\DeclareSymbolFont{legacymaths}{OT1}{cmr}{m}{n}
\let\orgdescriptionlabel\descriptionlabel
\renewcommand*{\descriptionlabel}[1]{%
  \let\orglabel\label
  \let\label\@gobble
  \phantomsection
  \protected@edef\@currentlabel{#1}%
  \let\label\orglabel
  \orgdescriptionlabel{#1}%
}
\def\paragraph{
	\@startsection{paragraph}{4}
	\z@{.5\linespacing\@plus.7\linespacing}{-.5em}%
	{\normalfont\itshape}}
\DeclareFontFamily{U}{mathx}{\hyphenchar\font45}
\DeclareFontShape{U}{mathx}{m}{n}{
      <5> <6> <7> <8> <9> <10>
      <10.95> <12> <14.4> <17.28> <20.74> <24.88>
      mathx10
      }{}
\DeclareSymbolFont{mathx}{U}{mathx}{m}{n}
\DeclareMathAccent{\widecheck}{0}{mathx}{"71}
\newcommand{\dif}{\,\mathrm{d}}
\DeclareMathOperator{\vol}{vol}
\DeclareMathOperator{\disc}{disc}
\DeclareMathOperator{\Nr}{Nr}
\DeclareMathOperator{\Tr}{Tr}
\DeclareMathOperator{\Lie}{Lie}
\DeclareMathOperator{\Gal}{Gal}
\DeclareMathOperator{\Hom}{Hom}
\DeclareMathOperator{\End}{End}
\DeclareMathOperator{\Aut}{Aut}
\DeclareMathOperator{\WR}{Res}
\DeclareMathOperator{\supp}{supp}
\DeclareMathOperator{\inv}{inv}
\DeclareMathOperator{\Ideals}{\mathcal{J}}
\DeclareMathOperator{\Pic}{Pic}
\DeclareMathOperator{\Res}{Res}
\DeclareMathOperator{\val}{val}
\newcommand{\Nrml}{\operatorname{N}}
\newcommand{\Cent}{\operatorname{Z}}
\newcommand{\Cor}{\operatorname{Cor}}
\newcommand{\Stab}{\operatorname{Stab}}
\newcommand{\RO}{\operatorname{RO}}
\newcommand{\ord}{\operatorname{ord}}
\newcommand{\Ad}{\operatorname{Ad}}
\newcommand{\meas}{m}
\renewcommand{\det}{\operatorname{det}} 
\DeclareMathOperator{\Gm}{\mathbb{G}_m}
\DeclareMathOperator{\Ga}{\mathbb{G}_a}
\newlength{\faktorheight}
\newcommand*{\dfaktor}[3]{
\mathchoice{
 \settototalheight{\faktorheight}{\ensuremath{#1}}%
  \raisebox{-0.5\faktorheight}{\ensuremath{#1}}
   \backslash
   \settototalheight{\faktorheight}{\ensuremath{#2}}%
  \raisebox{0.5\faktorheight}{\ensuremath{#2}}
  \slash
   \settototalheight{\faktorheight}{\ensuremath{#3}}%
  \raisebox{-0.5\faktorheight}{\ensuremath{#3}}
  }{
  \ensuremath{#1}
  \backslash
  \ensuremath{#2}
  \slash
  \ensuremath{#3}
  }
  {\let\GenerateWarning=\errmessage}
  {\let\GenerateWarning=\errmessage}
}
\newcommand*{\lfaktor}[2]{
\mathchoice{
 \settototalheight{\faktorheight}{\ensuremath{#1}}%
  \raisebox{-0.5\faktorheight}{\ensuremath{#1}}
	\backslash
   \settototalheight{\faktorheight}{\ensuremath{#2}}%
  \raisebox{0.5\faktorheight}{\ensuremath{#2}}
}
{
	\ensuremath{#1}
	\backslash
	\ensuremath{#2}
}
{\let\GenerateWarning=\errmessage}
{\let\GenerateWarning=\errmessage}
}
\newcommand*{\faktor}[2]{
\mathchoice{
 \settototalheight{\faktorheight}{\ensuremath{#1}}%
  \raisebox{0.5\faktorheight}{\ensuremath{#1}}
  \slash
   \settototalheight{\faktorheight}{\ensuremath{#2}}%
  \raisebox{-0.5\faktorheight}{\ensuremath{#2}}
}
{
	\ensuremath{#1}
	\slash
	\ensuremath{#2}
}
{\let\GenerateWarning=\errmessage}
{\let\GenerateWarning=\errmessage}
}
\newcommand*{\cyclic}[1]{
	\faktor{\mathbb{Z}}{#1\mathbb{Z}}
}
\newtheorem{thm}{Theorem}
\numberwithin{thm}{section}
\newtheorem*{thm*}{Statement of Theorem}
\newtheorem*{thm-quote}{Theorem}
\newtheorem{lem}[thm]{Lemma}
\newtheorem{prop}[thm]{Proposition}
\newtheorem{cor}[thm]{Corollary}
\newtheorem{conj}[thm]{Conjecture}
\theoremstyle{definition}
\newtheorem{defi}[thm]{Definition}
\theoremstyle{remark}
\newtheorem{remark}[thm]{Remark}
\title[Equidistribution on Kuga-Sato Varieties]{Equidistribution on Kuga-Sato Varieties of Torsion Points on CM Elliptic Curves}
\author[I. Khayutin]{Ilya Khayutin}
\address{Department of Mathematics, Northwestern University, Evanston, IL 60208, USA}
\begin{document}

\begin{abstract}

A connected Kuga-Sato variety $\mathbf{W}^r$ parameterizes tuples of $r$ points on elliptic curves (with level structure). A special point of $\mathbf{W}^r$ is a tuple of torsion points on a CM elliptic curve. A sequence of special points is strict if any CM elliptic curve appears at most finitely many times and no relation between the points in the tuple is satisfied infinitely often. The genus orbit of a special point is the $\Gal(\bar{\mathbb{Q}}/\mathbb{Q}^{\mathrm{ab}})$-orbit.
We show that genus orbits of special points in a strict sequence equidistribute in $\mathbf{W}^r(\mathbb{C})$, assuming a congruence condition at two fixed primes.

A genus orbit can be very sparse in the full Galois orbit. In particular, the number of torsion points on each elliptic curve in a genus orbit is not bounded below by the torsion order.

A genus orbit corresponds to a toral packet in an extension of $\mathbf{SL}_2$ by a vector representation. These packets also arise in the study by Aka, Einsiedler and Shapira of grids orthogonal to lattice points on the $2$-sphere. As an application we establish their joint equidistribution conjecture assuming two split primes.
\end{abstract}
\maketitle
\bgroup
\hypersetup{linkcolor = blue}
\tableofcontents
\egroup

\section{Introduction}
Our results can be presented either from a viewpoint of arithmetic geometry or homogeneous dynamics. We discuss first the arithmetic statements.
\subsection{Equidistribution of Genus Orbits of Special Points}
\subsubsection{Kuga-Sato Varieties}
Let $Y$ be a connected complex modular curve with neat level structure, i.e.\ $Y$ is a manifold. Assume that $Y$ can be defined over $\mathbb{Q}$, e.g. $Y=Y_1(N)$ for $N\geq 4$.
Denote by $W^r\to Y$ the $r$-fold complex Kuga-Sato variety over $Y$. A point $(A,\mathfrak{l})$ on $Y$ corresponds to a complex elliptic curve $A$ with level structure $\mathfrak{l}$. A point of $W^r$ above $(A,\mathfrak{l})$ corresponds to a tuple of $r$ complex points $X_1,\ldots,X_r\in A$. In particular, the fiber of $W^r\to Y$ over $(A,\mathfrak{l})$ is isomorphic to $A^r$. The universal elliptic curve $\mathcal{E}\to Y$ coincides with $W^1\to Y$.

\subsubsection{Special Points and Genus Orbits}
A special point of $Y$ is an elliptic curve with CM and appropriate level structure. A special point of $W^r$ coincides with $r$ torsion points over a special point of $Y$.

The theory of Complex Multiplication implies that special points of $W^r$ are algebraic, and defined over an abelian extension of an imaginary quadratic field. In this paper we study $\Gal(\bar{\mathbb{Q}}/\mathbb{Q}^\mathrm{ab})$-orbits of special points on $W^r$. We baptize them \emph{genus orbits} due to the evident relation with principal genus theory of quadratic fields. In particular, the genus orbit is always defined over the genus field of the imaginary quadratic field.

Genus orbits also arise naturally in geometric questions about lattice points on the $2$-sphere and their orthogonal grids as studied by Einsiedler, Aka and Shapira \cite{AES}. The first motivation to study genus orbits is a conjecture of \cite{AES}; a partial resolution of this conjecture is elaborated upon in \S \ref{intro:grids}. The second motivation stems in homogeneous dynamics -- understanding the asymptotic distribution of periodic torus orbits.

\subsubsection{Equidistribution}\label{intro:equidist}
The complex variety $W^r$ carries a natural uniform probability measure -- $\meas$. The push-forward of $\meas$ to $Y$ is the normalized hyperbolic volume measure and the conditional measure on each fiber $A^r$ is the probability Haar measure.
Equivalently, the uniform measure can be constructed using the uniformization of $W^r$. Denote $\mathbf{V}\coloneqq \Ga^{\times 2}$ and let
$\mathbf{P}^r\simeq \mathbf{SL}_2\ltimes \mathbf{V}^{\oplus r}$ where $\mathbf{SL}_2$ acts on $\mathbf{V}^{\oplus r}$ diagonally. Fix an embedding $\mathbf{SL}_2\hookrightarrow\mathbf{P}^r$, e.g.\ using the semi-direct product structure. This embedding defines the zero section.
There is a lattice $\Gamma<\mathbf{P}^r(\mathbb{R})$ such that
\begin{equation*}
W^r\simeq \dfaktor{\Gamma}{\mathbf{P}^r(\mathbb{R})}{K_\infty} \, ,
\end{equation*}
where $K_\infty=\mathbf{SO}_2(\mathbb{R})<\mathbf{SL}_2(\mathbb{R})\hookrightarrow \mathbf{P}^r(\mathbb{R})$ is a maximal compact subgroup. The isomorphism holds in the category of real analytic manifolds and the probability Haar measure on $\lfaktor{\Gamma}{\mathbf{P}^r(\mathbb{R})}$ descends to the uniform measure $\meas$ on $W^r$.

\begin{defi}\label{defi:strict-galois}
Write a point of $W^r$ as $(A,\mathfrak{l};X_1,\ldots,X_r)$.
A sequence of points $$\{(A_i,\mathfrak{l}_i;X_1^i,\ldots,X_r^i)\}_i\subset W^r$$ is strict if no fixed elliptic curve appears infinitely often in the sequence and for all $0\neq \left(m_1,\ldots,m_r\right)\in \mathbb{Z}^r$ the equation $m_1X_1^i+\ldots+m_r X_r^i=0$ holds finitely many times at most.
If $r=1$ the latter condition is equal to $\ord_{A_i}(X_1^i)\to_{i\to\infty}\infty$, where $\ord_{A_i}(X_1^i)$ is the order of the torsion point $X_1^i$ in $A_i$.
\end{defi}

Notice that a non-strict sequence has always a subsequence that is trapped in a weakly special subvariety of $W^r$. The weakly special subvariety in question is either the fiber over a point of $Y$ or a closed embedding $W^{r-1}\hookrightarrow W^r$ whose image is the geometric locus of an equation $m_1X_1+\ldots+m_r X_r=0$. In particular, a general sequence of special points can be decomposed into sequences each of which is appropriately strict in a weakly special subvariety.

The following is a version of our main theorem stated in the language of Galois orbits.
\begin{thm}\label{thm:genus-equidistribution}
Let $\{x_i=(A_i,\mathfrak{l}_i;X_1^i,\ldots,X_r^i)\}_i\subset W^r$ be a strict sequence of special points.
Denote the discriminant of the CM order of $A_i$ by $D_i$ and let $N_i$ be the order of $(X_1^i,\ldots,X_r^i)$ in $A^r$.
Write $D_i=D_i^\mathrm{fund} f_i^2$ where $D_i^\mathrm{fund}$ is a fundamental discriminant and $f_i$ is the conductor.
Fix two distinct primes $p_1$, $p_2$ and assume for all $i$
\begin{enumerate}
\item
\begin{align*}
\left(\frac{D_i^\mathrm{fund}}{p_1}\right)&=\left(\frac{D_i^\mathrm{fund}}{p_2}\right)=1\\
\val_{p_1}(f_i)&,\val_{p_2}(f_i)\ll 1\, ,
\end{align*}
\item
\begin{equation*}
\val_{p_1}(N_i),\val_{p_2}(N_i)\ll 1 \, .
\end{equation*}
\end{enumerate}
Then for every continuous compactly supported $f\colon W_r\to\mathbb{C}$
\begin{equation*}
\frac{1}{\left|\Gal(\bar{\mathbb{Q}}/\mathbb{Q}^\mathrm{ab}).x_i\right|}
\sum_{y\in \Gal(\bar{\mathbb{Q}}/\mathbb{Q}^\mathrm{ab}).x_i}f(y)
\to_{i\to\infty} \int_{W^r} f\dif \meas \, .
\end{equation*}
Stated otherwise, the normalized counting measures on the genus orbits of $x_i$ converge weak-$*$ to the uniform measure on $W^r$ when $i\to\infty$.
\end{thm}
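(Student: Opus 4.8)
The strategy is to translate the arithmetic statement into a dynamical one and then apply measure-rigidity together with an entropy bound. First I would use the theory of Complex Multiplication to realize the genus orbit of $x_i$ as a \emph{toral packet}: the adelic points $\Gal(\bar{\mathbb{Q}}/\mathbb{Q}^{\mathrm{ab}}).x_i$ are parameterized by a subgroup of $T_i(\mathbb{A}_f)$, where $T_i$ is the centralizer torus of the CM order inside $\mathbf{P}^r = \mathbf{SL}_2\ltimes\mathbf{V}^{\oplus r}$; the genus condition (fixing $\mathbb{Q}^{\mathrm{ab}}$ rather than the Hilbert class field) means we only see the image of $T_i(\mathbb{A}_f)$ in the genus group, i.e. the quotient by squares and by the local tori at all finite places. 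Concretely, the normalized counting measure $\mu_i$ on the genus orbit is a periodic torus orbit measure on $X=\mathbf{P}^r(\mathbb{Q})\backslash\mathbf{P}^r(\mathbb{A})/K_\infty$ pushed to $W^r$; one checks that the packet is genuinely a $T_i(\mathbb{A}_f)$-invariant measure supported on a single adelic periodic orbit, with the volume of the orbit growing (the strict hypothesis forces $D_i\to\infty$ and, after passing to the relevant quotient, $N_i\to\infty$ along the level aspect).

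Second, I would take a weak-$*$ limit $\mu_\infty$ of the $\mu_i$ and analyze its invariance. Here the two fixed primes $p_1,p_2$ with the split/bounded-valuation hypotheses are essential: at each $p_j$ the local torus $T_i(\mathbb{Q}_{p_j})$ is split (by $\left(\tfrac{D_i^{\mathrm{fund}}}{p_j}\right)=1$) and, because $\val_{p_j}(f_i),\val_{p_j}(N_i)\ll 1$, the $p_j$-component of the packet has uniformly bounded "depth," so it carries a diverging torus orbit inside $\mathbf{SL}_2(\mathbb{Q}_{p_j})\ltimes\mathbf{V}(\mathbb{Q}_{p_j})^{\oplus r}$. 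By a Mozes--Shah / linearization argument applied at these two places, any limit $\mu_\infty$ is invariant under a nontrivial unipotent-type or diagonal flow coming from the $p_j$-adic directions; equivalently one is in a position to invoke the relevant measure-classification input (Einsiedler--Lindenstrauss-type joinings classification on the $\mathbf{SL}_2$-part, combined with an analysis of the vector part $\mathbf{V}^{\oplus r}$, where the action on the fibers is a skew product over the base). The upshot is that $\mu_\infty$ is a convex combination of homogeneous measures, each invariant under a positive-entropy diagonal action at $p_1$ and $p_2$.

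Third comes the entropy and escape-of-mass input. One must rule out (a) escape of mass to the cusp and (b) concentration of $\mu_\infty$ on proper homogeneous subvarieties. For (a) I would use the geometry-of-numbers / Linnik-type argument: the $p_1,p_2$-adic Hecke directions expand volume, and a non-escape estimate (in the style of Einsiedler--Margulis--Venkatesh, adapted to the semidirect-product group $\mathbf{P}^r$) shows $\mu_\infty$ is a probability measure. For (b) the strictness hypothesis is what forbids concentration: if $\mu_\infty$ gave positive mass to a fiber $A^r$ or to a translate of an embedded $W^{r-1}$ (a relation $\sum m_k X_k=0$), then a positive proportion of the Galois orbit would have to lie on that weakly special subvariety, contradicting—via an equidistribution-of-the-projection argument on the base modular curve and a linear-algebra argument on $\mathbf{V}^{\oplus r}$—the hypothesis that no relation holds infinitely often and no CM curve repeats. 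Combining: $\mu_\infty$ is a probability measure, homogeneous, of full support, hence equal to $\meas$. Since every subsequential limit is $\meas$, the full sequence converges.

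The main obstacle I anticipate is the \textbf{measure-classification step in the semidirect-product group}: the standard joinings/measure-rigidity theorems are stated for products of reductive groups, whereas $\mathbf{P}^r=\mathbf{SL}_2\ltimes\mathbf{V}^{\oplus r}$ has a large unipotent radical, and the torus $T_i$ meets that radical nontrivially. One must carefully disentangle the dynamics into the $\mathbf{SL}_2$-base (where $S$-arithmetic measure rigidity with two split primes applies) and the vector fibers, showing that the fiberwise measure is forced to be Haar by a separate argument exploiting that torsion points of growing order equidistribute in $A^r$ under the toral action—essentially a $p$-adic equidistribution statement for the $\mathbf{V}^{\oplus r}(\mathbb{Q}_{p_j})$-coordinates. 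Making this fiberwise rigidity interact correctly with the base rigidity, and tracking the genus (mod squares) quotient throughout, is the technical heart of the argument.
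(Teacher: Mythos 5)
Your high-level framework (realize the genus orbit as a homogeneous toral packet, take a weak-$*$ limit, exploit split-torus invariance at $p_1,p_2$, invoke measure rigidity, then rule out intermediate measures) matches the paper's. But two of your intermediate steps are off, and one of them is a genuine gap at the heart of the proof.

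First, some smaller corrections. No Mozes--Shah or linearization argument is needed to get the $A_{p_1}\times A_{p_2}$-invariance: the congruence hypotheses guarantee that each packet is \emph{already} invariant under a split torus at $p_1,p_2$ after a conjugation by an element that stays in a fixed compact set; one then passes to the limit directly (Proposition~\ref{prop:limit-invariance}). Likewise, your worry that measure-rigidity results are stated only for reductive groups is unfounded: \cite[Theorem 1.6]{ELJoinings} applies to perfect algebraic groups, which includes $\mathbf{P}^1=\mathbf{SL}_2\ltimes\mathbf{V}$, and the torus does not in fact meet the unipotent radical — by Lemma~\ref{lem:reductive-conj-into-G} any reductive subgroup of $\mathbf{P}^r$ is $\mathbf{P}^r(\mathbb{Q})$-conjugate into $\mathbf{SL}_2$. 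Tightness is also easy: $[\mathbf{P}^r(\mathbb{A})]$ is a compact extension of $[\mathbf{G}(\mathbb{A})]$, so no non-escape estimate à la Einsiedler--Margulis--Venkatesh is needed; Duke's theorem on the base already forces the push-forward to be a probability measure.

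The serious gap is in your step (b): ruling out concentration on the intermediate homogeneous measures $\nu_{\mathrm y}$ supported on $[\mathbf{G}(\mathbb{A})(e,\mathrm y)]$ (torsion sections). You argue that if the limit gave positive mass to such a subvariety, then a positive proportion of the Galois orbit would have to lie on it, and that this follows from strictness plus ``torsion points of growing order equidistribute in $A^r$ under the toral action — a $p$-adic equidistribution statement.'' Both claims fail for \emph{genus} orbits. The strictness hypothesis only says the packets eventually leave any fixed torsion section; a weak-$*$ limit can still charge a torsion section even if the packets never touch it. And the fiberwise equidistribution statement you want is precisely false: as noted in the introduction, a genus orbit may contain a \emph{single} torsion point over each elliptic curve in the packet (e.g.\ for prime discriminant $D$ and torsion order $|D|$), so there is no conditional equidistribution in the fibers to exploit. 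This is why the full Galois orbit is easy but the genus orbit is hard. The paper's substitute is the relative-trace / cross-correlation method: one computes the cross-correlation of the packet measure against $\nu_{\mathrm y}$ for a Bowen-ball test function, rewrites the orbital sum as a short sum over fractional $\Lambda$-ideals with level structure (this requires the invariant theory of the $\Lambda_f^{(1)}(\mathrm x)$-action on $\mathbf V(\mathbb A_f)$ developed in \S\ref{sec:orbit-space}), and controls the error term with the Duke--Friedlander--Iwaniec subconvex bound for Hecke $L$-functions in the level aspect (Theorem~\ref{thm:Cor-main-inequality}). The resulting upper bound on the correlation, compared against the entropy of $\nu_{\mathrm y}$ under $A_{p_1}$, is what forces the coefficient of the intermediate measures to vanish. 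Without this analytic input your argument cannot close.
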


\subsubsection{Full Galois Orbits}
A genus orbit can be very sparse in the full Galois orbit of a special point. For example, a special point $(A,\mathfrak{l}; X)\in W^1$, when $A$ has CM by an order of \emph{prime} discriminant $D$ and $X$ is of order $|D|$, has a genus orbit in which over each elliptic curve there is a single torsion point. The full Galois orbit has $\gg_\varepsilon |D|^{1-\varepsilon}$ torsion points over each elliptic curve in the orbit.

This makes proving equidistribution of full Galois orbits significantly simpler. A.\ Venkatesh has observed that in the full Galois orbit of a special point $(A,\mathfrak{l};X)\in W^1$ the fiber of $\Gal(\bar{\mathbb{Q}}/\mathbb{Q}).(A,\mathfrak{l};X)$ over most elliptic curves $\tensor[^\sigma]{A}{}\in\Gal(\bar{\mathbb{Q}}/\mathbb{Q}).A$ becomes equidistributed in $\tensor[^\sigma]{A}{}$ in a quantitative and uniform way.
Writing down the full Galois orbit as an orbit of the full id\`ele class group of the CM field we can identify exactly the torsion points of $\tensor[^\sigma]{A}{}$ appearing in the fiber. This turns out to be a big enough set so that its equidistribution in $\tensor[^\sigma]{A}{}$ can be verified in an elementary way. The same statement or method cannot in general apply to the genus orbit as in the example above it contains a single torsion point over each elliptic curve in the orbit.
Equidistribution of genus orbits in the total space $W^r$ is somewhat delicate as it fails fiber-wise.
\subsubsection{Methods}
The proof of Theorem \ref{thm:genus-equidistribution} combines measure rigidity for diagonal actions, specifically \cite[Theorem 1.6]{ELJoinings}; the relative trace method for bounding accumulation on intermediate orbits -- first introduced by the author in \cite{KhJoint}; and the subconvex bound in the level aspect of Duke, Friedlander and Iwaniec \cite{DFI}. This paper has two main novel contributions.

\paragraph{Arithmetic Invariants}
The first one is the fine \emph{arithmetic} analysis of the orbit space of the quotient of the two dimensional affine space $\mathbf{V}$ by a rational anisotropic torus $\mathbf{T}<\mathbf{SL}_2$. Specifically, $\mathbf{T}$ is the norm $1$ torus of the  imaginary quadratic field associated to an elliptic curve with CM.
The GIT quotient of  $\mathbf{V}$ by a torus $\mathbf{T}$ is nothing but the affine line. Yet this quotient does not suffice for our purposes as it can at best parameterize orbits of $\mathbf{T}(\mathbb{Q})$ on $\mathbb{Q}^2$. We need to parametrize orbits of a certain compact-open subgroup of $\mathbf{T}(\mathbb{A}_f)$. We achieve such a parametrization by constructing an explicit invariant function valued in invertible fractional ideals of a quadratic order, with some extra level structure and a restriction on the Picard class.
This invariant theoretic problem is related to the question of understanding $\mathcal{T}(\mathbb{Z})$-orbits for an integral non-smooth model $\mathcal{T}$ of $\mathbf{T}$.
This analysis is carried out in \S\ref{sec:orbit-space} with complimentary local computations in  Appendix \ref{app:L-functions}.

\paragraph{Subconvexity}
The second new ingredient that is applicable specifically to the setting at hand is subconvexity of certain Hecke $L$-functions which is used to bound the arithmetic sums produced by the relative trace method.
The application of subconvexity can be seen as a substitute for the sieve method used in \cite{KhJoint}; it has a significant advantage of not requiring any assumptions about exceptional Landau-Siegel zeros. The use of subconvexity is described in \S\ref{sec:subconvex}.

\paragraph{Orthogonal Grids as Intersection of Periodic Orbits}
We also present a new viewpoint on the construction of grids orthogonal to lattice points in $\mathbb{Z}^3$ as studied by  Einsiedler, Aka and Shapira in \cite{AES}, cf.\ \S \ref{intro:grids}. We demonstrate that their construction is equivalent to the intersection of two periodic orbits in $\lfaktor{\mathbf{SL_3(\mathbb{Z})}}{\mathbf{SL}_3(\mathbb{R})}$. This viewpoint is mostly used to reprove well-known result. Nevertheless, the benefit of the intersection representation is an elegant explicit description of a joint ad\`elic torus action on the lattice points and their orthogonal grids. This can probably be achieved also by classical means, but I believe the intersection pictures demystifies many properties of the construction -- including the squaring of the Picard action as described in Remark \ref{rem:squaring}.

\subsubsection{Results without a Congruence Assumption}
The congruence assumption at the primes $p_1$, $p_2$ in Theorem \ref{thm:genus-equidistribution} provides invariance of any weak-$*$ limit measure under a split torus at two places. This invariance is required to apply measure rigidity results of Einsiedler and Lindenstrauss \cite{ELJoinings}. It is important to note that the methods of this paper provide useful information even without a congruence assumption.

The complex universal elliptic curve $\mathcal{E}$ is uniformized by $\mathbb{H}\times \mathbb{C}$. For any point $x=(x_\mathbb{H},x_\mathbb{C})\in \mathbb{H}\times \mathbb{C}$ denote by $B(x,R,r)$ the product of the hyperbolic ball of radius $R$ in $\mathbb{H}$ around $x_\mathbb{H}$ and the Euclidean ball of radius $r$ in $\mathbb{C}$ around $x_\mathbb{C}$.
By abuse of notation we also denote by $B(x,R,r)$ its projection to $\mathcal{E}$ under the quotient map -- this is an open neighborhood of the point $x\in\mathcal{E}$.
\begin{thm}\label{thm:genus-non-split}
Let $\{x_i\}_i\subset W^1=\mathcal{E}$ be a strict sequence of special points and denote by $\mu_i$ the Borel probability measure on $\mathcal{E}$ defined by
\begin{equation*}
\mu_i=\frac{1}{\left|\Gal(\bar{\mathbb{Q}}/\mathbb{Q}^\mathrm{ab}).x_i\right|}\sum_{y\in \Gal(\bar{\mathbb{Q}}/\mathbb{Q}^\mathrm{ab}).x_i} \delta_y \, .
\end{equation*}
Let $\mathcal{A}\subset \mathcal{E}$ be a torsion section and denote by $\nu_\mathcal{A}$ the uniform probability measure on $\mathcal{A}$ then
\begin{equation*}
\limsup_{i\to\infty} \int  \mu_i(B(x,R,r)) \dif \nu_\mathcal{A}(x)\ll \left(\cosh(2R)-1\right)r^2 \, .
\end{equation*}
Moreover, if $\mu_i\xrightarrow[i\to\infty]{\textrm{weak-}*}\mu$ then $\mu(\mathcal{A})=0$.
\end{thm}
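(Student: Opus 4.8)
The plan is to derive both assertions from one quantitative estimate furnished by the relative trace method of \cite{KhJoint}, applied here to the weakly special subvariety $\mathcal{A}$ rather than to an intermediate torus orbit. First I would pass to the adelic picture of \S\ref{intro:equidist}: the genus orbit of $x_i$ is a toral packet, i.e.\ the image in $\dfaktor{\Gamma}{\mathbf{P}^1(\mathbb{R})}{K_\infty}$ of a compact orbit $g_iH_i$, where $H_i$ is the product of (the image of) the norm-one torus $\mathbf{T}_i(\mathbb{A}_f)$ of the CM field of $A_i$ with a finite group of $\mathbf{V}$-translations encoding the torsion point, and $\mu_i$ is the push-forward of the Haar probability measure of $H_i$; the torsion section $\mathcal{A}$ lifts to a homogeneous set on which $\mathbf{SL}_2(\mathbb{A})$ acts transitively modulo the level (a fixed translate of the zero section by a torsion vector), with $\nu_\mathcal{A}$ the associated invariant probability measure. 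Using that the box is symmetric, $x\in B(y,R,r)\iff y\in B(x,R,r)$, I would rewrite
\[
I_i\coloneqq\int_\mathcal{A}\mu_i\bigl(B(x,R,r)\bigr)\dif\nu_\mathcal{A}(x)
=\frac{1}{\bigl|\Gal(\bar{\mathbb{Q}}/\mathbb{Q}^\mathrm{ab}).x_i\bigr|}\sum_{y\in\Gal(\bar{\mathbb{Q}}/\mathbb{Q}^\mathrm{ab}).x_i}\nu_\mathcal{A}\bigl(\mathcal{A}\cap B(y,R,r)\bigr),
\]
which exhibits $I_i$ as a bilinear automorphic period of exactly the shape the relative trace machinery bounds.

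Next I would run the relative trace expansion: replacing the sharp cutoff by smooth minorant and majorant bumps $\phi$ on $\mathbf{P}^1(\mathbb{A})$ supported near the identity, of Haar mass $\asymp(\cosh(2R)-1)r^2$ (which affects every estimate only by a factor $1+o(1)$), one forms the automorphic kernel $\sum_{\gamma\in\mathbf{P}^1(\mathbb{Q})}\phi(z^{-1}\gamma w)$ and unfolds $I_i$ as the integral of this kernel over $H_i$ against the $\mathbf{SL}_2(\mathbb{A})$-homogeneous set carrying $\nu_\mathcal{A}$. The ``diagonal'' $\gamma$ --- those forcing the orbit point to agree, up to the support of $\phi$, with a point of $\mathcal{A}$ --- contribute the equidistribution prediction $\int_\mathcal{A}\meas(B(x,R,r))\dif\nu_\mathcal{A}(x)$, which is $\ll(\cosh(2R)-1)r^2$ by the product structure of $\meas$ alone (a Euclidean $r$-ball in a fibre has normalized area $\ll r^2$, uniformly). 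The whole content of the theorem is therefore the claim that the remaining ``off-diagonal'' $\gamma$ contribute $o(1)$ for each fixed $R,r$ as $i\to\infty$: granting this, $\limsup_iI_i\le(\text{diagonal})\ll(\cosh(2R)-1)r^2$.

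The off-diagonal estimate is the crux, and the step I expect to be hardest. The plan is to use the arithmetic parametrization of \S\ref{sec:orbit-space} --- the explicit invariant valued in invertible fractional ideals of the quadratic order, with its level structure and Picard restriction --- to recast the off-diagonal as a sum over ideals of bounded norm with congruence conditions at the primes dividing the torsion order, whose size is governed by (moments of) Hecke $L$-functions. Strictness --- which for $r=1$ just says $\ord_{A_i}(X_1^i)\to\infty$, in particular $|D_i|\to\infty$ --- with the class number formula and Siegel's theorem makes $\vol(H_i)$ (comparable to the genus orbit size) grow like a positive power of $|D_i|$; but a genus orbit can be far sparser than the full Galois orbit, as the examples after Theorem \ref{thm:genus-equidistribution} show, so the trivial bound on the off-diagonal is already as large as $\vol(H_i)$ and yields nothing. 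The decisive input, announced in \S\ref{sec:subconvex}, is the Duke--Friedlander--Iwaniec subconvex bound in the level aspect \cite{DFI}, which supplies a power saving, bounding the off-diagonal by $\ll_{R,r}|D_i|^{-\delta}\vol(H_i)$ for some fixed $\delta>0$; after dividing by $\vol(H_i)$ it is $\ll_{R,r}|D_i|^{-\delta+o(1)}\to0$. This subconvex input replaces the sieve of \cite{KhJoint}; being unconditional, it needs neither the congruence hypothesis of Theorem \ref{thm:genus-equidistribution} nor any assumption on Landau--Siegel zeros, which is precisely why the present statement is unconditional.

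Finally, $\mu(\mathcal{A})=0$ follows from the displayed bound by a soft covering argument exploiting that $\mathcal{A}$ is a holomorphic section of $\mathcal{E}\to Y$. It suffices to show $\mu(K)=0$ for every compact $K\subset\mathcal{A}$. Let $U$ be the open $(R,r)$-neighbourhood of $K$ in $\mathcal{E}$; then $\mu(K)\le\liminf_i\mu_i(U)$ by weak-$*$ convergence, while for $w\in U$ one has $B(z_0,R,r)\subset B(w,2R,2r)$ where $z_0\in K$ satisfies $w\in B(z_0,R,r)$, so $\nu_\mathcal{A}(\mathcal{A}\cap B(w,2R,2r))\ge c_K:=\inf_{z\in K}\nu_\mathcal{A}(\mathcal{A}\cap B(z,R,r))>0$ and hence $c_K\mu_i(U)\le\int_\mathcal{A}\mu_i(B(x,2R,2r))\dif\nu_\mathcal{A}(x)$. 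Because $\mathcal{A}$ is holomorphic, $c_K\asymp_K\min(R,r)^2$ when $\mathcal{A}$ meets the fibres over $K$ transversally, and $c_K\asymp_K\cosh(2R)-1$ for every fixed $r$ when $\mathcal{A}$ is tangent to them (e.g.\ the zero section). Combining with the displayed bound at radii $(2R,2r)$ gives $\limsup_i\mu_i(U)\ll_K(\cosh(4R)-1)r^2/c_K$; letting $(R,r)\to0$ with $r\asymp_KR$ in the first case, resp.\ $r\to0$ with $R$ fixed in the second, the right-hand side tends to $0$. Thus $\mu(K)=0$, and exhausting $\mathcal{A}$ by compacta yields $\mu(\mathcal{A})=0$.
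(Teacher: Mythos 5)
Your overall architecture --- rewrite $\int\mu_i(B(x,R,r))\dif\nu_\mathcal{A}$ as a cross-correlation, expand it geometrically as a relative trace over the toral packet and the homogeneous set $\left[\mathbf{G}(\mathbb{A})(e,\mathrm{y})\right]$ underlying $\mathcal{A}$, convert the orbital integrals into an ideal count via \S\ref{sec:orbit-space}, and invoke the Duke--Friedlander--Iwaniec subconvex bound, with no congruence conditions because no measure rigidity is needed --- is indeed the paper's route (Theorem \ref{thm:main2} via Proposition \ref{prop:final-strict-sequence} and Theorem \ref{thm:Cor-main-inequality}, then push-forward as in the proof of Theorem \ref{thm:genus-equidistribution}). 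But your central decomposition has a genuine gap. Your ``diagonal'' --- the $\gamma$ moving an orbit point into the $\phi$-support of a point of $\mathcal{A}$ --- is simply the set of all $\gamma$ that contribute, and the assertion that it is bounded by the Haar prediction ``by the product structure of $\meas$ alone'' is exactly the statement to be proved, since $\mu_i$ is not $\meas$. Conversely, your claim that the part you recast as an ideal sum is $\ll_{R,r}|D_i|^{-\delta}\vol(H_i)$ is false: by the class number formula (Proposition \ref{prop:toral-volume}) and the residue computation (Proposition \ref{prop:residue}), the sum of the nontrivial orbital integrals has size $\asymp \vol(\mathcal{H}_i)\left(\cosh(2R)-1\right)r^2$ --- it is precisely the source of the main term, extracted as the pole at $s=1$ in the Perron/character decomposition, while subconvexity only controls the critical-line remainder (the term $\mathrm{ST}$ in Theorem \ref{thm:Cor-main-inequality}). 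So the mechanism that actually produces the bound $(\cosh(2R)-1)r^2$ is absent from your sketch.

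Second, you never treat the degenerate class $\gamma\in\mathbf{G}(\mathbb{Q})$, i.e.\ the term $\RO_f(0)$ of Proposition \ref{prop:geom-expansion}. Its contribution, when nonzero, is of size comparable to $\meas_{\mathbf{G}(\mathbb{R})}\left(B_{G,\infty}(2R)\right)\asymp\cosh(2R)-1$ with no factor $r^2$, so it would ruin the claimed bound; it must be shown to vanish for large $i$, and this is exactly where the torsion-order half of strictness enters (Proposition \ref{prop:trivial-contribution-vanishes}: $\RO_f(0)=0$ once $l_i^{-1}.\mathrm{x}^i$ leaves a fixed compact set). Your reading of strictness (``just says $\ord_{A_i}(X_1^i)\to\infty$, in particular $|D_i|\to\infty$'') conflates the two independent conditions of Definition \ref{defi:strict-galois} and in the end you use only discriminant growth; consequently sequences whose torsion points lie on, or converge to, the fixed section $\mathcal{A}$ --- precisely the ones the theorem must exclude --- are not ruled out by your argument. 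Your final covering argument deducing $\mu(\mathcal{A})=0$ from the displayed estimate is fine, and is a reasonable classical substitute for the paper's adelic lower-bound argument in the second half of the proof of Theorem \ref{thm:main2}, but it rests on the flawed central estimate.
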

The torsion section $\mathcal{A}$ is also a modular curve by itself, of level higher than $Y$ or equal to it; the probability measure $\nu_A$ is the normalized hyperbolic volume measure on the modular curve $\mathcal{A}$.
The expression $\left(\cosh(2R)-1\right)r^2$ is proportional to the product of the volume of a hyperbolic circle of radius $R$ and an Euclidean circle of radius $r$. This theorem may be interpreted as stating that the mass a genus orbit of a special point puts in a ball around a typical point on a torsion section is eventually bounded by the uniform measure of this ball; where typical is with respect to the volume measure on the torsion section.

\subsubsection{Further Discussion}
\paragraph{The Weil Pairing on a Genus Orbit}
For any integer $N$ and elliptic curve $A$ the Weil pairing $w\colon A[N]\times A[N]\to\mu_n$ is a non-degenerate bilinear alternating and Galois equivariant pairing valued in roots of unity of order $N$. As $\mathbb{Q}^\mathrm{ab}$ is the cyclotomic extension of $\mathbb{Q}$ we have for any $\sigma\in\Gal(\bar{\mathbb{Q}}/\mathbb{Q}^\mathrm{ab})$ and $P,Q\in  A[N]$
\begin{equation*}
w(\tensor[^\sigma]{P}{},\tensor[^\sigma]{Q}{})=w(P,Q)\in\mu_N \, .
\end{equation*}
In particular, the genus orbit of a special point $(A,\mathfrak{l};X_1,\ldots,X_r)\in W^r$ of torsion order $N$ has the pleasant property that for any $1\leq i\neq j\leq r$ the Weil pairing $w(X_i,X_j)$ is a well-defined invariant of the orbit. This is of course meaningful only for $r\geq 2$.

\paragraph{Genus Orbits on the Modular Curve}\label{intro:genus-modular-curve}
The situation for genus orbits on $Y$ and $W^r$ for $r\geq 1$ is very different. In the modular curve the genus orbit is rather large in the full Galois orbit, due to the fact that the index of a principal genus subgroup in the Picard group of a quadratic order of discriminant $D<0$ is of size $\ll 2^{\omega(D)}$. In particular, equidistribution of Genus orbits on the modular curve is closely related to equidistribution of full orbits proven by Duke \cite{Duke}. Under the extra assumption of a fixed split prime equidistribution of genus orbits of CM points has already been established by Linnik \cite{LinnikBook}. Without a split prime assumptions it follows from the Waldspurger formula \cite{Waldspurger} and the subconvex bounds of \cite{DFI-II,DFI-IIerr,Harcos,Michel04}.

\subsection{Equidistribution of Torus Orbits}
We proceed to describe our results in terms of periodic torus orbits and homogeneous dynamics. This is the framework used in the proofs. Our description is ad\`elic due to the fact that the periodic torus orbits in question are cumbersome to define classically.

\subsubsection{Ad\`elic Homogeneous Space}\label{sec:adelic-homogeneous}
A connected complex modular curve $Y$ can be uniformized by the complex upper-half plane $\mathbb{H}$. This provides a diffeomorphism
\begin{equation*}
Y\simeq \dfaktor{\mathbf{SL}_2(\mathbb{Q})}{\mathbf{SL}_2(\mathbb{A})}{K_\infty U_f} \, ,
\end{equation*}
where $K_\infty=\mathbf{SO}_2(\mathbb{R})<\mathbf{SL}_2(\mathbb{R})$ is an archimedean maximal compact subgroup and $U_f<\mathbf{SL}_2(\mathbb{A}_f)$ is a neat compact-open subgroup contained in $\mathbf{SL}_2(\hat{\mathbb{Z}})$.

For any $r\in\mathbb{N}$ we have defined $\mathbf{P}^r\coloneqq \mathbf{SL}_2\ltimes \mathbf{V}^{\oplus r}$ where $\mathbf{V}\coloneqq \Ga^{\times 2}$. Moreover, we have fixed an embedding $\mathbf{SL}_2\hookrightarrow\mathbf{P}^r$ corresponding to the zero section. Denote
$U_f^\ltimes\coloneqq U_f\ltimes \mathbf{V}(\hat{\mathbb{Z}})^{\oplus r}$ -- this is a compact-open subgroup of $\mathbf{P}^r(\mathbb{A}_f)$.
The connected complex $r$ Kuga-Sato variety above $Y$ satisfies
\begin{equation*}
W^r\simeq \dfaktor{\mathbf{P}^r(\mathbb{Q})}{\mathbf{P}^r(\mathbb{A})}{K_\infty U_f^\ltimes} \, ,
\end{equation*}
where the isomorphism is in the category of real analytic manifolds.
The lattice $\Gamma$ from \S\ref{intro:equidist} is equal to $\mathbf{P}^r(\mathbb{Q})\cap U_f^\ltimes$.
Notice that this is not the standard presentation of a Kuga-Sato variety as a mixed Shimura variety. The mixed Shimura model of $W^r$ is a double quotient of $\mathbf{GL}_2\ltimes \mathbf{V}^{\oplus r}$. The presentation using $\mathbf{SL}_2$ is adapted to the setting of genus orbits, as maximal tori in $\mathbf{SL}_2$ are norm $1$ tori of quadratic fields. The norm $1$ torus is mapped by the Artin reciprocity map of class field theory to a quotient of $\Gal(\bar{\mathbb{Q}}/\mathbb{Q}^\mathrm{ab})$.

Instead of studying the double quotient $W^r$ we shall establish equidistribution in the ad\`elic homogeneous space
\begin{equation*}
\left[\mathbf{P}^r(\mathbb{A})\right]\coloneqq \lfaktor{\mathbf{P}^r(\mathbb{Q})}{\mathbf{P}^r(\mathbb{A})} \, .
\end{equation*}
The pertinent statement for $W^r$ with any level structure will then follow by push-forward of measures from the ad\`elic quotient to the double quotient.

\subsubsection{Homogeneous Toral Sets}
Every maximal torus $\mathbf{T}<\mathbf{SL}_2$ defined and anisotropic over $\mathbb{Q}$ is of the form $\mathbf{T}\simeq \WR_{E/\mathbb{Q}}^1 \Gm$ for a quadratic field extension $E/\mathbb{Q}$.
Fix $\xi=(l,\mathrm{x})\in\mathbf{P}^r(\mathbb{A})$ with $l\in\mathbf{SL}_2(\mathbb{A})$ and $\mathrm{x}\in\mathbf{V}(\mathbb{A})^{\oplus r}$. The closed subset
$$\mathcal{H}=\left[\mathbf{T}(\mathbb{A})\xi\right]\subset \left[\mathbf{P}^r(\mathbb{A})\right]$$
is called a \emph{homogeneous toral set} \cite{ELMVCubic} and carries a unique $\Ad_{\xi^{-1}}\mathbf{T}(\mathbb{A})$-invariant Borel probability measure that we call the \emph{periodic measure}. We say that $\mathcal{H}$ is $K_\infty$-invariant if $\Ad_{\xi_\infty^{-1}}\mathbf{T}(\mathbb{R})=K_\infty$. In this case the splitting field $E/\mathbb{Q}$ of $\mathbf{T}$ is imaginary and $\mathrm{x}_\infty=0$.

In \S \ref{sec:order} we attach to a $K_\infty$-invariant homogeneous toral set an order $\Lambda$ in the splitting field $E/\mathbb{Q}$. If the homogeneous toral set arises by class field theory from a genus orbit of a special point then this order coincides with the endomorphism ring of the associated CM elliptic curve. The discriminant $D$ of a homogeneous toral set is defined to be the discriminant of this order. The torsion order $N\in\mathbb{N}$ of a homogeneous toral set  $\left[\mathbf{T}(\mathbb{A})(l,\mathrm{x})\right]$ is the order of the non-archimedean part $\mathrm{x}_f\in\mathbf{V}^{\oplus r}(\mathbb{A}_f)$ in the torsion group $\mathbf{V}^{\oplus r}(\mathbb{A}_f)/l_f.\mathbf{V}^{\oplus r}(\hat{\mathbb{Z}})$.

\subsubsection{Main Theorems}
We state our main results in their final form.
\begin{defi}\label{defi:strict-homogeneous}
A sequence of $K_\infty$-invariant homogeneous toral sets $\left\{\left[\mathbf{T}_i(\mathbb{A})(l_i,\mathrm{x}^i)\right]\right\}_i$ in $\left[\mathbf{P}^r(\mathbb{A})\right]$
with discriminants $\{D_i\}_i$ is \emph{strict} if $|D_i|\to_{i\to\infty}\infty$ and for every $0\neq m=(m_1,\ldots,m_r)\in\mathbb{Q}^r$ the sequence of elements
\begin{equation*}
l_i^{-1}.\left(\sum_{k=1}^r m_k \mathrm{x}_k^i\right)\in\mathbf{V}(\mathbb{A})
\end{equation*}
escapes all compact sets in $\mathbf{V}(\mathbb{A})$.

Let $N_{m,i}$ be the order of the non-archimedean part $\sum_{k=1}^r m_k \mathrm{x}_{k,f}^i\in \mathbf{V}(\mathbb{A}^f)$ in the torsion group $\mathbf{V}(\mathbb{A}_f)/l_{i,f}.\mathbf{V}(\hat{\mathbb{Z}})$. The last condition is equivalent to $N_{m,i}\to_{i\to\infty}\infty$ for every $0\neq m \in \mathbb{Q}^r$. In particular, for $r=1$ the latter condition is equivalent to $N_i\to_{i\to\infty}\infty$ where $N_i$ is the torsion order of the $i$'th homogeneous toral set.
\end{defi}

\begin{thm}\label{thm:main}
Let $\mathcal{H}_i\subset \left[\mathbf{P}^r(\mathbb{A})\right]$ be a strict sequence of $K_\infty$-invariant homogeneous toral sets. Let $D_i$ be the discriminant of $\mathcal{H}_i$ and $N_i$ the torsion order. Write $D_i=D_i^\mathrm{fund}f_i^2$ where $D_i^\mathrm{fund}$ is a fundamental discriminant and $f_i$ is the conductor. Denote by $\mu_i$ the periodic measure supported on $\mathcal{H}_i$.

Fix two distinct primes $p_1,p_2$ and assume for all $i$

\begin{align}
\label{eq:congruence-1}\tag{$\spadesuit$}
\left(\frac{D_i^\mathrm{fund}}{p_1}\right)&=
\left(\frac{D_i^\mathrm{fund}}{p_2}\right)=+1\\
\val_{p_1}(f_i),&\val_{p_2}(f_i)\ll 1 \, ,
\nonumber
\end{align}

\begin{equation}
\label{eq:congruence-2}\tag{$\spadesuit\spadesuit$}
\val_{p_1}(N_i),\val_{p_2}(N_i)\ll 1 \, .
\end{equation}

Then $\mu_i\xrightarrow[i\to\infty]{\mathrm{weak}-*}m$ where $m$ is the probability Haar measure on $\left[\mathbf{P}^r(\mathbb{A})\right]$.
\end{thm}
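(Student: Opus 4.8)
The plan is to follow the template of \cite{KhJoint}: combine measure rigidity for diagonalizable actions with the relative trace method, the genuinely new inputs being the orbit-space analysis of \S\ref{sec:orbit-space} and the level-aspect subconvex bound \cite{DFI}. Since $[\mathbf{P}^r(\mathbb{A})]$ is noncompact, the whole statement follows once we show that every weak-$*$ subsequential limit $\mu$ of $\{\mu_i\}$ is a probability measure equal to $m$; as $m$ is itself a probability measure, this forces convergence of the full sequence. First I would rule out escape of mass. Escape can only occur along the cusp of the modular curve, because $\mathbf{V}(\mathbb{Q})\backslash\mathbf{V}(\mathbb{A})$ is compact and so are the fibres of $[\mathbf{P}^r(\mathbb{A})]\to[\mathbf{SL}_2(\mathbb{A})]$; hence any lost mass is lost by the pushforward of $\mu_i$ to $[\mathbf{SL}_2(\mathbb{A})]$, which is the periodic torus measure on a CM point. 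That this does not happen is classical, following from the work of Duke \cite{Duke} (or already from Linnik \cite{LinnikBook} under our split-prime hypothesis). So $\mu$ is a probability measure.

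Next I would extract extra invariance from the congruence hypotheses and invoke measure classification. Each $\mu_i$ is invariant under $\Ad_{\xi_i^{-1}}\mathbf{T}_i(\mathbb{A})$, and \eqref{eq:congruence-1} forces $\mathbf{T}_i$ to split over $\mathbb{Q}_{p_1}$ and over $\mathbb{Q}_{p_2}$; using the bounded valuations in \eqref{eq:congruence-1} and \eqref{eq:congruence-2}, a standard Linnik-type argument (all maximal split tori of $\mathbf{SL}_2(\mathbb{Q}_{p_j})$ being conjugate, passing to a subsequence along which the relevant base points converge) shows that $\mu$ is invariant under a maximal $\mathbb{Q}_{p_1}$-split torus and a maximal $\mathbb{Q}_{p_2}$-split torus of $\mathbf{P}^r$ --- a diagonalizable group of rank two. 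Since the pushforward of $\mu$ to $[\mathbf{SL}_2(\mathbb{A})]$ is the equidistributed genus-orbit measure, hence Haar (Linnik \cite{LinnikBook}, using $p_1$), $\mu$ is a joining over the modular curve, and \cite[Theorem~1.6]{ELJoinings} applies: every ergodic component of $\mu$ is the algebraic probability measure on a periodic orbit of an intermediate $\mathbb{Q}$-subgroup of $\mathbf{P}^r$. As every $\mathbf{SL}_2$-submodule of $\mathbf{V}^{\oplus r}=\mathbf{V}\otimes_{\mathbb{Q}}\mathbb{Q}^r$ is of the form $\mathbf{V}\otimes L$ for a subspace $L\subseteq\mathbb{Q}^r$, the only intermediate periodic orbits are $[\mathbf{P}^r(\mathbb{A})]$ itself --- whose invariant probability is $m$ --- and, for a proper subspace $L\subsetneq\mathbb{Q}^r$, a weakly special subvariety $S_L$ cut out by the relations $\sum_k m_k X_k=0$ for $m$ annihilating $L$ (the extreme case $L=0$ being the zero section). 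These $S_L$ form a countable family of $m$-null subsets.

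The decisive step --- and the one I expect to be the main obstacle --- is ruling out the proper components, because a genus orbit can be extremely sparse in the full Galois orbit (over a CM curve of prime discriminant it may contain a single torsion point over each curve), so that, unlike for full Galois orbits, one cannot argue fibrewise. Suppose $\mu\neq m$; then some ergodic component of $\mu$ is supported on a fixed $S_{L_0}$, so $\mu(S_{L_0})=\delta>0$ and $\liminf_i\mu_i(\mathcal{O})\geq\delta$ for every fixed open $\mathcal{O}\supseteq S_{L_0}$. Fix $0\neq m$ annihilating $L_0$; strictness (Definition~\ref{defi:strict-homogeneous}) gives $N_{m,i}\to\infty$, and I would bound $\mu_i(\mathcal{O})$ quantitatively by the relative trace method of \cite{KhJoint}: replacing $\mathbf{1}_{\mathcal{O}}$ by a smooth majorant and expanding the self-join $\mu_i\otimes\mu_i$ against the corresponding kernel expresses $\mu_i(\mathcal{O})$, up to an error vanishing in the limit, as an arithmetic sum over pairs of points of the packet $\mathcal{H}_i$. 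Using the explicit invariant of \S\ref{sec:orbit-space} --- valued in invertible fractional ideals of the CM order with level structure and a Picard-class restriction, which pins down exactly which torsion data occur in $\mathcal{H}_i$ --- this sum becomes a sum of Hecke-character values weighted by an Eisenstein/theta-type kernel, and the level-aspect subconvex bound of Duke, Friedlander and Iwaniec \cite{DFI} produces a power saving over the trivial estimate, forcing $\mu_i(\mathcal{O})\to 0$ and contradicting $\liminf_i\mu_i(\mathcal{O})\geq\delta$. Hence $\mu=m$. (Subconvexity here replaces the sieve of \cite{KhJoint} and removes any dependence on hypothetical Siegel zeros; the special case $r=1$, $L_0=0$ --- mass near a torsion section --- is essentially Theorem~\ref{thm:genus-non-split}, handled without \eqref{eq:congruence-1} or \eqref{eq:congruence-2}.)
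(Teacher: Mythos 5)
Your broad architecture is correct: tightness follows from the equidistribution of the projection to the modular curve (Duke/Linnik); the congruence hypotheses give, after a bounded adjustment, invariance under $A_{p_1}\times A_{p_2}$; the joining rigidity theorem of Einsiedler--Lindenstrauss classifies ergodic components of a limit as the Haar measure or an algebraic measure on a periodic orbit of an intermediate $\mathbb{Q}$-subgroup; the list of intermediate subgroups is exactly $\mathbf{SL}_2\ltimes(\mathbf{V}\otimes L)$ for $L\subseteq\mathbb{Q}^r$; and the plan is to exclude the proper ones by a relative trace expanded via the orbit-space invariant of \S\ref{sec:orbit-space} and bounded with the DFI subconvex estimate. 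Two points in the decisive exclusion step do not hold up.

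First, the relative trace the paper uses is the \emph{cross}-correlation $\Cor(\mu_i,\nu)$ between the toral periodic measure $\mu_i$ and the periodic measure $\nu$ on the intermediate $\mathbf{G}$-orbit $[\mathbf{G}(\mathbb{A})(e,\mathrm{y})]$; this is what admits the geometric expansion over $\mathbf{T}(\mathbb{Q})\backslash\mathbf{V}(\mathbb{Q})$ in Proposition~\ref{prop:geom-expansion} and is converted via the $\inv$ map into a short sum over $\Ideals(\Lambda,\mathrm{x})$. You instead describe expanding ``the self-join $\mu_i\otimes\mu_i$,'' which is $\Cor(\mu_i,\mu_i)$, a different object. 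Expanding that would require parameterizing pairs of points of the packet $\mathcal{H}_i\times\mathcal{H}_i$, which is not what the orbit-space machinery in \S\ref{sec:orbit-space} is built to handle.

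Second, and more substantially, the conclusion ``forcing $\mu_i(\mathcal{O})\to 0$ and contradicting $\liminf_i\mu_i(\mathcal{O})\geq\delta$'' does not follow from a cross-correlation bound: $\Cor(\mu_i,\nu)$ small is compatible with $\mu_i$ carrying mass on a fixed archimedean neighbourhood $\mathcal{O}$ of $S_{L_0}$ while sitting far from $\supp\nu$ in the finite ad\`elic directions. The paper's contradiction is an entropy argument. If the limit $\mu$ has a proper component $c\int\nu_\mathrm{y}\,\dif\mathcal{Q}(\mathrm{y})$ with $c>0$, then, using a Bowen-ball test function $f_\tau$ at $p_1$ and non-negativity of the correlation kernel,
\begin{equation*}
\Cor\Bigl(\textstyle\int\nu_\mathrm{y}\,\dif\mathcal{Q},\int\nu_\mathrm{y}\,\dif\mathcal{Q}\Bigr)[f_\tau]
\;\leq\; c^{-1}\,\Cor\Bigl(\mu,\textstyle\int\nu_\mathrm{y}\,\dif\mathcal{Q}\Bigr)[f_\tau]
\;\leq\; c^{-1}\,\limsup_i\,\Cor\Bigl(\mu_i,\textstyle\int\nu_\mathrm{y}\,\dif\mathcal{Q}\Bigr)[f_\tau]
\;\ll\; c^{-1}\,p_1^{-6\tau}.
\end{equation*}
A $p_1^{-6\tau}$ decay for Bowen balls of level $2\tau$ gives an entropy lower bound $h_{\int\nu_\mathrm{y}\dif\mathcal{Q}}(a)\geq 3\log p_1$; but linearity of entropy and the fact that the periodic measure on each $[\mathbf{G}(\mathbb{A})(e,\mathrm{y})]$ has maximal entropy $2\log p_1$ give the contradictory upper bound. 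Without this entropy step, your cross-correlation bound does not exclude the proper component. Finally, a stylistic but consequential difference: the paper first reduces from general $r$ to $r=1$ (Corollary~\ref{cor:reduction-to-r=1}), so the entire relative trace computation is in $\mathbf{P}^1$ where the unipotent radical is a single copy of $\mathbf{V}$; keeping $r$ general as you do would require redoing the orbit-space analysis of \S\ref{sec:orbit-space} with $\mathbf{V}^{\oplus r}$ in place of $\mathbf{V}$.
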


I suspect that the theorem holds without the congruence conditions \eqref{eq:congruence-1} and \eqref{eq:congruence-2} above, yet these conditions are crucial to the proof method in this article. In the directions of removing the congruence conditions we have the following.

\begin{thm}\label{thm:main2}
Let $\mathcal{H}_i\subset \left[\mathbf{P}^1(\mathbb{A})\right]$ be a strict sequence of $K_\infty$-invariant homogeneous toral sets. Denote by $\mu_i$ the periodic measure supported on $\mathcal{H}_i$ and assume $\mu_i\xrightarrow[i\to\infty]{\textrm{weak}-*}\mu$. Then for any $\mathrm{y}\in\mathbf{V}(\mathbb{A})$
\begin{equation*}
\mu\left(\left[\mathbf{SL}_2(\mathbb{A})(e,\mathrm{y})\right]\right)=0 \, .
\end{equation*}
If $\mathrm{y}_\infty=0$ denote by $\nu_\mathrm{y}$ the $\Ad_{(e,-\mathrm{y})}\mathbf{SL}_2(\mathbb{A})$-invariant probability measure on $\left[\mathbf{SL}_2(\mathbb{A})(e,\mathrm{y})\right]$. Then
\begin{equation*}
\limsup_{i\to\infty } \int \mu_i(xB(R_G,R_V))  \dif \nu_\mathrm{y}(x)\ll (\cosh(2R_G)-1)R_V^2 \, ,
\end{equation*}
where $B(R_G,R_V)=B_\infty(R_G,R_V)\cdot \mathbf{P}^1(\hat{\mathbb{Z}})\subset\mathbf{P}^1(\mathbb{A})$ and $B_\infty(R_G,R_V)$ is the product of a Ball of radius $R_G$ on $\mathbf{SL}_2(\mathbb{R})$ and a ball of radius $R_V$ on $\mathbf{V}(\mathbb{R})$
\end{thm}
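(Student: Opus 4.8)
The plan is to extract the two assertions from a single soft analysis of any weak-$*$ limit $\mu$, combined with the measure-theoretic input already available from Theorem \ref{thm:main} (or rather its proof) applied in the degenerate direction. The key structural observation is that $\left[\mathbf{SL}_2(\mathbb{A})(e,\mathrm{y})\right]$ is the homogeneous locus cut out by the relation forcing the $\mathbf{V}$-coordinate to be trivial modulo the $\mathbf{SL}_2$-orbit; a strict sequence is designed precisely so that the points $l_i^{-1}.\mathrm{x}^i$ escape every compact set in $\mathbf{V}(\mathbb{A})$, i.e.\ they leave every neighborhood of this locus in a uniform way after translating by the torus. So I would first set up the fibered picture: $\left[\mathbf{P}^1(\mathbb{A})\right]\to\left[\mathbf{SL}_2(\mathbb{A})\right]$ with fiber $\mathbf{V}$, realize $\mathcal{H}_i$ as the push-out of a periodic torus orbit on $\left[\mathbf{SL}_2(\mathbb{A})\right]$ decorated by the section $\mathrm{x}^i$, and observe that the periodic measure $\mu_i$ disintegrates over the CM periodic measure on the base with fiberwise pieces supported on the torus-orbit of the displacement vector.

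For the vanishing statement $\mu\left(\left[\mathbf{SL}_2(\mathbb{A})(e,\mathrm{y})\right]\right)=0$, I would argue by a volume/escape-of-mass estimate in the fiber direction. Because the torus $\mathbf{T}_i$ is anisotropic and $K_\infty$-invariant, the archimedean displacement is zero, so all of the "non-degeneracy" lives at the finite places, where the torsion orders $N_{m,i}\to\infty$. The quantitative claim in the second half of the theorem is exactly the tool: applying it with the test neighborhood $xB(R_G,R_V)$ and letting $R_V\to 0$ shows that $\mu$ can put no mass on any fixed $\mathbf{SL}_2$-section through a point with $\mathrm{y}_\infty=0$; the general case of arbitrary $\mathrm{y}\in\mathbf{V}(\mathbb{A})$ then follows since a section with $\mathrm{y}_\infty\neq 0$ is disjoint from the support of every $\mu_i$ (all of which have $\mathrm{x}_\infty=0$), hence a fortiori from $\supp\mu$ after noting that the condition $\mathrm{x}_\infty = 0$ is closed.

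The heart is therefore the quantitative bound $\limsup_i \int \mu_i(xB(R_G,R_V))\dif\nu_\mathrm{y}(x)\ll (\cosh(2R_G)-1)R_V^2$, which is the adelic avatar of Theorem \ref{thm:genus-non-split}. I would prove it by unfolding: write the integral against $\nu_\mathrm{y}$ as an integral over $\left[\mathbf{SL}_2(\mathbb{A})\right]$ of the fiberwise mass of $\mu_i$ in a Euclidean $R_V$-ball, use the explicit description of the genus/toral packet from \S\ref{sec:orbit-space} to identify the set of $\mathbf{V}$-points appearing over each base point as a coset structure governed by the invariant fractional ideal, and count lattice points in a ball of radius $R_V$ inside this coset — this yields the $R_V^2$ (area of a $2$-dimensional Euclidean ball) together with the $(\cosh(2R_G)-1)$ coming from the hyperbolic-ball volume on the $\mathbf{SL}_2$-factor via a standard geometry-of-numbers / Iwasawa-coordinate estimate on the base modular curve. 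The factor $(\cosh(2R_G)-1)$ rather than $\cosh(R_G)-1$ reflects that one controls not the base CM measure directly but a self-correlation (a relative-trace-type expansion), so the radius effectively doubles; making this rigorous is the main obstacle, and I expect it to require the relative trace machinery of \cite{KhJoint} together with the subconvex input of \cite{DFI} to beat the trivial diagonal term, exactly as flagged in \S\ref{sec:subconvex}. The upshot is that no congruence assumption is needed here: the estimate is purely a counting/volume statement about the toral packet, and only the passage to full equidistribution (Theorem \ref{thm:main}) needs the split primes to feed \cite{ELJoinings}.
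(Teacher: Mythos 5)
Your high-level reading of the strategy is right: the second half of the theorem is a direct application of Proposition~\ref{prop:final-strict-sequence} with $\mathcal{Q}=\delta_{\mathrm y}$ and $\tau=0$, which in turn rests on the relative-trace expansion of the cross-correlation, the parametrization in \S\ref{sec:orbit-space}, and the subconvex bound from \S\ref{sec:subconvex}. You have also correctly flagged that you have not actually carried out this derivation; the paper's proof supplies the missing step by plugging the point mass into Proposition~\ref{prop:final-strict-sequence} and reading off the $\mathrm{y}_\infty=0$ case of the archimedean bound $X_{\max}^2-X_{\min}^2 = R_V^2$ together with $\meas_{\mathbf G(\mathbb R)}(B_{G,\infty}(2R_G))\asymp \cosh(2R_G)-1$. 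So the quantitative part of your proposal is in spirit the paper's proof, but the admission ``making this rigorous is the main obstacle'' means you have not established it.

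Where the proposal genuinely breaks down is the vanishing statement. Two issues.

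First, for $\mathrm{y}_\infty=0$, ``letting $R_V\to 0$'' gives $\limsup_i\int\mu_i(xB(R_G,R_V))\,\dif\nu_{\mathrm y}(x)\to 0$ (indeed $\Cor(\mu,\nu_{\mathrm y})[f_{0,R_G,R_V}]\to 0$), but this only says the \emph{$\nu_{\mathrm y}$-averaged} tube mass of $\mu$ vanishes. Concluding that $\mu\bigl(\left[\mathbf G(\mathbb A)(e,\mathrm y)\right]\bigr)=0$ requires a matching lower bound in the opposite direction: the paper writes $\Cor(\mu,\nu_{\mathrm y})[f] \geq \int_C\nu_{\mathrm y}(xB(R_G,R_V))\,\dif\mu(x)$ for a compact $C\subset\left[\mathbf G(\mathbb A)(e,\mathrm y)\right]$ with $\mu(C)>0$, and then uses that $\nu_{\mathrm y}(xB(R_G,R_V))\gg_{\mathrm y} \meas_{\mathbf G(\mathbb R)}(B_{G,\infty}(K(\mathrm y)R_G))$ for $x\in C$ and $R_G,R_V\ll_{C,\mathrm y}1$ (a density estimate on the orbit, needing injectivity of the covering map on small balls away from the cusp). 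Comparing decay rates in $R_G$ and $R_V$ then gives the contradiction. Your sketch never produces the lower bound, so the step from the decaying cross-correlation to $\mu$-mass zero is missing.

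Second, the reduction of the general $\mathrm y$ to $\mathrm{y}_\infty=0$ via ``disjointness of supports'' is incorrect. The condition ``$\mathrm{x}_\infty=0$'' is not a well-defined closed subset of $\left[\mathbf P^1(\mathbb A)\right]$: a coset representative's archimedean $\mathbf V$-component is only defined modulo $\mathbf P^1(\mathbb Q)$, and for $\mathrm{y}_\infty\neq 0$ the orbit $\mathbf G(\mathbb A)(e,\mathrm y)$ contains elements $(g,g.\mathrm y)$ with $g_\infty.\mathrm{y}_\infty\in\mathbf V(\mathbb Z)$, which after reduction land in the ``zero archimedean fiber'' locus. So $\left[\mathbf G(\mathbb A)(e,\mathrm y)\right]$ need not be disjoint from $\bigcup_i\supp\mu_i$, and even if it were, disjointness of supports is not preserved under weak-$*$ limits for closed sets. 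The paper avoids a case split entirely: the displayed bound in the proof handles $\mathrm{y}_\infty\neq 0$ by the $\sinh(R_G)$ and $\cosh(R_G/2)R_V$ terms, and both cases yield $\Cor(\mu,\nu_{\mathrm y})[f_{0,R_G,R_V}]/\meas_{\mathbf G(\mathbb R)}(B_{G,\infty}(2R_G))\to 0$ as $(R_G,R_V)\to 0$, which is what feeds the contradiction. You should drop the disjointness shortcut and run the density argument uniformly in $\mathrm y$.
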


\subsection{Grids Orthogonal To Integral Points}\label{intro:grids}
As an application of our results we present a partial resolution of a conjecture about joint equidistribution of lattice points on spheres and their orthogonal grids.
\subsubsection{The Orthogonal Complement Construction}
The following construction has been studied extensively by Aka, Einsiedler and Shapira  \cite{AES, AES2} in all dimensions $d\geq 3$.
In some cases this construction has already been investigated by Maass \cite{Maass56,Maass59}, see also the appendix by R.\ Zhang to the arxiv version of \cite{AES}.

Let $d\geq 3$. Denote by $\langle,\rangle\colon \mathbb{R}^d\times\mathbb{R}^d\to\mathbb{R}_{\geq 0}$ the Euclidean inner-product.
For each $D\in\mathbb{N}$ consider the primitive integral points of norm $\sqrt{D}$ in $\mathbb{Z}^d$
\begin{equation*}
\mathscr{H}_D\coloneqq\left\{x\in\mathbb{Z}^d_\mathrm{primitive} \mid \langle x,x \rangle=D \right\} \, .
\end{equation*}
Denote $\mathbb{D}(d)\coloneqq \left\{ D\in\mathbb{N} \mid \mathscr{H}_D\neq \emptyset \right\}$ and assume $D\in\mathbb{D}(d)$.
For each $x\in \mathscr{H}_D$ we denote by $x^\perp(\mathbb{Z})$ the integral lattice orthogonal to $x$
\begin{equation*}
x^\perp(\mathbb{Z})\coloneqq \left\{y\in \mathbb{Z}^d \mid \langle y, x\rangle =0 \right\} \, .
\end{equation*}
We also denote by $x^\perp$ the affine scheme defined over $\mathbb{Q}$ representing the linear subspace of the affine $d$-space perpendicular to $x$, i.e. $x^\perp(F)\coloneqq \{y\in F^d \mid \langle y, x\rangle =0  \}$ for any algebra $F/\mathbb{Q}$.
The group $x^\perp(\mathbb{Z})$ is a lattice of rank $d-1$ in the $d-1$ dimensional space $x^\perp(\mathbb{R})\coloneqq \{y\in\mathbb{R}^d \mid \langle y, x\rangle =0  \}$. The space $x^\perp(\mathbb{R})$ carries a volume form defined by the restriction of the inner-product $\langle,\rangle$.
The covolume of $x^\perp(\mathbb{Z})$ in $x^\perp(\mathbb{R})$ is then $\sqrt{D}$. Let $x^1\in\mathbb{Z}^d$ satisfy $\langle x^1,x \rangle=1$. The point $x^1$ always exists because $x$ is primitive but is not unique. Rather it defines a coset $x^1+x^\perp(\mathbb{Z})$. The orthogonal projection of $x^1+x^\perp(\mathbb{Z})$ to $x^\perp(\mathbb{R})$ is $x^\mathrm{tors}+x^\perp(\mathbb{Z})$ where
\begin{equation*}
x^\mathrm{tors}\coloneqq x^1-\frac{x}{D} \, .
\end{equation*}
It is easy to check that $x^\mathrm{tors}+x^\perp(\mathbb{Z})$ defines a single torsion point of order $D$ in the torus $\faktor{x^\perp(\mathbb{R})}{x^\perp(\mathbb{Z})}$.

Set $\hat{e}\in\mathbb{Z}^d$ to be the unique integral unit vector whose stabilizer in $\mathbf{SO}_d$ is $\mathbf{SO}_{d-1}$. Fix an orientation-preserving isomorphism of inner-product spaces $\hat{e}^\perp(\mathbb{R})\simeq \mathbb{R}^{d-1}$ which sends ${\hat{e}^\perp}_\mathbb{Z}$ to $\mathbb{Z}^{d-1}$. The isomorphism is unique up to composition with an element of $\mathbf{SO}_{d-1}(\mathbb{Z})$.
This isomorphism identifies the space of unimodular lattices of rank $d-1$ in $\hat{e}^\perp(\mathbb{R})$ with $\lfaktor{\mathbf{SL}_{d-1}(\mathbb{Z})}{\mathbf{SL}_{d-1}(\mathbb{R})}$. Moreover, define $\mathbf{ASL}_{d-1}\coloneqq \mathbf{SL}_{d-1}\ltimes \Ga^{\times (d-1)}$ then the space of pairs $\left(L,X\right)$ where $L\subset \hat{e}^\perp(\mathbb{R})$ is a unimodular lattice and $X\in \faktor{\hat{e}^\perp(\mathbb{R})}{L}$ is identified with $\lfaktor{\mathbf{ASL}_{d-1}(\mathbb{Z})}{\mathbf{ASL}_{d-1}(\mathbb{R})}$. In several sources, including \cite{AES2}, pairs $\left(L,X\right)$ as above are named \emph{grids}. We often replace the pair $(L,X)$ by the equivalent datum of the lattice coset $X+L$.

Choosing any element $g\in\mathbf{SO}_d(\mathbb{R})$ such that $g.x=\hat{e}$ we construct a well-defined $\mathbf{SO}_{d-1}(\mathbb{R})$-orbit $\mathbf{SO}_{d-1}(\mathbb{R})g.D^{-\frac{1}{2(d-1)}}\left(x^\perp(\mathbb{Z}), x^\mathrm{tors}\right)$ of a unimodular lattice in $\hat{e}^\perp(\mathbb{R})$ and a torsion point of order $D$. This is a well-defined class in $\dfaktor{\mathbf{ASL}_{d-1}(\mathbb{Z})}{\mathbf{ASL}_{d-1}(\mathbb{R})}{\mathbf{SO}_{d-1}(\mathbb{R})}$.

To summarize to each element $x\in\mathscr{H}_D$ we have associate a class $$\mathrm{Grid}(x)\in \dfaktor{\mathbf{ASL}_{d-1}(\mathbb{Z})}{\mathbf{ASL}_{d-1}(\mathbb{R})}{\mathbf{SO}_{d-1}(\mathbb{R})}$$ corresponding to a lattice of rank $d-1$ in $\hat{e}^\perp(\mathbb{R})$, up to rotation by $\mathbf{SO}_{d-1}(\mathbb{R})$, and a torsion point of order $D$.  This correspondence between $\mathcal{H}_D$ and $\mathbf{SO}_{d-1}(\mathbb{R})$-orbits on $\lfaktor{\mathbf{ASL}_{d-1}(\mathbb{Z})}{\mathbf{ASL}_{d-1}(\mathbb{R})}$ does not depend on any choice involved in the process.

\subsubsection{The Joint Equidistribution Conjecture}
The equidistribution of the sets $D^{-1/2}\mathcal{H}^D$ in the unit sphere $ \mathbf{S}^{d-1}(\mathbb{R})$ for $\mathbb{D}(d)\ni D\to\infty$ is well-known. For $d=3$ this is Duke's theorem \cite{Duke}, see also the pertinent work of Iwaniec \cite{Iwaniec}, and earlier results of Linnik \cite{LinnikSphereRussian,LinnikSphereEnglish,LinnikBook} that required a congruence condition on the sequence $D\to\infty$. For $d\geq 4$ this can be proved by the circle method and is attributed to Kloosterman \cite{Kloosterman4}. The following is a conjecture of Aka, Einsiedler and Shapira \cite{AES,AES2}.
\begin{conj}[Joint Equidistribution]\label{conj:joint-equidistirbution}
Define for each $D\in\mathbb{D}(d)$ the following finite set and Borel probability measure
\begin{align*}
\mathscr{J}_D&\coloneqq \left\{ \left(D^{-1/2} x,\mathrm{Grid}(x)\right) \mid x\in\mathscr{H}_D  \right\}
\subset \mathbf{S}^{d-1}(\mathbb{R})\times \dfaktor{\mathbf{ASL}_{d-1}(\mathbb{Z})}{\mathbf{ASL}_{d-1}(\mathbb{R})}{\mathbf{SO}_{d-1}(\mathbb{R})} \, ,\\
\mu^\mathrm{Grid}_D&\coloneqq \frac{1}{|\mathscr{J}_D|}\sum_{z\in\mathscr{J}_D} \delta_z \, .
\end{align*}
Then when $D\to\infty$ along $\mathbb{D}(d)$ the measures $\mu^\mathrm{Grid}_D$ converge weak-$*$ to the normalized Haar measure on $\mathbf{S}^{d-1}(\mathbb{R})\times \dfaktor{\mathbf{ASL}_{d-1}(\mathbb{Z})}{\mathbf{ASL}_{d-1}(\mathbb{R})}{\mathbf{SO}_{d-1}(\mathbb{R})}$.
\end{conj}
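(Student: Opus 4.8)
The plan is to deduce Conjecture~\ref{conj:joint-equidistirbution} in the case $d=3$ from Theorem~\ref{thm:main} (with $r=1$) and its method. Since $\mathbf{ASL}_2=\mathbf{SL}_2\ltimes\mathbf{V}=\mathbf{P}^1$, strong approximation together with \S\ref{sec:adelic-homogeneous} identifies the grid space $\dfaktor{\mathbf{ASL}_2(\mathbb{Z})}{\mathbf{ASL}_2(\mathbb{R})}{\mathbf{SO}_2(\mathbb{R})}$ with the full-level Kuga--Sato variety $W^1$. The two distinct split primes of the hypothesis are taken to be the primes $p_1,p_2$ of Theorem~\ref{thm:main}; concretely one proves the conjecture along any sequence $D\to\infty$ in $\mathbb{D}(3)$ for which $p_1$ and $p_2$ split in $\mathbb{Q}(\sqrt{-D})$ and $\val_{p_1}(D),\val_{p_2}(D)\ll 1$ --- this is what ``assuming two split primes'' abbreviates.

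First I would set up the adelic dictionary of \S\ref{intro:grids}. A primitive $x\in\mathscr{H}_D$ decomposes $\mathbb{Z}^3$ as $\mathbb{Z}x\oplus x^\perp(\mathbb{Z})$ up to finite index, and the rank-two lattice $x^\perp(\mathbb{Z})$ carries the quadratic form of discriminant $-D$ (or $-4D$), whose special orthogonal group is the norm-one torus $\mathbf{T}_D=\WR_{\mathbb{Q}(\sqrt{-D})/\mathbb{Q}}^{1}\Gm$. Running the construction of \S\ref{intro:grids} adelically identifies the pair $(D^{-1/2}x,\mathrm{Grid}(x))$, compatibly with the normalized counting measures, with a point of a $K_\infty$-invariant homogeneous toral set attached to $\mathbf{T}_D$: its $\mathbf{P}^1$-component records $\bigl(x^\perp(\mathbb{Z}),x^{\mathrm{tors}}\bigr)$ and has discriminant $-D$ (or $-4D$) and torsion order $\asymp D$, since $x^{\mathrm{tors}}$ has order exactly $D$; its $\mathbf{SO}_3$-component records the sphere point $D^{-1/2}x$ through the fixed, $D$-independent, compact space $\mathbf{SO}_3(\mathbb{Q})\backslash\mathbf{SO}_3(\mathbb{A})$. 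This is the ``intersection of two periodic orbits'' picture: $\mathscr{J}_D$ is the intersection in $\lfaktor{\mathbf{SL}_3(\mathbb{Z})}{\mathbf{SL}_3(\mathbb{R})}$ of the fixed compact $\mathbf{SO}_3(\mathbb{R})$-orbit with a periodic $\mathbf{ASL}_2(\mathbb{R})$-orbit, and the common torus $\mathbf{T}_D$ acts jointly. The delicate structural input is the squaring of the Picard action (Remark~\ref{rem:squaring}): the $\Cl(\mathbb{Q}(\sqrt{-D}))$-torsor $\mathscr{H}_D$ is transported to an action of squares on the grid side, which must be reconciled with the description of the $\mathbf{P}^1$-packet.

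Granting this, the $\mathbf{P}^1$-marginal is a strict sequence of $K_\infty$-invariant homogeneous toral sets in the sense of Definition~\ref{defi:strict-homogeneous}: $|D_i|\to\infty$ and $N_i\asymp D\to\infty$. Conditions \eqref{eq:congruence-1} and \eqref{eq:congruence-2} read for this sequence as $p_1,p_2$ split in $\mathbb{Q}(\sqrt{-D})$ with $\val_{p_j}(f_i)\ll 1$ and $\val_{p_j}(N_i)\ll 1$ --- all guaranteed by $\val_{p_j}(D)\ll 1$. Theorem~\ref{thm:main} then gives that the $\mathbf{P}^1$-marginals of $\mu^{\mathrm{Grid}}_D$ converge weak-$*$ to the Haar measure on $[\mathbf{P}^1(\mathbb{A})]$, hence to the Haar measure on $W^1$; so the grid coordinate alone equidistributes.

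The remaining --- and, I expect, principal --- difficulty is to promote this marginal equidistribution to joint equidistribution of $\mu^{\mathrm{Grid}}_D$ in $\mathbf{S}^{2}(\mathbb{R})\times W^1$, i.e.\ to show that the sphere coordinate decorrelates from the grid. Here one runs the machinery behind Theorem~\ref{thm:main} --- measure rigidity for diagonal actions \cite{ELJoinings}, the relative trace method, and the level-aspect subconvex bounds \cite{DFI} --- not for $\mathbf{P}^1$ alone but for the joint toral packet above: the split-torus invariance at $p_1,p_2$ supplies the invariance needed to invoke the joinings classification and thereby exclude intermediate joinings between the sphere and the grid factors, while the $\mathbf{SO}_3$-marginal equidistribution is classical (Duke's theorem, subsumed by the subconvexity input). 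The genuinely new point, exactly as for the grids, is the $\mathbf{V}$-direction: one must control the growing number $\asymp 2^{\omega(D)-1}$ of sphere points lying over a fixed grid and bound their accumulation on proper intermediate orbits, for which the escape-of-mass and arithmetic-invariant analysis underlying Theorems~\ref{thm:main} and~\ref{thm:main2} is needed.
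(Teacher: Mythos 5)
Your high-level approach is essentially the paper's, and the key structural identifications you make --- $\mathbf{ASL}_2=\mathbf{P}^1$, the ad\`elic intersection-of-two-periodic-orbits picture, the grid marginal as a $K_\infty$-invariant homogeneous toral set with discriminant $\asymp -D$ and torsion order $D$, the verification of strictness and of the congruence hypotheses \eqref{eq:congruence-1} and \eqref{eq:congruence-2}, and the appeal to Theorem~\ref{thm:main}, Duke's theorem, and the joinings classification --- all match what the paper does in \S\ref{sec:joint}; Proposition~\ref{prop:grids-packet} supplies the precise statement you need about the grid packet, and your comment on the squaring of the Picard action is exactly Remark~\ref{rem:squaring}. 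You are also right that what is actually established is the conditional $d=3$ result, Theorem~\ref{thm:joint}; the conjecture itself is not a theorem of this paper.

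Where your proposal goes astray is the final paragraph, which inverts where the difficulty lies. You expect the principal work to be in promoting the marginal to joint equidistribution, and you suggest re-running the relative-trace and subconvexity machinery on the joint toral packet, with a fresh escape-of-mass analysis in the $\mathbf{V}$-direction tied to the roughly $2^{\omega(D)-1}$ sphere points lying over a fixed grid. That is not how the paper proceeds. All of the new analytic content --- the geometric expansion of the cross-correlation, the orbit-space invariants of \S\ref{sec:orbit-space}, the subconvex bound of Duke--Friedlander--Iwaniec --- is spent entirely inside the proof of Theorem~\ref{thm:main}, i.e.\ on the $\mathbf{P}^1$-marginal alone. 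Once that marginal equidistributes and the $\mathbf{SO}_3$-marginal equidistributes classically, the passage to joint equidistribution is essentially immediate: the paper invokes the joining rigidity theorem of \cite{ELJoinings} together with the reduction already carried out by the methods of \cite{AES}, and the proof of Theorem~\ref{thm:joint} is three sentences long. No cross-correlation bound is run on the joint measure, and the fiber count $\asymp 2^{\omega(D)-1}$, while a true consequence of Remark~\ref{rem:squaring}, plays no role in that deduction.
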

In \cite{AES2} the conjecture has been fully resolved for $d\geq 6$, and assuming a congruence condition at a single prime for $d=4,5$. The method uses measure rigidity for unipotent flows and homogeneous dynamics. It relies heavily on the fact that points in $\mathscr{J}_D$ are equivalent to orbits on a homogeneous space with stabilizer $\mathbf{SO}_{d-1}$ which for $d\geq 4$ is a semi-simple group; hence $\mathbf{SO}_{d-1}(\mathbb{Q}_p)$ is virtually generated by unipotents for all primes $p$ where $\mathbf{SO}_{d-1}$ splits.

\subsubsection{Joint Equidistribution in Three Dimensions}
The case $d=3$ is substantially different due to the fact that $\mathbf{SO}_{d-1}=\mathbf{SO}_2$ is a torus and Ratner's theorems do not apply. In \cite{AES} progress is made for the following weaker conjecture.
\begin{conj}[Weaker Joint Equidistribution]
For $d=3$ consider the push-forward $\mu_D^\mathrm{Lat}$ of $\mu_D^\mathrm{Grid}$ to $\mathbf{S}^2(\mathbb{R})\times \dfaktor{\mathbf{SL}_2(\mathbb{Z})}{\mathbf{SL}_2(\mathbb{R})}{\mathbf{SO}_2(\mathbb{R})}$. This is the normalized counting measure on pairs of points in $D^{-1/2}\mathscr{H}_D$ and their orthogonal lattices, with the order $D$ torsion point discarded. Then as $\mathbb{D}(3)\ni D\to \infty$ the sequence of measures $\mu_D^\mathrm{Lat}$ converges weak-$*$ to the normalized Haar measure on $\mathbf{S}^2(\mathbb{R})\times \dfaktor{\mathbf{SL}_2(\mathbb{Z})}{\mathbf{SL}_2(\mathbb{R})}{\mathbf{SO}_2(\mathbb{R})}$.
\end{conj}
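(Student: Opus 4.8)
The plan is to establish the Weaker Joint Equidistribution conjecture through the intersection-of-periodic-orbits picture of \S\ref{intro:grids} --- which repackages the joint datum $(D^{-1/2}x,\,x^\perp(\mathbb{Z}))$, $x\in\mathscr{H}_D$, as a single ad\`elic torus orbit --- and then to feed it into Duke's theorem on each of the two factors together with the measure-rigidity input already used for Theorem \ref{thm:main}. In dimension $d=3$ the construction of \S\ref{intro:grids} realizes $\mathscr{J}_D$, with the order-$D$ marked point discarded, inside $\lfaktor{\mathbf{SL}_3(\mathbb{Z})}{\mathbf{SL}_3(\mathbb{R})}$ as an intersection of two periodic orbits that carries a joint ad\`elic torus action; projecting this torus orbit to the two relevant factors yields a homogeneous toral set $\mathcal{H}_D=[\mathbf{T}_D(\mathbb{A})\xi_D]$ in $[\mathbf{SO}_3(\mathbb{A})]\times[\mathbf{SL}_2(\mathbb{A})]$, where $\mathbf{T}_D\simeq\WR_{E_D/\mathbb{Q}}^1\Gm$ is the norm-one torus of the imaginary quadratic order of discriminant $-D$ (up to the usual power of $2$), acting on the sphere factor through its natural action on lattice points and on the $\mathbf{SL}_2$-factor through its action on binary quadratic forms of that discriminant. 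The structural point that makes the argument run is Remark \ref{rem:squaring}: these two actions differ by the squaring isogeny of $\mathbf{T}_D$, so $\mathbf{T}_D$ sits inside $\mathbf{SO}_3\times\mathbf{SL}_2$ as a torus that surjects onto each factor yet lies in no graph of an isogeny between them. Passing to the appropriate compact-open and $K_\infty$-quotient recovers $\mathbf{S}^2\times\dfaktor{\mathbf{SL}_2(\mathbb{Z})}{\mathbf{SL}_2(\mathbb{R})}{\mathbf{SO}_2(\mathbb{R})}$, the periodic measure on $\mathcal{H}_D$ pushes forward to $\mu_D^{\mathrm{Lat}}$, and $D\to\infty$ along $\mathbb{D}(3)$ produces a strict sequence of $K_\infty$-invariant toral sets.

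The two marginals are supplied by known equidistribution theorems. The projection of $\mu_D^{\mathrm{Lat}}$ to $\mathbf{S}^2$ is the normalized counting measure on $D^{-1/2}\mathscr{H}_D$, which converges weak-$*$ to the uniform measure by Duke's theorem \cite{Duke} (see also \cite{Iwaniec}). The projection to the modular curve is the counting measure on the Heegner points attached to $\{x^\perp(\mathbb{Z})\}_{x\in\mathscr{H}_D}$, which equidistributes to the hyperbolic probability measure, again by \cite{Duke} (equivalently, via Waldspurger's formula \cite{Waldspurger} and subconvexity in the discriminant aspect, as recalled in \S\ref{intro:genus-modular-curve}). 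Since $\mathbf{S}^2$ is compact and the modular-curve marginal already converges to a probability measure, no mass escapes up the cusp; hence every weak-$*$ limit $\mu$ of $\mu_D^{\mathrm{Lat}}$ is a probability measure on $\mathbf{S}^2\times\dfaktor{\mathbf{SL}_2(\mathbb{Z})}{\mathbf{SL}_2(\mathbb{R})}{\mathbf{SO}_2(\mathbb{R})}$ whose two marginals are the Haar measures. If one prefers not to quote Duke for the second factor, the non-escape of mass and an a priori bound on intermediate orbits can be produced instead by the relative trace estimate of \S\ref{sec:subconvex}, in the style of \cite{KhJoint}.

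It remains to upgrade ``the marginals of $\mu$ are Haar'' to ``$\mu$ is the product of its marginals''. Here I would invoke the joinings classification of Einsiedler and Lindenstrauss \cite[Theorem 1.6]{ELJoinings}: a weak-$*$ limit of periodic torus measures on the $\mathcal{H}_{D_i}$ is a joining of its marginals. Unlike the setting of Theorem \ref{thm:main}, whose unipotent radical $\mathbf{V}^{\oplus r}$ is precisely what forces the congruence hypotheses \eqref{eq:congruence-1} and \eqref{eq:congruence-2}, both ambient groups are now semisimple and $\mathbf{SO}_3$ is split over $\mathbb{Q}_p$ for every odd prime $p$, so the split places are abundant and genuine acting unipotents are available on the first factor; the classification therefore runs along the entire sequence with no congruence restriction on $D$ (cf.\ \cite{AES}). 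Granted Haar marginals, the only alternative to $\mu$ being the product is that $\mu$ is algebraic of ``diagonal'' type --- supported on a proper homogeneous subvariety that links the two coordinates over a common quotient --- and by the first paragraph this would force the two $\mathbf{T}_D$-actions to be intertwined by an isogeny compatible with the identification, contradicting the squaring relation of Remark \ref{rem:squaring}. Hence $\mu$ is the product of its marginals, which is the assertion of the conjecture. (Under the two-split-primes hypothesis of the paper's main application the conclusion is immediate in any case, the Weaker conjecture being a push-forward of Conjecture \ref{conj:joint-equidistirbution}; the point of the argument above is that it holds unconditionally.)

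I expect the main obstacle to be this last step, run along a sequence of \emph{varying} tori. One must (i) rule out escape and concentration of mass near intermediate homogeneous subvarieties, so that \cite[Theorem 1.6]{ELJoinings} may legitimately be applied to the limit $\mu$ and not merely to each $\mathcal{H}_{D_i}$ --- exactly the behaviour the relative trace method of \S\ref{sec:subconvex} is built to quantify --- and (ii) exclude the residual intermediate subvarieties, which turns entirely on the explicit ``squared'' description of the two Picard actions from \S\ref{intro:grids}; without it the product joining cannot be separated from a diagonal one, and this is precisely the difficulty that the marked torsion point reinstates in the full Conjecture \ref{conj:joint-equidistirbution} and that Remark \ref{rem:squaring} is meant to clarify.
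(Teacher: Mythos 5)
There is a genuine gap, and it sits exactly where you predicted the ``main obstacle'' would be. The statement you were asked about is stated in the paper as a \emph{conjecture}: the paper does not prove it, and the known result (the theorem of \cite{AES} quoted just below it, strengthened here to Theorem \ref{thm:joint}) requires the congruence condition $\left(\tfrac{D_i}{p_1}\right)=\left(\tfrac{D_i}{p_2}\right)=1$ at two fixed primes. Your argument claims to remove that condition, and the step that fails is the invocation of the joinings theorem \cite[Theorem 1.6]{ELJoinings}. That theorem classifies joinings that are invariant (and ergodic) under a \emph{higher-rank split diagonalizable} action, i.e.\ under $A_{p_1}\times A_{p_2}$ at two fixed places. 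The measures $\mu_{D}^{\mathrm{Lat}}$ are packets of orbits of the varying tori $\mathbf{T}_D(\mathbb{A})$; a weak-$*$ limit inherits such split-torus invariance only if the two fixed primes $p_1,p_2$ split in the quadratic fields $E_{D_i}$ along the whole sequence (after bounded conjugations, exactly as in Proposition \ref{prop:limit-invariance}). Without that hypothesis the limit measure carries no higher-rank diagonal invariance at all, and the joining classification simply cannot be applied. Your appeal to ``$\mathbf{SO}_3$ is split over $\mathbb{Q}_p$ for every odd prime, so genuine acting unipotents are available'' conflates properties of the ambient group with properties of the invariance group of the measures: the packets are invariant under a torus, not under any unipotent subgroup, and the stabilizer group relevant to the construction is $\mathbf{SO}_2$, a torus --- this is precisely why the paper singles out $d=3$ as the case where Ratner theory does not apply and why it states explicitly that the congruence condition ``is required for the application of the joining rigidity theorem of \cite{ELJoinings}'' and that removing the congruence conditions in Theorem \ref{thm:main} would still not strengthen Theorem \ref{thm:joint}.

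Two smaller points. First, knowing that both marginals of a limit $\mu$ are Haar (Duke's theorem on each factor) does not by itself constrain the joining; the whole difficulty is dynamical, and no amount of marginal information substitutes for the missing invariance. Second, your proposed exclusion of ``diagonal'' joinings via Remark \ref{rem:squaring} is only relevant \emph{after} a rigidity theorem has reduced the limit to an algebraic measure; in the conditional argument of \cite{AES} this step is indeed where the squaring of the Picard action (equivalently, the fact that the two torus actions are not intertwined by the identity) enters, but it cannot be run before the algebraicity of $\mu$ is established. As written, your argument proves nothing beyond the conditional Theorem \ref{thm:joint} already in the paper, and the unconditional statement remains open.
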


The push-forward of $\mu_D^\mathrm{Grid}$ to $\mathbf{S}^{d-1}(\mathbb{R})$ is obviously the normalized counting measure on $D^{-1/2}\mathscr{H}_D$ whose equidistribution has been established by Duke \cite{Duke}. Less obvious is that the push-forward of $\mu_D^\mathrm{Grid}$ to $\dfaktor{\mathbf{ASL}_2(\mathbb{Z})}{\mathbf{ASL}_2(\mathbb{R})}{\mathbf{SO}_2(\mathbb{R})}$ is a genus orbit of a special point
-- equivalently -- a $K_\infty$-invariant homogeneous toral set for maximal torus $\mathbf{T}<\mathbf{SL}_2$ defined  and anisotropic over $\mathbb{Q}$. This is established in \S \ref{sec:joint}. The fact that $\mu_D^\mathrm{Lat}$ is a packet of torus orbits, i.e.\ projection of a homogeneous toral set to the real quotient, is well-known.

The equidistribution of the push-forward $\mu_D^\mathrm{Lat}$ is known either by analytic methods, see discussion in \S\ref{intro:genus-modular-curve}, or assuming a split prime using Linnik's ergodic method \cite{LinnikBook}. In \cite{AES} these two results in combination with the joining rigidity theorem of Einsiedler and Lindenstrauss \cite{ELJoinings} are used to establish
\begin{thm}[\cite{AES}]
Let $\{D_i\}\subset \mathbb{D}(3)$ such that $D_i\to_{i\to\infty}\infty$.
Assume there are distinct primes $p_1$, $p_2$ such that for all $i$
\begin{equation*}
\left(\frac{D_i}{p_1}\right)=\left(\frac{D_i}{p_2}\right)=1 \, .
\end{equation*}
Then $\mu_{D_i}^\mathrm{Lat}$ converge to the Haar measure on $\mathbf{S}^2(\mathbb{R})\times \dfaktor{\mathbf{SL}_2(\mathbb{Z})}{\mathbf{SL}_2(\mathbb{R})}{\mathbf{SO}_2(\mathbb{R})}$
as $i\to\infty$.
\end{thm}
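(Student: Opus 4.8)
The plan is to read $\mu_{D_i}^{\mathrm{Lat}}$ as a sequence of \emph{joinings} of two homogeneous systems whose individual equidistribution is already available, and then to push any weak-$*$ limit to the product measure using the joining classification of Einsiedler and Lindenstrauss. By the adelic description set up in \S\ref{sec:joint}, for $x\in\mathscr{H}_D$ the pair $\bigl(D^{-1/2}x,\,[x^\perp(\mathbb{Z})]\bigr)$ lifts to a point of $[(\mathbf{SO}_3\times\mathbf{G})(\mathbb{A})]$ (for a suitable level), where $\mathbf{SO}_3$ acts on the sphere $\mathbf{S}^2(\mathbb{R})$ and $\mathbf{G}$ is an inner form of $\mathbf{PGL}_2$ (equivalently of $\mathbf{SO}_3$) whose real points act on the shape space $\dfaktor{\mathbf{SL}_2(\mathbb{Z})}{\mathbf{SL}_2(\mathbb{R})}{\mathbf{SO}_2(\mathbb{R})}$; crucially both factors contain a common anisotropic maximal torus $\mathbf{T}=\mathbf{T}^{(D)}\cong\WR^1_{E/\mathbb{Q}}\Gm$ with $E=\mathbb{Q}(\sqrt{-D})$, embedded diagonally, and as $x$ ranges over $\mathscr{H}_D$ the lifted points sweep out a periodic orbit of $\mathbf{T}(\mathbb{A})$ (a union of boundedly many, by class field theory). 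Thus $\mu_D^{\mathrm{Lat}}$ is a joining of its two marginals.

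Next I would identify the two marginals of any subsequential weak-$*$ limit $\mu$ of $\mu_{D_i}^{\mathrm{Lat}}$. The projection to $\mathbf{S}^2(\mathbb{R})$ is the normalized counting measure on $D^{-1/2}\mathscr{H}_D$, which converges to the round probability measure by Duke's theorem \cite{Duke}. The projection to the shape space is a packet of torus orbits -- a Picard/genus orbit of CM points -- whose equidistribution to the hyperbolic probability measure is known, unconditionally by subconvexity as recalled in \S\ref{intro:genus-modular-curve} and, more to the point here, by Linnik's ergodic method \cite{LinnikBook} precisely because we assume a split prime. Hence both marginals of $\mu$ are Haar.

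The congruence condition $\left(\frac{D_i}{p_1}\right)=\left(\frac{D_i}{p_2}\right)=1$ enters by splitting $\mathbf{T}^{(D_i)}$ at the two places $p_1,p_2$: consequently each $\mu_{D_i}^{\mathrm{Lat}}$, and hence $\mu$, is invariant under the diagonally embedded $\mathbb{Q}_{p_1}$- and $\mathbb{Q}_{p_2}$-split tori. Because $\mathbf{SO}_3$ has rank $1$, a single split place would only yield a rank-$1$ diagonal action on the product -- insufficient for measure rigidity -- which is exactly why the statement needs \emph{two} primes, in contrast with $d\geq 4$, where $\mathbf{SO}_{d-1}$ is semisimple and generated by unipotents. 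With two split places one is in the higher-rank regime and the joining rigidity theorem \cite[Theorem 1.6]{ELJoinings} applies to the sequence of periodic torus packets $\mu_{D_i}^{\mathrm{Lat}}$ with $D_i\to\infty$: the limit $\mu$, being a joining with Haar marginals, must be \emph{algebraic}, i.e.\ a homogeneous measure on an orbit of an intermediate $\mathbb{Q}$-subgroup $\mathbf{T}\subseteq\mathbf{M}\subseteq\mathbf{SO}_3\times\mathbf{G}$ projecting onto both factors.

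Finally I would eliminate the nontrivial couplings. Besides the full product, the only reductive intermediate subgroups projecting onto both factors are twisted diagonal copies of $\mathbf{SO}_3$, i.e.\ graphs of isomorphisms between the two inner forms; a $\mu$ supported on such a subgroup would force $[x^\perp(\mathbb{Z})]$ to be a fixed algebraic function of $D^{-1/2}x$ of bounded complexity independent of $D$. This is incompatible with $D\to\infty$: the torsor data on the two factors have conductor and discriminant tending to infinity jointly and cannot be matched by a fixed finite correspondence for infinitely many $D$ (equivalently, two marginals of growing complexity cannot both be Haar while tied by a bounded-degree map). Hence $\mathbf{M}=\mathbf{SO}_3\times\mathbf{G}$ and $\mu$ is the product of the two Haar measures. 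Since every subsequence of $\{\mu_{D_i}^{\mathrm{Lat}}\}$ has a further subsequence converging to this measure, the whole sequence converges to the Haar measure on $\mathbf{S}^2(\mathbb{R})\times\dfaktor{\mathbf{SL}_2(\mathbb{Z})}{\mathbf{SL}_2(\mathbb{R})}{\mathbf{SO}_2(\mathbb{R})}$. I expect the main obstacle to be this last step together with verifying the precise hypotheses of \cite[Theorem 1.6]{ELJoinings} for these particular torus packets -- ensuring that no intermediate algebraic joining persists in the limit -- which is where the concrete geometry of the orthogonal-complement construction (and the fact that the grid genuinely links the two factors) must be used.
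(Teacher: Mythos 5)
Your overall strategy is exactly the one the paper reports (citing \cite{AES}): view $\mu_{D_i}^{\mathrm{Lat}}$ as a toral-packet joining of two homogeneous spaces, invoke Duke's theorem for the sphere marginal and the known equidistribution of CM packets for the shape marginal, use the two-split-prime hypothesis to create a higher-rank diagonal torus action, and then apply the joining rigidity theorem \cite[Theorem 1.6]{ELJoinings}. That matches the paper's one-paragraph account of \cite{AES}, so the main architecture is fine.

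Where your proposal goes astray is the final step. You try to exclude the intermediate algebraic joinings by a dynamical/growth heuristic (``a fixed algebraic function of $D^{-1/2}x$ of bounded complexity independent of $D$ is incompatible with $D\to\infty$''). That is not how the exclusion works, and as stated it is not a proof. The conclusion of \cite[Theorem 1.6]{ELJoinings} is that any weak-$*$ limit $\mu$ is the algebraic measure on an orbit of a $\mathbb{Q}$-subgroup $\mathbf{L}\subset\mathbf{SO}_3\times\mathbf{G}$ (with $\mathbf{G}$ the group acting on the shape space) whose projections to both factors are surjective. By a Goursat-type argument, a proper such $\mathbf{L}$ would have to be the graph of a $\mathbb{Q}$-isogeny $\mathbf{SO}_3\to\mathbf{G}$. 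But $\mathbf{SO}_3$ is anisotropic over $\mathbb{R}$ (compact real points) while $\mathbf{G}$ is split over $\mathbb{R}$, so there is no such $\mathbb{Q}$-isogeny. Hence $\mathbf{L}$ is forced to be the full product, and $\mu$ is the product of the two Haar measures. This exclusion is an elementary algebraic-group fact; it has nothing to do with $D\to\infty$, with ``bounded complexity,'' or with the concrete geometry of orthogonal complements. Two further minor inaccuracies: the intersection in Proposition \ref{prop:adelic-intersection} realizes $\mathscr{H}_D$ as a \emph{single} adelic torus packet, not ``a union of boundedly many''; and the group acting on the shape space is the split form (quotient of $\mathbf{SL}_2$), not merely some unspecified inner form of $\mathbf{PGL}_2$ -- its split real type is precisely what makes the exclusion argument above immediate.
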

The congruence condition at $p_1$ and $p_2$ is required for the associated orbits to be invariant under a split torus at two places so that the joining rigidity theorem of \cite{ELJoinings} applies. The original theorem at \cite{AES} required the discriminant to be fundamental, but this restriction is superfluous as the equidistribution theorems on each factor are known for general discriminants. Under the assumption of a fixed split prime they go back to Linnik. A similar result for $d=4,5$ without a congruence condition has been established by \cite{EWR} using effective methods in unipotent dynamics.

We demonstrate in \S \ref{sec:joint} that the push-forward of the full joint measure $\mu_D^\mathrm{Grid}$ to $\dfaktor{\mathbf{ASL}_2(\mathbb{Z})}{\mathbf{ASL}_2(\mathbb{R})}{\mathbf{SO}_2(\mathbb{R})}$ is a genus orbit of a special point. The method of \cite{AES} in conjunction with Theorem \ref{thm:main} imply the following theorem.
\begin{thm}\label{thm:joint}
Let $\{D_i\}\subset \mathbb{D}(3)$ such that $D_i\to_{i\to\infty}\infty$.
Assume there are distinct primes $p_1$, $p_2$ such that for all $i$
\begin{equation*}
\left(\frac{D_i}{p_1}\right)=\left(\frac{D_i}{p_2}\right)=1 \, .
\end{equation*}
Then $\mu_{D_i}^\mathrm{Grid}$ converge weak-$*$ to the Haar measure on $\mathbf{S}^2(\mathbb{R})\times \dfaktor{\mathbf{ASL}_2(\mathbb{Z})}{\mathbf{ASL}_2(\mathbb{R})}{\mathbf{SO}_2(\mathbb{R})}$
as $i\to\infty$.
\end{thm}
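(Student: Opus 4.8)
The plan is to reproduce the strategy of Aka--Einsiedler--Shapira for the joint system, feeding in Theorem \ref{thm:main} in place of their input, which was the equidistribution of the torsion-discarded orthogonal lattices $\mu_D^{\mathrm{Lat}}$. Note that $\mathbf{ASL}_2=\mathbf{P}^1$, so $\dfaktor{\mathbf{ASL}_2(\mathbb{Z})}{\mathbf{ASL}_2(\mathbb{R})}{\mathbf{SO}_2(\mathbb{R})}$ is a quotient of $[\mathbf{P}^1(\mathbb{A})]$ to which Theorem \ref{thm:main} applies after push-forward of measures.

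First I would lay out the ad\`elic picture of \S\ref{sec:joint}. The primitive vectors $\mathscr{H}_D$ form a torsor under a norm-one torus $\mathbf{T}_D<\mathbf{SL}_2$ attached to the imaginary quadratic order of discriminant $-D$ or $-4D$; this torus acts on the sphere through $\mathbf{SO}_3$ and, via the orthogonal-complement construction $x\mapsto\left(x^\perp(\mathbb{Z}),x^{\mathrm{tors}}\right)$, on the grid space. The spin correspondence together with the class field theory of \S\ref{sec:joint} identifies the push-forward of $\mu_D^{\mathrm{Grid}}$ to $\dfaktor{\mathbf{ASL}_2(\mathbb{Z})}{\mathbf{ASL}_2(\mathbb{R})}{\mathbf{SO}_2(\mathbb{R})}$ with a $K_\infty$-invariant homogeneous toral set attached to $\mathbf{T}_D$, of discriminant $-D$ or $-4D$ and of torsion order exactly $D$ (the point $x^{\mathrm{tors}}+x^\perp(\mathbb{Z})$ having order $D$), and it exhibits $\mu_D^{\mathrm{Grid}}$ on $\mathbf{S}^2(\mathbb{R})\times[\mathbf{P}^1(\mathbb{A})]$ as the periodic measure of a single diagonally embedded $\mathbf{T}_D(\mathbb{A})$-orbit. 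In particular any weak-$*$ limit $\mu$ of $\mu_{D_i}^{\mathrm{Grid}}$ is a joining of its two marginals, invariant under a diagonal copy of $\mathbf{T}_{D_i}(\mathbb{A})$.

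Next I would verify the hypotheses of Theorem \ref{thm:main} for these grid-side toral sets. Since $\left(\frac{D_i}{p_1}\right)=\left(\frac{D_i}{p_2}\right)=1$ is nonzero, $p_1,p_2\nmid D_i$; hence the torsion order $N_i=D_i$ has $\val_{p_j}(N_i)=0$, the conductor has $\val_{p_j}(f_i)\ll 1$, and $\left(\frac{D_i^{\mathrm{fund}}}{p_j}\right)$ equals the Kronecker symbol $\left(\frac{D_i}{p_j}\right)=1$, so \eqref{eq:congruence-1} and \eqref{eq:congruence-2} hold; strictness of the sequence follows from $D_i\to\infty$, which forces both $|D_i|\to\infty$ and the torsion orders to infinity. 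Theorem \ref{thm:main} with $r=1$ then forces the grid-marginal of $\mu$ to be the uniform measure on $\dfaktor{\mathbf{ASL}_2(\mathbb{Z})}{\mathbf{ASL}_2(\mathbb{R})}{\mathbf{SO}_2(\mathbb{R})}$, while Duke's theorem \cite{Duke} forces its sphere-marginal to be the uniform measure on $\mathbf{S}^2(\mathbb{R})$.

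Finally I would run the joining-rigidity step. The measure $\mu$ is invariant under the diagonal $\mathbf{T}_{D_i}(\mathbb{A})$, which by \eqref{eq:congruence-1} projects to a split one-parameter torus at each of $p_1$ and $p_2$ on each factor, the higher-rank input of \cite[Theorem 1.6]{ELJoinings}. The push-forward of $\mu_{D_i}^{\mathrm{Grid}}$ under the projection $\mathbf{P}^1\to\mathbf{SL}_2$ (forgetting the torsion point) is exactly the torsion-discarded measure $\mu_{D_i}^{\mathrm{Lat}}$, so by the theorem of \cite{AES} — valid precisely under \eqref{eq:congruence-1} — the corresponding push-forward of $\mu$ is the Haar measure on $\mathbf{S}^2(\mathbb{R})\times\dfaktor{\mathbf{SL}_2(\mathbb{Z})}{\mathbf{SL}_2(\mathbb{R})}{\mathbf{SO}_2(\mathbb{R})}$; here the only competing algebraic joining, a graph joining coming from the isogeny $\mathbf{SL}_2\to\mathbf{SO}_3$, is excluded because its sphere-marginal would be supported on a proper closed orbit, contradicting Duke's theorem. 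Since Theorem \ref{thm:main} moreover identifies the full $\mathbf{P}^1$-marginal of $\mu$ as Haar, the $\mathbf{T}_{D_i}(\mathbb{A})$-invariant conditional measures of $\mu$ along the $\mathbf{V}$-fibres over the product are pinned down, once more by the higher-rank action, to be uniform; hence $\mu$ is the product Haar measure on $\mathbf{S}^2(\mathbb{R})\times\dfaktor{\mathbf{ASL}_2(\mathbb{Z})}{\mathbf{ASL}_2(\mathbb{R})}{\mathbf{SO}_2(\mathbb{R})}$, and as every subsequential limit is this measure, $\mu_{D_i}^{\mathrm{Grid}}$ converges weak-$*$ to it, proving the theorem. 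The main obstacle is the ad\`elic identification of \S\ref{sec:joint}: showing that the torsion point $x^{\mathrm{tors}}=x^1-x/D$ corresponds, under the spin map and class field theory, to a torsion point of order $D$ on the CM elliptic curve of $\mathbf{T}_D$, compatibly with the ad\`elic torus action on \emph{both} the sphere and the grid, so that the grid-marginal is a bona fide $K_\infty$-invariant homogeneous toral set; this is cleanest through the description of $\mathrm{Grid}(x)$ as part of an intersection of two periodic orbits in $\lfaktor{\mathbf{SL}_3(\mathbb{Z})}{\mathbf{SL}_3(\mathbb{R})}$. A secondary difficulty, namely that the joining classification must accommodate the non-semisimple group $\mathbf{P}^1=\mathbf{SL}_2\ltimes\mathbf{V}$ on one factor, is circumvented by passing to the semisimple quotient (the \cite{AES} setting) and invoking Theorem \ref{thm:main} to control the remaining unipotent direction.
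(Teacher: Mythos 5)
Your proposal follows the same route as the paper: identify the grid marginal with the $K_\infty$-invariant homogeneous toral set of Proposition~\ref{prop:grids-packet}, verify the congruence hypotheses of Theorem~\ref{thm:main} for that sequence (and, as you note, since $\left(\tfrac{D_i}{p_j}\right)=1$ forces $p_j\nmid D_i$, the torsion order $D_i$ and conductor are coprime to $p_1,p_2$), and then combine grid-side equidistribution from Theorem~\ref{thm:main} with Duke's theorem on the sphere side and the joining rigidity theorem of \cite{ELJoinings}, following the scheme of \cite{AES}. That is exactly the structure of the paper's (very terse) proof, so on the essential points you reproduce it faithfully.

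One caveat worth flagging, since you supply details the paper elides. You assert that the obstruction is a ``graph joining coming from the isogeny $\mathbf{SL}_2\to\mathbf{SO}_3$'' and dismiss it because its sphere-marginal would be ``supported on a proper closed orbit, contradicting Duke's theorem.'' This is imprecise on both counts. Over $\mathbb{Q}$ there is no such isogeny: in the paper's setup $\mathbf{SO}_3\simeq\mathbf{PB}^\times$ for the Hamilton quaternion algebra, compact at $\infty$, while the Levi quotient of the grid-side group is the split $\mathbf{SL}_2$; these are distinct $\mathbb{Q}$-forms and admit no nontrivial $\mathbb{Q}$-homomorphism in either direction (a fortiori for the full $\mathbf{ASL}_2$, whose unipotent radical would have to die). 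Moreover, a graph joining attached to a genuine isomorphism has Haar marginals, not proper closed orbits; Duke's theorem does not exclude it. The real reason joining rigidity pins down the product measure is the nonexistence of intermediate algebraic subgroups of $\mathbf{SO}_3\times\mathbf{ASL}_2$ projecting onto both factors and containing the diagonal torus, other than the full product; your final ``conditional measures along the $\mathbf{V}$-fibres'' step is then an elaboration of how the $\mathbf{ASL}_2$-marginal being Haar (rather than only the $\mathbf{SL}_2$-marginal) forces triviality in the unipotent direction, which is precisely what Theorem~\ref{thm:main} buys over the input of \cite{AES}.
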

Notice that removing the congruence conditions at $p_1$ and $p_2$ in Theorem \ref{thm:main} would not allow us to strengthen Theorem \ref{thm:joint} as this condition is still required for the application of the joining rigidity theorem of \cite{ELJoinings}.

\subsection{Acknowledgments}
I would like to thank Menny Aka, Manfred Einsiedler, Manuel L\"uthi, Peter Sarnak, Yunqing Tang, Akshay Venkatesh, Andreas Wieser and Shou-Wu Zhang for fruitful and enlightening discussions. I am grateful to Asaf Katz for very useful and illuminating comments on a previous version of this manuscript. I am much obliged to the referee for several important suggestions that have improved the exposition.

The author was a Schmidt fellow at the Institute for Advanced Study during the academic year 2017-2018.

\subsection{Organization of the Paper}\hfill\\
In \S \ref{sec:prelim} we introduce rigorously homogeneous toral sets and describe their fundamental properties.\\
In \S \ref{sec:rigidity} we apply measure rigidity theorems of Einsiedler and Lindenstrauss for higher rank toral actions to reduce the equidistribution question to the problem of showing non-concentration on intermediate orbits. In this section we also reduce the general case to $r=1$.\\
In \S \ref{sec:cross-correlation} we describe the geometric expansion of the cross-correlation for Bowen ball test functions, which was first introduced as a tool for controlling concentration in \cite{KhJoint}. We demonstrate that the cross-correlation between a homogeneous toral set and a periodic orbit of $\mathbf{SL}_2$ can be understood in terms of orbits of a compact subgroup of an ad\`elic torus on the unipotent radical.\\
In \S \ref{sec:orbit-space} we introduce arithmetic invariants parameterizing the orbits introduced in the previous section and study their properties.\\
In \S \ref{sec:subconvex} we use the results of the previous two sections to bound the cross-correlation by a short sum over integral ideals in a quadratic order with level structure. The gist of this section is the application of the subconvex bound of Duke, Friedlander and Iwaniec \cite{DFI} to find an asymptotic upper bound for these sums. \\
In \S \ref{sec:equidistribution} we combine results from previous sections to prove the main theorems about equidistribution of homogeneous toral sets and genus orbits.\\
In \S \ref{sec:joint} we present a description of the orthogonal grid correspondence as an intersection of two periodic orbits in a homogeneous space. We show that the orthogonal grids form a genus orbit and prove our theorem about joint equidistribution of lattice points and orthogonal grids.\\
In the appendix \S \ref{app:L-functions} we compute local properties of modified Hecke $L$-functions which are used in \S \ref{sec:subconvex}.

\section{Preliminaries}\label{sec:prelim}
\subsection{Notations}
\begin{enumerate}
\item Algebraic varieties are denoted by bold face letters. They are defined over $\mathbb{Q}$ unless stated otherwise.
\item If $B$ is an algebra over a commutative ring $A$ with a well-defined norm map $\Nr\colon B\to A$, e.g.\ a field extension $E/\mathbb{Q}$, then we denote by $B^\times$ the invertible elements in $B$ and by $B^{(1)}$ the elements of norm $1$ in $B$.
\item If $E/\mathbb{Q}$ is a field extension we denote by $\mathcal{O}_E$ the maximal order in $E$. For any rational place $v<\infty$ we set $E_v\coloneqq E\otimes_\mathbb{Q} \mathbb{Q}_v\simeq \prod_{w\mid v} E_w$. The maximal order in the \'etale-algebra $E_v$ is denoted by $\mathcal{O}_{E_v}$.
\item For a field $E/\mathbb{Q}$ we denote by $\mathbb{A}_E$ the ad\`ele ring of $E$ and let $\mathbb{A}_E^\times$ be the group of invertible ad\`eles. We shall use the subscript $f$ to denote the non-archimedean part, e.g.\ $\mathbb{A}_{E,f}$ and $\mathbb{A}_{E,f}^\times$ are the ring of finite ad\`eles and the group of finite id\`eles respectively.

Abusing the previous notation, let $\mathbb{A}_E^{(1)}$ denote the ad\`eles which are of norm $1$ \emph{everywhere}, i.e.\ $\mathbb{A}_E^{(1)}\coloneqq \prod'_v E_v^{(1)}$ where $v$ runs over the places of $\mathbb{Q}$ and the restricted direct product is with respect to the compact subgroup $\mathcal{O}_{E_v}^{(1)}$ for $v<\infty$.

\item For an affine algebraic group $\mathbf{M}$ defined over $\mathbb{Q}$ we denote $\left[\mathbf{M}(\mathbb{A})\right]\coloneqq \lfaktor{\mathbf{M}(\mathbb{Q})}{\mathbf{M}(\mathbb{A})}$. Moreover, for any subset $K\subset \mathbf{M}(\mathbb{A})$ let $[K]\subset \left[\mathbf{M}(\mathbb{A})\right]$ be the image of $K$ under the quotient map.  For an element $g\in\mathbf{M}(\mathbb{A})$ we denote by $g_v$ the $v$-local part of $g$ for any rational place $v$. We also use the notation $g_f\in\mathbf{M}(\mathbb{A}_f)$ for the non-archimedean part.

\item For $\mathbf{M}$ an affine perfect group defined and anisotropic over $\mathbb{Q}$
we denote either by $\meas_{\mathbf{M}(\mathbb{A})}$ the covolume $1$ Haar measure on $\mathbf{M}(\mathbb{A})$ and by $\meas_\mathbf{M}$ the probability Haar measure on $\left[\mathbf{M}(\mathbb{A})\right]$. For any local field $\mathbb{Q}_v$ denote by $\meas_{\mathbf{M}(\mathbb{Q}_v)}$ a Haar measure on $\mathbf{M}(\mathbb{Q}_v)$.
\end{enumerate}

\subsection{Homogeneous Sets and Periodic Measures}
For brevity we denote henceforth $\mathbf{G}\coloneqq \mathbf{SL}_2$.

\begin{defi}
For any linear subgroup $\mathbf{H}<\mathbf{P}^r$ defined and anisotropic over $\mathbb{Q}$
an $\mathbf{H}$-homogeneous set is a closed subset of $\left[\mathbf{P}^r(\mathbb{A})\right]$ of the form
\begin{equation*}
\mathcal{H}=\left[\mathbf{H}(\mathbb{A})g\right] \, ,
\end{equation*}
where $g\in\mathbf{P}^r(\mathbb{A})$. The homogeneous set $\mathcal{H}$ is invariant under the right action of $\Ad_{g^{-1}}\mathbf{H}(\mathbb{A})$ and supports a unique $\Ad_{g^{-1}}\mathbf{H}(\mathbb{A})$-invariant Borel probability measure which we call the \emph{periodic measure}. This measure is the pushforward by the right translation by $g$ of the Haar measure on $\left[\mathbf{H}(\mathbb{A})\right]$ which is finite because $\mathbf{H}$ is assumed to be anisotropic over $\mathbb{Q}$.

If $\mathbf{H}=\mathbf{T}$ is a maximal torus in $\mathbf{P}^r$ then $\mathcal{H}$ is called a \emph{homogeneous toral set}.
Recall from \S \ref{sec:adelic-homogeneous} that $\mathbf{K}_\infty=\mathbf{SO}_2(\mathbb{R})<\mathbf{G}(\mathbb{R})\hookrightarrow\mathbf{P}^r(\mathbb{R})$ is a fixed maximal compact subgroup.
A homogeneous toral set is $K_\infty$-invariant if $\Ad_{g_\infty^{-1}}\mathbf{T}(\mathbb{R})=K_\infty$.
\end{defi}

\begin{remark}
Notice that for any $\gamma\in \mathbf{P}^r(\mathbb{Q})$ the data $(\mathbf{H},g)$ and $(\Ad_\gamma \mathbf{H}, \gamma g)$ define the same homogeneous set with an identical periodic measure.
\end{remark}
\begin{remark}
A homogeneous toral set $\left[\mathbf{T}(\mathbb{A})(l,\mathrm{x})\right]$ satisfying $\mathbf{T}<\mathbf{G}$ is $K_\infty$-invariant if and only if $\Ad_{l_\infty}^{-1}\mathbf{T}(\mathbb{R})=K_\infty$ in $\mathbf{G}(\mathbb{R})$ and $\mathrm{x}_\infty=0$.
\end{remark}

Our interest lies is homogeneous toral sets in $\mathbf{P}^r$.
Homogeneous sets for the subgroup $\mathbf{G}<\mathbf{P}^1$ will arise in the process of analyzing the possible limits of periodic measures on homogeneous toral sets.

\begin{defi}
Let $\mathcal{H}=\left[\mathbf{T}(\mathbb{A})(l,\mathrm{x})\right]$ be a $K_\infty$-invariant homogeneous toral set.
The projection of $\mathcal{H}$ to $W^r$ is called a \emph{genus packet}. It is a finite collection of CM elliptic curves with torsion points, cf.\ \S \ref{sec:torsion-pts}.

Let $E/\mathbb{Q}$ be the imaginary quadratic extension splitting $\mathbf{T}\simeq \Res^1_{E/\mathbb{Q}} \Gm$.
The main theorem of complex multiplication implies that a genus packet is a single orbit of the Galois subgroup $\Gal(\bar{\mathbb{Q}}/\mathbb{Q}^\mathrm{ab})<\Gal(\bar{\mathbb{Q}}/E)$ which corresponds by Artin reciprocity to the subgroup of the id\`ele class group of $E$ of elements that are everywhere of norm $1$.
\end{defi}

\subsection{Standard Form of Homogeneous Sets}
We show that homogeneous sets for reductive groups can be put into a form that simplifies computations.
\begin{lem}\label{lem:reductive-conj-into-G}
Let $\mathbf{H}<\mathbf{P}^r$ be a reductive algebraic group defined over $\mathbb{Q}$ then $\mathbf{H}$ is conjugate to a subgroup of $\mathbf{G}$ by an element of $\mathbf{P}^r(\mathbb{Q})$
\end{lem}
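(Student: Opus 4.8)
The key structural fact is that $\mathbf{P}^r \simeq \mathbf{G}\ltimes \mathbf{V}^{\oplus r}$ has a unipotent radical $\mathbf{V}^{\oplus r}$ with reductive quotient $\mathbf{G}$, and the section $\mathbf{G}\hookrightarrow \mathbf{P}^r$ realizes $\mathbf{G}$ as a Levi factor defined over $\mathbb{Q}$. A reductive subgroup $\mathbf{H}<\mathbf{P}^r$ intersects the unipotent radical $\mathbf{V}^{\oplus r}$ trivially (a reductive group has no nontrivial normal unipotent subgroup, and $\mathbf{H}\cap \mathbf{V}^{\oplus r}$ is unipotent and normal in $\mathbf{H}$ only if it is trivial — more precisely, $\mathbf{H}\cap \mathbf{V}^{\oplus r}$ is a unipotent subgroup of the reductive group $\mathbf{H}$, hence, being also normal, must be trivial; alternatively it simply must be a unipotent subgroup and the projection $\mathbf{H}\to\mathbf{G}$ restricted to it is trivial, so it is a unipotent normal subgroup of $\mathbf{H}$). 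Therefore the projection $\pi\colon \mathbf{P}^r\to\mathbf{G}$ restricts to a closed immersion on $\mathbf{H}$, identifying $\mathbf{H}$ with a reductive subgroup $\pi(\mathbf{H})<\mathbf{G}$ over $\mathbb{Q}$. The plan is to show $\mathbf{H}$ and the Levi $\mathbf{G}$-section composed with this identification — i.e.\ $\mathbf{H}$ and $\pi(\mathbf{H})$ viewed inside $\mathbf{P}^r$ via the zero section — are conjugate by a rational point of the unipotent radical.

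First I would set up the cohomological formalism. Both $\mathbf{H}$ and $\pi(\mathbf{H})$ (as a subgroup of $\mathbf{G}\hookrightarrow\mathbf{P}^r$) are Levi-type complements to $\mathbf{V}^{\oplus r}$ inside the subgroup $\mathbf{V}^{\oplus r}\rtimes \pi(\mathbf{H})<\mathbf{P}^r$. Over an algebraically closed field such complements are all conjugate by the unipotent radical (Levi decompositions are conjugate, e.g.\ because $\mathbf{V}^{\oplus r}$ as a $\pi(\mathbf{H})$-module has vanishing higher cohomology in characteristic zero — indeed $\pi(\mathbf{H})$ reductive forces $H^1(\pi(\mathbf{H}),\mathbf{V}^{\oplus r})=0$). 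The set of $\mathbf{V}^{\oplus r}(\bar{\mathbb{Q}})$-conjugates of $\mathbf{H}$ that are stable under nothing extra is a torsor, and the obstruction to finding a $\mathbb{Q}$-rational conjugating element lands in $H^1(\mathbb{Q}, \mathbf{V}^{\oplus r})$ — but $\mathbf{V}^{\oplus r} \simeq \Ga^{2r}$ is a vector group, so $H^1(\mathbb{Q},\mathbf{V}^{\oplus r})=0$ by the additive form of Hilbert 90 (vanishing of Galois cohomology of $\Ga$). Hence the conjugating element can be taken in $\mathbf{V}^{\oplus r}(\mathbb{Q})\subset\mathbf{P}^r(\mathbb{Q})$, and since $\pi(\mathbf{H})<\mathbf{G}$, this gives the claim.

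Concretely, writing an element of $\mathbf{H}$ as $(h, \phi(h))$ with $h\in\pi(\mathbf{H})$ and $\phi\colon \pi(\mathbf{H})\to\mathbf{V}^{\oplus r}$ a morphism of varieties, the condition that this be a subgroup forces $\phi$ to be a $1$-cocycle for the action of $\pi(\mathbf{H})$ on $\mathbf{V}^{\oplus r}$ (i.e.\ $\phi(h_1 h_2) = \phi(h_1) + h_1.\phi(h_2)$); conjugating $\mathbf{H}$ by $(e, v)\in \mathbf{V}^{\oplus r}(\mathbb{Q})$ changes $\phi$ by the coboundary $h\mapsto v - h.v$; and the vanishing of $H^1(\pi(\mathbf{H}),\mathbf{V}^{\oplus r})$ over $\mathbb{Q}$ (which follows from $\pi(\mathbf{H})$ reductive together with the triviality of $H^1(\mathbb{Q},-)$ on vector groups, or directly from linear reductivity in characteristic zero applied to the rational representation $\mathbf{V}^{\oplus r}$) produces the required $v\in\mathbf{V}^{\oplus r}(\mathbb{Q})$ killing $\phi$. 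I expect the main technical point to verify carefully is the first one — that a reductive $\mathbf{H}$ meets the unipotent radical trivially so that $\phi$ is genuinely a well-defined morphism and $\mathbf{H}$ is the graph of a cocycle; once that is in place the cohomological vanishing is routine in characteristic zero.
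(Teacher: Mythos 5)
Your concrete argument in the second half of the proposal is exactly the paper's proof: the projection $\mathbf{H}\hookrightarrow\mathbf{P}^r\to\mathbf{V}^{\oplus r}$ is a rational cocycle for the $\mathbf{H}$-action coming from $\mathbf{H}\to\mathbf{G}$, and rational (Hochschild) cohomology of a reductive group in characteristic zero vanishes, so the cocycle is a coboundary $h\mapsto v - h.v$ with $v\in\mathbf{V}^{\oplus r}(\mathbb{Q})$, and $(e,v)$ does the conjugating. Your preliminary step (that $\mathbf{H}\cap\mathbf{V}^{\oplus r}$ is a normal unipotent subgroup of the reductive $\mathbf{H}$, hence trivial, so $\mathbf{H}$ is the graph of $\phi$ on $\pi(\mathbf{H})$) is correct but not strictly needed: the paper sidesteps it by treating the map $\mathbf{H}\to\mathbf{V}^{\oplus r}$ directly as a cocycle on $\mathbf{H}$ itself. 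Your first paragraph sketches a different, also valid route --- conjugate over $\bar{\mathbb{Q}}$ using conjugacy of Levi complements, then descend via vanishing of Galois $H^1$ of the relevant vector group --- though note the conjugating set is a torsor under the $\pi(\mathbf{H})$-fixed subspace of $\mathbf{V}^{\oplus r}$, not all of $\mathbf{V}^{\oplus r}$; that subspace is still a vector group, so the descent goes through, but the paper avoids Galois cohomology entirely.
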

\begin{proof}
Consider the following composite map
\begin{equation}\label{eq:H-to-SL2-proj}
\mathbf{H}\hookrightarrow \mathbf{P}^r\to\mathbf{G} \, ,
\end{equation}
where the last map is the quotient by the unipotent radical.
The action of $\mathbf{G}$ on $\mathbf{V}^{\oplus r}$ composed with \eqref{eq:H-to-SL2-proj} defines a rational representation of $\mathbf{H}$ on $\mathbf{V}^{\oplus r}$.

Next consider the composite map
\begin{equation}\label{eq:H-cocycle-map}
\mathbf{H}\hookrightarrow \mathbf{P}^r\to\mathbf{V}^{\oplus r} \, ,
\end{equation}
where the last map is the projection onto the second coordinate of $\mathbf{P}^r=\mathbf{G} \ltimes \mathbf{V}^{\oplus r}$. This map is a rational cocycle in $Z^1(\mathbf{H},\mathbf{V}^{\oplus r})$ with respect to the action induced by \eqref{eq:H-to-SL2-proj}. Because $\mathbf{H}$ is assumed to be reductive over $\mathbb{Q}$ its rational cohomology is trivial \cite{Hochschild}. The fact that the cocycle \eqref{eq:H-cocycle-map} is a coboundary implies the claim.
\end{proof}

\begin{defi}
Let $\mathbf{H}<\mathbf{P}^r$ be a reductive subgroup defined and anisotropic over $\mathbb{Q}$.
A \emph{standard form} of an $\mathbf{H}$-homogeneous set $\mathcal{H}$ is a linear  subgroup $\mathbf{H}_0<\mathbf{G}$ that is $\mathbf{P}^r(\mathbb{Q})$-conjugate to $\mathbf{H}$ and group elements $l_\infty\in\mathbf{G}(\mathbb{R})$, $\mathrm{x}_\infty\in\mathbf{V}(\mathbb{R})$, $X\in\mathbf{V}^{\oplus r}(\mathbb{Q})$, $k\in \mathbf{P}^r(\hat{\mathbb{Z}})$ such that
\begin{equation*}
\mathcal{H}=\left[\mathbf{H}_0(e,X)(l_\infty,\mathrm{x}_\infty-X)_\infty k\right] \, ,
\end{equation*}
where $(l_\infty,\mathrm{x}_\infty-X)_\infty$ is an element of $\mathbf{P}^r(\mathbb{R})$.
Notice that if $\mathbf{H}=\mathbf{T}$ is a maximal torus anisotropic over $\mathbb{R}$ then $\mathcal{H}$ is $K_\infty$-invariant if and only if $\mathrm{x}_\infty=0$ and $\Ad_{l_\infty^{-1}}\mathbf{T}(\mathbb{R})=K_\infty$.
\end{defi}

\begin{cor}\label{cor:standard-form}
Every reductive homogeneous set in $\mathbf{P}^r$ has a standard form.
\end{cor}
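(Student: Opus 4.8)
The plan is to bootstrap Lemma~\ref{lem:reductive-conj-into-G} using strong approximation for $\mathbf{G}=\mathbf{SL}_2$ together with the elementary splitting $\mathbb{A}_f=\mathbb{Q}+\hat{\mathbb{Z}}$ for the vector group $\mathbf{V}^{\oplus r}$, while being careful that every conjugation used keeps the acting group inside $\mathbf{G}$. Starting from a reductive homogeneous set $\mathcal{H}=\left[\mathbf{H}(\mathbb{A})g\right]$, I would first apply Lemma~\ref{lem:reductive-conj-into-G} to obtain $\gamma_1\in\mathbf{P}^r(\mathbb{Q})$ with $\Ad_{\gamma_1}\mathbf{H}<\mathbf{G}$. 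By the remark following the definition of a homogeneous set, $(\mathbf{H},g)$ and $(\Ad_{\gamma_1}\mathbf{H},\gamma_1 g)$ define the same homogeneous set with the same periodic measure, so I may assume from the outset that $\mathbf{H}<\mathbf{G}$; set $h\coloneqq\gamma_1 g$ and decompose $h=h_\infty h_f$ with $h_f\in\mathbf{P}^r(\mathbb{A}_f)$.

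Next I would normalize the finite part. Projecting $h_f$ under $\mathbf{P}^r\to\mathbf{G}$ yields $l_f\in\mathbf{SL}_2(\mathbb{A}_f)$, and strong approximation for $\mathbf{SL}_2$ (equivalently, class number one) gives $l_f=\lambda k_{\mathbf{G}}$ with $\lambda\in\mathbf{SL}_2(\mathbb{Q})$ and $k_{\mathbf{G}}\in\mathbf{SL}_2(\hat{\mathbb{Z}})$. Conjugating $(\mathbf{H},h)$ by $\gamma_2\coloneqq(\lambda,0)\in\mathbf{G}(\mathbb{Q})\subset\mathbf{P}^r(\mathbb{Q})$ leaves $\mathcal{H}$ and its periodic measure unchanged and, crucially because $\gamma_2\in\mathbf{G}(\mathbb{Q})$, keeps the new acting group inside $\mathbf{G}$; after this reduction the $\mathbf{G}$-component of $h_f$ lies in $\mathbf{SL}_2(\hat{\mathbb{Z}})$, so in semidirect-product coordinates $h_f=(k_{\mathbf{G}},\mathrm{y}_f)$ with $k_{\mathbf{G}}\in\mathbf{SL}_2(\hat{\mathbb{Z}})$ and $\mathrm{y}_f\in\mathbf{V}^{\oplus r}(\mathbb{A}_f)$. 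Since $\mathbf{V}^{\oplus r}$ is a $\mathbb{Q}$-vector group and $\mathbb{A}_f=\mathbb{Q}+\hat{\mathbb{Z}}$, write $\mathrm{y}_f=X+\mathrm{u}$ with $X\in\mathbf{V}^{\oplus r}(\mathbb{Q})$ and $\mathrm{u}\in\mathbf{V}^{\oplus r}(\hat{\mathbb{Z}})$; then the identity $(e,X)(k_{\mathbf{G}},\mathrm{u})=(k_{\mathbf{G}},X+\mathrm{u})$, read in $\mathbf{P}^r(\mathbb{A}_f)$ with $(e,X)$ regarded as the finite component of the global rational element $(e,X)\in\mathbf{P}^r(\mathbb{Q})$, shows $h_f=(e,X)_f\,k$ with $k\coloneqq(k_{\mathbf{G}},\mathrm{u})\in\mathbf{P}^r(\hat{\mathbb{Z}})$. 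It then remains to rewrite $h$ place by place: putting $(l_\infty,\mathrm{w}_\infty)\coloneqq(e,-X)_\infty h_\infty\in\mathbf{P}^r(\mathbb{R})$ and $\mathrm{x}_\infty\coloneqq\mathrm{w}_\infty+X$, the adele $(e,X)(l_\infty,\mathrm{x}_\infty-X)_\infty k$ has archimedean component $(e,X)_\infty(l_\infty,\mathrm{w}_\infty)=h_\infty$ and finite component $(e,X)_f\,k=h_f$, hence equals $h$. Therefore $\mathcal{H}=\left[\mathbf{H}(\mathbb{A})(e,X)(l_\infty,\mathrm{x}_\infty-X)_\infty k\right]$ is a standard form, with $\mathbf{H}_0=\mathbf{H}<\mathbf{G}$ being $\mathbf{P}^r(\mathbb{Q})$-conjugate to the original group.

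The argument is essentially bookkeeping and I do not anticipate a genuine difficulty; the one point that truly needs care is to carry out the strong-approximation reduction of the $\mathbf{G}$-factor only through conjugations by elements of $\mathbf{G}(\mathbb{Q})$, so that the property ``$\mathbf{H}<\mathbf{G}$'' is not destroyed, and to track the non-commutative multiplication of $\mathbf{P}^r=\mathbf{G}\ltimes\mathbf{V}^{\oplus r}$ correctly when splitting off the rational vector $X$. Finally, if $\mathbf{H}$ is a maximal torus anisotropic over $\mathbb{R}$ and $\mathcal{H}$ is $K_\infty$-invariant, then the $K_\infty$-invariance forces the $\mathbf{V}$-part of $h_\infty$, and hence $\mathrm{x}_\infty$, to vanish, because $\mathbf{H}(\mathbb{R})$ has no nonzero fixed vector in $\mathbf{V}^{\oplus r}(\mathbb{R})$; this recovers the normalization $\mathrm{x}_\infty=0$ with $\Ad_{l_\infty^{-1}}\mathbf{H}(\mathbb{R})=K_\infty$ recorded in the definition of a standard form.
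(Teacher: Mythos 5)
Your proof is correct and follows the same approach as the paper: reduce to $\mathbf{H}<\mathbf{G}$ via Lemma~\ref{lem:reductive-conj-into-G}, then use strong approximation for $\mathbf{SL}_2$ (in the form $\mathbf{SL}_2(\mathbb{A}_f)=\mathbf{SL}_2(\mathbb{Q})\mathbf{SL}_2(\hat{\mathbb{Z}})$) and for $\mathbb{G}_a$ (in the form $\mathbb{A}_f=\mathbb{Q}+\hat{\mathbb{Z}}$), which is precisely the one-line argument the paper gives. You simply flesh out the bookkeeping the paper leaves implicit, and your observation that the second translation/conjugation must be by an element of $\mathbf{G}(\mathbb{Q})$ rather than general $\mathbf{P}^r(\mathbb{Q})$, so as not to destroy $\mathbf{H}<\mathbf{G}$, is exactly the right point to flag.
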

\begin{proof}
Any reductive homogeneous set is equivalent by Lemma \ref{lem:reductive-conj-into-G} to an $\mathbf{H}_0$-homogeneous set with $\mathbf{H}_0<\mathbf{G}$. The claim follows from strong approximation for $\mathbf{SL}_2$ and $\mathbb{G}_a$.
\end{proof}

\subsection{Volume of Homogeneous Set}
\begin{defi}
Fix a compact $K_\infty$-invariant neighborhood of the identity $\Omega_\infty\subset\mathbf{P}^r(\mathbb{R})$. Let $\mathcal{H}=\left[\mathbf{H}(\mathbb{A})g\right]$ be a homogeneous set in $[\mathbf{P}^r(\mathbb{A})]$ and denote by $\meas_{\mathbf{H}(\mathbb{A})}$ the covolume $1$ Haar measure on $\mathbf{H}(\mathbb{A})$. The volume of $\mathcal{H}$ (with respect to $\Omega_\infty$) is defined as
\begin{equation*}
\vol(\mathcal{H})\coloneqq \meas_{\mathbf{H}(\mathbb{A})}\left(\Ad_g \left(\Omega_\infty \times \mathbf{P}^r(\hat{\mathbb{Z}})\right)\right)^{-1} \, .
\end{equation*}
\end{defi}
\begin{remark} Notice that although the volume depends on the choice of $\Omega_\infty$ we have for any $K_\infty$-invariant compact identity neighborhoods $\Omega_\infty,\Omega_\infty'\subset\mathbf{P}^r(\mathbb{R})$
\begin{equation*}
\vol_{\Omega_\infty}(\mathcal{H}) \ll_{\Omega_\infty,\Omega_\infty'} \vol_{\Omega_\infty'}(\mathcal{H})
\ll_{\Omega_\infty,\Omega_\infty'} \vol_{\Omega_\infty}(\mathcal{H}) \, ,
\end{equation*}
where the implicit constants do not depend on $\mathcal{H}$. Moreover, for a $K_\infty$-invariant homogeneous toral set $\mathcal{H}=\left[\mathbf{T}(\mathbb{A})g\right]$
\begin{equation*}
\vol(\mathcal{H})= \meas_{\mathbf{T}(\mathbb{A}_f)}\left(\Ad_{g_f}  \mathbf{P}^r(\hat{\mathbb{Z}})\right)
\end{equation*}
for any choice of $K_\infty$-invariant identity neighborhood $\Omega_\infty$, where we have normalized $\meas_{\mathbf{T}(\mathbb{A}_f)}$ so that $\meas_{\mathbf{T}(\mathbb{A})}=\meas_{\mathbf{T}(\mathbb{R})}\times \meas_{\mathbf{T}(\mathbb{A}_f)}$ with $\meas_{\mathbf{T}(\mathbb{R})}$ a probability measure.
\end{remark}

\subsection{The Quadratic Order of a Homogeneous Toral Set}\label{sec:order}
\begin{defi}\label{defi:order}
Let $\mathcal{H}=\left[\mathbf{T}(\mathbb{A})(l,\mathrm{x})\right]\subset \left[\mathbf{P}^r(\mathbb{A})\right]$ be a $K_\infty$-invariant homogeneous toral set such that $\mathbf{T}<\mathbf{G}$. The torus $\mathbf{T}<\mathbf{G}$ satisfies $\mathbf{T}\simeq \Res^1_{E/\mathbb{Q}}\Gm$ where $E/\mathbb{Q}$ is the quadratic imaginary extension splitting $\mathbf{T}$. Moreover, there is a commutative algebra scheme $\mathbf{E}<\mathbf{M}_2$ defined over $\mathbb{Q}$ such that $\mathbf{E}(\mathbb{F})\simeq E \otimes_\mathbb{Q} F$ for any algebra $F/\mathbb{Q}$ and $\mathbf{T}=\mathbf{SL}_1(\mathbf{E})$. We fix once and for all an isomorphism $\mathbf{E}(\mathbb{Q})\simeq E$ and identify these fields.
\begin{enumerate}
\item
For any finite place $v<\infty$ define
\begin{equation*}
\Lambda_v\coloneqq \mathbf{E}(\mathbb{Q}_v)\cap \Ad_{l_v} \mathbf{M}_2(\mathbb{Z}_v) \, .
\end{equation*}
The ring $\Lambda_v$ is an order in the quadratic \'{e}tale-algebra $\mathbf{E}(\mathbb{Q}_v)\simeq E\otimes \mathbb{Q}_v$.

\item
Notice that $\Lambda_v$ is the $v$-adic closure of the order $\mathbf{E}(\mathbb{Q})\cap \mathbf{M}_2(\mathbb{Z})$ whenever $l_v\in \mathbf{SL}_2(\mathbb{Z}_v)$. In particular, $\Lambda_v=\mathcal{O}_{E_v}$ for almost all $v$ and we can define the following intersection in $\mathbf{E}(\mathbb{Q})=E$
\begin{equation*}
\Lambda\coloneqq\cap_{v<\infty} \Lambda_v<E \, .
\end{equation*}
The lattice $\Lambda$ is an order in $E$ that is singular exactly at the primes where $\Lambda_v$  is non-maximal.

\item
The discriminant of $\mathcal{H}$ is defined by $\disc(\mathcal{H})\coloneqq \disc(\Lambda)=\prod_v |\disc(\Lambda_v)|_v^{-1}$. Notice that the product is well defined because $|\disc(\Lambda_v)|_v^{-1}=1$ for almost all places $v$.
\end{enumerate}

Notice that the quadratic order depends only on the homogeneous toral set $\left[\mathbf{T}(\mathbb{A})l\right]\subset \left[\mathbf{G(\mathbb{A})}\right]$ which is the image of $\mathcal{H}$ in $\left[\mathbf{G(\mathbb{A})}\right]$.

\begin{prop}\label{prop:complex-torus-cm}
Let $\left[\mathbf{T}(\mathbb{A})l\right]\subset \left[\mathbf{G(\mathbb{A})}\right]$ be a $K_\infty$-invariant homogeneous toral set with quadratic order $\Lambda<E$. The push-forward $\mathcal{P}$ of $\left[\mathbf{T}(\mathbb{A})l\right]$ to  $\dfaktor{\mathbf{G}(\mathbb{Z})}{\mathbf{G}(\mathbb{R})}{K_\infty}$ is a finite set of complex tori of the form $\mathbb{C}/\mathfrak{a}$ where $\mathfrak{a}$ is a proper fractional ideal of $\Lambda$
\end{prop}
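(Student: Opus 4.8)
The plan is to make the classical dictionary between $K_\infty$-invariant toral orbits on $[\mathbf{G}(\mathbb{A})]$ and complex tori with CM completely explicit at the level of lattices. First I would unwind the double quotient: by Corollary \ref{cor:standard-form} the homogeneous toral set has a standard form $[\mathbf{T}(\mathbb{A})l]=[\mathbf{T}_0(\mathbb{A})(l_\infty)_\infty k]$ with $\mathbf{T}_0<\mathbf{G}$, $l_\infty\in\mathbf{G}(\mathbb{R})$ satisfying $\Ad_{l_\infty^{-1}}\mathbf{T}_0(\mathbb{R})=K_\infty$, and $k\in\mathbf{G}(\hat{\mathbb{Z}})$. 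Pushing forward to $\dfaktor{\mathbf{G}(\mathbb{Z})}{\mathbf{G}(\mathbb{R})}{K_\infty}$ amounts to intersecting the adelic orbit with the image of $\mathbf{G}(\mathbb{R})\cdot\mathbf{G}(\hat{\mathbb{Z}})$; since $[\mathbf{T}(\mathbb{A})]$ has finite volume ($\mathbf{T}$ anisotropic), the fibers of $[\mathbf{T}(\mathbb{A})l]\to \mathcal{P}$ are indexed by a finite double coset set, concretely $\mathbf{T}(\mathbb{Q})\backslash\mathbf{T}(\mathbb{A}_f)/(\mathbf{T}(\mathbb{A}_f)\cap \Ad_{k}\mathbf{G}(\hat{\mathbb{Z}}))$, which is a quotient of a ray class group of $\Lambda$ and hence finite. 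This gives finiteness of $\mathcal{P}$.

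Next I would identify each point of $\mathcal{P}$ with a complex torus. A point of $\dfaktor{\mathbf{G}(\mathbb{Z})}{\mathbf{G}(\mathbb{R})}{K_\infty}$ is a point $\tau\in\mathbb{H}$ up to $\mathbf{SL}_2(\mathbb{Z})$, i.e.\ a lattice $\Lambda_\tau=\mathbb{Z}+\mathbb{Z}\tau\subset\mathbb{C}$ up to homothety, i.e.\ the complex torus $\mathbb{C}/\Lambda_\tau$. The point $\tau$ produced by $[\mathbf{T}(\mathbb{A})l]$ is, by the $K_\infty$-invariance condition $\Ad_{l_\infty^{-1}}\mathbf{T}_0(\mathbb{R})=K_\infty=\mathbf{SO}_2(\mathbb{R})$, precisely the fixed point in $\mathbb{H}$ of the torus $\mathbf{T}_0(\mathbb{R})$ acting via $l_\infty$; equivalently $\tau$ generates $E$ over $\mathbb{Q}$, so $\mathbb{C}/\Lambda_\tau$ has CM by an order in $E$. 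It remains to pin down \emph{which} order and to get a proper fractional ideal rather than an arbitrary lattice. Here I would use the definition of $\Lambda_v=\mathbf{E}(\mathbb{Q}_v)\cap\Ad_{l_v}\mathbf{M}_2(\mathbb{Z}_v)$ from Definition \ref{defi:order}: the lattice $\mathfrak{a}$ in $\mathbb{C}\simeq \mathbf{E}(\mathbb{R})/(\text{the conjugate factor})$ cut out locally by $\Ad_{l_v}\mathbf{M}_2(\mathbb{Z}_v)$ at every finite place is, by construction, a $\Lambda_v$-module for each $v$ (because $\Lambda_v$ multiplies $\Ad_{l_v}\mathbf{M}_2(\mathbb{Z}_v)$ into itself: it \emph{is} $\mathbf{E}(\mathbb{Q}_v)\cap \Ad_{l_v}\mathbf{M}_2(\mathbb{Z}_v)$, a subring of the matrix order stabilizing the module). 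Hence $\mathfrak{a}$ is a fractional $\Lambda$-ideal, and since $\Lambda_v=\{x\in E_v: x\mathfrak{a}_v\subseteq \mathfrak{a}_v\}$ is exactly the multiplier ring locally, $\mathfrak{a}$ is a \emph{proper} (a.k.a.\ invertible/regular) fractional ideal of $\Lambda$ — its multiplier ring is $\Lambda$ itself, not a larger order. The complex structure on $\mathbf{E}(\mathbb{R})$ is the one making the $K_\infty$-fixed point holomorphic, so the quotient is genuinely $\mathbb{C}/\mathfrak{a}$ as a complex torus. Finally, the remark after Definition \ref{defi:order} that the quadratic order depends only on $[\mathbf{T}(\mathbb{A})l]$ (not on the $\mathbf{V}^{\oplus r}$-coordinate) justifies working purely in $\mathbf{G}$.

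The main obstacle, I expect, is the bookkeeping in the last step: carefully transporting the local lattice data $\Ad_{l_v}\mathbf{M}_2(\mathbb{Z}_v)\cap \mathbf{E}(\mathbb{Q}_v)$-module structure through the embedding $\mathbf{E}\hookrightarrow\mathbf{M}_2$ and the archimedean uniformization, so that one genuinely recovers $\mathfrak{a}$ as a lattice in $\mathbb{C}$ with multiplier ring exactly $\Lambda$, rather than merely a $\Lambda$-stable lattice. One has to check that no collapsing occurs, that the archimedean component of $\mathbf{E}$ acting through $l_\infty$ really does induce the standard complex structure on the relevant line, and that the double-coset finiteness count matches the size of the appropriate ray/Picard class set of $\Lambda$. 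Everything else — finiteness of $\mathcal{P}$, CM-ness of the tori — follows formally from anisotropy of $\mathbf{T}$ and the $K_\infty$-invariance hypothesis.
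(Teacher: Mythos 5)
Your proposal unwinds the same classical dictionary that the paper simply cites as standard (the paper's proof defers to the analogue of \cite[Cor.~4.4]{ELMVPeriodic}): finiteness from the class number of the anisotropic torus $\mathbf{T}$, and the identification of each point with $\mathbb{C}/\mathfrak{a}$ by transporting the local lattice data $l_v.\mathbb{Z}_v^2$ and its endomorphism order $\Lambda_v=\mathbf{E}(\mathbb{Q}_v)\cap\Ad_{l_v}\mathbf{M}_2(\mathbb{Z}_v)$ through $\jmath$. Minor bookkeeping slips (e.g.\ $\mathbf{E}(\mathbb{R})\simeq\mathbb{C}$ already for an imaginary quadratic $E$, with no ``conjugate factor'' to drop; and the object that should be called the lattice is $l_v.\mathbb{Z}_v^2\subset\mathbf{V}(\mathbb{Q}_v)$ rather than a lattice ``cut out by'' the matrix order) do not affect the structure, and your identification of the multiplier ring as exactly $\Lambda$ is the right way to get properness.
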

\begin{proof}
This claim is standard.
The finiteness result follows from the finiteness of the class number of the torus $\mathbf{T}$ -- essentially -- the finiteness of the class group of an imaginary quadratic field. An argument analogous to \cite[Corollary 4.4]{ELMVPeriodic} for imaginary quadratic fields establishes the second part.
\end{proof}
\end{defi}

\subsubsection{Torsion Points}\label{sec:torsion-pts}
Let $\mathcal{H}=\left[\mathbf{T}(\mathbb{A})(l,\mathrm{x})\right]\subset \left[\mathbf{P}^r(\mathbb{A})\right]$ be a $K_\infty$-invariant homogeneous toral set satisfying $\mathbf{T}<\mathbf{G}$.
We discuss how the element $\mathrm{x}\in\mathbf{V}^{\oplus r}$ gives rise to a tuple of torsion points in $\mathbb{C}/\Lambda$.

\begin{defi}
Fix an integer $k\in\mathbb{N}$.
For any $\mathrm{x}\in\mathbf{V}^{\oplus k}(\mathbb{A})$ we define the order $\ord_{\mathcal{H}}(\mathrm{x})$ to be the torsion order of the non-archimedean part $\mathrm{x}_f\in\mathbf{V}^{\oplus k}(\mathbb{A}_f)$ in the torsion group $\mathbf{V}^{\oplus k}(\mathbb{A}_f)/l_f.\mathbf{V}^{\oplus k}(\hat{\mathbb{Z}})$. Similarly, for any prime $p$ we set $\ord_{\mathcal{H}}(\mathrm{x}_p)$ to be the torsion order of $\mathrm{x}_p$ in $\mathbf{V}^{\oplus k}(\mathbb{Q}_p)/l_p.\mathbf{V}^{\oplus k}(\mathbb{Z})$. We always have
\begin{equation*}
\ord_{\mathcal{H}}(\mathrm{x})=\prod_p \ord_{\mathcal{H}}(\mathrm{x}_p) \, .
\end{equation*}
\end{defi}

\begin{defi}\label{defi:jmath-iso}
\hfill
\begin{enumerate}[label=(\alph*)]
\item
The action of $\mathbf{M}_2$ on $\mathbf{V}$ makes $\mathbf{V}(\mathbb{Q})$ a $1$-dimensional vector space for the field $E=\mathbf{E}(\mathbb{Q})$. We can check locally that the order of $E$ stabilizing the lattice $\cap_{v<\infty} l_v.\mathbb{Z}_v^2\subset \mathbf{V}(\mathbb{Q})$ is exactly $\Lambda$. Any quadratic order is monogenic hence all proper fractional $\Lambda$-ideals are principle. This implies that we can choose a linear $E$-isomorphism $\jmath\colon \mathbf{V}(\mathbb{Q})\to E$ mapping $\cap_{v<\infty} l_v.\mathbb{Z}_v^2$ to $\Lambda$; this isomorphism is unique up to composition with multiplication by an element of $\Lambda^\times$.

\item
The linear isomorphism $\jmath$ induces by base change for all rational places $v$ a linear isomorphism $\jmath_v\colon \mathbf{V}(\mathbb{Q}_v)\to E_v=\mathbf{E}\otimes \mathbb{Q}_v$ sending $l_v.\mathbb{Z}_v^2$ to $\Lambda_v$ for $v<\infty$. These isomorphisms combine to an ad\`elic isomorphism
\begin{equation*}
\jmath_{\mathbb{A}}\colon \mathbf{V}(\mathbb{A})\to \mathbb{A}_E
\end{equation*}
that sends $\mathbf{V}(\mathbb{Q})$ to $E$ and the compact subgroup $l_v.\mathbb{Z}_v^2$ to  $\Lambda_v$ for all $v<\infty$. Again, this isomorphism is defined up to global multiplication by an element of $\Lambda^\times$.

\item
We derive an isomorphism of quotients
\begin{equation}\label{eq:AE-V-iso}
\jmath_{/\Lambda}\colon
\dfaktor{\mathbf{V}(\mathbb{Q})}{\mathbf{V}(\mathbb{A})}{l_f.\prod_{v<\infty}\mathbb{Z}_v^2}
\to
\dfaktor{E}{\mathbb{A}_E}{\prod_{v<\infty} \Lambda_v} \, .
\end{equation}
Fixing one of the two-possible field isomorphisms $E\otimes\mathbb{R}\simeq \mathbb{C}$ the right-hand side of \eqref{eq:AE-V-iso} is naturally identified with the complex torus $\mathbb{C} / \Lambda$.

\item
If $\mathrm{x}=(\mathrm{x}_1,\ldots,\mathrm{x}_r)\in\mathbf{V}(\mathbb{A})^{\oplus r}$ then $$\jmath_{/\Lambda}(\mathrm{x})\coloneqq\left(\jmath_{/\Lambda}(\mathrm{x}_1),\ldots,\jmath_{/\Lambda}(\mathrm{x}_r)\right) \in \left(\mathbb{C}/\Lambda\right)^{\oplus r}$$
is a tuple of $r$ points in $\mathbb{C} / \Lambda$. This tuple is uniquely defined up to  mutual complex conjugation and diagonal multiplication by an element of $\Lambda^\times$. Moreover, if $\mathcal{H}$ is $K_\infty$-invariant then the classes of $\mathrm{x}_i$ in the left-hand side of \eqref{eq:AE-V-iso} have a zero archimedean components, equivalently, $\jmath_{/\Lambda}(\mathrm{x}_i)$ are torsion points in $\mathbb{C} / \Lambda$, i.e.\ $\jmath_{/\Lambda}(\mathrm{x}_i)\in E / \Lambda$. The torsion order of $\jmath_{/\Lambda}(\mathrm{x}_i)$ is equal to $\ord_{\mathcal{H}}(\mathrm{x}_i)$.

\item For any rational place $v$ the isomorphism $\jmath_v\colon\mathbf{V}(\mathbb{Q}_v)\to E_v$ induces and isomorphism
\begin{equation*}
\jmath_v\colon \End_{\mathbb{Q}_v}(E_v)\to \mathbf{M}_2(\mathbb{Q}_v) \, .
\end{equation*}
This isomorphism sends $\End(\Lambda_v)$ to $\Ad_{l_v}\mathbf{M}_2(\mathbb{Z}_v)$ and $\Aut^1(\Lambda_v)$ to $\Ad_{l_v}\mathbf{G}(\mathbb{Z}_v)$, where $\Aut^1(\Lambda_v)$ is the group of $\mathbb{Q}_v$-automorphism of $\Lambda_v$ of determinant $1$.
\end{enumerate}
\end{defi}

\subsection{Local Stabilizers and the Class Number Formula}
\begin{defi}\label{defi:Lambda_v(x)+Pic}
\hfill
\begin{enumerate}
\item
Define for any $v<\infty$
\begin{align*}
\Lambda_v^\times(\mathrm{x})&\coloneqq  \left\{\lambda\in \Lambda_v^\times \mid \lambda.\mathrm{x}_v-\mathrm{x}_v\in \Lambda_v^{\oplus r} \right\}\\
&=\left\{\lambda\in \Lambda_v^\times \mid \lambda.\jmath_{/\Lambda}(\mathrm{x})-\jmath_{/\Lambda}(\mathrm{x})\equiv 0 \mod{\Lambda_v^{\oplus r}} \right\}
\, ,\\
\Lambda_v^{(1)}(\mathrm{x})&\coloneqq \Lambda_v^{(1)}\cap \Lambda_v^\times(\mathrm{x}) \, .
\end{align*}
In the second line above we consider $\jmath_{/\Lambda}(\mathrm{x})$ as an element of $(E_v/\Lambda_v)^{\oplus r}$.
Moreover, set
\begin{align*}
\Lambda_f^\times(\mathrm{x})&\coloneqq  \prod_{v<\infty} \Lambda_v^\times(\mathrm{x}) <\mathbb{A}^\times_{E,f} \, ,\\
\Lambda_f^{(1)}(\mathrm{x})&\coloneqq \prod_{v<\infty} \Lambda_v^{(1)}(\mathrm{x}) < \mathbb{A}^{(1)}_{E,f}
=\mathbf{T}(\mathbb{A}_f) \, .
\end{align*}
\item
Define
\begin{equation*}
\Pic(\Lambda,\mathrm{x})\coloneqq \dfaktor{E^\times}{\mathbb{A}_{E,f}}{\Lambda_f^\times(\mathrm{x})} \, .
\end{equation*}
This is a finite abelian group (a quotient of a ray class group of $E$). For $\jmath_{/\Lambda}(\mathrm{x})=0\mod \Lambda^{\oplus r}$ the group $\Pic(\Lambda, \mathrm{x})$ coincides with the regular Picard group of $\Lambda$ and generally it is a finite extension of it. When $\Lambda=\mathcal{O}_E$ is the maximal order and $r=1$ the group  $\Pic(\mathcal{O}_E, \mathrm{x})$ is a standard ray class group of $E$ with modulus equal to $\left(\mathcal{O}_E:\jmath_{/\Lambda}(\mathrm{x})\right)$.

\item
Set $\Pic^\mathrm{pg}(\Lambda,\mathrm{x})$ to be the image of $\mathbb{A}_{E,f}^{(1)}$ in $\Pic(\Lambda,\mathrm{x})$. As a group
\begin{equation*}
\Pic^\mathrm{pg}(\Lambda,\mathrm{x})\simeq \dfaktor{E^{(1)}}{\mathbb{A}_{E,f}^{(1)}}{\Lambda_f^{(1)}(\mathrm{x})} \, .
\end{equation*}

Notice that when $r=1$ and $\Lambda=\mathcal{O}_E$ is the maximal order $\Pic^\mathrm{pg}(\mathcal{O}_E,\mathrm{x})$ is the principal genus subgroup of the ray class group $\Pic(\mathcal{O}_E, \mathrm{x})$ as defined by Hasse \cite{HassePG}. Moreover when $\jmath_{/\Lambda}(\mathrm{x})\equiv 0 \mod \mathcal{O}_E$ it coincides with the classical principal genus subgroup of Gauss.
\end{enumerate}
\end{defi}

\begin{lem}\label{lem:vol-Lambda_f^{(1)}}
For any place $v<\infty$
\begin{align*}
\Ad_{l_v} \mathbf{G}(\mathbb{Z}_v) \cap \mathbf{T}(\mathbb{Q}_v)&=\Lambda_v^{(1)} \, ,\\
\Ad_{(l_v,\mathrm{x}_v)} \mathbf{P}^r(\mathbb{Z}_v) \cap \mathbf{T}(\mathbb{Q}_v)&=\Lambda_v^{(1)}(\mathrm{x}) \, .
\end{align*}
In particular, $\vol(\mathcal{H})=\meas_{\mathbf{T}(\mathbb{A}_f)}\left(\Lambda_f^{(1)}(\mathrm{x})\right)$.
\end{lem}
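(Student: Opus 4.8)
The plan is to establish the two displayed local identities place by place by an elementary computation, and then to assemble them adelically using the description of $\vol(\mathcal{H})$ for a $K_\infty$-invariant homogeneous toral set recorded in the Remark following the definition of the volume.

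For the first identity, recall that under the embedding $\mathbf{E}\hookrightarrow\mathbf{M}_2$ an element of $\mathbf{T}(\mathbb{Q}_v)=\mathbf{SL}_1(\mathbf{E})(\mathbb{Q}_v)$ is exactly multiplication by some $\lambda\in\mathbf{E}(\mathbb{Q}_v)$ with $\det(\lambda)=\Nr_{E_v/\mathbb{Q}_v}(\lambda)=1$. I would first prove $\supseteq$: if $\lambda\in\Lambda_v^{(1)}$ then $\lambda\in\Lambda_v\subseteq\Ad_{l_v}\mathbf{M}_2(\mathbb{Z}_v)$ by the very definition of $\Lambda_v$, and since $\lambda$ is integral over $\mathbb{Z}_v$ its reduced trace lies in $\mathbb{Z}_v$, so $\lambda^{-1}=\bar\lambda=\Tr_{E_v/\mathbb{Q}_v}(\lambda)-\lambda\in\mathbb{Z}_v+\Lambda_v=\Lambda_v\subseteq\Ad_{l_v}\mathbf{M}_2(\mathbb{Z}_v)$; hence $l_v^{-1}\lambda l_v$ and its inverse both lie in $\mathbf{M}_2(\mathbb{Z}_v)$, so $l_v^{-1}\lambda l_v\in\mathbf{GL}_2(\mathbb{Z}_v)$, and $\det\lambda=1$ upgrades this to $\lambda\in\Ad_{l_v}\mathbf{SL}_2(\mathbb{Z}_v)=\Ad_{l_v}\mathbf{G}(\mathbb{Z}_v)$. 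The reverse inclusion is immediate: if $\lambda\in\Ad_{l_v}\mathbf{G}(\mathbb{Z}_v)\cap\mathbf{T}(\mathbb{Q}_v)$ then $\lambda\in\mathbf{E}(\mathbb{Q}_v)\cap\Ad_{l_v}\mathbf{M}_2(\mathbb{Z}_v)=\Lambda_v$ and $\Nr(\lambda)=1$. (This is the $\mathbf{G}$-part of Definition \ref{defi:jmath-iso}(e).)

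For the second identity I would use the semidirect-product structure: with the group law $(g,v)(h,w)=(gh,\,v+g.w)$ on $\mathbf{P}^r=\mathbf{G}\ltimes\mathbf{V}^{\oplus r}$, a short computation gives, for $(h,w)\in\mathbf{P}^r(\mathbb{Z}_v)$,
\begin{equation*}
\Ad_{(l_v,\mathrm{x}_v)}(h,w)=\bigl(\Ad_{l_v}h,\ l_v.w+\mathrm{x}_v-(\Ad_{l_v}h).\mathrm{x}_v\bigr)\,.
\end{equation*}
Requiring this to equal an element $(\lambda,0)\in\mathbf{T}(\mathbb{Q}_v)$ forces $h=\Ad_{l_v^{-1}}\lambda$ and $l_v.w=(\lambda-1).\mathrm{x}_v$. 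By the first identity, $h\in\mathbf{G}(\mathbb{Z}_v)$ is equivalent to $\lambda\in\Lambda_v^{(1)}$. For the second equation I would transport everything through the $E_v$-linear isomorphism $\jmath_v$ of Definition \ref{defi:jmath-iso}, which carries $l_v.\mathbb{Z}_v^2$ onto $\Lambda_v$ (componentwise on $\mathbf{V}^{\oplus r}$): then the condition $w\in\mathbf{V}^{\oplus r}(\mathbb{Z}_v)$, i.e.\ $(\lambda-1).\mathrm{x}_v\in l_v.\mathbf{V}^{\oplus r}(\mathbb{Z}_v)$, becomes $\lambda.\jmath_v(\mathrm{x}_v)-\jmath_v(\mathrm{x}_v)\in\Lambda_v^{\oplus r}$, which for $\lambda\in\Lambda_v^\times$ is exactly the defining condition of $\Lambda_v^\times(\mathrm{x})$. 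Hence $(\lambda,0)\in\Ad_{(l_v,\mathrm{x}_v)}\mathbf{P}^r(\mathbb{Z}_v)$ iff $\lambda\in\Lambda_v^{(1)}\cap\Lambda_v^\times(\mathrm{x})=\Lambda_v^{(1)}(\mathrm{x})$, and every such $\lambda$ does occur, with $h,w$ the uniquely determined elements above lying in $\mathbf{G}(\mathbb{Z}_v)$ and $\mathbf{V}^{\oplus r}(\mathbb{Z}_v)$ respectively; this is the claimed equality.

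Finally, for the volume statement I would take the product of the second identity over all finite places. Since $\mathbf{P}^r(\hat{\mathbb{Z}})=\prod_{v<\infty}\mathbf{P}^r(\mathbb{Z}_v)$ and $\mathbf{T}(\mathbb{A}_f)=\prod'_{v<\infty}\mathbf{T}(\mathbb{Q}_v)$, this yields $\Ad_{(l_f,\mathrm{x}_f)}\mathbf{P}^r(\hat{\mathbb{Z}})\cap\mathbf{T}(\mathbb{A}_f)=\prod_{v<\infty}\Lambda_v^{(1)}(\mathrm{x})=\Lambda_f^{(1)}(\mathrm{x})$, the restricted product being legitimate because $\Lambda_v^{(1)}(\mathrm{x})=\Lambda_v^{(1)}=\mathcal{O}_{E_v}^{(1)}$ for all but finitely many $v$ (Definition \ref{defi:order}). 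Substituting $g=(l,\mathrm{x})$ into the formula $\vol(\mathcal{H})=\meas_{\mathbf{T}(\mathbb{A}_f)}\bigl(\Ad_{g_f}\mathbf{P}^r(\hat{\mathbb{Z}})\cap\mathbf{T}(\mathbb{A}_f)\bigr)$ from the Remark after the definition of $\vol$ then gives the claim. Nothing here is deep; the only points requiring care are getting the semidirect-product conjugation formula right and the translation along $\jmath_v$ between integrality of $\mathrm{x}_v$ against $l_v.\mathbb{Z}_v^2$ and integrality of $\jmath_v(\mathrm{x}_v)$ against $\Lambda_v$. I do not anticipate a genuine obstacle.
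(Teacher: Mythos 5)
Your proof is correct, and it fills in exactly the kind of routine verification the paper leaves to the reader (the lemma is stated with an empty proof, a bare $\qed$). The conjugation formula in $\mathbf{G}\ltimes\mathbf{V}^{\oplus r}$ is right, the sign in $l_v.w=(\lambda-1).\mathrm{x}_v$ is right, and the translation along $\jmath_v$ from the lattice $l_v.\mathbf{V}^{\oplus r}(\mathbb{Z}_v)$ to $\Lambda_v^{\oplus r}$ matches Definitions \ref{defi:jmath-iso} and \ref{defi:Lambda_v(x)+Pic}. One small simplification: in the first identity you do not need the reduced-trace argument to see that $\lambda^{-1}\in\Ad_{l_v}\mathbf{M}_2(\mathbb{Z}_v)$; for any $M\in\mathbf{M}_2(\mathbb{Z}_v)$ with $\det M=1$ one has $M^{-1}=\operatorname{adj}(M)\in\mathbf{M}_2(\mathbb{Z}_v)$ automatically, so $\Ad_{l_v}\mathbf{G}(\mathbb{Z}_v)=\Ad_{l_v}\mathbf{M}_2(\mathbb{Z}_v)\cap\{\det=1\}$, and intersecting with $\mathbf{T}(\mathbb{Q}_v)=\mathbf{E}(\mathbb{Q}_v)^{(1)}$ gives $\Lambda_v^{(1)}$ in one line. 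Your detour through $\lambda^{-1}=\bar\lambda=\Tr(\lambda)-\lambda$ is valid (it is the same Cramer computation specialized to the quadratic order) but not needed.
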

\qed

\begin{prop}[Class Number Formula]\label{prop:toral-volume}
Let $\mathbf{T}<\mathbf{G}$ be a maximal torus defined and anisotropic over $\mathbb{Q}$.
The volume of a $K_\infty$-invariant homogeneous toral set $\mathcal{H}=\left[\mathbf{T}(\mathbb{A})(l,\mathrm{x})\right]\subset \left[\mathbf{P}^r(\mathbb{A})\right]$ is equal to
\begin{equation*}
\vol(\mathcal{H})=\frac{w_E}{2\pi\left|\mathcal{O}_E^\times\right|\left[\Pic(\Lambda,\mathrm{x}): \Pic^\mathrm{pg}(\Lambda,\mathrm{x}) \right]}\sqrt{|D|}L(1,\chi_E)\prod_{p\mid f}\left(1-\frac{\chi_E(p)}{p}\right)
\prod_{v<\infty}\left[\Lambda_v^\times \colon \Lambda_v^\times(\mathrm{x})\right] \, ,
\end{equation*}
where $\chi_E$ is the real Dirichlet character attached to the quadratic extension $E/\mathbb{Q}$ by class field theory, $w_E$ is the number of roots of unity in $E$ and $f$ is the conductor of the order $\Lambda$.
\end{prop}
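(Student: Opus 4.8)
The plan is to compute the adelic volume $\vol(\mathcal{H}) = \meas_{\mathbf{T}(\mathbb{A}_f)}(\Lambda_f^{(1)}(\mathrm{x}))$ (using Lemma \ref{lem:vol-Lambda_f^{(1)}}) by a three-step comparison: first relate it to the volume of $\Lambda_f^{(1)}$ (the case $\mathrm{x}=0$), then relate $\meas_{\mathbf{T}(\mathbb{A}_f)}(\Lambda_f^{(1)})$ to the order volume / class number of the order $\Lambda$, and finally express the class number of $\Lambda$ in terms of $L(1,\chi_E)$ via the analytic class number formula, carefully tracking the index $[\Pic(\Lambda,\mathrm{x}) : \Pic^{\mathrm{pg}}(\Lambda,\mathrm{x})]$.

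\textbf{Step 1 (from $\mathrm{x}$ to $\mathrm{x}=0$).} Since $\Lambda_f^{(1)}(\mathrm{x}) < \Lambda_f^{(1)}$ is an open subgroup with $[\Lambda_f^\times : \Lambda_f^\times(\mathrm{x})] = \prod_{v<\infty}[\Lambda_v^\times : \Lambda_v^\times(\mathrm{x})]$ (a finite product, as the factor is $1$ for almost all $v$), one has $\meas_{\mathbf{T}(\mathbb{A}_f)}(\Lambda_f^{(1)}(\mathrm{x})) = \meas_{\mathbf{T}(\mathbb{A}_f)}(\Lambda_f^{(1)}) \cdot [\Lambda_f^{(1)} : \Lambda_f^{(1)}(\mathrm{x})]^{-1}$, and the displayed formula will account for this once I check the local index identity $[\Lambda_v^{(1)} : \Lambda_v^{(1)}(\mathrm{x})] = [\Lambda_v^\times : \Lambda_v^\times(\mathrm{x})]$, which should follow from a snake-lemma / determinant argument comparing the norm-$1$ subgroup to the full unit group locally (the extra $[\Pic(\Lambda,\mathrm{x}):\Pic^{\mathrm{pg}}(\Lambda,\mathrm{x})]$ factor in the denominator records the discrepancy between these indices globally versus the product of local ones, i.e.\ the failure of $\mathbb{A}^{(1)}_{E,f}$ to surject onto the relevant class group). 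Concretely, I would run the exact sequence relating $E^{(1)}\backslash \mathbb{A}^{(1)}_{E,f} / \Lambda_f^{(1)}(\mathrm{x})$ to $E^\times \backslash \mathbb{A}^\times_{E,f} / \Lambda_f^\times(\mathrm{x})$ and extract the index $[\Pic(\Lambda,\mathrm{x}):\Pic^{\mathrm{pg}}(\Lambda,\mathrm{x})]$.

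\textbf{Step 2 (class number / order volume).} Using $[\mathbf{T}(\mathbb{A})] = \lfaktor{\mathbf{T}(\mathbb{Q})}{\mathbf{T}(\mathbb{A})}$ has total mass $1$ and the decomposition $\meas_{\mathbf{T}(\mathbb{A})} = \meas_{\mathbf{T}(\mathbb{R})} \times \meas_{\mathbf{T}(\mathbb{A}_f)}$ with $\meas_{\mathbf{T}(\mathbb{R})}$ a probability measure, the volume $\meas_{\mathbf{T}(\mathbb{A}_f)}(\Lambda_f^{(1)})$ equals $|\Pic^{\mathrm{pg}}(\Lambda,0)|^{-1}$ times the covolume of the image of $E^{(1)}$, i.e.\ is governed by $|E^{(1)}\backslash \mathbb{A}^{(1)}_{E,f}/\Lambda_f^{(1)}|$ and the unit index $[\mathcal{O}_E^{(1)} : \Lambda^{(1)}]$; the torsion group $E^{(1)}$ contributes $w_E$ and $|\mathcal{O}_E^\times|$. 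Equivalently, I can identify the double quotient with a genus/narrow class group of the order $\Lambda$ and invoke the standard relation between the class number $h(\Lambda)$ of the order and $h(\mathcal{O}_E)$, namely $h(\Lambda) = \frac{h(\mathcal{O}_E) f}{[\mathcal{O}_E^\times : \Lambda^\times]} \prod_{p\mid f}\bigl(1 - \frac{\chi_E(p)}{p}\bigr)$.

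\textbf{Step 3 (Dirichlet class number formula).} Insert the classical formula $h(\mathcal{O}_E) = \frac{w_E \sqrt{|D_E^{\mathrm{fund}}|}}{2\pi} L(1,\chi_E)$ for the imaginary quadratic field $E$, and combine with the conductor factor $\sqrt{|D|} = f\sqrt{|D_E^{\mathrm{fund}}|}$, the Euler factors $\prod_{p\mid f}(1-\chi_E(p)/p)$, and the index corrections from Steps 1--2. Collecting all constants yields exactly the stated expression.

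\textbf{Main obstacle.} The delicate point is Step 1: correctly bookkeeping the relationship between the \emph{global} index $[\Lambda_f^{(1)}:\Lambda_f^{(1)}(\mathrm{x})]$ and the \emph{product of local} indices $\prod_v[\Lambda_v^\times : \Lambda_v^\times(\mathrm{x})]$, and pinning down that the correction is precisely $[\Pic(\Lambda,\mathrm{x}):\Pic^{\mathrm{pg}}(\Lambda,\mathrm{x})]^{-1}$ rather than some other class-group-theoretic quantity. This requires a careful diagram chase with the norm map on ideles and its interaction with the congruence conditions defining $\Lambda_v^\times(\mathrm{x})$; the non-maximality of $\Lambda$ at primes dividing $f$ (where $\Lambda_v^{(1)}$ is not the full local norm-$1$ unit group) adds a further layer that must be handled place-by-place, most cleanly via the local computations deferred to Appendix \ref{app:L-functions}.
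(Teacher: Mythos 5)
Your Steps 2--3 (the order class-number relation and Dirichlet's formula) are sound and match what the paper does, but the decisive point is Step 1, exactly where you flag your "main obstacle," and there it breaks: the local index identity $[\Lambda_v^{(1)}:\Lambda_v^{(1)}(\mathrm{x})]=[\Lambda_v^\times:\Lambda_v^\times(\mathrm{x})]$ you propose to verify is false in general, so no snake-lemma argument will deliver it. Take $r=1$ and an odd prime $p$ split in $E$ with $\Lambda_p=\mathcal{O}_{E_p}\simeq\mathbb{Z}_p\times\mathbb{Z}_p$ and $\jmath_p(\mathrm{x}_p)=(p^{-1},p^{-1})$: then $\Lambda_p^\times(\mathrm{x})=(1+p\mathbb{Z}_p)\times(1+p\mathbb{Z}_p)$ has index $(p-1)^2$ in $\Lambda_p^\times$, while under $\Lambda_p^{(1)}\simeq\mathbb{Z}_p^\times$ the subgroup $\Lambda_p^{(1)}(\mathrm{x})$ corresponds to $1+p\mathbb{Z}_p$, of index $p-1$ (an inert prime with $\mathrm{x}_p$ of order $p$ gives $p^2-1$ versus $p+1$). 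The correct relation is $[\Lambda_v^\times:\Lambda_v^\times(\mathrm{x})]=[\Lambda_v^{(1)}:\Lambda_v^{(1)}(\mathrm{x})]\cdot[\Nr\Lambda_v^\times:\Nr\Lambda_v^\times(\mathrm{x})]$, and by the norm exact sequence used in the proof of Lemma \ref{lem:principal-genus} the product over $v$ of the norm-image indices equals $[\Pic(\Lambda,\mathrm{x}):\Pic^{\mathrm{pg}}(\Lambda,\mathrm{x})]\big/[\Pic(\Lambda,0):\Pic^{\mathrm{pg}}(\Lambda,0)]$. So your remark that the genus index "records the discrepancy" is the right instinct, but it contradicts the identity you plan to prove; and if that identity did hold, your Step 1 would output the formula with $[\Pic(\Lambda,0):\Pic^{\mathrm{pg}}(\Lambda,0)]$ in place of $[\Pic(\Lambda,\mathrm{x}):\Pic^{\mathrm{pg}}(\Lambda,\mathrm{x})]$, which is a genuinely different constant since that index grows with the torsion order of $\mathrm{x}$. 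As written, the key step is both unresolved and anchored to a false claim.

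The paper avoids the comparison with the $\mathrm{x}=0$ case altogether. From the exact sequence $1\to\lfaktor{\Lambda^\times(\mathrm{x})}{\Lambda_f^{(1)}(\mathrm{x})}\to\lfaktor{\mathbf{T}(\mathbb{Q})}{\mathbf{T}(\mathbb{A})}\to\Pic^{\mathrm{pg}}(\Lambda,\mathrm{x})\to1$ it gets $\vol(\mathcal{H})=\left|\Pic^{\mathrm{pg}}(\Lambda,\mathrm{x})\right|\left|\Lambda^\times(\mathrm{x})\right|^{-1}$ in one stroke, then writes $\left|\Pic^{\mathrm{pg}}(\Lambda,\mathrm{x})\right|=\left|\Pic(\Lambda,\mathrm{x})\right|/[\Pic(\Lambda,\mathrm{x}):\Pic^{\mathrm{pg}}(\Lambda,\mathrm{x})]$ and evaluates $\left|\Pic(\Lambda,\mathrm{x})\right|$ through the chain $\left|\Pic(\mathcal{O}_E)\right|$ (analytic class number formula), $[\Pic(\Lambda):\Pic(\mathcal{O}_E)]=\frac{|\Lambda^\times|}{|\mathcal{O}_E^\times|}f\prod_{p\mid f}\bigl(1-\chi_E(p)p^{-1}\bigr)$ via Corollary \ref{cor:units-volume-ratio}, and $[\Pic(\Lambda,\mathrm{x}):\Pic(\Lambda)]=\frac{|\Lambda^\times(\mathrm{x})|}{|\Lambda^\times|}\prod_{v<\infty}[\Lambda_v^\times:\Lambda_v^\times(\mathrm{x})]$, each relative index coming from a short exact sequence whose kernel involves only the full local unit groups; norm-one local indices never appear, and the factors $\prod_v[\Lambda_v^\times:\Lambda_v^\times(\mathrm{x})]$ and $[\Pic(\Lambda,\mathrm{x}):\Pic^{\mathrm{pg}}(\Lambda,\mathrm{x})]^{-1}$ enter for structural reasons rather than as corrections. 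If you want to keep your reduction to $\mathrm{x}=0$, you must replace the false identity by the norm-map bookkeeping sketched above; otherwise adopt the paper's factorization, which is shorter and sidesteps the pitfall entirely.
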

\begin{proof}
Let $\Lambda^\times(\mathrm{x})\coloneqq \left\{\lambda\in \Lambda^\times \mid \lambda\cdot \jmath_{/\Lambda}(\mathrm{x})-\jmath_{/\Lambda}(\mathrm{x})\equiv 0 \mod \Lambda^{\oplus r} \right\}$.
Using Lemma \ref{lem:vol-Lambda_f^{(1)}} and the following short exact sequence
\begin{equation*}
1\to \lfaktor{\Lambda^\times(\mathrm{x})}{\Lambda_f^{(1)}(\mathrm{x})}\to \lfaktor{\mathbf{T}(\mathbb{Q})}{\mathbf{T}(\mathbb{A})}\to
\Pic^\mathrm{pg}(\Lambda,\mathrm{x})\to 1 \, .
\end{equation*}
we deduce that $\vol(\mathcal{H})=\left|\Pic^\mathrm{pg}(\Lambda,\mathrm{x})\right|\left|\Lambda^\times(\mathrm{x})\right|^{-1}$. Hence we can write
\begin{equation}\label{eq:vol(H)-indices}
\vol(\mathcal{H})=\frac{\left|\Pic(\mathcal{O}_E)\right|\left[\Pic(\Lambda):\Pic(\mathcal{O}_E)\right]
\left[\Pic(\Lambda,\mathrm{x}):\Pic(\Lambda)\right]} {\left[\Pic(\Lambda,\mathrm{x}):\Pic^\mathrm{pg}(\Lambda,\mathrm{x})\right]\left|\Lambda^\times(\mathrm{x})\right|} \, .
\end{equation}
To compute the first term in the numerator of \eqref{eq:vol(H)-indices} we use the analytic class number formula
\begin{equation*}
\left|\Pic(\mathcal{O}_E)\right|=\frac{w_E}{2\pi} \sqrt{|D_E|}L(1,\chi_E) \, ,
\end{equation*}
where $D_E$ is the discriminant of $\mathcal{O}_E$.
We evaluate the term $\left[\Pic(\Lambda):\Pic(\mathcal{O}_E)\right]$ using the exact sequence
\begin{equation*}
1\to\dfaktor{\mathcal{O}_E^\times}{\prod_{v<\infty}\mathcal{O}_{E_v}^\times}{\prod_{v<\infty}\Lambda_v^\times}\to\Pic(\Lambda)\to\Pic(\mathcal{O}_E)\to 1
\end{equation*}
which implies using Corollary \ref{cor:units-volume-ratio}
\begin{equation*}
\left[\Pic(\Lambda):\Pic(\mathcal{O}_E)\right]=\frac{\left|\Lambda^\times\right|}{\left|\mathcal{O}_E^\times\right|}
\prod_{v<\infty}\left[\mathcal{O}_{E_v}^\times: \Lambda_v^\times\right]=\frac{\left|\Lambda^\times\right|}{\left|\mathcal{O}_E^\times\right|}
f\prod_{p\mid f}\left(1-\frac{\chi_E(p)}{p}\right) \, .
\end{equation*}
Finally to compute $\left[\Pic(\Lambda,\mathrm{x}):\Pic(\Lambda)\right]$ we consider the short exact sequence
\begin{equation*}
1\to\dfaktor{\Lambda^\times}{\prod_{v<\infty}\Lambda_v^\times}{\prod_{v<\infty}\Lambda_v^\times(\mathrm{x})}
 \to\Pic(\Lambda,\mathrm{x})\to\Pic(\Lambda)\to 1
\end{equation*}
to see that
\begin{equation*}
\left[\Pic(\Lambda,\mathrm{x}):\Pic(\Lambda)\right]=\frac{\left|\Lambda^\times(\mathrm{x})\right|}{\left|\Lambda^\times\right|}
\prod_{v<\infty} \left[\Lambda_v^\times\colon \Lambda_v^\times(\mathrm{x})\right] \, .
\end{equation*}
The final claim follows by combining all the formulae above with \eqref{eq:vol(H)-indices} and the relation $D=D_E f^2$.
\end{proof}

\section{Structure of Limits of Periodic Measures}\label{sec:rigidity}
\subsection{Invariance of Limits}
In the following proposition we show how the congruence assumptions at two primes for the discriminants and torsion order imply that any weak-$*$ limit must be invariant under a split torus at two places. We also present some relevant consequences of Duke's theorem for $\mathbf{SL}_2$.
\begin{prop}\label{prop:limit-invariance}
Let $\left\{\mathcal{H}_i\subset\left[\mathbf{P}^r(\mathbb{A})\right]\right\}_i$ be a sequence of $K_\infty$-invariant homogeneous toral sets. Let $\mu_i$ be the periodic measures on $\mathcal{H}_i$. Denote by $D_i$ and $N_i$ the discriminant and torsion order of $\mathcal{H}_i$. Write $D_i=D_i^\mathrm{fund}f_i^2$ where $D_i^\mathrm{fund}$ is a fundamental discriminant and $f_i$ is the conductor.

Fix $p_1$,$p_2$ and assume for all $i$
\begin{enumerate}
\item
\begin{align*}
\left(\frac{D_i^\mathrm{fund}}{p_1}\right)&=\left(\frac{D_i^\mathrm{fund}}{p_2}\right)=1 \, ,\\
\val_{p_1}(f_i)&,\val_{p_2}(f_i)\ll 1 \, .
\end{align*}
\item
\begin{equation*}
\val_{p_1}(N_i),\val_{p_2}(N_i)\ll 1 \, .
\end{equation*}
\end{enumerate}

Then there is a pre-compact sequence $\left\{\xi_i\right\}_i\subset \mathbf{P}^r(\mathbb{A})$ such that $\mathcal{H}_i \xi_i$ is $A_{p_1}\times A_{p_2}$ invariant for all $i$.
The sequence $\mu_i$ is tight and if $\mu_i\xrightarrow[i\to\infty]{\textrm{weak}-*}\mu$ then there is
$\xi\in \overline{\left\{\xi_i \right\}_i}$ such that the measure $\xi_*.\mu$ is $A_{p_1}\times A_{p_2}$-invariant and projects to the Haar measure on $\left[\mathbf{G}(\mathbb{A})\right]$.
\end{prop}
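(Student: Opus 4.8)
The plan is to prove the two assertions (existence of a precompact sequence $\{\xi_i\}$ making $\mathcal{H}_i\xi_i$ invariant under $A_{p_1}\times A_{p_2}$, and the consequence for a weak-$*$ limit) in sequence, with the first being essentially a local computation at $p_1,p_2$ and the second an application of Duke's theorem on $[\mathbf{G}(\mathbb{A})]$ together with the standard argument that the limit of invariant measures is invariant.

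First I would work one prime at a time; fix $p\in\{p_1,p_2\}$. By Corollary~\ref{cor:standard-form} we may put $\mathcal{H}_i$ in standard form, so that the relevant local datum is the element $(l_{i,p},\mathrm{x}_{i,p})\in\mathbf{P}^r(\mathbb{Q}_p)$ together with the torus $\mathbf{T}_i<\mathbf{G}$. The congruence hypothesis $\left(\frac{D_i^\mathrm{fund}}{p}\right)=1$ says the splitting field $E_i$ of $\mathbf{T}_i$ is split at $p$, so $\mathbf{T}_i(\mathbb{Q}_p)$ is (after conjugation) a split maximal torus of $\mathbf{G}(\mathbb{Q}_p)$; the condition $\val_p(f_i)\ll 1$ bounds how far the local order $\Lambda_{i,p}$ is from maximal, hence bounds the distance (in $\mathbf{G}(\mathbb{Q}_p)/\mathbf{G}(\mathbb{Z}_p)$, or rather on the building) from $\Ad_{l_{i,p}}\mathbf{G}(\mathbb{Z}_p)$ to a standard hyperspecial vertex whose stabilizer contains the standard diagonal torus. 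Concretely, there is $g_{i,p}\in\mathbf{G}(\mathbb{Q}_p)$, ranging over a fixed finite set modulo $\mathbf{G}(\mathbb{Z}_p)$ (the bound depending only on the implied constant in $\val_p(f_i)\ll1$), conjugating $\mathbf{T}_i(\mathbb{Q}_p)$ to contain the standard split torus $A_p\subset\mathbf{G}(\mathbb{Q}_p)$. Next, the condition $\val_p(N_i)\ll1$ plays the analogous role in the $\mathbf{V}^{\oplus r}$-direction: it bounds the order of $\mathrm{x}_{i,p}$ in $\mathbf{V}^{\oplus r}(\mathbb{Q}_p)/l_{i,p}\mathbf{V}^{\oplus r}(\mathbb{Z}_p)$, so a bounded element $v_{i,p}\in\mathbf{V}^{\oplus r}(\mathbb{Q}_p)$ (again ranging over a fixed finite set mod the relevant compact) translates $\mathrm{x}_{i,p}$ into the standard lattice. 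Taking $\xi_{i,p}=(g_{i,p},v_{i,p})^{-1}$ and $\xi_i=\prod_p\xi_{i,p}$ (trivial away from $p_1,p_2$), the set $\mathcal{H}_i\xi_i=[\mathbf{T}_i(\mathbb{A})(l_i,\mathrm{x}_i)\xi_i]$ is then right-invariant under $\Ad_{\xi_i^{-1}(l_i,\mathrm{x}_i)^{-1}}\mathbf{T}_i(\mathbb{A})\supset A_{p_1}\times A_{p_2}$ by construction, and $\{\xi_i\}$ lies in a fixed compact subset of $\mathbf{P}^r(\mathbb{A})$ since each $\xi_{i,p}$ ranges over a finite set and the components away from $p_1,p_2$ are trivial. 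The careful bookkeeping here — verifying that the finiteness of the relevant index sets is uniform in $i$, which is exactly the content of the $\ll 1$ hypotheses, and checking via Definition~\ref{defi:jmath-iso} and Lemma~\ref{lem:vol-Lambda_f^{(1)}} that the translated torus genuinely contains $A_p$ — is the main technical obstacle, essentially because it is a genuinely local but somewhat delicate lattice/building computation at a ramified-but-boundedly-so place.

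For the limit statement, suppose $\mu_i\xrightarrow{\text{weak-}*}\mu$. Passing to a subsequence, by precompactness $\xi_i\to\xi$ for some $\xi\in\overline{\{\xi_i\}_i}$. Each pushforward $(\xi_i)_*\mu_i$ is $A_{p_1}\times A_{p_2}$-invariant, and since $\xi_i\to\xi$ and $\mu_i\to\mu$ weak-$*$ we get $(\xi_i)_*\mu_i\to\xi_*\mu$ weak-$*$; invariance under a fixed closed subgroup is preserved under weak-$*$ limits (test against $\phi$ and $\phi\circ(\text{right translation})$, using uniform continuity of $\phi$ and the continuity of the group action to move the translation past the limit), so $\xi_*\mu$ is $A_{p_1}\times A_{p_2}$-invariant. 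Finally, for the projection to $[\mathbf{G}(\mathbb{A})]$: the image of $\mathcal{H}_i$ in $[\mathbf{G}(\mathbb{A})]$ is a $K_\infty$-invariant homogeneous toral set of discriminant $D_i$ with $|D_i|\to\infty$ along the subsequence (this is forced since $\mu_i\to\mu$ a probability measure and the strictness/growth of discriminants is in the hypotheses of the theorems invoking this proposition — here one should note the proposition as stated does not itself assume $|D_i|\to\infty$, so strictly I would either add that hypothesis or observe that the conclusion "projects to Haar" is to be read conditionally; following the paper's intent I assume $|D_i|\to\infty$), and Duke's equidistribution theorem for CM points on the modular curve, in its adelic form for $[\mathbf{G}(\mathbb{A})]$, says the pushforwards of the periodic measures converge to the Haar measure $\meas_{\mathbf{G}}$. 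Since the projection $[\mathbf{P}^r(\mathbb{A})]\to[\mathbf{G}(\mathbb{A})]$ is continuous and proper enough to commute with weak-$*$ limits of probability measures, the pushforward of $\mu$ is $\meas_{\mathbf{G}}$; and the pushforward of $\xi_*\mu$ differs from that of $\mu$ by right translation by the (precompact limit of the) $\mathbf{G}$-component of $\xi$, which preserves $\meas_{\mathbf{G}}$. Hence $\xi_*\mu$ projects to Haar on $[\mathbf{G}(\mathbb{A})]$, as claimed. Tightness of $\{\mu_i\}$ follows from the same projection: the image measures equidistribute toward a probability measure on $[\mathbf{G}(\mathbb{A})]$ hence are tight there, and in the fiber direction $\mathbf{V}^{\oplus r}$ the torsion points $\jmath_{/\Lambda}(\mathrm{x}^i)$ live in a compact set (the quotient $\mathbb{A}_E/E\cdot(\text{compact})$), so no mass escapes.
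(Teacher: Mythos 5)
Your argument is essentially the paper's: split the problem prime by prime, use the congruence hypotheses to control the local torus and the local component of $\mathrm{x}$ at $p_1,p_2$, assemble a precompact $\{\xi_i\}$, and then invoke Duke's theorem for the $[\mathbf{G}(\mathbb{A})]$ projection together with compactness of the fibers of $[\mathbf{P}^r(\mathbb{A})]\to[\mathbf{G}(\mathbb{A})]$ for tightness. The only place the paper is tighter is in producing $g_{i,p}$: rather than a building/lattice bookkeeping argument, it cites properness of the local discriminant map on the variety of split tori (in the sense of \cite{ELMVCubic}), which immediately yields a fixed compact set $C_p\subset\mathbf{G}(\mathbb{Q}_p)$ containing a conjugator, and your heuristic is a correct unpacking of that fact. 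Your parenthetical observation that the proposition as stated does not explicitly assume $|D_i|\to\infty$ (needed for the Duke input) is a fair catch; the paper's own proof makes the same implicit assumption, which is supplied by strictness wherever the proposition is actually invoked.
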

\begin{proof}
Write $\mathcal{H}_i$ in standard form as $\left[\mathbf{T}_i(e,\mathrm{x}^i)(l_{i,\infty},-\mathrm{x}^i)_\infty \right]\subset\left[\mathbf{P}^r(\mathbb{A})k_i\right]$.
The push-forward of $\mu_i$ to $\left[\mathbf{G}(\mathbb{A})\right]$ is a sequence of periodic measure on homogeneous toral sets with volume going to infinity. A generalization of Duke's theorem \cite{Duke} for homogeneous toral sets in $\left[\mathbf{SL}_2(\mathbb{A})\right]$ implies that the push-forward converges to the Haar measure, cf. discussion in \S\ref{intro:genus-modular-curve} in general or \cite{LinnikBook} assuming a splitting condition at a single prime. Because $\left[\mathbf{P}^r(\mathbb{A})\right]$ is a compact extension of $\left[\mathbf{G}(\mathbb{A})\right]$ the sequence $\{\mu_i\}_i$ is tight and a weak-$*$ limit $\mu$ is necessarily a probability measure whose push-forward to $\left[\mathbf{G}(\mathbb{A})\right]$ is the Haar measure.

Fix $p\in\{p_1,p_2\}$ and $i\in\mathbb{N}$.
The assumption $\left(\frac{D_i^\mathrm{fund}}{p}\right)=1$ implies that $p$ splits at $E=\mathbf{E}_i(\mathbb{Q})$ and $\mathbf{T}_i(\mathbb{Q}_p)=E_p^{(1)}$ is a split rank-$1$ torus. The local discriminant map from the variety of split tori in $\mathbf{G}(\mathbb{Q}_p)$ to $\mathbb{Q}_p^\times$ is proper, cf.\ the definition of local discriminant in \cite{ELMVCubic}. Hence the assumption $\val_p(f_i)\ll 1$ implies that there is a fixed compact set $C_p\subset\mathbf{G}(\mathbb{Q}_p)$, independent of $i$, such that for all $i$ there is some $g_{p,i}\in C_p$ satisfying $\mathbf{T}_i(\mathbb{Q}_p)=\Ad_{g_{p,i}} A_p$.

The assumption $\val_p(N_i)\ll1$ implies that there is some $m_p\geq 0$ such that ${p}^{m_p}\mathrm{x}_{i,p}\in \mathbf{V}(\mathbb{Z}_p)^{\oplus r}$ for all $i$. In particular, for each $i$ there is an element $\mathrm{w}_{p,i}\in p^{-m_p}\mathbf{V}(\mathbb{Z}_{p_1})^{\oplus r}$ satisfying $\Ad_{k_i^{-1}(e,-\mathrm{x}_{i,p})}\mathbf{T}_i(\mathbb{Q}_p)=\Ad_{(g_{p,i},\mathrm{w}_{p,i})}A_{p}$.

Define $\xi_i\in\mathbf{P}^r(\mathbb{A})$ to coincide with $(g_{p,i},\mathrm{w}_{p,i})\in\mathbf{P}^r(\mathbb{Q}_p)$ at the places $p=p_1,p_2$ and set the local component of $\xi_i$ at all other places to be the identity. The sequence $\left\{\xi_i\right\}_i$ is then contained in the compact set $\prod_{p\in\{p_1,p_2\}} C_p\times p^{-k_p}\mathbf{V}(\mathbb{Z}_p)^{\oplus r}$. It obviously satisfies the claimed properties.
\end{proof}

\subsection{Measure Rigidity for \texorpdfstring{$r=1$}{r=1}}
In this section we present the consequences of measure rigidity for higher rank diagonalizable actions to limits of periodic measures of homogeneous toral sets. The main theorem we use from homogeneous dynamics is due to Einsiedler and Lindenstrauss \cite{ELJoinings}. This theorem
establishes that a measure on the homogeneous space of a perfect algebraic group -- invariant under a split torus at two places and whose push-forward to the homogeneous space of the semi-simple part is Haar -- is necessarily algebraic.
\begin{thm}\label{thm:rigidity-r=1}
Let $\mu$ be a probability measure on $\left[\mathbf{P}^1(\mathbb{A})\right]$ such that the push-forward of $\mu$ to $\left[\mathbf{G}(\mathbb{A})\right]$ is the Haar measure.

If $\mu$ is $A_{p_1}\times A_{p_2}$-invariant and ergodic then either $\mu$ is the Haar measure on $\left[\mathbf{P}^1(\mathbb{A})\right]$ or $\mu$ is the periodic measure supported on $\left[\mathbf{G}(\mathbb{A})(e,\mathrm{y}) \right]$ for some $\mathrm{y}\in\mathbf{V}(\mathbb{A})$ such that $\mathrm{y}_{p_1}=\mathrm{y}_{p_2}=0$.
\end{thm}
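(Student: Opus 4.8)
The plan is to classify $A_{p_1}\times A_{p_2}$-invariant ergodic measures on $\left[\mathbf{P}^1(\mathbb{A})\right]$ lying over Haar on $\left[\mathbf{G}(\mathbb{A})\right]$ by invoking the measure classification of Einsiedler and Lindenstrauss \cite[Theorem 1.6]{ELJoinings}. The group $\mathbf{P}^1=\mathbf{G}\ltimes\mathbf{V}$ is a perfect algebraic group (since $\mathbf{SL}_2$ acts on $\mathbf{V}=\Ga^{\times 2}$ via the standard representation, which has no trivial subrepresentation, so $[\mathbf{P}^1,\mathbf{P}^1]=\mathbf{P}^1$), with semisimple part $\mathbf{G}$ and unipotent radical $\mathbf{V}$. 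The split torus $A_p<\mathbf{G}(\mathbb{Q}_p)$, viewed inside $\mathbf{P}^1(\mathbb{Q}_p)$, is a class-$\mathcal{A}$ subgroup in the sense required there, and the hypothesis that the pushforward to $\left[\mathbf{G}(\mathbb{A})\right]$ is Haar is exactly the non-concentration/joining-type hypothesis needed to apply the theorem. First I would quote the theorem to conclude that $\mu$ is \emph{homogeneous}: there is a closed subgroup $\mathbf{G}(\mathbb{A})\le M\le\mathbf{P}^1(\mathbb{A})$ and a point $z$ such that $\mu$ is the $M$-invariant measure on the closed orbit $Mz$.

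Next I would determine the possible $M$. Since $\mu$ lies over Haar on $\left[\mathbf{G}(\mathbb{A})\right]$ and $\mathbf{P}^1(\mathbb{A})/\mathbf{V}(\mathbb{A})\cong\mathbf{G}(\mathbb{A})$, the group $M$ must surject onto $\mathbf{G}(\mathbb{A})$; write $M=\mathbf{G}(\mathbb{A})\ltimes_{\mathrm{graph}} W$ up to conjugation by a unipotent element, where $W=M\cap\mathbf{V}(\mathbb{A})$ is a closed $\Ad\mathbf{G}(\mathbb{A})$-stable subgroup of $\mathbf{V}(\mathbb{A})$ — here I use that $M$ contains $\mathbf{G}(\mathbb{A})$ honestly (after conjugating $z$ by an element of $\mathbf{V}(\mathbb{A})$ we may assume the section is the standard one, since $H^1$ of $\mathbf{G}(\mathbb{A})$ acting on $\mathbf{V}(\mathbb{A})$ is controlled — concretely, any continuous cocycle splits because $\mathbf{G}$ acts without invariants). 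Now I analyze closed $\mathbf{G}(\mathbb{A})$-submodules $W<\mathbf{V}(\mathbb{A})=\mathbb{A}^2$: locally at each place $v$, the only $\mathbf{SL}_2(\mathbb{Q}_v)$-stable subgroups of $\mathbb{Q}_v^2$ are $0$ and $\mathbb{Q}_v^2$ (irreducibility of the standard representation over $\mathbb{Q}_v$, plus the fact that any proper closed subgroup stable under the full $\mathbf{SL}_2(\mathbb{Q}_v)$-action — in particular under the unipotents and the torus — is forced to be trivial), and globally $W$ must contain $\mathbf{V}(\mathbb{Q})$ if it is nonzero in order for $Mz$ to be closed. So either $W=\mathbf{V}(\mathbb{A})$, giving $M=\mathbf{P}^1(\mathbb{A})$ and $\mu$ the Haar measure, or $W=0$, giving $M=\mathbf{G}(\mathbb{A})$ (in the chosen coordinates) and $\mu$ the periodic measure on a $\mathbf{G}(\mathbb{A})$-orbit.

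In the second case I would identify the orbit: it has the form $\left[\mathbf{G}(\mathbb{A})(e,\mathrm{y})\right]$ for some $\mathrm{y}\in\mathbf{V}(\mathbb{A})$, and closedness of this orbit together with $A_{p_1}\times A_{p_2}$-invariance pins down $\mathrm{y}$ at $p_1,p_2$. Indeed, $A_p$-invariance of $\mu$ means $\left[\mathbf{G}(\mathbb{A})(e,\mathrm{y})\right]$ is right-invariant under $\Ad_{(e,-\mathrm{y})}A_p\subset\mathbf{P}^1(\mathbb{Q}_p)$; but $\Ad_{(e,-\mathrm{y})}a=(a,\,a.\mathrm{y}-\mathrm{y})\cdot(\text{correction})$, and for this to lie in the stabilizer of the orbit one needs $a.\mathrm{y}_p-\mathrm{y}_p$ to lie back in the orbit direction, i.e.\ (tracing through the semidirect product) $a.\mathrm{y}_p=\mathrm{y}_p$ for all $a\in A_p$. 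Since $A_p$ is a split torus acting on $\mathbb{Q}_p^2$ with both weights nontrivial, the only fixed vector is $\mathrm{y}_p=0$. Hence $\mathrm{y}_{p_1}=\mathrm{y}_{p_2}=0$, which is exactly the asserted conclusion.

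The main obstacle is verifying the hypotheses of \cite[Theorem 1.6]{ELJoinings} precisely — in particular confirming that $\mathbf{P}^1$ is of the right shape (perfect, with the unipotent radical carrying the standard $\mathbf{SL}_2$-representation) so that the relevant measure-classification result applies to $A_{p_1}\times A_{p_2}$ acting diagonally, and checking that "pushforward is Haar on $\left[\mathbf{G}(\mathbb{A})\right]$'' is the correct surrogate for the no-escape-of-mass/entropy-positivity input of that theorem. Once the black box is correctly invoked, the remaining representation-theoretic analysis of $\mathbf{G}(\mathbb{A})$-stable subgroups of $\mathbf{V}(\mathbb{A})$ and the computation of the fixed space of a split torus are routine. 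I would also take care to handle the cocycle-splitting step cleanly, reducing to the standard section before classifying $W$, since otherwise the intermediate groups $M$ are only well-defined up to $\mathbf{V}(\mathbb{A})$-conjugacy.
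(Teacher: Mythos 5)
Your overall strategy matches the paper's: invoke \cite[Theorem 1.6]{ELJoinings} to obtain algebraicity, then classify intermediate subgroups of $\mathbf{P}^1$ via the representation theory of $\mathbf{V}$, and finally pin down $\mathrm{y}_{p_1}=\mathrm{y}_{p_2}=0$ from the $A_p$-invariance condition. The final steps are done essentially correctly. However, the way you invoke the black box elides two points that the paper must handle explicitly, and the second is a genuine gap.

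First, \cite[Theorem 1.6]{ELJoinings} is formulated for $S$-arithmetic quotients $\lfaktor{\Gamma}{\mathbf{H}(\mathbb{Q}_S)}$, not directly for ad\`elic quotients. The paper therefore pushes $\mu$ down to $W^1_S$ for every finite $S\supseteq\{\infty,p_1,p_2\}$, applies the theorem at that level, and then recovers the ad\`elic statement by taking an inverse limit over $S$ (as in the cited argument of \cite[Proof of Theorem 4.4]{KhJoint}). Applying the theorem directly to $[\mathbf{P}^1(\mathbb{A})]$ as you do requires justification that does not come for free.

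Second, and more substantively, the conclusion of \cite[Theorem 1.6]{ELJoinings} is \emph{not} that $\mu_S$ is the invariant measure of a group orbit for the full group of $\mathbb{Q}_S$-points of an algebraic subgroup; it is that $\mu_S$ is supported on $[Lg]$ where $L$ is only a closed \emph{finite-index} subgroup of $\mathbf{L}(\mathbb{Q}_S)$. Your proposal asserts directly that one obtains a closed subgroup $M$ with $\mathbf{G}(\mathbb{A})\le M\le\mathbf{P}^1(\mathbb{A})$, which silently assumes this finite-index subgroup is the whole group. That requires an argument: the paper supplies it via Lemma \ref{lem:P1-finite-index}, which shows by Goursat's lemma that $\mathbf{P}^1(\mathbb{Q}_S)$ and $\mathbf{SL}_2(\mathbb{Q}_S)$ admit no nontrivial closed finite-index subgroups (using that $\mathbf{SL}_2(\mathbb{Q}_v)$ has no such subgroups and that $\mathbf{V}(\mathbb{Q}_v)\simeq\mathbb{Q}_v^2$ is self-Pontryagin-dual, hence has no closed torsion in its dual). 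Without this step the classification in your next paragraph --- closed $\mathbf{G}(\mathbb{A})$-stable subgroups of $\mathbf{V}(\mathbb{A})$ --- isn't yet the relevant classification, since a priori the measure might live on an orbit of a strictly smaller, non-algebraic closed subgroup. You should insert this lemma (or an equivalent argument) before concluding $M=\mathbf{G}(\mathbb{A})$ or $M=\mathbf{P}^1(\mathbb{A})$.

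One smaller remark: the paper classifies $\mathbf{V}\cap\mathbf{L}$ as an \emph{algebraic} subrepresentation of $\mathbf{V}$ (where irreducibility is immediate), rather than classifying all closed topological $\mathbf{G}(\mathbb{A})$-stable subgroups of $\mathbf{V}(\mathbb{A})$ as you propose. Your route works, but the needed input --- that $\mathbb{Q}_v^2$ has no proper closed $\mathbf{SL}_2(\mathbb{Q}_v)$-stable subgroups --- is exactly part of what Lemma \ref{lem:P1-finite-index} establishes, so in practice the two classifications are tied together; doing the algebraic one first and then invoking the finite-index lemma is the cleaner ordering.
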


\begin{cor}\label{cor:structure-of-limit-r=1}
For each $\mathrm{y}\in\mathbf{V}(\mathbb{A})$ denote by $\nu_y$ the periodic measure supported on $\left[\mathbf{G}(\mathbb{A})(e,\mathrm{y})\right]\subset\left[\mathbf{P}^1(\mathbb{A})\right]$.

Assume $r=1$ in the setting of Proposition \ref{prop:limit-invariance} then there is a Borel probability measure $\mathcal{P}$ on $\mathbf{V}(\mathbb{A})$ and $c\geq0$ such that
\begin{equation*}
\xi_*.\mu= (1-c)\meas_{\mathbf{P}^1}+c\int_{\mathbf{V}(\mathbb{A})} \nu_\mathrm{y} \dif\mathcal{P}(\mathrm{y})
\end{equation*}
and $\mathrm{y}_{p_1}=\mathrm{y}_{p_2}=0$ for $\mathcal{P}$-almost every $\mathrm{y}\in\mathbf{V}(\mathbb{A})$.
\end{cor}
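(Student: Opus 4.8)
The plan is to combine the ergodic decomposition of $\xi_*.\mu$ under $A_{p_1}\times A_{p_2}$ with the classification of ergodic components in Theorem~\ref{thm:rigidity-r=1}. Write the ergodic decomposition $\xi_*.\mu=\int_\Omega\mu_\omega\dif\tau(\omega)$; for $\tau$-almost every $\omega$ the component $\mu_\omega$ is an $A_{p_1}\times A_{p_2}$-invariant ergodic probability measure on $\left[\mathbf{P}^1(\mathbb{A})\right]$, and the task is to identify each such component.

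First I would show that $\pi_*\mu_\omega=\meas_{\mathbf{G}}$ for $\tau$-almost every $\omega$, where $\pi\colon\left[\mathbf{P}^1(\mathbb{A})\right]\to\left[\mathbf{G}(\mathbb{A})\right]$ is the $A_{p_1}\times A_{p_2}$-equivariant projection. By Proposition~\ref{prop:limit-invariance}, $\pi_*(\xi_*.\mu)=\meas_{\mathbf{G}}$, so $\meas_{\mathbf{G}}=\int_\Omega\pi_*\mu_\omega\dif\tau(\omega)$ exhibits $\meas_{\mathbf{G}}$ as an average of $A_{p_1}\times A_{p_2}$-invariant probability measures. Since $\meas_{\mathbf{G}}$ is already $A_{p_1}$-ergodic — the one-parameter diagonalizable subgroup $A_{p_1}$ acts mixingly on $L^2_0(\left[\mathbf{G}(\mathbb{A})\right])$ by Howe--Moore together with strong approximation for $\mathbf{SL}_2$ — it is an extreme point of the (Choquet) simplex of $A_{p_1}\times A_{p_2}$-invariant probability measures, so the only representation of $\meas_{\mathbf{G}}$ as such an average is the trivial one, i.e.\ $\pi_*\mu_\omega=\meas_{\mathbf{G}}$ for $\tau$-almost every $\omega$. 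For every such $\omega$ the measure $\mu_\omega$ satisfies the hypotheses of Theorem~\ref{thm:rigidity-r=1}, hence $\mu_\omega=\meas_{\mathbf{P}^1}$ or $\mu_\omega=\nu_{\mathrm{y}_\omega}$ for some $\mathrm{y}_\omega\in\mathbf{V}(\mathbb{A})$ with $(\mathrm{y}_\omega)_{p_1}=(\mathrm{y}_\omega)_{p_2}=0$.

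Next I would repackage the decomposition. Put $\Omega_0=\{\omega:\mu_\omega\neq\meas_{\mathbf{P}^1}\}$ and $c\coloneqq\tau(\Omega_0)\in[0,1]$; the complement of $\Omega_0$ contributes the term $(1-c)\meas_{\mathbf{P}^1}$. The assignment $\mathrm{y}\mapsto\nu_{\mathrm{y}}$ is a Borel map from $\{\mathrm{y}\in\mathbf{V}(\mathbb{A}):\mathrm{y}_{p_1}=\mathrm{y}_{p_2}=0\}$ to the space of Borel probability measures on $\left[\mathbf{P}^1(\mathbb{A})\right]$ with its weak-$*$ Borel structure, so a measurable selection theorem (Jankov--von Neumann uniformization) supplies a Borel right inverse $\Psi$ on its image. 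For $c>0$ set $\mathcal{P}\coloneqq c^{-1}\,\Psi_*\bigl((\omega\mapsto\mu_\omega)_*(\tau\restriction\Omega_0)\bigr)$, and for $c=0$ let $\mathcal{P}$ be arbitrary; then $\mathcal{P}$ is a Borel probability measure on $\mathbf{V}(\mathbb{A})$ concentrated on $\{\mathrm{y}_{p_1}=\mathrm{y}_{p_2}=0\}$ with $c\int_{\mathbf{V}(\mathbb{A})}\nu_{\mathrm{y}}\dif\mathcal{P}(\mathrm{y})=\int_{\Omega_0}\mu_\omega\dif\tau(\omega)$, which yields the asserted identity $\xi_*.\mu=(1-c)\meas_{\mathbf{P}^1}+c\int_{\mathbf{V}(\mathbb{A})}\nu_{\mathrm{y}}\dif\mathcal{P}(\mathrm{y})$.

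I expect the main obstacle to be the measure-theoretic bookkeeping in this last step: one must confirm that the family $\{\nu_{\mathrm{y}}\}_{\mathrm{y}}$ depends measurably on $\mathrm{y}$ and admits a measurable section, so that the intermediate-orbit part of the ergodic decomposition descends to a genuine Borel probability measure on $\mathbf{V}(\mathbb{A})$. The remaining ingredients — existence of the ergodic decomposition, ergodicity of $\meas_{\mathbf{G}}$ under a single-place diagonal torus, and Theorem~\ref{thm:rigidity-r=1} — are used essentially off the shelf.
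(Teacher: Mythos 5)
Your proposal is correct and follows the same route the paper takes (the paper's proof is the one-liner "apply Theorem~\ref{thm:rigidity-r=1} to the $A_{p_1}\times A_{p_2}$ ergodic decomposition of $\xi_*.\mu$"). You have usefully made explicit the step the paper leaves implicit, namely that $\meas_{\mathbf{G}}$ is $A_{p_1}$-ergodic (as in Lemma~\ref{lem:Haar-ergodic-r=1} but for $\mathbf{G}$ in place of $\mathbf{P}^1$) and hence an extreme point of the simplex of invariant measures, which forces $\pi_*\mu_\omega=\meas_{\mathbf{G}}$ for almost every ergodic component; the measurable-selection bookkeeping for assembling $\mathcal{P}$ is likewise sound.
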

\begin{proof}
Follows by applying Theorem \ref{thm:rigidity-r=1} to the $A_{p_1}\times A_{p_2}$ ergodic decomposition of $\xi_*.\mu$.
\end{proof}

Before proving Theorem \ref{thm:rigidity-r=1} we need to show the following standard lemma which is an application of Goursat's Lemma.
\begin{lem}\label{lem:P1-finite-index}
Let $S$ be a finite set of rational places. The groups $\mathbf{P}^1(\mathbb{Q}_S)$ and $\mathbf{SL}_2(\mathbb{Q}_S)$ contain no non-trivial closed subgroups of finite index.
\end{lem}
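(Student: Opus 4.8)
The plan is to prove slightly more than stated: neither $\mathbf{SL}_2(\mathbb{Q}_S)$ nor $\mathbf{P}^1(\mathbb{Q}_S)$ admits \emph{any} proper subgroup of finite index, closed or not. The mechanism is the elementary fact that a divisible abelian group $D$ has no proper finite-index subgroup: if $K\le D$ with $[D:K]=m<\infty$ then $mD\subseteq K$, whereas $mD=D$ by divisibility. It follows at once that a group generated by a family of divisible abelian subgroups has no proper finite-index subgroup, since such a subgroup $H$ would meet each generating divisible subgroup $D$ in a subgroup of finite index in $D$, which by divisibility is all of $D$, so that $H$ contains every generator.

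First I would handle a single rational place $v$ (archimedean or not). The additive group $(\mathbb{Q}_v,+)=\Ga(\mathbb{Q}_v)$ is a $\mathbb{Q}$-vector space, hence divisible. Over any field $\mathbf{SL}_2$ is generated by its two opposite unipotent root subgroups (the elementary matrices, via row reduction), each isomorphic to $\Ga(\mathbb{Q}_v)$; likewise $\mathbf{P}^1(\mathbb{Q}_v)=\mathbf{SL}_2(\mathbb{Q}_v)\ltimes\mathbf{V}(\mathbb{Q}_v)$ is generated by those two subgroups together with $\mathbf{V}(\mathbb{Q}_v)\cong\Ga(\mathbb{Q}_v)^{2}$, again divisible abelian. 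By the observation of the previous paragraph, both $\mathbf{SL}_2(\mathbb{Q}_v)$ and $\mathbf{P}^1(\mathbb{Q}_v)$ have no proper finite-index subgroup.

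Then I would pass to the finite product. Write $G=\prod_{v\in S}G_v$ with $G_v\in\{\mathbf{SL}_2(\mathbb{Q}_v),\mathbf{P}^1(\mathbb{Q}_v)\}$, and suppose $H\le G$ has finite index. For each $v\in S$ the intersection of $H$ with the coordinate copy $G_v^{\circ}\cong G_v$ has index at most $[G:H]$ in $G_v^{\circ}$; since $G_v$ has no proper finite-index subgroup, $G_v^{\circ}\subseteq H$. The subgroups $G_v^{\circ}$, $v\in S$, generate $G$, so $H=G$. This last step is the Goursat-type reduction alluded to in the lemma: a finite-index subgroup of a direct product surjects onto each factor and has trivial linking subquotients, hence contains each factor. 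In particular there is no proper closed finite-index subgroup, which is the assertion.

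There is no genuine obstacle here; the whole content is the divisibility of $(\mathbb{Q}_v,+)$ together with the fact that $\mathbf{SL}_2$ over a field and the vector group $\mathbf{V}$ are generated by copies of $\Ga$. The only minor points to keep in mind are that the generation statement for $\mathbf{SL}_2$ holds over \emph{every} field, so the archimedean and non-archimedean places of $S$ are on equal footing, and that in the semidirect product $\mathbf{P}^1$ the vector part $\mathbf{V}(\mathbb{Q}_v)$ is itself a normal divisible subgroup, so $\mathbf{P}^1$ requires nothing beyond the $\mathbf{SL}_2$ input.
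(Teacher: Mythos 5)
Your proof is correct, and it takes a genuinely different and more elementary route than the paper's. The paper treats $\mathbf{SL}_2(\mathbb{Q}_v)$ via Dickson's classification of normal subgroups of $\mathbf{SL}_2$, treats $\mathbf{V}(\mathbb{Q}_v)$ via Pontryagin duality and self-duality of $\mathbb{Q}_v^2$, assembles the semidirect product $\mathbf{P}^1(\mathbb{Q}_v)$ by a coset-rearrangement argument, and then handles the finite product $\mathbf{P}^1(\mathbb{Q}_S)$ by induction on $|S|$ using Goursat's Lemma. Your approach replaces all of this by the single observation that $\Ga(\mathbb{Q}_v)=(\mathbb{Q}_v,+)$ is divisible, so any group generated by subgroups isomorphic to $\Ga(\mathbb{Q}_v)$ has no proper finite-index subgroup (closed or not); this covers $\mathbf{SL}_2(\mathbb{Q}_v)$ via the two root subgroups and $\mathbf{P}^1(\mathbb{Q}_v)$ by adding $\mathbf{V}(\mathbb{Q}_v)\cong\Ga(\mathbb{Q}_v)^2$. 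The product step in your argument is likewise simpler than what you call it: you do not actually need Goursat, only the observation that $H\cap G_v^\circ$ has index at most $[G:H]$ in the coordinate copy $G_v^\circ$ and hence equals it, after which the $G_v^\circ$ generate $G$. What your approach buys is a stronger conclusion (no proper finite-index subgroup at all, without the word ``closed''), uniform treatment of archimedean and non-archimedean places, and independence from Dickson's theorem, Pontryagin duality, and Goursat's Lemma; what the paper's approach buys is essentially nothing here beyond matching a standard template for such arguments. Your argument is clean and self-contained.
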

\begin{proof}
Let $v$ be any rational place. The only non-trivial normal subgroup of $\mathbf{SL}_2(\mathbb{Q}_v)$ is $\mu_2(\mathbb{Q}_v)$ \cite{DicksonPSLn}, hence it has no non-trivial subgroups of finite index. The abelian group $\mathbf{V}(\mathbb{Q}_v)\simeq \mathbb{Q}_v^2$ also has no non-trivial closed finite index subgroups. This is equivalent to the statement that the Pontryagin dual of $\mathbb{Q}_v^2$ has no closed torsion subgroups which is the case because $\mathbb{Q}_v^2$ is self-dual.

We would like to deduce that $\mathbf{P}^1(\mathbb{Q}_v)$ has no non-trivial closed subgroups of finite index. If $H_0<\mathbf{P}^1(\mathbb{Q}_v)$ is closed and has finite index then $H_0$ surjects onto $\mathbf{SL}_2(\mathbb{Q}_v)$. Hence if $\omega_1,\ldots, \omega_n$ are representatives for the classes of $\faktor{\mathbf{P}^1(\mathbb{Q}_v)}{H_0}$ then there are some $h_1,\ldots,h_n\in H_0$ so that for all $i$ the image of $\omega_i h_i$ in $\mathbf{SL}_2(\mathbb{Q}_v)$ is the identity. The elements $\omega_1h_1,\ldots,\omega_n h_n$ are then also representatives for the classes of $\faktor{\mathbf{V}(\mathbb{Q}_v)}{\mathbf{V}(\mathbb{Q}_v)\cap H_0}$. We deduce that $\mathbf{V}(\mathbb{Q}_v)\cap H_0=\mathbf{V}(\mathbb{Q}_v)$ and $H_0=\mathbf{P}^1(\mathbb{Q}_v)$.

Finally, set $\mathbf{H}$ to be either $\mathbf{SL}_2$ or $\mathbf{P}^1$ and  let $H<\mathbf{H}(\mathbb{Q}_S)$ be a closed subgroup of finite index. We  prove the claim by induction on the size of $S$, the case of $S$ being a singleton has been already demonstrated. Fix $v\in S$ and set $G_1=\mathbf{H}(\mathbb{Q}_v)$ and $G_2=\prod_{v\neq s \in S} \mathbf{H}(\mathbb{Q}_s)$. The projection of $H$ on $G_1$ and $G_2$ is a finite index subgroup, hence it is surjective by the induction hypothesis. Write $\prod_{s\in S} \mathbf{H}(\mathbb{Q}_s)\simeq G_1\times G_2$. Goursat's Lemma implies that if $N_1\coloneqq G_1\times\{e\}\cap H$ and $N_2\coloneqq \{e\}\times G_2\cap H$ then the image of $H$ in $\faktor{G_1}{N_1}\times\faktor{G_2}{N_2}$ is the graph of a group isomorphism. This image is a finite index subgroup, which implies that $\faktor{G_1}{N_1}\simeq \faktor{G_2}{N_2}$ are finite groups. Because all finite-index subgroups of $G_1$ and $G_2$ are the whole group we deduce that $G_1\times\{e\},\{e\}\times G_2\subset H$ and $H=G_1\times G_2$ as claimed.
\end{proof}

\begin{proof}[Proof of Theorem \ref{thm:rigidity-r=1}]
Let $S$ be any set of places for $\mathbb{Q}$ containing $\infty$, $p_1$, $p_2$ and set
\begin{equation}\label{eq:Ys-define}
W^1_S\coloneqq \lfaktor{\Gamma^1}{\mathbf{P}^1(\mathbb{Q}_S)}
\simeq \dfaktor{\mathbf{P}^1(\mathbb{Q})}{\mathbf{P}^1(\mathbb{A})}{K^S} \, ,
\end{equation}
where $K^S\coloneqq \prod_{v\not \in S} \mathbf{P}^1(\mathbb{Z}_v)$ and $\Gamma^1\coloneqq \mathbf{P}^1(\mathbb{Q})\cap K^S$ is a congruence lattice embedded diagonally in $\mathbf{P}^1(\mathbb{Q}_S)=\prod_{s\in S} \mathbf{P}^1(\mathbb{Q}_s)$. The isomorphism in \eqref{eq:Ys-define} holds because $\mathbf{P}^1=\mathbf{SL}_2\ltimes \Ga^{\times 2}$ has the strong approximation property. This follows from strong approximation for $\mathbf{SL}_2$ and $\Ga$.
Denote by $\mu_S$ the push-forward of the measure $\mu$ to $W^1_S$ under the quotient by $K^S$. This is an $A_{p_1}\times A_{p_2}$-invariant and ergodic probability Borel measure on $W^1_S$.

There is a quotient map
\begin{equation*}
W^1_S\to Y_S\coloneqq \lfaktor{\Gamma}{\mathbf{SL}_2(\mathbb{Q}_S)} \, ,
\end{equation*}
where $\Gamma\coloneqq \mathbf{SL}_2\left(\mathbb{Q}\left[\prod_{\infty\neq p\in S}\frac{1}{p}\right]\right)=\mathbf{SL}_2(\mathbb{Q})\cap \prod_{v\not\in S} \mathbf{SL}_2(\mathbb{Z}_v)$. The push-forward of $\mu_S$ to $Y_S$ is the Haar measure.

Apply \cite[Theorem 1.6]{ELJoinings} to deduce that the measure
$\mu_S$ is the algebraic measure supported on $[Lg]$ where $L<\mathbf{L}(\mathbb{Q}_S)$ is a closed finite index subgroup, $\mathbf{L}<\mathbf{P}^1$ is an algebraic subgroup defined over $\mathbb{Q}$, $g\in\mathbf{P}^1(\mathbb{Q}_S)$ and $A_{p_1}\times A_{p_2} <g^{-1} L g$.  Because the push-forward of $\mu_S$ to $Y_S$ is the Haar measure we deduce that the map from $\mathbf{L}$ to $\mathbf{SL}_2$ is surjective.

We have a short exact sequence
\begin{equation*}
1\to \mathbf{V}\cap \mathbf{L}\to \mathbf{L}\to\mathbf{SL}_2\to 1 \, .
\end{equation*}
Hence $\mathbf{V}\cap\mathbf{L}$ is the radical of $\mathbf{L}$ and $\mathbf{SL}_2$ is its semisimple factor. In particular, $\mathbf{L}\simeq \mathbf{SL}_2\ltimes \left(\mathbf{V}\cap \mathbf{L}\right)$ and $\mathbf{V}\cap \mathbf{L}$ is an $\mathbf{SL}_2$ sub-representation of $\mathbf{V}$. The only possible sub-representations are either $0$ or $\mathbf{V}$ itself. In the latter case $\mathbf{L}\simeq \mathbf{P}^1$ and because $\mathbf{P}^1$ is connected this implies $\mathbf{L}=\mathbf{P}^1$.

If $\mathbf{V}\cap\mathbf{L}=0$ then $\mathbf{L}\simeq \mathbf{SL}_2$. By Lemma \ref{lem:reductive-conj-into-G} the subgroup $\mathbf{L}$ is then conjugate to $\mathbf{G}$ by an element of $\mathbf{P}^1(\mathbb{Q})$

Whether $\mathbf{L}\simeq\mathbf{G}$ or $\mathbf{L}=\mathbf{P}^1$ Lemma \ref{lem:P1-finite-index} implies that $L=\mathbf{L}(\mathbb{Q}_S)$.
Taking an inverse limit over $S$ by a standard argument, cf.\ \cite[Proof of Theorem 4.4]{KhJoint}, we deduce that either $\mu$ is the Haar measure or it is the periodic measure on $\left[\mathbf{G}(\mathbb{A})\xi \right]$ for some $\xi\in\mathbf{P}^1(\mathbb{A})$ such that $A_p<\Ad_{\xi_p}^{-1}\mathbf{G}(\mathbb{Q}_p)$ for all $p\in\{p_1,p_2\}$.  Replacing $\xi$ by $(g,e)\xi$ for some $g\in\mathbf{G}(\mathbb{A})$ we can assume without loss of generality that $\xi=(e,\mathrm{y})$ for some $\mathrm{y}\in\mathbf{V}(\mathbb{A})$. The condition $A_{p_{1,2}}<\Ad_{\xi_{p_{1,2}}^{-1}}\mathbf{G}(\mathbb{Q}_{p_{1,2}})$ implies $\mathrm{y}_{p_1}=\mathrm{y}_{p_2}=0$.
\end{proof}

\subsection{Reduction to the \texorpdfstring{$r=1$}{r=1} Case}
In this section we show how the Theorem \ref{thm:main} for $r\geq 2$ reduces to the case of $r=1$. The proof is another application of measure rigidity for higher rank diagonal actions.
\begin{defi}
For each $0\neq m=\left(m_1,\ldots,m_r\right)\in\mathbb{Q}^r$ define a surjective homomorphism of algebraic groups over $\mathbb{Q}$
\begin{equation*}
\operatorname{\pi}_{m}\colon\mathbf{P}^{r}\to\mathbf{P}^1
\end{equation*}
by
\begin{equation*}
\operatorname{\pi}_{m}(g,y_1,\ldots,v_r)=\left(g,\sum_{k=1}^r m_k y_k\right) \, .
\end{equation*}
\end{defi}

\begin{thm}\label{thm:rigidity-r>1}
Let $\mu$ be a probability measure on $\left[\mathbf{P}^r(\mathbb{A})\right]$ such that for each $0\neq m\in\mathbb{Q}^r$ the push-forward $\pi_{m*}\mu$ is the Haar measure on $\left[\mathbf{P}^{1}(\mathbb{A})\right]$.
If $\mu$ is $A_{p_1}\times A_{p_2}$-invariant then $\mu$ is the Haar measure on $\left[\mathbf{P}^r(\mathbb{A})\right]$.
\end{thm}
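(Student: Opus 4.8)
The plan is to run the argument of the proof of Theorem~\ref{thm:rigidity-r=1} on the $A_{p_1}\times A_{p_2}$-ergodic components of $\mu$, and then to feed back the full family of hypotheses ``$\pi_{m*}\mu$ is Haar for every $m$'' in order to kill every proper intermediate subgroup that could occur.

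First I would take the ergodic decomposition $\mu=\int_\Omega\mu_\omega\,d\mu(\omega)$ with respect to $A_{p_1}\times A_{p_2}$. The quotient $\mathbf{P}^r\to\mathbf{G}$ intertwines these actions, and the pushforward of $\mu$ to $\left[\mathbf{G}(\mathbb{A})\right]$ is the Haar measure (a further pushforward of $\pi_{m*}\mu$ for any $m$), which is $A_{p_1}\times A_{p_2}$-ergodic by Howe--Moore. Since a factor of an ergodic system is ergodic, uniqueness of the ergodic decomposition forces the pushforward of $\mu_\omega$ to $\left[\mathbf{G}(\mathbb{A})\right]$ to be the Haar measure for $\mu$-a.e.\ $\omega$. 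For such $\omega$ I would then repeat the proof of Theorem~\ref{thm:rigidity-r=1} almost verbatim (noting that $\mathbf{P}^r$ is perfect, as $\mathbf{V}$ has no $\mathbf{G}$-fixed vectors): apply \cite[Theorem~1.6]{ELJoinings} on the finite-place quotients $W^r_S$, invoke Lemma~\ref{lem:reductive-conj-into-G} and the evident analogue of Lemma~\ref{lem:P1-finite-index} for $\mathbf{P}^r(\mathbb{Q}_S)$ (its Goursat-type proof applies unchanged, $\mathbf{V}^{\oplus r}(\mathbb{Q}_v)\cong\mathbb{Q}_v^{2r}$ still having no proper closed finite-index subgroup), and take an inverse limit over $S$, to conclude that $\mu_\omega$ is the homogeneous probability measure on a closed orbit $\left[\mathbf{L}_\omega\xi_\omega\right]$ with $\mathbf{L}_\omega$ equal, after a $\mathbf{P}^r(\mathbb{Q})$-conjugation, to $\mathbf{G}\ltimes\mathbf{R}_\omega$ for some $\mathbf{G}$-subrepresentation $\mathbf{R}_\omega<\mathbf{V}^{\oplus r}$.

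The heart of the matter is then to show $\mathbf{R}_\omega=\mathbf{V}^{\oplus r}$ for $\mu$-a.e.\ $\omega$, which gives $\mathbf{L}_\omega=\mathbf{P}^r$, hence $\mu_\omega$ Haar, hence $\mu$ Haar. Since $\mathbf{V}^{\oplus r}=\mathbf{V}\otimes\mathbb{Q}^r$ with $\mathbf{V}$ absolutely irreducible, Schur's lemma writes $\mathbf{R}_\omega=\mathbf{V}\otimes U_\omega$ for a linear subspace $U_\omega\subseteq\mathbb{Q}^r$, so I only need $U_\omega=\mathbb{Q}^r$. If this failed on a set of positive measure then, as $\mathbb{Q}^r$ has only countably many rational hyperplanes, there would be a fixed hyperplane $H_0$ with $\mu(\{\omega:U_\omega\subseteq H_0\})>0$; picking $0\neq m\in\mathbb{Q}^r$ with $H_0=\ker\langle m,\cdot\rangle$, the homomorphism $\pi_m$ annihilates $\mathbf{R}_\omega$ for all those $\omega$, so $\pi_m(\mathbf{L}_\omega)=\mathbf{G}$ (sitting in $\mathbf{P}^1$ as the zero section) and $\pi_{m*}\mu_\omega$ is a homogeneous measure supported on a single $\mathbf{G}(\mathbb{A})$-orbit, which is Haar-null, so $\pi_{m*}\mu_\omega$ is not the Haar measure. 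On the other hand $\pi_{m*}\mu=\int_\Omega\pi_{m*}\mu_\omega\,d\mu(\omega)$, each $\pi_{m*}\mu_\omega$ is ergodic (a factor of $\mu_\omega$), and $\pi_{m*}\mu$ is the ergodic Haar measure by hypothesis, so the same uniqueness argument forces $\pi_{m*}\mu_\omega$ to be Haar for $\mu$-a.e.\ $\omega$ --- contradicting $\mu(\{\omega:U_\omega\subseteq H_0\})>0$.

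I expect the main obstacle to be precisely this last step: keeping track of the ergodic components simultaneously under all the factor maps $\pi_m$, and correctly converting the family of equidistribution hypotheses into the statement that no proper $\mathbf{G}$-subrepresentation of $\mathbf{V}^{\oplus r}$ can carry positive mass as the nilpotent radical of an intermediate orbit. The two auxiliary inputs --- absolute irreducibility of $\mathbf{V}$, and the absence of proper closed finite-index subgroups in $\mathbf{P}^r(\mathbb{Q}_S)$ --- are routine extensions of material already in the paper.
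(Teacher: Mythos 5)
Your proof is correct and follows the paper's route: ergodic decomposition, a.e.\ classification of the components $\mu_\omega$ via \cite[Theorem 1.6]{ELJoinings} as algebraic measures on orbits of groups of the form $\mathbf{G}\ltimes\mathbf{R}_\omega$, and a representation-theoretic step forcing $\mathbf{R}_\omega=\mathbf{V}^{\oplus r}$. Where you use Schur's lemma to write $\mathbf{R}_\omega=\mathbf{V}\otimes U_\omega$ and then kill a proper $U_\omega$ by a rational-hyperplane pigeonhole, the paper's Lemma~\ref{lem:strict-subrepresentation} instead counts $\dim\Hom_{\mathbf{G}}(\mathbf{V}_0,\mathbf{V})$; these are the same linear algebra (indeed $\Hom_{\mathbf{G}}(\mathbf{V}\otimes U,\mathbf{V})\cong U^{*}$), so the variation is cosmetic. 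One remark on your pigeonhole step: it tacitly requires measurability of the sets $\{\omega:U_\omega\subseteq H_0\}$, which in turn rests on the algebraic classification of $\mu_\omega$ varying measurably in $\omega$; the cleaner route, and essentially the one the paper takes, is to observe that for each of the countably many $m\in\mathbb{Q}^r\setminus\{0\}$ the set $\{\omega:\pi_{m*}\mu_\omega\neq\text{Haar}\}$ is evidently measurable and $\mu$-null (by ergodic uniqueness), discard the countable union of these null sets once and for all, and only then apply the representation-theoretic lemma to the surviving $\omega$.
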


\begin{lem}\label{lem:Haar-ergodic-r=1}
The Haar measure $m_{\mathbf{P}^1}$ on $[\mathbf{P}^1(\mathbb{A})]$ is $A_{p_1}$-ergodic.
\end{lem}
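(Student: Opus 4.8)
The plan is to establish the equivalent statement that every $f\in L^2\bigl([\mathbf{P}^1(\mathbb{A})]\bigr)$ which is $A_{p_1}$-invariant under the right regular action is $\meas_{\mathbf{P}^1}$-a.e.\ constant. The two ingredients are the Mautner phenomenon for the (unitary, strongly continuous) right regular representation of $\mathbf{P}^1(\mathbb{A})$, and strong approximation for $\mathbf{P}^1$ relative to the place $p_1$.

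For the first step I would exploit the splitting $\mathbf{P}^1=\mathbf{G}\ltimes\mathbf{V}$ with $\mathbf{G}=\mathbf{SL}_2$ split over $\mathbb{Q}$, so that $A_{p_1}$ is the diagonal torus of $\mathbf{G}(\mathbb{Q}_{p_1})\hookrightarrow\mathbf{P}^1(\mathbb{Q}_{p_1})$. Writing $a=\diag(p_1,p_1^{-1})\in A_{p_1}$, one checks directly that $a^n u a^{-n}\to e$ as $n\to+\infty$ for $u$ in the upper unipotent $U^+(\mathbb{Q}_{p_1})$ and for $u$ in one of the two coordinate lines of $\mathbf{V}(\mathbb{Q}_{p_1})\simeq\mathbb{Q}_{p_1}^2$ (the $A_{p_1}$-weight spaces of the standard representation), and likewise $a^n u a^{-n}\to e$ as $n\to-\infty$ for $u$ in $U^-(\mathbb{Q}_{p_1})$ and for $u$ in the other coordinate line. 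The Mautner phenomenon — if $\pi$ is unitary, $\pi(a)v=v$, and $a^n u a^{-n}\to e$, then $\|\pi(u)v-v\|=\|\pi(a^n u a^{-n})v-v\|\to0$, hence $\pi(u)v=v$ — applied to the $A_{p_1}$-fixed vector $f$ shows that $f$ is fixed by $U^+(\mathbb{Q}_{p_1})$, $U^-(\mathbb{Q}_{p_1})$, and both coordinate lines of $\mathbf{V}(\mathbb{Q}_{p_1})$. Since $U^+(\mathbb{Q}_{p_1})$ and $U^-(\mathbb{Q}_{p_1})$ generate $\mathbf{SL}_2(\mathbb{Q}_{p_1})$ and the two lines generate $\mathbf{V}(\mathbb{Q}_{p_1})$, the function $f$ is invariant under all of $\mathbf{P}^1(\mathbb{Q}_{p_1})$.

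For the second step, strong approximation for $\mathbf{P}^1=\mathbf{SL}_2\ltimes\Ga^{\times 2}$ away from $p_1$ — which holds because $\mathbf{SL}_2$ is simply connected with $\mathbf{SL}_2(\mathbb{Q}_{p_1})$ non-compact and $\Ga$ has strong approximation, exactly as invoked around \eqref{eq:Ys-define} — gives that $\mathbf{P}^1(\mathbb{Q})\cdot\mathbf{P}^1(\mathbb{Q}_{p_1})$ is dense in $\mathbf{P}^1(\mathbb{A})$. Lifting $f$ to a left $\mathbf{P}^1(\mathbb{Q})$-invariant and right $\mathbf{P}^1(\mathbb{Q}_{p_1})$-invariant function $\tilde f\in L^1_{\mathrm{loc}}(\mathbf{P}^1(\mathbb{A}))$, we obtain a function invariant under the dense subgroup $\mathbf{P}^1(\mathbb{Q})\mathbf{P}^1(\mathbb{Q}_{p_1})$; by continuity of right translation on $L^1$ of compact subsets, $\tilde f$ is then invariant under all of $\mathbf{P}^1(\mathbb{A})$, hence a.e.\ constant. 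Thus $f$ is constant and $\meas_{\mathbf{P}^1}$ is $A_{p_1}$-ergodic.

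There is no substantial obstacle — the lemma is standard, as its statement indicates — but two places call for care: the bookkeeping in the Mautner step, namely verifying that each of the four relevant root and weight subgroups of $\mathbf{P}^1(\mathbb{Q}_{p_1})$ is contracted by $a^n$ for one of the two signs of $n$; and the closing density argument, which should be run through strong continuity of translation on $L^1$ of compact sets to sidestep integrability issues at infinity. A longer alternative to the first step is to decompose $L^2\bigl([\mathbf{P}^1(\mathbb{A})]\bigr)$ over the Pontryagin dual of the compact abelian group $[\mathbf{V}(\mathbb{A})]$ and treat each $\mathbf{SL}_2(\mathbb{Q})$-orbit of characters separately, but the Mautner route is shorter and self-contained.
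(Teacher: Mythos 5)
Your proof follows the same two-step route as the paper: the Mautner phenomenon upgrades $A_{p_1}$-invariance to $\mathbf{P}^1(\mathbb{Q}_{p_1})$-invariance via the stable and unstable horospherical subgroups of $a$ (which you correctly identify as the two unipotent root groups together with the two $A_{p_1}$-weight lines in $\mathbf{V}(\mathbb{Q}_{p_1})$), and strong approximation for $\mathbf{P}^1$ away from $p_1$ (the paper invokes \cite[Lemma 3.22]{GMO}) then forces a $\mathbf{P}^1(\mathbb{Q}_{p_1})$-invariant $L^2$ vector to be constant. The only point worth making explicit in your closing density step is that $\mathbf{P}^1(\mathbb{Q}_{p_1})$ is normal in $\mathbf{P}^1(\mathbb{A})$, so right-invariance of $\tilde f$ under it is equivalent to left-invariance, which is what lets you combine it with left $\mathbf{P}^1(\mathbb{Q})$-invariance and the density of $\mathbf{P}^1(\mathbb{Q})\mathbf{P}^1(\mathbb{Q}_{p_1})$.
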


\begin{proof}
Let $a\in A_{p_1}$ be an element that generates an unbounded subgroup.
The group $\mathbf{P}^1(\mathbb{Q}_{p_1})$ is topologically generated by the stable and unstable horospherical subgroups of $a$. The Mautner phenomena implies that any $a$-invariant vector in $L^2([\mathbf{P}^1(\mathbb{A})],m_{\mathbf{P}^1})$ is $\mathbf{P}^1(\mathbb{Q}_{p_1})$-invariant. A standard argument using strong approximation, cf.\cite[Lemma 3.22]{GMO}, says that any  $\mathbf{P}^1(\mathbb{Q}_{p_1})$-invariant vector is in $\mathbb{C}\cdot 1$.
\end{proof}

\begin{lem}\label{lem:strict-subrepresentation}
Let $\mathbf{V}_0<\mathbf{V}^{\oplus r}$ be a rational $\mathbf{SL}_2$ sub-representation. If $\pi_m(\mathbf{V}_0)=\mathbf{V}$ for all $0\neq m\in \mathbb{Q}^r$ then $\mathbf{V}_0=\mathbf{V}^{\oplus r}$.
\end{lem}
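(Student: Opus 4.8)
The plan is to reduce the statement to elementary linear algebra by using the isotypic decomposition of $\mathbf{V}^{\oplus r}$ as an $\mathbf{SL}_2$-representation. First I would record that the standard representation $\mathbf{V}=\Ga^{\times 2}$ of $\mathbf{SL}_2$ is absolutely irreducible, so that by Schur's lemma $\End_{\mathbf{SL}_2}(\mathbf{V})=\mathbb{Q}$. Viewing $\mathbf{V}^{\oplus r}=\mathbf{V}\otimes_{\mathbb{Q}}\mathbb{Q}^r$ with $\mathbf{SL}_2$ acting only on the first tensor factor, the evaluation map $\mathbf{V}\otimes_\mathbb{Q}\Hom_{\mathbf{SL}_2}(\mathbf{V},\mathbf{V}^{\oplus r})\to\mathbf{V}^{\oplus r}$ is an isomorphism and $\Hom_{\mathbf{SL}_2}(\mathbf{V},\mathbf{V}^{\oplus r})\cong\mathbb{Q}^r$; consequently every rational $\mathbf{SL}_2$-subrepresentation $\mathbf{V}_0<\mathbf{V}^{\oplus r}$ has the form $\mathbf{V}_0=\mathbf{V}\otimes_\mathbb{Q}U_0$ for a unique subspace $U_0\le\mathbb{Q}^r$ (namely $U_0=\Hom_{\mathbf{SL}_2}(\mathbf{V},\mathbf{V}_0)$ under the above identification).

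Next I would unwind what the maps $\pi_m$ do on the vector part. For $0\neq m=(m_1,\dots,m_r)\in\mathbb{Q}^r$, the restriction of $\pi_m$ to $\mathbf{V}^{\oplus r}=\mathbf{V}\otimes_\mathbb{Q}\mathbb{Q}^r$ is $\mathrm{id}_{\mathbf{V}}\otimes\ell_m$, where $\ell_m\colon\mathbb{Q}^r\to\mathbb{Q}$ is the linear functional $u\mapsto\sum_{k=1}^r m_k u_k$. Hence $\pi_m(\mathbf{V}_0)=\mathbf{V}\otimes_\mathbb{Q}\ell_m(U_0)$, and this equals $\mathbf{V}$ if and only if $\ell_m(U_0)=\mathbb{Q}$, i.e.\ if and only if $\ell_m$ does not vanish identically on $U_0$. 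Since $m\mapsto\ell_m$ is a linear isomorphism $\mathbb{Q}^r\xrightarrow{\sim}(\mathbb{Q}^r)^{\vee}$, the hypothesis that $\pi_m(\mathbf{V}_0)=\mathbf{V}$ for every $0\neq m$ is exactly the assertion that no nonzero functional on $\mathbb{Q}^r$ annihilates $U_0$; that is, the annihilator of $U_0$ is trivial, which forces $U_0=\mathbb{Q}^r$. Therefore $\mathbf{V}_0=\mathbf{V}\otimes_\mathbb{Q}\mathbb{Q}^r=\mathbf{V}^{\oplus r}$, as claimed (in particular the case $\mathbf{V}_0=0$ is ruled out automatically, its annihilator being all of $(\mathbb{Q}^r)^\vee$).

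I expect no serious obstacle: the only point deserving a line of justification is that the isotypic/Schur decomposition is valid for rational (algebraic) representations over $\mathbb{Q}$, which holds because the standard $\mathbf{SL}_2$-representation remains irreducible over $\bar{\mathbb{Q}}$ — equivalently, $\mathbf{SL}_2(\mathbb{Q})$ is Zariski dense and already acts irreducibly on $\mathbb{Q}^2$, with commutant $\mathbb{Q}$ as one sees by diagonalizing over a maximal torus and then imposing commutation with a unipotent. Everything else is bookkeeping with the decomposition $\mathbf{V}^{\oplus r}=\mathbf{V}\otimes_\mathbb{Q}\mathbb{Q}^r$ and the pairing $m\leftrightarrow\ell_m$.
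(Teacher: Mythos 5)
Your proof is correct and takes essentially the same route as the paper. Both arguments hinge on the same two facts: Schur's lemma gives $\End_{\mathbf{SL}_2}(\mathbf{V})=\mathbb{Q}$, and the assignment $m\mapsto\pi_m|_{\mathbf{V}^{\oplus r}}$ is a linear isomorphism $\mathbb{Q}^r\xrightarrow{\sim}\Hom_{\mathbf{G}}(\mathbf{V}^{\oplus r},\mathbf{V})$. The only difference is bookkeeping: the paper argues dually, observing that the hypothesis makes the restriction map $\Hom_{\mathbf{G}}(\mathbf{V}^{\oplus r},\mathbf{V})\to\Hom_{\mathbf{G}}(\mathbf{V}_0,\mathbf{V})$ injective and then counting dimensions ($\dim\Hom_{\mathbf{G}}(\mathbf{V}_0,\mathbf{V})\geq r$ forces $\dim\mathbf{V}_0\geq 2r$), whereas you make the isotypic decomposition $\mathbf{V}^{\oplus r}\cong\mathbf{V}\otimes_{\mathbb{Q}}\mathbb{Q}^r$ explicit, identify $\mathbf{V}_0=\mathbf{V}\otimes U_0$, and conclude $U_0=\mathbb{Q}^r$ from the triviality of its annihilator. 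The dimension count in the paper is implicitly using the same isotypic structure you spell out, so the two write-ups are really the primal and dual phrasing of one argument; yours is slightly more explicit about why $\dim\mathbf{V}_0 = 2\dim\Hom_{\mathbf{G}}(\mathbf{V}_0,\mathbf{V})$, which the paper leaves to the reader.
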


\begin{proof}
The endomorphism ring $\End_\mathbf{G}(\mathbf{V})$ is by definition $\mathbf{Z}_{\mathbf{M}_2(\mathbb{Q})}(\mathbf{SL}_2(\mathbb{Q}))\simeq \mathbb{Q}$. In particular $\Hom_\mathbf{G}(\mathbf{V}^{\oplus k},\mathbf{V})\simeq \End_\mathbf{G}(\mathbf{V})^k \simeq \mathbb{Q}^k$ for any $k\in\mathbb{N}$.
The map $m\mapsto \pi_m$ is an injective $\mathbb{Q}$-linear map from $\mathbb{Q}^r$ to $\Hom_\mathbf{G}(\mathbf{V}^{\oplus r},\mathbf{V})$. Hence it is also an isomorphism.

The inclusion $\mathbf{V}_0<\mathbf{V}$ induces a linear map
\begin{equation*}
\Hom_\mathbf{G}(\mathbf{V}^{\oplus r},\mathbf{V})\to \Hom_\mathbf{G}(\mathbf{V}_0,\mathbf{V}) \, .
\end{equation*}
The assumption in the claim implies that this map is injective. This implies $\dim_{\mathbb{Q}} \Hom_\mathbf{G}(\mathbf{V}_0,\mathbf{V})\geq r$ and $\dim_{\mathbb{Q}}\mathbf{V}_0\geq 2r$ which completes the proof.
\end{proof}

\begin{proof}[Proof of Theorem \ref{thm:rigidity-r>1}]
Let $$\mu=\int_{\left[\mathbf{P}^r(\mathbb{A})\right]} \mu_x \dif\mu(x)$$ be the $A_{p_1}\times A_{p_2}$ ergodic decomposition of $\mu$. For any $0\neq m \in \mathbb{Q}^r$ the push-forward by $\pi_m$ of the ergodic decomposition to $[\mathbf{P}^1(\mathbb{A})]$ is a decomposition of the Haar measure to $A_{p_1}$-invariant measures. The Haar measure on $[\mathbf{P}^1(\mathbb{A})]$ is $A_{p_1}$-ergodic by Lemma \ref{lem:Haar-ergodic-r=1}.
This implies that for almost every $x$ the push-forward of $\mu_x$ to $[\mathbf{P}^1(\mathbb{A})]$ is Haar. In particular, for almost every $x$ the push-forward of $\mu_x$ to $[\mathbf{G}(\mathbb{A})]$ is Haar.

By the same argument as in the proof of Theorem \ref{thm:rigidity-r=1} we deduce that almost each $\mu_x$ is the invariant measure supported on $[\mathbf{L}(\mathbb{A})\xi]$ where $\mathbf{L}<\mathbf{P}^r$ is an algebraic subgroup  defined over $\mathbb{Q}$, $\xi\in\mathbf{P}^{r}(\mathbb{A})$ and $\mathbf{L}$ surjects onto $\mathbf{P}^1$ under $\pi_m$ for any $0\neq m\in\mathbb{Q}^r$. Consequentially, $\mathbf{L}\simeq \mathbf{SL}_2\ltimes \mathbf{V}_0$ where $\mathbf{V}_0<\mathbf{V}^{\oplus r}$ is an $\mathbf{SL}_2$ sub-representation such that $\pi_m(\mathbf{V}_0)=\mathbf{V}$ for any $m\neq 0$ and Lemma \ref{lem:strict-subrepresentation} implies $\mathbf{V}_0=\mathbf{V}^{\oplus r}$.
\end{proof}

\begin{cor}\label{cor:reduction-to-r=1}
Assume Theorem \ref{thm:main} holds for $r=1$ then Theorem \ref{thm:main} holds for $r>1$.
\end{cor}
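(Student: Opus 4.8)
The plan is to reduce to the case $r=1$ by projecting along the maps $\pi_m$ and feeding the output into the measure-rigidity statement Theorem \ref{thm:rigidity-r>1}. Let $\mathcal{H}_i=\left[\mathbf{T}_i(\mathbb{A})(l_i,\mathrm{x}^i)\right]$ be a strict sequence of $K_\infty$-invariant homogeneous toral sets in $\left[\mathbf{P}^r(\mathbb{A})\right]$ satisfying \eqref{eq:congruence-1} and \eqref{eq:congruence-2}, with periodic measures $\mu_i$. By Proposition \ref{prop:limit-invariance} the family $\{\mu_i\}_i$ is tight, so it suffices to prove that every weak-$*$ convergent subsequence has limit $m$. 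Passing to such a subsequence, assume $\mu_i\to\mu$; Proposition \ref{prop:limit-invariance} then furnishes $\xi\in\mathbf{P}^r(\mathbb{A})$ such that $\xi_*.\mu$ is $A_{p_1}\times A_{p_2}$-invariant, and it remains to show $\xi_*.\mu=m$.

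I would first record what the $\pi_m$ do to the data. Fix $0\neq m=(m_1,\ldots,m_r)\in\mathbb{Q}^r$. The restriction of $\pi_m$ to the zero-section copy $\mathbf{G}\hookrightarrow\mathbf{P}^r$ is the zero-section embedding $\mathbf{G}\hookrightarrow\mathbf{P}^1$; since $\mathbf{T}_i<\mathbf{G}$ this gives $\pi_m(\mathcal{H}_i)=\left[\mathbf{T}_i(\mathbb{A})\bigl(l_i,\sum_{k=1}^r m_k\mathrm{x}_k^i\bigr)\right]$, a homogeneous toral set in $\left[\mathbf{P}^1(\mathbb{A})\right]$ for the torus $\mathbf{T}_i<\mathbf{G}<\mathbf{P}^1$. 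As $\mathcal{H}_i$ is $K_\infty$-invariant, $\mathrm{x}_\infty^i=0$ and $\Ad_{l_{i,\infty}^{-1}}\mathbf{T}_i(\mathbb{R})=K_\infty$, so the archimedean component of $\sum_k m_k\mathrm{x}_k^i$ vanishes and $\pi_m(\mathcal{H}_i)$ is again $K_\infty$-invariant. Since $\pi_m$ is a surjective homomorphism of algebraic groups, $\pi_{m*}$ carries the periodic measure $\mu_i$ to the periodic measure on $\pi_m(\mathcal{H}_i)$. By the remark closing Definition \ref{defi:order} the quadratic order of $\pi_m(\mathcal{H}_i)$ depends only on the image $\left[\mathbf{T}_i(\mathbb{A})l_i\right]$ in $\left[\mathbf{G}(\mathbb{A})\right]$, hence equals $\Lambda_i$; thus $\pi_m(\mathcal{H}_i)$ has discriminant $D_i$, inherits \eqref{eq:congruence-1}, and has torsion order $N_{m,i}$ in the sense of Definition \ref{defi:strict-homogeneous}.

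Next I would verify that $\{\pi_m(\mathcal{H}_i)\}_i$ meets the hypotheses of Theorem \ref{thm:main} in the case $r=1$. It is strict: $|D_i|\to\infty$ by assumption, and $N_{m,i}\to\infty$ is precisely the escape-of-compacta condition of Definition \ref{defi:strict-homogeneous} for the original sequence. For \eqref{eq:congruence-2} one argues prime by prime. Set $a_{m,p}=\max_k\max(0,-\val_p(m_k))$, a constant depending only on $m$ and $p$. Then $p^{a_{m,p}}m_k\in\mathbb{Z}_p$ for all $k$, while $p^{\val_p(N_i)}\mathrm{x}_{k,p}^i\in l_{i,p}.\mathbf{V}(\mathbb{Z}_p)$ for all $k$ by the definition of $N_i$, whence $p^{\val_p(N_i)+a_{m,p}}\sum_k m_k\mathrm{x}_{k,p}^i\in l_{i,p}.\mathbf{V}(\mathbb{Z}_p)$ and so $\val_p(N_{m,i})\leq\val_p(N_i)+a_{m,p}$. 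As $m$ is fixed, \eqref{eq:congruence-2} for $\{\mathcal{H}_i\}_i$ yields $\val_{p_1}(N_{m,i}),\val_{p_2}(N_{m,i})\ll_m 1$. The assumed case $r=1$ of Theorem \ref{thm:main} now gives $\pi_{m*}\mu_i\to m_{\mathbf{P}^1}$, hence $\pi_{m*}\mu=m_{\mathbf{P}^1}$, and therefore $\pi_{m*}(\xi_*.\mu)=(\pi_m(\xi))_*m_{\mathbf{P}^1}=m_{\mathbf{P}^1}$ by right-invariance of the Haar measure, for every $0\neq m\in\mathbb{Q}^r$. Since $\xi_*.\mu$ is $A_{p_1}\times A_{p_2}$-invariant and all of its projections $\pi_{m*}$ equal the Haar measure, Theorem \ref{thm:rigidity-r>1} forces $\xi_*.\mu=m$, hence $\mu=m$, which proves Theorem \ref{thm:main} for $r>1$.

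The one step needing genuine care is the last verification: the projected sequence $\{\pi_m(\mathcal{H}_i)\}_i$ automatically retains the discriminant $D_i$ and hence condition \eqref{eq:congruence-1}, but one must bound the $p_1$- and $p_2$-adic valuations of its torsion order $N_{m,i}$ in terms of $N_i$, which is exactly the elementary local estimate $\val_p(N_{m,i})\leq\val_p(N_i)+a_{m,p}$ above with its $m$-dependent constant. Everything else is formal once Theorem \ref{thm:rigidity-r>1} and the $r=1$ case of Theorem \ref{thm:main} are in hand.
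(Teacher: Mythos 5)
Your proposal is correct and follows essentially the same route as the paper: project via $\pi_m$, verify the projected sequence is strict and satisfies \eqref{eq:congruence-1} and \eqref{eq:congruence-2}, apply the $r=1$ case of Theorem \ref{thm:main} to each projection, and invoke Theorem \ref{thm:rigidity-r>1} together with the $A_{p_1}\times A_{p_2}$-invariance supplied by Proposition \ref{prop:limit-invariance}. The one cosmetic difference is in verifying \eqref{eq:congruence-2}: you bound $\val_p(N_{m,i})\leq\val_p(N_i)+a_{m,p}$ prime by prime with $a_{m,p}=\max_k\max(0,-\val_p(m_k))$, whereas the paper states the global bound $\ord_{\pi_m(\mathcal{H}_i)}\langle m,\mathrm{x}^i\rangle\leq\operatorname{lcd}(m_1,\ldots,m_r)\cdot\operatorname{lcm}(\ord(\mathrm{x}^i_{1,f}),\ldots,\ord(\mathrm{x}^i_{r,f}))$; these are the same estimate since $\operatorname{lcd}(m)=\prod_p p^{a_{m,p}}$.
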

\begin{proof}
We use the notations of Theorem \ref{thm:main} and write $\mathcal{H}_i=\left[\mathbf{T}_i(\mathbb{A})(l_i,\mathrm{x}^i)\right]$ where $\mathrm{x}\in\mathbf{V}^{\oplus r}(\mathbb{A})$. Consider for any $0\neq m\in\mathbb{Q}^r$ the sequence of push-forward  measures $\pi_{m*}.\mu_i$. These are the periodic measures on the homogeneous toral sets
\begin{equation*}
\pi_{m}(\mathcal{H}_i)=\left[\mathbf{T}_i(\mathbb{A})(l_i,\langle m,\mathrm{x}^i\rangle)\right]
\subset \left[\mathbf{P}^1(\mathbb{A})\right] \, ,
\end{equation*}
where $\langle m,\mathrm{x}^i\rangle=\sum_{k=1}^r m_k\mathrm{x}^i_k\in\mathbf{V}(\mathbb{A})$.
We would like to apply Theorem \ref{thm:main} with $r=1$ to the sequence $\left\{\pi_{m}(\mathcal{H}_i)\right\}_i$; we need to verify the strictness assumptions and the congruence conditions \eqref{eq:congruence-1} and \eqref{eq:congruence-2}.

The homogeneous toral set $\pi_{m}(\mathcal{H}_i)$ has the same discriminant as $\mathcal{H}_i$, in particular, its discriminant goes to infinity and congruence condition \eqref{eq:congruence-1} holds. To verify congruence condition \eqref{eq:congruence-2} for $p\in\{p_1,p_2\}$ notice first
\begin{align*}
\ord_{\mathcal{H}_i}
\left(\mathrm{x}^i_{1,f},\ldots,\mathrm{x}^i_{r,f}\right)&=
\operatorname{lcm}\left(\ord_{\mathcal{H}_i} (\mathrm{x}^i_{1,f}),\ldots,
\ord_{\mathcal{H}_i}(\mathrm{x}^i_{r,f})\right) \, ,\\
\ord_{\pi_{m}(\mathcal{H}_i)} \langle m,\mathrm{x}^i\rangle&\leq \operatorname{lcd}(m_1,\ldots,m_r)
\operatorname{lcm}\left(\ord_{\mathcal{H}_i} (\mathrm{x}^i_{1,f}),\ldots,
\ord_{\mathcal{H}_i}(\mathrm{x}^i_{r,f})\right) \, ,
\end{align*}
where $\operatorname{lcd}$ is the lowest common denominator. We deduce that for any fixed $m$ the congruence condition \eqref{eq:congruence-2} holds. The strictness assumption for  $\left\{\pi_{m}(\mathcal{H}_i)\right\}_i$ follows immediately from the strictness assumption for $\left\{\mathcal{H}_i\right\}_i$. The assumption that Theorem \ref{thm:main} holds for $r=1$ now implies that $\pi_{m,*}.\mu_i$ converge weak-$*$ to $\meas_{\mathbf{P}^1}$.

Let $\mu$ be any weak-$*$ limit point of $\mu_i$. Proposition \ref{prop:limit-invariance} says that $\mu$ is a probability measure and there is some $\xi\in\mathbf{P}^r(\mathbb{A})$ such that $\xi_*.\mu$ is $A_{p_1}\times A_{p_2}$-invariant. The discussion above implies for any $0\neq m\in\mathbb{Q}^r$ that $\pi_{m*}.\mu$ is the Haar measure, hence $\pi_{m*}.\xi_*.\mu=\pi_m(\xi)_*.\pi_{m,*}.\mu$ is also the Haar measure on $\left[\mathbf{P}^1(\mathbb{A})\right]$. Theorem \ref{thm:rigidity-r>1} now implies that $\xi_*.\mu$ is the Haar measure on $\left[\mathbf{P}^r(\mathbb{A})\right]$; thus any weak-$*$ limit point of $\{\mu_i\}_i$ is the Haar measure.
\end{proof}

Following this result we shall be interested henceforth only in homogeneous toral sets in $\left[\mathbf{P}^1(\mathbb{A})\right]$.

\section{Geometric Expansion of the Cross-correlation}\label{sec:cross-correlation}
In this section we discuss the cross-correlation and its geometric expansion as a relative trace. This is our main tool in excluding the possibility in Theorem \ref{thm:rigidity-r=1} of concentration on periodic orbits of $\mathbf{G}(\mathbb{A})$.

\subsection{Standing Assumptions}\label{sec:rt-assumptions}
Throughout this section we fix a $K_\infty$-invariant $\mathbf{T}$-homogeneous set $\mathcal{H}=\left[\mathbf{T}(\mathbb{A})(l,\mathrm{x})\right]\subset \left[\mathbf{P}^1(\mathbb{A})\right]$ such that $\mathbf{T}<\mathbf{G}$ is a maximal torus.
We denote by $\mu$ the periodic measure on $\mathcal{H}$. In addition, we fix a $\mathbf{G}$-homogeneous set $[\mathbf{G}(\mathbb{A})(e,\mathrm{y})]\subset \left[\mathbf{P}^1(\mathbb{A})\right]$ and denote by $\nu$ the periodic measure supported on it.

\subsection{Geometric Expansion}
\begin{defi} \label{defi:cross-correlation}
\hfill
\begin{enumerate}
\item
For any compactly supported bounded measurable function $f\colon \mathbf{P}^1(\mathbb{A})\to \mathbb{C}$ define\\ $K_f\colon \left[\mathbf{P}^1(\mathbb{A})\right]^{\times 2}\to \mathbb{C}$ by
\begin{equation*}
K_f(x,y)=\sum_{\gamma\in\mathbf{P}^1(\mathbb{Q})} f(x^{-1} \gamma y) \, .
\end{equation*}
Notice that for any compact subset $C\subset \mathbf{P}^1(\mathbb{A})$ if $x,y\in C$ then all summands above vanish except perhaps for the finitely many summands corresponding to $\mathbf{P}^1(\mathbb{Q}) \cap C \supp(f) C^{-1}$. In particular, the function $K_f$ is bounded on compact sets.
\item For any Borel probability measures $\lambda_1,\lambda_2$ on $\left[\mathbf{P}^1(\mathbb{A})\right]$ and $f$ as above set
\begin{equation*}
\Cor(\lambda_1,\lambda_2)[f]\coloneqq\int \dif \lambda_1(x) \int \dif \lambda_2(y) K_f(x,y)
\end{equation*}
whenever the integral is defined. We will be interested exclusively in non-negative real functions for which the integral is always defined and takes values in $\mathbb{R}\cup \{\infty\}$.
\end{enumerate}
\end{defi}
The cross-correlation is closely related to the notion of ``pair correlation'' from statistical mechanics. Indeed, if $B\subset \mathbf{P}^1(\mathbb{A})$ is a symmetric neighborhood of the identity and $f=\mathds{1}_B$ then it is immediate to see that
\begin{equation*}
\Cor(\lambda_1,\lambda_2)[f]\leq \lambda_1\times \lambda_2 \left(\left\{(x,y)\in \left[\mathbf{P}^1(\mathbb{A})\right] \mid x\in yB \right\}\right)\;.
\end{equation*}
Moreover, the inequality above is actually an equality if the map $x\mapsto xB$ is injective on $\supp \lambda_1 \cap \supp \lambda_2$. Classical ``pair-correlation'' functions can be recovered from this definition when $\lambda_1=\lambda_2$ and $f$ is a suitable potential function.  For us, it will be important to work with different measures $\lambda_1$, $\lambda_2$ and the test function $f$ will be the characteristic function of a Bowen ball. Probabilistically, the cross-correlation then measures that the probability that $x$ is $B$-close to $y$ when $x$ is chosen according to the law $\lambda_1$ and $y$ is chosen according to $\lambda_2$ independently of $x$.

The reason the cross-correlation is useful to us is that this probabilistic quantity can actually be studied by methods from automorphic forms. When $\lambda_1$ and $\lambda_2$ are periodic measures then the cross-correlation is a relative trace in disguise.

The cross-correlation between two different measures has been first introduced in \cite{KhJoint} as a tool to establish non-accumulation of periodic orbits on intermediate measures. It has been inspired by the relation between the self-correlation of a measure, the rate of large deviations and the Kolmogorov-Sinai entropy. This connection is implicit already in the work of Linnik \cite{LinnikBook} as the translation between his basic lemma and the rate of large deviations is essentially the geometric expansion of a relative trace.

\begin{prop}\label{prop:geom-expansion}
Fix $f\colon\mathbf{P}^1(\mathbb{A})\to \mathbb{R}_{\geq 0}$ measurable and compactly supported.
Let $\mu$ and $\nu$ be the periodic measures from \S\ref{sec:rt-assumptions}. Then
\begin{equation*}
\Cor(\mu,\nu)[f]=\RO_f(0)+
\sum_{[0]\neq [\mathrm{v}]\in\mathbf{T}(\mathbb{Q}) \backslash \mathbf{V}(\mathbb{Q})} \RO_f(\mathrm{v}) \, .
\end{equation*}
Where we denote
\begin{equation*}
\RO_f(\mathrm{v})\coloneqq
\begin{cases}
\int_{\mathbf{T}\times \mathbf{G}\, (\mathbb{A})} \dif(t,g)\,
f([t(l,\mathrm{x})]^{-1}(e,\mathrm{v})g(e,\mathrm{y})) & \mathrm{v}\neq 0\\
\int_{\mathbf{G}(\mathbb{A})} \dif g\, f((l,\mathrm{x})^{-1}g(e,\mathrm{y})) & \mathrm{v}=0
\end{cases}
\, .
\end{equation*}
\end{prop}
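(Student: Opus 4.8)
The plan is to unfold the two periodic measures one at a time and collapse the resulting triple integral via an orbit decomposition of $\mathbf{V}(\mathbb{Q})$ under $\mathbf{T}(\mathbb{Q})$. First I would write out $\Cor(\mu,\nu)[f] = \int d\mu(x)\int d\nu(y)\, K_f(x,y)$ and substitute the definition $K_f(x,y) = \sum_{\gamma\in\mathbf{P}^1(\mathbb{Q})} f(x^{-1}\gamma y)$. The periodic measure $\mu$ on $\mathcal{H} = [\mathbf{T}(\mathbb{A})(l,\mathrm{x})]$ is the push-forward under right translation by $(l,\mathrm{x})$ of the Haar probability measure on $[\mathbf{T}(\mathbb{A})]$, and likewise $\nu$ is the push-forward by $(e,\mathrm{y})$ of Haar measure on $[\mathbf{G}(\mathbb{A})]$; here $\mathbf{T}$ is anisotropic so $[\mathbf{T}(\mathbb{A})]$ is compact, while $\mathbf{G}=\mathbf{SL}_2$ is anisotropic as well (a form of the class number one / strong approximation setup) so $[\mathbf{G}(\mathbb{A})]$ carries a finite invariant measure. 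Unfolding both, the double integral becomes an integral over $[\mathbf{T}(\mathbb{A})]\times[\mathbf{G}(\mathbb{A})]$ of $\sum_{\gamma\in\mathbf{P}^1(\mathbb{Q})} f\big((t(l,\mathrm{x}))^{-1}\gamma\, g(e,\mathrm{y})\big)$, where $t$ ranges over a fundamental domain for $\mathbf{T}(\mathbb{Q})\backslash\mathbf{T}(\mathbb{A})$ and $g$ over one for $\mathbf{G}(\mathbb{Q})\backslash\mathbf{G}(\mathbb{A})$.

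Next I would use the standard unfolding trick: the sum over $\gamma\in\mathbf{P}^1(\mathbb{Q})$ together with the two fundamental-domain integrals reassembles into a single integral, but one must track the stabilizer/orbit structure. Decompose $\mathbf{P}^1(\mathbb{Q}) = \bigsqcup \mathbf{T}(\mathbb{Q})\,\delta\,\mathbf{G}(\mathbb{Q})$ into $(\mathbf{T}(\mathbb{Q}),\mathbf{G}(\mathbb{Q}))$-double cosets. Since $\mathbf{P}^1 = \mathbf{G}\ltimes\mathbf{V}$ and $\mathbf{T}<\mathbf{G}$, every element of $\mathbf{P}^1(\mathbb{Q})$ is $(g,\mathrm{w})$ with $g\in\mathbf{G}(\mathbb{Q})$, $\mathrm{w}\in\mathbf{V}(\mathbb{Q})$; multiplying on the right by $\mathbf{G}(\mathbb{Q})$ absorbs the $\mathbf{G}$-component, and multiplying on the left by $\mathbf{T}(\mathbb{Q})$ then acts on the residual $\mathbf{V}(\mathbb{Q})$-component by the linear $\mathbf{T}$-action on $\mathbf{V}$. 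Hence the double cosets are indexed precisely by $\mathbf{T}(\mathbb{Q})\backslash\mathbf{V}(\mathbb{Q})$, with representatives $(e,\mathrm{v})$. For each class, unfolding the $\mathbf{G}(\mathbb{Q})$ sum against the $g$-integral turns the $g$-fundamental-domain integral into an integral over all of $\mathbf{G}(\mathbb{A})$; for the zero class $[\mathrm{v}]=[0]$ the $\mathbf{T}(\mathbb{Q})$-stabilizer is all of $\mathbf{T}(\mathbb{Q})$ so the $t$-integral collapses to a point and only the $\mathbf{G}(\mathbb{A})$-integral survives, giving $\RO_f(0)$; for $[\mathrm{v}]\neq[0]$ the stabilizer of $\mathrm{v}$ in $\mathbf{T}(\mathbb{Q})$ is trivial (as $\mathbf{T}$ acts on $\mathbf{V}\setminus\{0\}$ with trivial stabilizers — this is where anisotropy of $\mathbf{T}$ over $\mathbb{Q}$ enters, $\mathbf{V}\setminus\{0\}$ being a torsor-like space for $\mathbf{T}$), so the $t$-integral also unfolds to all of $\mathbf{T}(\mathbb{A})$, yielding $\RO_f(\mathrm{v})$.

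Finally I would address convergence and the interchange of summation and integration. Since $f\geq 0$, Tonelli's theorem justifies all interchanges unconditionally, with every term in $\mathbb{R}\cup\{\infty\}$, matching the conventions fixed in Definition \ref{defi:cross-correlation}; this is why the proposition is stated for non-negative $f$. The one genuine subtlety — and I expect this to be the main technical point rather than a deep obstacle — is the bookkeeping of Haar measure normalizations under the two unfoldings: one must check that the invariant measure on $\mathbf{T}(\mathbb{Q})\backslash\mathbf{T}(\mathbb{A})$ times the chosen measure on $\mathbf{G}(\mathbb{A})$ reassembles, coset by coset, into exactly the product measure $\dif(t,g)$ on $\mathbf{T}(\mathbb{A})\times\mathbf{G}(\mathbb{A})$ appearing in the definition of $\RO_f(\mathrm{v})$, with no stray index factors — the triviality of stabilizers for $\mathrm{v}\neq 0$ is what makes these constants all equal to $1$. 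With the measures normalized as in the standing conventions ($\meas_{\mathbf{T}(\mathbb{A})}$ a probability measure on $[\mathbf{T}(\mathbb{A})]$, etc.), the identity follows, completing the proof.
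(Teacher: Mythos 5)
Your argument is essentially the paper's: unfold both periodic measures, decompose $\mathbf{P}^1(\mathbb{Q})$ into $(\mathbf{T}(\mathbb{Q}),\mathbf{G}(\mathbb{Q}))$-double cosets indexed by $\mathbf{T}(\mathbb{Q})\backslash\mathbf{V}(\mathbb{Q})$, note that the stabilizer is trivial for $[\mathrm{v}]\neq[0]$ and a diagonal copy of $\mathbf{T}$ for $[\mathrm{v}]=[0]$, and justify the interchanges by Tonelli since $f\geq 0$; the paper merely packages this as a single integral over $\mathbf{M}=\mathbf{T}\times\mathbf{G}$ with stabilizers isolated in a separate lemma. Two cosmetic slips worth fixing: for $[\mathrm{v}]=[0]$ the diagonal $\Delta\mathbf{T}(\mathbb{A})$-invariance quotients out a copy of $\mathbf{T}(\mathbb{A})$ (of measure one) rather than the $t$-integral "collapsing to a point," and $\mathbf{SL}_2$ is isotropic over $\mathbb{Q}$ — $[\mathbf{G}(\mathbb{A})]$ has finite volume because $\mathbf{G}$ is semisimple with no nontrivial rational characters, not because of anisotropy.
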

\begin{remark}
Notice that a change of variables $g\mapsto l^{-1}t^{-1}g$ and $t\mapsto t^{-1}$ implies for $\mathrm{v}\neq 0$
\begin{equation}\label{eq:RO-short-v}
\RO_f(\mathrm{v})=\int_{\mathbf{T}\times \mathbf{G}\, (\mathbb{A})} \dif(t,g)
f(g, g.\mathrm{y}+l^{-1}t.\mathrm{v}-l^{-1}.\mathrm{x}) \, .
\end{equation}
Similarly,
\begin{equation}\label{eq:RO-short-0}
\RO_f(0)= \int_{\mathbf{G}(\mathbb{A})} \dif g
f(g, g.\mathrm{y}-l^{-1}.\mathrm{x}) \, .
\end{equation}
\end{remark}

For the proof of Proposition \ref{prop:geom-expansion} we need to understand elementary properties of the group action from the following definition.

\begin{defi} \label{defi:M-action}
Set $\mathbf{M}\coloneqq \mathbf{T}\times\mathbf{G}$. We let $\mathbf{M}$ act on $\mathbf{P}^1$ using the left action of $\mathbf{T}$ on $\mathbf{P}^1$ and the right action of $\mathbf{G}$.
\end{defi}

\begin{lem}\label{lem:M-stab}
Let $\gamma\in\mathbf{P}^1(\mathbb{Q})$ then the stabilizer subgroup for the action from Definition \ref{defi:M-action} is
\begin{equation*}
\mathbf{M}_\gamma\simeq \begin{cases}
e & \gamma\not\in \mathbf{G}(\mathbb{Q})\\
\mathbf{T} & \gamma\in \mathbf{G}(\mathbb{Q})
\end{cases} \, ,
\end{equation*}
where in the second case the isomorphism is given by $t\mapsto (t,\gamma^{-1} t \gamma)$.
\end{lem}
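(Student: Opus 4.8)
The plan is to solve the fixed‑point equation defining $\mathbf{M}_\gamma$ explicitly inside the semidirect product $\mathbf{P}^1=\mathbf{G}\ltimes\mathbf{V}$. Writing the action of Definition~\ref{defi:M-action} as $(t,g).\gamma=t\gamma g^{-1}$, an element $(t,g)\in\mathbf{T}\times\mathbf{G}$ fixes $\gamma$ if and only if $g=\gamma^{-1}t\gamma$, subject to the constraint that $\gamma^{-1}t\gamma$ in fact lies in $\mathbf{G}$. Projection to the first factor therefore identifies $\mathbf{M}_\gamma$, as a group scheme over $\mathbb{Q}$, with $\{t\in\mathbf{T}\mid\gamma^{-1}t\gamma\in\mathbf{G}\}$, the inverse of this identification being $t\mapsto(t,\gamma^{-1}t\gamma)$ — exactly the map named in the statement. (With the opposite convention $(t,g).\gamma = t\gamma g$ one gets the same subgroup with the inverse map $t\mapsto(t,\gamma^{-1}t^{-1}\gamma)$; the stated normalization is the one matching this.)

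It remains to compute that subgroup. Write $\gamma=(h,\mathrm{w})$ with $h\in\mathbf{G}(\mathbb{Q})$ and $\mathrm{w}\in\mathbf{V}(\mathbb{Q})$, and view $t\in\mathbf{T}<\mathbf{G}$ as $(t,0)\in\mathbf{P}^1$. Using $(a,u)(b,v)=(ab,u+a.v)$ and $(a,u)^{-1}=(a^{-1},-a^{-1}.u)$, a direct computation gives $\gamma^{-1}t\gamma=(h^{-1}th,\ h^{-1}.(t.\mathrm{w}-\mathrm{w}))$, so $\gamma^{-1}t\gamma\in\mathbf{G}$ precisely when $t.\mathrm{w}=\mathrm{w}$; thus $\mathbf{M}_\gamma$ is carried isomorphically onto the stabilizer $\mathbf{T}_\mathrm{w}$ of $\mathrm{w}$ in $\mathbf{T}$. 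Since $\mathbf{G}<\mathbf{P}^1$ is the image of the fixed zero section $g\mapsto(g,0)$, the condition $\gamma\in\mathbf{G}(\mathbb{Q})$ is the same as $\mathrm{w}=0$, and there $t.\mathrm{w}=\mathrm{w}$ is vacuous, so $\mathbf{T}_\mathrm{w}=\mathbf{T}$ and $\mathbf{M}_\gamma\simeq\mathbf{T}$ via $t\mapsto(t,\gamma^{-1}t\gamma)$. If instead $\gamma\notin\mathbf{G}(\mathbb{Q})$, i.e. $\mathrm{w}\neq0$, I would invoke \S\ref{sec:order} and Definition~\ref{defi:jmath-iso}: $\mathbf{T}=\mathbf{SL}_1(\mathbf{E})$ acts on the one‑dimensional $E$‑vector space $\mathbf{V}(\mathbb{Q})$ by multiplication, where $E=\mathbf{E}(\mathbb{Q})$ is a field because $\mathbf{T}$ is anisotropic over $\mathbb{Q}$. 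Hence a nonzero $\mathrm{w}$ is an $E$‑basis of $\mathbf{V}(\mathbb{Q})$, so for every $\mathbb{Q}$‑algebra $R$ the module $\mathbf{V}(R)=\mathbf{V}(\mathbb{Q})\otimes_\mathbb{Q}R$ is free of rank one over $\mathbf{E}(R)$ with basis $\mathrm{w}$; therefore $(t-1).\mathrm{w}=0$ forces $t=1$ in $\mathbf{E}(R)$, so $\mathbf{T}_\mathrm{w}$ is the trivial group scheme and $\mathbf{M}_\gamma\simeq e$.

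The argument is essentially all in this last point; the rest is bookkeeping in the semidirect product. The one place where care is needed — and the only genuine obstacle — is the appeal to anisotropy: if $\mathbf{E}(\mathbb{Q})$ were a split étale algebra, a nonzero $\mathrm{w}$ need not generate $\mathbf{V}(\mathbb{Q})$ over $\mathbf{E}(\mathbb{Q})$ and the stabilizer of $\mathrm{w}$ could be a nontrivial subtorus, so the reduction to the field case must be flagged explicitly and used.
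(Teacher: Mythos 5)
Your proof is correct and the bookkeeping in the semidirect product is fine, but you take a different route from the paper to the key point — namely that $\Stab_{\mathbf{T}}(\mathrm{w})=\{e\}$ for any $0\neq\mathrm{w}\in\mathbf{V}(\mathbb{Q})$. You establish this by viewing $\mathbf{V}$ as a free rank-one $\mathbf{E}$-module and using anisotropy of $\mathbf{T}$ to make $\mathbf{E}(\mathbb{Q})$ a field, so that a nonzero $\mathrm{w}$ is an $\mathbf{E}$-basis. The paper's argument is shorter and more robust: the $\mathbf{SL}_2$-stabilizer of a nonzero vector is a unipotent subgroup, and a unipotent subgroup meets any torus only in the identity; this needs neither the $\mathbf{E}$-module structure nor anisotropy. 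This robustness also means your closing caution is slightly off: even if $\mathbf{E}(\mathbb{Q})$ were a split \'etale algebra so that a nonzero $\mathrm{w}$ failed to generate $\mathbf{V}(\mathbb{Q})$ over $\mathbf{E}(\mathbb{Q})$, the stabilizer in $\mathbf{T}=\mathbf{SL}_1(\mathbf{E})$ would still be trivial — if $t.\mathrm{w}=\mathrm{w}$ pins down one coordinate of $t=(t_1,t_2)$, the norm-one condition $t_1t_2=1$ pins down the other. So anisotropy is not the real obstacle your alternate argument needs to sidestep; the unipotent-vs.-semisimple dichotomy already handles all cases, and your module argument happens to suffice only because the torus in play is indeed anisotropic over $\mathbb{Q}$.
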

\begin{proof}
The stabilizer of any non-zero point of $\mathbf{V}$ under the $\mathbf{G}=\mathbf{SL}_2$ action is a unipotent subgroup, hence it intersects $\mathbf{T}$ trivially. In particular, $\mathbf{T}$ acts faithfully on $\mathbf{V}$. The formula for the stabilizer is an immediate computation using this fact.
\end{proof}

\begin{proof}[Proof of Proposition \ref{prop:geom-expansion}]
This is a geometric expansion of a relative trace. The situation is rather simple because for all $\gamma\in\mathbf{P}^1(\mathbb{Q})$ the stabilizers $\mathbf{M}_\gamma$ are isotropic over $\mathbb{Q}$, i.e.\ all $\gamma$ are elliptic. In what follows the restrictive assumption that $f$ is non-negative renders all series absolutely convergent.

Let $f_0\colon \mathbf{P}^1(\mathbb{A})\to \mathbb{C}$ be defined by $f_0(h)\coloneqq f((l,\mathrm{x})^{-1} h (e,\mathrm{y}))$ and for each coset $[m_{\mathbb{Q}}]\in\lfaktor{\mathbf{M}_\gamma}{\mathbf{M}}(\mathbb{Q})$ fix an arbitrary representative $m_{\mathbb{Q}}\in\mathbf{M}(\mathbb{Q})$. Then Tonelli's theorem for non-negative functions implies
\begin{align*}
\Cor(\mu,\nu)[f]&=\int_{\left[\mathbf{T}(\mathbb{A})\right]} \dif t \int_{\left[\mathbf{G}(\mathbb{A})\right]} \dif g\, K_{f_0}(t,g)\\
&=\sum_{\gamma\in \mathbf{P}^1(\mathbb{Q})}
\int_{\left[\mathbf{T}(\mathbb{A})\right]} \dif t \int_{\left[\mathbf{G}(\mathbb{A})\right]} \dif g\, f_0(t^{-1}\gamma g)
=\sum_{\gamma\in \mathbf{P}^1(\mathbb{Q})}
\int_{\left[\mathbf{M}(\mathbb{A})\right]} \dif m \, f_0(m^{-1}.\gamma)\\
&=\sum_{[\gamma]\in \mathbf{T}(\mathbb{Q}) \backslash \mathbf{P}^1(\mathbb{Q}) \slash \mathbf{G}(\mathbb{Q})}
\sum_{m_{\mathbb{Q}}\in \mathbf{M}_\gamma \backslash \mathbf{M} (\mathbb{Q})}
\int_{\left[\mathbf{M}(\mathbb{A})\right]} \dif m \, f_0(m^{-1} m_{\mathbb{Q}}.\gamma)\\
&=\sum_{[\gamma]\in \mathbf{T}(\mathbb{Q}) \backslash \mathbf{P}^1(\mathbb{Q}) \slash \mathbf{G}(\mathbb{Q})}
\sum_{m_{\mathbb{Q}}\in \mathbf{M}_\gamma \backslash \mathbf{M} (\mathbb{Q})}
\int_{m_{\mathbb{Q}}\mathcal{F}} \dif m \, f_0(m^{-1}.\gamma) \, ,
\end{align*}
where $\mathcal{F}\subset \mathbf{M}(\mathbb{A})$ is a fundamental domain for the left action of $\mathbf{M}(\mathbb{Q})$ on $\mathbf{M}(\mathbb{A})$. Consider the inner sum above for each fixed double coset $[\gamma]$. Notice that the function $m\mapsto f_0(m^{-1}.\gamma)$  is invariant under the action of $\mathbf{M}_{\gamma}(\mathbb{A})$ on the left. The set $\sqcup_{m_{\mathbb{Q}}\in \mathbf{M}_\gamma \backslash \mathbf{M} (\mathbb{Q})} m_{\mathbb{Q}}\mathcal{F}$ is a fundamental domain for the left action of $\mathbf{M}_\gamma(\mathbb{Q})$ on $\mathbf{M}(\mathbb{A})$. In particular, the inner sum is a single integral over $\lfaktor{\mathbf{M}_\gamma(\mathbb{Q})}{\mathbf{M}(\mathbb{A})}$.

Because the right action of $\mathbf{G}(\mathbb{Q})$ on $\mathbf{P}^1(\mathbb{Q})$ is transitive on the $\mathbf{G}$-coordinate and keeps the $\mathbf{V}$-coordinate invariant
for each double coset $[\gamma]$ one can choose a representative $[\gamma]=[(e,\mathrm{v})]$ with $\mathrm{v}\in\mathbf{V}(\mathbb{Q})$ unique up to the action of $\mathbf{T}(\mathbb{Q})$ on $\mathbf{V}(\mathbb{Q})$.
The inner summand above is then equal to
\begin{align*}
\int_{\mathbf{M}_\gamma(\mathbb{Q})\backslash \mathbf{M}(\mathbb{A})} \dif m\, f_0(m^{-1}.\gamma)
&=\meas_{\mathbf{M}_\gamma}\left(\left[\mathbf{M}_\gamma(\mathbb{A})\right]\right)
\int_{\mathbf{M}_\gamma\backslash \mathbf{M}\,(\mathbb{A})} \dif m\, f_0(m^{-1}.\gamma)
\\
&=\meas_{\mathbf{M}_\gamma}\left(\left[\mathbf{M}_\gamma(\mathbb{A})\right]\right)
\int_{\mathbf{M}_\gamma\backslash \mathbf{T}\times \mathbf{G}\,(\mathbb{A})} \dif (t,g)\, f_0(t^{-1}g,t^{-1}.\mathrm{v}) \, .
\end{align*}
If $\mathrm{v}=0$ then by Lemma \ref{lem:M-stab} the stabilizer $\mathbf{M}_\gamma$ is the torus $\mathbf{T}$ embedded diagonally in $\mathbf{T}\times\mathbf{G}$ and the integral above reduces to an integral over $\mathbf{G}(\mathbb{A})$. Otherwise, the stabilizer is trivial and the integral is over $\mathbf{T}(\mathbb{A}) \times \mathbf{G}(\mathbb{A})$. In both cases, the measure of $\left[\mathbf{M}_\gamma(\mathbb{A})\right]$ is $1$ in our normalization.
The claim follows by substituting the definition of $f_0$.
\end{proof}

\subsection{The Trivial Orbital Integral}
The following proposition shows that the contribution of the $[0]$ relative orbital integral eventually vanishes in any strict sequence of $K_\infty$-invariant homogeneous toral sets.
\begin{prop}\label{prop:trivial-contribution-vanishes}
Fix $f\colon\mathbf{P}^1(\mathbb{A})\to \mathbb{R}_{\geq 0}$ measurable and supported on $B_G\times B_V$ for some compact subsets $B_G\subset\mathbf{G}(\mathbb{A})$, $B_V\subset\mathbf{V}(\mathbb{A})$. If
\begin{equation*}
l^{-1}.\mathrm{x}\not \in -B_V+B_G.\mathrm{y}\subset \mathbf{V}(\mathbb{A})
\end{equation*}
then $\RO_f(0)=0$.
\end{prop}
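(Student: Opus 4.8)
The plan is to read off the statement directly from the ``short form'' \eqref{eq:RO-short-0} of the trivial relative orbital integral, which expresses
\begin{equation*}
\RO_f(0)=\int_{\mathbf{G}(\mathbb{A})} f\bigl(g,\,g.\mathrm{y}-l^{-1}.\mathrm{x}\bigr)\dif g \, .
\end{equation*}
(Recall this is obtained from the definition $\RO_f(0)=\int_{\mathbf{G}(\mathbb{A})} f\bigl((l,\mathrm{x})^{-1}g(e,\mathrm{y})\bigr)\dif g$ by writing $(l,\mathrm{x})^{-1}=(l^{-1},-l^{-1}.\mathrm{x})$, expanding the semidirect product law $(g_1,v_1)(g_2,v_2)=(g_1g_2,\,v_1+g_1.v_2)$ to get $(l,\mathrm{x})^{-1}(g,0)(e,\mathrm{y})=\bigl(l^{-1}g,\,(l^{-1}g).\mathrm{y}-l^{-1}.\mathrm{x}\bigr)$, and then substituting $g\mapsto lg$ using invariance of Haar measure on $\mathbf{G}(\mathbb{A})$; if one prefers, this step can simply be quoted from the Remark following Proposition~\ref{prop:geom-expansion}.)

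Next I would analyze the support condition. Under the identification $\mathbf{P}^1(\mathbb{A})\simeq\mathbf{G}(\mathbb{A})\times\mathbf{V}(\mathbb{A})$ in semidirect-product coordinates, the hypothesis says $\supp(f)\subseteq B_G\times B_V$. Hence the integrand $g\mapsto f\bigl(g,\,g.\mathrm{y}-l^{-1}.\mathrm{x}\bigr)$ can be nonzero at a point $g\in\mathbf{G}(\mathbb{A})$ only if simultaneously $g\in B_G$ and $g.\mathrm{y}-l^{-1}.\mathrm{x}\in B_V$. The second membership rearranges to $l^{-1}.\mathrm{x}\in g.\mathrm{y}-B_V$, and combined with $g\in B_G$ this forces
\begin{equation*}
l^{-1}.\mathrm{x}\in B_G.\mathrm{y}-B_V=-B_V+B_G.\mathrm{y}\subset\mathbf{V}(\mathbb{A}) \, .
\end{equation*}
Taking the contrapositive: if $l^{-1}.\mathrm{x}\notin -B_V+B_G.\mathrm{y}$, then $f\bigl(g,\,g.\mathrm{y}-l^{-1}.\mathrm{x}\bigr)=0$ for every $g\in\mathbf{G}(\mathbb{A})$, and therefore $\RO_f(0)=0$. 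Since $f\geq 0$, the integral is well defined throughout (as already noted in the setup of Proposition~\ref{prop:geom-expansion}), so there is no convergence subtlety.

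There is essentially no serious obstacle here: the only thing requiring care is bookkeeping in the semidirect-product coordinates — getting the inverse $(l,\mathrm{x})^{-1}=(l^{-1},-l^{-1}.\mathrm{x})$ and the action of $\mathbf{G}$ on the $\mathbf{V}$-coordinate right, and correctly orienting the translate $B_G.\mathrm{y}-B_V$ — all of which is routine once \eqref{eq:RO-short-0} is in hand.
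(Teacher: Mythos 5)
Your proof is correct and takes essentially the same route as the paper: both read off the vanishing directly from the short form \eqref{eq:RO-short-0} by observing that the support condition $g\in B_G$ and $g.\mathrm{y}-l^{-1}.\mathrm{x}\in B_V$ forces $l^{-1}.\mathrm{x}\in -B_V+B_G.\mathrm{y}$. Your version just spells out the semidirect-product bookkeeping that the paper leaves to the remark preceding the proposition.
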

\begin{proof}
Consider formula \eqref{eq:RO-short-0} for $\RO_f(0)$.
If $(l,\mathrm{x})$ satisfies the condition in the claim then $(g,g.\mathrm{y}-l^{-1}.\mathrm{x})\not\in \supp f$ for any $g\in\mathbf{G}(\mathbb{A})$.
\end{proof}

\subsection{Decomposition of Orbital Integrals}
The following definition is slightly non-standard.
\begin{defi}
A compactly supported $f\colon\mathbf{P}^1(\mathbb{A})\to \mathbb{R}_{\geq 0}$ is a \emph{standard test function} if $f=\prod_v f_v$ where $f_v\colon\mathbf{P}^1(\mathbb{Q}_v)\to \mathbb{R}_{\geq 0}$ is compactly supported for all $v$ and $f_v=\mathds{1}_{\mathbf{P}^1(\mathbb{Z}_v)}$ for almost all $v$. Denote also $f_f\colon \mathbf{P}^1(\mathbb{A}_f)\to\mathbb{R}_{\geq 0}\coloneqq \prod_{v<\infty} f_v$.
\end{defi}

\begin{lem}\label{lem:RO-infty-finite}
Let $f$ be a standard test function. The relative orbital integral of $\mathrm{v}\neq0$  decomposes as a product of an archimedean contribution and a finite one in the following way
\begin{align*}
\RO_f(\mathrm{v})&=\RO_f(\mathrm{v})_\infty \RO_f(\mathrm{v})_f \, ,\\
\RO_f(\mathrm{v})_\infty&\coloneqq
\int_{\mathbf{T}\times \mathbf{G}\, (\mathbb{R})} \dif(t,g)
f([t(l_\infty,0)]^{-1}(e,\mathrm{v})g(e,\mathrm{y}_\infty))\\
&=\int_{K_\infty \times \mathbf{G}(\mathbb{R})} \dif(k,g)
f(k(l_\infty,0)^{-1}(e,\mathrm{v})g(e,\mathrm{y}_\infty))\\
&=\int_{K_\infty \times \mathbf{G}(\mathbb{R})} \dif(k,g)
f(g, g.\mathrm{y}_\infty+kl_\infty^{-1}.\mathrm{v}) \, ,\\
\RO_f(\mathrm{v})_f&\coloneqq
\int_{\mathbf{T}\times \mathbf{G}\, (\mathbb{A}_f)} \dif(t,g)
f([t(l_f,\mathrm{x}_f)]^{-1}(e,\mathrm{v})g(e,\mathrm{y}_f))\\
&=\int_{\mathbf{T}\times \mathbf{G}\, (\mathbb{A}_f)} \dif(t,g)
f(g, g.\mathrm{y}_f+l_f^{-1}t.\mathrm{v}-l_f^{-1}.\mathrm{x}_f) \, .
\end{align*}
\end{lem}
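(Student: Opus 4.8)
\textit{Proof sketch.}
The plan is to start from the change-of-variables form \eqref{eq:RO-short-v} of the orbital integral and observe that every ingredient in it splits across the archimedean place and the non-archimedean places. Write $\mathbb{A}=\mathbb{R}\times\mathbb{A}_f$, so $(\mathbf{T}\times\mathbf{G})(\mathbb{A})=(\mathbf{T}\times\mathbf{G})(\mathbb{R})\times(\mathbf{T}\times\mathbf{G})(\mathbb{A}_f)$ with product Haar measure, normalised so that $\meas_{\mathbf{T}(\mathbb{R})}$ is a probability measure as fixed in \S\ref{sec:prelim}. Since $f$ is a standard test function, $f=f_\infty\cdot f_f$ in the sense $f(h_\infty,h_f)=f_\infty(h_\infty)f_f(h_f)$, and because the group operations appearing in \eqref{eq:RO-short-v} --- the $\mathbf{G}$-action on $\mathbf{V}$ and addition --- are carried out place by place, the integrand $(t,g)\mapsto f\bigl(g,\,g.\mathrm{y}+l^{-1}t.\mathrm{v}-l^{-1}.\mathrm{x}\bigr)$ factors as (a function of $(t_\infty,g_\infty)$) times (a function of $(t_f,g_f)$). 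As $f\geq 0$, Tonelli's theorem then gives $\RO_f(\mathrm{v})=\RO_f(\mathrm{v})_\infty\,\RO_f(\mathrm{v})_f$ in $[0,\infty]$, with finite factor $\int_{\mathbf{T}\times\mathbf{G}(\mathbb{A}_f)}\dif(t,g)\,f_f\bigl(g,\,g.\mathrm{y}_f+l_f^{-1}t.\mathrm{v}-l_f^{-1}.\mathrm{x}_f\bigr)$; undoing, now over $\mathbb{A}_f$, the two changes of variables that produced \eqref{eq:RO-short-v} recovers the first displayed expression for $\RO_f(\mathrm{v})_f$.

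For the archimedean factor we invoke $K_\infty$-invariance of $\mathcal{H}$ twice. First, $K_\infty$-invariance forces $\mathrm{x}_\infty=0$, so the term $l_\infty^{-1}.\mathrm{x}_\infty$ drops out and the archimedean factor equals $\int_{\mathbf{T}\times\mathbf{G}(\mathbb{R})}\dif(t,g)\,f_\infty\bigl(g,\,g.\mathrm{y}_\infty+l_\infty^{-1}t.\mathrm{v}\bigr)$, which, undoing those same changes of variables over $\mathbb{R}$, is the first displayed formula for $\RO_f(\mathrm{v})_\infty$. Second, $K_\infty$-invariance gives $\Ad_{l_\infty^{-1}}\mathbf{T}(\mathbb{R})=K_\infty$, so $t\mapsto k:=l_\infty^{-1}t^{-1}l_\infty$ is an isomorphism of compact groups $\mathbf{T}(\mathbb{R})\xrightarrow{\sim}K_\infty$ carrying probability Haar to probability Haar, and $l_\infty^{-1}t^{-1}=kl_\infty^{-1}$, i.e.\ $[t(l_\infty,0)]^{-1}=k(l_\infty,0)^{-1}$. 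Substituting turns the $\mathbf{T}(\mathbb{R})$-integral into an integral over $K_\infty$ and yields $\int_{K_\infty\times\mathbf{G}(\mathbb{R})}\dif(k,g)\,f_\infty\bigl(k(l_\infty,0)^{-1}(e,\mathrm{v})g(e,\mathrm{y}_\infty)\bigr)$. Finally a left translation $g\mapsto l_\infty k^{-1}g$ in the $\mathbf{G}(\mathbb{R})$-variable, combined with a direct expansion of the semidirect-product law $(g_1,v_1)(g_2,v_2)=(g_1g_2,\,v_1+g_1v_2)$ in $\mathbf{P}^1=\mathbf{G}\ltimes\mathbf{V}$, puts this into the third form $\int_{K_\infty\times\mathbf{G}(\mathbb{R})}\dif(k,g)\,f_\infty\bigl(g,\,g.\mathrm{y}_\infty+kl_\infty^{-1}.\mathrm{v}\bigr)$, completing the proof.

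I do not expect a genuine obstacle here: the substance is the careful bookkeeping of the semidirect-product multiplication through a handful of affine changes of variables, together with keeping the Haar-measure normalisations aligned so that the factorisation acquires no spurious constant. The one real input is that $K_\infty$-invariance simultaneously supplies $\mathrm{x}_\infty=0$ and $\Ad_{l_\infty^{-1}}\mathbf{T}(\mathbb{R})=K_\infty$, the latter being exactly what converts the archimedean $\mathbf{T}(\mathbb{R})$-integral into a $K_\infty$-average. It is worth emphasising that the identity is asserted in $[0,\infty]$ and requires no a priori finiteness of $\RO_f(\mathrm{v})$; finiteness, and the explicit evaluation of the local factors at the unramified places, are treated separately in \S\ref{sec:orbit-space} and Appendix \ref{app:L-functions}.
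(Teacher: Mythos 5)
Your argument is correct and is essentially the paper's own proof: the paper likewise deduces the factorization from the product structure of the Haar measures on $\mathbf{T}(\mathbb{A})$ and $\mathbf{G}(\mathbb{A})$ (together with $f=f_\infty\cdot f_f$), and obtains the explicit formulae by the same changes of variables used for \eqref{eq:RO-short-v} and \eqref{eq:RO-short-0}. You merely spell out the details the paper leaves implicit, in particular that $K_\infty$-invariance gives $\mathrm{x}_\infty=0$ and lets $\Ad_{l_\infty^{-1}}$ convert the $\mathbf{T}(\mathbb{R})$-average into a $K_\infty$-average.
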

\begin{proof}
The decomposition follows from the product structure of the Haar measures on $\mathbf{G}(\mathbb{A})$ and $\mathbf{T}(\mathbb{A})$. The explicit formulae are derived in the same manner as \eqref{eq:RO-short-v} and \eqref{eq:RO-short-0}.
\end{proof}

\subsection{The Archimedean Orbital Integral}
We compute a simple bound on the archimedean contribution.
\begin{prop}\label{prop:RO-archimedean}
Let $f=\prod_v f_v$ be a standard test function with archimedean factor $f_\infty$.
Fix left $K_\infty$-invariant compact subsets $B_{G,\infty}\subset \mathbf{G}(\mathbb{R})$ and $B_{V,\infty}\subset \mathbf{V}(\mathbb{R})$ and set $B_\infty=B_{G,\infty}\times B_{V,\infty}\subset \mathbf{P}^1(\mathbb{R})$.
Assume $f_\infty=\mathds{1}_{B_\infty}$. Then
\begin{equation*}
\RO_f(\mathrm{v})_\infty\leq
\meas_{\mathbf{G}(\mathbb{R})}\left((e,\mathrm{y}_\infty)\cdot B_\infty^{-1} B_\infty \cdot
(e,-\mathrm{y}_\infty)\right) \, .
\end{equation*}
Moreover $\RO_f(\mathrm{v})_\infty$ vanishes unless
\begin{equation*}
l_\infty^{-1}.\mathrm{v}\in B_{V,\infty}-B_{G,\infty}.\mathrm{y}_\infty \, .
\end{equation*}
\end{prop}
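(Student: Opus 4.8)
The plan is to compute $\RO_f(\mathrm{v})_\infty$ directly from the formula supplied by Lemma \ref{lem:RO-infty-finite}:
\begin{equation*}
\RO_f(\mathrm{v})_\infty=\int_{K_\infty\times\mathbf{G}(\mathbb{R})}\dif(k,g)\,f_\infty\bigl(g,\ g.\mathrm{y}_\infty+kl_\infty^{-1}.\mathrm{v}\bigr)\, ,
\end{equation*}
substituting $f_\infty=\mathds{1}_{B_\infty}$. Since $B_\infty=B_{G,\infty}\times B_{V,\infty}$, the integrand is the indicator of the set of $g$ with $g\in B_{G,\infty}$ and $g.\mathrm{y}_\infty+kl_\infty^{-1}.\mathrm{v}\in B_{V,\infty}$. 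Because $\meas_{K_\infty}$ is the probability Haar measure in our normalization, $\RO_f(\mathrm{v})_\infty$ is the average over $k\in K_\infty$ of $\meas_{\mathbf{G}(\mathbb{R})}(S_{\mathrm{w}})$, where $\mathrm{w}\coloneqq kl_\infty^{-1}.\mathrm{v}$ and $S_{\mathrm{w}}\coloneqq\{g\in\mathbf{G}(\mathbb{R}):(g,\,g.\mathrm{y}_\infty+\mathrm{w})\in B_\infty\}$.

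For the vanishing claim, suppose the integrand does not vanish at some $(k,g)$. Then $g\in B_{G,\infty}$ and $kl_\infty^{-1}.\mathrm{v}\in B_{V,\infty}-g.\mathrm{y}_\infty\subseteq B_{V,\infty}-B_{G,\infty}.\mathrm{y}_\infty$. Applying $k^{-1}$ and using that $B_{V,\infty}$ is $K_\infty$-invariant and $B_{G,\infty}$ is left $K_\infty$-invariant (so that $k^{-1}.(B_{G,\infty}.\mathrm{y}_\infty)=(k^{-1}B_{G,\infty}).\mathrm{y}_\infty=B_{G,\infty}.\mathrm{y}_\infty$) gives $l_\infty^{-1}.\mathrm{v}\in B_{V,\infty}-B_{G,\infty}.\mathrm{y}_\infty$. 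Contrapositively, $\RO_f(\mathrm{v})_\infty=0$ unless $l_\infty^{-1}.\mathrm{v}\in B_{V,\infty}-B_{G,\infty}.\mathrm{y}_\infty$.

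For the upper bound, the key identity is the semidirect-product computation
\begin{equation*}
\bigl(g,\ g.\mathrm{y}_\infty+\mathrm{w}\bigr)=(e,\mathrm{w})\,(g,0)\,(e,\mathrm{y}_\infty)\, ,
\end{equation*}
which rewrites the defining condition of $S_{\mathrm{w}}$ as $(g,0)\in(e,-\mathrm{w})\,B_\infty\,(e,-\mathrm{y}_\infty)$. If $S_{\mathrm{w}}\neq\emptyset$, pick $g_0\in S_{\mathrm{w}}$; then for every $g\in S_{\mathrm{w}}$ we have $(g_0^{-1}g,0)=(g_0,0)^{-1}(g,0)\in(e,\mathrm{y}_\infty)\,B_\infty^{-1}B_\infty\,(e,-\mathrm{y}_\infty)$, so $S_{\mathrm{w}}\subseteq g_0\cdot Q$ where $Q\coloneqq\{h\in\mathbf{G}(\mathbb{R}):(h,0)\in(e,\mathrm{y}_\infty)B_\infty^{-1}B_\infty(e,-\mathrm{y}_\infty)\}$. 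Left-invariance of Haar measure on $\mathbf{G}(\mathbb{R})$ gives $\meas_{\mathbf{G}(\mathbb{R})}(S_{\mathrm{w}})\leq\meas_{\mathbf{G}(\mathbb{R})}(Q)$, uniformly in $\mathrm{w}$ (hence in $k$), and averaging over $k$ yields $\RO_f(\mathrm{v})_\infty\leq\meas_{\mathbf{G}(\mathbb{R})}(Q)$. Under the zero-section identification $\mathbf{G}(\mathbb{R})\simeq\mathbf{G}(\mathbb{R})\times\{0\}\subset\mathbf{P}^1(\mathbb{R})$ this is exactly the asserted bound.

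No genuine obstacle arises: the argument is a chain of elementary manipulations. The points that need care are purely organizational --- keeping the left/right conventions straight when conjugating $B_\infty$ by $(e,\pm\mathrm{y}_\infty)$, using $B_\infty^{-1}B_\infty$ (not $B_\infty B_\infty^{-1}$) so that the conjugating element is $(e,\mathrm{y}_\infty)$, independent of $k$ and $\mathrm{v}$, and invoking each of the two $K_\infty$-invariance hypotheses at the one place each is needed.
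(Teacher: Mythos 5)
Your proof is correct and takes essentially the same route as the paper's: a left-translation argument in $\mathbf{G}(\mathbb{R})$ reduces the orbital integral to the Haar measure of a subset of the zero section cut out by $(e,\mathrm{y}_\infty)B_\infty^{-1}B_\infty(e,-\mathrm{y}_\infty)$. The only organizational difference is that the paper first kills the $k$-integral using $K_\infty$-invariance of $B_\infty$ and then translates by a single $g_0$, whereas you keep the $k$-integral, bound the inner integral uniformly in $k$ by the same translation trick, and average; this makes the upper bound visibly independent of the $K_\infty$-invariance hypothesis, which you then use only where it is needed, namely in the vanishing condition.
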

\begin{proof}
The vanishing condition follows from the formula in Lemma \ref{lem:RO-infty-finite} and by examining the support of $f_\infty$. Assume henceforth $\RO_f(\mathrm{v})_\infty\neq 0$. It follows that there is some $b\in K_\infty B_\infty=B_\infty$ and $g_0\in\mathbf{G}(\mathbb{R})$ such that $(l_\infty,0)^{-1}(e,\mathrm{v})g_0(e,\mathrm{y}_\infty)=b$.

Using the $K_\infty$-invariance we deduce
\begin{align*}
\RO_f(\mathrm{v})_\infty&=\int_{K_\infty \times \mathbf{G}(\mathbb{R})} \dif(k,g)
f(k(l_\infty,0)^{-1}(e,\mathrm{v})g(e,\mathrm{y}_\infty))\\
&= \int_{\mathbf{G}(\mathbb{R})} \dif g\,
f(b(e,-y_\infty)g_0^{-1}g(e,\mathrm{y}_\infty))
= \int_{\mathbf{G}(\mathbb{R})} \dif g\,
f(b(e,-y_\infty)g(e,\mathrm{y}_\infty))\\
&=\meas_{\mathbf{G}(\mathbb{R})}\left((e,\mathrm{y}_\infty)b^{-1}B_\infty(e,-\mathrm{y}_\infty)
\right)\leq
\meas_{\mathbf{G}(\mathbb{R})}\left((e,\mathrm{y}_\infty)\cdot B_\infty^{-1} B_\infty\cdot(e,-\mathrm{y}_\infty)
\right) \, .
\end{align*}
\end{proof}

\begin{defi}\label{defi:test-function-archimedean}
Denote by $\|\bullet\|_2$ the standard Euclidean norm on $\mathbf{V}(\mathbb{R}^2)=\mathbb{R}^2$. This is the unique inner-product norm that is $K_\infty$-invariant and such that the co-volume of $\mathbf{V}(\mathbb{Z})=\mathbb{Z}^2$ in $\mathbf{V}(\mathbb{R}^2)$ is $1$.

For any $R_G,R_V\geq0$ we use the following notations for the closed balls around the identity
\begin{align*}
B_{G,\infty}(R_G)&\coloneqq
K_\infty \left\{\exp\left(H\begin{pmatrix}
1/2 & 0 \\ 0 & -1/2
\end{pmatrix}\right)
\mid |H|< R_G \right\}
K_\infty \, ,\\
B_{V,\infty}(R_V)&\coloneqq \left\{\mathrm{w}_\infty\in\mathbf{V}(\mathbb{R}) \mid \|\mathrm{w}_\infty\|_2< R_v  \right\} \, .
\end{align*}
\end{defi}

\begin{cor}\label{cor:archimedean-ball-norm}
In the setting of Proposition \ref{prop:RO-archimedean} with $B_{G,\infty}=B_{G,\infty}(R_G)$ and $B_{V,\infty}=B_{V,\infty}(R_V)$ for some $R_G,R_V\geq 0$
if $\RO_f(\mathrm{v})\neq 0$ then
\begin{equation*}
\max\left\{\exp\left(-\frac{R_G}{2}\right)\|\mathrm{y}_\infty\|_2-R_V ,0\right\}<
\|l_\infty^{-1}.\mathrm{v}\|_2 < \exp\left(\frac{R_G}{2}\right)\|\mathrm{y}_\infty\|_2+R_V \, .
\end{equation*}
\end{cor}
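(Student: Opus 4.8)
The plan is to reduce the claim to the vanishing criterion already proved in Proposition \ref{prop:RO-archimedean}, so that the only substantive point is to control how elements of $B_{G,\infty}(R_G)$ distort the Euclidean norm on $\mathbf{V}(\mathbb{R})\simeq\mathbb{R}^2$. First I would observe that, since $K_\infty=\mathbf{SO}_2(\mathbb{R})$ acts on $\mathbf{V}(\mathbb{R})$ by $\|\bullet\|_2$-isometries and every $g\in B_{G,\infty}(R_G)$ is of the form $g=k_1\exp\!\big(H\,\diag(1/2,-1/2)\big)k_2$ with $k_1,k_2\in K_\infty$ and $|H|<R_G$, the singular values of the linear action of $g$ on $\mathbf{V}(\mathbb{R})$ are $e^{\pm H/2}$. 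Hence for every $\mathrm{w}\in\mathbf{V}(\mathbb{R})$ and every $g\in B_{G,\infty}(R_G)$ one has $e^{-R_G/2}\|\mathrm{w}\|_2\le\|g.\mathrm{w}\|_2\le e^{R_G/2}\|\mathrm{w}\|_2$.

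Next I would note that by Lemma \ref{lem:RO-infty-finite} we have $\RO_f(\mathrm{v})=\RO_f(\mathrm{v})_\infty\,\RO_f(\mathrm{v})_f$, so $\RO_f(\mathrm{v})\neq0$ forces $\RO_f(\mathrm{v})_\infty\neq0$, and the vanishing criterion in Proposition \ref{prop:RO-archimedean} (applied with $B_{G,\infty}=B_{G,\infty}(R_G)$ and $B_{V,\infty}=B_{V,\infty}(R_V)$, both of which are $K_\infty$-invariant by Definition \ref{defi:test-function-archimedean}) then provides $g\in B_{G,\infty}(R_G)$ and $\mathrm{w}_\infty\in B_{V,\infty}(R_V)$, i.e.\ $\|\mathrm{w}_\infty\|_2<R_V$, with $l_\infty^{-1}.\mathrm{v}=\mathrm{w}_\infty-g.\mathrm{y}_\infty$. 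Applying the triangle inequality together with the singular-value bound from the previous step yields $\|l_\infty^{-1}.\mathrm{v}\|_2\le\|\mathrm{w}_\infty\|_2+\|g.\mathrm{y}_\infty\|_2<R_V+e^{R_G/2}\|\mathrm{y}_\infty\|_2$ for the upper bound, and $\|l_\infty^{-1}.\mathrm{v}\|_2\ge\|g.\mathrm{y}_\infty\|_2-\|\mathrm{w}_\infty\|_2>e^{-R_G/2}\|\mathrm{y}_\infty\|_2-R_V$ for the lower bound; since $\|l_\infty^{-1}.\mathrm{v}\|_2\ge0$ trivially, the lower bound may be replaced by its maximum with $0$, which is precisely the asserted two-sided inequality.

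There is no genuine obstacle here: the statement is a one-line consequence of Proposition \ref{prop:RO-archimedean} once the norm distortion of the Cartan part of $B_{G,\infty}(R_G)$ is written down via the Cartan decomposition. The only thing to be careful about is the bookkeeping of normalizations: the factor $1/2$ in the matrix $\diag(1/2,-1/2)$ entering Definition \ref{defi:test-function-archimedean} is exactly what makes the singular values equal to $e^{\pm H/2}$ with $|H|<R_G$, hence the exponents $R_G/2$ (and not $R_G$) in the conclusion; and one should confirm that the hypotheses of Proposition \ref{prop:RO-archimedean} — left $K_\infty$-invariance of $B_{G,\infty}(R_G)$ and $K_\infty$-invariance of $B_{V,\infty}(R_V)$ — hold, which is immediate from the definitions.
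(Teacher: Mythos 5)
Your proof is correct and follows essentially the same route as the paper's: invoke the vanishing condition of Proposition \ref{prop:RO-archimedean} to place $l_\infty^{-1}.\mathrm{v}$ in $B_{V,\infty}(R_V)-B_{G,\infty}(R_G).\mathrm{y}_\infty$, and then read off the two-sided norm estimate. The paper simply asserts that this set is an annulus with inner radius $\max\{e^{-R_G/2}\|\mathrm{y}_\infty\|_2-R_V,0\}$ and outer radius $e^{R_G/2}\|\mathrm{y}_\infty\|_2+R_V$; your Cartan-decomposition/singular-value computation plus the triangle inequality is exactly the verification of that claim, spelled out.
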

\begin{proof}
This follows from the condition $l_\infty^{-1}.\mathrm{v}\in B_{V,\infty}-B_{G,\infty}.\mathrm{y}_\infty$ in Proposition \ref{prop:RO-archimedean} above. The set $B_{V,\infty}-B_{G,\infty}.\mathrm{y}_\infty$ is easy to write down explicitly with this choice of $B_{G,\infty}$ and $B_{V,\infty}$ -- it is an annulus centered at the origin with outer radius equal to $\exp\left(\frac{R_G}{2}\right)\|\mathrm{y}_\infty\|_2+R_V$ and inner radius equal to $\max\{\exp\left(-\frac{R_G}{2}\right)\|\mathrm{y}_\infty\|_2-R_V,0\}$.
\end{proof}

\subsection{The Non-Archimedean Contribution}
\subsubsection{Bowen Balls}
Our test function at a non-archimedean place will be the characteristic function of a homogeneous Bowen ball.
\begin{defi}\label{defi:bowen-ball}
Let $p$ be a rational prime and denote by $A_p<\mathbf{G}(\mathbb{Q}_p)$ the standard diagonal subgroup. Fix $a\in A_p$ a generator of $A_p/A_p^\circ\simeq \mathbb{Z}$ where $A_p^\circ<A_p$ is the maximal compact-open subgroup. Let $\tau>0$ be an integer.

Let $\mathbf{H}$ be either $\mathbf{G},\mathbf{V}$ or $\mathbf{P}^1$ and define the $\mathbf{H}(\mathbb{Z}_p)$ Bowen ball of level $2\tau$ to be
\begin{align*}
\mathbf{H}(\mathbb{Z}_p)^{(-\tau,\tau)}\coloneqq\bigcap_{k=-\tau}^\tau a^k \mathbf{H}(\mathbb{Z}_p) a^{-k} \, .
\end{align*}
Notice that the definition above does not depend on the specific choice of a generator $a$.
Moreover the subgroup $\mathbf{G}(\mathbb{Z}_p)^{(-\tau,\tau)}$ stabilizes the $\mathbb{Z}_p$-lattice   $\mathbf{V}(\mathbb{Z}_p)^{(-\tau,\tau)}$ under the standard action of $\mathbf{G}$ on $\mathbf{V}$.
\end{defi}

\begin{lem}
Let $v$ be a finite rational place.
Recall from Lemma \ref{lem:vol-Lambda_f^{(1)}} that $\Lambda_v^{(1)}(\mathrm{x})=\Ad_{(l_v,x_v)}\mathbf{P}^1(\mathbb{Z}_v)\cap \mathbf{T}(\mathbb{Q}_v)$.
If $\Ad_{(l_v,x_v)^{-1}}\mathbf{T}(\mathbb{Q}_v)=\mathbf{A}(\mathbb{Q}_v)$ then
a direct verification shows that $$\Lambda_v^{(1)}(\mathrm{x}) (l_v,x_v) \mathbf{P}^1(\mathbb{Z}_v)^{(-\tau,\tau)}=(l_v,x_v)\mathbf{P}^1(\mathbb{Z}_v)^{(-\tau,\tau)}$$
for all $n$.
\end{lem}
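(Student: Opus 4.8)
The plan is to prove the asserted set equality by showing that each individual left translate $\lambda(l_v,\mathrm{x}_v)\mathbf{P}^1(\mathbb{Z}_v)^{(-\tau,\tau)}$, as $\lambda$ ranges over $\Lambda_v^{(1)}(\mathrm{x})$, already coincides with $(l_v,\mathrm{x}_v)\mathbf{P}^1(\mathbb{Z}_v)^{(-\tau,\tau)}$; taking the union over $\lambda$ then yields the claim (for any admissible $\tau$). Since $\mathbf{P}^1(\mathbb{Z}_v)^{(-\tau,\tau)}=\bigcap_{k=-\tau}^{\tau}a^k\mathbf{P}^1(\mathbb{Z}_v)a^{-k}$ is an intersection of subgroups, hence itself a subgroup $H<\mathbf{P}^1(\mathbb{Q}_v)$, and since
\[
\lambda(l_v,\mathrm{x}_v)\,H=(l_v,\mathrm{x}_v)\,\big[(l_v,\mathrm{x}_v)^{-1}\lambda(l_v,\mathrm{x}_v)\big]\,H\,,
\]
the statement reduces to the claim that $g_\lambda\coloneqq(l_v,\mathrm{x}_v)^{-1}\lambda(l_v,\mathrm{x}_v)$ lies in $H=\mathbf{P}^1(\mathbb{Z}_v)^{(-\tau,\tau)}$ for every $\lambda\in\Lambda_v^{(1)}(\mathrm{x})$.

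To prove this I would pin down $g_\lambda$ from two directions. On the one hand, Lemma \ref{lem:vol-Lambda_f^{(1)}} identifies $\Lambda_v^{(1)}(\mathrm{x})$ with $\Ad_{(l_v,\mathrm{x}_v)}\mathbf{P}^1(\mathbb{Z}_v)\cap\mathbf{T}(\mathbb{Q}_v)$, so $g_\lambda\in\mathbf{P}^1(\mathbb{Z}_v)$. On the other hand $\lambda\in\mathbf{T}(\mathbb{Q}_v)$, and the hypothesis $\Ad_{(l_v,\mathrm{x}_v)^{-1}}\mathbf{T}(\mathbb{Q}_v)=\mathbf{A}(\mathbb{Q}_v)$ forces $g_\lambda\in\mathbf{A}(\mathbb{Q}_v)$. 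Because the zero section $\mathbf{G}\hookrightarrow\mathbf{P}^1$ is defined over $\mathbb{Z}$, one has $\mathbf{A}(\mathbb{Q}_v)\cap\mathbf{P}^1(\mathbb{Z}_v)=\mathbf{A}(\mathbb{Q}_v)\cap\mathbf{G}(\mathbb{Z}_v)=\mathbf{A}(\mathbb{Z}_v)$, so in fact $g_\lambda\in\mathbf{A}(\mathbb{Z}_v)$.

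The remaining point is the inclusion $\mathbf{A}(\mathbb{Z}_v)\subseteq\mathbf{P}^1(\mathbb{Z}_v)^{(-\tau,\tau)}$, and this is exactly where the diagonality built into the hypothesis is used: since $\mathbf{A}$ is a torus, $\mathbf{A}(\mathbb{Q}_v)$ is abelian, so the compact subgroup $\mathbf{A}(\mathbb{Z}_v)$ commutes with the generator $a\in A_v$, whence $a^k\mathbf{A}(\mathbb{Z}_v)a^{-k}=\mathbf{A}(\mathbb{Z}_v)\subseteq\mathbf{G}(\mathbb{Z}_v)\subseteq\mathbf{P}^1(\mathbb{Z}_v)$ for every $k$. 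Intersecting over $-\tau\le k\le\tau$ gives $\mathbf{A}(\mathbb{Z}_v)\subseteq\mathbf{P}^1(\mathbb{Z}_v)^{(-\tau,\tau)}$, and combined with $g_\lambda\in\mathbf{A}(\mathbb{Z}_v)$ this finishes the verification.

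I do not expect a genuine obstacle here — as the author remarks, this is a direct verification. The only thing meriting attention is the bookkeeping: correctly invoking Lemma \ref{lem:vol-Lambda_f^{(1)}} to get $\mathbf{P}^1(\mathbb{Z}_v)$-integrality of the conjugate $g_\lambda$, and recognizing that the hypothesis $\Ad_{(l_v,\mathrm{x}_v)^{-1}}\mathbf{T}(\mathbb{Q}_v)=\mathbf{A}(\mathbb{Q}_v)$ is precisely what pushes $g_\lambda$ into the \emph{diagonal} compact subgroup $\mathbf{A}(\mathbb{Z}_v)$, which is stable under conjugation by $a$ and therefore survives inside every Bowen ball $\mathbf{P}^1(\mathbb{Z}_v)^{(-\tau,\tau)}$.
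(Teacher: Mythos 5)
Your argument is correct and is precisely the ``direct verification'' the paper invokes without writing out: conjugating $\lambda$ by $(l_v,\mathrm{x}_v)^{-1}$ lands in $\mathbf{P}^1(\mathbb{Z}_v)\cap\mathbf{A}(\mathbb{Q}_v)=\mathbf{A}(\mathbb{Z}_v)$, which is centralized by $a$ and therefore sits inside every Bowen ball $\mathbf{P}^1(\mathbb{Z}_v)^{(-\tau,\tau)}$, giving the stability of the coset. Since the paper marks this lemma $\qed$ with no proof, your write-up simply fills that gap with the intended computation.
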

\qed
\begin{lem}\label{lem:Bowen-jmath}
Let $v<\infty$ and assume $\Ad_{(l_v,x_v)^{-1}}\mathbf{T}(\mathbb{Q}_v)=\mathbf{A}(\mathbb{Q}_v)$, in particular, $v$ splits in $E$. If $p$ is the rational prime associated to $v$ write $p=\pi \tensor[^\sigma]{\pi}{}\in E_v=\prod_{w\mid v} E_w$ where $\pi,\tensor[^\sigma]{\pi}{}$ are uniformizers of $E_w$ for $w\mid v$. For any $n\in\mathbb{Z}_{\geq 0}$
\begin{align*}
\jmath_v\left(l_v.\mathbf{V}(\mathbb{Z}_v)^{(-\tau,\tau)}\right)&=\bigcap_{k=-\tau}^\tau \frac{\pi^k}{\tensor[^\sigma]{\pi}{^k}}\Lambda_v\subset \Lambda_v \, ,\\
\jmath_v\left(\Ad_{l_v}\mathbf{G}(\mathbb{Z}_v)^{(-\tau,\tau)}\right)&=\bigcap_{k=-\tau}^\tau \End\left(\frac{\pi^k}{\tensor[^\sigma]{\pi}{^k}}\Lambda_v\right)\subset \End(\Lambda_v) \, .
\end{align*}
\end{lem}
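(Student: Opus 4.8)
The idea is to reduce everything to an explicit local computation in the split case, where the torus, the lattice, and the Bowen balls can all be diagonalized simultaneously. Since $v$ splits in $E$, after the identification $\jmath_v$ the torus $\mathbf{T}(\mathbb{Q}_v)$ becomes the diagonal torus $\mathbf{A}(\mathbb{Q}_v)$ (by the standing assumption $\Ad_{(l_v,x_v)^{-1}}\mathbf{T}(\mathbb{Q}_v) = \mathbf{A}(\mathbb{Q}_v)$), and $E_v = E_w \times E_{w'} \cong \mathbb{Q}_p \times \mathbb{Q}_p$. Under $\jmath_v$, the $\mathbb{Z}_p$-order $\Lambda_v$ is the stabilizer of $l_v.\mathbf{V}(\mathbb{Z}_v)$, and Definition \ref{defi:jmath-iso}(e) tells us $\jmath_v$ carries $\Ad_{l_v}\mathbf{M}_2(\mathbb{Z}_v)$ to $\End(\Lambda_v)$ and $\Ad_{l_v}\mathbf{G}(\mathbb{Z}_v)$ to $\Aut^1(\Lambda_v)$. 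So the plan is: first record that $\jmath_v\bigl(l_v.\mathbf{V}(\mathbb{Z}_v)\bigr)=\Lambda_v$ and that the generator $a\in A_p$ of $A_p/A_p^\circ$ acts on $E_v \cong E_w\times E_{w'}$ as multiplication by $(\pi,\tensor[^\sigma]{\pi}{})^{-1}$ up to a unit — equivalently, conjugation by $a^k$ on $\End(E_v)$ is multiplication by $\pi^k/\tensor[^\sigma]{\pi}{^k}$ on the $E_v$-module structure. The precise normalization of which power of $\pi$ appears is fixed by the requirement that $a$ generate $A_p/A_p^\circ$, i.e. $a = \diag(p,1)$ up to $A_p^\circ$; one should check the exponent bookkeeping here carefully since that is where sign/normalization errors creep in.

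Granting that, the first identity is immediate: by definition $\mathbf{V}(\mathbb{Z}_p)^{(-\tau,\tau)} = \bigcap_{k=-\tau}^{\tau} a^k \mathbf{V}(\mathbb{Z}_p) a^{-k}$, so applying $\jmath_v \circ (l_v . \,\cdot\,)$ and using that $\jmath_v$ intertwines the $a$-conjugation action on $\mathbf{V}$ with multiplication by $\pi^k/\tensor[^\sigma]{\pi}{^k}$ on $E_v$ gives
\[
\jmath_v\bigl(l_v.\mathbf{V}(\mathbb{Z}_v)^{(-\tau,\tau)}\bigr)
= \bigcap_{k=-\tau}^{\tau} \frac{\pi^k}{\tensor[^\sigma]{\pi}{^k}}\,\jmath_v\bigl(l_v.\mathbf{V}(\mathbb{Z}_v)\bigr)
= \bigcap_{k=-\tau}^{\tau} \frac{\pi^k}{\tensor[^\sigma]{\pi}{^k}}\,\Lambda_v ,
\]
and since the $k=0$ term is $\Lambda_v$ the intersection is contained in $\Lambda_v$. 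For the second identity, apply $\jmath_v$ to $\mathbf{G}(\mathbb{Z}_p)^{(-\tau,\tau)} = \bigcap_k a^k \mathbf{G}(\mathbb{Z}_p) a^{-k}$ after conjugating by $l_v$; using $\jmath_v(\Ad_{l_v}\mathbf{G}(\mathbb{Z}_v)) = \Aut^1(\Lambda_v) \subset \End(\Lambda_v)$ and the fact that conjugation by $a^k$ on $\mathbf{M}_2$ corresponds on $\End_{\mathbb{Q}_v}(E_v)$ to conjugation by multiplication-by-$(\pi^k/\tensor[^\sigma]{\pi}{^k})$, which sends $\End(\Lambda_v)$ to $\End\bigl(\tfrac{\pi^k}{\tensor[^\sigma]{\pi}{^k}}\Lambda_v\bigr)$ (an endomorphism ring only depends on the lattice up to homothety, but the relevant lattices here are genuinely distinct since $\pi \ne \tensor[^\sigma]{\pi}{}$), one gets the claimed intersection. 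Here one should be mildly careful: the determinant-one constraint ($\Aut^1$ rather than $\Aut$) is preserved throughout because conjugation does not change determinants, so restricting to the $\mathbf{G}$-part rather than the $\mathbf{M}_2$-part is harmless.

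The main obstacle is purely bookkeeping: pinning down exactly how the abstract generator $a$ of $A_p/A_p^\circ$ translates into the pair of uniformizers $(\pi,\tensor[^\sigma]{\pi}{})$ under $\jmath_v$, and making sure the two factors $E_w$, $E_{w'}$ are labeled consistently so that the exponents $\pi^k/\tensor[^\sigma]{\pi}{^k}$ (rather than $\tensor[^\sigma]{\pi}{^k}/\pi^k$) appear. Everything else is a direct translation of the definition of the Bowen ball through the module isomorphism $\jmath_v$, using that $\jmath_v$ is $E_v$-linear and compatible with the action of $\mathbf{M}_2(\mathbb{Q}_v) = \End_{\mathbb{Q}_v}(E_v)$. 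No deeper input is needed; in particular no global or adelic statement enters, only the local structure at the single split place $v$.
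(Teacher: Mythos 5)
Your proof is correct and takes the same route as the paper, which simply says ``Follows directly from Definition \ref{defi:jmath-iso}''; you have fleshed out exactly the intended argument, and your observation that the choice of generator of $A_p/A_p^\circ$ only enters through the $k\leftrightarrow -k$ symmetry of the intersection is a clean way to dispose of the normalization ambiguity. One small slip: you write $a=\diag(p,1)$, which is not an element of $A_p<\mathbf{SL}_2(\mathbb{Q}_p)$; the generator is $\diag(p,p^{-1})$, which under $\jmath_v$ indeed acts on $E_v\cong E_w\times E_{w'}$ as multiplication by $\pi/\tensor[^\sigma]{\pi}{}$. You are also right to be uneasy about the determinant-one bookkeeping in the second identity: as written, the left-hand side $\jmath_v\bigl(\Ad_{l_v}\mathbf{G}(\mathbb{Z}_v)^{(-\tau,\tau)}\bigr)$ consists entirely of norm-one endomorphisms, so it cannot literally equal the full ring $\bigcap_k\End\bigl(\tfrac{\pi^k}{\tensor[^\sigma]{\pi}{^k}}\Lambda_v\bigr)$; the intended statement is the intersection of that ring with $\Aut^1(E_v)$ (equivalently, $\bigcap_k\Aut^1\bigl(\tfrac{\pi^k}{\tensor[^\sigma]{\pi}{^k}}\Lambda_v\bigr)$). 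This appears to be an $\End$-for-$\Aut^1$ typo in the paper rather than a gap in your reasoning, and only the first identity is used downstream in Proposition \ref{prop:geom-expansion-ideals}.
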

\begin{proof}
Follows directly from Definition \ref{defi:jmath-iso}.
\end{proof}

\begin{prop}\label{prop:geom-non-archimedean-sum}
Assume  $\Ad_{(l_{p_1},x_{p_1})^{-1}}\mathbf{T}(\mathbb{Q}_{p_1})=\mathbf{A}(\mathbb{Q}_{p_1})$.
Let $f=f_\infty \cdot \prod_{v<\infty} \mathds{1}_{B_v}$ where $B_v=\mathbf{P}^1(\mathbb{Z}_v)$ for all $v\not\in \left\{\infty,p_1\right\}$ and $B_{p_1}=\mathbf{P}^1(\mathbb{Z}_{p_1})^{(-\tau,\tau)}$ for a fixed $\tau>0$.
Denote $B_f=\prod_{v<\infty} B_v$ and write $B_f=B_{G_f}\ltimes B_{V,f}$. The compact groups $B_{G,f}<\mathbf{G}(\mathbb{A}_f)$, $B_{V,f}<\mathbf{V}(\mathbb{A}_f)$ are a product of the standard maximal subgroups at all places except $p_1$ and a Bowen ball of level $2\tau$ at $p_1$.

  Define the following $\Lambda_f^{(1)}(\mathrm{x})$-invariant compact open subset of $\mathbf{V}(\mathbb{A}_f)$
\begin{equation*}
B_{V,f}(\mathrm{x},\mathrm{y})\coloneqq \mathrm{x}_f
+l_f.\left(B_{V,f}-B_{G,f}.\mathrm{y}_f\right) \, .
\end{equation*}
Then the cross-correlation is equal to the following counting formula
\begin{align*}
\Cor(\mu,\nu)[f]&=\RO_f(0)+
\vol\left(\left[\mathbf{T}(\mathbb{A})(l,\mathrm{x})\right]\right)^{-1}
\meas_{\mathbf{G}(\mathbb{A}_f)} \left(\Ad_{(e,\mathrm{y}_f)}  B_f \right)\\
&\cdot\sum_{[0]\neq [\mathrm{v}]\in\mathbf{T}(\mathbb{Q}) \backslash \mathbf{V}(\mathbb{Q})}
\RO_f(\mathrm{v})_\infty\cdot
\#\left\{[t]\in \mathbf{T}(\mathbb{A}_f)\slash \Lambda_f^{(1)}(\mathrm{x})
\mid t^{-1}.\mathrm{v}\in B_{V,f}(\mathrm{x},\mathrm{y})\right\} \, .
\end{align*}
\end{prop}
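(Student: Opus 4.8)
The plan is to substitute the geometric expansion of Proposition~\ref{prop:geom-expansion}, namely $\Cor(\mu,\nu)[f]=\RO_f(0)+\sum_{[0]\neq[\mathrm{v}]}\RO_f(\mathrm{v})$, and then to evaluate the non-archimedean part of each $\RO_f(\mathrm{v})$ with $\mathrm{v}\neq 0$ in closed form; the term $\RO_f(0)$ is merely carried along. By Lemma~\ref{lem:RO-infty-finite} one has $\RO_f(\mathrm{v})=\RO_f(\mathrm{v})_\infty\RO_f(\mathrm{v})_f$ with $\RO_f(\mathrm{v})_f=\int_{\mathbf{T}\times\mathbf{G}\,(\mathbb{A}_f)}\dif(t,g)\,f(g,\,g.\mathrm{y}_f+l_f^{-1}t.\mathrm{v}-l_f^{-1}.\mathrm{x}_f)$. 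Since $f$ is a standard test function with $f_f=\mathds{1}_{B_f}$ and $B_f=B_{G,f}\ltimes B_{V,f}$ is a \emph{group} (here the Bowen-ball structure enters via the fact, recorded in Definition~\ref{defi:bowen-ball}, that $B_{G,f}$ stabilises $B_{V,f}$), the integrand factors as $\mathds{1}_{B_{G,f}}(g)\,\mathds{1}_{B_{V,f}}(g.\mathrm{y}_f+l_f^{-1}t.\mathrm{v}-l_f^{-1}.\mathrm{x}_f)$, so I would carry out the $g$-integral first.

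The crux is that for fixed $t$ the set $\{g\in B_{G,f}\mid g.\mathrm{y}_f+l_f^{-1}t.\mathrm{v}-l_f^{-1}.\mathrm{x}_f\in B_{V,f}\}$ is either empty or a single left translate $g_0S$ of the \emph{$t$-independent} compact subgroup $S:=\{h\in B_{G,f}\mid h.\mathrm{y}_f-\mathrm{y}_f\in B_{V,f}\}$: given one solution $g_0$, the substitution $g=g_0h$, the group law of $B_{V,f}$, and the fact that $g_0\in B_{G,f}$ stabilises $B_{V,f}$ collapse the condition to $h\in S$. As $S=\mathbf{G}(\mathbb{A}_f)\cap\Ad_{(e,\mathrm{y}_f)}B_f$, the $g$-integral equals $\meas_{\mathbf{G}(\mathbb{A}_f)}(\Ad_{(e,\mathrm{y}_f)}B_f)$ (read as the measure of this intersection) whenever it does not vanish, and it does not vanish precisely when $l_f^{-1}t.\mathrm{v}-l_f^{-1}.\mathrm{x}_f\in B_{V,f}-B_{G,f}.\mathrm{y}_f$, i.e. $t.\mathrm{v}\in\mathrm{x}_f+l_f.(B_{V,f}-B_{G,f}.\mathrm{y}_f)=B_{V,f}(\mathrm{x},\mathrm{y})$. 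This yields $\RO_f(\mathrm{v})_f=\meas_{\mathbf{G}(\mathbb{A}_f)}(\Ad_{(e,\mathrm{y}_f)}B_f)\cdot\meas_{\mathbf{T}(\mathbb{A}_f)}(\{t\in\mathbf{T}(\mathbb{A}_f)\mid t.\mathrm{v}\in B_{V,f}(\mathrm{x},\mathrm{y})\})$.

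It then remains to turn the $t$-measure into a count. First I would note that $B_{V,f}(\mathrm{x},\mathrm{y})$ is $\Lambda_f^{(1)}(\mathrm{x})$-invariant: away from $p_1$ this is automatic from $\Lambda_v^{(1)}(\mathrm{x})\subset\Ad_{(l_v,\mathrm{x}_v)}\mathbf{P}^1(\mathbb{Z}_v)$, while at $p_1$ it is the Bowen-ball stability observed just before Lemma~\ref{lem:Bowen-jmath} — this is the only place the hypothesis $\Ad_{(l_{p_1},\mathrm{x}_{p_1})^{-1}}\mathbf{T}(\mathbb{Q}_{p_1})=\mathbf{A}(\mathbb{Q}_{p_1})$ is used, as it forces $\Ad_{(l_{p_1},\mathrm{x}_{p_1})^{-1}}\Lambda_{p_1}^{(1)}(\mathrm{x})=A_{p_1}^{\circ}$, which normalises the Bowen ball of level $2\tau$. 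Consequently $\{t\mid t.\mathrm{v}\in B_{V,f}(\mathrm{x},\mathrm{y})\}$ is a union of $\Lambda_f^{(1)}(\mathrm{x})$-cosets, and a finite one, since it is the preimage of the compact set $B_{V,f}(\mathrm{x},\mathrm{y})$ under the proper map $t\mapsto t.\mathrm{v}$ — proper because under $\jmath$ this is multiplication on $\mathbb{A}_{E,f}$ by the invertible idele $\jmath(\mathrm{v})$ restricted to the closed subgroup $\mathbf{T}(\mathbb{A}_f)=\mathbb{A}_{E,f}^{(1)}$ (Lemma~\ref{lem:M-stab}). Hence this measure equals $\meas_{\mathbf{T}(\mathbb{A}_f)}(\Lambda_f^{(1)}(\mathrm{x}))$ times the corresponding count of cosets, and $\meas_{\mathbf{T}(\mathbb{A}_f)}(\Lambda_f^{(1)}(\mathrm{x}))=\vol(\mathcal{H})^{-1}$ by Lemma~\ref{lem:vol-Lambda_f^{(1)}} in the normalisation inherited from Proposition~\ref{prop:geom-expansion}. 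After the harmless substitution $t\mapsto t^{-1}$ (to match $t^{-1}.\mathrm{v}$ in the statement), summing $\RO_f(\mathrm{v})_\infty\RO_f(\mathrm{v})_f$ over $[0]\neq[\mathrm{v}]\in\mathbf{T}(\mathbb{Q})\backslash\mathbf{V}(\mathbb{Q})$ — all series converging by non-negativity — and restoring $\RO_f(0)$ produces the asserted counting formula. The only genuinely delicate step is the $t$-independence of the $g$-integral, which rests squarely on $B_{G,f}$ being a subgroup that normalises $B_{V,f}$; everything else is bookkeeping with the product structure of the Haar measures.
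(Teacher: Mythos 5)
Your proof is correct and takes essentially the same route as the paper's, with the two main steps performed in reverse order: the paper first disintegrates the $t$-integral over $\Lambda_f^{(1)}(\mathrm{x})$-cosets and then evaluates the $g$-integral for each coset representative via the change of variable $g\mapsto g_0^{-1}g$ in the group $B_f$, whereas you evaluate the $g$-integral first — identifying the solution set as a left coset $g_0 S$ of the subgroup $S=\mathbf{G}(\mathbb{A}_f)\cap\Ad_{(e,\mathrm{y}_f)}B_f$ — and only then disintegrate the $t$-integral. Both rest on the same two observations (the $g$-integral is $t$-independent because $B_f$ is a group and $B_{G,f}$ normalises $B_{V,f}$, and the $t$-integral reduces to a coset count because the integrand is $\Lambda_f^{(1)}(\mathrm{x})$-invariant). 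The properness argument for finiteness of the coset set is extra detail the paper leaves implicit (it follows anyway from convergence of the non-negative series), and the citation of Lemma~\ref{lem:M-stab} at that point is a slight misattribution — what you actually use is that $\mathbb{A}_{E,f}^{(1)}$ is closed in $\mathbb{A}_{E,f}$ and that $\jmath(\mathrm{v})$ is invertible, not the stabilizer computation — but the conclusion is right.
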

\begin{proof}
The function $t\mapsto f([t(l_f,\mathrm{x}_f)]^{-1}(e,\mathrm{v})g(e,\mathrm{y}_f))$ is $\Lambda_f^{(1)}(\mathrm{x})$-invariant for all $g\in\mathbf{G}(\mathbb{A}_f)$ hence by Fubini
\begin{align*}
\RO_f(\mathrm{v})_f&=\meas_{\mathbf{T}(\mathbb{A}_f)} (\Lambda_f^{(1)}(\mathrm{x}))
\sum_{[t]\in \mathbf{T}(\mathbb{A}_f)\slash \Lambda_f^{(1)}(\mathrm{x})}
\RO(\mathrm{v},t)
\, ,\\
\RO(\mathrm{v},t)&\coloneqq\int_{\mathbf{G}(\mathbb{A}_f)} f([t(l_f,\mathrm{x}_f)]^{-1}(e,\mathrm{v})g(e,\mathrm{y}_f)) \, ,
\end{align*}
where in the sum on the right we can pick an arbitrary representative of each class $[t]$.

Consider a single summand $\RO(\mathrm{v},t)$ and denote $L=t(l_f,\mathrm{x}_f)$, $R=(e,\mathrm{y}_f)$. The integrand is the characteristic function of $L B_f R^{-1}$.
For the integral not to vanish there must exist some $g_0\in L B_f R^{-1}$. This condition is equivalent to $t^{-1}.\mathrm{v}\in B_{V,f}(\mathrm{x},\mathrm{y})$.
Assume this condition holds then because $B_f$ is a group the change of variable $g\mapsto g_0^{-1} g$ implies
\begin{equation*}
\RO(\mathrm{v},t)=\meas_{\mathbf{G}(\mathbb{A}_f)}(R  B_{f}  R^{-1}) \, .
\end{equation*}
\end{proof}

\section{Orbit Space for the Action of a Torus on the Unipotent Radical}
\label{sec:orbit-space}
In this section we continue to consider a fixed $K_\infty$-invariant $\mathbf{T}$-homogeneous set $\left[\mathbf{T}(\mathbb{A})(l,\mathrm{x})\right]\subset\left[\mathbf{P}^1(\mathbb{A})\right]$ for $\mathbf{T}<\mathbf{G}$ defined and anisotropic over $\mathbb{Q}$. Our aim is to describe the the orbit space $\lfaktor{\Lambda_f^{(1)}(\mathrm{x})}{\mathbf{V}(\mathbb{A}_f)}$ using fractional $\Lambda$-ideals with level structure associated to $\mathrm{x}$.

Recall from section \S\ref{sec:torsion-pts} that we have constructed a linear isomorphism $\jmath_{\mathbb{A}}\colon \mathbf{V}(\mathbb{A})\to \mathbb{A}_E$ that maps $\mathbf{V}(\mathbb{Q})$ to $E$ and $l_v.\mathbb{Z}_v$ to $\Lambda_v$ for all $v<\infty$. Moreover, this isomorphism is equivariant with respect to the action of $\mathbf{T}(\mathbb{A}_E)=\mathbb{A}_E^{(1)}$ on both sides.

\begin{defi}\label{defi:ideals+level}
\hfill
\begin{enumerate}
\item Define
\begin{equation*}
\Ideals(\Lambda,\mathrm{x})\coloneqq\faktor{\mathbb{A}_{E,f}^\times}{\Lambda^\times_f(\mathrm{x})} \, .
\end{equation*}
Notice that $\Ideals(\Lambda,\mathrm{x})$ depend only on the class of $\jmath_{/\Lambda}(\mathrm{x})$ modulo $\Lambda$.

\item
For $\mathrm{x}=0$ we denote $\Ideals(\Lambda)\coloneqq \Ideals(\Lambda,0)$ which we identify with the set of invertible fractional $\Lambda$-ideals through the map
\begin{equation*}
\Ideals(\Lambda) \ni \left(\alpha_v \Lambda_v^\times\right)_{v<\infty}\mapsto \bigcap_{v<\infty} \alpha_v \Lambda_v \subset E \, .
\end{equation*}

\item For any rational place $v<\infty$ we define $\|\bullet\|_v\colon E_v\to \mathbb{Z}$ by writing $E_v\simeq\prod_{w\mid v} E_w$ and setting
\begin{equation*}
\|(\alpha_w)_{w\mid v}\|_v =\prod_{w\mid v} |\alpha_w|_w \, ,
\end{equation*}
where we normalize $|\bullet|_w$ so that the absolute value of a uniformizer in $\mathcal{O}_{E_w}$ is reciprocal to the size of the residue field of $E_w$.

\item
For each $\mathrm{x}\in \mathbb{A}_E$ there is a finite to one quotient map $\Ideals(\Lambda,\mathrm{x})\to\Ideals(\Lambda)$. It is natural to consider $\Ideals(\Lambda,\mathrm{x})$ as invertible fractional $\Lambda$-ideals with level structure associated to $\jmath_{/\Lambda}(\mathrm{x}) \in E/\Lambda$. We define for each $\mathfrak{a}\in \Ideals(\Lambda,\mathrm{x})$ the norm $\Nr(\mathfrak{a})$ to be the norm of the fractional $\Lambda$-ideal associated to $\mathfrak{a}$ via the map  $\Ideals(\Lambda,\mathrm{x})\to\Ideals(\Lambda)$. This norm coincides with the ad\`elic character
\begin{equation*}
\Nr\left(\left(\alpha_v \Lambda_v^\times(\mathrm{x}) \right)_{v<\infty}\right)=\prod_{v<\infty} \|\alpha_v\|_v^{-1} \, .
\end{equation*}

\item We use the notation $\mathfrak{a}\mapsto [\mathfrak{a}]$ for the quotient map
\begin{equation*}
\Ideals(\Lambda,\mathrm{x})=\faktor{\mathbb{A}_{E,f}^\times}{\Lambda^\times_f(\mathrm{x})}
\to \dfaktor{E^\times}{\mathbb{A}_{E,f}^\times}{\Lambda^\times_f(\mathrm{x})}=\Pic(\Lambda,\mathrm{x}) \, .
\end{equation*}
\end{enumerate}

\end{defi}

\begin{defi}\label{def:Vaccessible} \hfill
\begin{enumerate}
\item Define
\begin{align*}
\mathbf{V}(\mathbb{A}_f)_\mathrm{accessible}&\coloneqq \mathbf{T}(\mathbb{A}_f).\mathbf{V}(\mathbb{Q})\subset \mathbf{V}(\mathbb{A}_f) \, ,\\
\mathbf{V}(\mathbb{A}_f)_\mathrm{accessible}^\times&\coloneqq \mathbf{V}(\mathbb{A}_f)_\mathrm{accessible}\setminus \{0\} \, .
\end{align*}
Notice that
\begin{align*}
\jmath_{\mathbb{A}}\left(\mathbf{V}(\mathbb{A}_f)_\mathrm{accessible}\right)&=E\cdot \mathbb{A}_{E,f}^{(1)} \, ,\\
\jmath_{\mathbb{A}}\left(\mathbf{V}(\mathbb{A}_f)_\mathrm{accessible}^\times\right)&=E^\times\cdot \mathbb{A}_{E,f}^{(1)} \, .
\end{align*}
In particular, if an element of $\mathbf{V}(\mathbb{A}_f)_\mathrm{accessible}$ is zero at some place $v$ then it vanishes globally.
\item
Define the map $\inv\colon \mathbf{V}(\mathbb{A}_f)_\mathrm{accessible}^\times\to \Ideals(\Lambda,\mathrm{x})$
\begin{equation*}
\inv\left(\mathrm{w}_f\right)\coloneqq \jmath_{\mathbb{A}}(\mathrm{w}_f)\bmod \Lambda^\times_f(\mathrm{x}) \, .
\end{equation*}
\end{enumerate}
\end{defi}

Recall that the subgroup $\Pic^\mathrm{pg}(\Lambda,\mathrm{x})<\Pic(\Lambda,\mathrm{x})$ was defined in \ref{defi:Lambda_v(x)+Pic} as the image of $\mathbb{A}_{E,f}^{(1)}$ in $\Pic(\Lambda,\mathrm{x})$.
\begin{lem}\label{lem:invariant-basics}
All $\mathrm{w}_f=(\mathrm{w}_v)_{v<\infty}\in\mathbf{V}(\mathbb{A}_f)_\mathrm{accessible}^\times$ satisfy
$[\inv(\mathrm{w}_f)]\in \Pic^\mathrm{pg}(\Lambda,\mathrm{x})$. Moreover,
for any place $v<\infty$
\begin{equation*}
\Nr(\inv(\mathrm{w}_f))=\Nr\jmath_v(\mathrm{w}_v) \, ,
\end{equation*}
where the norm map on the right hand side is the local norm $E_v\to\mathbb{Q}_v$. In particular, the claim implies $\Nr\jmath_v(\mathrm{w}_v)\in \mathbb{Q}$.
\end{lem}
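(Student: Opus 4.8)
The plan is to reduce both assertions to the factorization furnished by Definition~\ref{def:Vaccessible}, namely $\jmath_{\mathbb{A}}\!\left(\mathbf{V}(\mathbb{A}_f)_\mathrm{accessible}^\times\right)=E^\times\cdot\mathbb{A}_{E,f}^{(1)}$. Given $\mathrm{w}_f\in\mathbf{V}(\mathbb{A}_f)_\mathrm{accessible}^\times$, I would first write $\jmath_{\mathbb{A}}(\mathrm{w}_f)=\alpha\cdot t$ with $\alpha\in E^\times$ and $t\in\mathbb{A}_{E,f}^{(1)}$; since an accessible element that vanishes at one place vanishes everywhere, $\jmath_{\mathbb{A}}(\mathrm{w}_f)$ is a genuine finite id\`ele, so $\inv(\mathrm{w}_f)=\jmath_{\mathbb{A}}(\mathrm{w}_f)\bmod\Lambda_f^\times(\mathrm{x})$ is well defined. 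Everything then follows by unwinding definitions against this factorization, using only that $\Lambda^\times_f(\mathrm{x})<\mathbb{A}_{E,f}^\times$ and that each $\jmath_v$ is $E_v$-linear.

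For the Picard-class statement, $[\inv(\mathrm{w}_f)]$ is by definition the image of $\jmath_{\mathbb{A}}(\mathrm{w}_f)=\alpha t$ in $\Pic(\Lambda,\mathrm{x})=E^\times\backslash\mathbb{A}_{E,f}^\times/\Lambda_f^\times(\mathrm{x})$. The factor $\alpha\in E^\times$ is killed on the left, so $[\inv(\mathrm{w}_f)]$ equals the image of $t$, which lies in $\Pic^\mathrm{pg}(\Lambda,\mathrm{x})$ since the latter was defined in Definition~\ref{defi:Lambda_v(x)+Pic} precisely as the image of $\mathbb{A}_{E,f}^{(1)}$.

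For the norm identity, fix a finite place $v$. From $\jmath_v(\mathrm{w}_v)=\alpha t_v$ with $t_v\in E_v^{(1)}$ I would get $\Nr_{E_v/\mathbb{Q}_v}\jmath_v(\mathrm{w}_v)=\Nr_{E_v/\mathbb{Q}_v}(\alpha)\,\Nr_{E_v/\mathbb{Q}_v}(t_v)=\Nr_{E/\mathbb{Q}}(\alpha)$, using $\Nr t_v=1$ and that the local norm of a global element is the image of its global norm; in particular this lies in $\mathbb{Q}$ and is independent of $v$, which already proves the last sentence of the lemma. It would then remain to match $\Nr_{E/\mathbb{Q}}(\alpha)$ with $\Nr(\inv(\mathrm{w}_f))=\prod_{v<\infty}\|\jmath_v(\mathrm{w}_v)\|_v^{-1}$ from Definition~\ref{defi:ideals+level}. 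Since $\|\beta\|_v=\prod_{w\mid v}|\beta_w|_w=|\Nr_{E_v/\mathbb{Q}_v}(\beta)|_v$ (by the standard relation $|\beta_w|_w=|\Nr_{E_w/\mathbb{Q}_v}(\beta_w)|_v$ between normalized absolute values), this equals $\prod_{v<\infty}|\Nr_{E/\mathbb{Q}}(\alpha)|_v^{-1}$, and the product formula for $\mathbb{Q}^\times$ rewrites it as $|\Nr_{E/\mathbb{Q}}(\alpha)|_\infty=\Nr_{E/\mathbb{Q}}(\alpha)$, positivity of the norm (as $E$ is imaginary quadratic) giving the last equality. Comparing, $\Nr(\inv(\mathrm{w}_f))=\Nr_{E/\mathbb{Q}}(\alpha)=\Nr\jmath_v(\mathrm{w}_v)$.

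The argument is essentially bookkeeping, and the only point demanding care --- the closest thing to an obstacle --- is the normalization of local modules: one must use the $\|\cdot\|_v$ on $E_v$ compatible with local norms (equivalently $|\beta|_w=|\Nr_{E_w/\mathbb{Q}_v}\beta|_v$) and apply the product formula to the rational number $\Nr_{E/\mathbb{Q}}(\alpha)$ rather than to $\jmath_{\mathbb{A}}(\mathrm{w}_f)$ itself. With that settled there is no further difficulty.
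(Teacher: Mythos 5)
Your argument is correct and follows the paper's own route: factor $\jmath_{\mathbb{A}}(\mathrm{w}_f)$ as $\alpha\cdot t$ with $\alpha\in E^\times$, $t\in\mathbb{A}_{E,f}^{(1)}$, then read off both claims. The only cosmetic difference is that you apply the product formula over $\mathbb{Q}$ to $\Nr_{E/\mathbb{Q}}(\alpha)$ whereas the paper applies it over $E$ to $\alpha$ directly; the two are equivalent via $\|\beta\|_v=|\Nr_{E_v/\mathbb{Q}_v}\beta|_v$, which you correctly invoke.
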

\begin{proof}
The first claim follows immediately from Definition \ref{def:Vaccessible}. For the second part write $\jmath_{\mathbb{A}}(\mathrm{w}_f)=c(\alpha_v)_v$ with $c\in E^\times$ and $\alpha_v\in E_v^{(1)}$ for all $v<\infty$. The product formula for $E$ and the fact $\Nr\alpha_v=1$ imply
\begin{equation*}
\Nr(\inv(\mathrm{w}_f))=\prod_{v<\infty} \|c\alpha_v\|_v^{-1}=\prod_{v<\infty} \|c\|_v^{-1}=\Nr c
=\Nr(c\alpha_v)=\Nr(\jmath_v(\mathrm{w}_v))
\end{equation*}
as claimed.
\end{proof}

\begin{prop}\label{prop:invariant-separates}
The invariant map $\inv\colon\mathbf{V}(\mathbb{A}_f)_\mathrm{accessible}^\times\to\Ideals(\Lambda,\mathrm{x})$ separates $\Lambda_f^{(1)}(\mathrm{x})$-orbits.
\end{prop}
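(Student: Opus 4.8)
The plan is to push everything through the $E$-linear isomorphism $\jmath_{\mathbb{A}}\colon\mathbf{V}(\mathbb{A})\to\mathbb{A}_E$, which is equivariant for the multiplication action of $\mathbf{T}(\mathbb{A}_f)=\mathbb{A}_{E,f}^{(1)}$. Under $\jmath_{\mathbb{A}}$ the locus $\mathbf{V}(\mathbb{A}_f)_{\mathrm{accessible}}^\times$ is carried bijectively onto $E^\times\cdot\mathbb{A}_{E,f}^{(1)}\subset\mathbb{A}_{E,f}^\times$ (Definition~\ref{def:Vaccessible}), the group $\Lambda_f^{(1)}(\mathrm{x})$ acts by multiplication, and $\inv$ becomes the reduction map $\mathbb{A}_{E,f}^\times\to\Ideals(\Lambda,\mathrm{x})=\mathbb{A}_{E,f}^\times/\Lambda_f^\times(\mathrm{x})$ restricted to $E^\times\cdot\mathbb{A}_{E,f}^{(1)}$. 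Thus the statement is equivalent to: for $a=\jmath_{\mathbb{A}}(\mathrm{w}_f)$ and $b=\jmath_{\mathbb{A}}(\mathrm{w}'_f)$ in $E^\times\cdot\mathbb{A}_{E,f}^{(1)}$, one has $a\,\Lambda_f^\times(\mathrm{x})=b\,\Lambda_f^\times(\mathrm{x})$ if and only if $a\in b\,\Lambda_f^{(1)}(\mathrm{x})$.

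The ``if'' direction is immediate because $\Lambda_f^{(1)}(\mathrm{x})\subset\Lambda_f^\times(\mathrm{x})$, so $\inv$ is constant on $\Lambda_f^{(1)}(\mathrm{x})$-orbits; one only needs to note that such an orbit stays inside $\mathbf{V}(\mathbb{A}_f)^\times_{\mathrm{accessible}}$, which holds since the accessible locus is $\mathbf{T}(\mathbb{A}_f)$-stable.

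For the converse, write $a=bu$ with $u=(u_v)_{v<\infty}\in\Lambda_f^\times(\mathrm{x})$, which is exactly what equality of the two classes in $\Ideals(\Lambda,\mathrm{x})$ provides. The whole point is to upgrade this to $u\in\Lambda_f^{(1)}(\mathrm{x})$, i.e.\ to show $\Nr(u_v)=1$ at every finite place $v$, and this is where Lemma~\ref{lem:invariant-basics} enters. Since $\mathrm{w}_f$ and $\mathrm{w}'_f$ are accessible, the lemma gives, at each $v<\infty$, the equalities of local norms $\Nr(a_v)=\Nr(\inv(\mathrm{w}_f))=\Nr(\inv(\mathrm{w}'_f))=\Nr(b_v)$, where $\Nr$ on the outer terms is the local norm $E_v\to\mathbb{Q}_v$; moreover accessibility forces $a_v,b_v\in E_v^\times$, since an element of $E^\times\cdot\mathbb{A}_{E,f}^{(1)}$ is invertible, not merely nonzero, at every place. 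Dividing yields $\Nr(u_v)=1$, hence $u_v\in\Lambda_v^{(1)}\cap\Lambda_v^\times(\mathrm{x})=\Lambda_v^{(1)}(\mathrm{x})$ for all $v$, so $u\in\Lambda_f^{(1)}(\mathrm{x})$. Transporting back through $\jmath_{\mathbb{A}}$ and using its $\mathbf{T}(\mathbb{A}_f)$-equivariance, $a=bu$ becomes $\mathrm{w}_f=u.\mathrm{w}'_f$, so $\mathrm{w}_f$ and $\mathrm{w}'_f$ lie in the same $\Lambda_f^{(1)}(\mathrm{x})$-orbit.

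I do not expect a genuine obstacle; the only subtlety to get right is that passing to the accessible locus is precisely what makes the components $\jmath_v(\mathrm{w}_v)$ units in the \'etale algebras $E_v$ rather than merely nonzero elements, which both legitimizes the cancellation of local norms above and is what gives the norm-one statement of Lemma~\ref{lem:invariant-basics} its force.
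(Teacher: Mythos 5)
Your proof is correct and follows essentially the same route as the paper's: transfer via $\jmath_{\mathbb{A}}$, write the relating unit $u_v=k_v\in\Lambda_v^\times(\mathrm{x})$, and deduce $\Nr(u_v)=1$ from the equality of the rational norms carried by the two accessible elements. The only cosmetic difference is that you invoke Lemma~\ref{lem:invariant-basics} explicitly to get those norm equalities, whereas the paper re-derives the relevant product-formula observation inline.
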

\begin{proof}
Applying the isomorphism $\jmath_{\mathbb{A}}$ the claim is equivalent to the following assertion. For any $c,c'\in E^\times$,  $(\alpha_v)_{v<\infty},(\alpha'_v)_{v<\infty}\in \mathbb{A}_{E,f}^{(1)}$ if $c_f=c(\alpha_v)_{v<\infty}$, $c'_f=c'(\alpha'_v)_{v<\infty}$ satisfy $c'_f\in c_f \Lambda^\times_f(\mathrm{x})$ then  $c'_f\in c_f \Lambda^{(1)}_f(\mathrm{x})$.

Assume now $c'_f\in c_f \Lambda^\times_f(\mathrm{x})$ and write $c'\alpha'_v =c\alpha_v k_v$ for some $k_v\in\Lambda_v^\times(\mathrm{x})$ for all $v<\infty$.
Because the reciprocal of the norm of an element in $E^\times$ coincides with the restriction of the ad\`elic character on $\mathbb{A}_{E,f}^\times$ to $E^\times$ we have $\Nr(c)=\Nr(c')$. Hence
\begin{equation*}
k_v=\frac{c'}{c}\frac{\alpha'_v}{\alpha_v}
\end{equation*}
is a norm $1$ element for all $v<\infty$ as claimed.
\end{proof}
\begin{remark}
The image in $\Ideals(\Lambda,\mathrm{x})$ of the invariant map is exactly all elements $\mathfrak{a}\in \Ideals(\Lambda,\mathrm{x})$ such that $[\mathfrak{a}]\in \Pic^\mathrm{pg}(\Lambda,\mathrm{x})$.
\end{remark}

The following proposition shows how to compute the norm of am element in $\Ideals(\Lambda,\mathrm{x})$ using the archimedean place.
\begin{prop}\label{prop:psi-archimedean}
Let $\mathrm{w}_f=t_f\mathrm{v}\in\mathbf{V}(\mathbb{A}_f)_\mathrm{accessible}^\times$ where $t_f\in\mathbf{T}(\mathbb{A}_f)$ and $\mathrm{v}\in\mathbf{V}(\mathbb{Q})$ then
\begin{equation*}
\Nr(\inv(\mathrm{w}_f))=\sqrt{|D|}\left\| l_\infty^{-1}.\mathrm{v}\right\|_2^2 \, ,
\end{equation*}
where the norm on the right hand side is the usual Euclidean norm on $\mathbf{V}(\mathbb{R})=\mathbb{R}^2$.
\end{prop}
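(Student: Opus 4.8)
The plan is to transport the computation to the archimedean place and then reduce it to an identity between two $\mathbf{T}(\mathbb{R})$-invariant quadratic forms on the two-dimensional real vector space $\mathbf{V}(\mathbb{R})$, the constant being determined by a single covolume comparison.

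First I would rewrite the left-hand side at infinity. Writing $\mathrm{w}_f=t_f\mathrm{v}$ with $t_f\in\mathbf{T}(\mathbb{A}_f)=\mathbb{A}_{E,f}^{(1)}$ and $\mathrm{v}\in\mathbf{V}(\mathbb{Q})$, the idele $\jmath_{\mathbb{A}}(\mathrm{w}_f)$ is $t_f$ times the diagonal image of $\jmath(\mathrm{v})\in E^\times$, so since every local component of $t_f$ has norm $1$ the definition of $\Nr$ on $\Ideals(\Lambda,\mathrm{x})$ gives $\Nr(\inv(\mathrm{w}_f))=\prod_{v<\infty}\|\jmath(\mathrm{v})\|_v^{-1}$. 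By the product formula for $E$, and because the unique archimedean place of $E$ is complex with normalized absolute value $z\mapsto |z|^2$, this product equals $|\jmath_\infty(\mathrm{v})|^2$, where $\jmath_\infty\colon\mathbf{V}(\mathbb{R})\to E\otimes_{\mathbb{Q}}\mathbb{R}\xrightarrow{\ \sim\ }\mathbb{C}$ is the base change of $\jmath$; this is the same as $\Nr_{E/\mathbb{Q}}(\jmath(\mathrm{v}))$, and also follows directly from Lemma \ref{lem:invariant-basics}. Thus it remains to show
\begin{equation*}
|\jmath_\infty(\mathrm{v})|^2=\sqrt{|D|}\,\bigl\|l_\infty^{-1}.\mathrm{v}\bigr\|_2^2\qquad\text{for all }\mathrm{v}\in\mathbf{V}(\mathbb{R}).
\end{equation*}

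Next I would observe that both sides are positive-definite quadratic forms in $\mathrm{v}$, and that both are invariant under $\mathbf{T}(\mathbb{R})$: the form $\mathrm{v}\mapsto|\jmath_\infty(\mathrm{v})|^2$ because $\jmath_\infty$ intertwines the $E^\times$-action and $\mathbf{T}(\mathbb{R})=E_\infty^{(1)}$ acts on $\mathbb{C}$ through the norm-one complex numbers, which preserve $|\cdot|$; the form $\mathrm{v}\mapsto\|l_\infty^{-1}.\mathrm{v}\|_2^2$ because $K_\infty$-invariance of $\mathcal{H}$ means $\Ad_{l_\infty^{-1}}\mathbf{T}(\mathbb{R})=K_\infty=\mathbf{SO}_2(\mathbb{R})$ and $\mathbf{SO}_2(\mathbb{R})$ preserves $\|\cdot\|_2$. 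Since the compact torus $\mathbf{T}(\mathbb{R})<\mathbf{SL}_2(\mathbb{R})$ acts on $\mathbf{V}(\mathbb{R})\cong\mathbb{R}^2$ through a conjugate of $\mathbf{SO}_2(\mathbb{R})$, hence irreducibly over $\mathbb{R}$, the space of $\mathbf{T}(\mathbb{R})$-invariant quadratic forms is one-dimensional. Consequently there is a unique $\kappa>0$ with $|\jmath_\infty(\mathrm{v})|^2=\kappa\,\|l_\infty^{-1}.\mathrm{v}\|_2^2$ identically.

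Finally I would compute $\kappa$ by comparing the volume forms (equivalently the Gram determinants on a common lattice basis) attached to the two quadratic forms; on a two-dimensional space the associated covolume scales linearly when the form is scaled, so $\kappa$ is the ratio of these covolumes. Pick a $\mathbb{Z}$-basis of $\mathcal{L}\coloneqq\bigcap_{v<\infty}l_v.\mathbb{Z}_v^2\subset\mathbf{V}(\mathbb{Q})$: because every $l_v$ has determinant $1$, $\mathcal{L}$ has covolume $1$ for $\|\cdot\|_2$, hence also for $\mathrm{v}\mapsto\|l_\infty^{-1}.\mathrm{v}\|_2^2$ since $l_\infty\in\mathbf{SL}_2(\mathbb{R})$. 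By construction $\jmath$ carries this basis to a $\mathbb{Z}$-basis of $\Lambda$, so the covolume of $\mathcal{L}$ for $\mathrm{v}\mapsto|\jmath_\infty(\mathrm{v})|^2$ equals the covolume of $\Lambda$ in $\mathbb{C}$ for the norm form, which is computed from the discriminant of the order $\Lambda$; with the normalizations of $\|\bullet\|_2$ and $\disc(\Lambda)$ fixed in the paper this gives $\kappa=\sqrt{|D|}$, as claimed. The only genuinely delicate point is this last step: keeping straight which lattice, which absolute value, and the precise compatibility of $\jmath$ at the archimedean place versus the finite places; once those conventions are nailed down, the constant falls out of a short linear-algebra computation.
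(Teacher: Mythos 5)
Your proposal is correct and follows essentially the same route as the paper: reduce the left-hand side to the archimedean norm $\Nr_{E/\mathbb{Q}}(\jmath(\mathrm{v}))=|\jmath_\infty(\mathrm{v})|^2$ via the product formula (this is Lemma \ref{lem:invariant-basics}), argue that the two positive-definite forms on $\mathbf{V}(\mathbb{R})$ are proportional, and fix the constant by the covolume of $\Lambda$. The only cosmetic difference is that you invoke one-dimensionality of the $\mathbf{T}(\mathbb{R})$-invariant quadratic forms, whereas the paper phrases the same fact as ``$S$ is a similitude because it intertwines $K_\infty$ with $\mathbb{C}^{(1)}$'' and extracts the scalar via $(S^{*}S)^2=|\det S|^2$; both reduce the proportionality constant to the same covolume comparison, which your sketch defers to the paper's normalization rather than recomputing.
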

\begin{proof}
Write $\jmath_{\mathbb{A}_f}(\mathrm{w})=c(\alpha_v)_v$ where $c=\jmath(\mathrm{v})\in E$ and $\alpha_v=\jmath_v(t_v)\in E_v^{(1)}$ for all $v<\infty$. The proof of Lemma \ref{lem:invariant-basics} implies that $\Nr(\inv(\mathrm{w}_f))=\Nr c=\Nr \jmath_\infty (\mathrm{v})$.
Consider the composite  map
\begin{equation*}
S\colon\mathbb{R}^2=\mathbf{V}(\mathbb{R})\xrightarrow{l_\infty}\mathbf{V}(\mathbb{R})\xrightarrow{\jmath_\infty} \, .
\mathbb{C}
\end{equation*}
This is a linear isomorphism over $\mathbb{R}$ that intertwines the $K_\infty$-action on the left-hand side with multiplication by $\jmath_{\infty}(E_\infty^{(1)})=\mathbb{C}^{(1)}$ on the right-hand side. In particular, $S$ is a Euclidean similitude and we deduce that
\begin{equation*}
\Nr(\inv(\mathrm{w}_f))=\Nr(\jmath_{\infty}(\mathrm{v}))=\Nr S(l_\infty^{-1}\mathrm{v})=S\tensor[^*]{S}{} \left\| l_\infty^{-1}.\mathrm{v}\right\|_2^2
\, ,
\end{equation*}
where $S\tensor[^*]{S}{}$ is a scalar because $S$ is a similitude. This scalar is non-negative because both the field norm of an imaginary quadratic field and the Euclidean norm on $\mathbb{R}^2$ are positive functions.

To show that $S\tensor[^*]{S}{}=\sqrt{|D|}$ we use the equality
\begin{equation*}
(S\tensor[^*]{S}{})^2=\det S\tensor[^*]{S}{}=|\det S|^2\Rightarrow S\tensor[^*]{S}{}=|\det S| \, .
\end{equation*}
The map $\jmath_{\infty}$ maps the lattice $\cap_{v<\infty} l_v.\mathbb{Z}_v$ to $\Lambda$. Because $l_v\in\mathbf{SL}_2(\mathbb{Q}_v)$ for all $v<\infty$ we can check locally to see that the lattice $\cap_{v<\infty} l_v.\mathbb{Z}_v\subset \mathbf{V}(\mathbb{Q})\subset\mathbf{V}(\mathbb{R})$ is unimodular. As $l_\infty\in\mathbf{SL}_2(\mathbb{R})$ the map $S$ sends the unimodular lattice $l_\infty^{-1}.\left(\cap_{v<\infty} l_v.\mathbb{Z}_v\right)$ to the lattice $\Lambda$ of covolume $\sqrt{|D|}$. Hence $|\det S|=\sqrt{|D|}$ as required.
\end{proof}

\section{The Subconvex Bound}\label{sec:subconvex}
In this section we tie the different threads of the proof together. Using the previous two sections we rewrite the cross-correlation as a sum over $\Lambda$-ideals with level structure that are integral in an appropriate sense. This sum is then controlled using the subconvex bound of Duke, Friedlander and Iwaniec \cite{DFI}.

As in \S\ref{sec:rt-assumptions}
we fix a $K_\infty$-invariant $\mathbf{T}$-homogeneous set $\mathcal{H}=\left[\mathbf{T}(\mathbb{A})(l,\mathrm{x})\right]$ such that $\mathbf{T}<\mathbf{G}$ is a maximal torus.
We denote by $\mu$ the periodic measure on $\mathcal{H}$. In addition, we fix a $\mathbf{G}$-homogeneous set $[\mathbf{G}(\mathbb{A})(e,\mathrm{y})]$ and denote by $\nu$ the periodic measure supported on it.

We are now in position to rewrite the geometric expansion as presented in Proposition \ref{prop:geom-non-archimedean-sum} using the orbit space developed in \S\ref{sec:orbit-space}.
\begin{defi}\label{defi:Bowen-ball-test-function}
Let $\tau\in\mathbb{Z}_{\geq 0}$, $R_V,R_G>0  $.
A Bowen ball test function $f=f_{\tau, R_G, R_V}\colon\mathbf{P}^1(\mathbb{A})\to\mathbb{R}_{>0}$ is a function of the form $f=\prod_v f_v$ where
\begin{align*}
\forall v\neq \infty,p_1\colon f_v&=\mathds{1}_{\mathbf{P}^1(\mathbb{Z}_v)} \, ,\\
f_{p_1}&= \mathds{1}_{\mathbf{P}^1(\mathbb{Z}_{p_1})^{(-\tau,\tau)}} \, ,\\
f_\infty&= \mathds{1}_{B_{G,\infty}(R_G)}\cdot \mathds{1}_{B_{V,\infty}(R_V)} \, .
\end{align*}
Where $\mathbf{P}^1(\mathbb{Z}_{p_1})^{(-\tau,\tau)}$ is the Bowen ball as defined in Definition \ref{defi:bowen-ball} and $B_{G,\infty}(R_G)$, $B_{V,\infty}(R_V)$ are the open identity balls from Definition \ref{defi:test-function-archimedean}.
\end{defi}

\begin{prop}\label{prop:geom-expansion-ideals}
Let $f=f_{\tau, R_G, R_V}=\mathds{1}_B$ be a Bowen ball test function.
If $\tau>0$ then assume in addition $\mathrm{y}_{p_1}=0$ and
\begin{equation*}
(l_{p_1},\mathrm{x}_{p_1})^{-1} \mathbf{T}(\mathbb{Q}_{p_1})(l_{p_1},\mathrm{x}_{p_1})=\mathbf{A}(\mathbb{Q}_{p_1}) \, .
\end{equation*}
In particular, $\mathrm{x}_{p_1}=0$ if $\tau>0$.

Set
\begin{align*}
X_{\min}&=\max\left\{\exp\left(-\frac{R_G}{2}\right)\|\mathrm{y}_\infty\|_2-R_V ,0\right\} \, ,\\
X_{\max}&=\exp\left(\frac{R_G}{2}\right)\|\mathrm{y}_\infty\|_2+R_V \, .
\end{align*}
Then the cross-correlation is bounded above by
\begin{align*}
\Cor(\mu,\nu)[f]&\leq\RO_f(0)+
\vol\left(\left[\mathbf{T}(\mathbb{A})(l,\mathrm{x})\right]\right)^{-1}
\meas_{\mathbf{G}(\mathbb{A})} \left(\Ad_{(e,\mathrm{y})} (B^{-1}B) \right)\\
&
\sum_{N=\sqrt{|D|}X_{\min}^2}
^{\sqrt{|D|}X_{\max}^2}
\left|\left\{\Ideals(\Lambda,\mathrm{x})\ni \mathfrak{a}\subset \Lambda_f(\mathrm{x},\mathrm{y}) \mid \substack{\Nr\mathfrak{a}=N\\
[\mathfrak{a}]\in \Pic^\mathrm{pg}(\Lambda,\mathrm{x})}\right\}\right| \, ,
\end{align*}
where
\begin{align*}
\Lambda_f(\mathrm{x},\mathrm{y})&\coloneqq\prod_{v<\infty}\Lambda_v(\mathrm{x},\mathrm{y}) \, ,\\
\forall v\neq\infty,p_1 \colon \Lambda_v(\mathrm{x},\mathrm{y})&= \jmath_v (\mathrm{x}_v)+\Lambda_v-\Aut^1(\Lambda_v).\jmath_v(l_v.\mathrm{y}_v) \, ,\\
\Lambda_{p_1}(\mathrm{x},\mathrm{y})&= \begin{cases}
\jmath_{p_1} (\mathrm{x}_{p_1})+\Lambda_{p_1}-\Aut^1(\Lambda_{p_1}).\jmath_{p_1}(l_{p_1}.\mathrm{y}_{p_1}) & \tau=0\\
\bigcap_{k=-\tau}^\tau \frac{\pi^k}{\tensor[^\sigma]{\pi}{^k}}\Lambda_{p_1} & \tau>0
\end{cases} \, ,
\end{align*}
where $\pi$ is a uniformizer of $E_\mathfrak{p}$ for $p_1=\mathfrak{p}\tensor[^\sigma]{\mathfrak{p}}{}$.
\end{prop}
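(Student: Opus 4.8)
The strategy is to start from the exact counting formula for the cross-correlation in Proposition~\ref{prop:geom-non-archimedean-sum} and translate every object on the non-archimedean side into the language of ideals with level structure from~\S\ref{sec:orbit-space}. First I would invoke Proposition~\ref{prop:geom-non-archimedean-sum} with the given Bowen ball test function $f=f_{\tau,R_G,R_V}$, which gives $\Cor(\mu,\nu)[f]$ as $\RO_f(0)$ plus $\vol(\mathcal{H})^{-1}\,\meas_{\mathbf{G}(\mathbb{A}_f)}(\Ad_{(e,\mathrm{y}_f)}B_f)$ times a double sum over $[0]\neq[\mathrm{v}]\in\mathbf{T}(\mathbb{Q})\backslash\mathbf{V}(\mathbb{Q})$ and over cosets $[t]\in\mathbf{T}(\mathbb{A}_f)/\Lambda_f^{(1)}(\mathrm{x})$ with $t^{-1}.\mathrm{v}\in B_{V,f}(\mathrm{x},\mathrm{y})$, each weighted by $\RO_f(\mathrm{v})_\infty$. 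The point is that a pair $([\mathrm{v}],[t])$ as above is exactly a point of $\mathbf{V}(\mathbb{A}_f)^\times_\mathrm{accessible}$ up to the $\Lambda_f^{(1)}(\mathrm{x})$-action, namely $\mathrm{w}_f = t^{-1}.\mathrm{v}$; by Proposition~\ref{prop:invariant-separates} the invariant map $\inv$ separates such orbits, and by the remark following it the image is precisely the elements $\mathfrak{a}\in\Ideals(\Lambda,\mathrm{x})$ with $[\mathfrak{a}]\in\Pic^\mathrm{pg}(\Lambda,\mathrm{x})$. So the double sum is really a single sum over such $\mathfrak{a}$, subject to the condition $\mathrm{w}_f\in B_{V,f}(\mathrm{x},\mathrm{y})$.

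Next I would unwind the two constraints on $\mathfrak{a}$. The \emph{support condition} $\mathrm{w}_f\in B_{V,f}(\mathrm{x},\mathrm{y}) = \mathrm{x}_f + l_f.(B_{V,f}-B_{G,f}.\mathrm{y}_f)$ becomes, after applying $\jmath_{\mathbb{A}_f}$ and using Definition~\ref{defi:jmath-iso}(b),(e) and Lemma~\ref{lem:Bowen-jmath}, an inclusion $\mathfrak{a}\subset\Lambda_f(\mathrm{x},\mathrm{y})$ where the local factors are exactly as written in the statement: at $v\neq\infty,p_1$ one gets $\jmath_v(\mathrm{x}_v)+\Lambda_v-\Aut^1(\Lambda_v).\jmath_v(l_v.\mathrm{y}_v)$ since $\jmath_v$ sends $l_v.\mathbf{V}(\mathbb{Z}_v)$ to $\Lambda_v$ and $\Ad_{l_v}\mathbf{G}(\mathbb{Z}_v)$ to $\Aut^1(\Lambda_v)$; at $p_1$, when $\tau>0$, the hypotheses force $\mathrm{x}_{p_1}=0$ and $\mathrm{y}_{p_1}=0$ (the latter assumed) and the Bowen ball $l_{p_1}.\mathbf{V}(\mathbb{Z}_{p_1})^{(-\tau,\tau)}$ maps via $\jmath_{p_1}$ to $\bigcap_{k=-\tau}^\tau \frac{\pi^k}{{}^\sigma\pi^k}\Lambda_{p_1}$ by Lemma~\ref{lem:Bowen-jmath}. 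The \emph{norm condition} comes from Corollary~\ref{cor:archimedean-ball-norm}: $\RO_f(\mathrm{v})_\infty$ vanishes unless $X_{\min}<\|l_\infty^{-1}.\mathrm{v}\|_2<X_{\max}$, and by Proposition~\ref{prop:psi-archimedean} one has $\Nr(\inv(\mathrm{w}_f))=\sqrt{|D|}\,\|l_\infty^{-1}.\mathrm{v}\|_2^2$, so $\RO_f(\mathrm{v})_\infty$ is supported on $N:=\Nr\mathfrak{a}$ lying in $[\sqrt{|D|}X_{\min}^2, \sqrt{|D|}X_{\max}^2]$. Finally I would bound $\RO_f(\mathrm{v})_\infty$ crudely using Proposition~\ref{prop:RO-archimedean}, replacing it by $\meas_{\mathbf{G}(\mathbb{R})}((e,\mathrm{y}_\infty)B_\infty^{-1}B_\infty(e,-\mathrm{y}_\infty))$, so that together with the finite-place measure factor the whole archimedean-times-finite prefactor consolidates into $\meas_{\mathbf{G}(\mathbb{A})}(\Ad_{(e,\mathrm{y})}(B^{-1}B))$ as in the statement (using $B=B_\infty\cdot B_f$ and the product structure of Haar measure on $\mathbf{G}(\mathbb{A})$, noting $B_f$ is a group so $B_f^{-1}B_f=B_f$). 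Collecting the surviving terms gives exactly the claimed upper bound.

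\textbf{Main obstacle.} The bookkeeping is routine given the machinery already built, and I expect no deep difficulty; the one place that needs care is the precise matching of the \emph{level structure} at the finitely many "bad" places $v\neq p_1$ where $l_v\notin\mathbf{SL}_2(\mathbb{Z}_v)$ or $\mathrm{x}_v\neq 0$. There one must check that the coset $\jmath_v(\mathrm{x}_v)+\Lambda_v-\Aut^1(\Lambda_v).\jmath_v(l_v.\mathrm{y}_v)$ is genuinely $\Lambda_v^\times(\mathrm{x})$-invariant (so that "$\mathfrak{a}\subset\Lambda_f(\mathrm{x},\mathrm{y})$" is a well-defined condition on $\Ideals(\Lambda,\mathrm{x})$ rather than on a mere coset representative), and that the inequality direction is preserved when passing from the pointwise support condition on $\mathrm{w}_f$ to the ideal-inclusion condition — i.e.\ that enlarging $\mathrm{w}_f$ to the ideal it generates does not lose any solutions. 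Both follow from the equivariance of $\jmath_{\mathbb{A}}$ under $\mathbf{T}(\mathbb{A}_E)=\mathbb{A}_E^{(1)}$ recorded at the start of~\S\ref{sec:orbit-space} together with Definition~\ref{defi:jmath-iso}(e), but they are the only genuinely non-formal checks. The passage from an equality-weighted sum to an inequality ($\leq$) is what lets us discard the exact shape of $\RO_f(\mathrm{v})_\infty$ and the exact multiplicity of orbits, which is all that is needed for the subsequent application of subconvexity.
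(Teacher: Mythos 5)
Your proposal follows the same route as the paper: start from Proposition~\ref{prop:geom-non-archimedean-sum}, identify a pair $([\mathrm{v}],[t])$ with a $\Lambda_f^{(1)}(\mathrm{x})$-orbit of $\mathrm{w}_f=t^{-1}.\mathrm{v}\in\mathbf{V}(\mathbb{A}_f)^\times_{\mathrm{accessible}}$, transport it into $\Ideals(\Lambda,\mathrm{x})$ via $\inv$ (Proposition~\ref{prop:invariant-separates} for injectivity, Lemma~\ref{lem:invariant-basics} for the $\Pic^{\mathrm{pg}}$ constraint), use Lemma~\ref{lem:Bowen-jmath} to show $\jmath_{\mathbb{A}}(B_{V,f}(\mathrm{x},\mathrm{y}))=\Lambda_f(\mathrm{x},\mathrm{y})$, and finally control the archimedean factor with Proposition~\ref{prop:RO-archimedean}, Corollary~\ref{cor:archimedean-ball-norm} and Proposition~\ref{prop:psi-archimedean}. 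This is precisely the paper's argument; you spell out a couple of steps (consolidating the archimedean and finite-place measure factors, and the interpretation of ``$\mathfrak{a}\subset\Lambda_f(\mathrm{x},\mathrm{y})$'' as a condition on the orbit representative) that the paper's very terse proof leaves implicit, but the substance is the same.
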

\begin{proof}
Recall from Proposition \ref{prop:geom-non-archimedean-sum} the definition $B_{V,f}(\mathrm{x},\mathrm{y})=\mathrm{x}_f+l_f.(B_{V,f}-B_{G,f}.\mathrm{y}_f)\subset\mathbf{V}(\mathbb{A}_f)$ where $B_{G,f}\ltimes B_{V_f}$ is the support of $\prod_{v<\infty} f_v$. By definition $B_{V,f}=\prod_{v\neq\infty,p_1} \mathbf{V}(\mathbb{Z}_v)\times \mathbf{V}(\mathbb{Z}_{p_1})^{(-\tau,\tau)}$. Using Lemma \ref{lem:Bowen-jmath}
we deduce that $\jmath_{\mathbb{A}}(B_{V,f}(\mathrm{x},\mathrm{y}))=\Lambda_f(\mathrm{x},\mathrm{y})$.

The claim then follows by rewriting the summation in Proposition \ref{prop:geom-non-archimedean-sum} as a sum over $\Lambda_f^{(1)}(\mathrm{x})\backslash\mathbf{V}(\mathbb{A}_f)_\mathrm{accessible}^\times$ and translating it to a sum over $\Ideals(\Lambda,\mathrm{x})$ using the invariant map of Definition \ref{def:Vaccessible} in conjunction with Lemma \ref{lem:invariant-basics}, Proposition \ref{prop:invariant-separates},
Proposition \ref{prop:RO-archimedean}, Corollary \ref{cor:archimedean-ball-norm} and Proposition \ref{prop:psi-archimedean}.
\end{proof}

\begin{defi}\label{defi:L-functions}
For any unitary character $\chi\colon \Pic(\Lambda,\mathrm{x})\to\mathbb{C}^\times$ define the meromorphic function
\begin{equation*}
L_{\Lambda(\mathrm{x},\mathrm{y})}(s,\chi)=\sum_{\Ideals(\Lambda,\mathrm{x})\ni\mathfrak{a}\subset \Lambda_f(\mathrm{x},\mathrm{y})}
\frac{\chi(\mathfrak{a})}{\Nr(\mathfrak{a})^s} \, .
\end{equation*}
Because $\chi$ is multiplicative and $\Ideals(\Lambda,\mathrm{x})$ and $\Lambda_f(\mathrm{x},\mathrm{y})$ split into a product of local factors the function $L_{\Lambda(\mathrm{x},\mathrm{y})}(s,\chi)$ has a formal Euler product
\begin{equation*}
L_{\Lambda(\mathrm{x},\mathrm{y})}(s,\chi)=\prod_{v<\infty} \sum_{E_v^\times/\Lambda_v^\times(\mathrm{x})\ni\alpha_v\subset \Lambda_v(\mathrm{x},\mathrm{y})} \chi(\alpha_v)\|\alpha_v\|_v^s \, .
\end{equation*}
For almost all places $v<\infty$ the Euler factor coincides with the Euler factor at $v$ of the Hecke $L$-function $L(s,\chi)$ with Grossencharakter $\chi$. This happens in particular when $v\neq p_1$, $\Lambda_v=\mathcal{O}_{E_v}$ and $\mathrm{x}_v,\mathrm{y}_v\in\mathcal{O}_{E_v}$. The other Euler factors are non-vanishing holomorphic functions for $\Re s>0$ as seen in Appendix \ref{app:L-functions}. Hence $L_{\Lambda(\mathrm{x},\mathrm{y})}(s,\chi)$ is meromorphic for $\Re s>0$. It is holomorphic if $\chi\neq 1$ and has a single simple pole at $s=1$ otherwise.
\end{defi}
\begin{remark}
Proposition \ref{prop:L-local-denominators} implies that
the $L$-function $L_{\Lambda(\mathrm{x,y})}$ is an $L$-series of the form
\begin{equation*}
\sum_{n\in\frac{1}{(\ord_\mathcal{H}(\mathrm{x})\ord_\mathcal{H}(l.\mathrm{y}))^2}\mathbb{Z}} \frac{a_n}{n^s} \, .
\end{equation*}
The non integral denominators arise due to the fact that whenever either $\mathrm{x}_v\not\in\mathcal{O}_{E_v}$ or $\mathrm{y}_v\not\in\mathcal{O}_{E_v}$ then  $\Lambda_v(\mathrm{x},\mathrm{y})\not\subset \mathcal{O}_{E_v}$. One could easily convert this to a standard $L$-series with an integral summation range by multiplying by $(\ord_\mathcal{H}(\mathrm{x})\ord_\mathcal{H}(l.\mathrm{y}))^{2s}$. Yet this transformation is unnecessary as all the argument we employ using the Perron formula are evidently valid also for $L$-series with non-integral summands.
\end{remark}

\begin{defi}
Fix a  smooth function $\varphi(x,\alpha)\colon \mathbb{R}\times[0,1) \to [0,1]$ such that for all $\alpha\in[0,1)$
\begin{enumerate}
\item $\varphi(x,\alpha)$ is a compactly supported function of $x$,
\item $\varphi(x,\alpha)\geq \mathds{1}_{[\alpha,1]}(x)$,
\item $\int_\mathbb{R} \varphi(x,\alpha) \dif x \ll 1-\alpha$.
\end{enumerate}
For any $0\leq \alpha< 1$ we denote the Mellin transform of $\varphi_\alpha(x,\alpha)$ in the $x$ variable by $\mathcal{M}\varphi(s,\alpha)$.
\end{defi}
\begin{remark}
An explicit construction
is
\begin{equation*}
\varphi(x,\alpha)=\eta\left(\frac{x-\alpha}{1-\alpha}\right) \, ,
\end{equation*}
where $\eta\colon \mathbb{R}\to[0,1]$ is any smooth compactly supported function satisfying $\eta\geq \mathds{1}_{[0,1]}$.
\end{remark}

\begin{remark}
For any $0\leq\alpha<1$ because $\varphi(\cdot,\alpha)$ is smooth and compactly supported the Mellin transform $\mathcal{M}\varphi(s,\alpha)$ decays faster then any polynomial in the vertical direction uniformly in any vertical strip $a\leq \Re s \leq b$. Moreover as $\varphi(x,\alpha)$ is a smooth function of $2$-variables the decay rate depends continuously on $\alpha$.
\end{remark}

\begin{prop}\label{prop:geom-expansion-perron}
In the setting of Proposition \ref{prop:geom-expansion-ideals}
the following inequality hold for any $c>1$
\begin{align*}
&\Cor(\mu,\nu)[f]\leq\RO_f(0)+
\frac{\vol\left(\left[\mathbf{T}(\mathbb{A})(l,\mathrm{x})\right]\right)^{-1}
\meas_{\mathbf{G}(\mathbb{A})} \left(\Ad_{(e,\mathrm{y})} (B^{-1}B) \right)}
{\left[\Pic(\Lambda,\mathrm{x}):\Pic^\mathrm{pg}(\Lambda,\mathrm{x})\right]}\\
&\cdot\sum_{\chi\in \Pic^\mathrm{pg}(\Lambda,\mathrm{x})^\perp}
\frac{1}{2\pi i}\int_{c-i\infty}^{c+i\infty} L_{\Lambda(\mathrm{x},\mathrm{y})}(s,\chi)
\mathcal{M}\varphi\left(s,\left(\frac{X_{\min}}{X_{\max}}\right)^2\right)
\left(\sqrt{|D|}X_{\max}^2\right)^s
 \dif s \, .
\end{align*}
\end{prop}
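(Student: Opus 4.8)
The plan is to take the upper bound of Proposition~\ref{prop:geom-expansion-ideals} as input and reshape the truncated ideal-counting sum occurring there into the stated contour integral by the standard Perron recipe, in three moves: majorize the counting sum by the smooth weight $\varphi$; detect the principal-genus condition $[\mathfrak{a}]\in\Pic^\mathrm{pg}(\Lambda,\mathrm{x})$ by orthogonality of characters of $\Pic(\Lambda,\mathrm{x})$; and recognize the resulting smoothed sum over ideals as a contour integral of $L_{\Lambda(\mathrm{x},\mathrm{y})}(s,\chi)$ via Mellin inversion. Only the inner sum
$$\Sigma\coloneqq\sum_{N=\sqrt{|D|}X_{\min}^2}^{\sqrt{|D|}X_{\max}^2}\left|\left\{\Ideals(\Lambda,\mathrm{x})\ni\mathfrak{a}\subset\Lambda_f(\mathrm{x},\mathrm{y})\mid\Nr\mathfrak{a}=N,\ [\mathfrak{a}]\in\Pic^\mathrm{pg}(\Lambda,\mathrm{x})\right\}\right|$$
needs to be transformed; the prefactor $\vol\!\left(\left[\mathbf{T}(\mathbb{A})(l,\mathrm{x})\right]\right)^{-1}\meas_{\mathbf{G}(\mathbb{A})}\!\left(\Ad_{(e,\mathrm{y})}(B^{-1}B)\right)$ and the term $\RO_f(0)$ of Proposition~\ref{prop:geom-expansion-ideals} are carried through unchanged.

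Put $\alpha\coloneqq(X_{\min}/X_{\max})^2$; one has $\alpha\in[0,1)$ since $R_G,R_V>0$ forces $X_{\min}<X_{\max}$, and $X_{\max}\ge R_V>0$. For any $\mathfrak{a}$ with $\sqrt{|D|}X_{\min}^2\le\Nr\mathfrak{a}\le\sqrt{|D|}X_{\max}^2$ the quantity $\Nr\mathfrak{a}/(\sqrt{|D|}X_{\max}^2)$ lies in $[\alpha,1]$, so property (2) of $\varphi$ yields
$$\Sigma\le\sum_{\substack{\Ideals(\Lambda,\mathrm{x})\ni\mathfrak{a}\subset\Lambda_f(\mathrm{x},\mathrm{y})\\ [\mathfrak{a}]\in\Pic^\mathrm{pg}(\Lambda,\mathrm{x})}}\varphi\!\left(\frac{\Nr\mathfrak{a}}{\sqrt{|D|}X_{\max}^2},\alpha\right),$$
a finite sum since $\varphi(\cdot,\alpha)$ is compactly supported and only finitely many ideals contained in the fixed lattice $\Lambda_f(\mathrm{x},\mathrm{y})$ have bounded norm. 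Because $\Pic^\mathrm{pg}(\Lambda,\mathrm{x})$ is a subgroup of the finite abelian group $\Pic(\Lambda,\mathrm{x})$, orthogonality gives $\mathds{1}_{\{g\in\Pic^\mathrm{pg}(\Lambda,\mathrm{x})\}}=[\Pic(\Lambda,\mathrm{x}):\Pic^\mathrm{pg}(\Lambda,\mathrm{x})]^{-1}\sum_{\chi\in\Pic^\mathrm{pg}(\Lambda,\mathrm{x})^\perp}\chi(g)$ for $g\in\Pic(\Lambda,\mathrm{x})$; applying this with $g=[\mathfrak{a}]$ (each $\chi\colon\Pic(\Lambda,\mathrm{x})\to\mathbb{C}^\times$ being pulled back along $\Ideals(\Lambda,\mathrm{x})\to\Pic(\Lambda,\mathrm{x})$) rewrites the right-hand side as $[\Pic(\Lambda,\mathrm{x}):\Pic^\mathrm{pg}(\Lambda,\mathrm{x})]^{-1}\sum_{\chi}\sum_{\mathfrak{a}}\chi(\mathfrak{a})\,\varphi\!\left(\Nr\mathfrak{a}/(\sqrt{|D|}X_{\max}^2),\alpha\right)$.

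Finally, Mellin inversion gives $\varphi(y,\alpha)=\frac{1}{2\pi i}\int_{c-i\infty}^{c+i\infty}\mathcal{M}\varphi(s,\alpha)\,y^{-s}\dif s$ for every $y>0$ and every $c>0$, and in particular the integral on the right vanishes whenever $\varphi(y,\alpha)=0$; hence each finite inner sum over $\mathfrak{a}$ above may be extended harmlessly to a sum over all $\mathfrak{a}\in\Ideals(\Lambda,\mathrm{x})$ with $\mathfrak{a}\subset\Lambda_f(\mathrm{x},\mathrm{y})$. Substituting the Mellin integral and interchanging summation with integration then produces, for each $\chi$,
$$\sum_{\Ideals(\Lambda,\mathrm{x})\ni\mathfrak{a}\subset\Lambda_f(\mathrm{x},\mathrm{y})}\chi(\mathfrak{a})\,\varphi\!\left(\frac{\Nr\mathfrak{a}}{\sqrt{|D|}X_{\max}^2},\alpha\right)=\frac{1}{2\pi i}\int_{c-i\infty}^{c+i\infty}L_{\Lambda(\mathrm{x},\mathrm{y})}(s,\chi)\,\mathcal{M}\varphi(s,\alpha)\left(\sqrt{|D|}X_{\max}^2\right)^{s}\dif s,$$
by the definition of $L_{\Lambda(\mathrm{x},\mathrm{y})}(s,\chi)$; the possibly non-integral values of $\Nr(\mathfrak{a})$ (cf.\ the remark after Definition~\ref{defi:L-functions}) do not interfere. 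Feeding these identities back up the chain and into Proposition~\ref{prop:geom-expansion-ideals} gives exactly the claimed inequality, with $\alpha=(X_{\min}/X_{\max})^2$. The one step that needs genuine care is the interchange of the ideal sum with the contour integral: for $c>1$ the series $\sum_{\mathfrak{a}}\Nr(\mathfrak{a})^{-c}$ converges absolutely, since by Definition~\ref{defi:L-functions} the Euler product of $L_{\Lambda(\mathrm{x},\mathrm{y})}(s,\chi)$ differs from that of the Hecke $L$-function $L(s,\chi)$ in only finitely many factors, each holomorphic and non-vanishing for $\Re s>0$; together with the super-polynomial vertical decay of $\mathcal{M}\varphi(s,\alpha)$ this makes Fubini applicable once the sum has been extended to the full Dirichlet series as above.
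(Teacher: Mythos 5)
Your proof is correct and follows essentially the same route as the paper: majorize the truncated count by the smooth weight $\varphi$, detect the genus condition by orthogonality of characters of $\Pic(\Lambda,\mathrm{x})$, and convert the smoothed ideal sum to a contour integral by Mellin inversion. You are slightly more explicit than the paper about why Fubini applies in the Perron step, which is a welcome amplification rather than a departure.
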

\begin{proof}
Denote $\alpha=\left(\frac{X_{\min}}{X_{\max}}\right)^2$.
We apply first an elementary transformation
\begin{align}
\nonumber
\sum_{\substack{\Ideals(\Lambda,\mathrm{x})\ni \mathfrak{a}\subset \Lambda_f(\mathrm{x},\mathrm{y})
\\ \sqrt{|D|}X_{\min}^2\leq \Nr\mathfrak{a}\leq \sqrt{|D|}X_{\max}^2
\\ [\mathfrak{a}]\in\Pic^\mathrm{pg}(\Lambda,\mathrm{x})}}
1
&\leq\sum_{\substack{\Ideals(\Lambda,\mathrm{x})\ni \mathfrak{a}\subset \Lambda_f(\mathrm{x},\mathrm{y})
\\ [\mathfrak{a}]\in\Pic^\mathrm{pg}(\Lambda,\mathrm{x})}}
\varphi\left(\frac{\Nr\mathfrak{a}}{ \sqrt{|D|}X_{\max}^2},\alpha\right)
\\
=
\left[\Pic(\Lambda,\mathrm{x}):\Pic^\mathrm{pg}(\Lambda,\mathrm{x})\right]^{-1}
&\sum_{\chi\in \Pic^\mathrm{pg}(\Lambda,\mathrm{x})^\perp}
\sum_{\Ideals(\Lambda,\mathrm{x})\ni \mathfrak{a}\subset \Lambda_f(\mathrm{x},\mathrm{y})}
\chi(\mathfrak{a})\varphi\left(\frac{\Nr\mathfrak{a}}{\sqrt{|D|}X_{\max}^2},\alpha\right) \, .
\label{eq:pre-Perron}
\end{align}
The following is a smoothed version of Perron's formula which holds in our case because $L_{\Lambda(\mathrm{x},\mathrm{y})}(s,\chi)$ is meromorphic with at most a single simple pole at $s=1$.
\begin{equation*}
\forall Y>0\colon \
\sum_{\Ideals(\Lambda,\mathrm{x})\ni \mathfrak{a}\subset \Lambda_f(\mathrm{x},\mathrm{y})}
\chi(\mathfrak{a}) \varphi\left(\frac{\Nr(\mathfrak{a})}{Y},\alpha\right)
=\frac{1}{2\pi i}
\int_{c-i\infty}^{c+i\infty} L_{\Lambda(\mathrm{x},\mathrm{y})}(s,\chi)
\mathcal{M}\varphi(s,\alpha) Y^s\dif s \, .
\end{equation*}
The claim follows by applying Perron's formula to each summand in \eqref{eq:pre-Perron}.
\end{proof}

We are now prepared to establish the main bound from which all our results follow. The most important aspect is that the main term of the correlation, which we denote by $\mathrm{MT}$, decays as $p_1^{-2\tau}$ for large $\tau$. This is exactly the behavior we expect if we replace $\mu$ by the Haar measure $m$, because then the correlation is comparable to the volume of a $p_1$-adic tube of width $p_1^{-\tau}$ around the fixed periodic $\mathbf{G}$-orbit. Because $\mathbf{G}$ is of codimension $2$ in $\mathbf{P}^1$ the the Haar volume of such a tube should be proportional to $p_1^{-2\tau}$. Most of the work goes into showing that the main term dominates the error term which we denote by $\mathrm{ST}$. This is where the subconvex bound plays a crucial role. There is a technical difficulty because the $L$-functions that appears in the pertinent counting problem, $L_{\Lambda(\mathrm{x,y})}(s,\chi)$, differ from the canonical class group $L$-functions at finitely many Euler terms. Because the number of differing terms is not uniformly bounded we need also to show that this local contribution to the error term is negligible. This calculation is done in the appendix.

\begin{thm}\label{thm:Cor-main-inequality}
Let $f=f_{\tau, R_G, R_V}=\mathds{1}_B$ be a Bowen ball test function.
If $\tau>0$ then assume in addition $\mathrm{y}_{p_1}=0$ and
\begin{equation*}
(l_{p_1},\mathrm{x}_{p_1})^{-1} \mathbf{T}(\mathbb{Q}_{p_1})(l_{p_1},\mathrm{x}_{p_1})=A_{p_1} \, .
\end{equation*}
In particular, if $\tau>0$ then $\mathrm{x}_{p_1}=0$ and $\val_{p_1}(f)=1$.

Set
\begin{align*}
X_{\min}&=\max\left\{\exp\left(-\frac{R_G}{2}\right)\|\mathrm{y}_\infty\|_2-R_V ,0\right\} \, ,\\
X_{\max}&=\exp\left(\frac{R_G}{2}\right)\|\mathrm{y}_\infty\|_2+R_V \, .
\end{align*}
Then there is an explicit computable constant $\delta>0$ and a continuous function $\Phi\colon [0,1)\to \mathbb{R}$ such that
\begin{align*}
\Cor(\mu,\nu)[f]-\RO_f(0)&\ll
\meas_{\mathbf{G}(\mathbb{A})} \left(\Ad_{(e,\mathrm{y})} (B^{-1}B) \right)
\ord_{\mathcal{H}}(l.\mathrm{y})^2 (\mathrm{MT}+\mathrm{ST}) \, ,\\
\mathrm{MT}&\coloneqq \left(X_{\max}^2-X_{\min}^2 \right)p_1^{-2\tau} \, ,\\
\left|\mathrm{ST}\right|\ &\ll_\varepsilon \Phi\left(\left(\frac{X_{\min}}{X_{\max}}\right)^2\right)
 (f\ord_{\mathcal{H}}(l.\mathrm{y}))^\varepsilon
|D|^{-\delta+\varepsilon}  X_{\max}\,p_1^{-\tau}\\
&\cdot\left(\frac{\ord_{\mathcal{H}}(\mathrm{x})}{\gcd(\ord_{\mathcal{H}}(\mathrm{x}),f)}\right)^{-1/4-\delta+\varepsilon} \, .
\end{align*}
\end{thm}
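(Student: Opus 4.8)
The argument starts from the inequality of Proposition~\ref{prop:geom-expansion-perron}, which bounds $\Cor(\mu,\nu)[f]-\RO_f(0)$ by $\vol([\mathbf{T}(\mathbb{A})(l,\mathrm{x})])^{-1}\meas_{\mathbf{G}(\mathbb{A})}(\Ad_{(e,\mathrm{y})}(B^{-1}B))$ times the average over $\chi\in\Pic^\mathrm{pg}(\Lambda,\mathrm{x})^\perp$ of the contour integrals $\frac{1}{2\pi i}\int_{(c)}L_{\Lambda(\mathrm{x},\mathrm{y})}(s,\chi)\,\mathcal{M}\varphi(s,\alpha)\,(\sqrt{|D|}X_{\max}^2)^s\,ds$ with $c>1$ and $\alpha=(X_{\min}/X_{\max})^2$. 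The plan is to shift every contour to the line $\Re s=\tfrac12$, collect the residue at $s=1$ produced by the single pole of the $\chi=1$ term — this yields $\mathrm{MT}$ — and estimate the shifted integrals by subconvexity — this yields $\mathrm{ST}$. Since $\Pic^\mathrm{pg}(\Lambda,\mathrm{x})^\perp$ has exactly $[\Pic(\Lambda,\mathrm{x}):\Pic^\mathrm{pg}(\Lambda,\mathrm{x})]$ elements, the prefactor $[\Pic(\Lambda,\mathrm{x}):\Pic^\mathrm{pg}(\Lambda,\mathrm{x})]^{-1}$ makes the $\chi$-sum an average, so it suffices to bound each term uniformly in $\chi$.

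\emph{The main term.} Only $\chi=1$ fails to be holomorphic for $\Re s>0$; shifting its contour past $s=1$ contributes $\Res_{s=1}L_{\Lambda(\mathrm{x},\mathrm{y})}(s,1)\cdot\mathcal{M}\varphi(1,\alpha)\cdot\sqrt{|D|}X_{\max}^2$. Using the Euler product of Definition~\ref{defi:L-functions} and the local computations of Appendix~\ref{app:L-functions}, this residue factors as $L(1,\chi_E)$ — the residue of the associated Dedekind-type zeta function — times explicit local correction factors at $p_1$ and at the primes dividing the conductor $f$ and the orders $\ord_{\mathcal{H}}(\mathrm{x})$, $\ord_{\mathcal{H}}(l.\mathrm{y})$. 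The factor at $p_1$ is $\asymp p_1^{-2\tau}$ because for $\tau>0$ the lattice $\bigcap_{k=-\tau}^{\tau}\frac{\pi^k}{\tensor[^\sigma]{\pi}{^k}}\Lambda_{p_1}$ equals $p_1^{\tau}\mathcal{O}_{E_{p_1}}$, and the remaining correction factors are $\ll\ord_{\mathcal{H}}(l.\mathrm{y})^2$ after cancellation against the index factors $[\Lambda_v^\times:\Lambda_v^\times(\mathrm{x})]$ and $[\Pic(\Lambda,\mathrm{x}):\Pic^\mathrm{pg}(\Lambda,\mathrm{x})]$ occurring in $\vol([\mathbf{T}(\mathbb{A})(l,\mathrm{x})])$ by Proposition~\ref{prop:toral-volume}. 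Crucially, the $\sqrt{|D|}L(1,\chi_E)$ in $\vol([\mathbf{T}(\mathbb{A})(l,\mathrm{x})])^{-1}$ cancels the $\sqrt{|D|}L(1,\chi_E)$ from the residue, so no lower bound on $L(1,\chi_E)$ is needed for $\mathrm{MT}$. Finally $\mathcal{M}\varphi(1,\alpha)\ll 1-\alpha$ converts $X_{\max}^2(1-\alpha)$ into $X_{\max}^2-X_{\min}^2$, giving the asserted $\mathrm{MT}$.

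\emph{The secondary term.} On $\Re s=\tfrac12$ every $L_{\Lambda(\mathrm{x},\mathrm{y})}(s,\chi)$ is holomorphic; since $\mathcal{M}\varphi(s,\alpha)$ decays faster than any polynomial in $\Im s$, with rate continuous in $\alpha$, the horizontal segments vanish and each integral converges absolutely, contributing at most $\Phi(\alpha)\cdot\sup_{t}\big|L_{\Lambda(\mathrm{x},\mathrm{y})}(\tfrac12+it,\chi)(1+|t|)^{-A}\big|\cdot(\sqrt{|D|}X_{\max}^2)^{1/2}$ for a suitable exponent $A$ and a continuous weight $\Phi$ (the $L^1$-profile of $\mathcal{M}\varphi(\cdot,\alpha)$ on the critical line). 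To bound $L_{\Lambda(\mathrm{x},\mathrm{y})}(\tfrac12+it,\chi)$ we compare it with the canonical Hecke $L$-function $L(s,\chi)$ of the ray class character $\chi$ of $E$: the two differ by the finitely many Euler factors at $p_1$ and at the primes where $\Lambda_v$, $\mathrm{x}_v$ or $l_v.\mathrm{y}_v$ fail to be maximal respectively integral, and Appendix~\ref{app:L-functions} (cf.\ Proposition~\ref{prop:L-local-denominators}) bounds the product of these discrepancy factors on $\Re s=\tfrac12$ by $\ll_\varepsilon(f\ord_{\mathcal{H}}(l.\mathrm{y}))^\varepsilon\, p_1^{-\tau}\,(\gcd(\ord_{\mathcal{H}}(\mathrm{x}),f)/\ord_{\mathcal{H}}(\mathrm{x}))^{1/4}$, the $p_1^{-\tau}$ again coming from the contracted $p_1$-lattice. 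For $L(\tfrac12+it,\chi)$ itself we invoke the Duke--Friedlander--Iwaniec subconvex bound \cite{DFI} in the level aspect, which saves a power $|D|^{1/4-\delta}$ in the conductor for some absolute $\delta>0$. Multiplying by $(\sqrt{|D|}X_{\max}^2)^{1/2}=|D|^{1/4}X_{\max}$, absorbing $\vol([\mathbf{T}(\mathbb{A})(l,\mathrm{x})])^{-1}$ and using the standard lower bound for $L(1,\chi_E)$, the $|D|$-exponent becomes $-\delta+\varepsilon$ and one reaches the claimed bound on $\mathrm{ST}$.

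\emph{The main obstacle.} The delicate point is the comparison in the secondary term: the number of Euler factors in which $L_{\Lambda(\mathrm{x},\mathrm{y})}(s,\chi)$ deviates from $L(s,\chi)$ grows with $\omega(\ord_{\mathcal{H}}(\mathrm{x})\ord_{\mathcal{H}}(l.\mathrm{y}))$ and is not uniformly bounded, so estimating each factor by $O(1)$ will not do; one must show their product is controlled by $(\ord_{\mathcal{H}}(\cdots))^\varepsilon$ together with genuinely favourable powers of $p_1$ and of $\ord_{\mathcal{H}}(\mathrm{x})/\gcd(\ord_{\mathcal{H}}(\mathrm{x}),f)$. This is exactly what the local analysis of Appendix~\ref{app:L-functions} provides, and it rests on the explicit description of the orbit space in \S\ref{sec:orbit-space}. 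The remaining technical burden is to track all constants so that $\mathrm{MT}$ genuinely dominates $\mathrm{ST}$ once $\tau$ is taken large with $p_1$ fixed, which is what ultimately converts the inequality into non-concentration and hence equidistribution.
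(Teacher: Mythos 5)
Your proposal follows the paper's own argument essentially step for step: contour shift of the Perron integral from Proposition~\ref{prop:geom-expansion-perron}, residue at $s=1$ giving $\mathrm{MT}$, subconvexity on $\Re s=\tfrac12$ giving $\mathrm{ST}$, with the class number formula (Proposition~\ref{prop:toral-volume}), the residue bound (Proposition~\ref{prop:residue}), the conductor bound (Lemma~\ref{lem:conductor}) and the critical-line local factor estimate (Proposition~\ref{prop:L1/2-bound}) supplying the arithmetic inputs. Your explicit observation that the $\sqrt{|D|}\,L(1,\chi_E)$ in $\vol(\mathcal{H})^{-1}$ cancels the same quantity in the residue — so that Siegel's bound is needed only for $\mathrm{ST}$, not $\mathrm{MT}$ — is correct and consistent with (though left implicit in) the paper. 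A few small inaccuracies do not affect the structure: the discrepancy bound on the critical line comes from Proposition~\ref{prop:L1/2-bound} rather than Proposition~\ref{prop:L-local-denominators} and the saving there is $\left(\Nr(\mathcal{O}_E:\jmath_{/\Lambda}(\mathrm{x}))\right)^{-1/2}$, which only after combining with the conductor contribution $\left(f^2\Nr(\mathcal{O}_E:\jmath_{/\Lambda}(\mathrm{x}))\right)^{1/4-\delta}$ from subconvexity nets the exponent $-1/4-\delta$; and for the trivial character the paper invokes Burgess rather than Duke--Friedlander--Iwaniec, since the latter applies to the cuspidal theta lift.
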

\begin{remark}\label{rem:torsion-order-translation}
Because $\ord_{\mathcal{H}}(l.\mathrm{y})$ is the order of $l_f.\mathrm{y}_f$ in $\mathbf{V}(\mathbb{A}_f)/l_f.\mathbf{V}(\hat{\mathbb{Z}})$  it is also the torsion order of $\mathrm{y}_f$ in  $\mathbf{V}(\mathbb{A}_f)/\mathbf{V}(\hat{\mathbb{Z}})$. In particular, $\ord_{\mathcal{H}}(l.\mathrm{y})$ depends only on $\mathrm{y}$ an not on the homogeneous toral set $\mathcal{H}$ and its datum $l$.
\end{remark}
\begin{remark}
The dependence on $\varepsilon$ is ineffective due to the application of Siegel's lower bound for $L(1,\chi_E)$.
\end{remark}
\begin{remark}
The constant $\delta>0$ is the best known constant for subconvexity of $\mathbf{GL}_2$ $L$-functions in the level aspect. A positive value of $\delta$ has been originally established by \cite{DFI} and $\delta=1/4$ would follow from the Lindel\"of Hypothesis for these $L$-functions. The classical convexity bound provides $\delta=0$ and would suffice for our needs as long as there is some $\eta>0$ such that
\begin{equation*}
\left(\frac{\ord_{\mathcal{H}}(\mathrm{x})}{\gcd(\ord_{\mathcal{H}}(\mathrm{x}),f)}\right)\gg |D|^\eta \, .
\end{equation*}
This is the case, in particular, for the joint equidistribution problem studied by Aka, Einsiedler and Shapira \cite{AES} whenever $D$ is fundamental.
\end{remark}

For the proof we will need the following upper bound from principal genus theory.
\begin{lem}\label{lem:principal-genus}
\begin{equation*}
\left[\Pic(\Lambda,\mathrm{x}):\Pic^\mathrm{pg}(\Lambda,\mathrm{x})\right]\ll 4^{\omega(f\ord_{\mathcal{H}}(\mathrm{x}))} \, .
\end{equation*}
\end{lem}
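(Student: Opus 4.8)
The plan is to realize $[\Pic(\Lambda,\mathrm{x}):\Pic^\mathrm{pg}(\Lambda,\mathrm{x})]$ as a ``number of genera'' and to bound it by the number of quadratic characters cutting out those genera, all of which ramify only at a short list of primes.

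First I would linearize the index by means of the relative norm. By Definition~\ref{defi:Lambda_v(x)+Pic} the quotient $\Pic(\Lambda,\mathrm{x})/\Pic^\mathrm{pg}(\Lambda,\mathrm{x})$ is canonically $\mathbb{A}_{E,f}^\times\big/\big(E^\times\cdot\mathbb{A}_{E,f}^{(1)}\cdot\Lambda_f^\times(\mathrm{x})\big)$. Since $\mathbb{A}_{E,f}^{(1)}=\prod'_{v<\infty}E_v^{(1)}$ is exactly the kernel of the norm homomorphism $N\coloneqq N_{E/\mathbb{Q}}\colon\mathbb{A}_{E,f}^\times\to\mathbb{A}_{\mathbb{Q},f}^\times$, the norm induces a group isomorphism
\[
\Pic(\Lambda,\mathrm{x})/\Pic^\mathrm{pg}(\Lambda,\mathrm{x})\;\xrightarrow{\ \sim\ }\;N(\mathbb{A}_{E,f}^\times)\big/\big(N(E^\times)\cdot N(\Lambda_f^\times(\mathrm{x}))\big),
\]
which is a finite abelian group. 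Moreover $N(\Lambda_v^\times(\mathrm{x}))=N(\mathcal{O}_{E_v}^\times)$ at every place $v$ for which $\Lambda_v=\mathcal{O}_{E_v}$ and $\jmath_{/\Lambda}(\mathrm{x})_v\equiv 0\bmod\Lambda_v$; hence the only places carrying new information are those dividing $f$ --- where $\Lambda_v$ fails to be maximal --- and those dividing $\ord_{\mathcal{H}}(\mathrm{x})$ --- where the level structure is nontrivial.

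Next I would dualize and count. By Pontryagin duality the index equals the number of finite-order characters $\chi$ of $\Pic(\Lambda,\mathrm{x})$ that are trivial on $\Pic^\mathrm{pg}(\Lambda,\mathrm{x})$, i.e.\ trivial on the image of $\ker N=\mathbb{A}_{E,f}^{(1)}$; every such $\chi$ factors as $\chi=\psi\circ N$. Triviality of $\chi$ on the principal id\`eles forces $\psi$ to be trivial on $N(E^\times)$, and since $N(\mathbb{Q}^\times)=\mathbb{Q}^{\times2}\subseteq N(E^\times)$ the character $\psi$ is quadratic; triviality of $\chi$ on $\Lambda_f^\times(\mathrm{x})$ forces $\psi$ to be trivial on $N(\Lambda_f^\times(\mathrm{x}))$, which by the previous step confines the conductor of $\psi$ to the primes dividing $f\,\ord_{\mathcal{H}}(\mathrm{x})$ (together with a bounded contribution at $v=2$). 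The group of quadratic characters with conductor supported on a set of $t$ primes has order $\ll 2^{t}$, so the index is $\ll 2^{\omega(f\,\ord_{\mathcal{H}}(\mathrm{x}))}\ll 4^{\omega(f\,\ord_{\mathcal{H}}(\mathrm{x}))}$.

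The main obstacle is the support statement for the conductor of $\psi$: one must analyze the local norm subgroups $N(\Lambda_v^\times(\mathrm{x}))\subseteq\mathbb{Z}_v^\times$ at the finitely many places dividing $f\,\ord_{\mathcal{H}}(\mathrm{x})$ and at $v=2$, and verify in each case that a ramified quadratic character trivial on $N(\Lambda_v^\times(\mathrm{x}))$ has conductor exponent bounded by the local valuation of $f\,\ord_{\mathcal{H}}(\mathrm{x})$; at the places where $E/\mathbb{Q}$ ramifies one additionally invokes the product formula for the quadratic Hilbert symbol so that those places impose a relation (the one already carried by $\chi_E$) rather than contributing independent degrees of freedom. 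These are finite, explicit computations with the groups $\Lambda_v^\times(\mathrm{x})$ of exactly the flavour of those performed in Appendix~\ref{app:L-functions}.
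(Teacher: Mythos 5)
Your overall skeleton --- identify $\Pic(\Lambda,\mathrm{x})/\Pic^{\mathrm{pg}}(\Lambda,\mathrm{x})$ with a quotient of $\Nr(\mathbb{A}_{E,f}^\times)$ using $\ker\Nr=\mathbb{A}_{E,f}^{(1)}$, dualize, and count characters whose ramification is confined to the primes dividing $f\ord_{\mathcal{H}}(\mathrm{x})$ --- is the same skeleton as the paper's proof, which likewise runs through Pontryagin duality, the norm exact sequence coming from class field theory and the Hasse norm theorem, and a local count of characters. The genuine gap is your pivotal deduction that $\psi$ is quadratic. You infer this from triviality of $\psi$ on $N(E^\times)\supseteq N(\mathbb{Q}^\times)=\mathbb{Q}^{\times 2}$. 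But $\mathbb{Q}^{\times 2}$ here is the \emph{diagonally embedded group of global} squares, and triviality of an id\`elic character on a diagonal copy of $\mathbb{Q}^{\times 2}$ imposes no bound whatsoever on its order: any character of a ray-class-type quotient $\mathbb{A}_{\mathbb{Q},f}^\times\big/\mathbb{Q}_{>0}\prod_p U_p\cong\prod_p \mathbb{Z}_p^\times/U_p$ is trivial on all of $\mathbb{Q}_{>0}$, hence on $\mathbb{Q}^{\times 2}$, and can have large order. To force $\psi^2=1$ you would need triviality on squares of \emph{id\`eles}, i.e.\ on $N(\mathbb{A}_{\mathbb{Q},f}^\times)$; that is not among your hypotheses, and asserting it amounts to asserting that the rational id\`eles already lie in $E^\times\mathbb{A}_{E,f}^{(1)}\Lambda_f^\times(\mathrm{x})$, which is essentially the content of the lemma rather than an input to it. As written, ``hence $\psi$ is quadratic'' is a non sequitur, and with it the $\ll 2^{t}$ count collapses; the boundedness of the torsion of the dual group is exactly the non-trivial point here and cannot be extracted merely from factoring through the norm.

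The paper negotiates this juncture differently. It stays on the $E$-side and argues that a character of $\lfaktor{E^\times}{\mathbb{A}_E^\times}$ vanishing on the image of $\mathbb{A}_E^{(1)}$ is real valued (Hilbert's Satz 90, i.e.\ conjugation-invariance of such characters), so that $\Pic^{\mathrm{pg}}(\Lambda,\mathrm{x})^\perp$ is $2$-torsion; only then does it pass to $\mathbb{Q}$ via the exact sequence involving $\chi_E$, producing a $4$-torsion character group $\Delta$ of $\dfaktor{\mathbb{Q}^\times}{\mathbb{A}^\times}{\mathbb{R}_{>0}\prod_{v<\infty}\Nr\Lambda_v^\times(\mathrm{x})}$, whose ramification is confined to $p\mid f\ord_{\mathcal{H}}(\mathrm{x})$ by the local computation $\Nr\Lambda_v^\times(\mathrm{x})=\Nr\mathcal{O}_{E_v}^\times=\mathbb{Z}_v^\times$ at the remaining places, and which is finally bounded by $\prod_{p\mid f\ord_{\mathcal{H}}(\mathrm{x})}\bigl|\widehat{\mathbb{Z}_p^\times}[4]\bigr|\ll 4^{\omega(f\ord_{\mathcal{H}}(\mathrm{x}))}$. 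So if you wish to keep your route you must supply the torsion statement by an argument of this kind (triviality on the full norm-one id\`ele class group, not on global squares) before any counting of characters; your attention to the ramified primes of $E$ and the Hilbert-symbol product formula is sensible, but it is subordinate to this main missing step.
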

\begin{proof}
We shall compute the index using Pontryagin duality
\begin{equation*}
\left[\Pic(\Lambda,\mathrm{x}):\Pic^\mathrm{pg}(\Lambda,\mathrm{x})\right]= \left|\Pic^\mathrm{pg}(\Lambda,\mathrm{x})^\perp\right| \, .
\end{equation*}
The following commutative diagram
\begin{center}
\begin{tikzcd}
\lfaktor{E^{(1)}}{\mathbb{A}_{E}^{(1)}} \arrow[r] \arrow[d] & \lfaktor{E^\times}{\mathbb{A}_{E}^\times} \arrow[d] \\
\Pic^\mathrm{pg}(\Lambda,\mathrm{x}) \arrow[r] &  \Pic(\Lambda,\mathrm{x})  \\
\end{tikzcd}
\end{center}
implies that any character in $\Pic^\mathrm{pg}(\Lambda,\mathrm{x})^\perp=\ker\left[\widehat{\Pic(\Lambda,\mathrm{x})}\to \widehat{\Pic^\mathrm{pg}(\Lambda,\mathrm{x})}\right]$ defines a character of $\lfaktor{E^\times}{\mathbb{A}_{E}^\times}$ vanishing on $\lfaktor{E^{(1)}}{\mathbb{A}_{E}^{(1)}}$. Hilbert's Satz 90 implies that any such character is real valued, hence $\Pic^\mathrm{pg}(\Lambda,\mathrm{x})^\perp$ is a $2$-torsion group.

Global class field theory and the Hasse norm theorem provide an exact sequence
\begin{equation*}
1\to
\lfaktor{E^{(1)}}{\mathbb{A}_E^{(1)}} \to
\lfaktor{E^\times}{\mathbb{A}_E^\times}\xrightarrow{\Nr}\lfaktor{\mathbb{Q}^\times}{\mathbb{A}^\times}\xrightarrow{\chi_E} \{\pm 1\}\to 1 \, .
\end{equation*}
This sequence descends to an exact sequence
\begin{equation*}
1\to
\Pic^\mathrm{pg}(\Lambda,\mathrm{x}) \to
\Pic(\Lambda,\mathrm{x})\xrightarrow{\Nr}
\dfaktor{\mathbb{Q}^\times}{\mathbb{A}^\times}{\mathbb{R}_{>0}\prod_{v<\infty} \Nr \Lambda_v^\times(\mathrm{x})}
\xrightarrow{\chi_E} \{\pm 1\}\to 1
\end{equation*}
and a dual exact sequence
\begin{equation*}
1\leftarrow
\widehat{\Pic^\mathrm{pg}(\Lambda,\mathrm{x})} \leftarrow
\widehat{\Pic(\Lambda,\mathrm{x})}
\xleftarrow{\widehat{\Nr}}\widehat{\dfaktor{\mathbb{Q}^\times}{\mathbb{A}^\times}
{\mathbb{R}_{>0}\prod_{v<\infty} \Nr \Lambda_v^\times(\mathrm{x})}}\xleftarrow{\widehat{\chi_E}} \{\pm 1\}\leftarrow 1 \, .
\end{equation*}
The exactness of the latter sequence implies that the following sequence is also exact
\begin{equation}\label{eq:Delta-exact-seq}
1\to\langle\chi_E\rangle \to \Delta \to \Pic^\mathrm{pg}(\Lambda,\mathrm{x})^\perp\to 1 \, ,
\end{equation}
where $\Delta$ is the Pontryagin dual of $\dfaktor{\mathbb{Q}^\times}{\mathbb{A}^\times}{\mathbb{R}_{>0}\prod_{v<\infty}\Nr \Lambda_v^\times(\mathrm{x})}$.
Because $\Pic^\mathrm{pg}(\Lambda,\mathrm{x})^\perp$ is $2$-torsion and $\ord \chi_E=2$ we deduce from \eqref{eq:Delta-exact-seq} that $\Delta$ is $4$-torsion. Hence $\Delta$ is contained in the group of characters $\chi\colon \lfaktor{\mathbb{Q}^\times}{\mathbb{A}^\times}\to S^1$ with $\ord\chi\mid 4$ and conductor contained in $\mathbb{R}_{>0}\prod_{v<\infty}\Nr \Lambda_v^\times(\mathrm{x})$.

For any $v\nmid f\ord_{\mathcal{H}}(\mathrm{x})$ we have $\Lambda_v^\times(\mathrm{x})=\mathcal{O}_{E_v}^\times$ and $E_v/\mathbb{Q}_v$ is an unramified extension of \'etale-algebras. Local class field theory implies $\Nr\Lambda_v^\times(\mathrm{x})=\Nr\mathcal{O}_{E_v}^\times=\mathbb{Z}_v^\times$ for these $v$. We deduce that all $\chi\in \Delta$ are unramified outside $f\ord_{\mathcal{H}}(\mathrm{x})$. The multiplicative structure of id\`ele class group characters implies
\begin{equation*}
\Delta\subset \prod_{p\mid f\ord_{\mathcal{H}}(\mathrm{x})} \widehat{\mathbb{Z}_p^\times}[4] \, .
\end{equation*}
The isomorphism $\mathbb{Z}_p^\times\simeq \mu_{p-1}\times \mathbb{Z}_p$ implies $\widehat{\mathbb{Z}_p^\times}\simeq \mu_{p-1}\times \mathbb{Q}_p/\mathbb{Z}_p$
and $\widehat{\mathbb{Z}_p^\times}[4]\simeq \mu_{p-1}[4]\times \mathbb{Q}_p/\mathbb{Z}_p[4]$. Using the equalities
\begin{align*}
\mu_{p-1}[4]&\simeq\begin{cases}
1 & p=2\\
\mu_2 & p\equiv 3\mod 4\\
\mu_4 & p\equiv 1\mod 4
\end{cases} \, , &
\mathbb{Q}_p/\mathbb{Z}_p[4]&\simeq\begin{cases}
1 & p\neq 2\\
\cyclic{4} & p=2
\end{cases} \, ,
\end{align*}
and \eqref{eq:Delta-exact-seq} we conclude
\begin{equation*}
\left|\Pic^\mathrm{pg}(\Lambda,\mathrm{x})^\perp\right|=\frac{1}{2}|\Delta|\leq  \frac{1}{2} \prod_{p\mid f\ord_{\mathcal{H}}(\mathrm{x})} |\widehat{\mathbb{Z}_p^\times}[4]|
\ll 4^{\omega(f\ord_{\mathcal{H}}(\mathrm{x}))} \, .
\end{equation*}
\end{proof}

\begin{proof}[Proof of Theorem \ref{thm:Cor-main-inequality}]
Denote $\alpha=\left(\frac{X_{\min}}{X_{\max}}\right)^2$ and fix $c>1$. Apply Proposition \ref{prop:geom-expansion-perron}. We need to evaluate for each $\chi\in\Pic^\mathrm{pg}(\Lambda,\mathrm{x})$ the integral $\int_{c-i\infty}^{c+i\infty} F_\chi(s)\dif s$. Where the  integrand is
\begin{equation*}
F_\chi(s)\coloneqq L_{\Lambda(\mathrm{x},\mathrm{y})}(s,\chi)
\mathcal{M}\varphi(s,\alpha)\left(\sqrt{|D|}X_{\max}^2\right)^s \, .
\end{equation*}
The function $F_\chi(s)$ is meromorphic in the strip $\Re s>0$ with at most a simple pole at $s=1$ with residue
\begin{align*}
\Res_{s=1} F_\chi(s)&=\mathcal{M}\varphi(1,\alpha) \sqrt{|D|}X_{\max}^2\Res_{s=1}L_{\Lambda(\mathrm{x},\mathrm{y})}(s,\chi)\\
&\ll (1-\alpha)\sqrt{|D|}X_{\max}^2\Res_{s=1}L_{\Lambda(\mathrm{x},\mathrm{y})}(s,\chi)\\
&=\sqrt{|D|}\left(X_{\max}^2-X_{\min}^2\right)\Res_{s=1}L_{\Lambda(\mathrm{x},\mathrm{y})}(s,\chi) \, ,
\end{align*}
where we have used the property $\mathcal{M}\varphi(1,\alpha)=\int \varphi(x,\alpha)\dif x \ll 1-\alpha$.
This residue vanishes unless $\chi$ is trivial.
We would like to shift the contour of integration of $F_\chi(s)$ to the vertical line $\int_{1/2-i\infty}^{1/2+i\infty}$ using the residue theorem. In the process we collect the potential residue at $s=1$ and the contribution from the horizontal lines $\lim_{T\to\infty} \left(\int_{1/2- iT}^{c- iT}-\int_{1/2+iT}^{c+iT}\right)$. We argue that the horizontal contribution vanishes.

Because $\varphi$ is smooth and compactly supported its Mellin transform decays faster then any polynomial in the vertical direction uniformly for $1/2\leq \Re s \leq c$.
The convexity bound for quadratic Hecke $L$-functions, Lemma \ref{lem:local-vertical} and the trivial bound $|1-p^{-s}|^{-1}\geq \left(1+p^{-\Re s}\right)^{-1}$ imply that
\begin{equation}\label{eq:L-decay}
|L_{\Lambda(\mathrm{x},\mathrm{y})}(s,\chi)|\ll_{\Lambda,\mathrm{x},\mathrm{y}} 1+|\Im s|
\end{equation}
for any $\Re s\geq1/2$. Hence in the strip $1/2\leq \Re s \leq c$ the function $|F_\chi(s)|$ decays uniformly to $0$ when $|\Im s|  \to\infty$. This implies the vanishing of the horizontal contribution.

Next we need to evaluate each integral $\int_{1/2-i\infty}^{1/2+i\infty}F_\chi(s)\dif s$. We can bound $L_{\Lambda(\mathrm{x},\mathrm{y})}$ on the critical line $\Re s=1/2$ using the subconvexity bound of Duke, Friedlander and Iwaniec for Hecke $L$-function. If $L(s,\chi)$ is the Hecke $L$-function of the Grossencharakter $\chi$ then $L_{\Lambda(\mathrm{x},\mathrm{y})}$ differs from $L(s,\chi)$ at only finitely many terms of the Euler product which are evaluated explicitly in Appendix \ref{app:L-functions}. Specifically, Proposition \ref{prop:L1/2-bound} implies
\begin{align*}
\Bigg|\int_{1/2-i\infty}^{1/2+i\infty} F_\chi(s) \dif s\Bigg|
\ll_{\varepsilon} \int_{1/2-i\infty}^{1/2+i\infty}
&|L(s,\chi) \mathcal{M}\varphi(s,\alpha)| \dif s |D|^{1/4} X_{\max}\,p_1^{-\tau}\\
&\cdot\ord_{\mathcal{H}}(l.\mathrm{y})^2
(f\ord_{\mathcal{H}}(l.\mathrm{y}))^{\varepsilon}  12^{\omega(\ord_{\mathcal{H}}(\mathrm{x}))}\\
&\cdot \left(\Nr\left(\mathcal{O}_E:\jmath_{/\Lambda}(\mathrm{x})\right)\right)^{-1/2}
\prod_{v\mid\ord_{\mathcal{H}}(\mathrm{x})} \left[\Lambda_v^\times:\Lambda_v^\times(\mathrm{x})\right]\, .
\end{align*}

Because $\chi$ is a character of $\Pic(\Lambda,\mathrm{x})=\dfaktor{E^\times}{\mathbb{A}_{E,f}^\times}{\Lambda_f^\times(\mathrm{x})}$, the classical Grossencharakter associated to $\chi$  has conductor ideal dividing the ideal $\mathfrak{c}(\Lambda_f^\times(\mathrm{x}))$ from Lemma \ref{lem:conductor}.
The theta lift of $\chi$ is an $\mathbf{SL}_2$ modular form with level dividing $|D_E|\Nr \mathfrak{c}(\Lambda_f^\times(\mathrm{x}))$, weight $1$ and nebentypus $\chi_E$, cf.\ \cite[Proposition 12.5]{iwaniec1997topics}.
The subconvex bound \cite[Theorem 2.4]{DFI} whenever $\chi$ is non-trivial and the Burgess bound \cite{Burgess2} for trivial $\chi$ imply that there is some explicit $\delta>0$ such that
\begin{equation*}
\int_{1/2-i\infty}^{1/2+i\infty}
|L(s,\chi) \mathcal{M}\varphi(s,\alpha)| \dif s\ll \left(|D_E|\Nr \mathfrak{c}(\Lambda_f^\times(\mathrm{x}))\right)^{1/4-\delta}
\int_{1/2-i\infty}^{1/2+i\infty} (1+|s|)^{10} |\mathcal{M}\varphi(s,\alpha)| \dif s \, .
\end{equation*}
We now define
\begin{equation*}
\Phi(\alpha)\coloneqq \int_{1/2-i\infty}^{1/2+i\infty} (1+|s|)^{10} |\mathcal{M}\varphi(s,\alpha)| \dif s \, .
\end{equation*}
Because $\varphi(x,\alpha)$ is smooth and compactly supported in $x$ for each $\alpha$ the Mellin transform $\mathcal{M}\varphi(s,\alpha)$ decays faster then any polynomial in the vertical line $\Re s=1/2$ uniformly in $\alpha$ on compact sets $\subset [0,1)$. Hence the integral $\int_{1/2-i\infty}^{1/2+i\infty} (1+|s|)^{10} |\mathcal{M}\varphi(s)| \dif s$ converges to a finite positive constant that depends continuously on $\alpha\in[0,1)$. Applying Lemma \ref{lem:conductor} to bound $\Nr\mathfrak{c}(\Lambda^\times_f(\mathrm{x}))$ we deduce
\begin{align*}
\left|\int_{1/2-i\infty}^{1/2+i\infty} F_\chi(s) \dif s\right|\ll_\varepsilon& \Phi(\alpha)(f\ord_{\mathcal{H}}(l.\mathrm{y}))^\varepsilon
|D|^{1/2-\delta} X_{\max}\,p_1^{-\tau}\\
&12^{\omega(\ord_{\mathcal{H}}(\mathrm{x}))}\left(\Nr\left(\mathcal{O}_E:\jmath_{/\Lambda}(\mathrm{x})\right)\right)^{-1/4-\delta} \\
&\ord_{\mathcal{H}}(l.\mathrm{y})^2
\prod_{v\mid\ord_{\mathcal{H}}(\mathrm{x})} \left[\Lambda_v^\times:\Lambda_v^\times(\mathrm{x})\right] \, .
\end{align*}

Combining these results into Proposition \ref{prop:geom-expansion-perron} and using Propositions \ref{prop:toral-volume} and \ref{prop:residue} we deduce the required expression
\begin{align*}
\Cor(\mu,\nu)[f]-\RO_f(0)&\ll
\meas_{\mathbf{G}(\mathbb{A})} \left(\Ad_{(e,\mathrm{y})} (B^{-1}B) \right)
\ord_{\mathcal{H}}(l.\mathrm{y})^2 (\mathrm{MT}+\mathrm{ST}) \, ,\\
\mathrm{MT}&\coloneqq \left(X_{\max}^2-X_{\min}^2 \right)p_1^{-2\tau} \, ,\\
\left|\mathrm{ST}\right|\ &\ll_\varepsilon
\Phi(\alpha)
(f\ord_{\mathcal{H}}(l.\mathrm{y}))^\varepsilon  \left[\Pic(\Lambda,\mathrm{x}):\Pic^\mathrm{pg}(\Lambda,\mathrm{x})\right]
L(1,\chi_E)^{-1}\\
&\cdot|D|^{-\delta}  X_{\max}\,p_1^{-\tau}
 12^{\omega(\ord_{\mathcal{H}}(\mathrm{x}))}\left(\Nr\left(\mathcal{O}_E:\jmath_{/\Lambda}(\mathrm{x})\right)\right)^{-1/4-\delta} \, .
\end{align*}
To conclude the proof we need only show the correct upper bound to $|\mathrm{ST}|$.
Notice that up till now all dependence on the parameter $\varepsilon$ is effective and can be made completely explicit. The second order term $\mathrm{ST}$ should be negligible compared to the main order term $\mathrm{MT}$ whenever $|D|\to\infty$ and $\ord_{\mathcal{H}}(\mathrm{x})\to\infty$. To see this we apply Siegel's bound to deduce $L(1,\chi_E)\gg |D|^{-\varepsilon}$ \emph{ineffectively}. This and Lemma \ref{lem:principal-genus} imply
\begin{equation}\label{eq:ST}
\left|\mathrm{ST}\right|\ \ll_\varepsilon \Phi(\alpha)
(f\ord_{\mathcal{H}}(l.\mathrm{y}))^\varepsilon
|D|^{-\delta+\varepsilon}  X_{\max}\,p_1^{-\tau}\\
\cdot 48^{\omega(\ord_{\mathcal{H}}(\mathrm{x}))}\left(\Nr\left(\mathcal{O}_E:\jmath_{/\Lambda}(\mathrm{x})\right)\right)^{-1/4-\delta} \, .
\end{equation}
We are left with bounding the dependence of the second order term on $\mathrm{x}$. Recall from Lemma \ref{lem:conductor} that
\begin{equation*}
\left(\Nr\left(\mathcal{O}_E:\jmath_{/\Lambda}(\mathrm{x})\right)\right)^{-1}=\prod_{v<\infty}\prod_{w\mid v} \min\{|\mathrm{x}_w|_w^{-1},1\} \, .
\end{equation*}
Let $v\mid\ord_{\mathcal{H}}(\mathrm{x})$.
Notice that if $\mathrm{x}_v\not\in\mathcal{O}_{E_v}$, which is always the case if $\Lambda_v=\mathcal{O}_{E,v}$, then $\prod_{w\mid v} \min\{|\mathrm{x}_w|_w^{-1},1\}\leq \ord_{\mathcal{H}}(\mathrm{x}_v)^{-1}$. If $\Lambda_v$ is non-maximal and $\mathrm{x}_v\in\mathcal{O}_{E_v}$ then necessarily $f_v\mathrm{x}_v\in\Lambda_v$ and $\ord_{\mathcal{H}}(\mathrm{x}_v)| f_v$. Hence
$$\prod_{w\mid v} \min\{|\mathrm{x}_w|_w^{-1},1\}\leq \left(\frac{\ord_{\mathcal{H}}(\mathrm{x}_v)}{\gcd(\ord_{\mathcal{H}}(\mathrm{x}_v),f_v)}\right)^{-1}$$
for  any $v\mid\ord_{\mathcal{H}}(\mathrm{x})$. Thus
\begin{equation*}
\left(\Nr\left(\mathcal{O}_E:\jmath_{/\Lambda}(\mathrm{x})\right)\right)^{-1}\leq
\left(\frac{\ord_{\mathcal{H}}(\mathrm{x})}{\gcd(\ord_{\mathcal{H}}(\mathrm{x}),f)}\right)^{-1}
\end{equation*}
and
\begin{align*}
48^{\omega(\ord_{\mathcal{H}}(\mathrm{x}))}\left(\Nr\left(\mathcal{O}_E:\jmath_{/\Lambda}(\mathrm{x})\right)\right)^{-1/4-\delta}
&\ll_\varepsilon f^\varepsilon 48^{\omega\left(\frac{\ord_{\mathcal{H}}(\mathrm{x})}{\gcd(\ord_{\mathcal{H}}(\mathrm{x}),f)}\right)} \left(\frac{\ord_{\mathcal{H}}(\mathrm{x})}{\gcd(\ord_{\mathcal{H}}(\mathrm{x}),f)}\right)^{-1/4-\delta}\\
&\ll_\varepsilon f^\varepsilon \left(\frac{\ord_{\mathcal{H}}(\mathrm{x})}{\gcd(\ord_{\mathcal{H}}(\mathrm{x}),f)}\right)^{-1/4-\delta+\varepsilon} \, .
\end{align*}
The claim follows by substituting this inequality into \eqref{eq:ST}.
\end{proof}

\section{Equidistribution of Genus Orbits}\label{sec:equidistribution}
\subsection{Equidistribution of Torus Orbits}
\begin{defi}
For any element $\mathrm{y}\in\mathbf{V}(\mathbb{A})$ denote $\ord_{\mathbf{V}(\mathbb{A}_f)}(\mathrm{y})$ to be the torsion order of the non-archimedean part $\mathrm{y}_f$ in the torsion group $\mathbf{V}(\mathbb{A}_f)/\mathbf{V}(\hat{\mathbb{Z}})$. Similarly, we let $\ord_{\mathbf{V}(\mathbb{Q}_p)}(\mathrm{y}_p)$ to be the torsion order of $\mathrm{y}_p$ in $\mathbf{V}(\mathbb{Q}_p)/\mathbf{V}(\mathbb{Z}_p)$. If $\mathrm{y}=\begin{pmatrix} a/b \\ c/d \end{pmatrix}$ is a rational element in lowest terms then $\ord_{\mathbf{V}(\mathbb{A}_f)}=\operatorname{lcm}(b,d)$.

Recall from Remark \ref{rem:torsion-order-translation} that for a homogeneous toral set $\mathcal{H}=\left[\mathbf{T}(\mathbb{A})(l,\mathrm{x})\right]$ we have $\ord_{\mathcal{H}}(l.\mathrm{y})=\ord_{\mathbf{V}(\mathbb{A}_f)}(\mathrm{y})$ for all
$\mathrm{y}\in\mathbf{V}(\mathbb{A})$.
\end{defi}
\begin{lem}\label{lem:AdYvol-padic}
For any prime $p$ and $\mathrm{y}_p\in\mathbf{V}(\mathbb{Q}_p)$ with $\mathrm{y}_p\not\in \mathbf{V}(\mathbb{Z}_p)$
\begin{equation*}
\meas_{\mathbf{G}(\mathbb{Q}_p)}\left(\Ad_{(e,\mathrm{y}_p)}
\left(\mathbf{P}^1(\mathbb{Z}_p)^{-1} \mathbf{P}^1(\mathbb{Z}_p)\right)\right)=
\frac{1}{\ord_{\mathbf{V}(\mathbb{Q}_p)}(\mathrm{y}_p)^2(1-p^{-2})} \meas_{\mathbf{G}(\mathbb{Q}_p)}\left(\mathbf{P}^1(\mathbb{Z}_p)\right) \, .
\end{equation*}
If $\mathrm{y}_p\in \mathbf{V}(\mathbb{Z}_p)$ then
\begin{equation*}
\meas_{\mathbf{G}(\mathbb{Q}_p)}\left(\Ad_{(e,\mathrm{y}_p)}
\left(\mathbf{P}^1(\mathbb{Z}_p)^{-1} \mathbf{P}^1(\mathbb{Z}_p)\right)\right)=
\meas_{\mathbf{G}(\mathbb{Q}_p)}\left(\mathbf{P}^1(\mathbb{Z}_p)\right) \, .
\end{equation*}
\end{lem}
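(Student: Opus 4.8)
The plan is to write $\Ad_{(e,\mathrm{y}_p)}\big(\mathbf{P}^1(\mathbb{Z}_p)^{-1}\mathbf{P}^1(\mathbb{Z}_p)\big)$ out explicitly, intersect it with the zero section $\mathbf{G}(\mathbb{Q}_p)\hookrightarrow\mathbf{P}^1(\mathbb{Q}_p)$ — which is what $\meas_{\mathbf{G}(\mathbb{Q}_p)}(\,\cdot\,)$ denotes for a subset of $\mathbf{P}^1(\mathbb{Q}_p)$, cf.\ the proof of Proposition~\ref{prop:geom-non-archimedean-sum} — and then evaluate the resulting Haar measure by an index computation. First note that $\mathbf{P}^1(\mathbb{Z}_p)^{-1}\mathbf{P}^1(\mathbb{Z}_p)=\mathbf{P}^1(\mathbb{Z}_p)$, being a group. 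Using the semidirect product law of $\mathbf{P}^1=\mathbf{G}\ltimes\mathbf{V}$ one computes, for $(g,v)\in\mathbf{P}^1(\mathbb{Z}_p)$,
\[
(e,\mathrm{y}_p)(g,v)(e,-\mathrm{y}_p)=\big(g,\;v+(1-g).\mathrm{y}_p\big),
\]
which lies on the zero section exactly when $v=(g-1).\mathrm{y}_p$; together with $v\in\mathbf{V}(\mathbb{Z}_p)$ this is the condition $(g-1).\mathrm{y}_p\in\mathbf{V}(\mathbb{Z}_p)$. Hence the intersection in question is
\[
S_p:=\big\{g\in\mathbf{SL}_2(\mathbb{Z}_p)\;:\;(g-1).\mathrm{y}_p\in\mathbf{V}(\mathbb{Z}_p)\big\},
\]
the stabilizer in $\mathbf{SL}_2(\mathbb{Z}_p)$ of the class $\bar{\mathrm{y}}_p$ of $\mathrm{y}_p$ in $\mathbf{V}(\mathbb{Q}_p)/\mathbf{V}(\mathbb{Z}_p)$. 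If $\mathrm{y}_p\in\mathbf{V}(\mathbb{Z}_p)$ then $S_p=\mathbf{SL}_2(\mathbb{Z}_p)=\mathbf{P}^1(\mathbb{Z}_p)\cap\mathbf{G}(\mathbb{Q}_p)$, which already gives the second formula.

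For the generic case I would put $n=\ord_{\mathbf{V}(\mathbb{Q}_p)}(\mathrm{y}_p)=p^{m}$ with $m\ge 1$ and set $w:=p^{m}\mathrm{y}_p$, which is a primitive element of $\mathbf{V}(\mathbb{Z}_p)$, i.e.\ $w\notin p\mathbf{V}(\mathbb{Z}_p)$. The action of $\mathbf{SL}_2(\mathbb{Z}_p)$ on $p^{-m}\mathbf{V}(\mathbb{Z}_p)/\mathbf{V}(\mathbb{Z}_p)\cong(\mathbb{Z}/p^{m}\mathbb{Z})^2$ factors through $\mathbf{SL}_2(\mathbb{Z}/p^{m}\mathbb{Z})$, and under this identification $\bar{\mathrm{y}}_p$ corresponds to the primitive vector $\bar{w}$. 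Thus $S_p$ is the full preimage of $\Stab_{\mathbf{SL}_2(\mathbb{Z}/p^{m}\mathbb{Z})}(\bar{w})$, so $[\mathbf{SL}_2(\mathbb{Z}_p):S_p]=[\mathbf{SL}_2(\mathbb{Z}/p^{m}\mathbb{Z}):\Stab(\bar{w})]$. Since $\mathbf{SL}_2(\mathbb{Z}/p^{m}\mathbb{Z})$ acts transitively on primitive vectors of $(\mathbb{Z}/p^{m}\mathbb{Z})^2$ — any primitive vector of $\mathbb{Z}_p^2$ completes to an element of $\mathbf{SL}_2(\mathbb{Z}_p)$, which one reduces mod $p^{m}$ — and there are $p^{2m}-p^{2m-2}=p^{2m}(1-p^{-2})$ of them, the orbit–stabilizer theorem gives $[\mathbf{SL}_2(\mathbb{Z}_p):S_p]=p^{2m}(1-p^{-2})=n^2(1-p^{-2})$.

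Finally, $S_p$ being an open subgroup of the compact group $\mathbf{SL}_2(\mathbb{Z}_p)$ of this index, additivity of Haar measure over cosets yields
\[
\meas_{\mathbf{G}(\mathbb{Q}_p)}(S_p)=\frac{1}{n^2(1-p^{-2})}\,\meas_{\mathbf{G}(\mathbb{Q}_p)}\big(\mathbf{SL}_2(\mathbb{Z}_p)\big)=\frac{1}{n^2(1-p^{-2})}\,\meas_{\mathbf{G}(\mathbb{Q}_p)}\big(\mathbf{P}^1(\mathbb{Z}_p)\big),
\]
which is the first formula. I do not expect a real obstacle: the argument is elementary, and the only points that need care are the bookkeeping convention that $\meas_{\mathbf{G}(\mathbb{Q}_p)}$ is applied to the intersection with the zero section of $\mathbf{P}^1$, and keeping the cases $\mathrm{y}_p\in\mathbf{V}(\mathbb{Z}_p)$ (where $S_p$ is everything) and $\mathrm{y}_p\notin\mathbf{V}(\mathbb{Z}_p)$ separate, since the primitive-vector count $n^2(1-p^{-2})$ is valid only in the latter.
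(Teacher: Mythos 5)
Your proof is correct and follows essentially the same route as the paper: both identify the set in question as the preimage in $\mathbf{SL}_2(\mathbb{Z}_p)$ of the stabilizer of a primitive vector in $(\mathbb{Z}/p^m\mathbb{Z})^2$ and then compute its index. The only cosmetic difference is that the paper first conjugates by some $k\in\mathbf{SL}_2(\mathbb{Z}_p)$ to reduce to the standard vector $(1,0)^T$ and quotes the index of the upper-triangular unipotent, whereas you keep the vector general and invoke orbit--stabilizer together with the count of primitive vectors, which produces the same quantity $p^{2m}(1-p^{-2})$.
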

\begin{proof}
The second statement is trivial while the first is a standard computation that we include for the sake of completeness. Write $\ord_{\mathbf{V}(\mathbb{Q}_p)}(\mathrm{y}_p)=p^m\geq p$. There is some $k\in\mathbf{G}(\mathbb{Z}_p)$ such that $k.p^m\mathrm{y}_p=\begin{pmatrix}1 \\ 0\end{pmatrix}$. Computing we see
\begin{align*}
\meas_{\mathbf{G}(\mathbb{Q}_p)}&\left(\Ad_{(e,\mathrm{y}_p)}
\left(\mathbf{P}^1(\mathbb{Z}_p)^{-1} \mathbf{P}^1(\mathbb{Z}_p)\right)\right)=
\meas_{\mathbf{G}(\mathbb{Q}_p)}\left(\left\{b\in\mathbf{SL}_2(\mathbb{Z}_p) \mid b.\mathrm{y}_p
-\mathrm{y}_p\in \mathbb{Z}_p^2
\right\}\right)\\
&=
\meas_{\mathbf{G}(\mathbb{Q}_p)}\left(\left\{b\in\mathbf{SL}_2(\mathbb{Z}_p)
\mid bk. \begin{pmatrix}1 \\ 0\end{pmatrix}
-k.\begin{pmatrix}1 \\ 0\end{pmatrix}\in p^m\mathbb{Z}_p^2
\right\}\right)\\
&=
\meas_{\mathbf{G}(\mathbb{Q}_p)}\left(\left\{b\in\mathbf{SL}_2(\mathbb{Z}_p)
\mid b. \begin{pmatrix}1 \\ 0\end{pmatrix}
-\begin{pmatrix}1 \\ 0\end{pmatrix}\in p^m\mathbb{Z}_p^2
\right\}\right) \, .
\end{align*}
The last measure is inverse proportional to the index of the upper triangular unipotent subgroup in $\mathbf{SL}_2\left(\cyclic{p^m}\right)$.
\end{proof}

\begin{lem}\label{lem:AdYvol-real}
Fix $R_G,R_V>0$.
Let $B_\infty=B_{G_\infty}(R_G)\times B_{V,\infty}(R_V)\subset \mathbf{P}^1(\mathbb{R})$ as in Definition \ref{defi:test-function-archimedean}. Then for any $\mathrm{y}_\infty\in\mathbf{V}(\mathbb{R})$
\begin{equation*}
\meas_{\mathbf{G}(\mathbb{R})}\left(\Ad_{(e,\mathrm{y}_\infty)}
\left(B_\infty^{-1}B_\infty\right)\right)\leq
\meas_{\mathbf{G}(\mathbb{R})}\left(B_{G_\infty}\left(2R_G\right)\right) \, .
\end{equation*}
\end{lem}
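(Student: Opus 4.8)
The plan is to reduce the estimate to the triangle inequality for the hyperbolic distance on $\mathbf{G}(\mathbb{R})/K_\infty\simeq\mathbb{H}$, once the semidirect-product bookkeeping is carried out. Recall first that in $\mathbf{P}^1=\mathbf{G}\ltimes\mathbf{V}$ one has $\Ad_{(e,\mathrm{y})}(g,v)=(g,\,v+\mathrm{y}-g.\mathrm{y})$, so $\Ad_{(e,\mathrm{y})}$ fixes the $\mathbf{G}$-coordinate; and, exactly as in the proof of Lemma~\ref{lem:AdYvol-padic} and in Proposition~\ref{prop:RO-archimedean}, the symbol $\meas_{\mathbf{G}(\mathbb{R})}$ applied to a subset $S\subseteq\mathbf{P}^1(\mathbb{R})$ denotes the Haar measure of $S\cap\mathbf{G}(\mathbb{R})$, where $\mathbf{G}(\mathbb{R})\hookrightarrow\mathbf{P}^1(\mathbb{R})$ is the zero section $g\mapsto(g,0)$.

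First I would identify the $\mathbf{G}$-part of $B_\infty^{-1}B_\infty$. Writing $B_\infty=\{(g,v):g\in B_{G,\infty}(R_G),\ v\in B_{V,\infty}(R_V)\}$ and using $(g_1,v_1)^{-1}(g_2,v_2)=(g_1^{-1}g_2,\,g_1^{-1}.(v_2-v_1))$, the image of $B_\infty^{-1}B_\infty$ under the projection $\mathbf{P}^1\to\mathbf{G}$ is $B_{G,\infty}(R_G)^{-1}B_{G,\infty}(R_G)$. Since $B_{G,\infty}(R)=K_\infty\{\diag(e^{H/2},e^{-H/2}):|H|<R\}K_\infty$ is symmetric, this equals $B_{G,\infty}(R_G)\cdot B_{G,\infty}(R_G)$. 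As $\Ad_{(e,\mathrm{y}_\infty)}$ leaves the $\mathbf{G}$-coordinate untouched, any element of $\Ad_{(e,\mathrm{y}_\infty)}(B_\infty^{-1}B_\infty)$ lying on the zero section has $\mathbf{G}$-coordinate in this set, so
$$\Ad_{(e,\mathrm{y}_\infty)}(B_\infty^{-1}B_\infty)\cap\mathbf{G}(\mathbb{R})\ \subseteq\ B_{G,\infty}(R_G)\cdot B_{G,\infty}(R_G).$$

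Next I would show $B_{G,\infty}(R_G)\cdot B_{G,\infty}(R_G)\subseteq B_{G,\infty}(2R_G)$. By $K_\infty$-bi-invariance and the Cartan decomposition $g=k_1\diag(e^{H/2},e^{-H/2})k_2$ one has $g\cdot i=k_1.(e^{H}i)$ and hence $g\in B_{G,\infty}(R)$ if and only if $d(i,g\cdot i)<R$, where $d$ is the hyperbolic distance. For $g_1,g_2\in B_{G,\infty}(R_G)$, left-invariance of $d$ and the fact that $K_\infty$ fixes $i$ give $d(g_1\cdot i,\,g_1g_2\cdot i)=d(i,g_2\cdot i)<R_G$, so $d(i,g_1g_2\cdot i)<2R_G$ by the triangle inequality, i.e.\ $g_1g_2\in B_{G,\infty}(2R_G)$.

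Combining the two inclusions with monotonicity of Haar measure yields
$$\meas_{\mathbf{G}(\mathbb{R})}\left(\Ad_{(e,\mathrm{y}_\infty)}(B_\infty^{-1}B_\infty)\right)\le\meas_{\mathbf{G}(\mathbb{R})}\left(B_{G,\infty}(R_G)\cdot B_{G,\infty}(R_G)\right)\le\meas_{\mathbf{G}(\mathbb{R})}\left(B_{G,\infty}(2R_G)\right),$$
which is the assertion. There is no genuine obstacle here; the only point deserving care is keeping track of the semidirect-product multiplication and recording that $\meas_{\mathbf{G}(\mathbb{R})}$ is read against the zero section (just as in Lemma~\ref{lem:AdYvol-padic}) rather than against the full projection onto $\mathbf{G}$ — the hyperbolic triangle inequality supplies all the remaining content.
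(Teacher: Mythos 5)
Your proof is correct and follows the same essential route as the paper: both rest on the observations that $\Ad_{(e,\mathrm{y}_\infty)}$ fixes the $\mathbf{G}$-coordinate, that $\meas_{\mathbf{G}(\mathbb{R})}$ is read against the zero section, and that the $\mathbf{G}$-part of $B_\infty^{-1}B_\infty$ lies in $B_{G,\infty}(2R_G)$ by the hyperbolic triangle inequality. The only difference is cosmetic: you discard the $\mathbf{V}$-constraint immediately, whereas the paper records the bound $B_\infty^{-1}B_\infty\subset B_{G,\infty}(2R_G)\times B_{V,\infty}(2e^{R_G/2}R_V)$ and introduces the set $B(R_1,R_2,\mathrm{r})$ before noting $B(R_1,R_2,\mathrm{r})\subset B_{G,\infty}(R_1)$ — that extra bookkeeping is used only to justify the sharper estimate in the Remark following the Lemma, not for the inequality stated here.
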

\begin{remark}
With a slightly more delicate analysis it is possible to establish the better estimate
\begin{align*}
\meas_{\mathbf{G}(\mathbb{R})}&\left(\Ad_{(e,\mathrm{y}_\infty)}
\left(B_\infty^{-1}B_\infty\right)\right)\leq
\meas_{\mathbf{G}(\mathbb{R})}\left(B_{G_\infty}\left(R_G'\right)\right) \, ,\\
R_G'&=\min\left\{R_G,
\log\left(1+\frac{2\exp\left(\frac{R_G}{2}\right)R_V}{\|\mathrm{y}_\infty\|_2}\right)
\right\} \, .
\end{align*}
Notice that for large $\|\mathrm{y}_\infty\|_2$ the value of $\meas_{\mathbf{G}(\mathbb{R})}\left(B_{G_\infty}\left(R_G'\right)\right)$ is proportional to $\|\mathrm{y}_\infty\|_2^{-2}$ -- similarly to the $p$-adic case.
\end{remark}
\begin{proof}
From Definition \ref{defi:test-function-archimedean} it follows that $B_{G_\infty}(R_G)^{-1}=B_{G_\infty}(R_G)$ and $-B_{V,\infty}(R_V)=B_{V,\infty}(R_V)$.
The triangle inequality on the hyperbolic plane implies that $B_{G_\infty}(R_G)B_{G_\infty}(R_G)=B_{G_\infty}(2R_G)$ and from the Euclidean triangle inequality we deduce $B_{V,\infty}(R_V)+B_{V,\infty}(R_V)=B_{V,\infty}(2R_V)$. Moreover, $B_{G_\infty}(R_G).B_{V,\infty}(2R_V)=B_{V,\infty}(2\exp\left(\frac{R_G}{2}\right)R_V)$ and
\begin{align*}
B_\infty^{-1}B_\infty&= B_{G_\infty}(R_G)^{-1}\cdot \left[B_{G_\infty}(R_G)\times (B_{V_\infty}(R_V)-B_{V_\infty}(R_V))\right]\\
&\subset B_{G_\infty}(2R_G)\times B_{V,\infty}(2\exp\left(\frac{R_G}{2}\right)R_V) \, .
\end{align*}
We can then write
\begin{align*}
\Ad_{(e,\mathrm{y}_\infty)}\left(B_\infty^{-1}B_\infty\right)&\subset
\left\{(b,\mathrm{w}+\mathrm{y}_\infty-b.\mathrm{y}_\infty) \mid
b\in  B_{G_\infty}(2R_G), \mathrm{w}\in B_{V,\infty}(2\exp\left(\frac{R_G}{2}\right)R_V)
\right\}\\
&\Rightarrow
\meas_{\mathbf{G}(\mathbb{R})}\left(\Ad_{(e,\mathrm{y}_\infty)}\left(B_\infty^{-1}B_\infty\right)\right)\leq  \meas_{\mathbf{G}(\mathbb{R})}(B(2R_G,2\exp\left(\frac{R_G}{2}\right)R_V,\mathrm{y}_\infty)) \, ,\\
B(R_1,R_2,\mathrm{r})&\coloneqq \left\{b\in B_{G_\infty}(R_1) \mid b.\mathrm{r}-\mathrm{r}
\in B_{V,\infty}(R_2) \right\} \subset \mathbf{G}(\mathbb{R}) \, .
\end{align*}
The proof concludes by noticing that $B(R_1,R_2,\mathrm{r})\subset B_{G,\infty}(R_1)$ for all $R_1,R_2>0$ and $\mathrm{r}\in\mathbb{R}^2$. The more delicate estimate in the remark can be proven by evaluating the integral
\begin{equation*}
\int_{B_{G,\infty}(R_1)} \mathds{1}_{[0,R_2]}\left(\|b.\mathrm{r}-\mathrm{r}\|_2\right)
\dif \meas_{\mathbf{G}(\mathbb{R})}(b)
\end{equation*}
using the Cartan decomposition formula for the Haar measure.
\end{proof}
\begin{cor}\label{cor:AdYB}
Let $f=f_{\tau, R_G, R_V}=\mathds{1}_B$ be a Bowen ball test function. If $\tau>0$ then assume in addition $\mathrm{y}_{p_1}=0$.
Then for any $\mathrm{y}\in\mathbf{V}(\mathbb{A})$
\begin{align*}
\meas_{\mathbf{G}(\mathbb{A})}\left(\Ad_{(e,\mathrm{y})} (B^{-1}B) \right)
\ll  \ord_{\mathbf{V}(\mathbb{A}_f)}(\mathrm{y})^{-2}
\meas_{\mathbf{G}(\mathbb{R})}\left(B_{G_\infty}\left(2R_G\right)\right) p_1^{-4\tau} \, .
\end{align*}
\end{cor}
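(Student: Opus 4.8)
The plan is to exploit the product structure of the Bowen ball test function: factor the measure over places and apply the local estimates of Lemmas~\ref{lem:AdYvol-real} and~\ref{lem:AdYvol-padic} together with a single $\mathbf{SL}_2$-index computation. Recall that $B=B_\infty\times\prod_{v<\infty}B_v$ with $B_v=\mathbf{P}^1(\mathbb{Z}_v)$ for $v\neq\infty,p_1$, with $B_{p_1}=\mathbf{P}^1(\mathbb{Z}_{p_1})^{(-\tau,\tau)}$, and with $B_\infty=B_{G,\infty}(R_G)\times B_{V,\infty}(R_V)$. As this is a restricted product, $B^{-1}B=\prod_v B_v^{-1}B_v$ with $B_v^{-1}B_v=\mathbf{P}^1(\mathbb{Z}_v)$ for almost all $v$, and $\Ad_{(e,\mathrm{y})}$ acts place by place. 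Fixing a factorization $\meas_{\mathbf{G}(\mathbb{A})}=\prod_v\meas_{\mathbf{G}(\mathbb{Q}_v)}$, and recalling that for $S\subseteq\mathbf{P}^1(\mathbb{Q}_v)$ the symbol $\meas_{\mathbf{G}(\mathbb{Q}_v)}(S)$ abbreviates the measure of $\{g\in\mathbf{G}(\mathbb{Q}_v)\mid(g,0)\in S\}$ under the zero section $\mathbf{G}\hookrightarrow\mathbf{P}^1$ (as in the proofs of Proposition~\ref{prop:geom-non-archimedean-sum} and Lemma~\ref{lem:AdYvol-padic}), I would write
\begin{equation*}
\meas_{\mathbf{G}(\mathbb{A})}\left(\Ad_{(e,\mathrm{y})}(B^{-1}B)\right)=\prod_v\meas_{\mathbf{G}(\mathbb{Q}_v)}\left(\Ad_{(e,\mathrm{y}_v)}(B_v^{-1}B_v)\right),
\end{equation*}
a product whose factor at $v$ is $\meas_{\mathbf{G}(\mathbb{Q}_v)}(\mathbf{P}^1(\mathbb{Z}_v))$ for almost all $v$.

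Then I would estimate each local factor. At $v=\infty$, Lemma~\ref{lem:AdYvol-real} bounds the factor by $\meas_{\mathbf{G}(\mathbb{R})}\left(B_{G,\infty}(2R_G)\right)$. At a finite place $v\neq p_1$ the set $B_v^{-1}B_v=\mathbf{P}^1(\mathbb{Z}_v)$ is a subgroup, and Lemma~\ref{lem:AdYvol-padic} gives
\begin{equation*}
\meas_{\mathbf{G}(\mathbb{Q}_v)}\left(\Ad_{(e,\mathrm{y}_v)}\mathbf{P}^1(\mathbb{Z}_v)\right)\leq\frac{\meas_{\mathbf{G}(\mathbb{Q}_v)}\left(\mathbf{P}^1(\mathbb{Z}_v)\right)}{\ord_{\mathbf{V}(\mathbb{Q}_v)}(\mathrm{y}_v)^2\,(1-v^{-2})},
\end{equation*}
which is valid also when $\mathrm{y}_v\in\mathbf{V}(\mathbb{Z}_v)$, since then $\ord_{\mathbf{V}(\mathbb{Q}_v)}(\mathrm{y}_v)=1$ and $(1-v^{-2})^{-1}>1$. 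For $v=p_1$ I would distinguish two cases. If $\tau=0$ then $B_{p_1}=\mathbf{P}^1(\mathbb{Z}_{p_1})$ and the previous bound applies unchanged. If $\tau>0$ then $\mathrm{y}_{p_1}=0$ by hypothesis, while $B_{p_1}$ is a compact open subgroup, so $\Ad_{(e,0)}(B_{p_1}^{-1}B_{p_1})=B_{p_1}$; using $\mathbf{P}^1(\mathbb{Z}_{p_1})^{(-\tau,\tau)}=\mathbf{G}(\mathbb{Z}_{p_1})^{(-\tau,\tau)}\ltimes\mathbf{V}(\mathbb{Z}_{p_1})^{(-\tau,\tau)}$ and the zero-section convention, the factor equals $[\mathbf{G}(\mathbb{Z}_{p_1}):\mathbf{G}(\mathbb{Z}_{p_1})^{(-\tau,\tau)}]^{-1}\meas_{\mathbf{G}(\mathbb{Q}_{p_1})}(\mathbf{P}^1(\mathbb{Z}_{p_1}))$; in particular the vector part $\mathbf{V}(\mathbb{Z}_{p_1})^{(-\tau,\tau)}$ is invisible to $\meas_{\mathbf{G}(\mathbb{Q}_{p_1})}$. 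A routine index computation, identifying $\mathbf{G}(\mathbb{Z}_{p_1})^{(-\tau,\tau)}$ with the full preimage in $\mathbf{SL}_2(\mathbb{Z}_{p_1})$ of the diagonal torus modulo $p_1^{2\tau}$, gives $[\mathbf{G}(\mathbb{Z}_{p_1}):\mathbf{G}(\mathbb{Z}_{p_1})^{(-\tau,\tau)}]=p_1^{4\tau}(1+p_1^{-1})\geq p_1^{4\tau}$, hence the factor is $\leq p_1^{-4\tau}\meas_{\mathbf{G}(\mathbb{Q}_{p_1})}(\mathbf{P}^1(\mathbb{Z}_{p_1}))$.

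Finally I would multiply the local bounds. The auxiliary product $\prod_{v<\infty}(1-v^{-2})^{-1}=\zeta(2)$ converges, and $\prod_{v<\infty}\ord_{\mathbf{V}(\mathbb{Q}_v)}(\mathrm{y}_v)^{-2}=\ord_{\mathbf{V}(\mathbb{A}_f)}(\mathrm{y})^{-2}$ (when $\tau>0$ the factor at $p_1$ contributes the harmless $\ord_{\mathbf{V}(\mathbb{Q}_{p_1})}(0)^{-2}=1$), so the product of the local estimates is
\begin{equation*}
\meas_{\mathbf{G}(\mathbb{A})}\left(\Ad_{(e,\mathrm{y})}(B^{-1}B)\right)\leq\zeta(2)\Big(\prod_{v<\infty}\meas_{\mathbf{G}(\mathbb{Q}_v)}(\mathbf{P}^1(\mathbb{Z}_v))\Big)\,\ord_{\mathbf{V}(\mathbb{A}_f)}(\mathrm{y})^{-2}\,\meas_{\mathbf{G}(\mathbb{R})}\left(B_{G,\infty}(2R_G)\right)\,p_1^{-4\tau}.
\end{equation*}
The leading constant is finite and depends only on the fixed Haar measure, which is the assertion. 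The only real subtlety is this bookkeeping: keeping straight which local factor produces which power of $p_1$ and of $\ord_{\mathbf{V}(\mathbb{A}_f)}(\mathrm{y})$, and checking convergence of $\prod_{v<\infty}(1-v^{-2})^{-1}$ so that assembling the local estimates leaves an absolute implied constant. The $p_1$-adic index computation, though routine, is worth spelling out, since it explains why the exponent is $-4\tau$: only the $\mathbf{SL}_2$-part of the Bowen ball enters $\meas_{\mathbf{G}(\mathbb{Q}_{p_1})}$, whereas its vector part (responsible for an additional $p_1^{-2\tau}$ in the other terms of the trace formula) does not.
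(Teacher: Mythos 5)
Your proposal is correct and follows essentially the same route as the paper: factor $\meas_{\mathbf{G}(\mathbb{A})}$ over places, invoke Lemmas~\ref{lem:AdYvol-padic} and~\ref{lem:AdYvol-real} locally, control the auxiliary product by $\zeta(2)$, and reduce the $p_1$-factor to the index $[\mathbf{SL}_2(\mathbb{Z}_{p_1}):\mathbf{SL}_2(\mathbb{Z}_{p_1})^{(-\tau,\tau)}]$. The one cosmetic divergence is how that index is evaluated: you identify $\mathbf{G}(\mathbb{Z}_{p_1})^{(-\tau,\tau)}$ as the full preimage of the diagonal torus under reduction modulo $p_1^{2\tau}$ and count inside $\mathbf{SL}_2(\mathbb{Z}/p_1^{2\tau})$, whereas the paper identifies $\mathbf{SL}_2(\mathbb{Z}_{p_1})^{(-\tau,\tau)}$ as the pointwise stabilizer of a path of length $4\tau$ in the Bruhat--Tits tree and uses strong transitivity; both give $(p_1+1)p_1^{4\tau-1}$.
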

\begin{proof}
From Definition \ref{defi:Bowen-ball-test-function} and Lemmata \ref{lem:AdYvol-padic},  \ref{lem:AdYvol-real} we deduce
\begin{align*}
\meas_{\mathbf{G}(\mathbb{A})}\left(\Ad_{(e,\mathrm{y})} (B^{-1}B) \right)
&\ll  \meas_{\mathbf{G}(\mathbb{R})}\left(B_{G_\infty}\left(R_G'\right)\right) \meas_{\mathbf{G}(\mathbb{Q}_{p_1})}\left(\mathbf{P}^1(\mathbb{Z}_{p_1})^{(-\tau,\tau)}\right)\\
&\cdot
\prod_{p\mid \ord_{\mathbf{V}(\mathbb{A}_f)}(\mathrm{y})} \ord_{\mathbf{V}(\mathbb{Q}_p)}(\mathrm{y}_p)^{-2}(1-p^{-2})^{-1}\\
&\leq  \meas_{\mathbf{G}(\mathbb{R})}\left(B_{G_\infty}\left(R_G'\right)\right)
\meas_{\mathbf{G}(\mathbb{Q}_{p_1})}\left(\mathbf{G}(\mathbb{Z}_{p_1})^{(-\tau,\tau)}\right)
\ord_{\mathbf{V}(\mathbb{A}_f)}(\mathrm{y})^{-2} \zeta(2) \, .
\end{align*}
We need only show $\meas_{\mathbf{G}(\mathbb{Q}_{p_1})}\left(\mathbf{G}(\mathbb{Z}_{p_1})^{(-\tau,\tau)}\right)\ll p_1^{-4\tau}$.
This last inequality is easy to prove using $\mathscr{B}$ -- the Bruhat-Tits tree of $\mathbf{SL}_2(\mathbb{Q}_{p_1})$. Observe that if $\mathbf{SL}_2(\mathbb{Z}_{p_1})$ is the stabilizer of the vertex $x_0$ then $\mathbf{SL}_2(\mathbb{Z}_{p_1})^{(-\tau,\tau)}$ is the stabilizer of a path of length $4\tau$ centered at $x_0$. Using the strong transitivity of the action of $\mathbf{SL}_2(\mathbb{Q}_{p_1})$ on $\mathscr{B}$ it is easy to compute the index
\begin{equation*}
\left[\mathbf{SL}_2(\mathbb{Z}_{p_1}):\mathbf{SL}_2(\mathbb{Z}_{p_1})^{(-\tau,\tau)}\right]=
(p_1+1)p_1^{4\tau-1} \, .
\end{equation*}
\end{proof}

\begin{prop}\label{prop:final-strict-sequence}
Let $f=f_{\tau, R_G, R_V}=\mathds{1}_B$ be a Bowen ball test function and let $\{\mathcal{H}_i\subset\left[\mathbf{P}^1(\mathbb{A})\right]\}$ be a strict sequence of $K_\infty$-invariant homogeneous toral sets. If $\tau>0$ then assume in addition that all $\mathcal{H}_i$ are $A_{p_1}$-invariant. Denote by $\mu_i$ the periodic measure supported on $\mathcal{H}_i$.
Fix $\mathcal{Q}$ a \emph{compactly supported} probability measure on $\mathbf{V}(\mathbb{A})$ and if $\tau>0$ assume that $\mathrm{y}_{p_1}=0$ for $\mathcal{Q}$ almost every $\mathrm{y}$.  Denote by $\nu_\mathrm{y}$ the periodic measure on $\left[\mathbf{G}(\mathbb{A})(e,\mathrm{y})\right]\subset\left[\mathbf{P}^1(\mathbb{A})\right]$.

Set
\begin{align*}
X_{\min}&=\max\left\{\exp\left(-\frac{R_G}{2}\right)\|\mathrm{y}_\infty\|_2-R_V ,0\right\} \, ,\\
X_{\max}&=\exp\left(\frac{R_G}{2}\right)\|\mathrm{y}_\infty\|_2+R_V \, .
\end{align*}
Then
\begin{align*}
\limsup_{i\to\infty} \Cor\left(\mu_i,\int \nu_\mathrm{y} \dif\mathcal{Q}(\mathrm{y})\right)[f]&\ll
\meas_{\mathbf{G}(\mathbb{R})}\left(B_{G_\infty}\left(2R_G\right)\right)
\left(X_{\max}^2-X_{\min}^2\right)p_1^{-6\tau} \, .
\end{align*}
\end{prop}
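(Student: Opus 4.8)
I would reduce the statement to the single‑orbit inequality of Theorem~\ref{thm:Cor-main-inequality}, which already contains all the analytic substance (in particular the subconvex bound), and then integrate over $\mathcal{Q}$ and pass to the limit. First, since the cross‑correlation $\Cor(\lambda_1,\lambda_2)[f]$ of Definition~\ref{defi:cross-correlation} is linear in $\lambda_2$ and all integrands are non‑negative, Tonelli gives
\begin{equation*}
\Cor\Big(\mu_i,\int\nu_\mathrm{y}\dif\mathcal{Q}(\mathrm{y})\Big)[f]=\int\Cor(\mu_i,\nu_\mathrm{y})[f]\dif\mathcal{Q}(\mathrm{y})\, ,
\end{equation*}
so it suffices to bound $\Cor(\mu_i,\nu_\mathrm{y})[f]$ uniformly for $\mathrm{y}$ in the compact set $\supp\mathcal{Q}$. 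Next I would dispose of the trivial orbital integral: writing $f=\mathds{1}_B$ with $B=B_G\ltimes B_V$ compact, Proposition~\ref{prop:trivial-contribution-vanishes} gives $\RO_f(0)=0$ for the pair $(\mu_i,\nu_\mathrm{y})$ unless $l_i^{-1}.\mathrm{x}^i\in -B_V+B_G.\mathrm{y}$; the union of these sets over $\mathrm{y}\in\supp\mathcal{Q}$ is compact, while strictness (Definition~\ref{defi:strict-homogeneous}, case $r=1$) forces $l_i^{-1}.\mathrm{x}^i$ to leave every compact subset of $\mathbf{V}(\mathbb{A})$. Hence there is $i_0=i_0(\mathcal{Q},f)$ with $\RO_f(0)=0$ for all $i\geq i_0$ and all $\mathrm{y}\in\supp\mathcal{Q}$.

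\textbf{Applying the single‑orbit bound.} I would then invoke Theorem~\ref{thm:Cor-main-inequality} together with Corollary~\ref{cor:AdYB}. When $\tau>0$ the standing hypothesis that each $\mathcal{H}_i$ is $A_{p_1}$‑invariant is exactly the condition $(l_{i,p_1},\mathrm{x}^i_{p_1})^{-1}\mathbf{T}_i(\mathbb{Q}_{p_1})(l_{i,p_1},\mathrm{x}^i_{p_1})=A_{p_1}$ required there: $A_{p_1}$‑invariance means this conjugate of $\mathbf{T}_i(\mathbb{Q}_{p_1})$ contains $A_{p_1}$, and a one‑dimensional torus of $\mathbf{P}^1$ containing the split maximal torus $A_{p_1}$ must equal it; moreover $\mathrm{y}_{p_1}=0$ holds $\mathcal{Q}$‑a.e., and for $\tau=0$ there is nothing to check. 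Using $\ord_{\mathcal{H}_i}(l_i.\mathrm{y})=\ord_{\mathbf{V}(\mathbb{A}_f)}(\mathrm{y})$ (Remark~\ref{rem:torsion-order-translation}), the factor $\ord_{\mathcal{H}_i}(l_i.\mathrm{y})^2$ coming from Theorem~\ref{thm:Cor-main-inequality} cancels the factor $\ord_{\mathbf{V}(\mathbb{A}_f)}(\mathrm{y})^{-2}$ coming from Corollary~\ref{cor:AdYB}, leaving
\begin{equation*}
\Cor(\mu_i,\nu_\mathrm{y})[f]\ll\meas_{\mathbf{G}(\mathbb{R})}\big(B_{G_\infty}(2R_G)\big)\,p_1^{-4\tau}\big(\mathrm{MT}+\mathrm{ST}\big)\, ,
\end{equation*}
and $p_1^{-4\tau}\mathrm{MT}=(X_{\max}^2-X_{\min}^2)p_1^{-6\tau}$, which is already the shape of the desired bound.

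\textbf{Uniform control of the error term.} It remains to show that $\mathrm{ST}$ is negligible \emph{uniformly} in $\mathrm{y}\in\supp\mathcal{Q}$. On that compact set $X_{\max}$ is bounded, the ratio $\alpha=(X_{\min}/X_{\max})^2$ stays in a compact subinterval of $[0,1)$ (always $<1$ since $R_V>0$), so $\Phi(\alpha)$ is bounded; and $\ord_{\mathbf{V}(\mathbb{A}_f)}(\mathrm{y})$ is bounded because a compact subset of $\mathbf{V}(\mathbb{A}_f)$ lies in $M^{-1}\mathbf{V}(\hat{\mathbb{Z}})$ for some $M$. Combining this with $f_i\leq|D_i|^{1/2}$ (the conductor) and the trivial estimate $\big(\ord_{\mathcal{H}_i}(\mathrm{x}^i)/\gcd(\ord_{\mathcal{H}_i}(\mathrm{x}^i),f_i)\big)^{-1/4-\delta+\varepsilon}\leq1$, the bound for $\mathrm{ST}$ in Theorem~\ref{thm:Cor-main-inequality} with the choice $\varepsilon=\delta/3$ yields $p_1^{-4\tau}|\mathrm{ST}|\ll_{\mathcal{Q},R_G,R_V,p_1,\tau}|D_i|^{-\delta/2}$ uniformly on $\supp\mathcal{Q}$. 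Thus for $i\geq i_0$,
\begin{equation*}
\Cor(\mu_i,\nu_\mathrm{y})[f]\ll\meas_{\mathbf{G}(\mathbb{R})}\big(B_{G_\infty}(2R_G)\big)\Big((X_{\max}^2-X_{\min}^2)p_1^{-6\tau}+|D_i|^{-\delta/2}\Big)
\end{equation*}
uniformly in $\mathrm{y}$; integrating against the probability measure $\mathcal{Q}$ and letting $i\to\infty$ (so $|D_i|\to\infty$ by strictness) kills the error term and gives the claim, with $\int(X_{\max}^2-X_{\min}^2)\dif\mathcal{Q}(\mathrm{y})\leq X_{\max}^2-X_{\min}^2$ (and $=R_V^2$ when $\mathrm{y}_\infty=0$, the case relevant to Corollary~\ref{cor:structure-of-limit-r=1}).

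\textbf{Main obstacle.} Essentially all the difficulty lies upstream, inside Theorem~\ref{thm:Cor-main-inequality}; here the only points that genuinely need care are the \emph{uniformity in $\mathrm{y}$} of the two vanishing/smallness statements — that $\RO_f(0)=0$ for all large $i$ and all $\mathrm{y}\in\supp\mathcal{Q}$, and that $\mathrm{ST}\to0$ uniformly — both of which rest on compactness of $\supp\mathcal{Q}$ and on the cancellation of the $\ord_{\mathbf{V}(\mathbb{A}_f)}(\mathrm{y})^{\pm2}$ factors between Theorem~\ref{thm:Cor-main-inequality} and Corollary~\ref{cor:AdYB}. Everything else is bookkeeping.
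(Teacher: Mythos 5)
Your proposal matches the paper's proof step for step: linearity and Tonelli to exchange $\Cor$ and integration over $\mathcal{Q}$, strictness plus Proposition~\ref{prop:trivial-contribution-vanishes} to kill $\RO_f(0)$ for $i$ large, Theorem~\ref{thm:Cor-main-inequality} for the pointwise bound, compactness of $\supp\mathcal{Q}$ for the uniform decay of $\mathrm{ST}$, and Corollary~\ref{cor:AdYB} (with the cancellation of the $\ord_{\mathbf{V}(\mathbb{A}_f)}(\mathrm{y})^{\pm 2}$ factors and the product $p_1^{-2\tau}\cdot p_1^{-4\tau}=p_1^{-6\tau}$) to land on the stated inequality. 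Your verification that $A_{p_1}$-invariance gives the exact torus condition needed in Theorem~\ref{thm:Cor-main-inequality} when $\tau>0$ is a small piece of bookkeeping the paper leaves implicit, but it is correct and all the remaining steps are identical in substance.
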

\begin{proof}
Using the linearity of cross-correlation observe that for all $i$
\begin{equation*}
\Cor\left(\mu_i,\int \nu_\mathrm{y} \dif\mathcal{Q}(\mathrm{y})\right)[f]=
\int \Cor\left(\mu_i, \nu_\mathrm{y} \right)[f] \dif\mathcal{Q}(\mathrm{y}) \, .
\end{equation*}
We write $\mathcal{H}_i=\left[\mathbf{T}_i(\mathbb{A})(l_i,\mathrm{x}_i)\right]$ and bound the integral over cross-correlation using Theorem \ref{thm:Cor-main-inequality}.
\begin{align}\label{eq:integral-correlation-inequaltity}
&\int \Cor\left(\mu_i, \nu_\mathrm{y} \right)[f] \dif\mathcal{Q}(\mathrm{y})-
\int \RO^i_{f}(0)\dif\mathcal{Q}(\mathrm{y})\\\nonumber
\ll
&\int \meas_{\mathbf{G}(\mathbb{A})} \left(\Ad_{(e,\mathrm{y})} (B^{-1}B) \right)
\ord_{\mathbf{V}(\mathbb{A}_f)}(\mathrm{y})^2 (\mathrm{MT}+\mathrm{ST})\dif\mathcal{Q}(\mathrm{y}) \, ,\\\nonumber
\mathrm{MT}&\coloneqq \left(X_{\max}^2-X_{\min}^2 \right)p_1^{-2\tau} \, ,\\\nonumber
\left|\mathrm{ST}\right|\ &\ll_\varepsilon \Phi\left(\alpha\right)
 (f_i\ord_{\mathbf{V}(\mathbb{A}_f)}(\mathrm{y}))^\varepsilon
|D_i|^{-\delta+\varepsilon}  X_{\max}\,p_1^{-\tau}
\left(\frac{\ord_{\mathcal{H}_i}({\mathrm{x}}^i)}{\gcd(\ord_{\mathcal{H}_i}({\mathrm{x}}^i),f_i)}\right)^{-1/4-\delta+\varepsilon} \, .
\end{align}
where $D_i=D_i^\mathrm{fund}f_i^2$ is the discriminant of $\mathcal{H}_i$
and $\alpha=X_{\min}^2/X_{\max}^2$. Write $\supp f =B=B_G\times B_V$ where $B_G\subset\mathbf{G}(\mathbb{A})$ and $B_V\subset\mathbf{V}(\mathbb{A})$. From Proposition \ref{prop:trivial-contribution-vanishes} we deduce that $\RO^i_{f}(0)=0$ for all $\mathrm{y}\in \supp\mathcal{Q}$ whenever $l_i.\mathrm{x}^i\not\in -B_V+B_G.\supp\mathcal{Q}$. Thus because the sequence $\mathcal{H}_i$ is strict $\int \RO^i_{f}(0) \dif\mathcal{Q}(\mathrm{y})=0$ for $i\gg_{\supp \mathcal{Q}, R_G, R_V}1$.

The strictness assumption also implies that $|D_i|\to_{i\to\infty}\infty$. Moreover, $$\Phi(\alpha)\left(\ord_{\mathbf{V}(\mathbb{A}_f)}(\mathrm{y})\right)^\varepsilon X_{\max}$$ is a continuous function of $\mathrm{y}$ hence it is bounded on $\supp\mathcal{Q}$. Thus $|\mathrm{ST}|\to_{i\to\infty}0$ uniformly in $\supp \mathcal{Q}$. These facts in conjunction with \eqref{eq:integral-correlation-inequaltity} imply that
\begin{equation*}
\limsup_{i\to\infty}\Cor\left(\mu_i,\int \nu_\mathrm{y} \dif\mathcal{Q}(\mathrm{y})\right)[f]
\ll \mathrm{MT}\int \meas_{\mathbf{G}(\mathbb{A})} \left(\Ad_{(e,\mathrm{y})} (B^{-1}B) \right)
\ord_{\mathbf{V}(\mathbb{A}_f)}(\mathrm{y})^2 \dif\mathcal{Q}(\mathrm{y}) \, .
\end{equation*}
The claim follows by substituting Corollary \ref{cor:AdYB} into the inequality above.
\end{proof}

\begin{proof}[Proof of Theorem \ref{thm:main}]
Due to Corollary \ref{cor:reduction-to-r=1} it is enough to consider the case $r=1$. Let $\left\{\mathcal{H}_i\subset \left[\mathbf{P}^1(\mathbb{A})\right]\right\}$ be a strict sequence of homogeneous toral sets such that the congruence conditions \eqref{eq:congruence-1} and \eqref{eq:congruence-2} of Theorem \ref{thm:main} are satisfied. Denote by $\mu_i$ the periodic measure supported on $\mathcal{H}_i$. By Theorem \ref{thm:rigidity-r=1} there is a pre-compact sequence $\{\xi_i\}_i\subset \mathbf{P}^1(\mathbb{A})$ such that the periodic measure $\xi_{i.*}\mu_i$ on the homogeneous toral set $\mathcal{H}_i\xi_i$ is $A_{p_1}\times A_{p_2}$ invariant for all $i$. Let $\mu$ be any weak-$*$ limit point of $\{\mu_i\}_i$. Corollary \ref{cor:structure-of-limit-r=1} implies that there is some $\xi\in\overline{\{\xi_i\}_i}$ and $c\geq 0$ such that
\begin{equation}\label{eq:limit-meas-decompose}
\xi_*.\mu=(1-c)\meas_{\mathbf{P}^1}+c\int \nu_\mathrm{y} \dif\mathcal{P}(y) \, ,
\end{equation}
where $\nu_\mathrm{y}$ is the periodic measure supported on $\left[\mathbf{G}(\mathbb{A})(e,\mathrm{y})\right]$ and $\mathcal{P}$ is a probability measure on $\mathbf{V}(\mathbb{A})$ such that $\mathrm{y}_{p_1}=\mathrm{y}_{p_2}=0$ for $\mathcal{P}$ almost all $\mathrm{y}$. The claim would follow if we show that $c=0$.

For any $\tau\in\mathbb{Z}_{\geq 0}$, $R_V,R_G>0$ let $f=f_{\tau,R_G,R_V}=\mathds{1}_{B(\tau)}$ be the Bowen ball test function from Definition \ref{defi:Bowen-ball-test-function}. Fix  a compact set $C_V\subset\mathbf{V}(\mathbb{A})$ large enough so that $\mathcal{P}(C_V)>0$ and define a new compactly supported probability measure on $\mathbf{V}(\mathbb{A})$ by conditioning on $C_V$
\begin{equation*}
\mathcal{Q}(A)\coloneqq \frac{\mathcal{P}(C_V\cap A)}{\mathcal{P}(C_V)} \, .
\end{equation*}
The function $f$ is the characteristic function of an open-set, hence weak-$*$ convergence and \eqref{eq:limit-meas-decompose} imply
\begin{align}\nonumber
c \mathcal{P}(C_V)\Cor\left(\int \nu_\mathrm{y}\dif\mathcal{Q}(\mathrm{y}),\int \nu_\mathrm{y}\dif\mathcal{Q}(\mathrm{y})\right)\left[f\right]
&\leq\Cor\left(\xi_*.\mu,\int
 \nu_\mathrm{y}\dif\mathcal{Q}(\mathrm{y})\right)\left[f\right]\\
&\leq \limsup_{i\to\infty} \Cor\left(\xi_{i,*}.\mu_i,\int \nu_\mathrm{y}\dif\mathcal{Q}(\mathrm{y})\right) \, .
\label{eq:integral-correlation}
\end{align}
We would like to apply Proposition \ref{prop:final-strict-sequence} to the $\limsup$ above. We need to show that the the sequence of $K_\infty$-invariant homogeneous toral sets $\mathcal{H}_i\xi_i$ is strict. It is already known to be $A_{p_1}$-invariant.
Write $\mathcal{H}_i\xi_i=\left[\mathbf{T}_i(\mathbb{A})(l_i',{\mathrm{x}'}^i)\right]$ and let $D_i'={D'}_i^\mathrm{fund}f_i'^2$ be the discriminant of the homogeneous toral set $\mathcal{H}_i\xi_i$.
We compute the discriminant and torsion order of
$\mathcal{H}_i\xi_i$ in terms of the discriminant and torsion order of $\mathcal{H}_i$. Recall from the proof of Theorem \ref{thm:rigidity-r=1} that $\xi_i$ is non-trivial only at the places $p_1$ and $p_2$. This immediately implies that $D_i$ and $D_i'$ agree at all primes except $p_1$, $p_2$; the same holds for the torsion order.

At the primes $p=p_1,p_2$ we know that $\mathcal{H}_i\xi_i$ is invariant under $A_p$. Because $A_p$ intersect $\mathbf{G}(\mathbb{Z}_p)$ at a maximal compact-open subgroup of $A_p$ we see from Definition \ref{defi:order} that the local order of $\mathcal{H}_i\xi_i$ at $p=p_1,p_2$ is maximal. Hence
\begin{equation*}
D_i'=D_i\prod_{p\in\{p_1,p_2\}}p^{-2\val_p(f_i)} \, ,
\end{equation*}
where $f_i$ is the conductor of the order attached to $\disc(\mathcal{H}_i)$. The congruence assumption \eqref{eq:congruence-1} and the strictness condition for $\left\{\mathcal{H}_i\right\}_i$ now imply that $D_i'\to_{i\to\infty}\infty$.
The fact that $\mathcal{H}_i\mathrm{\xi}_i$ is $A_{p_1}\times A_{p_2}$-invariant implies that the torsion order of  $\mathcal{H}_i\mathrm{\xi}_i$ is $1$ at the primes $p_1$, $p_2$ and we have
\begin{equation*}
\ord_{\mathcal{H}_i\xi_i}({\mathrm{x}'}^i)=N_i\prod_{p\in\{p_1,p_2\}}p^{-\val_p(N_i)} \, .
\end{equation*}
Again congruence condition \eqref{eq:congruence-2} now implies $\ord_{\mathcal{H}_i\xi_i}({\mathrm{x}'}^i)\to_{i\to\infty}\infty$, hence $l_i.{\mathrm{x}'}^i$ escapes all compact set in $\mathbf{V}(\mathbb{A})$ when $i\to\infty$ and the sequence is strict as claimed. We conclude using Proposition \ref{prop:final-strict-sequence}
\begin{equation*}
\limsup_{i\to\infty} \Cor\left(\xi_*.\mu,\int
 \nu_\mathrm{y}\dif\mathcal{Q}(\mathrm{y})\right) \ll_{C_V,R_G,R_V} p_1^{-6\tau} \, .
\end{equation*}
Assume in contradiction $c>0$ then the inequality above and \eqref{eq:integral-correlation} imply
\begin{equation*}
\Cor\left(\int \nu_\mathrm{y}\dif\mathcal{Q}(\mathrm{y}),\int \nu_\mathrm{y}\dif\mathcal{Q}(\mathrm{y})\right)\left[f\right]
\ll_{C_V,R_G,R_V} c^{-1} p_1^{-6\tau} \, .
\end{equation*}
The definitions of the cross-correlation and the Bowen ball test function \ref{defi:cross-correlation}, \ref{defi:Bowen-ball-test-function} imply immediately that
\begin{align}\label{eq:Bowen-balls-upper-bound}
\int \nu_\mathrm{y}\dif\mathcal{Q}(\mathrm{y}) &\times \int \nu_\mathrm{y}\dif\mathcal{Q}(\mathrm{y}) \left(\left\{x,y\in \left[\mathbf{P}^1(\mathbb{A})\right]^2 \mid y\in x B(\tau)
\right\}\right)\\
& \leq \Cor\left(\int \nu_\mathrm{y}\dif\mathcal{Q}(\mathrm{y}),\int \nu_\mathrm{y}\dif\mathcal{Q}(\mathrm{y})\right)\left[f\right]
\ll_{C_V,R_G,R_V} c^{-1} p_1^{-6\tau} \, ,\nonumber
\end{align}
where $B(\tau)$ is the support of $f$.
Recall from Definition \ref{defi:bowen-ball} that $B(\tau)\subset\mathbf{P}^1(\mathbb{A})$ is a Bowen ball of level $2\tau$ for $a\in A_{p_1}$; where $a$ generates $A_{p_1}/A_{p_1}^\circ\simeq \mathbb{Z}$. There is a direct relation between the decay rate of Bowen balls for $a$ as proven in \eqref{eq:Bowen-balls-upper-bound} and the Kolmogorov-Sinai entropy $h_{\int \nu_\mathrm{y}\dif\mathcal{Q}}(a)$. A standard adaption of \cite[Proposition 3.2]{ELMVPeriodic} to the $S$-arithmetic\footnote{This statement is closely related to the Brin-Katok theorem \cite{BrinKatok}.} setting says that \eqref{eq:Bowen-balls-upper-bound} implies
\begin{equation}\label{eq:P'-entropy-lower-bound}
h_{\int \nu_\mathrm{y}\dif\mathcal{Q}(\mathrm{y})}(a)\geq 3\log p_1 \, .
\end{equation}
On the other hand, entropy is a linear function of the measure hence
\begin{equation}\label{eq:P'-entropy-upper-bound}
h_{\int \nu_\mathrm{y}\dif\mathcal{Q}(\mathrm{y})}(a)
=\int h_{\nu_\mathrm{y}}(a) \dif\mathcal{Q}(\mathrm{y})\leq 2 \log p_1 \, .
\end{equation}
To prove the last inequality we use the fact that for every $\mathrm{y}\in\mathbf{V}(\mathbb{A})$ with $\mathrm{y}_{p_1}=0$ the periodic measure is a measure of maximal entropy on  the measurable dynamical system $\left[\mathbf{G}(\mathbb{A})(e,\mathrm{y})\right]$ with the left action of $a$; and that the entropy of the periodic measure is $2\log p_1$. Notice that this dynamical system is isomorphic to $\left[\mathbf{G}(\mathbb{A})\right]$ with the left action of $a$. This isomorphism sends the periodic measure to the Haar measure. The fact that the Haar measure has maximal entropy and its value is $2 \log p_1$ is a corollary of the relation between entropy and leaf-wise measures in the stable direction, cf.\ \cite[Theorem 7.9]{Pisa}. The inconsistent inequalities \eqref{eq:P'-entropy-lower-bound} and \eqref{eq:P'-entropy-upper-bound} contradict the assumption $c>0$.
\end{proof}
\begin{proof}[Proof of Theorem \ref{thm:main2}]
Let $f=f_{\tau=0, R_G, R_V}=\mathds{1}_{B(R_G,R_V)}$ be a Bowen ball test function with $\tau=0$. Set $\mathcal{Q}$ to be the point mass at $\mathrm{y}$
then Proposition \ref{prop:final-strict-sequence} implies
\begin{align*}
\Cor&\left(\mu,\nu_\mathrm{y} \right)[f]\leq
\limsup_{i\to\infty} \Cor\left(\mu_i,\nu_\mathrm{y} \right)[f]\ll
\meas_{\mathbf{G}(\mathbb{R})}\left(B_{G_\infty}\left(2R_G\right)\right)
\left(X_{\max}^2-X_{\min}^2 \right)
\\
&\ll \meas_{\mathbf{G}(\mathbb{R})}\left(B_{G_\infty}\left(2R_G\right)\right)
\begin{cases}
2\|\mathrm{y}_\infty\|_2^2\sinh(R_G)+4\|\mathrm{y}_\infty\|_2 \cosh(R_G/2)R_V &
\|\mathrm{y}_\infty\|_2\geq \exp\left(\frac{R_G}{2}\right)R_V\\
\left(\exp\left(\frac{R_G}{2}\right)\|\mathrm{y}_\infty\|_2+R_V\right)^2 &\|\mathrm{y}_\infty\|_2<  \exp\left(\frac{R_G}{2}\right)R_V
\end{cases} \, .
\end{align*}
In case $\mathrm{y}_\infty=0$ we are in the second case. This proves the second claimed statement.

To see that $\mu\left(\left[\mathbf{G}(\mathbb{A})(e,\mathrm{y})\right]\right)=0$ we observe first
\begin{equation}\label{eq:Cor(m,y)-asymp}
\lim_{(R_G,R_V)\to 0} \frac{\Cor\left(\mu,\nu_\mathrm{y} \right)[f_{0,R_G, R_V}]}{ \meas_{\mathbf{G}(\mathbb{R})}\left(B_{G_\infty}\left(2R_G\right)\right)}=0 \, .
\end{equation}
Assume $\mu\left(\left[\mathbf{G}(\mathbb{A})(e,\mathrm{y})\right]\right)>0$.
We will establish a contradiction with the fact that on $\left[\mathbf{G}(\mathbb{A})(e,\mathrm{y})\right]$ the uniform $\nu_\mathrm{y}$ mass of \emph{any} archimedean ball decays with the same rate when the center is restricted to a compact set. The latter restriction is required to avoid non-injectivity problems at the cusp.

Let $C\subset \left[\mathbf{G}(\mathbb{A})(e,\mathrm{y})\right]$ be a compact subset such that $\mu(C)>0$. For $R_G, R_V \ll_{C,\mathrm{y}} 1$ the quotient map\footnote{A restriction of the quotient map $\mathbf{P}^1(\mathbb{A})\to \left[\mathbf{P}^1(\mathbb{A})\right]$.} $\mathbf{G}(\mathbb{A})(e,\mathrm{y})\to\left[\mathbf{G}(\mathbb{A})(e,\mathrm{y})\right]$ is injective on $\mathbf{G}(\mathbb{A})(e,\mathrm{y})\cap x B(R_G,R_V)$ for all $x\in C$. The latter set contains an archimedean ball around $x\in\mathbf{G}(\mathbb{A})(e,\mathrm{y})$ of radius $\gg_{\mathrm{y}} R_G$. In particular, if $R_G, R_V \ll_{C,\mathrm{y}} 1$ then there is some $K(\mathrm{y})\geq 0$ such that $\nu_\mathrm{y}\left(x B(R_G,R_V)\right) \geq \meas_{\mathbf{G}(\mathbb{R})}(B_{G,\infty}(K(\mathrm{y})R_G))$ for all $x\in C$. This implies for $R_G, R_V \ll_{C,\mathrm{y}} 1$
\begin{align*}
\Cor\left(\mu,\nu_\mathrm{y} \right)[f_{0,R_G, R_V}]&\geq
\int_{C} \nu_\mathrm{y} \left(x\left(B_{G,\infty}(R_G)\times B_{V,\infty}(R_V)\right)\cdot \mathbf{P}^1(\hat{\mathbb{Z}})\right) \dif \mu(x)\\
&\geq \mu(C) \meas_{\mathbf{G}(\mathbb{R})}(B_{G,\infty}(K(\mathrm{y})R_G)) \, .
\end{align*}
This contradicts \eqref{eq:Cor(m,y)-asymp}.
\end{proof}
\subsection{Equidistribution of Genus Galois Orbits}
\begin{proof}[Proof of Theorem \ref{thm:genus-equidistribution}]
For an imaginary quadratic extension $E/\mathbb{Q}$ we need to compute
the preimage of $\Gal(E^\mathrm{ab}/\mathbb{Q}^\mathrm{ab})$ under the Artin reciprocity map $\lfaktor{E^\times}{\mathbb{A}_E^\times}\to \Gal(E^\mathrm{ab}/E)$. The argument uses the following commutative diagram of class field theory
\begin{center}
\begin{tikzcd}
\lfaktor{E^\times}{\mathbb{A}_E^\times} \arrow[r]\arrow[dd,"\Nr"] & \Gal(E^\mathrm{ab}/E)\arrow[d] \\
& \Gal(\mathbb{Q}^\mathrm{ab}/E)\arrow[d, hook] \\
\lfaktor{\mathbb{Q}^\times}{\mathbb{A}^\times} \arrow[r]&  \Gal(\mathbb{Q}^\mathrm{ab}/\mathbb{Q})
\end{tikzcd}
\end{center}
where the horizontal maps are reciprocity maps, the left vertical map is the norm map, the top right vertical map is restriction map and the bottom right vertical map is an inclusion. The Hasse norm theorem implies that $\lfaktor{E^{(1)}}{\mathbb{A}_E^{(1)}}$ is the kernel of the left vertical map while Galois theory says that $\Gal(E^\mathrm{ab}/\mathbb{Q}^\mathrm{ab})$ is the kernel of the right vertical map. As the diagram commutes the preimage
of $\Gal(E^\mathrm{ab}/\mathbb{Q}^\mathrm{ab})$ is $\lfaktor{E^{(1)}}{\mathbb{A}_E^{(1)}}$.

Combining this fact with the main theorem of complex multiplication for elliptic curves, cf.\ \cite[Theorem 5.4]{Shimura}, we see that a genus orbit of a special point $\Gal(\bar{\mathbb{Q}}/\mathbb{Q}^\mathrm{ab}).x_i$ is a $K_\infty$-invariant homogeneous toral set $\mathcal{H}_i\subset\left[\mathbf{P}^r(\mathbb{A})\right]$. Proposition \ref{prop:complex-torus-cm} implies that the endomorphism ring of a CM elliptic curve in the genus orbit is exactly the quadratic order from \S\ref{sec:order}. In particular, the strictness assumption from Definition \ref{defi:strict-galois} implies $|\disc(\mathcal{H}_i)|\to\infty$ and congruence condition \eqref{eq:congruence-1} follows from the congruence assumptions in Theorem \ref{thm:genus-equidistribution}. The second part of the strictness assumption of Definition \ref{defi:strict-homogeneous} follows from the strictness assumption \ref{defi:strict-galois} using the map $\jmath_{/\Lambda}$ from \S \ref{sec:torsion-pts} and the characterization of strictness using torsion order in Definition \ref{defi:strict-homogeneous}. The congruence condition \eqref{eq:congruence-2} follows as well from the congruence assumptions in Theorem \ref{thm:genus-equidistribution} using the map $\jmath_{/\Lambda}$.

Theorem \ref{thm:main} for the sequence $\left\{\mathcal{H}_i\right\}_i$ implies the claim.
\end{proof}
\begin{proof}[Proof of Theorem \ref{thm:genus-non-split}]
Torsion sections of $\mathcal{E}$ are quotient of homogeneous sets of the form $\left[\mathbf{G}(\mathbb{A})(e,\mathrm{y})\right]$ where $\mathrm{y}_\infty=0$ and $\mathrm{y}_f\in\mathbf{V}(\mathbb{A}_f)\cap \mathbf{V}(\mathbb{Q})$. The claim now
follows from Theorem \ref{thm:main2} by the argument in the proof of Theorem \ref{thm:genus-equidistribution}.
\end{proof}

\section{Joint Equidistribution of Points on Spheres and Orthogonal Grids}\label{sec:joint}
We begin by describing the orthogonal grid construction using an intersection of two periodic orbits in $\lfaktor{\mathbf{SL}_3(\mathbb{Z})}{\mathbf{SL}_3(\mathbb{R})}$. The benefit of this description is that it introduces the toral action naturally.

We carry the notations from the introduction \S \ref{intro:grids}. Recall that
\begin{equation*}
\mathcal{H}_D\coloneqq \left\{y\in\mathbb{Z}_\mathrm{primitive}^3 \mid \langle y, y\rangle=D \right\}\, .
\end{equation*}
For any $y\in\mathbb{Z}^3$ we denote by $y^\perp$ the affine variety over $\mathbb{Q}$ representing the $2$-dimensional rational linear subspace orthogonal to $y$. Moreover, $y^\perp(\mathbb{Z})\coloneqq \left\{z\in \mathbb{Z}^3 \mid \langle z, y\rangle=0 \right\}$.
For any $y\in\mathscr{H}_D$ we fix an integral point $y^1\in\mathbb{Z}^3$ such that $\langle y^1, y \rangle=1$. This point is not unique, but the coset $y^1+y^\perp(\mathbb{Z})$ is uniquely defined.

\begin{defi}
Denote by $\Theta\colon \mathbf{SL}_3\to\mathbf{SL}_3$ the Cartan involution $g\mapsto \tensor[^t]{g}{^{-1}}$, the fixed points of which are $\mathbf{SO}_3$. We use the notation $\tensor[^\Theta]{g}{}=\tensor[^t]{g}{^{-1}}$ for the action of $\Theta$ on $\mathbf{SL}_3$.
Fix $x\in\mathbb{Z}^3_\mathrm{primitive}$ with $\langle x,x\rangle =D\in\mathbb{N}$ and define
\begin{align*}
\mathbf{T}&\coloneqq \Stab_{\mathbf{SO}_3}(x) \, ,\\
\mathbf{H}&\coloneqq \Stab_{\mathbf{SL}_3}(x) \, ,\\
\tensor[^\Theta]{\mathbf{H}}{} &\coloneqq \Theta(\mathbf{H}) \, .
\end{align*}
Notice that $\Theta$ acts trivially on $\mathbf{T}$ and $\mathbf{T}<\mathbf{H}\cap \tensor[^\Theta]{\mathbf{H}}{}$. The group $\mathbf{T}$ is a rank-$1$ torus defined over $\mathbb{Q}$ and anisotropic over $\mathbb{R}$.
\end{defi}

We will study the following two periodic orbits and see how their intersections gives rise to the orthogonal grid construction.
\begin{equation*}
\left[\mathbf{SO}_3(\mathbb{R})\right],\left[\tensor[^\Theta]{\mathbf{H}}{}(\mathbb{R})\right]\subset \lfaktor{\mathbf{SL}_3(\mathbb{Z})}{\mathbf{SL}_3(\mathbb{R})} \, .
\end{equation*}

We begin by discussing the orbit of the group $\tensor[^\Theta]{\mathbf{H}}{}$ and its relation to lattice cosets in the orthogonal plane.

\subsection{The Periodic Orbit of the Special Affine Group}
\begin{prop}\label{prop:Htheta-ASL2}\hfill
\begin{enumerate}
\item
For any $y\in\mathbb{Q}^3$ the group $\tensor[^\Theta]{\mathbf{H}}{}$ stabilizes the affine plane $y+x^\perp=\langle y,x \rangle x+x^\perp$. If $\langle y,x\rangle\neq 0$ then any affine $\mathbb{Q}$-isomorphism of $y+x^\perp$ with the standard affine $2$-plane induces an isomorphism of $\tensor[^\Theta]{\mathbf{H}}{}$ with $\mathbf{ASL}_2$.

If $\langle y,x \rangle=0$ then the unipotent radical of $\tensor[^\Theta]{\mathbf{H}}{}$ acts trivially on $y+x^\perp=x^\perp$ and a linear $\mathbb{Q}$-isomorphism of $x^\perp$ with the affine $2$-plane induces and isomorphism of $\tensor[^\Theta]{\mathbf{H}}{}/\operatorname{\mathcal{R}_u}\left(\tensor[^\Theta]{\mathbf{H}}{}\right)$ with $\mathbf{SL}_2$

\item
Define
$$\tensor[^\Theta]{\mathbf{H}}{}(\mathbb{Z}) \coloneqq \mathbf{SL}_3(\mathbb{Z})\cap \tensor[^\Theta]{\mathbf{H}}{}(\mathbb{R})$$ then $\tensor[^\Theta]{\mathbf{H}}{}(\mathbb{Z})$ is a lattice in $\tensor[^\Theta]{\mathbf{H}}{}(\mathbb{R})$. In particular, the set
\begin{equation*}
\left[\tensor[^\Theta]{\mathbf{H}}{}(\mathbb{R})\right]\subset \lfaktor{\mathbf{SL}_3(\mathbb{Z})}{\mathbf{SL}_3(\mathbb{R})}
\end{equation*}
is a closed periodic orbit of the group $\tensor[^\Theta]{\mathbf{H}}{}(\mathbb{R})$.
\end{enumerate}
\end{prop}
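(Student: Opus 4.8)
The plan is to first make $\tensor[^\Theta]{\mathbf{H}}{}$ explicit. Since $\Theta(g)=\tensor[^t]{g}{^{-1}}$ is an algebraic automorphism of $\mathbf{SL}_3$ and $\mathbf{H}=\{g:gx=x\}$, one has $h\in\tensor[^\Theta]{\mathbf{H}}{}$ if and only if $\tensor[^t]{h}{}x=x$. For such $h$ and any $z\in\mathbb{Q}^3$, $\langle hz,x\rangle=\langle z,\tensor[^t]{h}{}x\rangle=\langle z,x\rangle$, so $h$ preserves every affine hyperplane $\{z:\langle z,x\rangle=c\}$; taking $c=\langle y,x\rangle$ gives the first assertion, since $\{z:\langle z,x\rangle=\langle y,x\rangle\}=y+x^\perp$ (and, as $\langle x,x\rangle=D\neq 0$, this also equals $\tfrac{\langle y,x\rangle}{D}x+x^\perp$).

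Next I would read off the group structure from a well-chosen $\mathbb{Q}$-basis of $\mathbb{Q}^3$. If $\langle y,x\rangle\neq 0$, take a basis $(y,b_2,b_3)$ with $b_2,b_3\in x^\perp$; then $\langle\,\cdot\,,x\rangle$ is $\langle y,x\rangle$ times the first coordinate, $y+x^\perp=\{a_1=1\}$, and $\tensor[^\Theta]{\mathbf{H}}{}$ is exactly the set of matrices with first row $(1,0,0)$, i.e.\ of block shape $\left(\begin{smallmatrix}1&0\\w&A\end{smallmatrix}\right)$ with column $w\in\Ga^{\times2}$ and $A\in\mathbf{M}_2$; the condition $\det=1$ forces $A\in\mathbf{SL}_2$, and block multiplication $\left(\begin{smallmatrix}1&0\\w&A\end{smallmatrix}\right)\left(\begin{smallmatrix}1&0\\u&B\end{smallmatrix}\right)=\left(\begin{smallmatrix}1&0\\w+Au&AB\end{smallmatrix}\right)$ realizes $\tensor[^\Theta]{\mathbf{H}}{}\simeq\mathbf{SL}_2\ltimes\Ga^{\times2}=\mathbf{ASL}_2$ with $\mathbf{SL}_2$ acting standardly. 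The restriction to $y+x^\perp=\{a_1=1\}$ of such a matrix is the affine map $z'\mapsto Az'+w$ on $\mathbb{Q}^2$, giving a homomorphism $\tensor[^\Theta]{\mathbf{H}}{}\to\operatorname{Aff}(y+x^\perp)$; it is injective (a trivial action forces $A=\mathrm{Id}$, $w=0$) with image all determinant-one affine maps, hence an isomorphism onto $\mathbf{ASL}_2$ under any affine $\mathbb{Q}$-identification of $y+x^\perp$ with the standard affine plane. If instead $\langle y,x\rangle=0$, take a basis $(x,b_2,b_3)$ with $b_2,b_3$ spanning $x^\perp$; then $y+x^\perp=x^\perp=\{a_1=0\}$, the same block computation gives $\tensor[^\Theta]{\mathbf{H}}{}\simeq\mathbf{ASL}_2$, but now the restriction of $\left(\begin{smallmatrix}1&0\\w&A\end{smallmatrix}\right)$ to $\{a_1=0\}$ is the \emph{linear} map $z'\mapsto Az'$; its kernel is $\{A=\mathrm{Id}\}=\Ga^{\times2}=\operatorname{\mathcal{R}_u}(\tensor[^\Theta]{\mathbf{H}}{})$ and its image is $\mathbf{SL}_2$, so $\tensor[^\Theta]{\mathbf{H}}{}/\operatorname{\mathcal{R}_u}(\tensor[^\Theta]{\mathbf{H}}{})\simeq\mathbf{SL}_2$ acting on $x^\perp$. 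This proves part (1); note $\mathbf{H}$, hence $\tensor[^\Theta]{\mathbf{H}}{}$, is connected, being the stabilizer of a nonzero vector in $\mathbf{SL}_3$.

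For part (2), the group $\tensor[^\Theta]{\mathbf{H}}{}\simeq\mathbf{SL}_2\ltimes\Ga^{\times2}$ is a connected $\mathbb{Q}$-group; since $\mathbf{SL}_2$ is perfect and $\{(g-\mathrm{Id})v:g\in\mathbf{SL}_2,\,v\in\Ga^{\times2}\}$ spans $\Ga^{\times2}$, the group $\tensor[^\Theta]{\mathbf{H}}{}$ is perfect, in particular has no nontrivial $\mathbb{Q}$-rational character. The group $\tensor[^\Theta]{\mathbf{H}}{}(\mathbb{Z})=\mathbf{SL}_3(\mathbb{Z})\cap\tensor[^\Theta]{\mathbf{H}}{}(\mathbb{R})$ is an arithmetic subgroup of $\tensor[^\Theta]{\mathbf{H}}{}$, discrete in $\tensor[^\Theta]{\mathbf{H}}{}(\mathbb{R})$ because $\mathbf{SL}_3(\mathbb{Z})$ is discrete in $\mathbf{SL}_3(\mathbb{R})$; by the theorem of Borel and Harish-Chandra it is therefore a lattice in $\tensor[^\Theta]{\mathbf{H}}{}(\mathbb{R})$. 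Finally, the standard fact from reduction theory (cf.\ Raghunathan) that for a lattice $\Gamma<G$ and a closed subgroup $H<G$ with $\Gamma\cap H$ a lattice in $H$ the set $\Gamma H$ is closed, applied with $G=\mathbf{SL}_3(\mathbb{R})$, $\Gamma=\mathbf{SL}_3(\mathbb{Z})$, $H=\tensor[^\Theta]{\mathbf{H}}{}(\mathbb{R})$, shows $[\tensor[^\Theta]{\mathbf{H}}{}(\mathbb{R})]$ is closed; it carries the finite $\tensor[^\Theta]{\mathbf{H}}{}(\mathbb{R})$-invariant measure obtained by pushing forward the Haar measure of $\tensor[^\Theta]{\mathbf{H}}{}(\mathbb{Z})\backslash\tensor[^\Theta]{\mathbf{H}}{}(\mathbb{R})$, so it is periodic.

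The proposition is essentially structural bookkeeping; the one place to be careful is the transpose identification $\tensor[^\Theta]{\mathbf{H}}{}=\{h:\tensor[^t]{h}{}x=x\}$, which is what makes $\tensor[^\Theta]{\mathbf{H}}{}$ act on the \emph{affine} hyperplanes $\langle\,\cdot\,,x\rangle=\text{const}$ rather than fixing the vector $x$, together with the split into the cases $\langle y,x\rangle\neq0$ (the full $\mathbf{ASL}_2$ acting affinely) and $\langle y,x\rangle=0$ (only the reductive quotient $\mathbf{SL}_2$ acting on the linear plane $x^\perp$). I do not anticipate any serious obstacle beyond these points.
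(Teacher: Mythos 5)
Your proof is correct. You take a route that differs from the paper's in two places, both of which are legitimate alternatives. In part (1) you fix a $\mathbb{Q}$-basis $(y,b_2,b_3)$ (or $(x,b_2,b_3)$) adapted to the decomposition $\mathbb{Q}^3=\mathbb{Q}y\oplus x^\perp$ and read the block form off directly; the paper instead finds $\gamma\in\mathbf{SL}_3(\mathbb{Q})$ carrying $\langle y,x\rangle x$ to $\hat{e}$ and computes $\tensor[^\Theta]{\Stab}{_{\mathbf{SL}_3}}(\hat{e})$ explicitly. Both compute the same block shape $\left(\begin{smallmatrix}1&0\\ *&\mathbf{SL}_2\end{smallmatrix}\right)$ in appropriate coordinates; yours is essentially a coordinate-free repackaging of the paper's conjugation. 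For part (2) the divergence is more genuine: the paper shows finiteness of covolume by choosing the affine identification so that $\mathbf{ASL}_2(\mathbb{Z})$ is carried to $\tensor[^\Theta]{\Stab}{_{\mathbf{SL}_3}}(\hat{e})(\mathbb{Z})$, and then transports the finite Haar measure back through the rational conjugation from part (1); you instead verify that $\tensor[^\Theta]{\mathbf{H}}{}\simeq\mathbf{SL}_2\ltimes\Ga^{\times2}$ is a perfect $\mathbb{Q}$-group (hence admits no nontrivial $\mathbb{Q}$-rational character) and invoke the Borel--Harish-Chandra finiteness theorem together with Raghunathan's criterion for the closedness of $\Gamma H$ when $\Gamma\cap H$ is a lattice in $H$. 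Your version is shorter and cleaner but relies on heavier general machinery; the paper's is entirely hands-on, at the cost of keeping track of which conjugating element is integral. One minor point worth spelling out for yourself: once you choose a basis you get one particular isomorphism $\tensor[^\Theta]{\mathbf{H}}{}\xrightarrow{\sim}\mathbf{ASL}_2$, whereas the proposition asserts this for \emph{any} affine $\mathbb{Q}$-identification of $y+x^\perp$ with the affine $2$-plane; this follows because $\mathbf{ASL}_2$ is normal in the full affine group $\mathbf{GL}_2\ltimes\Ga^{\times2}$ (it is the kernel of the determinant), so changing the identification conjugates the image inside a group in which $\mathbf{ASL}_2$ is stable.
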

\begin{proof}
Notice first that for all geometric points $y,z$ of the affine $3$-space if $\langle y-z,x\rangle=0$ then $\langle h.y-h.z,x\rangle=\langle y-z,\tensor[^t]{h}{}.x\rangle=\langle y-z,x\rangle=0$ for any point of $\tensor[^\Theta]{\mathbf{H}}{}$; hence the plane $y+x^\perp$ is $\tensor[^\Theta]{\mathbf{H}}{}$-stable. Assume $\langle y,x\rangle\neq 0$; we reduce to the case $y=\hat{e}$. There is $\gamma\in \mathbf{SL}_3(\mathbb{Q})$ such that $\gamma.\langle y,x\rangle x=\hat{e}$. This $\gamma$ then satisfies $\tensor[^\Theta]{\gamma}{}.x^\perp=\hat{e}^\perp$ and $\gamma.(y+x^\perp)=\hat{e}+\hat{e}^\perp$. We have $\gamma \mathbf{H} \gamma^{-1}=\Stab_{\mathbf{SL}_3} (\hat{e})$ and $\tensor[^\Theta]{\gamma}{} \tensor[^\Theta]{\mathbf{H}}{} \tensor[^\Theta]{\gamma}{^{-1}}=\tensor[^\Theta]{\Stab}{_{\mathbf{SL}_3}} (\hat{e})$. Conjugation by $\tensor[^\Theta]{\gamma}{}$ intertwines the action of $\tensor[^\Theta]{\Stab}{_{\mathbf{SL}_3}} (\hat{e})$ on $\hat{e}+\hat{e}^\perp$ with the action of $\tensor[^\Theta]{\mathbf{H}}{}$ on $y+x^\perp$.

We need now to show that a rational affine isomorphism of $\hat{e}+\hat{e}^\perp$ with the affine $2$-plane induces an isomorphism of $\tensor[^\Theta]{\Stab}{_{\mathbf{SL}_3}} (\hat{e})$ with $\mathbf{ASL}_2$. This is obvious when
writing $\tensor[^\Theta]{\Stab}{_{\mathbf{SL}_3}} (\hat{e})$ in matrix form
\begin{equation*}
\tensor[^\Theta]{\Stab}{_{\mathbf{SL}_3}} (\hat{e})=\left\{\begin{pmatrix}
* & * & *\\
* & * & * \\
0 & 0 & 1
\end{pmatrix}\right\} \, .
\end{equation*}
The case of $\langle y,x \rangle=0$ is proven similarly by taking any $\gamma\in\mathbf{SL}_2(\mathbb{Q})$ such that $\gamma.x=\hat{e}$.

For $y=x$ the affine isomorphism can be chosen to send the lattice $\mathbf{ASL}_2(\mathbb{Z})$ to $\tensor[^\Theta]{\Stab}{_{\mathbf{SL}_3}} (\hat{e})(\mathbb{Z})\coloneqq \mathbf{SL}_3(\mathbb{Z})\cap \tensor[^\Theta]{\Stab}{_{\mathbf{SL}_3}} (\hat{e})(\mathbb{R})$. Hence the latter is also a lattice and $\lfaktor{\tensor[^\Theta]{\Stab}{_{\mathbf{SL}_3}} (\hat{e})(\mathbb{Z})}{\tensor[^\Theta]{\Stab}{_{\mathbf{SL}_3}} (\hat{e})(\mathbb{R})}$
supports a finite Haar measure.

We can write
\begin{equation*}
\left[\tensor[^\Theta]{\mathbf{H}}{}(\mathbb{R})\right]=
\left[\tensor[^\Theta]{\gamma}{^{-1}}\tensor[^\Theta]{\Stab}{_{\mathbf{SL}_3}} (\hat{e})(\mathbb{R})\tensor[^\Theta]{\gamma}{}\right]
=
\left[\tensor[^\Theta]{\Stab}{_{\mathbf{SL}_3}} (\hat{e})(\mathbb{R})\tensor[^\Theta]{\gamma}{}\right]
\subset \lfaktor{\mathbf{SL}_3(\mathbb{Z})}{\mathbf{SL}_3(\mathbb{R})} \, .
\end{equation*}
This implies $\left[\tensor[^\Theta]{\mathbf{H}}{}(\mathbb{R})\right]
=\lfaktor{\tensor[^\Theta]{\mathbf{H}}{}(\mathbb{Z})}{\tensor[^\Theta]{\mathbf{H}}{}(\mathbb{R})}$ supports a finite $\tensor[^\Theta]{\gamma}{^{-1}}\tensor[^\Theta]{\Stab}{_{\mathbf{SL}_3}} (\hat{e})(\mathbb{R})\tensor[^\Theta]{\gamma}{}$-invariant measure. This is  a finite Haar measure on $\lfaktor{\tensor[^\Theta]{\mathbf{H}}{}(\mathbb{Z})}{\tensor[^\Theta]{\mathbf{H}}{}(\mathbb{R})}$ proving that the discrete subgroup $\tensor[^\Theta]{\mathbf{H}}{}(\mathbb{Z})$ is a lattice.
\end{proof}

\begin{lem} \label{lem:Htheta-integral}
For any $y,y'\in\mathscr{H}_D$ and $g\in\mathbf{SL}_3(\mathbb{R})$ if
\begin{equation*}
g.\left(y^1+y^\perp(\mathbb{Z})\right)=(y')^1+y'^\perp(\mathbb{Z})
\end{equation*}
then $g\in\mathbf{SL}_3(\mathbb{Z})$.
\end{lem}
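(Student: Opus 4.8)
The plan is to show that the lattice coset $y^1+y^\perp(\mathbb{Z})$ already determines the full lattice $\mathbb{Z}^3$, namely as the subgroup of $\mathbb{R}^3$ that it generates, and then to use that $g$ is an additive automorphism of $\mathbb{R}^3$.

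First I would record the partition
\[
\mathbb{Z}^3=\bigsqcup_{n\in\mathbb{Z}}\bigl(ny^1+y^\perp(\mathbb{Z})\bigr).
\]
Indeed, for $v\in\mathbb{Z}^3$ put $n\coloneqq\langle v,y\rangle\in\mathbb{Z}$; then $v-ny^1\in\mathbb{Z}^3$ and $\langle v-ny^1,y\rangle=n-n=0$, so $v-ny^1\in y^\perp(\mathbb{Z})$ and $v\in ny^1+y^\perp(\mathbb{Z})$; conversely each piece lies in $\mathbb{Z}^3$, and the pieces are disjoint because $\langle\cdot,y\rangle$ is constant equal to $n$ on the $n$-th one. (Here primitivity of $y$ is what guarantees the existence of the integral point $y^1$.) Writing $S\coloneqq y^1+y^\perp(\mathbb{Z})$, the subgroup $\langle S\rangle<\mathbb{R}^3$ generated by $S$ then contains $S-S=y^\perp(\mathbb{Z})$ and, subtracting an element of $y^\perp(\mathbb{Z})$ from an element of $S$, also the vector $y^1$; hence $\langle S\rangle\supseteq\mathbb{Z}y^1+y^\perp(\mathbb{Z})=\mathbb{Z}^3$, while $\langle S\rangle\subseteq\mathbb{Z}^3$ trivially. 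Thus $\langle S\rangle=\mathbb{Z}^3$, and likewise $\langle S'\rangle=\mathbb{Z}^3$ for $S'\coloneqq (y')^1+(y')^\perp(\mathbb{Z})$.

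Next I would apply $g$. Since $g$ is an additive automorphism of $\mathbb{R}^3$, it carries the subgroup generated by any subset onto the subgroup generated by its image; therefore $g.\mathbb{Z}^3=g.\langle S\rangle=\langle g.S\rangle=\langle S'\rangle=\mathbb{Z}^3$. Consequently the columns of $g$ and of $g^{-1}$ lie in $\mathbb{Z}^3$, so $g\in\mathbf{GL}_3(\mathbb{Z})$, and combined with $\det g=1$ this gives $g\in\mathbf{SL}_3(\mathbb{Z})$. I do not expect a genuine obstacle here: the only point requiring a little care is to keep the group‑generation argument inside $\mathbb{Z}^3$, i.e.\ to use that both $y^\perp(\mathbb{Z})$ and $y^1$ are defined integrally; everything else is formal.
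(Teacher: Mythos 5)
Your proof is correct and follows essentially the same route as the paper's: both reduce to showing $g.\mathbb{Z}^3=\mathbb{Z}^3$ by exploiting that the coset $y^1+y^\perp(\mathbb{Z})$ determines $\mathbb{Z}^3$. The paper phrases this by picking an explicit $\mathbb{Z}$-basis $y^1,v_1,v_2$ of $\mathbb{Z}^3$ and tracking its image, while you phrase it via the subgroup generated by the coset — a cosmetic difference, not a different argument.
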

\begin{proof}
Because $y^\perp(\mathbb{Z})=\left(y^1+y^\perp(\mathbb{Z})\right)-\left(y^1+y^\perp(\mathbb{Z})\right)$, and the same for $y'$, the group element $g$ also satisfies $g.y^\perp(\mathbb{Z})=y'^\perp(\mathbb{Z})$. Fix $v_1,v_2$ a basis for $y^\perp(\mathbb{Z})$ over $\mathbb{Z}$, then $y^1,v_1,v_2$ span $\mathbb{Z}^3$ over $\mathbb{Z}$. The vectors $g.v_1,g.v_2$ span $y'^\perp(\mathbb{Z})$ over $\mathbb{Z}$ and $g.y^1\in g.(y')^1+y'^\perp(\mathbb{Z})$. Hence $g.y^1,g.v_1,g.v_2$ also form a $\mathbb{Z}$-basis of $\mathbb{Z}^3$. This implies $g.\mathbb{Z}^3=\mathbb{Z}^3\Rightarrow g\in\mathbf{SL}_3(\mathbb{Z})$.
\end{proof}
\begin{lem} \label{lem:Htheta-Z-stab}
\begin{equation*}
\tensor[^\Theta]{\mathbf{H}}{}(\mathbb{Z})=
\Stab_{\tensor[^\Theta]{\mathbf{H}}{}(\mathbb{R})}\left(x^1+x^\perp(\mathbb{Z})\right) \, .
\end{equation*}
Moreover, for any $y\in\mathbb{Z}^3$
\begin{equation*}
\tensor[^\Theta]{\mathbf{H}}{}(\mathbb{Z})
\subset
\Stab_{\tensor[^\Theta]{\mathbf{H}}{}(\mathbb{R})}\left(y+x^\perp(\mathbb{Z})\right) \, .
\end{equation*}
\end{lem}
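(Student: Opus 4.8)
The plan is to exploit two elementary facts. First, every $g\in\tensor[^\Theta]{\mathbf{H}}{}(\mathbb{R})$ preserves the linear functional $y\mapsto\langle y,x\rangle$: indeed $g\in\tensor[^\Theta]{\mathbf{H}}{}(\mathbb{R})$ means $\tensor[^t]{g}{^{-1}}\in\mathbf{H}(\mathbb{R})$, i.e.\ $\tensor[^t]{g}{}.x=x$, so $\langle g.y,x\rangle=\langle y,\tensor[^t]{g}{}.x\rangle=\langle y,x\rangle$ for all $y$. In particular $g$ preserves $x^\perp(\mathbb{R})$, every affine translate $y+x^\perp(\mathbb{R})$, and the value $\langle x^1,x\rangle=1$. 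This is exactly the observation underlying Proposition \ref{prop:Htheta-ASL2}, and I would simply record it at the outset. Second, an element of $\mathbf{SL}_3(\mathbb{Z})$ preserves $\mathbb{Z}^3$, hence preserves any sublattice of $\mathbb{Z}^3$ cut out by an integral linear condition.

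Next I would prove the inclusion $\tensor[^\Theta]{\mathbf{H}}{}(\mathbb{Z})\subset\Stab_{\tensor[^\Theta]{\mathbf{H}}{}(\mathbb{R})}\!\left(y+x^\perp(\mathbb{Z})\right)$ for arbitrary $y\in\mathbb{Z}^3$; taking $y=x^1$ this also yields the inclusion ``$\subset$'' in the equality. Let $g\in\tensor[^\Theta]{\mathbf{H}}{}(\mathbb{Z})=\mathbf{SL}_3(\mathbb{Z})\cap\tensor[^\Theta]{\mathbf{H}}{}(\mathbb{R})$. Since $g$ and $g^{-1}$ both lie in $\mathbf{SL}_3(\mathbb{Z})$ and both preserve $x^\perp(\mathbb{R})$, the map $g$ restricts to an automorphism of $x^\perp(\mathbb{Z})=x^\perp(\mathbb{R})\cap\mathbb{Z}^3$, so $g.x^\perp(\mathbb{Z})=x^\perp(\mathbb{Z})$. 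Moreover $g.y\in\mathbb{Z}^3$ and $\langle g.y-y,x\rangle=0$, whence $g.y-y\in x^\perp(\mathbb{R})\cap\mathbb{Z}^3=x^\perp(\mathbb{Z})$, i.e.\ $g.y\in y+x^\perp(\mathbb{Z})$. Combining, $g.\!\left(y+x^\perp(\mathbb{Z})\right)=g.y+g.x^\perp(\mathbb{Z})=y+x^\perp(\mathbb{Z})$, as desired.

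Finally, for the reverse inclusion ``$\supset$'' in the equality I would invoke Lemma \ref{lem:Htheta-integral} with $y=y'=x$ (note $x\in\mathscr{H}_D$, with its chosen companion vector $x^1$): if $g\in\tensor[^\Theta]{\mathbf{H}}{}(\mathbb{R})\subset\mathbf{SL}_3(\mathbb{R})$ fixes $x^1+x^\perp(\mathbb{Z})$ then $g\in\mathbf{SL}_3(\mathbb{Z})$, hence $g\in\mathbf{SL}_3(\mathbb{Z})\cap\tensor[^\Theta]{\mathbf{H}}{}(\mathbb{R})=\tensor[^\Theta]{\mathbf{H}}{}(\mathbb{Z})$. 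I do not expect a genuine obstacle here: the statement is pure bookkeeping once the functional-preservation remark and Lemma \ref{lem:Htheta-integral} are available. The only point demanding a moment's care is using the $\tensor[^\Theta]{\mathbf{H}}{}$-invariance of $\langle x^1,x\rangle=1$ to ensure $g$ lands in the correct affine coset $x^1+x^\perp(\mathbb{Z})$ and not merely in some other translate of $x^\perp(\mathbb{Z})$.
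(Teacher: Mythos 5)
Your proposal is correct and follows essentially the same route as the paper: the forward inclusion is established by the same transpose calculation $\langle g.y,x\rangle=\langle y,\tensor[^t]{g}{}.x\rangle=\langle y,x\rangle$ together with $g.\mathbb{Z}^3=\mathbb{Z}^3$, and the reverse inclusion is obtained by invoking Lemma \ref{lem:Htheta-integral} with $y=y'=x$. The closing remark about $\langle x^1,x\rangle=1$ is harmless but not actually needed once you know $g$ preserves each affine translate of $x^\perp$.
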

\begin{proof}
If $h\in \tensor[^\Theta]{\mathbf{H}}{}(\mathbb{Z})=\tensor[^\Theta]{\mathbf{H}}{}(\mathbb{R})\cap \mathbf{SL}_3(\mathbb{Z})$ then $h.\mathbb{Z}^3=\mathbb{Z}^3$ and $h.x^\perp=x^\perp$; hence $h.x^\perp(\mathbb{Z})=x^\perp(\mathbb{Z})$. Moreover, if $y\in\mathbb{Z}^3$ then $\langle h.y,x\rangle=\langle y, \tensor[^t]{h}{}.x \rangle=\langle y,x \rangle$ and $h.y\in\mathbb{Z}^3$ hence $h.y-y\in x^\perp(\mathbb{Z})$. This implies that $h.(y+x^\perp(\mathbb{Z}))=y+x^\perp(\mathbb{Z})$. Hence $\tensor[^\Theta]{\mathbf{H}}{}(\mathbb{Z})
\subset
\Stab_{\tensor[^\Theta]{\mathbf{H}}{}(\mathbb{R})}\left(y+x^\perp(\mathbb{Z})\right)$.

The reverse inclusion for $y=x_1$ follows from Lemma \ref{lem:Htheta-integral} with $y=y'=x$.
\end{proof}

\begin{defi}\label{defi:Htheta-orbit-to-Grids}
\hfill
\begin{enumerate}
\item
For any $y,z\in\mathbb{Q}^3$ and $d>0$ denote by $\mathcal{L}^d_\bullet (y+z^\perp)$ the space of cosets of rank $2$ $\mathbb{Z}$-lattices of covolume $\sqrt{d}$ contained in the affine space $y+z^\perp(\mathbb{R})$.

\item
Proposition \ref{prop:Htheta-ASL2} implies that the group $\tensor[^\Theta]{\mathbf{H}}{}(\mathbb{R})$ acts transitively on the space $\mathcal{L}^D_\bullet (x^1+x^\perp)$. Fixing $x_1+x^\perp(\mathbb{Z})\in\mathcal{L}^D_\bullet (x^1+x^\perp)$ as a base point we use Lemma \ref{lem:Htheta-Z-stab} to construct an $\tensor[^\Theta]{\mathbf{H}}{}(\mathbb{R})$-equivariant isomorphism
\begin{equation*}
\lfaktor{\tensor[^\Theta]{\mathbf{H}}{}(\mathbb{Z})}{\tensor[^\Theta]{\mathbf{H}}{}(\mathbb{R})}
\to \mathcal{L}^D_\bullet (x^1+x^\perp) \, .
\end{equation*}

\item The map $P+L\mapsto (P-\frac{x}{D})+L$ is a bijection $\mathcal{L}^D_\bullet (x^1+x^\perp)\to \mathcal{L}^D_\bullet (x^\perp)$. Using this bijection defined a sequence $\tensor[^\Theta]{\mathbf{H}}{}(\mathbb{R})$-equivariant isomorphisms
\begin{equation*}
\lfaktor{\tensor[^\Theta]{\mathbf{H}}{}(\mathbb{Z})}{\tensor[^\Theta]{\mathbf{H}}{}(\mathbb{R})}
\to \mathcal{L}^D_\bullet (x^1+x^\perp)\to \mathcal{L}^D_\bullet (x^\perp) \, ,
\end{equation*}
where the action of $\tensor[^\Theta]{\mathbf{H}}{}(\mathbb{R})$ on $\mathcal{L}^D_\bullet (x^\perp)$ is the twisted action defined by $h.(P+L)=(h.\frac{x}{D}-\frac{x}{D})+h.P+h.L$. In what follows we will not be using the twisted action.
\end{enumerate}
\end{defi}

\subsection{The Intersection of Periodic Orbits}
We turn to discuss the intersection between $\left[\tensor[^\Theta]{\mathbf{H}}{}(\mathbb{R})\right]$ and $\left[\mathbf{SO}_3(\mathbb{R})\right]$.
\begin{prop}\label{prop:Htheta-SO3-intersect}
\begin{equation*}
\lfaktor{\mathbf{SL}_3(\mathbb{Z})}{\mathbf{SL}_3(\mathbb{R})}
\supset
\left[\tensor[^\Theta]{\mathbf{H}}{}(\mathbb{R})\right] \cap \left[\mathbf{SO}_3(\mathbb{R})\right]
=\bigsqcup_{y\in \mathbf{SO}_3(\mathbb{Z})\backslash\mathscr{H}_D} \left[g_{x\to y}\mathbf{T}(\mathbb{R})\right] \, ,
\end{equation*}
where for each $y\in\mathscr{H}_D$ the coset $g_{x\to y}\mathbf{T}(\mathbb{R})$ is the set of all elements in $\mathbf{SO}_3(\mathbb{R})$ mapping $x$ to $y$.
\end{prop}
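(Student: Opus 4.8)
The plan is to describe the intersection of the two periodic orbits explicitly by tracking which lattices in $\lfaktor{\mathbf{SL}_3(\mathbb{Z})}{\mathbf{SL}_3(\mathbb{R})}$ lie on both. First I would observe that a point of $\left[\mathbf{SO}_3(\mathbb{R})\right]$ is the class $[\mathbf{SO}_3(\mathbb{R})g]$ of some $g\in\mathbf{SO}_3(\mathbb{R})$, and such a class lies in $\left[\tensor[^\Theta]{\mathbf{H}}{}(\mathbb{R})\right]=\left[\tensor[^\Theta]{\Stab}{_{\mathbf{SL}_3}}(\hat{e})(\mathbb{R})\tensor[^\Theta]{\gamma}{}\right]$ (notation as in the proof of Proposition \ref{prop:Htheta-ASL2}) precisely when there exist $\delta\in\mathbf{SL}_3(\mathbb{Z})$ and $h\in\tensor[^\Theta]{\mathbf{H}}{}(\mathbb{R})$ with $\delta g = h$. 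Since $g\in\mathbf{SO}_3(\mathbb{R})$ preserves the standard inner product and $\delta\in\mathbf{SL}_3(\mathbb{Z})$, the composite $h=\delta g$ maps the primitive vector $x$ to $\delta.x\in\mathbb{Z}^3_{\mathrm{primitive}}$; and because $h\in\tensor[^\Theta]{\mathbf{H}}{}(\mathbb{R})$ fixes $x$ under the $\Theta$-twisted action, i.e.\ $\tensor[^t]{h}{}.x = x$, one computes $\langle \delta.x,\delta.x\rangle$ via $h$ and $g$ to see that $\delta.x$ again has norm $D$, hence $\delta.x\in\mathscr{H}_D$. This produces the labelling of intersection components by elements of $\mathscr{H}_D$.

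Next I would pin down, for a fixed $y\in\mathscr{H}_D$, the set of $g\in\mathbf{SO}_3(\mathbb{R})$ whose class contributes the component labelled $y$. The natural guess is $g_{x\to y}\mathbf{T}(\mathbb{R})$, the coset of $\mathbf{SO}_3(\mathbb{R})$ sending $x$ to $y$ (nonempty and a single right $\mathbf{T}(\mathbb{R})$-coset because $\mathbf{SO}_3(\mathbb{R})$ acts transitively on vectors of a fixed length with point stabilizer $\mathbf{T}(\mathbb{R})=\Stab_{\mathbf{SO}_3}(x)(\mathbb{R})$). To verify this: given $g$ with $g.x = y$, I would exhibit $h\in\tensor[^\Theta]{\mathbf{H}}{}(\mathbb{R})$ and $\delta\in\mathbf{SL}_3(\mathbb{Z})$ with $\delta g=h$. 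The element $h$ should be $\tensor[^\Theta]{(g')}{}$ for a suitable $g'\in\mathbf{SO}_3(\mathbb{R})$ with $g'.y=x$; since $\Theta$ is trivial on $\mathbf{SO}_3$, $h=\tensor[^\Theta]{(g')}{}=g'^{-1}\cdot(\text{something})$ needs care, so more precisely I would take $h\in\mathbf{SL}_3(\mathbb{R})$ defined by its action sending the $\mathbb{Z}$-basis $y^1,v_1,v_2$ of $\mathbb{Z}^3$ adapted to $y$ to the basis $x^1,w_1,w_2$ adapted to $x$, check that $\tensor[^t]{h}{}.x=y\Rightarrow h\in\tensor[^\Theta]{\mathbf{H}}{}(\mathbb{R})$ after conjugating appropriately, and then $\delta \coloneqq h g^{-1}$ lies in $\mathbf{SL}_3(\mathbb{Z})$ by Lemma \ref{lem:Htheta-integral}. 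The clean way is to run it through $\hat{e}$: conjugate by $\tensor[^\Theta]{\gamma}{}$ so $\tensor[^\Theta]{\mathbf{H}}{}$ becomes $\tensor[^\Theta]{\Stab}{_{\mathbf{SL}_3}}(\hat{e})$ and both problems become the statement that the integral points fixing the relevant flag through $\hat{e}$ form the correct lattice, which is Lemma \ref{lem:Htheta-Z-stab}.

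Then I would check that distinct labels give disjoint components at the level of $\lfaktor{\mathbf{SL}_3(\mathbb{Z})}{\mathbf{SL}_3(\mathbb{R})}$: if $[g_{x\to y}\mathbf{T}(\mathbb{R})]$ and $[g_{x\to y'}\mathbf{T}(\mathbb{R})]$ meet, then $\delta g_{x\to y} t = g_{x\to y'} t'$ for some $\delta\in\mathbf{SL}_3(\mathbb{Z})$, $t,t'\in\mathbf{T}(\mathbb{R})$, whence $\delta.y = \delta g_{x\to y}t.x = g_{x\to y'}t'.x = y'$ (using $t,t'$ fix $x$), so $y$ and $y'$ are $\mathbf{SL}_3(\mathbb{Z})$-equivalent; refining the count to $\mathbf{SO}_3(\mathbb{Z})\backslash\mathscr{H}_D$ comes from the fact that the extra ambiguity in choosing $g_{x\to y}$ and the identification $\lfaktor{\mathbf{SL}_3(\mathbb{Z})}{\mathbf{SL}_3(\mathbb{R})}$ only collapses $y,y'$ in the same $\mathbf{SO}_3(\mathbb{Z})$-orbit (a $\delta$ fixing the norm form and permuting these finite sets lies in $\mathbf{SO}_3(\mathbb{Z})$ up to sign, and $-1\notin\mathbf{SO}_3$). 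Finally, each $[g_{x\to y}\mathbf{T}(\mathbb{R})]$ is closed since $\mathbf{T}$ is anisotropic over $\mathbb{Q}$, so the union is the disjoint union claimed.

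The main obstacle I anticipate is the bookkeeping in the second step: producing, for a given $g\in\mathbf{SO}_3(\mathbb{R})$ with $g.x=y$, the correct integral $\delta$ and the correct element of $\tensor[^\Theta]{\mathbf{H}}{}(\mathbb{R})$ — i.e.\ getting the transpose-inverse twist and the direction of the flag exactly right so that Lemma \ref{lem:Htheta-integral} and Lemma \ref{lem:Htheta-Z-stab} apply verbatim — and symmetrically checking the reverse containment (that every intersection point arises this way). Everything else is formal transitivity and properness of anisotropic torus orbits.
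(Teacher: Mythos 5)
Your three-part outline (forward inclusion, reverse inclusion, disjointness) matches the paper's structure, and your disjointness argument is essentially identical to the paper's. But both of your inclusion steps contain concrete errors in handling the $\Theta$-twist — the very point you flag as the anticipated obstacle.

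In the forward direction, you write that $h=\delta g$ ``maps $x$ to $\delta.x$'' and that $\delta.x$ has norm $D$. Both claims are false as stated: $h.x = \delta.(g.x)$, not $\delta.x$, and $\langle \delta.x,\delta.x\rangle \neq D$ in general for $\delta\in\mathbf{SL}_3(\mathbb{Z})$. The correct chain is: $\tensor[^t]{h}{}.x = x$ and $h=\delta g$ give $\tensor[^t]{g}{}\tensor[^t]{\delta}{}.x = x$; since $g\in\mathbf{SO}_3(\mathbb{R})$ we have $\tensor[^t]{g}{}=g^{-1}$, so $g.x = \tensor[^t]{\delta}{}.x$. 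It is $g.x$ (equivalently $\tensor[^\Theta]{\delta}{^{-1}}.x$) that is the integral primitive vector of norm $D$ labelling the component, not $\delta.x$. The paper arrives at exactly this by writing $g.x = \tensor[^\Theta]{g}{}.x = \tensor[^\Theta]{\gamma}{}\tensor[^\Theta]{h}{}.x = \tensor[^\Theta]{\gamma}{}.x$.

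In the reverse direction, the element $h$ you construct — sending the $y$-adapted integral basis to the $x$-adapted one — satisfies $\tensor[^t]{h}{}.x = y$, not $\tensor[^t]{h}{}.x = x$. Since $\tensor[^\Theta]{\mathbf{H}}{}$ is by definition $\{g : \tensor[^t]{g}{}.x = x\}$, your proposed $h$ does not lie in $\tensor[^\Theta]{\mathbf{H}}{}(\mathbb{R})$ unless $y=x$; ``after conjugating appropriately'' does not repair this, since the group in the proposition is $\tensor[^\Theta]{\mathbf{H}}{}$ itself, not a conjugate. What you actually need here is simpler than any basis bookkeeping: choose $\delta\in\mathbf{SL}_3(\mathbb{Z})$ with $\tensor[^\Theta]{\delta}{}.x = y$ (possible since $x,y$ are primitive, $\mathbf{SL}_3(\mathbb{Z})$ acts transitively on primitive vectors, and $\Theta$ preserves $\mathbf{SL}_3(\mathbb{Z})$), and then check directly that $\delta^{-1}g\in\tensor[^\Theta]{\mathbf{H}}{}(\mathbb{R})$: $\tensor[^t]{(\delta^{-1}g)}{}.x = g^{-1}\tensor[^\Theta]{\delta}{}.x = g^{-1}.y = x$. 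This is what the paper does, and it makes Lemma \ref{lem:Htheta-integral} and Lemma \ref{lem:Htheta-Z-stab} unnecessary for this proposition (they are used later, for Proposition \ref{prop:intersection-grids}). Your instinct that ``getting the transpose-inverse twist exactly right'' is the crux was correct; the writeup just does not get it right.
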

\begin{proof}
Notice first that the union on the right hand side is indeed disjoint. Two $\mathbf{T}(\mathbb{R})$ orbit $\left[g_{x\to y}\mathbf{T}(\mathbb{R})\right]$ and $\left[g_{x\to y'}\mathbf{T}(\mathbb{R})\right]$ intersect if there is $\gamma\in\mathbf{SL}_3(\mathbb{Z})$ and $t,t'\in\mathbf{T}(\mathbb{R})$ such that
\begin{equation*}
\gamma= g_{x\to y}tt'^{-1}g_{x\to y'}^{-1}\in\mathbf{SO}_3(\mathbb{R}) \, .
\end{equation*}
Then $\gamma\in\mathbf{SO}_3(\mathbb{Z})$ and $\gamma.y'=y$, hence $\mathbf{SO}_3(\mathbb{Z})y=\mathbf{SO}_3(\mathbb{Z})y'$.

We now establish that the right hand side is contained in the left hand side. Fix $y\in \mathscr{H}_D$. It is obvious that $\left[g_{x\to y}\mathbf{T}(\mathbb{R})\right]\subset \left[\mathbf{SO}_3(\mathbb{R})\right]$ because $g_{x\to y}\mathbf{T}(\mathbb{R})\subset \mathbf{SO}_3(\mathbb{R})$. Fix $\gamma\in\mathbf{SL}_3(\mathbb{Z})$ such that $\tensor[^\Theta]{\gamma}{}.x=y$, this is possible because $x,y\in\mathbb{Z}^3_\mathrm{primitive}$ and $\tensor[^\Theta]{\mathbf{SL}}{_3}(\mathbb{Z})=\mathbf{SL}_3(\mathbb{Z})$. For any $g\in g_{x\to y}\mathbf{T}(\mathbb{R})$ note that $\tensor[^\Theta]{\left(g^{-1}\gamma\right)}{}.x=g^{-1}\tensor[^\Theta]{\gamma}{}.x=x$. Thus $\tensor[^\Theta]{\left(g^{-1}\gamma\right)}{}\in\mathbf{H}(\mathbb{R})$ and $g^{-1}\gamma\in \tensor[^\Theta]{\mathbf{H}}{}(\mathbb{R})$. We conclude that $g\in\left[\tensor[^\Theta]{\mathbf{H}}{}(\mathbb{R})\right]$ proving that the right hand side is included in the left one.

In the other direction we argue as follows.
For any point $\mathbf{SL}_3(\mathbb{Z})g\in \left[\mathbf{SO}_3(\mathbb{R})\right]$ that belongs also to $\left[\tensor[^\Theta]{\mathbf{H}}{}(\mathbb{R})\right]$ there is some $\gamma\in\mathbf{SL}_3(\mathbb{Z})$ and $h\in \tensor[^\Theta]{\mathbf{H}}{}(\mathbb{R})$ such that
$\gamma h =g$. In particular $g.x=\tensor[^\Theta]{g}{}.x= \tensor[^\Theta]{\gamma}{}.x$. Notice that $\tensor[^\Theta]{\gamma}{}.x\in\mathbb{Z}^3_\mathrm{primitive}$ because $x\in\mathbb{Z}^3_\mathrm{primitive}$ and $\tensor[^\Theta]{\gamma}{}\in \mathbf{SL}_3(\mathbb{Z})$. On the other hand $\langle g.x, g.x \rangle =D$, hence $g.x=\tensor[^\Theta]{\gamma}{}.x\in \mathscr{H}_D$.

Any other intersection point $\mathbf{SL}_3(\mathbb{Z})g'$, $g'\in\mathbf{SO}_3(\mathbb{R})$, such that $g'.x=g.x$ satisfies $g'\in \mathbf{T}(\mathbb{R})g$.
\end{proof}
\begin{remark}
A slightly more conceptual presentation of the proof is by using the fact that the ring of regular functions on $\mathbf{SL}_3$ invariant under the right action of $\mathbf{H}$ and the left action of $\mathbf{SO}_3$ is generated by the single polynomial $g\mapsto \langle g.x, g.x \rangle$.
\end{remark}

\begin{prop}\label{prop:intersection-grids}
This image of $\left[\tensor[^\Theta]{\mathbf{H}}{}(\mathbb{R})\right] \cap \left[\mathbf{SO}_3(\mathbb{R})\right]$ under the isomorphism from Definition \ref{defi:Htheta-orbit-to-Grids}
\begin{equation*}
\lfaktor{\tensor[^\Theta]{\mathbf{H}}{}(\mathbb{Z})}{\tensor[^\Theta]{\mathbf{H}}{}(\mathbb{R})}\to \mathcal{L}^D_\bullet(x^\perp)
\end{equation*}
is
\begin{equation*}
\bigsqcup_{y\in \mathbf{SO}_3(\mathbb{Z})\backslash\mathscr{H}_D} \mathbf{T}(\mathbb{R})g_{x\to y}^{-1}.\left(y^\mathrm{tors}+y^\perp(\mathbb{Z})\right)
\, .
\end{equation*}
where $y^\mathrm{tors}=y^1-\frac{y}{D}$ as in \S \ref{intro:grids} and the action is the standard action of $\mathbf{SO}_3(\mathbb{R})$ on $\mathbb{R}^3$.

Equivalently, the image of the intersection under the isomorphism
\begin{equation*}
\lfaktor{\tensor[^\Theta]{\mathbf{H}}{}(\mathbb{Z})}{\tensor[^\Theta]{\mathbf{H}}{}(\mathbb{R})}\to \mathcal{L}^D_\bullet(x^1+x^\perp)
\end{equation*}
is
\begin{equation}\label{eq:orthogonal-lattice-intersection}
\bigsqcup_{y\in \mathbf{SO}_3(\mathbb{Z})\backslash\mathscr{H}_D} \mathbf{T}(\mathbb{R})g_{x\to y}^{-1}.\left(y^1+y^\perp(\mathbb{Z})\right) \, .
\end{equation}
\end{prop}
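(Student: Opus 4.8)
The plan is to trace the intersection $\left[\tensor[^\Theta]{\mathbf{H}}{}(\mathbb{R})\right]\cap\left[\mathbf{SO}_3(\mathbb{R})\right]$ through the identifications already in place: Proposition \ref{prop:Htheta-SO3-intersect} describes the intersection itself as $\bigsqcup_{y}[g_{x\to y}\mathbf{T}(\mathbb{R})]$ inside $\lfaktor{\mathbf{SL}_3(\mathbb{Z})}{\mathbf{SL}_3(\mathbb{R})}$, and it remains to push this through the equivariant isomorphism $\Psi\colon\lfaktor{\tensor[^\Theta]{\mathbf{H}}{}(\mathbb{Z})}{\tensor[^\Theta]{\mathbf{H}}{}(\mathbb{R})}\to\mathcal{L}^D_\bullet(x^1+x^\perp)$ of Definition \ref{defi:Htheta-orbit-to-Grids} and then through the affine shift $\beta\colon P+L\mapsto(P-\frac{x}{D})+L$. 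The first thing I would do is make $\Psi$ explicit: it is $\tensor[^\Theta]{\mathbf{H}}{}(\mathbb{R})$-equivariant and sends the identity class to the base point $x^1+x^\perp(\mathbb{Z})$, whose stabilizer in $\tensor[^\Theta]{\mathbf{H}}{}(\mathbb{R})$ is $\tensor[^\Theta]{\mathbf{H}}{}(\mathbb{Z})$ by Lemma \ref{lem:Htheta-Z-stab}; the unique normalization compatible with left multiplication by $\tensor[^\Theta]{\mathbf{H}}{}(\mathbb{Z})$ is $\Psi(\tensor[^\Theta]{\mathbf{H}}{}(\mathbb{Z})h)=h^{-1}.(x^1+x^\perp(\mathbb{Z}))$, the inversion being forced by converting a right coset space into a left action.

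Next I would follow a point of the intersection over a fixed $y\in\mathscr{H}_D$. By Proposition \ref{prop:Htheta-SO3-intersect} it is $\mathbf{SL}_3(\mathbb{Z})g$ with $g\in g_{x\to y}\mathbf{T}(\mathbb{R})$; choosing $\gamma\in\mathbf{SL}_3(\mathbb{Z})$ with $\tensor[^\Theta]{\gamma}{}.x=y$ as in the proof of that proposition, the element $h:=g^{-1}\gamma$ lies in $\tensor[^\Theta]{\mathbf{H}}{}(\mathbb{R})$, so $\mathbf{SL}_3(\mathbb{Z})g=\mathbf{SL}_3(\mathbb{Z})h^{-1}$ corresponds to the class $\tensor[^\Theta]{\mathbf{H}}{}(\mathbb{Z})h^{-1}$, and the explicit $\Psi$ gives $\Psi(\tensor[^\Theta]{\mathbf{H}}{}(\mathbb{Z})h^{-1})=h.(x^1+x^\perp(\mathbb{Z}))=g^{-1}\gamma.(x^1+x^\perp(\mathbb{Z}))$. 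I would then check that $\gamma.(x^1+x^\perp(\mathbb{Z}))=y^1+y^\perp(\mathbb{Z})$: the identity $\langle\gamma v,y\rangle=\langle\gamma v,\tensor[^t]{\gamma}{^{-1}}x\rangle=\langle v,x\rangle$ shows $\gamma$ carries $x^\perp$ onto $y^\perp$, hence (being in $\mathbf{SL}_3(\mathbb{Z})$) $x^\perp(\mathbb{Z})$ onto $y^\perp(\mathbb{Z})$, and carries the integral point $x^1$ (pairing to $1$ with $x$) to an integral point pairing to $1$ with $y$, i.e.\ into $y^1+y^\perp(\mathbb{Z})$. Thus $\Psi(\mathbf{SL}_3(\mathbb{Z})g)=g^{-1}.(y^1+y^\perp(\mathbb{Z}))$, and as $t$ runs over $\mathbf{T}(\mathbb{R})$ the element $g^{-1}=t^{-1}g_{x\to y}^{-1}$ runs over $\mathbf{T}(\mathbb{R})g_{x\to y}^{-1}$, so $\Psi\bigl([g_{x\to y}\mathbf{T}(\mathbb{R})]\bigr)=\mathbf{T}(\mathbb{R})g_{x\to y}^{-1}.(y^1+y^\perp(\mathbb{Z}))$. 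Taking the disjoint union over $\mathbf{SO}_3(\mathbb{Z})\backslash\mathscr{H}_D$, with disjointness inherited from Proposition \ref{prop:Htheta-SO3-intersect}, yields \eqref{eq:orthogonal-lattice-intersection}.

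Finally, to obtain the description in $\mathcal{L}^D_\bullet(x^\perp)$ I would apply $\beta$. For $w\in\mathbf{T}(\mathbb{R})g_{x\to y}^{-1}$ one has $w.y=x$ — since $g_{x\to y}^{-1}.y=x$ and $\mathbf{T}(\mathbb{R})$ fixes $x$ — hence $\frac{x}{D}=w.\frac{y}{D}$ and $\beta\bigl(w.(y^1+y^\perp(\mathbb{Z}))\bigr)=w.y^1-w.\frac{y}{D}+w.y^\perp(\mathbb{Z})=w.(y^\mathrm{tors}+y^\perp(\mathbb{Z}))$, which gives the first displayed formula of the proposition.

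All of these steps are elementary bookkeeping with the bilinear form; the one place demanding genuine care, and the main obstacle, is pinning down the direction of the isomorphism $\Psi$ (the inversion $h\mapsto h^{-1}$ coming from the right-versus-left conventions in Definition \ref{defi:Htheta-orbit-to-Grids}) and keeping it consistent with the representative $h=g^{-1}\gamma$ produced in the proof of Proposition \ref{prop:Htheta-SO3-intersect}, so that it is $g_{x\to y}^{-1}$, and not $g_{x\to y}$, that appears in the answer. Everything else reduces to the orthogonality relations $\langle g_{x\to y}a,b\rangle=\langle a,g_{x\to y}^{-1}b\rangle$ for $g_{x\to y}\in\mathbf{SO}_3(\mathbb{R})$.
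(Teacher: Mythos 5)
Your proof is correct, and it takes a genuinely different route from the paper's. The paper argues by containment plus counting: it shows the displayed union is disjoint, verifies via the bilinear form that each orbit $\mathbf{T}(\mathbb{R})g_{x\to y}^{-1}.(y^1+y^\perp(\mathbb{Z}))$ lands in $\mathcal{L}^D_\bullet(x^1+x^\perp)$, invokes Lemma \ref{lem:Htheta-integral} to show each such orbit is contained in the image of the intersection, and then observes that both sides consist of exactly $\left|\mathbf{SO}_3(\mathbb{Z})\backslash\mathscr{H}_D\right|$ disjoint $\mathbf{T}(\mathbb{R})$-orbits, so they must coincide. You instead compute the image of each $\mathbf{T}(\mathbb{R})$-orbit of the intersection directly through the isomorphism of Definition \ref{defi:Htheta-orbit-to-Grids}, obtaining equality with no counting step and no appeal to Lemma \ref{lem:Htheta-integral} (disjointness is simply transported by the bijection from Proposition \ref{prop:Htheta-SO3-intersect}). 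The key technical point you correctly identified — that the equivariant map must be $\Psi(\tensor[^\Theta]{\mathbf{H}}{}(\mathbb{Z})h)=h^{-1}.(x^1+x^\perp(\mathbb{Z}))$, the inversion being forced by well-definedness since $\tensor[^\Theta]{\mathbf{H}}{}(\mathbb{Z})$ only stabilizes the base point — is exactly what the paper leaves implicit in Definition \ref{defi:Htheta-orbit-to-Grids}, and your checks that $\gamma.(x^1+x^\perp(\mathbb{Z}))=y^1+y^\perp(\mathbb{Z})$ and that $w.y=x$ for $w\in\mathbf{T}(\mathbb{R})g_{x\to y}^{-1}$ are both valid. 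What your approach buys is that it makes the normalization of $\Psi$ explicit and traceable, which is useful when this isomorphism is reused downstream (e.g.\ in Proposition \ref{prop:grids-packet}); what the paper's approach buys is brevity, since the counting argument sidesteps the bookkeeping.
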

\begin{proof}
We prove the second version of the claim.
That the union in \eqref{eq:orthogonal-lattice-intersection} is disjoint follows by a computation from Lemma \ref{lem:Htheta-integral}.

Notice that for every $y\in\mathscr{H}_D$ the orbit $\mathbf{T}(\mathbb{R})g_{x\to y}^{-1}.\left(y^1+y^\perp(\mathbb{Z})\right)$ is indeed contained in $ \mathcal{L}^D_\bullet(x^1+x^\perp)$ because
\begin{equation*}
\left\langle \mathbf{T}(\mathbb{R})g_{x\to y}^{-1}.\left(y^1+y^\perp(\mathbb{Z})\right), x\right\rangle
=\left\langle y^1+y^\perp(\mathbb{Z}), g_{x\to y}\mathbf{T}(\mathbb{R}).x\right\rangle
=\left\langle y^1+y^\perp(\mathbb{Z}), y\right\rangle=1 \, .
\end{equation*}
Lemma \ref{lem:Htheta-integral} can be used to show that the orbit $\mathbf{T}(\mathbb{R})g_{x\to y}^{-1}.\left(y^1+y^\perp(\mathbb{Z})\right)$ is contained in the image of
\begin{equation*}
\left[\tensor[^\Theta]{\mathbf{H}}{}(\mathbb{R})\right] \cap \left[\mathbf{SO}_3(\mathbb{R})\right]\hookrightarrow \lfaktor{\tensor[^\Theta]{\mathbf{H}}{}(\mathbb{Z})}{\tensor[^\Theta]{\mathbf{H}}{}(\mathbb{R})} \, .
\end{equation*}
We have shown that \eqref{eq:orthogonal-lattice-intersection} is contained in the intersection. To prove that it is the whole intersection we notice from Proposition \ref{prop:Htheta-SO3-intersect} that the intersection is a collection of $\left|\mathbf{SO}_3(\mathbb{Z})\backslash\mathscr{H}_D\right|$-disjoint orbits of $\mathbf{T}(\mathbb{R})$ and the same holds for \eqref{eq:orthogonal-lattice-intersection}, hence they are equal.
\end{proof}

\begin{defi}
Let $g_{x\to\hat{e}}\in\mathbf{SO}_3(\mathbb{R})$ satisfy $g_{x\to\hat{e}}.x=D^{1/2}\hat{e}$.
Any element in $g\in\mathbf{SO}_2(\mathbb{R})g_{x\to\hat{e}}$ defines a bijection
\begin{equation*}
\mathcal{L}^D_\bullet(x^\perp)\xrightarrow{g} \mathcal{L}^1_\bullet(\hat{e}^\perp)
\end{equation*}
by $L+P\mapsto D^{-1/4}g.\left(L+P\right)$. This map depends on the specific representative $g\in \mathbf{SO}_2(\mathbb{R})g_{x\to\hat{e}}$ but the induced bijection of quotients
\begin{equation*}
\lfaktor{\mathbf{T}(\mathbb{R})}{\mathcal{L}^D_\bullet(x^\perp)}\xrightarrow{g} \lfaktor{\mathbf{SO}_2(\mathbb{R})}{\mathcal{L}^1_\bullet(\hat{e}^\perp)}
\end{equation*}
is uniquely-defined.

We denote by $\mathscr{G}_D\subset \mathcal{L}^1_\bullet(\hat{e}^\perp)$ the image of the intersection
$\left[\tensor[^\Theta]{\mathbf{H}}{}(\mathbb{R})\right] \cap \left[\mathbf{SO}_3(\mathbb{R})\right]$ under the composite map
\begin{equation*}
\left[\tensor[^\Theta]{\mathbf{H}}{}(\mathbb{R})\right]\to \mathcal{L}^D_\bullet(x^1+x^\perp)
\to \mathcal{L}^D_\bullet(x^\perp)\to \mathcal{L}^1_\bullet(\hat{e}^\perp) \, .
\end{equation*}
It is a finite collection of $\mathbf{SO}_2(\mathbb{R})$ orbits of lattice cosets in $\mathcal{L}^1_\bullet(\hat{e}^\perp)$. Each  $\mathbf{SO}_2(\mathbb{R})$-orbit in $\mathscr{G}_D$ is equal to $\operatorname{Grid}(y)$ for some $y\in\mathscr{H}_D$.
\end{defi}

The picture emerging till far is rather elegant. The image of the intersection $\left[\tensor[^\Theta]{\mathbf{H}}{}(\mathbb{R})\right] \cap \left[\mathbf{SO}_3(\mathbb{R})\right]$ in $\lfaktor{\mathbf{SO}_3(\mathbb{Z})}{\mathbf{S}^2(\mathbb{R})}\simeq\dfaktor{\mathbf{SO}_3(\mathbb{Z})}{\mathbf{SO}_3(\mathbb{R})}{\mathbf{SO}_2 (\mathbb{R})}$ is $D^{-1/2}\mathbf{SO}^3(\mathbb{Z})\backslash\mathscr{H}_D$. Each point in $\mathscr{H}_D$ we can lift to a $\mathbf{T}(\mathbb{R})$ orbit in the intersection, the image of this orbit in $\lfaktor{\mathbf{SO}_2(\mathbb{R})}{\mathcal{L}^1_\bullet(\hat{e}^\perp)}$ is exactly $\operatorname{Grid}(x)$ from \S \ref{intro:grids}.

\subsection{The Ad\`elic Description of the Intersection}
The real advantage of the intersection picture is that the joint action of the Picard group, equivalently the ad\`elic torus $\mathbf{T}(\mathbb{A})$, on the correspondence between lattice points on the sphere and their orthogonal grids is evident from the next proposition. In particular, it establishes that $\Phi\left(\left[\tensor[^\Theta]{\mathbf{H}}{}(\mathbb{R})\right] \cap \left[\mathbf{SO}_3(\mathbb{R})\right]\right)g_\infty$ is the projection of a $K_\infty$-invariant homogeneous toral set.
\begin{prop}\label{prop:adelic-intersection}
The image of
$\left[\mathbf{T}(\mathbb{A})\right]\subset \lfaktor{\mathbf{SL}_3(\mathbb{Q})}{\mathbf{SL}_3(\mathbb{A})}$ under the quotient map
\begin{equation*}
\lfaktor{\mathbf{SL}_3(\mathbb{Q})}{\mathbf{SL}_3(\mathbb{A})}
\xrightarrow{\slash\mathbf{SL}_3(\hat{\mathbb{Z}})}
\lfaktor{\mathbf{SL}_3(\mathbb{Z})}{\mathbf{SL}_3(\mathbb{R})}
\end{equation*}
is $\left[\tensor[^\Theta]{\mathbf{H}}{}(\mathbb{R})\right] \cap \left[\mathbf{SO}_3(\mathbb{R})\right]$.
\end{prop}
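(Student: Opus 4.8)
The plan is to identify the image of $\left[\mathbf{T}(\mathbb{A})\right]$ explicitly and match it against the orbit decomposition of Proposition \ref{prop:Htheta-SO3-intersect}. First I would use strong approximation for $\mathbf{SL}_3$ to write $\mathbf{SL}_3(\mathbb{A})=\mathbf{SL}_3(\mathbb{Q})\cdot\left(\mathbf{SL}_3(\mathbb{R})\times\mathbf{SL}_3(\hat{\mathbb{Z}})\right)$, so that the quotient map in the statement is well-defined and surjective. An element of $\left[\mathbf{T}(\mathbb{A})\right]$ is represented by $t=(t_\infty,t_f)$ with $t_\infty\in\mathbf{T}(\mathbb{R})<\mathbf{SO}_3(\mathbb{R})$ and $t_f\in\mathbf{T}(\mathbb{A}_f)$. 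Writing $t_f=\gamma_f k_f$ with $\gamma_f\in\mathbf{SL}_3(\mathbb{Q})$ (embedded diagonally) and $k_f\in\mathbf{SL}_3(\hat{\mathbb{Z}})$ — possible by strong approximation for $\mathbf{SL}_3$ applied to the finite places — the image of $[t]$ in $\lfaktor{\mathbf{SL}_3(\mathbb{Z})}{\mathbf{SL}_3(\mathbb{R})}$ is $\mathbf{SL}_3(\mathbb{Z})\gamma_f^{-1}t_\infty$. Since $t_\infty\in\mathbf{SO}_3(\mathbb{R})$, this already lies in $\left[\mathbf{SO}_3(\mathbb{R})\right]$, giving the inclusion of the image in $\left[\mathbf{SO}_3(\mathbb{R})\right]$.

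Next I would check membership in $\left[\tensor[^\Theta]{\mathbf{H}}{}(\mathbb{R})\right]$. Because $\mathbf{T}<\mathbf{H}$ and $\Theta$ fixes $\mathbf{T}$ pointwise, we have $t\in\tensor[^\Theta]{\mathbf{H}}{}(\mathbb{A})$; hence $\gamma_f^{-1}t_\infty=(\gamma_f^{-1}t)\cdot$(trivial finite component after clearing $k_f$) ... more precisely $\gamma_f^{-1}t_f=k_f\in\mathbf{SL}_3(\hat{\mathbb{Z}})$ and $\gamma_f^{-1}t\in\tensor[^\Theta]{\mathbf{H}}{}(\mathbb{A})$ forces $\gamma_f^{-1}t_\infty\in\tensor[^\Theta]{\mathbf{H}}{}(\mathbb{R})\cdot\left(\tensor[^\Theta]{\mathbf{H}}{}(\mathbb{A}_f)\cap\mathbf{SL}_3(\hat{\mathbb{Z}})\right)$, so that modulo $\mathbf{SL}_3(\hat{\mathbb{Z}})$ the class $\mathbf{SL}_3(\mathbb{Z})\gamma_f^{-1}t_\infty$ lies in $\left[\tensor[^\Theta]{\mathbf{H}}{}(\mathbb{R})\right]$. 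For the reverse inclusion, take a point of $\left[\tensor[^\Theta]{\mathbf{H}}{}(\mathbb{R})\right]\cap\left[\mathbf{SO}_3(\mathbb{R})\right]$; by Proposition \ref{prop:Htheta-SO3-intersect} it is of the form $\mathbf{SL}_3(\mathbb{Z})g_{x\to y}t_\infty$ with $y\in\mathscr{H}_D$ and $t_\infty\in\mathbf{T}(\mathbb{R})$. Pick $\gamma\in\mathbf{SL}_3(\mathbb{Z})$ with $\tensor[^\Theta]{\gamma}{}.x=y$ (exists since $x,y$ are primitive integral vectors of the same norm and $\mathbf{SL}_3(\mathbb{Z})$ acts transitively on primitive vectors). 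Then $\tensor[^\Theta]{(g_{x\to y}^{-1}\gamma)}{}$ fixes $x$, so $g_{x\to y}^{-1}\gamma\in\tensor[^\Theta]{\mathbf{H}}{}(\mathbb{R})$; tracking this through, one sees $g_{x\to y}t_\infty$ is represented ad\`elically by an element of $\mathbf{T}(\mathbb{A})$ whose finite part absorbs $\gamma$ and lands in $\mathbf{SL}_3(\hat{\mathbb{Z}})$ — the key point being that $\gamma\in\mathbf{SL}_3(\mathbb{Z})=\tensor[^\Theta]{\mathbf{SL}}{_3}(\mathbb{Z})$ is integral everywhere finite and lies in $\tensor[^\Theta]{\mathbf{H}}{}(\mathbb{A}_f)\cdot\mathbf{T}(\mathbb{A}_f)$ appropriately.

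I expect the main obstacle to be the bookkeeping in the reverse inclusion: showing precisely that every $\mathbf{T}(\mathbb{R})$-orbit $[g_{x\to y}\mathbf{T}(\mathbb{R})]$ appearing in Proposition \ref{prop:Htheta-SO3-intersect} is hit by $\left[\mathbf{T}(\mathbb{A})\right]$, i.e.\ that the ad\`elic torus $\mathbf{T}(\mathbb{A})$ surjects onto the full (finite, disjoint) union indexed by $\mathbf{SO}_3(\mathbb{Z})\backslash\mathscr{H}_D$ rather than onto a proper sub-collection. Concretely one must verify that for each $y$ there is $t\in\mathbf{T}(\mathbb{A})$ with $t=\gamma\cdot(g_{x\to y}t_\infty)\cdot k_f$, $\gamma\in\mathbf{SL}_3(\mathbb{Q})$, $k_f\in\mathbf{SL}_3(\hat{\mathbb{Z}})$; this is exactly the statement that the class-number-type obstruction is trivial here, which follows because $\mathbf{SL}_3$ has class number one (strong approximation) and $\mathbf{T}$ embeds in $\mathbf{SL}_3$ rather than $\mathbf{GL}_3$. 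A clean way to organize the whole argument is to observe, as in the Remark after Proposition \ref{prop:Htheta-SO3-intersect}, that the ring of $\mathbf{SO}_3\times\mathbf{H}$-biinvariant functions on $\mathbf{SL}_3$ is generated by $g\mapsto\langle g.x,g.x\rangle$, so both sides of the asserted equality are precisely the fiber over $D$ of the induced map to the affine line, computed once ad\`elically and once over $\mathbb{Z}$; strong approximation for $\mathbf{SL}_3$ is what makes these two computations agree. I would present the direct orbit-matching argument above as the main proof and mention the invariant-theoretic reformulation as a remark.
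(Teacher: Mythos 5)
Your forward inclusion already has a gap. You factor $t_f=\gamma_f k_f$ with $\gamma_f\in\mathbf{SL}_3(\mathbb{Q})$ and $k_f\in\mathbf{SL}_3(\hat{\mathbb{Z}})$ using strong approximation for $\mathbf{SL}_3$, and then assert that since $t_\infty\in\mathbf{SO}_3(\mathbb{R})$ the resulting class $\mathbf{SL}_3(\mathbb{Z})\gamma_f^{-1}t_\infty$ ``already lies in $[\mathbf{SO}_3(\mathbb{R})]$''. It does not: nothing forces $\gamma_f$ to land in $\mathbf{SO}_3(\mathbb{Q})$, and indeed strong approximation for $\mathbf{SL}_3$ tells you nothing about the subgroup $\mathbf{SO}_3$, which is anisotropic over $\mathbb{R}$ and therefore \emph{fails} strong approximation. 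The step you are silently using is the equality $\mathbf{SO}_3(\mathbb{A}_f)=\mathbf{SO}_3(\mathbb{Q})\cdot\mathbf{SO}_3(\hat{\mathbb{Z}})$, i.e.\ that the quadratic form $x^2+y^2+z^2$ has class number one --- a genuinely arithmetic input that the paper invokes explicitly at this point. The same gap infects your claim that $\gamma_f^{-1}t\in\tensor[^\Theta]{\mathbf{H}}{}(\mathbb{A})$: this would require $\gamma_f\in\tensor[^\Theta]{\mathbf{H}}{}(\mathbb{Q})$, which is again not supplied by strong approximation for $\mathbf{SL}_3$. (For $\tensor[^\Theta]{\mathbf{H}}{}\simeq\mathbf{ASL}_2$ one \emph{can} quote strong approximation, but that is a separate statement from the one you are making.)

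The reverse inclusion is where the real content of the proposition sits, and your proposal misses it. To realize a point $\mathbf{SL}_3(\mathbb{Z})g_{x\to y}t_\infty$ as the image of $[\mathbf{T}(\mathbb{A})]$ you need a \emph{rational} element $g_\mathbb{Q}\in\mathbf{SO}_3(\mathbb{Q})$ with $g_\mathbb{Q}.x=y$ (so that the archimedean component of the adelic representative can lie in $\mathbf{T}(\mathbb{R})$), and then you need $g_\mathbb{Q}\in\mathbf{T}(\mathbb{Q}_p)\cdot\mathbf{SO}_3(\mathbb{Z}_p)$ at every finite place (so that the finite components can lie in $\mathbf{T}(\mathbb{Q}_p)$). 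The first is a Hasse-principle statement (Lemma~\ref{lem:SO3(Q)-transitive}, proved via Witt's theorem or, alternatively, the vanishing of $H^1(\mathbb{Q},\mathbf{B}_x^\times)$); the second is a local integral transitivity statement (Lemma~\ref{lem:SO3-local-transitivity}, proved via maximal orders in quaternion algebras, Hensel's lemma, and Lang's theorem, with separate care at $p=2$ and at $p\mid D$). Your $\gamma\in\mathbf{SL}_3(\mathbb{Z})$ with $\tensor[^\Theta]{\gamma}{}.x=y$, produced from the (easy) transitivity of $\mathbf{SL}_3(\mathbb{Z})$ on primitive vectors, is not an isometry and so cannot play the role of $g_\mathbb{Q}$ or $k_p$; the sentence ``tracking this through, one sees $g_{x\to y}t_\infty$ is represented adelically by an element of $\mathbf{T}(\mathbb{A})$'' is exactly where the missing Lemmas are being swept under the rug. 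You correctly flag that the hard part is to show the adelic torus surjects onto the full union over $\mathbf{SO}_3(\mathbb{Z})\backslash\mathscr{H}_D$, but the justification you give (``$\mathbf{SL}_3$ has class number one and $\mathbf{T}$ embeds in $\mathbf{SL}_3$'') would, if it worked, imply that \emph{every} homogeneous toral set in $\mathbf{SL}_3$ projects to a single real torus orbit, which is false --- the number of orbits in the packet is governed by the class group of $\mathbf{T}$, not of $\mathbf{SL}_3$. For the same reason the invariant-theoretic reformulation at the end, while pleasant, does not absorb any of this work: agreement of the adelic and integral fibers of the invariant map is exactly what needs the Hasse principle and local integral transitivity, and is not a consequence of strong approximation for $\mathbf{SL}_3$ alone.
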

This proposition is the well-known statement that the primitive integral points on the $2$-sphere of radius $\sqrt{D}$ form a single toral packet, cf.\ \cite{EMV} for the case of $D$ square-free.  For the proof we will use the Hamilton quaternions and a few preparatory lemmata. This proof is different from the one in \cite{EMV}; I have preferred arguments which may generalize better to higher rank torus orbits.

\begin{defi}\hfill
\begin{enumerate}
\item
Denote by $\mathbf{B}$ the Hamilton quaternion algebra $\mathbf{B}$ defined over $\mathbb{Q}$. Denote by $\mathbf{Z}\simeq \Gm$ the center of $\mathbf{B}^\times$. The group $\mathbf{PB}^\times\coloneqq \mathbf{Z}\backslash \mathbf{B}^\times$ acts faithfully by conjugation on the traceless quaternions $\mathbf{B}^0$. The group $\mathbf{PB}^\times$ is exactly the group of linear automorphisms of  $\mathbf{B}^0$ preserving the reduced norm and its polarization which is proportional to the trace form.

The quaternion algebra $\mathbf{B}$ is ramified exactly at $\infty$ and $2$. For $p>2$ there is a group isomorphism $\mathbf{PB}^\times(\mathbb{Q}_p) \simeq \mathbf{PGL}_2(\mathbb{Q}_p)$ where the action of $\mathbf{PB}^\times(\mathbb{Q}_p)$ on $\mathbf{B}^0(\mathbb{Q}_p)$ is intertwined with the adjoint action\footnote{For this we identify the trace zero $2 \times 2$ matrices with the Lie algebra of $\mathbf{PGL}_2$.} of $\mathbf{PGL}_2(\mathbb{Q}_p)$ on $\mathfrak{pgl}_2(\mathbb{Q}_p)$.
\item
Fix a $\mathbb{Q}$-linear isomorphism of the traceless quaternions with the $3$-dimensional Euclidean space sending the quaternion norm squared to the Euclidean norm squared, we henceforth identify these two spaces. This induces an isomorphism $\mathbf{PB}^\times\simeq \mathbf{SO}_3$ over $\mathbb{Q}$ and a closed embedding $\mathbf{PB}^\times\hookrightarrow \mathbf{SL}_3$. We identify henceforth  $\mathbf{PB}^\times$ with $\mathbf{SO}_3$ and consider it as a closed subgroup of $\mathbf{SL}_3$.
\item
For each prime $p$ define $K_{\mathbf{B},p}=\mathbf{PB}^\times(\mathbb{Q}_p)\cap \mathbf{SL}_3(\mathbb{Z}_p)$. Using the identification of $\mathbf{PB}^\times$ and $\mathbf{SO}_3$ the group $K_{\mathbf{B},p}$ is identified with $\mathbf{SO}_3(\mathbb{Z}_p)$.
\item
Let $\mathbf{B}^{0,D}$ be the affine variety of quaternions of norm $D$ and trace $0$. It is a homogeneous space for the group $\mathbf{PB}^\times$. Denote the stabilizer of $x\in\mathbf{B}^{0,D}(\mathbb{Q})$ by $\mathbf{PB}^\times_x\simeq \mathbf{T}$. Then $\mathbf{B}^{0,D}\simeq\faktor {\mathbf{PB}^\times} {\mathbf{PB}^\times_x}$.
\end{enumerate}
\end{defi}

\begin{lem}\label{lem:SO3(Q)-transitive}
For any $y\in\mathscr{H}_D$ there is $g\in\mathbf{SO}_3(\mathbb{Q})$ such that $g.x=y$.
\end{lem}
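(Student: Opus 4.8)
The plan is to use the quaternionic model just set up, together with the Skolem--Noether theorem. First I would record the elementary identity satisfied by any $w\in\mathbf{B}^{0,D}(\mathbb{Q})$: since $\Trd(w)=0$ and $\Nrd(w)=D$, the reduced Cayley--Hamilton relation gives $w^2=\Trd(w)\,w-\Nrd(w)=-D$. Consequently the $\mathbb{Q}$-algebra homomorphism $\mathbb{Q}[t]/(t^2+D)\to\mathbf{B}(\mathbb{Q})$ sending $t\mapsto w$ is well defined, and because $D>0$ the source is the imaginary quadratic field $E=\mathbb{Q}(\sqrt{-D})$, so this map is an embedding $\iota_w\colon E\hookrightarrow\mathbf{B}(\mathbb{Q})$ of $\mathbb{Q}$-algebras with $\iota_w(\sqrt{-D})=w$.

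Applying this to $w=x$ and to $w=y$ produces two embeddings $\iota_x,\iota_y\colon E\hookrightarrow\mathbf{B}(\mathbb{Q})$. Since $\mathbf{B}$ is a central simple $\mathbb{Q}$-algebra and $E$ is a (simple) $\mathbb{Q}$-subalgebra, the Skolem--Noether theorem yields an element $b\in\mathbf{B}^\times(\mathbb{Q})$ with $\iota_y(\cdot)=b\,\iota_x(\cdot)\,b^{-1}$; evaluating at $\sqrt{-D}$ gives $y=bxb^{-1}$. Let $g\in\mathbf{PB}^\times(\mathbb{Q})$ be the image of $b$ under $\mathbf{B}^\times(\mathbb{Q})\to\mathbf{PB}^\times(\mathbb{Q})$. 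Under the fixed isomorphism $\mathbf{PB}^\times\simeq\mathbf{SO}_3$ over $\mathbb{Q}$, for which the conjugation action of $\mathbf{PB}^\times$ on the traceless quaternions becomes the standard action of $\mathbf{SO}_3$ on $\mathbb{Q}^3$, the element $g$ acts on $x$ by $g.x=bxb^{-1}=y$. Thus $g\in\mathbf{SO}_3(\mathbb{Q})$ and $g.x=y$, as required.

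There is no real obstacle here: the content is entirely the Skolem--Noether theorem for $\mathbf{B}$, the only things to check being the algebraic identity $w^2=-D$ and the compatibility of the conjugation action with the chosen isomorphism $\mathbf{PB}^\times\simeq\mathbf{SO}_3$, both immediate from the definitions. (Alternatively, one could argue purely with quadratic forms: Witt's extension theorem gives $g_0\in\mathbf{O}_3(\mathbb{Q})$ with $g_0.x=y$, and if $\det g_0=-1$ one corrects $g_0$ by a reflection of $\mathbb{Q}^3$ fixing $y$, which exists because the nondegenerate binary form obtained by restricting $\langle\,,\,\rangle$ to $y^\perp(\mathbb{Q})$ represents a nonzero rational value. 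I would prefer the quaternionic argument since it matches the setup of the surrounding proofs and is the relevant mechanism for the adelic statement in Proposition \ref{prop:adelic-intersection}.)
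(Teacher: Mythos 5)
Your proof is correct, and it gives a route that is distinct in presentation from both proofs the paper offers, though it is essentially the classical form of the paper's second (Galois-cohomological) argument. The paper's second proof reduces the claim to the vanishing of $\ker\bigl[H^1(\mathbb{Q},\mathbf{PB}^\times_x)\to H^1(\mathbb{Q},\mathbf{PB}^\times)\bigr]$, deduced from $H^1(\mathbb{Q},\mathbf{B}^\times_x)=1$; the latter is Hilbert 90 for the quasi-split torus $\mathrm{Res}_{E/\mathbb{Q}}\mathbb{G}_m$. Your Skolem--Noether argument is precisely the concrete content of that vanishing: any two $\mathbb{Q}$-algebra embeddings of the field $E=\mathbb{Q}(\sqrt{-D})$ into the central simple algebra $\mathbf{B}(\mathbb{Q})$ are inner-conjugate, which after passing to $\mathbf{PB}^\times\simeq\mathbf{SO}_3$ is exactly the lemma. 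The benefit of your formulation is that it is elementary and fully explicit, invoking no nonabelian cohomology; the benefit of the paper's cohomological phrasing is that the same diagram chase transfers verbatim to other base rings, which is used in Lemma \ref{lem:SO3-local-transitivity} where Lang's theorem plays the role of Hilbert 90 over $\mathbb{F}_p$. Your parenthetical alternative via Witt's extension theorem and a sign-correcting reflection matches the paper's first proof in all essentials. One small point worth making explicit when writing this up: the application of Skolem--Noether requires $\mathbb{Q}[t]/(t^2+D)$ to be a simple algebra, which holds because $D>0$ forces $t^2+D$ to be irreducible over $\mathbb{Q}$; you note this, but it is the one hypothesis that would fail if one tried the same argument with $D<0$.
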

We present two proofs. The first uses quadratic spaces and the second Galois cohomology of tori.
\begin{proof}[First Proof]
Consider $x^\perp(\mathbb{Q})$ and $y^\perp(\mathbb{Q})$ as rational quadratic space equipped with the restriction of the Euclidean inner product $\langle,\rangle$. The restriction of the norm $\|\bullet \|^2$ to $x^\perp(\mathbb{Z})$ and $y^\perp(\mathbb{Z})$ is in both cases a primitive integral binary quadratic form of discriminant $-D$ or $-4D$, cf. \cite[\S 4.1.2]{AES}. Hence it is a norm form of a lattice in the quadratic field $\mathbb{Q}(\sqrt{D})$. This shows that both quadratic spaces $\left(x^\perp(\mathbb{Q}),\langle,\rangle\right)$ and $\left(y^\perp(\mathbb{Q}),\langle,\rangle\right)$ are isometric over $\mathbb{Q}$ to $\mathbb{Q}(\sqrt{D})$, equipped with the trace form. By Witt's extension theorem there is a rational isometry of $\mathbb{Q}^3$ sending $x^\perp(\mathbb{Q})$ to $y^\perp(\mathbb{Q})$. We have thus constructed an element in $g\in\mathbf{O}_3(\mathbb{Q})$ such that $g.x^\perp=y^\perp$, hence $g.x=\pm y$. By post-composing $g$ with the reflection through the vector $y$ we can assume $g.x=y$. If $\det g=-1$ then we post-compose it with an element $h\in\mathbf{O}_3(\mathbb{R})$ with $\det h=-1$ and $h.y=y$. Such an element $h$ can be constructed by first constructing an orientation inverting rational isometry of $y^\perp(\mathbb{Q})$ (reflection through a vector), extending it using Witt's extension theorem to an isometry $h$ of $\mathbb{Q}^3$ and if $h.y=-y$ then we compose it with the reflection through $y$.
\end{proof}
\begin{proof}[Second Proof]
The points $x$ and $y$ are in the same $\mathbf{SO}_3(\mathbb{Q})$-orbit if the kernel of the following map of pointed Galois cohomologies is trivial
\begin{equation*}
\ker\left[H^1(\mathbb{Q},\mathbf{T})\to H^1(\mathbb{Q},\mathbf{SO}_3)\right]=1 \, .
\end{equation*}
To prove that this kernel is trivial we use the Hamilton quaternion algebra $\mathbf{B}$. We need to show that $\ker\left[H^1(\mathbb{Q},\mathbf{PB}^\times_x)\to H^1(\mathbb{Q},\mathbf{PB}^\times)\right]=1$. We consider the non-faithful action of $\mathbf{B}^\times$ on $\mathbf{B}^0$ by conjugation and denote the stabilizer of $x$ by $\mathbf{B}^\times_x$.

We have the following commutative diagram with exact rows.
\begin{center}
\begin{tikzcd}
1 \arrow[r] & \mathbf{B}^{\times}_x \arrow[r]\arrow[d] & \mathbf{B}^{\times} \arrow[r]\arrow[d] & \mathbf{B}^{0,D} \arrow[r]\arrow[d] & 1 \\
1 \arrow[r] & \mathbf{PB}^{\times}_x \arrow[r] & \mathbf{PB}^{\times} \arrow[r]& \mathbf{B}^{0,D} \arrow[r] & 1 \\
\end{tikzcd}
\end{center}
It induces a commutative diagram of pointed Galois cohomology sets with exact rows
\begin{center}
\begin{tikzcd}
\cdots\arrow[r] & \mathbf{B}^{0,D}(\mathbb{Q}) \arrow[r]\arrow[d] & H^1(\mathbb{Q},\mathbf{B}^{\times}_x) \arrow[r]\arrow[d] & H^1(\mathbb{Q},\mathbf{B}^{\times}) \arrow[d]\\
\cdots\arrow[r]&\mathbf{B}^{0,D}(\mathbb{Q}) \arrow[r] & H^1(\mathbb{Q},\mathbf{PB}^{\times}_x) \arrow[r] & H^1(\mathbb{Q},\mathbf{PB}^{\times}) \\
\end{tikzcd}
\end{center}
Because the leftmost vertical map is the identity the vanishing of $$\ker\left[H^1(\mathbb{Q},\mathbf{PB}^{\times}_x) \to H^1(\mathbb{Q},\mathbf{PB}^{\times})\right]$$ is a consequence of $H^1(\mathbb{Q},\mathbf{B}^{\times}_x)=1$. This latter equality holds because $\mathbf{B}^\times_x\simeq \WR_{\mathbb{Q}(\sqrt{-D})/\mathbb{Q}} \Gm$ and the Galois cohomology of this torus vanishes as it is a quasi-split torus (its character group is a permutation module for the Galois group).
\end{proof}

\begin{lem}\label{lem:KBp-structure}
For all primes $p$ the group $K_{\mathbf{B},p}$ is a maximal compact-open subgroup of $\mathbf{PB}^\times(\mathbb{Q}_p)$. In particular, $K_{\mathbf{B},2}=\mathbf{PB}^\times(\mathbb{Q}_2)$.
\end{lem}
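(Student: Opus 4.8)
The plan is to translate the statement into the computation of a lattice stabilizer inside $\mathbf{PB}^\times(\mathbb{Q}_p)$ and then to distinguish the split case $p$ odd from the ramified case $p=2$. Unwinding the definitions, $K_{\mathbf{B},p}=\mathbf{PB}^\times(\mathbb{Q}_p)\cap\mathbf{SL}_3(\mathbb{Z}_p)$ is exactly the stabilizer in $\mathbf{PB}^\times(\mathbb{Q}_p)$, acting by conjugation on $\mathbf{B}^0(\mathbb{Q}_p)$, of the lattice $\mathbb{Z}_p^3\subset\mathbf{B}^0(\mathbb{Q}_p)$. The first step is to identify this lattice with the traceless part of a maximal $\mathbb{Z}_p$-order of $\mathbf{B}_p$. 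For the natural identification $bi+cj+dk\mapsto(b,c,d)$ the lattice $\mathbb{Z}_p^3$ is the traceless part $L_p^0$ of the Lipschitz order $L_p=\mathbb{Z}_p\langle i,j\rangle$ (for a different $\mathbb{Q}$-linear isometry the two lattices differ by an element of $\mathbf{O}_3(\mathbb{Q})$, which only conjugates $K_{\mathbf{B},p}$ and affects neither conclusion). Now $L_p$ has reduced discriminant a power of $2$, so for odd $p$ it is already maximal; for $p=2$ the Hurwitz order $\mathcal{O}_2=\mathbb{Z}_2+\mathbb{Z}_2 i+\mathbb{Z}_2 j+\mathbb{Z}_2\tfrac{1+i+j+k}{2}$ is the unique maximal $\mathbb{Z}_2$-order, and its traceless part equals $L_2^0=\mathbb{Z}_2 i+\mathbb{Z}_2 j+\mathbb{Z}_2 k$ because a traceless Hurwitz quaternion cannot have half-integral coordinates. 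In all cases $\mathbb{Z}_p^3$ is the traceless part of a maximal order.

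\textbf{Odd $p$.} Here I would use the isomorphism $\mathbf{PB}^\times(\mathbb{Q}_p)\simeq\mathbf{PGL}_2(\mathbb{Q}_p)$ recorded above, under which conjugation on $\mathbf{B}^0(\mathbb{Q}_p)$ becomes the adjoint action on $\mathfrak{sl}_2(\mathbb{Q}_p)$ and a maximal order is carried to a $\mathbf{GL}_2(\mathbb{Q}_p)$-conjugate of $M_2(\mathbb{Z}_p)$, with traceless part $\mathfrak{sl}_2(\mathbb{Z}_p)$. Since $p$ is odd, $M_2(\mathbb{Z}_p)=\mathbb{Z}_p\cdot I\oplus\mathfrak{sl}_2(\mathbb{Z}_p)$, so an element of $\mathbf{PGL}_2(\mathbb{Q}_p)$ stabilizing $\mathfrak{sl}_2(\mathbb{Z}_p)$ already normalizes $M_2(\mathbb{Z}_p)$; conversely, since $M_2(\mathbb{Z}_p)=\operatorname{End}(\mathbb{Z}_p^2)$, its normalizer in $\mathbf{GL}_2(\mathbb{Q}_p)$ is $\mathbb{Q}_p^\times\mathbf{GL}_2(\mathbb{Z}_p)$, which modulo the centre is $\mathbf{PGL}_2(\mathbb{Z}_p)$. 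Thus $K_{\mathbf{B},p}$ corresponds to $\mathbf{PGL}_2(\mathbb{Z}_p)$, the stabilizer of a vertex of the Bruhat--Tits tree of $\mathbf{PGL}_2(\mathbb{Q}_p)$, which is a maximal compact-open subgroup.

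\textbf{The case $p=2$.} Since $\mathbf{B}$ is ramified at $2$, the algebra $\mathbf{B}_2$ is a division algebra, so $\mathbf{PB}^\times(\mathbb{Q}_2)$ is compact and $\mathbf{B}_2$ admits a unique maximal $\mathbb{Z}_2$-order $\mathcal{O}_2$. Conjugation by any element of $\mathbf{B}_2^\times$ preserves $\mathcal{O}_2$ by uniqueness and preserves the reduced trace, hence preserves $\mathcal{O}_2\cap\mathbf{B}^0(\mathbb{Q}_2)=L_2^0=\mathbb{Z}_2^3$. Therefore $K_{\mathbf{B},2}=\mathbf{PB}^\times(\mathbb{Q}_2)$, and a compact group is its own unique maximal compact-open subgroup, which gives both assertions at $p=2$.

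\textbf{Main obstacle.} The only genuinely delicate point is the first step: verifying that $\mathbb{Z}_p^3$ is the traceless part of a \emph{maximal} $\mathbb{Z}_p$-order for every $p$, and in particular the check at $p=2$ that replacing the non-maximal Lipschitz order by the maximal Hurwitz order does not alter the traceless part. Once this is in place, the odd-$p$ case is standard structure theory of $\mathbf{PGL}_2$ over a local field and the $p=2$ case is immediate from the uniqueness of the maximal order in a local division algebra.
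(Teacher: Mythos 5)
Your proof is correct and follows essentially the same route as the paper's: identify $\mathbb{Z}_p^3$ as the traceless part of a maximal order, note that for odd $p$ invertibility of $2$ lets a maximal order in $M_2(\mathbb{Q}_p)$ be recovered from its traceless part (so the stabilizer is $\mathbf{PGL}_2(\mathbb{Z}_p)$, a maximal compact), and use uniqueness of the maximal order in the division algebra $\mathbf{B}(\mathbb{Q}_2)$ for $p=2$. The only cosmetic difference is that you compute the normalizer of $M_2(\mathbb{Z}_p)$ directly instead of invoking the Bruhat--Tits tree, and you spell out the small check that the traceless Hurwitz quaternions at $p=2$ are exactly $\mathbb{Z}_2^3$, which the paper asserts without comment.
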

\begin{proof}
Denote by $\mathcal{O}\subset\mathbf{B}(\mathbb{Q})$ the Hurwitz quaternions -- all quaternions such that either all coordinates are integral or all coordinates are half-integral. This is a maximal order in $\mathbf{B}(\mathbb{Q})$ hence for any prime $p$ its $p$-adic completion $\mathcal{O}_p$ is a maximal order in $\mathbf{B}(\mathbb{Q}_p)$. In coordinate form
\begin{equation*}
\mathcal{O}_p=\begin{cases}
\left\{a+bi+cj+dk \mid a,b,c,d\in\mathbb{Z}_2 \right\}\sqcup \frac{1}{2} \left\{a+bi+cj+dk \mid a,b,c,d\in\mathbb{Z}_2^\times \right\}
 & p =2\\
 \left\{a+bi+cj+dk \mid a,b,c,d\in\mathbb{Z}_p \right\} & p>2
\end{cases} \, .
\end{equation*}
Denote $\mathcal{O}_p^0\coloneqq\mathbf{B}^0(\mathbb{Q}_p)\cap \mathcal{O}_p$, for all $p$ we see that $\mathcal{O}_p^0=\mathbb{Z}_p^3$.

For $p=2$ there is a unique maximal order invariant under conjugation. The claim for $p=2$ follows because the trace is also invariant for the conjugation action of $\mathbf{PB}^\times(\mathbb{Q}_2)$.

Assume $p>2$. Any element $M\in\mathbf{M}_{2\times 2}(\mathbb{Q}_p)$ can be written as $M=M-\frac{1}{2}\Tr M +\frac{1}{2}\Tr M$. Because $2$ is invertible in $\mathbb{Z}_p$ we see that any maximal order $\mathcal{O}'\subset\mathbf{M}_{2\times 2}(\mathbb{Q}_p)$ satisfies  $\mathcal{O}'={\mathcal{O}'}^0+\mathbb{Z}_p$; where ${\mathcal{O}'}^0$ is the set of traceless elements of $\mathcal{O}'$. Thus $\Stab_{\mathbf{PGL}_2}(\mathcal{O}')=\Stab_{\mathbf{PGL}_2}({\mathcal{O}'}^0)$. Recall that maximal orders in $\mathbf{M}_{2\times 2}(\mathbb{Q}_p)$ are in bijection with the vertices of the Bruhat-Tits tree of $\mathbf{PGL}_2(\mathbb{Q}_p)$ and the conjugation action of $\mathbf{PGL}_2(\mathbb{Q}_p)$ on maximal orders corresponds to the action of $\mathbf{PGL}_2(\mathbb{Q}_p)$ on the Bruhat-Tits tree. Hence $\Stab_{\mathbf{PGL}_2}({\mathcal{O}'}^0)$ is the stabilizer of a vertex in the Bruhat-Tits tree which is a maximal compact-open subgroup.

The group $K_{\mathbf{B},p}$ is by definition the stabilizer in $\mathbf{PB}^\times(\mathbb{Q}_p)$ of $\mathbb{Z}_p^3=\mathcal{O}_p^0\subseteq \mathbf{B}(\mathbb{Q}_p)$.
Because $\mathbf{B}$ is unramified at $p>2$ there is an isomorphism of central simple algebras $\mathbf{B}(\mathbb{Q}_p)$ with $\mathbf{M}_{2\times 2}(\mathbb{Q}_p)$. This isomorphism sends $\mathcal{O}_p$ to some maximal order $\mathcal{O}'\subset \mathbf{M}_{2\times 2}(\mathbb{Q}_p)$ and $K_{\mathbf{B},p}$ to  $\Stab_{\mathbf{PGL}_2}({\mathcal{O}'}^0)$ which is a maximal compact-open subgroup.
\end{proof}

\begin{lem}\label{lem:SO3-local-transitivity}
For all primes $p$ there is an element $k\in K_{\mathbf{B},p}=\mathbf{SO}_3(\mathbb{Z}_p)$ satisfying $k.x=y$.
\end{lem}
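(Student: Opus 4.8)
The plan is to transport the statement, via the identifications of Lemma~\ref{lem:KBp-structure}, into a question about conjugating trace-zero quaternions by units of the local order $\mathcal{O}_p$, treating the ramified prime $p=2$ by Skolem--Noether and each odd prime by a short module-theoretic argument over a quadratic order. Under $\mathbf{SO}_3\simeq\mathbf{PB}^\times$ and the identification of $\mathbf{SO}_3(\mathbb{Z}_p)$ with $K_{\mathbf{B},p}=\Stab_{\mathbf{PB}^\times(\mathbb{Q}_p)}(\mathcal{O}_p^0)$, the points $x,y\in\mathscr{H}_D$ become elements of $\mathcal{O}_p^0=\mathbb{Z}_p^3$ of reduced trace $0$ and reduced norm $D$ that are \emph{primitive} (being primitive in $\mathbb{Z}^3$). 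Since $x\neq 0$ has $\Trd(x)=0$ it is not central, so $\mathbb{Q}_p[x]$ is a quadratic $\mathbb{Q}_p$-subalgebra of $\mathbf{B}(\mathbb{Q}_p)$, and from $\Trd(x)=0$, $\Nrd(x)=D$ one gets $x^2+D=0$, hence a canonical isomorphism $\mathbb{Q}_p[x]\xrightarrow{\sim}A_p\xrightarrow{\sim}\mathbb{Q}_p[y]$ with $A_p:=\mathbb{Q}_p[t]/(t^2+D)$, sending $x\mapsto t\mapsto y$.

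For $p=2$ this already finishes the job: $\mathbf{B}(\mathbb{Q}_2)$ is a division algebra, so $\mathbb{Q}_2[x]$ is a field, and the Skolem--Noether theorem (applied to the two embeddings of this field into the central simple algebra $\mathbf{B}(\mathbb{Q}_2)$) produces $g\in\mathbf{B}^\times(\mathbb{Q}_2)$ with $gxg^{-1}=y$. By Lemma~\ref{lem:KBp-structure} one has $K_{\mathbf{B},2}=\mathbf{PB}^\times(\mathbb{Q}_2)=\mathbf{SO}_3(\mathbb{Z}_2)$, so the image of $g$ is the desired $k$.

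For odd $p$ the element produced by Skolem--Noether lies only in $\mathbf{PB}^\times(\mathbb{Q}_p)$, and the crux is to upgrade $\mathbb{Q}_p$-conjugacy to conjugacy by the maximal compact. Following the proof of Lemma~\ref{lem:KBp-structure}, after composing with an inner automorphism of $\mathbf{M}_{2\times 2}(\mathbb{Q}_p)$ I would fix an isomorphism $\mathbf{B}(\mathbb{Q}_p)\simeq\mathbf{M}_{2\times 2}(\mathbb{Q}_p)$ carrying $\mathcal{O}_p$ to $\mathbf{M}_{2\times 2}(\mathbb{Z}_p)$, hence $\mathcal{O}_p^0$ to $\mathfrak{sl}_2(\mathbb{Z}_p)$ and $K_{\mathbf{B},p}$ to $\mathbf{PGL}_2(\mathbb{Z}_p)$ acting by conjugation. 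Let $X,Y\in\mathfrak{sl}_2(\mathbb{Z}_p)$ be the primitive matrices attached to $x,y$; they satisfy $\det X=\det Y=D$ and $X^2+D=0=Y^2+D$. Setting $R:=\mathbb{Z}_p[t]/(t^2+D)$ (an order in $A_p$ depending only on $D$), let $M_X$, resp.\ $M_Y$, be $\mathbb{Z}_p^2$ regarded as an $R$-module by letting $t$ act as $X$, resp.\ $Y$. Then $g\in\mathbf{GL}_2(\mathbb{Z}_p)$ conjugates $X$ to $Y$ exactly when $g\colon M_X\xrightarrow{\sim}M_Y$ is $R$-linear, so it suffices to show $M_X\cong R$ for every primitive $X$. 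This is where primitivity enters: the binary form $Q_X(v):=\det\bigl(v\mid Xv\bigr)$ on column vectors $v\in\mathbb{Z}_p^2$ has the same content as the vector $X\in\mathfrak{sl}_2(\mathbb{Z}_p)=\mathbb{Z}_p^3$ (because $p$ is odd), hence is a primitive binary quadratic form over $\mathbb{Z}_p$ and therefore represents a unit. Choosing $v$ with $Q_X(v)\in\mathbb{Z}_p^\times$ makes $\{v,Xv\}$ a $\mathbb{Z}_p$-basis, so $M_X=Rv$ is cyclic; being torsion-free over $\mathbb{Z}_p$ of rank $2=\rank_{\mathbb{Z}_p}R$, the surjection $R\twoheadrightarrow M_X$ is injective, whence $M_X\cong R$. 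The same gives $M_Y\cong R$, so $M_X\cong M_Y$; projecting a $\mathbb{Z}_p$-linear isomorphism to $\mathbf{PGL}_2(\mathbb{Z}_p)\cong K_{\mathbf{B},p}$ yields $k\in\mathbf{SO}_3(\mathbb{Z}_p)$ with $k.x=y$. The main obstacle is precisely this upgrade for odd $p$: without the primitivity hypothesis the $\mathbf{GL}_2(\mathbb{Z}_p)$-conjugacy class of a trace-zero matrix of given determinant is not unique (the module $M_X$ can then be a nontrivial fractional ideal of an overorder of $R$), and it is primitivity that forces $M_X$ to be free of rank one over $R$.
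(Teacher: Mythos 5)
Your argument is correct, and for odd primes it takes a genuinely different route from the paper. The paper handles $p>2$ by working with the conjugation-transporter $\Lambda_{y\to x}$ as a $\mathbb{Z}_p$-scheme, showing its special fiber is a torsor under a smooth connected group over $\mathbb{F}_p$ (splitting into the ``multiplicative reduction'' case $p\nmid D$ and the ``additive reduction'' case $p\mid D$), invoking Lang's theorem to produce an $\mathbb{F}_p$-point, and then lifting via Hensel. You instead classify $\mathbf{GL}_2(\mathbb{Z}_p)$-conjugacy classes of trace-zero matrices of determinant $D$ by regarding $\mathbb{Z}_p^2$ as a module over $R=\mathbb{Z}_p[t]/(t^2+D)$ and observing that primitivity of $X$ forces the attached binary form $Q_X(v)=\det(v\mid Xv)$ to be primitive (its content equals the content of $X$ since $p$ is odd, and a primitive binary form over $\mathbb{Z}_p$ represents a unit); a unit value yields a cyclic generator, hence $M_X\cong R$, so the class is unique. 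This is more elementary and avoids both Lang and Hensel, but is tailored to the $2\times 2$ situation; the paper's scheme-theoretic argument is chosen (as the author notes) with an eye toward higher-rank generalizations. For $p=2$ you apply Skolem--Noether directly inside the division algebra $\mathbf{B}(\mathbb{Q}_2)$, whereas the paper simply cites its global rational transitivity lemma together with $K_{\mathbf{B},2}=\mathbf{PB}^\times(\mathbb{Q}_2)$; both are fine, and yours has the small virtue of being purely local.
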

\begin{proof}
For $p=2$ this is immediate from Lemmata \ref{lem:KBp-structure} and \ref{lem:SO3(Q)-transitive}. Assume $p>2$ and fix an isomorphism $\mathbf{B}(\mathbb{Q}_p)\simeq \mathbf{M}_{2\times 2}(\mathbb{Q}_p)$ sending the Hurwitz maximal order $\mathcal{O}_p$ from the proof of Lemma \ref{lem:KBp-structure} to $\mathbf{M}_2(\mathbb{Z}_p)$. Use this isomorphism to identify the former spaces and identify $\mathbf{PB}^\times(\mathbb{Q}_p)$ with $\mathbf{PGL}_2(\mathbb{Q}_p)$. Then $K_{\mathbf{B},p}=\mathbf{PGL}_2(\mathbb{Z}_p)$ and $x$, $y$ are two points in $\mathcal{O}_p$ with trace $0$. The assumption that $x$ and $y$ are primitive implies that $x,y\not\in p \mathbf{M}_2(\mathbb{Z}_p)$.

We abuse the notation and define tentatively $\mathbf{PGL}_2$ as an affine scheme over $\mathbb{Z}_p$ using the adjoint representation.
Denote by $\Lambda_{y\to x}$ the closed affine subscheme of $\mathbf{PGL}_2$ of group elements $g$ such that $gyg^{-1}=x$. This scheme can be evidently defined over $\mathbb{Z}_p$ and
\begin{equation*}
\Lambda_{y\to x}(\mathbb{Z}_p)=\left\{k\in K_{\mathbf{B},p} \mid k^{-1}.x=y \right\} \, .
\end{equation*}
We will show next that the reduction of $\Lambda_{y\to x}$ modulo $p$ is a smooth variety with a point over $\mathbb{F}_p$.  Hensel's lemma then implies that there is a point in $\Lambda_{y\to x}(\mathbb{Z}_p)$ finishing the proof of the claim.

The scheme $\Lambda_{x\to y}$ is a torsor for the stabilizer $\mathbf{T}$ of $x$ which is a torus in $\mathbf{PGL}_2$. We denote the reduction modulo $p$ by an over-line.
The reduction of $\overline{\Lambda_{x\to y}}$ is a torsor for the reduction $\overline{\mathbf{T}}=\Stab_{\overline{\mathbf{PGL}_2}} \overline{x}$. Hence if $\overline{\Lambda_{x\to y}}(\mathbb{F}_p)$ is non-empty then $\overline{\Lambda_{x\to y}}$ is isomorphic to $\overline{\mathbf{T}}$ over $\mathbb{F}_p$. We need then to show that $\overline{x}$ and $\overline{y}$ are conjugate over $\mathbb{F}_p$ and that $\overline{\mathbf{T}}$ is smooth.

The element $x$, $y$ have the same norm and trace so the characteristic polynomial of $\overline{x}$ and $\overline{y}$ is the same. We distinguish between two cases. If $p\nmid D$ this is the case of \emph{multiplicative reduction} and if $p\mid D$ this is the case of \emph{additive reduction}.

If $p\mid D$ then $\det \overline{x}=\det\overline{y}=0$ but $\overline{x},\overline{y}\neq 0$ because $x,y\in\mathbb{Z}^3_\mathrm{primitive} \Rightarrow x,y\not\in p \mathbf{M}_2(\mathbb{Z}_p)$. If $p\nmid D$ then $\overline{x},\overline{y}$ are both regular semisimple elements in $\mathbf{GL}_2(\mathbb{F}_p)$. The centralizer $\overline{\mathbf{T}}$ of a regular semisimple element is a maximal torus -- hence smooth.

In both cases using the Jordan normal form over the algebraic closure $\overline{\mathbb{F}_p}$ we see that  $\overline{x},\overline{y}$ are conjugate over $\overline{\mathbb{F}_p}$. Assume for the moment $p\mid D$. There is a single non-trivial nilpotent conjugacy class over $\overline{\mathbb{F}_p}$ -- the class of $N\coloneqq\begin{pmatrix}
0 & 1 \\ 0 & 0
\end{pmatrix}\in \mathbb{F}_p$. The stabilizer $\Stab_{\overline{\mathbf{PGL}_2}} N$ is isomorphic to $\Ga$ as it is the group of upper-triangular unipotent matrices. This is also a smooth group. To show that there is a single non-trivial nilpotent conjugacy class over $\mathbb{F}_p$ we need to establish
$\ker\left[H^1(\mathbb{F}_p, \Stab_{\overline{\mathbf{PGL}_2}} N)\to H^1(\mathbb{F}_p, \overline{\mathbf{PGL}_2})\right]=1$. This follows from Lang's theorem as  $\Stab_{\overline{\mathbf{PGL}_2}} N\simeq \Ga$ is smooth and connected and $H^1(\mathbb{F}_p, \Stab_{\overline{\mathbf{PGL}_2}} N)=1$. This also implies for $p\mid D$ that $\overline{\mathbf{T}}$ is conjugate to $\Stab_{\overline{\mathbf{PGL}_2}} N$ over $\mathbb{F}_p$ and is smooth.

The claim that $\overline{x},\overline{y}$ are conjugate over $\mathbb{F}_p$ if $p\nmid D$ follows similarly as Lang's theorem implies the vanishing $H^1(\mathbb{F}_p, \overline{\mathbf{T}})$.
\end{proof}

\begin{proof}[Proof of Proposition \ref{prop:adelic-intersection}]
We first show the inclusion of the image of $\left[\mathbf{T}(\mathbb{A})\right]$ in the intersection of the real periodic orbits.
Because $\mathbf{SO}_3(\mathbb{A}_f)\cap \mathbf{SL}_3(\hat{\mathbb{Z}})=\mathbf{SO}_3(\hat{\mathbb{Z}})$ and the quadratic form $x^2+y^2+z^2$ has class number $1$ (it is the unique form in its genus) the image of $\left[\mathbf{SO}_3(\mathbb{A})\right]$ under the quotient map is exactly $\left[\mathbf{SO}_3(\mathbb{R})\right]$. The group $\tensor[^\Theta]{\mathbf{H}}{}
\simeq\mathbf{ASL}_2=\mathbf{P}^1$ has strong approximation, hence the image of $\left[\tensor[^\Theta]{\mathbf{H}}{}(\mathbb{A})\right]$ under the quotient map is $\left[\tensor[^\Theta]{\mathbf{H}}{}(\mathbb{R})\right]$. Obviously we have $\left[\mathbf{T}(\mathbb{A})\right]\subset \left[\tensor[^\Theta]{\mathbf{H}}{}(\mathbb{A})\right] \cap \left[\mathbf{SO}_3(\mathbb{A})\right]$ -- hence the quotient image of $\left[\mathbf{T}(\mathbb{A})\right]$ is contained in the intersection of the images of the latter two homogeneous sets, which is exactly $\left[\tensor[^\Theta]{\mathbf{H}}{}(\mathbb{R})\right] \cap \left[\mathbf{SO}_3(\mathbb{R})\right]$.

To establish the inverse inclusion it is enough to show for every $y\in\mathscr{H}_D$ that
\begin{equation*}
\mathbf{SL}_3(\mathbb{Q})\cdot g_{x\to y}\mathbf{T}(\mathbb{R})\subset \mathbf{SL}_3(\mathbb{Q})\cdot \mathbf{T}(\mathbb{A})\cdot \mathbf{SO}_3(\hat{\mathbb{Z}}) \, .
\end{equation*}
Lemma \ref{lem:SO3(Q)-transitive} furnishes the existence of $g_\mathbb{Q}\in\mathbf{SO}_3(\mathbb{Q})$ such that $g_\mathbb{Q}.x=y$. Using this element we write
\begin{equation*}
\mathbf{SL}_3(\mathbb{Q})\cdot g_{x\to y}\mathbf{T}(\mathbb{R})=\mathbf{SL}_3(\mathbb{Q})\cdot \mathbf{T}(\mathbb{R})\cdot g_{\mathbb{Q},f}^{-1} \, ,
\end{equation*}
where $g_{\mathbb{Q},f}\in\mathbf{SO}_3(\mathbb{A}_f)$ is the diagonal embedding of $g_\mathbb{Q}^{-1}$. Lemma \ref{lem:SO3-local-transitivity} implies that for any $p$ there is an element $k_p\in\mathbf{SO}_3(\mathbb{Z}_p)$ such that $k_p.x=y\Rightarrow g_\mathbb{Q}^{-1}\in \mathbf{T}(\mathbb{Q}_p)k_p^{-1}$. Hence $g_{\mathbb{Q},f}\in \mathbf{T}(\mathbb{A}_f)\cdot \mathbf{SO}_3(\hat{\mathbb{Z}})$.
\end{proof}

\begin{remark}\label{rem:squaring}
The proposition above implies that the finite abelian group $$C_D\coloneqq\dfaktor{\mathbf{T}(\mathbb{Q})}{\mathbf{T}(\mathbb{A})}{\mathbf{T}(\mathbb{R})\cdot\mathbf{T}(\mathbb{A}_f)\cap \mathbf{SO}_3(\hat{\mathbb{Z}})}$$ acts simply transitively on the correspondence $\mathscr{J}_D$ from Conjecture \ref{conj:joint-equidistirbution}. Recall tht $\mathbf{T}\simeq \lfaktor{\Gm}{\WR_{E/\mathbb{Q}} \Gm}$ for some quadratic imaginary extension $E/\mathbb{Q}$ and for all primes $p\neq 2$ the group $K_{B,p}$ is the projective group of units of a maximal order. This implies that $C_D$ is a quotient
of a Picard group of an order\footnote{It is not necessarily the Picard group itself because the compact group $K_{\mathbf{B},2}$ is all of $\mathbf{SO}_3(\mathbb{Q}_2)$, which is bigger then the image of integral elements $\mathcal{O}_2^\times$ in the projective group of units. Specifically, if $2$ ramifies in $E$ then $C_D$ is a quotient of $\Pic(\Lambda)$ by the order $2$ group generated by the prime above $2$. Otherwise, $C_D=\Pic(\Lambda)$.} $\Lambda\subset E$.

In \cite{EMV} it is shown that if $x\in\mathcal{H}_D$ then $(\lambda.x)^\perp(\mathbb{Z})=\lambda^2.x^\perp(\mathbb{Z})$ for each $\lambda\in C_D$.
This squaring of the action is evident in our description because the intersection is a homogeneous toral set in $\mathbf{ASL}_2$, rather then in $\mathbf{AGL}_2$. The group $\mathbf{SO}_3\simeq \mathbf{PB}^\times$ is of adjoint type while $\mathbf{SL}_2$ is simply-connected. The intersection construction provides a map $\mathbf{T}\hookrightarrow \tensor[^\Theta]{\mathbf{H}}{}\simeq \mathbf{ASL}(x^1+x^\perp)$. We claim that this map is the isomorphism $\lfaktor{\Gm}{\WR_{E/\mathbb{Q}} \Gm}\to \WR_{E/\mathbb{Q}}^1 \Gm$ defined by $\lambda\mapsto \frac{\lambda}{\tensor[^\sigma]{\lambda}{}}$ which descends to a square of $\lambda$ in $C_D$. To see this identify $\mathbf{B}^0= \Lie(\mathbf{PB}^\times)$ in the standard fashion, then the space $x^\perp$ is the spanned by the non-trivial roots of $\mathbf{T}$ in $\Lie(\mathbf{PB}^\times)$. In particular, $\mathbf{T}$ acts in $x^\perp$ with weights $\lambda\mapsto \frac{\lambda}{\tensor[^\sigma]{\lambda}{}}$ and $\lambda\mapsto \frac{\tensor[^\sigma]{\lambda}{}}{\lambda}$. The weights of $\mathbf{T}$ and the fact $\mathbf{T}.x=x$ characterize the embedding $\mathbf{T}\hookrightarrow \mathbf{ASL}\left(x^1+x^\perp\right)$. On the level of tori this embedding is seen to coincide with the map $\lfaktor{\Gm}{\WR_{E/\mathbb{Q}} \Gm}\to \WR_{E/\mathbb{Q}}^1 \Gm$ from above.
\end{remark}

\subsection{Ad\`elic Torus Action on Orthogonal Grids}
Our last step is to present the collection of orthogonal grids $\mathscr{G}_D\subset \mathcal{L}^1_\bullet(\hat{e}^\perp)$ as a projection of an ad\`elic homogeneous toral set.
Fix an orientation preserving rational linear isomorphism $\varphi$ of $\hat{e}^\perp$ with the affine $2$-space mapping the lattice $\hat{e}^\perp(\mathbb{Z})$ to $\mathbb{Z}^2$. This map is uniquely-defined up to post composition with an element of $\mathbf{SL}_2(\mathbb{Z})$. This induces a unique isomorphism
\begin{equation*}
\varphi\colon \mathcal{L}^1_\bullet(\hat{e}^\perp)\to \lfaktor{\mathbf{ASL}_2(\mathbb{Z})}{\mathbf{ASL}_2(\mathbb{R})}
\end{equation*}
mapping the lattice $\hat{e}^\perp(\mathbb{Z})$ to the identity coset on the right. To see the action of $\mathbf{ASL}_2(\mathbb{R})$ on $\mathcal{L}^1_\bullet(\hat{e}^\perp)$ as an action of a subgroup of $\mathbf{SL}_3(\mathbb{R})$ we define first an affine isomorphism $\hat{e}^\perp\to\hat{e}+\hat{e}^\perp$ by $P\mapsto P+\hat{e}$. This induces a bijection
\begin{equation*}
\mathcal{L}^1_\bullet(\hat{e}^\perp)\to \mathcal{L}^1_\bullet(\hat{e}+\hat{e}^\perp)
\end{equation*}
defined by $P+L\mapsto(\hat{e}+P)+L$ and. By composing with $\varphi$ we derive an affine isomorphism of $\hat{e}+\hat{e}^\perp$ with the affine $2$-space. This affine isomorphism intertwines the action  of the group $\tensor[^\Theta]{\Stab_{\mathbf{SL}_e}(\hat{e})}{}$ on $\hat{e}+\hat{e}^\perp$ with the action of $\mathbf{ASL}_2$ on the affine $2$-space. In particular, we henceforth identify $\mathbf{ASL}_2=\tensor[^\Theta]{\Stab_{\mathbf{SL}_3}(\hat{e})}{}$.

\begin{prop}\label{prop:grids-packet}
Let $\delta\in\mathbf{SL}_3(\mathbb{R})$ be any element satisfying $\delta.x=\hat{e}$ and fix $g_\infty\in\mathbf{ASL}_2(\mathbb{R})\subset \mathbf{SL}_3(\mathbb{R})$ such that $\Ad_{g_\infty^{-1} \tensor[^\Theta]{\delta}{}} \mathbf{T}(\mathbb{R})=\mathbf{SO}_2(\mathbb{R})$. Then the homogeneous toral set
\begin{equation*}
\mathcal{H}_D\coloneqq\left[\left(\Ad_{\tensor[^\Theta]{\delta}{}} \mathbf{T}\right)(\mathbb{A})g_\infty\right]\subset \left[\mathbf{ASL}_2(\mathbb{A})\right]
\end{equation*}
project to $\varphi(\mathscr{G}_D)\subset \lfaktor{\mathbf{ASL}_2(\mathbb{Z})}{\mathbf{ASL}_2(\mathbb{R})}$. The discriminant of $\mathcal{H}_D$ is $-4D$ if $D\equiv 1,2 \mod 4$ and $-D$ if $D\equiv 3 \mod 4$. The torsion order is $D$.
\end{prop}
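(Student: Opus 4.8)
The plan is to assemble the proposition from the three earlier structural results --- Definition \ref{defi:Htheta-orbit-to-Grids}, Proposition \ref{prop:intersection-grids}, and Proposition \ref{prop:adelic-intersection} --- by tracking a single $\mathbf{T}(\mathbb{A})$-orbit through the chain of identifications. First I would observe that by Proposition \ref{prop:adelic-intersection} the quotient image of $\left[\mathbf{T}(\mathbb{A})\right]$ under $\lfaktor{\mathbf{SL}_3(\mathbb{Q})}{\mathbf{SL}_3(\mathbb{A})}\to\lfaktor{\mathbf{SL}_3(\mathbb{Z})}{\mathbf{SL}_3(\mathbb{R})}$ is exactly $\left[\tensor[^\Theta]{\mathbf{H}}{}(\mathbb{R})\right]\cap\left[\mathbf{SO}_3(\mathbb{R})\right]$; then Proposition \ref{prop:intersection-grids} identifies this intersection, via the equivariant isomorphisms of Definition \ref{defi:Htheta-orbit-to-Grids}, with the union $\bigsqcup_{y}\mathbf{T}(\mathbb{R})g_{x\to y}^{-1}.(y^1+y^\perp(\mathbb{Z}))$ inside $\mathcal{L}^D_\bullet(x^1+x^\perp)$, and applying the final normalization map $\mathcal{L}^D_\bullet(x^\perp)\to\mathcal{L}^1_\bullet(\hat{e}^\perp)$ of the last Definition shows that this is precisely $\mathscr{G}_D$. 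The only genuinely new bookkeeping is to push all of this through the chosen real element $\delta$ (so that $\delta.x=\hat{e}$ rather than merely $g_{x\to\hat e}.x=D^{1/2}\hat e$) and through the rational trivialization $\varphi$, checking that conjugating the torus $\mathbf{T}$ by $\tensor[^\Theta]{\delta}{}$ and translating on the right by $g_\infty$ reproduces exactly the $\mathbf{SO}_2(\mathbb{R})$-orbits $\operatorname{Grid}(y)$. Concretely, $\tensor[^\Theta]{\delta}{}$ conjugates $\tensor[^\Theta]{\mathbf{H}}{}=\tensor[^\Theta]{\Stab_{\mathbf{SL}_3}(x)}{}$ onto $\tensor[^\Theta]{\Stab_{\mathbf{SL}_3}(\hat e)}{}=\mathbf{ASL}_2$ and $\mathbf{T}$ onto $\Ad_{\tensor[^\Theta]{\delta}{}}\mathbf{T}$, and since strong approximation for $\mathbf{ASL}_2$ gives $\left[\left(\Ad_{\tensor[^\Theta]{\delta}{}}\mathbf{T}\right)(\mathbb{A})g_\infty\right]$ as a well-defined homogeneous toral set whose archimedean part, by the choice of $g_\infty$, is $\mathbf{SO}_2(\mathbb{R})$-invariant, this is the desired $K_\infty$-invariant homogeneous toral set and its projection to $\lfaktor{\mathbf{ASL}_2(\mathbb{Z})}{\mathbf{ASL}_2(\mathbb{R})}$ is $\varphi(\mathscr{G}_D)$ by the compatibility of $\varphi$ with the identifications already established.

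Next I would compute the discriminant of $\mathcal{H}_D$. By Definition \ref{defi:order} the discriminant is that of the quadratic order $\Lambda=\cap_{v<\infty}\Lambda_v$ attached to the torus datum, and the torus in question is $\mathbf{T}=\Stab_{\mathbf{SO}_3}(x)\simeq\WR^1_{E/\mathbb{Q}}\Gm$ with $E=\mathbb{Q}(\sqrt{-D})$. The key input is the identification of the local orders with the stabilizers of $x^\perp(\mathbb{Z}_p)$: via the quaternion model used in Lemma \ref{lem:KBp-structure} and Lemma \ref{lem:SO3-local-transitivity}, and the fact from \cite[\S 4.1.2]{AES} that the restriction of the norm form to $x^\perp(\mathbb{Z})$ is a primitive binary quadratic form of discriminant $-D$ when $D\equiv 3\bmod 4$ and $-4D$ when $D\equiv 1,2\bmod 4$, one reads off that $\Lambda$ is the order of that discriminant in $E$. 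This is essentially a local-at-each-$p$ computation comparing the order $\Lambda_p=\mathbf{E}(\mathbb{Q}_p)\cap\Ad_{l_p}\mathbf{M}_2(\mathbb{Z}_p)$ with the ring generated by the action of $\mathbf{T}$ on the rank-one $E_p$-module $x^\perp(\mathbb{Z}_p)$; the subtlety at $p=2$ (where $\mathbf{B}$ ramifies and $K_{\mathbf{B},2}=\mathbf{PB}^\times(\mathbb{Q}_2)$) is exactly what forces the case distinction, and it is handled by the explicit description of $\mathcal{O}_2$ in the proof of Lemma \ref{lem:KBp-structure}.

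Finally, the torsion order. The torsion point attached to $y\in\mathscr{H}_D$ is the class of $y^\mathrm{tors}=y^1-\frac{y}{D}$ in $\faktor{y^\perp(\mathbb{R})}{y^\perp(\mathbb{Z})}$, and it is observed already in \S\ref{intro:grids} that this is a torsion point of order exactly $D$; transporting this via the equivariant isomorphisms to the $\mathbf{V}$-coordinate of the homogeneous toral set and invoking Definition \ref{defi:jmath-iso}(d), the torsion order of $\mathcal{H}_D$ in the sense of \S\ref{sec:torsion-pts} equals $D$. I expect the main obstacle to be purely organizational rather than conceptual: keeping the various choices ($\delta$ versus $g_{x\to\hat e}$, $g_\infty$, $\varphi$, the Hurwitz order identification) consistent so that the equivariance of every map in the chain is genuinely preserved, and in particular verifying carefully that the affine shift $P\mapsto P+\hat e$ composed with $\varphi$ matches the map $P+L\mapsto(P-\tfrac{x}{D})+L$ of Definition \ref{defi:Htheta-orbit-to-Grids} under $\delta$ --- a compatibility of three different affine identifications that is routine but error-prone. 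The discriminant computation at $p=2$ is the one place where something genuinely must be checked rather than quoted.
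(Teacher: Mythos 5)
Your overall structure matches the paper's: push $[\mathbf{T}(\mathbb{A})]$ through Proposition \ref{prop:adelic-intersection} to get the real intersection, then through Proposition \ref{prop:intersection-grids} and the maps of Definition \ref{defi:Htheta-orbit-to-Grids} to land in $\mathscr{G}_D$, and quote \cite[\S 4.1.2]{AES} for the discriminant and the order of $x^{\mathrm{tors}}$ for the torsion order. But you mischaracterize the one genuinely nontrivial step. You describe the reconciliation of "$\delta$ versus $g_{x\to\hat e}$" and of the various affine identifications as "routine but error-prone bookkeeping". In the paper's proof it is not bookkeeping: writing $g=\tensor[^\Theta]{\delta}{_\infty^{-1}}g_\infty$, the hypothesis $\Ad_{g^{-1}}\mathbf{T}(\mathbb{R})=\mathbf{SO}_2(\mathbb{R})$ pins $g$ down only up to the normalizer $\Nrml_{\mathbf{SL}_3(\mathbb{R})}(\mathbf{SO}_2(\mathbb{R}))$, and the paper must compute that normalizer (it is the maximal torus $S=\exp(\mathbb{R}H)\cdot\mathbf{SO}_2(\mathbb{R})$, and $\mathbf{SO}_2$ is self-normalizing in the $\mathbf{SL}_2$ block) to conclude $g\in g_{x\to\hat e}^{-1}\mathbf{SO}_2(\mathbb{R})\exp(\lambda H)$ for some real $\lambda$. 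The additional constraint $g_\infty\in\mathbf{ASL}_2(\mathbb{R})$, i.e.\ $\exp(\lambda H)\in\mathbf{SO}_3(\mathbb{R})\,\delta^{-1}\,\Stab(\hat e)(\mathbb{R})$, is then what forces $\exp(4\lambda)=\langle x,x\rangle=D$, and $\exp(\lambda)=D^{1/4}$ is precisely the rescaling factor in the definition of $\mathscr{G}_D$ that converts $\mathcal{L}^D_\bullet(x^\perp)$ into $\mathcal{L}^1_\bullet(\hat e^\perp)$. Your plan never explains how the stated hypotheses single out this scaling; without the normalizer argument there is no reason the homogeneous toral set should project onto unimodular lattices rather than some other homothety class. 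That is a concrete gap in the archimedean part, not a matter of tracking signs.

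Two smaller remarks. For consistency you (and the statement itself, which reads $\delta\in\mathbf{SL}_3(\mathbb{R})$) should take $\delta\in\mathbf{SL}_3(\mathbb{Z})$; otherwise $\Ad_{\tensor[^\Theta]{\delta}{}}\mathbf{T}$ is not a $\mathbb{Q}$-subgroup of $\mathbf{ASL}_2$ and the statement of the proposition does not parse, and the proof's use of $\tensor[^\Theta]{\delta}{^{-1}}\in\mathbf{SL}_3(\mathbb{Z}_v)$ at every finite place requires it. For the discriminant, your plan to redo a local computation at $p=2$ via the Hurwitz order is more than the paper does: it simply notes (via Proposition \ref{prop:complex-torus-cm}) that the discriminant of $\mathcal{H}_D$ equals the discriminant of the binary form $\langle\cdot,\cdot\rangle\restriction_{x^\perp(\mathbb{Z})}$ and cites \cite[\S 4.1.2]{AES} for the case split on $D\bmod 4$. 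Either route works, but the local $p=2$ computation you propose is not actually forced.
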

\begin{remark}
Notice that there is no canonically defined homogeneous toral set projecting to $\varphi(\mathscr{G}_D)$ because only the coset $\tensor[^\Theta]{\mathbf{ASL}}{_2}(\mathbb{Z})\delta$ is uniquely defined. Nevertheless, all the possible homogeneous toral set have the same projection; essentially, because $\tensor[^\Theta]{\mathbf{ASL}}{_2}(\mathbb{Z})$ is contained in $\mathbf{SL}_3(\mathbb{Z}_p)$ for all primes $p$.
\end{remark}
\begin{proof}
First notice that $\Ad_\delta \mathbf{T}< \Stab_{\mathbf{SL}_3}(\hat{e})\Rightarrow
\Ad_{\tensor[^\Theta]{\delta}{}} \mathbf{T}<\mathbf{ASL}_2$. Because $\mathbf{ASL}_2$ has class number one to compute the projection of $\mathcal{H}_D$ to $\lfaktor{\mathbf{ASL}_2(\mathbb{Z})}{\mathbf{ASL}_2(\mathbb{R})}$ is is enough to compute the projection of
\begin{equation*}
\widetilde{\mathcal{H}_D}\coloneqq\left[\tensor[^\Theta]{\delta}{}\mathbf{T}(\mathbb{A})\tensor[^\Theta]{\delta}{^{-1}}g_\infty\right]
=\left[\mathbf{T}(\mathbb{A})\tensor[^\Theta]{\delta}{^{-1}}g_\infty\right] \subset\left[\mathbf{SL}_3(\mathbb{A})\right]
\end{equation*}
to $\lfaktor{\mathbf{SL}_3(\mathbb{Z})}{\mathbf{SL}_3(\mathbb{R})}$.
For all finite places $v<\infty$ we have $\tensor[^\Theta]{\delta}{^{-1}}\in\mathbf{SL}_3(\mathbb{Z})\subset \mathbf{SL}_3(\mathbb{Z}_v)$. Hence the real projection of $\widetilde{\mathcal{H}_D}$ is the same as of $\left[\mathbf{T}(\mathbb{A})\tensor[^\Theta]{\delta}{^{-1}_\infty}g_\infty\right]$ where $\tensor[^\Theta]{\delta}{^{-1}_\infty}$ is the image of $\tensor[^\Theta]{\delta}{^{-1}}$ in $\mathbf{SL}_3(\mathbb{R})$. Yet this projection is the right translation by   $$g=\tensor[^\Theta]{\delta}{^{-1}_\infty}g_\infty\in\mathbf{SL}_3(\mathbb{R})$$ of the real projection of $\left[\mathbf{T}(\mathbb{A})\right]$.

From Proposition \ref{prop:adelic-intersection} we deduce that $\mathcal{H}_D$ projects to
$\left(
\left[\tensor[^\Theta]{\mathbf{H}}{}(\mathbb{R})\right] \cap \left[\mathbf{SO}_3(\mathbb{R})\right]
\right)g$ in $\lfaktor{\mathbf{SL}_3(\mathbb{Z})}{\mathbf{SL}_3(\mathbb{R})}$. The element $g^{-1}\in\mathbf{SL}_3(\mathbb{R})$ conjugates $\mathbf{T}(\mathbb{R})$ to $\mathbf{SO}_2(\mathbb{R})$.
Let $g_{x\to\hat{e}}\in\mathbf{SO}_3(\mathbb{R})$ satisfy $g_{x\to\hat{e}}.x=D^{1/2}\hat{e}$, then $g_{x\to\hat{e}}$ also conjugates $\mathbf{T}(\mathbb{R})$ to $\mathbf{SO}_2(\mathbb{R})$. This implies that $g_{x\to\hat{e}}g$ normalizes $\mathbf{SO}_2(\mathbb{R})$.

To learn what are the possibilities for $g_{x\to\hat{e}}g$ we compute $\Nrml_{\mathbf{SL}_3(\mathbb{R})}(\mathbf{SO}_2(\mathbb{R}))$. Because $\mathbf{SO}_2(\mathbb{R})$ is a torus with non-singular elements $\Cent_{\mathbf{SL}_3(\mathbb{R})}\mathbf{SO}_2(\mathbb{R})$ is the maximal torus
\begin{align*}
S&\coloneqq
\exp\left(*H\right)\cdot \mathbf{SO}_2(\mathbb{R}) \, ,\\
H&\coloneqq \begin{pmatrix}
1 & 0 & 0 \\ 0 & 1 & 0 \\ 0 & 0 & -2
\end{pmatrix} \, .
\end{align*}
The normalizer of $\mathbf{SO}_2(\mathbb{R})$ is contained in the normalizer of $S$. The group
$\Nrml(S)/S$ is contained in the absolute Weyl group of $S$. The absolute Weyl group is the permutation group on the three absolute characters of $S$ corresponding to the three eigenvalues.
If $k\in\mathbf{SO}_2(\mathbb{R})$ has eigenvalues $\exp(\pm i\theta)$ then $\exp(tH)k$ has eigenvalues $ \exp(t\pm i\theta), \exp(-2t)$. The real group $\Nrml(S)/ S$ must keep the unique real eigenvalue $\exp(-2t)$ invariant. Hence the only non-trivial possibility for $\Nrml(S)/ S$ is the group $\cyclic{2}$ permuting $\exp(t\pm i\theta)$. We deduce that if there is a non-trivial element in $\Nrml(S)/ S$ it is represented by an element $g_0\in\mathbf{SL}_2(\mathbb{R})\hookrightarrow\mathbf{SL}_3(\mathbb{R})$ where $\mathbf{SL}_2(\mathbb{R})$ is embedded in the upper left block. Such an element need also normalize $\mathbf{SO}_2(\mathbb{R})$, but $\mathbf{SO}_2(\mathbb{R})$ is self-normalizing in $\mathbf{SL}_2(\mathbb{R})$ because it is a maximal compact. We conclude that $\Nrml_{\mathbf{SL}_3(\mathbb{R})}(\mathbf{SO}_2(\mathbb{R}))=S$.

At last we see that there is some $\lambda\in\mathbb{R}$ such that $g\in g_{x\to \hat{e}}^{-1}\mathbf{SO}_2(\mathbb{R})\exp(\lambda H)$. To compute $\lambda$ note that
\begin{align*}
\exp(\lambda H)&\in \mathbf{SO}_3(\mathbb{R})\tensor[^\Theta]{\delta}{^{-1}}\mathbf{ASL}_2(\mathbb{R})
\Rightarrow \exp(-\lambda H)=\tensor[^\Theta]{\exp(\lambda H)}{}
\in \mathbf{SO}_3(\mathbb{R}) \delta^{-1} \Stab_{\mathbf{SL}_3(\mathbb{R})}(\hat{e})\\
&\Rightarrow
\exp(4\lambda)=\langle \exp(-\lambda H).\hat{e}, \exp(-\lambda H).\hat{e} \rangle= \langle \delta^{-1}.\hat{e}, \delta^{-1}.\hat{e} \rangle= \langle x, x \rangle =D \, .
\end{align*}
Hence $\exp(\lambda)=D^{1/4}$.

The projection $\left(
\left[\tensor[^\Theta]{\mathbf{H}}{}(\mathbb{R})\right] \cap \left[\mathbf{SO}_3(\mathbb{R})\right]
\right)g$ is a collection of $\mathbf{SO}_2(\mathbb{R})$-orbits contained in $\left[\mathbf{ASL}_2(\mathbb{R})\right]$. To find the corresponding lattice cosets in $\mathcal{L}^1_\bullet(\hat{e}+\hat{e}^\perp)$ we use Proposition \ref{prop:intersection-grids} to write  the projection in terms of rank $2$ lattices in $\mathbb{R}^3$
\begin{align*}
\left(
\left[\tensor[^\Theta]{\mathbf{H}}{}(\mathbb{R})\right] \cap \left[\mathbf{SO}_3(\mathbb{R})\right]
\right)g
&\mapsto \bigsqcup_{y\in \mathbf{SO}_3(\mathbb{Z})\backslash\mathscr{H}_D} g^{-1}\mathbf{T}(\mathbb{R})g_{x\to y}^{-1}.\left(y^1+y^\perp(\mathbb{Z})\right)\\
&=\bigsqcup_{y\in \mathbf{SO}_3(\mathbb{Z})\backslash\mathscr{H}_D} \mathbf{SO}_2(\mathbb{R}) \left(g_{x\to y}g\right)^{-1}.\left(y^1+y^\perp(\mathbb{Z})\right)\\
&=\bigsqcup_{y\in \mathbf{SO}_3(\mathbb{Z})\backslash\mathscr{H}_D} \exp(-\lambda H)\mathbf{SO}_2(\mathbb{R}) g_{y\to \hat{e}}.\left(y^1+y^\perp(\mathbb{Z})\right) \, .
\end{align*}
We see that these lattice cosets are the rotation of the orthogonal cosets to some plane orthogonal to  $\hat{e}$. The element $\exp(-\lambda H)$ acts on a plane orthogonal to $\hat{e}$ as homothety by the scalar $\exp(-\lambda)=D^{-1/4}$. The plane $y^1+y^\perp$ is equal to $y/D+y^\perp$, thus all these lattice cosets are rotated to the plane $\hat{e}+e^\perp(\mathbb{R})$ as expected. Observing the bijection between $\mathcal{L}^D_\bullet(x^1+x^\perp)$ and $\mathcal{L}^D_\bullet(x^\perp)$ we deduce that the projection of the homogeneous toral set is exactly as claimed.

To compute the discriminant notice that it depends only on the projection of the homogeneous toral set to $\lfaktor{\mathbf{SL}_2(\mathbb{Z})}{\mathbf{SL}_2(\mathbb{R})}$ and this discriminant can be computed using Proposition \ref{prop:complex-torus-cm}. In particular, the discriminant coincides with the discriminant of a primitive integral representative of the quadratic form $\langle , \rangle$ restricted to $x^\perp(\mathbb{Z})$. This can be computed elementary and shown to be equal to the claimed value, cf.\cite[\S 4.1.2]{AES}. The torsion order is $D$ exactly because $x^\mathrm{tors}$ is an order $D$ torsion point.
\end{proof}

\subsection{Proof of Joint Equidistribution Theorem}
\begin{proof}[Proof of Theorem \ref{thm:joint}]
Using the methods of \cite{AES} it is enough to show that the normalized counting measures on $\varphi(\mathscr{G}_D)$ converge weak-$*$ to the Haar measure on $\lfaktor{\mathbf{ASL}_2(\mathbb{Z})}{\mathbf{ASL}_2(\mathbb{R})}$ when $D\to\infty$. The joint equidistribution then follows by the joining rigidity theorem of Einsiedler and Lindenstrauss \cite{ELJoinings}. The equidistribution of $\varphi(\mathscr{G}_D)$ is an immediate corollary of Theorem \ref{thm:main} and Proposition \ref{prop:grids-packet}.
\end{proof}
\appendix
\section{Modified Hecke L-functions}\label{app:L-functions}
Fix an imaginary quadratic field $E/\mathbb{Q}$ and an order $\Lambda<\mathcal{O}_E$ Denote by $D_E$ the discriminant of $E$ then $D=D_Ef^2$ where $f\in\mathbb{N}$ is the conductor.

This appendix is dedicated to studying the $L$-functions defined in \ref{defi:L-functions} which coincide with some Hecke $L$-functions of the field $E/\mathbb{Q}$ modified at finitely many places.

\subsection{Local Euler Factors}
Fix a non-archimedean place $v$ and denote the residue characteristic of $\mathbb{Q}_v$ by $p$. Let $\meas_v$ be the additive Haar measure on $E_v$ normalized so that $\meas(\mathcal{O}_{E_v})=1$. By abuse of notation we will write $\meas$ for $\meas_v$ when the rational place $v$ is understood from the context.

Exactly as in Definition \ref{defi:ideals+level} we extend the non-archimedean absolute value from local fields to a norm on local \'etale-algebras by taking the product of the absolute values of all coordinates
\begin{equation*}
\|\bullet\|_v \coloneqq \prod_{w\mid v} |\bullet|_w \, .
\end{equation*}
Whenever $v$ is fixed by the context we may drop the $v$ subscript in $\|\bullet\|_v$.
This definition coordinates well with the change of variable formula which reads $\meas(a B)=\|a\|_v\meas(B)$ for any $a\in E_v$ and all Borel sets $B\subseteq E_v$. The general change of variables formula is $g_*.m=|\det g|_v \cdot m$ for all $g\in \End_{\mathbb{Q}_v}(E_v)$.
If $E_v/\mathbb{Q}_v$ is a quadratic \'etale-algebra then $\|p^k\|_v=p^{-2k}$ for all $k\in\mathbb{Z}$.

We introduce a definition to be employed only in the appendix.
\begin{defi}
For any $\mathrm{w}\in\mathbb{A}_E$ denote by $\ord(\mathrm{w})$ the order of the non-archimedean part $\mathrm{w}_f\in \mathbb{A}_{E,f}$ in the torsion group $\mathbb{A}_{E,f}/\Lambda_f$.

Notice that if $E$ and $\Lambda$ are associated to a homogeneous toral set $\mathcal{H}=\left[\mathbf{T}(\mathbb{A})(l,\mathrm{x})\right]$ then $\ord(\jmath_\mathbb{A}(\mathrm{y}))=\ord_\mathcal{H}(\mathrm{y})$ for all $\mathrm{y}\in\mathbf{V}(\mathbb{A})$.
\end{defi}

Fix $\mathrm{x},\mathrm{y}\in\mathbb{A}_E$.
We are interested in counting elements of $\mathfrak{a}\in\Ideals(\Lambda,\mathrm{x})$ satisfying $\mathfrak{a}\in \mathrm{x}_v+\Lambda_v-\Aut^1(\Lambda_v).\mathrm{y}_v$ weighted by a character $\chi\colon \lfaktor{E^\times}{\mathbb{A}_E^\times}\to S^1$. The local factor at $v$ is described by
\begin{equation}\label{eq:L-order-local-integral}
L_{\Lambda_v(\mathrm{x},\mathrm{y})}(s,\chi)=\meas(\Lambda_v^\times(\mathrm{x}))^{-1}\int_{\mathrm{x}_v+\Lambda_v-\Aut^1(\Lambda_v).\mathrm{y}_v} \chi(z) \|z\|^{s-1}\dif\meas(z) \, .
\end{equation}
The function $\chi\colon E_v\to \mathbb{C}$ is defined using the composition $E_v^\times \hookrightarrow \mathbb{A}_E^\times\xrightarrow{\chi}S^1$ and extended to all of $E_v$ by letting it vanish on non-invertible elements.

If $\chi=1$ we shall omit it from the notation.
If $\mathrm{y}_v=0$ we denote $L_{\Lambda_v(\mathrm{x})}=L_{\Lambda_v(\mathrm{x},\mathrm{y})}$. Moreover, if $\mathrm{x}_v=\mathrm{y}_v=0$ then we write $L_{\Lambda_v(\mathrm{x},\mathrm{y})}=L_{\Lambda_v}$.

The following elementary lemma shows that each local factor is bounded in vertical strips. This is useful when applying Perron's formula.
\begin{lem}\label{lem:local-vertical}
For all $s\in\mathbb{C}$ and all characters $\chi$
\begin{equation*}
|L_{\Lambda_v(\mathrm{x},\mathrm{y})}(s,\chi)|\leq L_{\Lambda_v(\mathrm{x},\mathrm{y})}(\Re s) \, .
\end{equation*}
\end{lem}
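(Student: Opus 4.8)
The plan is to deduce the bound directly from the integral representation \eqref{eq:L-order-local-integral} of the local factor, using nothing more than the triangle inequality for integrals together with a trivial pointwise estimate on the integrand. No Euler product manipulation or analytic continuation is needed, since the statement concerns a single local factor.

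First I would recall that, by definition \eqref{eq:L-order-local-integral},
\begin{equation*}
L_{\Lambda_v(\mathrm{x},\mathrm{y})}(s,\chi)=\meas(\Lambda_v^\times(\mathrm{x}))^{-1}\int_{\mathrm{x}_v+\Lambda_v-\Aut^1(\Lambda_v).\mathrm{y}_v} \chi(z)\,\|z\|^{s-1}\dif\meas(z),
\end{equation*}
and then record the pointwise bound on the integrand. Since $\chi$ is a unitary character extended by zero to the non-invertible elements of $E_v$, one has $|\chi(z)|=\mathds{1}_{E_v^\times}(z)$ for every $z\in E_v$, and in particular $|\chi(z)|\leq 1$; and since $\|z\|_v$ is a nonnegative real number, $\bigl|\|z\|^{s-1}\bigr|=\|z\|^{\Re s-1}$. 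Hence $|\chi(z)\,\|z\|^{s-1}|\leq \mathds{1}_{E_v^\times}(z)\,\|z\|^{\Re s-1}$ pointwise on the region of integration.

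Integrating this inequality and pulling the common normalizing factor $\meas(\Lambda_v^\times(\mathrm{x}))^{-1}$ through, I would obtain
\begin{align*}
|L_{\Lambda_v(\mathrm{x},\mathrm{y})}(s,\chi)|
&\leq \meas(\Lambda_v^\times(\mathrm{x}))^{-1}\int_{\mathrm{x}_v+\Lambda_v-\Aut^1(\Lambda_v).\mathrm{y}_v} \mathds{1}_{E_v^\times}(z)\,\|z\|^{\Re s-1}\dif\meas(z)\\
&= L_{\Lambda_v(\mathrm{x},\mathrm{y})}(\Re s),
\end{align*}
where the last equality is once more \eqref{eq:L-order-local-integral}, now read with the trivial character, recalling the convention that the function denoted by the omitted character $\chi=1$ is precisely $\mathds{1}_{E_v^\times}$.

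There is no real obstacle here; the only two points worth a remark are (i) that the character, being extended by zero off $E_v^\times$, has absolute value equal to the indicator of $E_v^\times$ rather than the constant $1$, so the right-hand side genuinely is the $\chi=1$ factor and the bound is not a needless overcount, and (ii) the asserted inequality should be understood in $\mathbb{R}\cup\{+\infty\}$, so that it holds vacuously at those $s$ of small real part for which the defining integral of $L_{\Lambda_v(\mathrm{x},\mathrm{y})}(\Re s)$ fails to converge; in the range $\Re s>0$ relevant to the applications the right-hand side is finite, as is shown in the local computations of this appendix.
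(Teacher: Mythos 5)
Your proof is correct and is essentially the paper's own argument: both apply the triangle inequality to the integral representation \eqref{eq:L-order-local-integral} and observe that $|\chi(z)|\cdot\bigl|\|z\|^{s-1}\bigr|$ is exactly the integrand defining $L_{\Lambda_v(\mathrm{x},\mathrm{y})}(\Re s)$. Your remarks about $|\chi|=\mathds{1}_{E_v^\times}$ and the convention at small $\Re s$ are fine but add nothing beyond the paper's one-line proof.
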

\begin{proof}
\begin{equation*}
|L_{\Lambda_v(\mathrm{x},\mathrm{y})}(s)|\leq \meas(\Lambda_v^\times(\mathrm{x}))^{-1}\int_{\mathrm{x}_v+\Lambda_v-\Aut^1(\Lambda_v).\mathrm{y}_v} |\chi(z)| \cdot|\|z\|^{s-1}|\dif\meas(z)=L_{\Lambda_v(\mathrm{x},\mathrm{y})}(\Re s)
\end{equation*}
\end{proof}

\subsubsection{Structure of a non-Maximal Quadratic Order}
We assume $\Lambda_v<\mathcal{O}_{E_v}$ is a non-maximal order of conductor $f_v$.  To simplify the notation we assume without loss of generality that the conductor is a power of $p$ and write
\begin{equation*}
f_v=p^n
\end{equation*}

We fix the standard set of representatives in $\{0,1,\ldots,p^{n-1}\}\subset\mathbb{Z}$ for $\cyclic{p^n}$
and use them also as representatives for $\faktor{\mathbb{Z}_v}{f_v\mathbb{Z}_v}\simeq \cyclic{p^n}$.
\begin{lem}\label{lem:Lambda-reduction}
One has
\begin{align*}
\Lambda_v&=\mathbb{Z}_v+f_v\mathcal{O}_{E_v}=\mathbb{Z}+f_v\mathcal{O}_{E_v}=\bigsqcup_{a\in\cyclic{f_v}} a+f_v\mathcal{O}_{E_v} \, ,\\
\Lambda_v^\times&=\mathbb{Z}_v^\times+f_v\mathcal{O}_{E_v}=\mathbb{Z}^\times+f_v\mathcal{O}_{E_v}=\bigsqcup_{a\in\cyclic{f_v}^\times} a+f_v\mathcal{O}_{E_v} \, .
\end{align*}
\end{lem}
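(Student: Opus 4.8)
The statement asserts two facts about a non-maximal order $\Lambda_v$ of conductor $f_v = p^n$: first, the additive description $\Lambda_v = \mathbb{Z}_v + f_v \mathcal{O}_{E_v} = \mathbb{Z} + f_v \mathcal{O}_{E_v}$ together with its decomposition into cosets of $f_v\mathcal{O}_{E_v}$ indexed by $\mathbb{Z}/f_v\mathbb{Z}$, and second, the analogous description of the unit group $\Lambda_v^\times$. The plan is to start from the definition of the conductor of an order: $f_v$ is the largest power of $p$ such that $\mathbb{Z}_v + f_v\mathcal{O}_{E_v} \subseteq \Lambda_v$, and in fact for a quadratic order this inclusion is an equality — $\mathcal{O}_{E_v}/\mathbb{Z}_v$ is free of rank one over $\mathbb{Z}_v$ (pick $\theta$ with $\mathcal{O}_{E_v} = \mathbb{Z}_v + \mathbb{Z}_v\theta$), so any $\mathbb{Z}_v$-submodule between $\mathbb{Z}_v$ and $\mathcal{O}_{E_v}$ is of the form $\mathbb{Z}_v + p^k\mathbb{Z}_v\theta = \mathbb{Z}_v + p^k\mathcal{O}_{E_v}$ for a unique $k \geq 0$, and the conductor is exactly this $p^k = f_v$. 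This gives $\Lambda_v = \mathbb{Z}_v + f_v\mathcal{O}_{E_v}$ immediately.

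Next I would establish $\mathbb{Z}_v + f_v\mathcal{O}_{E_v} = \mathbb{Z} + f_v\mathcal{O}_{E_v}$: since $f_v\mathcal{O}_{E_v} \supseteq f_v\mathbb{Z}_v$ and $\mathbb{Z}$ is dense in $\mathbb{Z}_v$, every element of $\mathbb{Z}_v$ differs from an element of $\mathbb{Z}$ by something in $f_v\mathbb{Z}_v \subseteq f_v\mathcal{O}_{E_v}$, so the two sums coincide. The coset decomposition $\Lambda_v = \bigsqcup_{a \in \mathbb{Z}/f_v\mathbb{Z}} (a + f_v\mathcal{O}_{E_v})$ then follows because $\mathbb{Z}/f_v\mathbb{Z}$ maps onto $\Lambda_v/f_v\mathcal{O}_{E_v}$ (surjectivity from the sum description) and the map is injective since $\mathbb{Z} \cap f_v\mathcal{O}_{E_v} = f_v\mathbb{Z}$ (an integer lying in $f_v\mathcal{O}_{E_v}$ has both coordinates divisible by $f_v$ in the $\mathbb{Z}_v$-basis $1,\theta$, hence is divisible by $f_v$ in $\mathbb{Z}$). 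Disjointness of the stated representatives $\{0,\dots,p^{n-1}\}$ is then just the choice of transversal.

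For the unit group, I would argue that $\lambda \in \Lambda_v$ is a unit in $\Lambda_v$ if and only if it is a unit in $\mathcal{O}_{E_v}$ (a local ring) and its inverse again lies in $\Lambda_v$; since $f_v\mathcal{O}_{E_v}$ is the maximal ideal-like conductor, reduction mod $f_v\mathcal{O}_{E_v}$ sends $\Lambda_v$ onto $\mathbb{Z}/f_v\mathbb{Z}$ and $\mathcal{O}_{E_v}$ onto $\mathcal{O}_{E_v}/f_v\mathcal{O}_{E_v}$, and a standard lifting argument (units mod a Jacobson radical lift to units) shows $\lambda \in \Lambda_v^\times$ iff its image in $\mathbb{Z}/f_v\mathbb{Z}$ is a unit, i.e. $\lambda \in \mathbb{Z}_v^\times + f_v\mathcal{O}_{E_v}$. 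Combined with $\mathbb{Z}_v^\times + f_v\mathcal{O}_{E_v} = \mathbb{Z}^\times + f_v\mathcal{O}_{E_v}$ (same density argument, noting a unit of $\mathbb{Z}_v$ is congruent mod $f_v$ to an element of $(\mathbb{Z}/f_v\mathbb{Z})^\times$ which lifts to a rational integer coprime to $p$) this yields the decomposition over $(\mathbb{Z}/f_v\mathbb{Z})^\times$. The only mildly delicate point — the main obstacle, such as it is — is the clean verification that $\Lambda_v^\times$ is detected purely by the reduction mod $f_v\mathcal{O}_{E_v}$ rather than requiring a separate check that the inverse stays in $\Lambda_v$; this is handled by observing $f_v\mathcal{O}_{E_v}$ is contained in the radical of $\Lambda_v$ (it is a proper ideal of $\Lambda_v$ and $\Lambda_v/f_v\mathcal{O}_{E_v} \cong \mathbb{Z}/f_v\mathbb{Z}$ is local when $f_v = p^n$), so Nakayama-type lifting applies. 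Everything else is a routine unwinding of definitions.
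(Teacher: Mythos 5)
Your handling of the first line (the additive description and the coset decomposition) matches the paper's in spirit — the paper simply invokes "identical to orders in quadratic number fields" and "weak approximation" for what you write out as the $\mathbb{Z}_v$-submodule classification and the density argument, so nothing to comment on there.

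For the unit group, your route is genuinely different from the paper's and it has a gap. You want to conclude $1 + f_v\mathcal{O}_{E_v} \subseteq \Lambda_v^\times$ from the claim that $f_v\mathcal{O}_{E_v}$ lies in the Jacobson radical of $\Lambda_v$, and you justify the latter by saying $f_v\mathcal{O}_{E_v}$ is a proper ideal with $\Lambda_v / f_v\mathcal{O}_{E_v} \cong \mathbb{Z}/f_v\mathbb{Z}$ local. That implication is false in general: for a ring $R$ and proper ideal $I$, the quotient $R/I$ being local only tells you there is a \emph{unique} maximal ideal of $R$ containing $I$; it says nothing about the maximal ideals of $R$ that do not contain $I$. (Take $R = \mathbb{Z}$ and $I = 4\mathbb{Z}$: the quotient is local, yet $4\mathbb{Z} \not\subseteq J(\mathbb{Z}) = 0$.) What you actually need is that $\Lambda_v$ \emph{itself} is local, which is true here — for $f_v = p^n$ with $n \geq 1$ one can check $\Lambda_v / p\Lambda_v \cong \mathbb{F}_p[\varepsilon]/(\varepsilon^2)$ has a unique maximal ideal, and then $f_v\mathcal{O}_{E_v}$ being proper forces it inside the maximal ideal — but that is a separate computation you have not done, and it is not implied by localness of $\Lambda_v/f_v\mathcal{O}_{E_v}$. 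A related slip: $\mathcal{O}_{E_v}$ is not a local ring when $v$ splits in $E$ (it is $\mathbb{Z}_v \times \mathbb{Z}_v$), so your parenthetical "(a local ring)" is also off.

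The paper's proof avoids all of this by working topologically rather than ring-theoretically: for $\alpha \in \mathcal{O}_{E_v}$, the geometric series $(1 + f_v\alpha)^{-1} = \sum_{k\ge 0}(-f_v\alpha)^k$ has terms in $\Lambda_v$ tending to zero $p$-adically, and since $\Lambda_v$ is closed (being a compact $\mathbb{Z}_p$-module), the sum converges to an element of $\Lambda_v$. That one line yields $1 + f_v\mathcal{O}_{E_v} \subseteq \Lambda_v^\times$ directly, with no need to analyze the maximal ideal structure of $\Lambda_v$. Your plan would be correct if you replace the Jacobson-radical inference either with the explicit verification that $\Lambda_v$ is local, or — more simply — with the paper's convergent-series argument.
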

\begin{proof}
The second equality in both statements is a corollary of weak approximation.

The first statement is proven in a manner identical to orders in quadratic number fields.
For the second statement the inclusion $\Lambda_v^\times\subseteq\mathbb{Z}_v^\times+f_v\mathcal{O}_{E_v}$ is immediate. For the reverse inclusion it is enough to show that $1+f_v\mathcal{O}_{E_v}\subseteq \Lambda_v^\times$. This follows by writing $(1+f_v\alpha)^{-1}=\sum_{k=0}^\infty (-f_v\alpha)^k$ and noticing that $\Lambda_v$ is closed, hence the power series converges to a value in $\Lambda_v$.
\end{proof}
\begin{lem}\label{lem:local-volumes}
The following formulae hold
\begin{align*}
\meas(\mathcal{O}_{E_v}^\times)&=\left(1-p^{-1}\right)\left(1-\chi_E(p)p^{-1}\right)=L_{\mathcal{O}_{E_v}}(1)^{-1} \, ,\\
\meas(\Lambda_v^\times)&=f_v^{-1}\left(1-p^{-1}\right) \, ,\\
\meas(\Lambda_v)&=f_v^{-1} \, .
\end{align*}
\end{lem}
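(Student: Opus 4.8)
The plan is to read off all three identities from additivity of $\meas$, the coset decompositions recorded in Lemma~\ref{lem:Lambda-reduction}, and the two normalizations $\meas(\mathcal{O}_{E_v})=1$ and $\|p^k\|_v=p^{-2k}$ (valid since $E_v$ is a quadratic \'etale algebra). First I would handle $\meas(\Lambda_v)$ and $\meas(\Lambda_v^\times)$. By Lemma~\ref{lem:Lambda-reduction} the order $\Lambda_v$ is the disjoint union of the $f_v$ translates $a+f_v\mathcal{O}_{E_v}$ with $a$ running over representatives of $\cyclic{f_v}$, while $\Lambda_v^\times$ is the disjoint union of the $f_v-p^{-1}f_v=f_v(1-p^{-1})$ such translates indexed by $\cyclic{f_v}^\times$ (using $f_v=p^n$). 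Each translate has measure $\meas(f_v\mathcal{O}_{E_v})=\|f_v\|_v\meas(\mathcal{O}_{E_v})=p^{-2n}=f_v^{-2}$, so $\meas(\Lambda_v)=f_v\cdot f_v^{-2}=f_v^{-1}$ and $\meas(\Lambda_v^\times)=f_v(1-p^{-1})\cdot f_v^{-2}=f_v^{-1}(1-p^{-1})$.

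Next, for $\meas(\mathcal{O}_{E_v}^\times)$ I would split into the three cases according to the splitting type of $p$ in $\mathcal{O}_E$, i.e.\ according to $\chi_E(p)\in\{1,-1,0\}$. If $p$ splits then $\mathcal{O}_{E_v}=\mathbb{Z}_p\times\mathbb{Z}_p$ and $\mathcal{O}_{E_v}^\times=\mathbb{Z}_p^\times\times\mathbb{Z}_p^\times$ has measure $(1-p^{-1})^2$. If $p$ is inert then $E_v$ is the unramified quadratic field extension, $p$ is a uniformizer, $\mathcal{O}_{E_v}^\times=\mathcal{O}_{E_v}\setminus p\mathcal{O}_{E_v}$, and $\meas(\mathcal{O}_{E_v}^\times)=1-\|p\|_v=1-p^{-2}=(1-p^{-1})(1+p^{-1})$. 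If $p$ ramifies then the maximal ideal is $\pi\mathcal{O}_{E_v}$ for a uniformizer $\pi$ with $\|\pi\|_v^2=\|p\|_v$, hence $\|\pi\|_v=p^{-1}$, and $\meas(\mathcal{O}_{E_v}^\times)=1-p^{-1}$. In each of the three cases the value coincides with $(1-p^{-1})(1-\chi_E(p)p^{-1})$, which is the asserted closed form.

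Finally, the identity $\meas(\mathcal{O}_{E_v}^\times)=L_{\mathcal{O}_{E_v}}(1)^{-1}$ is immediate from the defining integral \eqref{eq:L-order-local-integral} for the maximal order (so $\Lambda_v=\mathcal{O}_{E_v}$, $\mathrm{x}_v=\mathrm{y}_v=0$, $\chi=1$) evaluated at $s=1$: the integrand $\|z\|^{s-1}$ specializes to $\|z\|^{0}$, which equals $1$ on the invertible elements of $E_v$ and therefore $\meas$-almost everywhere on $\mathcal{O}_{E_v}$, giving $L_{\mathcal{O}_{E_v}}(1)=\meas(\mathcal{O}_{E_v}^\times)^{-1}\meas(\mathcal{O}_{E_v})=\meas(\mathcal{O}_{E_v}^\times)^{-1}$. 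No step here poses a genuine obstacle; the only things to be careful about are the bookkeeping of residue field sizes and uniformizers in the inert and ramified cases, and — both in this last identity and in the split case — remembering that the support of the extended character consists of the elements invertible in the \'etale algebra $E_v$ (equivalently, nonzero in every factor), not of the units of $\mathcal{O}_{E_v}$.
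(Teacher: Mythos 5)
Your proof is correct and follows essentially the same route as the paper: the second and third formulas from the coset decomposition of Lemma~\ref{lem:Lambda-reduction} together with $\meas(f_v\mathcal{O}_{E_v})=\|f_v\|_v=f_v^{-2}$, and the first from a case-by-case computation of the unit measure according to the splitting type of $p$ (the paper phrases this as inclusion-exclusion over the maximal ideals, but your direct product/difference computations in the three cases are the same calculation). You also supply the brief verification that $L_{\mathcal{O}_{E_v}}(1)^{-1}=\meas(\mathcal{O}_{E_v}^\times)$, which the paper leaves implicit, and you correctly flag the subtlety that the integrand in \eqref{eq:L-order-local-integral} is supported on $\mathcal{O}_{E_v}\cap E_v^\times$ (a full-measure subset of $\mathcal{O}_{E_v}$), not on $\mathcal{O}_{E_v}^\times$.
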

\begin{proof}
The formula for $\meas(\mathcal{O}_{E_v}^\times)$ follows by subtracting the measure of the maximal ideals using the inclusion-exclusion principle. There is $1$ maximal ideal in the inert and ramified cases and $2$ in the split one.

The second formula follows from Lemma \ref{lem:Lambda-reduction} in the following manner
\begin{align*}
\meas(\Lambda_v^\times)=&\meas(\mathbb{Z}^\times+f_v\mathcal{O}_{E_v})
=\sum_{a \in \left(\cyclic{f_v}\right)^\times} \meas(a+f_v\mathcal{O}_E)\\
&=\sum_{a \in \left(\cyclic{f_v}\right)^\times} \|f_v\| =f_v\left(1-\frac{1}{p}\right)\|f_v\| \, .
\end{align*}
The third formula is proved in a similar manner.
\end{proof}
\begin{cor}\label{cor:units-volume-ratio} The ratio of the volumes of group of units satisfies
\begin{equation*}
\left[\mathcal{O}_{E_v}^\times \colon \Lambda_v^\times \right]
=\frac{\meas(\mathcal{O}_{E_v}^\times)}{\meas(\Lambda_v^\times)}=f_v\left(1-\chi_E(p)p^{-1}\right) \, .
\end{equation*}
\end{cor}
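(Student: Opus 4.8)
The identity has two parts: first that the index equals the ratio of Haar volumes, and second the explicit evaluation of that ratio. For the first part, the plan is to observe that $\Lambda_v^\times$ is an open subgroup of finite index in $\mathcal{O}_{E_v}^\times$. Openness is immediate from Lemma \ref{lem:Lambda-reduction}, since $\Lambda_v^\times = \mathbb{Z}_v^\times + f_v\mathcal{O}_{E_v}$ contains the open set $1 + f_v\mathcal{O}_{E_v}$; finiteness of the index follows because $\mathcal{O}_{E_v}^\times$ is compact (being open and closed in the compact set $\mathcal{O}_{E_v}$) and $\Lambda_v^\times$ is open, hence cofinite. Thus we may write $\mathcal{O}_{E_v}^\times = \bigsqcup_{i=1}^k g_i\Lambda_v^\times$ with $g_i\in\mathcal{O}_{E_v}^\times$ and $k=[\mathcal{O}_{E_v}^\times:\Lambda_v^\times]$.

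Next I would apply the change-of-variables formula for the additive Haar measure recalled at the start of the appendix, $\meas(g_iB)=\|g_i\|_v\meas(B)$, together with the fact that $\|g_i\|_v=1$ for $g_i\in\mathcal{O}_{E_v}^\times$. This gives $\meas(g_i\Lambda_v^\times)=\meas(\Lambda_v^\times)$ for every $i$, whence
\begin{equation*}
\meas(\mathcal{O}_{E_v}^\times)=\sum_{i=1}^k\meas(g_i\Lambda_v^\times)=k\,\meas(\Lambda_v^\times)=[\mathcal{O}_{E_v}^\times:\Lambda_v^\times]\,\meas(\Lambda_v^\times) \, ,
\end{equation*}
which is the first equality claimed.

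Finally I would substitute the volume formulae from Lemma \ref{lem:local-volumes}: $\meas(\mathcal{O}_{E_v}^\times)=(1-p^{-1})(1-\chi_E(p)p^{-1})$ and $\meas(\Lambda_v^\times)=f_v^{-1}(1-p^{-1})$. Dividing, the factor $(1-p^{-1})$ cancels and we obtain $[\mathcal{O}_{E_v}^\times:\Lambda_v^\times]=f_v(1-\chi_E(p)p^{-1})$, completing the proof. There is essentially no obstacle here — the content is entirely in Lemmata \ref{lem:Lambda-reduction} and \ref{lem:local-volumes}, already established; the only point worth stating carefully is the reduction of the index to a volume ratio via the scaling property of the additive measure, which is why I would spell out the coset decomposition explicitly rather than invoke a general ``Haar measure'' principle.
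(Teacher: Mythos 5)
Your proposal is correct and takes essentially the same approach as the paper, which presents this corollary with no written proof at all (just ``\qed''), treating it as an immediate consequence of Lemma~\ref{lem:local-volumes}. You have simply spelled out the standard step that the index of an open finite-index subgroup equals the ratio of Haar volumes — using $\|g_i\|_v=1$ for $g_i\in\mathcal{O}_{E_v}^\times$ together with the scaling property of the additive measure — which the paper takes for granted before dividing the two formulae from Lemma~\ref{lem:local-volumes}.
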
\qed

\subsubsection{First Properties of the Local Factor}
\begin{prop}\label{prop:L-local-denominators}
\begin{equation*}
\|\ord(\mathrm{x})\ord(\mathrm{y})\|^s
L_{\Lambda_v(\mathrm{x},\mathrm{y})}(s,\chi)\in \mathbb{C}[[p^{-s}]] \, .
\end{equation*}
\end{prop}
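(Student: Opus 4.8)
The plan is to unwind the integral representation \eqref{eq:L-order-local-integral} and keep track of the possible values of $\|z\|_v$ on the domain of integration. Set $p^a=\ord(\mathrm{x}_v)$ and $p^b=\ord(\mathrm{y}_v)$ (these are powers of $p$ since $E_v/\Lambda_v$ is a $p$-group); then the $p$-part of the rational integer $\ord(\mathrm{x})\ord(\mathrm{y})$ is $p^{a+b}$, so $\|\ord(\mathrm{x})\ord(\mathrm{y})\|_v^s=p^{-2(a+b)s}=(p^{-s})^{2(a+b)}$. The domain $D\coloneqq \mathrm{x}_v+\Lambda_v-\Aut^1(\Lambda_v).\mathrm{y}_v$ is compact and open in $E_v$, and $\|\bullet\|_v$ takes values in $p^{\mathbb{Z}}\cup\{0\}$. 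First I would partition $D=\bigsqcup_{k\in\mathbb{Z}}D_k$ with $D_k=\{z\in D: \|z\|_v=p^{-k}\}$ (the locus $z=0$, if it meets $D$, has measure zero and may be discarded) and expand
\begin{equation*}
L_{\Lambda_v(\mathrm{x},\mathrm{y})}(s,\chi)=\meas(\Lambda_v^\times(\mathrm{x}))^{-1}\sum_{k\in\mathbb{Z}}\Big(\int_{D_k}\chi(z)\,\dif\meas(z)\Big)\,p^{k}\,(p^{-s})^{k},
\end{equation*}
where each power $(p^{-s})^k$ occurs with a single well-defined coefficient, so the right-hand side is a priori a formal Laurent series in $p^{-s}$ (and it converges for $\Re s$ large, since $\meas(D_k)\ll_{v,D}p^{-k}$).

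The key step is to show $D_k=\emptyset$ for $k<-2(a+b)$, i.e.\ $\|z\|_v\le p^{2(a+b)}$ for every $z\in D$. For this I would use that each $u\in\Aut^1(\Lambda_v)$ stabilizes $\Lambda_v$, hence acts on $E_v/\Lambda_v$ and preserves orders, so $\ord(u.\mathrm{y}_v)=\ord(\mathrm{y}_v)=p^b$. Since $\Lambda_v$ has order $1$ in $E_v/\Lambda_v$, any $z=\mathrm{x}_v+\lambda-u.\mathrm{y}_v\in D$ has order dividing $\operatorname{lcm}(p^a,1,p^b)=p^{\max(a,b)}$, that is $p^{\max(a,b)}z\in\Lambda_v\subseteq\mathcal{O}_{E_v}$. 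Consequently $\|z\|_v=p^{2\max(a,b)}\,\|p^{\max(a,b)}z\|_v\le p^{2\max(a,b)}\le p^{2(a+b)}$, as required.

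Combining the two points, $L_{\Lambda_v(\mathrm{x},\mathrm{y})}(s,\chi)$ is a Laurent series in $p^{-s}$ whose negative powers are confined to $(p^{-s})^{-1},\dots,(p^{-s})^{-2(a+b)}$; multiplying by $(p^{-s})^{2(a+b)}=\|\ord(\mathrm{x})\ord(\mathrm{y})\|_v^s$ therefore yields an element of $\mathbb{C}[[p^{-s}]]$, which is the claim. I do not anticipate a genuine obstacle here: once $D$ and the order bound are in place the argument is bookkeeping of powers of $p$. The only spots deserving a little care are the two structural facts invoked in the key step — that $\Aut^1(\Lambda_v)$ preserves $v$-adic orders (immediate from $u.\Lambda_v=\Lambda_v$) and that $\Lambda_v\subseteq\mathcal{O}_{E_v}$ forces its elements to be integral, so that $\|\cdot\|_v\le 1$ there.
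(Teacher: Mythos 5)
Your proof is correct and rests on the same underlying fact as the paper's: that multiplying the domain $\mathrm{x}_v+\Lambda_v-\Aut^1(\Lambda_v).\mathrm{y}_v$ by $\ord(\mathrm{x})\ord(\mathrm{y})$ lands it inside $\Lambda_v\subseteq\mathcal{O}_{E_v}$, forcing $\|z\|_v\le\|\ord(\mathrm{x})\ord(\mathrm{y})\|_v^{-1}$. The paper realizes this via a single change of variables $z\mapsto\ord(\mathrm{x})\ord(\mathrm{y})\,z$, whose Jacobian pulls out the factor $\|\ord(\mathrm{x})\ord(\mathrm{y})\|^{-s}$ directly, whereas you partition $D$ into level sets of $\|\cdot\|_v$ and bound the minimal level by tracking torsion orders in $E_v/\Lambda_v$ (incidentally obtaining the slightly sharper bound $p^{2\max(a,b)}$ in place of $p^{2(a+b)}$); this is a cosmetic rather than a substantive difference, and the essential mechanism is identical.
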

\begin{proof}
Denote $\Lambda_v(\mathrm{x},\mathrm{y})'\coloneqq \ord(\mathrm{x})\ord(\mathrm{y})\left(\mathrm{x}_v+\Lambda_v-\Aut^1(\Lambda_v).\mathrm{y}_v\right)$ then $\Lambda_v(\mathrm{x},\mathrm{y})\subset \Lambda_v$ and
\begin{align*}
\meas(\Lambda_v^\times(\mathrm{x}))L_{\Lambda_v(\mathrm{x},\mathrm{y})}(s,\chi)
&= \int_{\Lambda_v(\mathrm{x},\mathrm{y})'} \chi\left(\frac{z}{\ord(\mathrm{x})\ord(\mathrm{y})}\right)
\cdot \left\|\frac{z}{\ord(\mathrm{x})\ord(\mathrm{y})}\right\|^{s-1}
\dif\meas\left(\frac{z}{\ord(\mathrm{x})\ord(\mathrm{y})}\right)\\
&=\|\ord(\mathrm{x})\ord(\mathrm{y})\|^{-s} \chi(\ord(\mathrm{x})\ord(\mathrm{y}))^{-1}
\int_{\Lambda_v(\mathrm{x},\mathrm{y})'} \chi(x)\|z\|^{s-1}\dif\meas(z)\\
&\in \|\ord(\mathrm{x})\ord(\mathrm{y})\|^{-s} \mathbb{C}[[p^{-s}]] \, .
\end{align*}
\end{proof}

The following results reduce our further work to the case where we need only evaluate  $L_{\Lambda_v(\mathrm{x})}(s)$.

\begin{prop}\label{prop:Lx<=Lxy}
\begin{equation*}
|L_{\Lambda_v(\mathrm{x},\mathrm{y})}(s,\chi)|
\leq \|\ord(\mathrm{y})\|^{-\Re s} \frac{\meas\left(\Lambda_v^\times\left(\ord(\mathrm{y})\mathrm{x}\right)\right)}
{\meas(\Lambda_v^\times(\mathrm{x}))}
 L_{\Lambda_v(\ord(\mathrm{y})\mathrm{x})}(\Re s) \, .
\end{equation*}
\end{prop}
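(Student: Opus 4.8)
The plan is to reduce the defining integral \eqref{eq:L-order-local-integral} for $L_{\Lambda_v(\mathrm{x},\mathrm{y})}$ to the one for $L_{\Lambda_v(\ord(\mathrm{y})\mathrm{x})}$ by a change of variables that clears the denominator introduced by $\mathrm{y}$, and then estimate trivially by absolute values while enlarging the domain of integration. First I would set $M\coloneqq\ord(\mathrm{y})$ and record the containment
\[
M\cdot\bigl(\mathrm{x}_v+\Lambda_v-\Aut^1(\Lambda_v).\mathrm{y}_v\bigr)\subseteq M\mathrm{x}_v+\Lambda_v .
\]
This rests on two facts: $M\mathrm{y}_v\in\Lambda_v$, because $M$ is a multiple of the local order of $\mathrm{y}_v$ in $E_v/\Lambda_v$ (the order of a tuple in $\mathbb{A}_{E,f}/\Lambda_f=\bigoplus_w E_w/\Lambda_w$ is the lcm of the orders of its components); and $\Aut^1(\Lambda_v)$ preserves the $\mathbb{Z}_v$-module $\Lambda_v$, which is exactly the content of Definition \ref{defi:jmath-iso}(e), where $\Aut^1(\Lambda_v)$ is identified with $\Ad_{l_v}\mathbf{G}(\mathbb{Z}_v)$ acting on the lattice $l_v.\mathbb{Z}_v^2$. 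Since $M$ is a scalar, $M\,\Aut^1(\Lambda_v).\mathrm{y}_v=\Aut^1(\Lambda_v).(M\mathrm{y}_v)\subseteq\Lambda_v$, and together with $M\Lambda_v\subseteq\Lambda_v$ this gives the displayed inclusion.

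Next I would substitute $w=Mz$ in \eqref{eq:L-order-local-integral}. Using the change-of-variables rule $\meas(aB)=\|a\|_v\meas(B)$, the identities $\|z\|_v=\|M\|_v^{-1}\|Mz\|_v$ and $\chi(z)=\chi(M)^{-1}\chi(Mz)$ (valid on $E_v^\times$, and both sides vanishing on the measure-zero complement where $\|\cdot\|_v=0$), I obtain
\[
\meas(\Lambda_v^\times(\mathrm{x}))\,L_{\Lambda_v(\mathrm{x},\mathrm{y})}(s,\chi)=\|M\|_v^{-s}\chi(M)^{-1}\int_{M(\mathrm{x}_v+\Lambda_v-\Aut^1(\Lambda_v).\mathrm{y}_v)}\chi(w)\|w\|_v^{s-1}\dif\meas(w).
\]
Taking absolute values, $|\chi(M)^{-1}|=1$, $|\chi(w)|\le1$, $|\|w\|_v^{s-1}|=\|w\|_v^{\Re s-1}$, and the integrand $\|w\|_v^{\Re s-1}$ is non-negative, so by the containment of the first step I may replace the domain by the larger set $M\mathrm{x}_v+\Lambda_v$. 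By \eqref{eq:L-order-local-integral} applied with $\mathrm{y}=0$ and $\mathrm{x}$ replaced by $M\mathrm{x}$, the resulting integral equals $\meas(\Lambda_v^\times(M\mathrm{x}))\,L_{\Lambda_v(M\mathrm{x})}(\Re s)$. Dividing by $\meas(\Lambda_v^\times(\mathrm{x}))$ and noting $\|M\|_v^{-\Re s}=\|\ord(\mathrm{y})\|_v^{-\Re s}$ yields precisely the asserted bound.

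I do not anticipate a genuine obstacle. The one point needing care is the domain containment in the first step, in particular the invariance of $\Lambda_v$ under $\Aut^1(\Lambda_v)$; the remaining manipulations are formal, with the caveat that when $\Re s\le1$ the various integrals may be $+\infty$, in which case the inequality is still true (and the loci where $\|\cdot\|_v$ vanishes have measure zero, so they never interfere with the substitution).
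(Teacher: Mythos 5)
Your proof is correct and takes essentially the same approach as the paper: both establish the containment $\mathrm{x}_v+\Lambda_v-\Aut^1(\Lambda_v).\mathrm{y}_v\subset \mathrm{x}_v+\ord(\mathrm{y})^{-1}\Lambda_v$ (your inclusion is this one rescaled by $\ord(\mathrm{y})$), then change variables by $\ord(\mathrm{y})$ and bound $|\chi|$ and $\lvert\,\|z\|^{s-1}\rvert$ trivially. The only cosmetic difference is the order of operations — you substitute first and then enlarge the domain and take absolute values, the paper does the reverse — and the net computation is identical.
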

\begin{proof}
Notice first that $\ord(\mathrm{y})\cdot\mathrm{y}_v\in \Lambda_v$ thus $\mathrm{x}_v+\Lambda_v-\Aut^1(\Lambda_v).\mathrm{y}_v\subset \mathrm{x}_v+\ord(\mathrm{y})^{-1}\Lambda_v$. We deduce
\begin{align*}
\meas(\Lambda_v^\times(\mathrm{x}))|L_{\Lambda_v(\mathrm{x},\mathrm{y})}(s,\chi)|
&\leq \int_{\mathrm{x}_v+\ord(\mathrm{y})^{-1}\Lambda_v} |\chi(z)|\cdot | \|z\|^{s-1} |\dif\meas(z)\\
&=\int_{\ord(\mathrm{y})\mathrm{x}_v+\Lambda_v} \left\|\frac{z}{\ord(\mathrm{y})}\right\|^{\Re s-1}\dif\meas\left(\frac{z}{\ord(\mathrm{y})}\right)\\
&=\|\ord(\mathrm{y})\|^{-\Re s} \meas(\Lambda_v^\times(\ord(\mathrm{y})\mathrm{x}))L_{\Lambda_v(\ord(\mathrm{y})\mathrm{x})}(\Re s) \, .
\end{align*}
\end{proof}

We can now evaluate the local factor when $s=1$ and $\chi=1$.
\begin{prop}\label{prop:L-local-at-1}
For any $\mathrm{x}\in\mathbb{A}_E$
\begin{equation*}
L_{\Lambda_v(\mathrm{x})}(1)=
\left(\meas(\Lambda_v^\times(\mathrm{x}))f_v\right)^{-1}
=\left[\Lambda_v^\times: \Lambda_v^\times(\mathrm{x}) \right](1-p^{-1})^{-1}\cdot\begin{cases}
(1-\chi_E(p)p^{-1})^{-1} & f_v=1\\
1 & f_v>1
\end{cases} \, .
\end{equation*}
In particular,
\begin{align*}
L_{\Lambda_v(\mathrm{x},\mathrm{y})}(1)&\leq
\left(\meas(\Lambda_v^\times(\mathrm{x}))\|\ord(\mathrm{y})\|f_v\right)^{-1}\\
&= \|\ord(\mathrm{y})\|^{-1}\left[\Lambda_v^\times: \Lambda_v^\times(\mathrm{x}) \right](1-p^{-1})^{-1}\cdot\begin{cases}
(1-\chi_E(p)p^{-1})^{-1} & f_v=1\\
1 & f_v>1
\end{cases} \, .
\end{align*}
\end{prop}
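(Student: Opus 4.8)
The plan is to evaluate the defining local integral \eqref{eq:L-order-local-integral} directly at $s=1$ for the trivial character, reducing the first assertion to a volume computation, and then to read off the explicit closed form from Lemma \ref{lem:local-volumes}.

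First I would specialise \eqref{eq:L-order-local-integral} to $\mathrm{y}_v=0$, $\chi=1$, $s=1$. Since $\|z\|^{s-1}\equiv 1$ for $z\in E_v^\times$, and the trivial character of $\lfaktor{E^\times}{\mathbb{A}_E^\times}$ extended by zero to $E_v$ is simply $\mathds{1}_{E_v^\times}$, this gives
\begin{equation*}
L_{\Lambda_v(\mathrm{x})}(1)=\meas(\Lambda_v^\times(\mathrm{x}))^{-1}\,\meas\!\big((\mathrm{x}_v+\Lambda_v)\cap E_v^\times\big)\, .
\end{equation*}
The point is that the non-invertible locus $E_v\setminus E_v^\times$ is Haar-null: writing $z=(z_w)_{w\mid v}\in E_v\simeq\prod_{w\mid v}E_w$, one has $z\notin E_v^\times$ iff some $z_w=0$ iff $\|z\|_v=0$, i.e.\ $E_v\setminus E_v^\times$ is the zero set of the nonzero norm polynomial $N_{E_v/\mathbb{Q}_v}$ on the two-dimensional affine space $E_v$ over $\mathbb{Q}_v$ — it is $\{0\}$ when $E_v$ is a field and the union of the two coordinate axes when $E_v$ splits — hence a null set for the additive Haar measure. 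Therefore $\meas\big((\mathrm{x}_v+\Lambda_v)\cap E_v^\times\big)=\meas(\mathrm{x}_v+\Lambda_v)=\meas(\Lambda_v)=f_v^{-1}$ by translation invariance of Haar measure and Lemma \ref{lem:local-volumes}, which gives the first equality $L_{\Lambda_v(\mathrm{x})}(1)=(\meas(\Lambda_v^\times(\mathrm{x}))f_v)^{-1}$.

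For the second equality I would use that $\Lambda_v^\times(\mathrm{x})$ is a finite-index subgroup of $\Lambda_v^\times$, so $\meas(\Lambda_v^\times(\mathrm{x}))=\meas(\Lambda_v^\times)\,[\Lambda_v^\times:\Lambda_v^\times(\mathrm{x})]^{-1}$, and then substitute the value of $\meas(\Lambda_v^\times)$ from Lemma \ref{lem:local-volumes}: $\meas(\Lambda_v^\times)=(1-p^{-1})(1-\chi_E(p)p^{-1})$ when $f_v=1$ and $\meas(\Lambda_v^\times)=f_v^{-1}(1-p^{-1})$ when $f_v>1$. The stated closed form then drops out in both cases. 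Finally, for the ``in particular'' inequality I would invoke Proposition \ref{prop:Lx<=Lxy} with $s=1$, $\chi=1$, and plug in the value $L_{\Lambda_v(\ord(\mathrm{y})\mathrm{x})}(1)=(\meas(\Lambda_v^\times(\ord(\mathrm{y})\mathrm{x}))f_v)^{-1}$ just obtained; the two occurrences of $\meas(\Lambda_v^\times(\ord(\mathrm{y})\mathrm{x}))$ cancel, leaving $L_{\Lambda_v(\mathrm{x},\mathrm{y})}(1)\le\|\ord(\mathrm{y})\|^{-1}(\meas(\Lambda_v^\times(\mathrm{x}))f_v)^{-1}$, and its explicit form follows by the same substitution. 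There is no genuine obstacle here; the one step that deserves care is the Haar-null claim for $E_v\setminus E_v^\times$, where the split étale-algebra case must be treated on the same footing as the field case.
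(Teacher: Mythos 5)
Your proof is correct and follows the same route the paper's terse one-line proof intends: evaluate the defining local integral \eqref{eq:L-order-local-integral} at $s=1$, $\chi=1$, $\mathrm{y}_v=0$, reduce to a volume computation via Lemma \ref{lem:local-volumes}, and deduce the inequality from Proposition \ref{prop:Lx<=Lxy}. The one detail you spell out that the paper leaves implicit — that $E_v\setminus E_v^\times$ is Haar-null in both the field and split \'etale-algebra cases, so the trivial character's vanishing on non-units does not affect the integral — is exactly the right thing to check, and your justification is sound. The algebraic bookkeeping in the two cases $f_v=1$, $f_v>1$ and the cancellation of $\meas(\Lambda_v^\times(\ord(\mathrm{y})\mathrm{x}))$ in the ``in particular'' part are both verified correctly.
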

\begin{proof}
The first claim follows directly from \eqref{eq:L-order-local-integral} and Lemma \ref{lem:local-volumes}. The second claim follows from the first part and Proposition \ref{prop:Lx<=Lxy}.
\end{proof}

\subsubsection{Local Factor When \texorpdfstring{$\mathrm{x}_v=0$}{x=0}}
In this section we compute $L_{\Lambda_v}$ which coincides with $L_{\Lambda_v(\mathrm{x})}$ whenever $\mathrm{x}_v\in\Lambda_v$.

\begin{lem}\label{lem:local-coset-integrals}
Let $a\in\mathbb{Z}_v$ then
\begin{equation*}
\int_{a+f_v\mathcal{O}_{E_v}} \|z\|^{s-1} \dif\meas(z)=\begin{cases}
\|f_v\|\|a\|^{s-1} & \|a\|>\|f_v\|\\
\|f_v\|^s \meas(\mathcal{O}_{E_v}^\times) L_{\mathcal{O}_{E_v}}(s) & a=0
\end{cases} \, .
\end{equation*}
\end{lem}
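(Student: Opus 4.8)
The plan is to split into the two stated cases, each of which reduces to an elementary change of variables together with the definition \eqref{eq:L-order-local-integral} of the local factor. Begin with $a=0$. Here the domain of integration is the ideal $f_v\mathcal{O}_{E_v}$; substituting $z=f_v w$ with $w$ ranging over $\mathcal{O}_{E_v}$ and using the scaling relation $\meas(f_v B)=\|f_v\|\meas(B)$ together with the multiplicativity $\|f_v w\|=\|f_v\|\,\|w\|$ gives
\[
\int_{f_v\mathcal{O}_{E_v}}\|z\|^{s-1}\dif\meas(z)=\|f_v\|^{s}\int_{\mathcal{O}_{E_v}}\|w\|^{s-1}\dif\meas(w).
\]
It then remains to recognize the last integral: specializing \eqref{eq:L-order-local-integral} to $\Lambda_v=\mathcal{O}_{E_v}$ and $\mathrm{x}_v=\mathrm{y}_v=0$ with $\chi=1$ — so that $\Lambda_v^\times(\mathrm{x})=\mathcal{O}_{E_v}^\times$ and the domain $\mathrm{x}_v+\Lambda_v-\Aut^1(\Lambda_v).\mathrm{y}_v$ is all of $\mathcal{O}_{E_v}$ — yields $\int_{\mathcal{O}_{E_v}}\|w\|^{s-1}\dif\meas(w)=\meas(\mathcal{O}_{E_v}^\times)L_{\mathcal{O}_{E_v}}(s)$, which is exactly the asserted formula.

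For the case $\|a\|>\|f_v\|$ the key observation is that $z\mapsto\|z\|$ is constant on the coset $a+f_v\mathcal{O}_{E_v}$, with value $\|a\|$. Indeed, for $w\in\mathcal{O}_{E_v}$ one has $\|f_v w\|=\|f_v\|\,\|w\|\le\|f_v\|<\|a\|$; writing $E_v\simeq\prod_{w\mid v}E_w$ and using that $a$ comes from $\mathbb{Z}_v\subset\mathbb{Q}_v$ and hence lies on the diagonal, the ultrametric inequality in each local field $E_w$ gives $|a+f_v\alpha|_w=|a|_w$ whenever $|f_v\alpha|_w<|a|_w$, and taking the product over $w\mid v$ shows $\|a+f_v\alpha\|=\|a\|$ for every $\alpha\in\mathcal{O}_{E_v}$. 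Consequently the integral equals $\|a\|^{s-1}\meas\bigl(a+f_v\mathcal{O}_{E_v}\bigr)$, and by translation invariance of $\meas$ and $\meas(f_v\mathcal{O}_{E_v})=\|f_v\|$ this is $\|f_v\|\,\|a\|^{s-1}$.

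The calculation is routine and I expect no genuine obstacle; the only step demanding a little care is the constancy of $\|\cdot\|$ on the coset in the split case, where $\|\cdot\|$ is a product of two absolute values and so is not itself ultrametric. This is handled by applying the ultrametric estimate separately in each coordinate, which is legitimate precisely because $a$ is a rational scalar embedded diagonally — exactly where the hypothesis $a\in\mathbb{Z}_v$ (rather than $a\in\mathcal{O}_{E_v}$) is used.
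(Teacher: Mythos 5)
Your proof is correct and follows the same route as the paper's: a change of variables for $a=0$ and constancy of $\|\cdot\|$ on the coset for $\|a\|>\|f_v\|$. Your extra observation — that in the split case $\|\cdot\|$ is a product of two ultrametric absolute values and hence not itself ultrametric, so the constancy $\|a+x\|=\|a\|$ really does rely on $a$ lying on the diagonal $\mathbb{Z}_v\hookrightarrow E_v$ — is a sharp point that the paper takes for granted, and it correctly identifies where the hypothesis $a\in\mathbb{Z}_v$ (as opposed to $a\in\mathcal{O}_{E_v}$) is actually used.
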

\begin{proof}
If $\|a\|>\|f_v\|$ then $\|a+x\|=\|a\|$ for all $x\in f_v\mathcal{O}_{E_v}$, thus
\begin{align*}
\int_{a+f_v\mathcal{O}_{E_v}} \|z\|^{s-1} \dif\meas(z)&=
\int_{f_v\mathcal{O}_{E_v}} |a+z|^{s-1} \dif\meas(z)=\\
\int_{f_v\mathcal{O}_{E_v}} \|a\|^{s-1} \dif\meas(z)&=\|a\|^{s-1}\meas(f_v\mathcal{O}_{E_v})
=\|a\|^{s-1}\|f_v\|
\end{align*}
The equality in the case $a=0$ is an immediate application of the change of variable $x\mapsto \frac{x}{f_v}$ and \eqref{eq:L-order-local-integral}.
\end{proof}

\begin{prop}\label{prop:local-l-factor-formula}
Assume $\Lambda_v\subsetneq \mathcal{O}_{E_v}$ is non-maximal.
The local factor of $L_\Lambda$ at $v$ is for all $s\neq 1/2$ equal to
\begin{align*}
L_{\Lambda_v}(s)&=\frac{1-f_v^{-(2s-1)}}{1-p^{-(2s-1)}}+f_v^{-(2s-1)}\left(1-\left(\frac{D_E}{p}\right)\frac{1}{p}\right)L_{\mathcal{O}_{E_v}}(s)\\
&=\frac{1-f_v^{-(2s-1)}}{1-p^{-(2s-1)}}+\frac{f_v^{-(2s-1)}}{1-p^{-1}}\frac{L_{\mathcal{O}_{E_v}}(s)}{L_{\mathcal{O}_{E_v}}(1)} \, ,
\end{align*}
while for $s=1/2$
\begin{equation*}
L_{\Lambda_v}(1/2)=n+\frac{1}{1-p^{-1}}\frac{L_{\mathcal{O}_{E_v}}(1/2)}{L_{\mathcal{O}_{E_v}}(1)} \, .
\end{equation*}
\end{prop}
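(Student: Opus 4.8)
\emph{Proof plan.} The plan is to start from the integral representation \eqref{eq:L-order-local-integral}, which for $\mathrm{x}_v=\mathrm{y}_v=0$ and trivial character reads $L_{\Lambda_v}(s)=\meas(\Lambda_v^\times)^{-1}\int_{\Lambda_v}\|z\|^{s-1}\dif\meas(z)$, and to split the domain using Lemma \ref{lem:Lambda-reduction}. That lemma writes $\Lambda_v$ as the disjoint union $\bigsqcup_a\bigl(a+f_v\mathcal{O}_{E_v}\bigr)$ over the standard representatives $a\in\{0,1,\dots,p^{n-1}\cdot p-1\}$ of $\mathbb{Z}_v/f_v\mathbb{Z}_v$ (with $f_v=p^n$), so that
\[
\int_{\Lambda_v}\|z\|^{s-1}\dif\meas(z)=\sum_{a}\int_{a+f_v\mathcal{O}_{E_v}}\|z\|^{s-1}\dif\meas(z).
\]
Then I would apply Lemma \ref{lem:local-coset-integrals} term by term. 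The key observation is that every \emph{nonzero} representative $a$ in this range has $|a|_p\ge p^{-(n-1)}$, hence $\|a\|_v=|a|_p^{2}\ge p^{-2(n-1)}>p^{-2n}=\|f_v\|_v$, so only the first case of the lemma is used off $a=0$, giving $\|f_v\|_v\|a\|_v^{s-1}$; the single term $a=0$ contributes $\|f_v\|_v^{\,s}\meas(\mathcal{O}_{E_v}^\times)L_{\mathcal{O}_{E_v}}(s)$.

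Next I would evaluate the finite sum over nonzero $a$ by grouping according to $j=\val_p(a)$: for each $j\in\{0,\dots,n-1\}$ there are exactly $(p-1)p^{\,n-1-j}$ representatives with $\val_p(a)=j$, and for these $\|a\|_v^{s-1}=p^{-2(s-1)j}$. Using $p^{-j}p^{-2(s-1)j}=p^{-(2s-1)j}$ together with the identity $\|f_v\|_v(p-1)p^{\,n-1}=f_v^{-1}(1-p^{-1})=\meas(\Lambda_v^\times)$ from Lemma \ref{lem:local-volumes}, one obtains
\[
\|f_v\|_v\sum_{a\ne 0}\|a\|_v^{s-1}=\meas(\Lambda_v^\times)\sum_{j=0}^{n-1}p^{-(2s-1)j}.
\]
The geometric sum equals $\dfrac{1-f_v^{-(2s-1)}}{1-p^{-(2s-1)}}$ when $s\ne 1/2$ and equals $n$ when $s=1/2$; this is the only place the two cases of the statement diverge.

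Finally, dividing through by $\meas(\Lambda_v^\times)$ yields
\[
L_{\Lambda_v}(s)=\frac{1-f_v^{-(2s-1)}}{1-p^{-(2s-1)}}+\frac{\|f_v\|_v^{\,s}\,\meas(\mathcal{O}_{E_v}^\times)}{\meas(\Lambda_v^\times)}\,L_{\mathcal{O}_{E_v}}(s),
\]
and its analogue with $n$ in place of the first summand at $s=1/2$. Since $\|f_v\|_v^{\,s}=f_v^{-2s}$ and, by Corollary \ref{cor:units-volume-ratio}, $\meas(\mathcal{O}_{E_v}^\times)/\meas(\Lambda_v^\times)=f_v\bigl(1-\chi_E(p)p^{-1}\bigr)$, the second term becomes $f_v^{-(2s-1)}\bigl(1-\bigl(\tfrac{D_E}{p}\bigr)p^{-1}\bigr)L_{\mathcal{O}_{E_v}}(s)$, which is the first displayed formula. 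For the second form one rewrites $1-\chi_E(p)p^{-1}=\meas(\mathcal{O}_{E_v}^\times)/(1-p^{-1})$ and uses $\meas(\mathcal{O}_{E_v}^\times)=L_{\mathcal{O}_{E_v}}(1)^{-1}$, both from Lemma \ref{lem:local-volumes}, and likewise for $s=1/2$. The computation is entirely elementary; the only points that require care are checking that no nonzero representative $a$ is divisible by $f_v$ (so that exactly one branch of Lemma \ref{lem:local-coset-integrals} is invoked away from $a=0$) and handling the degenerate geometric series at the central point $s=1/2$ separately.
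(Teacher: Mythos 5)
Your proof is correct and follows essentially the same route as the paper's: decompose $\Lambda_v$ into cosets $a+f_v\mathcal{O}_{E_v}$ via Lemma~\ref{lem:Lambda-reduction}, evaluate each term with Lemma~\ref{lem:local-coset-integrals}, group the nonzero $a$ by $p$-valuation to obtain a geometric series, and use Lemma~\ref{lem:local-volumes} and Corollary~\ref{cor:units-volume-ratio} for the volume bookkeeping. The only cosmetic addition is that you explicitly verify no nonzero representative of $\cyclic{f_v}$ is divisible by $f_v$, a point the paper leaves implicit.
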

\begin{proof}
We use the integral representation \eqref{eq:L-order-local-integral} and Lemma \ref{lem:local-coset-integrals} to write
\begin{equation}\label{eq:local-L-integral-decomposition}
L_{\Lambda_v}(s)= \meas(\Lambda_v^\times)^{-1} \int_{f_v\mathcal{O}_{E_v}} \|z\|^{s-1} \dif\meas(z)
+\meas(\Lambda_v^\times)^{-1}\sum_{0\neq a \in \cyclic{f_v}} \int_{a+f_v\mathcal{O}_{E_v}} \|z\|^{s-1} \dif\meas(z) \, .
\end{equation}
By Lemma \ref{lem:local-coset-integrals} and Corollary \ref{cor:units-volume-ratio} the first term on the right hand side of \eqref{eq:local-L-integral-decomposition} is equal to
\begin{equation*}
\frac{\meas(\mathcal{O}_{E_v}^\times)}{\meas(\Lambda_v^\times)} f_v^{-2s}L_{\mathcal{O}_{E_v}}(s)
=f_v^{-(2s-1)}\left(1-\chi_E(p)p^{-1}\right) L_{\mathcal{O}_{E_v}}(s)\, .
\end{equation*}

To evaluate the second term on the right hand side of \eqref{eq:local-L-integral-decomposition} we use Lemma \ref{lem:local-coset-integrals}
\begin{align*}
\sum_{0\neq a \in \cyclic{f_v}} &\int_{a+f_v\mathcal{O}_{E_v}} \|z\|^{s-1} \dif\meas(z)=
\sum_{0\neq a \in \cyclic{f_v}} \|f_v\| \|a\|^{s-1}\\
&=f_v^{-2} \sum_{k=0}^{n-1}p^{n-k}\left(1-\frac{1}{p}\right)p^{-2k(s-1)}
=f_v^{-1}\left(1-\frac{1}{p}\right) \sum_{k=0}^{n-1}p^{-k(2s-1)}\\
&=f_v^{-1}\left(1-\frac{1}{p}\right)\frac{1-f_v^{-(2s-1)}}{1-p^{-(2s-1)}}=\meas(\Lambda_v^\times)\frac{1-f_v^{-(2s-1)}}{1-p^{-(2s-1)}} \, .
\end{align*}
To pass to the sum over $k$ above we have collected all non-trivial element of $\cyclic{f_v}$ according to their $p$-valuation -- $p^k$. In the last equality we have used Lemma \ref{lem:local-volumes}. Notice that the formula for the geometric sums only holds for $s\neq 1/2$ while for $s=1/2$ this sum is equal to $n$.

The claim follows by combining the expressions for all the summands in \eqref{eq:local-L-integral-decomposition}.
\end{proof}

\begin{cor}\label{cor:L1/2-x=0}
\begin{equation*}
0<\meas(\Lambda_v^\times)f_v L_{\Lambda_v}(1/2)\leq n+3 \, .
\end{equation*}
\end{cor}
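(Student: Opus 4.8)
The claim is a clean two-sided estimate for $\meas(\Lambda_v^\times)f_v L_{\Lambda_v}(1/2)$, and the natural route is to feed the explicit closed form of $L_{\Lambda_v}(1/2)$ from Proposition~\ref{prop:local-l-factor-formula} through Lemma~\ref{lem:local-volumes}. First I would dispose of the maximal case: if $\Lambda_v=\mathcal{O}_{E_v}$ then $f_v=1$, $n=0$, and $\meas(\mathcal{O}_{E_v}^\times)L_{\mathcal{O}_{E_v}}(1/2)$ must be shown to lie in $(0,3]$; positivity is immediate from the integral representation \eqref{eq:L-order-local-integral} with $\chi=1$ and $s=1/2$, while the upper bound follows by computing $L_{\mathcal{O}_{E_v}}(1/2)$ explicitly in the split, inert and ramified cases (it is $(1-p^{-1/2})^{-2}$, $(1-p^{-1})^{-1}$, $(1-p^{-1})^{-1}$ up to the $\meas(\mathcal{O}_{E_v}^\times)$ factor which is $L_{\mathcal{O}_{E_v}}(1)^{-1}$ by Lemma~\ref{lem:local-volumes}), and in each case multiplying by $\meas(\mathcal{O}_{E_v}^\times)=(1-p^{-1})(1-\chi_E(p)p^{-1})$ gives a quantity that is maximized at the smallest prime $p=2$ and stays below $3$; in fact below $3$ with room to spare, and one can even note it is $\le 3$ trivially since $(1-p^{-1/2})^{-1}\le (1-2^{-1/2})^{-1}<4$ and the $\meas(\mathcal{O}_{E_v}^\times)\le 1$ factor brings it down.

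For the non-maximal case, $f_v=p^n$ with $n\ge 1$, I would substitute the formula
\[
L_{\Lambda_v}(1/2)=n+\frac{1}{1-p^{-1}}\frac{L_{\mathcal{O}_{E_v}}(1/2)}{L_{\mathcal{O}_{E_v}}(1)}
\]
and use $\meas(\Lambda_v^\times)=f_v^{-1}(1-p^{-1})$ from Lemma~\ref{lem:local-volumes}, so that $\meas(\Lambda_v^\times)f_v=1-p^{-1}$ and therefore
\[
\meas(\Lambda_v^\times)f_v L_{\Lambda_v}(1/2)=(1-p^{-1})n+\frac{L_{\mathcal{O}_{E_v}}(1/2)}{L_{\mathcal{O}_{E_v}}(1)}.
\]
Positivity is clear since every term is positive. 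For the upper bound, $(1-p^{-1})n\le n$, and the remaining term $L_{\mathcal{O}_{E_v}}(1/2)/L_{\mathcal{O}_{E_v}}(1)=\meas(\mathcal{O}_{E_v}^\times)L_{\mathcal{O}_{E_v}}(1/2)$ is exactly the quantity bounded by $3$ in the maximal case above. Hence the sum is $\le n+3$, which is the assertion.

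The only mildly delicate point — the "main obstacle," though it is really just a bookkeeping step — is verifying uniformly in $p$ and in the splitting type of $v$ that $\meas(\mathcal{O}_{E_v}^\times)L_{\mathcal{O}_{E_v}}(1/2)\le 3$. This amounts to checking three elementary inequalities: in the split case $(1-p^{-1})(1-p^{-1})\cdot(1-p^{-1/2})^{-2}=\bigl((1-p^{-1})/(1-p^{-1/2})\bigr)^2=(1+p^{-1/2})^2\le(1+2^{-1/2})^2<3$; in the inert case $(1-p^{-1})(1+p^{-1})(1-p^{-1})^{-1}=1+p^{-1}\le 3/2$; and in the ramified case $(1-p^{-1})\cdot 1\cdot(1-p^{-1})^{-1}=1$. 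All three are $\le 3$, with the worst being the split case at $p=2$. This completes the proof; I would write it as a short case analysis after reducing to the identity displayed above.
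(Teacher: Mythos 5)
Your proof is correct and follows essentially the same route as the paper: substitute Proposition~\ref{prop:local-l-factor-formula} and Lemma~\ref{lem:local-volumes} to reduce to bounding $\meas(\mathcal{O}_{E_v}^\times)L_{\mathcal{O}_{E_v}}(1/2)$, then check the split, inert, and ramified cases using $p\ge 2$. One small arithmetic slip: in the ramified case $L_{\mathcal{O}_{E_v}}(s)=(1-p^{-s})^{-1}$, so $L_{\mathcal{O}_{E_v}}(1/2)=(1-p^{-1/2})^{-1}$ rather than $(1-p^{-1})^{-1}$ (you conflated it with the inert factor), giving $\meas(\mathcal{O}_{E_v}^\times)L_{\mathcal{O}_{E_v}}(1/2)=1+p^{-1/2}$ rather than $1$ --- still comfortably below $3$, so the conclusion is unaffected.
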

\begin{proof}
Assume first $\Lambda_v\subsetneq \mathcal{O}_{E_v}$.
Substitute the formula for $L_{\mathcal{O}_{E_v}}$ into Proposition \ref{prop:local-l-factor-formula}
\begin{equation*}
L_{\Lambda_v}(1/2)=n(1-p^{-1})
+\frac{(1-p^{-1})(1-\chi_E(p)p^{-1})}{(1-p^{-1/2})(1-\chi_E(p)p^{-1/2})} \, .
\end{equation*}
The same formula holds with $n=0$ when $\Lambda_v=\mathcal{O}_{E_v}$.
The claim follows by substituting for the three possible case $\chi_E(p)=0,1,-1$ and applying the inequality $p\geq 2$.
\end{proof}

\subsubsection{Local Factor when \texorpdfstring{$\mathrm{x}_v$}{Xv} not in the Order}
In this section we compute for the case $\mathrm{x}_v\not\in \Lambda_v$.
\begin{prop}\label{prop:L-local-xv}
Assume $\mathrm{x}_v\not\in\Lambda_v$ then
\begin{equation*}
\meas(\Lambda_v^\times(\mathrm{x})) f_v L_{\Lambda_v(\mathrm{x})}(s)=f_v^{-1}\sum_{a\in \cyclic{f_v}} \prod_{w\mid v}\begin{cases}
|\mathrm{x}_{w}+a|_w^{s-1} &\mathrm{x}_w+a\not\in f_v\mathcal{O}_{E_w}\\
|f_v|_w^{s-1}\frac{1-p^{-1}}{1-p^{-s}} &\mathrm{x}_w+a\in f_v\mathcal{O}_{E_w}
\end{cases} \, ,
\end{equation*}
where the second option can happen only when $v$ is split in $E$ and at most for a single $a\in\cyclic{f_v}$ for each $w\mid v$. Moreover, it is impossible for the second option to happen with the same $a$ for both $w\mid v$

If $\Lambda_v=\mathcal{O}_{E_v}$ then a simpler formula holds
\begin{equation*}
\meas(\Lambda_v^\times(\mathrm{x})) f_v L_{\Lambda_v(\mathrm{x})}(s)=
\prod_{w\mid v}\begin{cases}
|\mathrm{x}_w|_w^{s-1} &\mathrm{x}_w\not\in \mathcal{O}_{E_w}\\
\frac{1-p^{-1}}{1-p^{-s}} &\mathrm{x}_w\in \mathcal{O}_{E_w}
\end{cases} \, .
\end{equation*}
Here as well the second option can happen only if $v$ splits and at most for one $w\mid v$.

\end{prop}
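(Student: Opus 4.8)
The plan is to unwind the integral definition \eqref{eq:L-order-local-integral} of $L_{\Lambda_v(\mathrm{x})}(s)=L_{\Lambda_v(\mathrm{x},0)}(s,1)$, for which the domain of integration is simply $\mathrm{x}_v+\Lambda_v$ since $\Aut^1(\Lambda_v).0=0$. Using the coset decomposition $\Lambda_v=\bigsqcup_{a\in\cyclic{f_v}}(a+f_v\mathcal{O}_{E_v})$ from Lemma \ref{lem:Lambda-reduction}, together with the factorizations $\mathcal{O}_{E_v}=\prod_{w\mid v}\mathcal{O}_{E_w}$, $\|\cdot\|_v=\prod_{w\mid v}|\cdot|_w$ and $\meas_v=\prod_{w\mid v}\meas_w$ (with $\meas_w(\mathcal{O}_{E_w})=1$), one obtains
\begin{equation*}
\meas(\Lambda_v^\times(\mathrm{x}))\,L_{\Lambda_v(\mathrm{x})}(s)=\sum_{a\in\cyclic{f_v}}\ \prod_{w\mid v}\int_{\mathrm{x}_w+a+f_v\mathcal{O}_{E_w}}|z|_w^{s-1}\dif\meas_w(z)\,.
\end{equation*}
It therefore suffices to evaluate each local coset integral $I_w(a)$ and to track which summands fall into which case.

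I would evaluate $I_w(a)$ by the same dichotomy as in Lemma \ref{lem:local-coset-integrals}. If $\mathrm{x}_w+a\notin f_v\mathcal{O}_{E_w}$, the integrand is constant equal to $|\mathrm{x}_w+a|_w^{s-1}$ on the coset by the ultrametric inequality, so $I_w(a)=|f_v|_w\,|\mathrm{x}_w+a|_w^{s-1}$. If $\mathrm{x}_w+a\in f_v\mathcal{O}_{E_w}$, the coset equals $f_v\mathcal{O}_{E_w}$ and the substitution $z=f_vu$ gives $I_w(a)=|f_v|_w^s\int_{\mathcal{O}_{E_w}}|u|_w^{s-1}\dif\meas_w(u)$; since this second case forces $E_w=\mathbb{Q}_p$ (see below), the remaining integral is $\tfrac{1-p^{-1}}{1-p^{-s}}$, whence $I_w(a)=|f_v|_w\cdot|f_v|_w^{s-1}\tfrac{1-p^{-1}}{1-p^{-s}}$. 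In both cases $I_w(a)=|f_v|_w\,T_w(a)$, where $T_w(a)$ is the bracketed expression in the statement. Multiplying over $w\mid v$ and using $\prod_{w\mid v}|f_v|_w=\|f_v\|_v=f_v^{-2}$ (because $E_v/\mathbb{Q}_v$ is a quadratic \'etale algebra) yields $\prod_{w\mid v}I_w(a)=f_v^{-2}\prod_{w\mid v}T_w(a)$, and hence $\meas(\Lambda_v^\times(\mathrm{x}))\,f_v\,L_{\Lambda_v(\mathrm{x})}(s)=f_v^{-1}\sum_a\prod_wT_w(a)$, which is the asserted formula. The case $\Lambda_v=\mathcal{O}_{E_v}$ is the specialization $f_v=1$, $\cyclic{f_v}=\{0\}$, for which the outer sum collapses and $|f_v|_w=1$.

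Finally I would supply the bookkeeping for the second case, which is genuinely needed rather than cosmetic: at an inert or ramified place the integral $\int_{\mathcal{O}_{E_w}}|u|_w^{s-1}\dif\meas_w$ equals $\tfrac{1-p^{-2}}{1-p^{-2s}}$, not $\tfrac{1-p^{-1}}{1-p^{-s}}$, so the stated formula would be false if the second case could occur there. Since $f_v\mathcal{O}_{E_w}\cap\mathbb{Z}_p=f_v\mathbb{Z}_p$ for every $w\mid v$, the condition $\mathrm{x}_w+a\in f_v\mathcal{O}_{E_w}$ determines $a\in\cyclic{f_v}$ uniquely, so the second case occurs at most once per $w$. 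And if it occurred at the unique place above an inert or ramified $v$, or at both places $w_1,w_2$ above a split $v$ with a common $a$, then $\mathrm{x}_v\in -a+f_v\mathcal{O}_{E_v}\subset\mathbb{Z}_v+f_v\mathcal{O}_{E_v}=\Lambda_v$, contradicting $\mathrm{x}_v\notin\Lambda_v$; this confines the second case to split $v$ and forbids it at both places simultaneously with a common $a$, and in the maximal case the same argument (with $a=0$) shows it forces $\mathrm{x}_w\in\mathcal{O}_{E_w}$ at one place only. The main obstacle, such as it is, is purely bookkeeping: keeping the measure normalizations and the two competing powers of $|f_v|_w$ straight across the product over $w\mid v$, and invoking $\mathrm{x}_v\notin\Lambda_v$ exactly where the residue-field size $p$ (rather than $p^2$) is used.
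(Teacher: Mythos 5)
Your proof is correct and follows essentially the same route as the paper's: unwind \eqref{eq:L-order-local-integral}, decompose $\Lambda_v=\bigsqcup_{a}(a+f_v\mathcal{O}_{E_v})$ via Lemma \ref{lem:Lambda-reduction}, factor over $w\mid v$, compute each coset integral by the ultrametric dichotomy, and note $\prod_{w\mid v}|f_v|_w=f_v^{-2}$ to assemble the claimed formula. One small inaccuracy in your closing remark, though not in the proof itself: at a \emph{ramified} place $w$ the residue field of $E_w$ has size $p$, not $p^2$, so with the paper's normalization $\int_{\mathcal{O}_{E_w}}|u|_w^{s-1}\dif\meas_w(u)=\frac{1-p^{-1}}{1-p^{-s}}$ just as in the split case; only at an \emph{inert} place does one get $\frac{1-p^{-2}}{1-p^{-2s}}$. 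The exclusion of the second case at inert and ramified $v$ is still forced by $\mathrm{x}_v\notin\Lambda_v$ (your argument for this is correct), but the displayed formula would in fact remain valid if the second case occurred at a ramified place, so the ``genuinely needed'' claim applies only to the inert case.
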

\begin{proof}
Formula \eqref{eq:L-order-local-integral} implies
\begin{align*}
\meas(\Lambda_v^\times(\mathrm{x})) L_{\Lambda_v(\mathrm{x})}&=\sum_{a\in \cyclic{f}} \int_{\mathrm{x}_v+a+f_v\mathcal{O}_{E_v}} \|z\|^{s-1}\dif\meas(z)\\
&=\sum_{a\in \cyclic{f}} \prod_{w\mid v} \int_{f_v\mathcal{O}_{E_w}} |\mathrm{x}_w+a+z_w|_w^{s-1} \dif\meas_w(z_w) \, .
\end{align*}
The assumption $\mathrm{x}_v\not\in\Lambda_v$ implies that $\mathrm{x}_v+a\not\in f_v\mathcal{O}_{E_v}$ for all $a\in\mathbb{Z}_v$, hence if $\mathrm{x}_w+a\in f_v\mathcal{O}_{E_w}$ then $v$ must be split in $E$.
For each $a\in\cyclic{f}$ we consider two cases.
If $v$ is split and $\mathrm{x}_w+a\in f_v\mathcal{O}_{E_w}$ then
\begin{align*}
\int_{f_v\mathcal{O}_{E_w}} |\mathrm{x}_w+a+z_w|_w^{s-1} \dif\meas_w(z_w)&=
\int_{f_v\mathbb{Z}_v} |z_w|_w^{s-1} \dif\meas_w(z_w)
=\int_{\mathbb{Z}_v} |f_vz_w|_w^{s-1} \dif\meas_w(f_vz_w)\\
&=|f_v|_w^s \frac{1-p^{-1}}{1-p^{-s}}=|f_v|_w |f_v|_w^{s-1}\frac{1-p^{-1}}{1-p^{-s}} \, .
\end{align*}
On the other hand if $\mathrm{x}_w+a\not \in f_v\mathcal{O}_{E_w}$ then $|\mathrm{x}_w+a+z_w|_w=|\mathrm{x}_w+a|_w$ for all $z_w\in f_v\mathcal{O}_{E_w}$ and
\begin{equation*}
\int_{f_v\mathcal{O}_{E_v}} |\mathrm{x}_w+a+z_w|_w^{s-1} \dif\meas_w(z_w)=
|\mathrm{x}_w+a|_w^{s-1} \meas_w(f_v \mathcal{O}_{E_w})
= |f_v|_w|\mathrm{x}_w+a|_w^{s-1} \, .
\end{equation*}
This and Proposition \ref{prop:L-local-at-1} implies the first claimed formula. If $\mathrm{x}_w+a_i\in f_v\mathcal{O}_{E_w}$ for some $a_1,a_2\in\cyclic{f_v}$ then $a_1-a_2=0$, thus the second option in the first claimed formula can occur at most for one $a\in\cyclic{f_v}$ for each $w\mid v$. It also may not occur for the same $a\in\cyclic{f_v}$ for both $w\mid v$ because otherwise $\mathrm{x}_v\in -a+f_v\mathcal{O}_{E_v}\subset \Lambda_v$ contradicting the assumption.

The second claimed formula is a simple specialization of the first one.
\end{proof}

\begin{cor}\label{cor:L1/2-non-max}
If $v$ splits in $E$ then
\begin{equation*}
0<\meas(\Lambda_v^\times(\mathrm{x}))f_vL_{\Lambda_v(\mathrm{x})}(1/2)\leq  \prod_{w\mid v} \begin{cases}
|\mathrm{x}_w|_w^{-1/2} & \mathrm{x}_w\not\in \mathcal{O}_{E_w}\\
2\sqrt{n} & \mathrm{x}_w\in \mathcal{O}_{E_w}
\end{cases} \, .
\end{equation*}
Otherwise,
\begin{equation*}
0<\meas(\Lambda_v^\times(\mathrm{x}))f_vL_{\Lambda_v(\mathrm{x})}(1/2)\leq
\begin{cases}
|\mathrm{x}_v|_v^{-1/2} & \mathrm{x}_v\not\in \mathcal{O}_{E_v}\\
4n & \mathrm{x}_v\in \mathcal{O}_{E_v}
\end{cases} \, .
\end{equation*}

\end{cor}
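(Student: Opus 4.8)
The plan is to substitute $s=1/2$ into the explicit formulae of Proposition~\ref{prop:L-local-xv} and to bound the resulting finite sum over $\cyclic{f_v}$ by sorting the residue classes according to the $p$-adic valuation of the integer part they contribute. The strict positivity $0<\meas(\Lambda_v^\times(\mathrm{x}))f_vL_{\Lambda_v(\mathrm{x})}(1/2)$ is immediate from the integral representation \eqref{eq:L-order-local-integral} with $\mathrm{y}_v=0$, the integrand being non-negative and positive on a set of positive measure. When $\Lambda_v=\mathcal{O}_{E_v}$ (so $f_v=1$, $n=0$) the bound is read off directly from the second displayed formula in Proposition~\ref{prop:L-local-xv}, using $\frac{1-p^{-1}}{1-p^{-1/2}}=1+p^{-1/2}<2$; so henceforth I assume $\Lambda_v\subsetneq\mathcal{O}_{E_v}$, i.e.\ $f_v=p^n$ with $n\ge1$.

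In the inert and ramified cases Proposition~\ref{prop:L-local-xv} has no exceptional Euler term, so
\[
\meas(\Lambda_v^\times(\mathrm{x}))f_vL_{\Lambda_v(\mathrm{x})}(1/2)=f_v^{-1}\sum_{a\in\cyclic{f_v}}|\mathrm{x}_v+a|_v^{-1/2}.
\]
If $\mathrm{x}_v\notin\mathcal{O}_{E_v}$ then $|a|_v\le1<|\mathrm{x}_v|_v$ forces $|\mathrm{x}_v+a|_v=|\mathrm{x}_v|_v$ for every $a\in\mathbb{Z}_v$, and the sum collapses to $|\mathrm{x}_v|_v^{-1/2}$: equality in the stated bound. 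If $\mathrm{x}_v\in\mathcal{O}_{E_v}$, then $\mathrm{x}_v\notin\Lambda_v$ forces $\mathrm{x}_v\in a_0+p^{m}\mathcal{O}_{E_v}\setminus\bigl(a_0+p^{m+1}\mathcal{O}_{E_v}\bigr)$ for some $a_0\in\mathbb{Z}_v$ and some $0\le m<n$; I would then check (treating the unramified and ramified residue behaviour separately) that $|\mathrm{x}_v+a|_v$ depends only on $\val_p(a_0+a)$ as long as $\val_p(a_0+a)\le m$, and is pinned to its maximal value $p^{-2m}$ (resp.\ $p^{-1-2m}$) for all remaining classes. Grouping the $p^n$ classes by $\val_p(a_0+a)$ and using $\#\{a\in\cyclic{f_v}:\val_p(a_0+a)=k\}=p^{n-k}(1-p^{-1})$ for $0\le k<n$, the sum telescopes to at most $(m+1)p^n$, so the quantity is $\le m+1\le n\le4n$.

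For split $v$ one has $E_v\simeq\mathbb{Q}_p\times\mathbb{Q}_p$, and the quantity factors through the two places $w\mid v$. If both $\mathrm{x}_w\notin\mathcal{O}_{E_w}$, each local factor is the constant $|\mathrm{x}_w|_w^{-1/2}$ and the value is exactly $\prod_{w\mid v}|\mathrm{x}_w|_w^{-1/2}$. If exactly one coordinate, say $\mathrm{x}_{w_1}$, lies outside $\mathcal{O}_{E_{w_1}}$, its factor is constant and pulls out, while the remaining $w_2$-average equals $\frac{1-p^{-1}}{1-p^{-1/2}}=1+p^{-1/2}<2\le2\sqrt n$ (the same telescoping identity as in Corollary~\ref{cor:L1/2-x=0}), yielding the bound $|\mathrm{x}_{w_1}|_{w_1}^{-1/2}\cdot2\sqrt n$. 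The remaining case, $\mathrm{x}_{w_1},\mathrm{x}_{w_2}\in\mathcal{O}_{E_{w_i}}$, is the genuinely combinatorial one: here $\mathrm{x}_v\notin\Lambda_v$ means $e:=\val_p(\mathrm{x}_{w_1}-\mathrm{x}_{w_2})<n$; there is at most one exceptional class $a_i\equiv-\mathrm{x}_{w_i}\pmod{p^n}$ per coordinate, never the same one, and for $a\ne a_1,a_2$ one has $\min\{\val_p(\mathrm{x}_{w_1}+a),\val_p(\mathrm{x}_{w_2}+a)\}\le e$. Sorting the classes by this pair of valuations and using the counting identity above, I would show the $w_1w_2$-sum is $\le(n+O(1))p^n$, so after the $f_v^{-1}=p^{-n}$ normalisation the quantity is $\le n+O(1)\le4n$ for $n\ge1$; the two exceptional classes each contribute at most $p^{-1/2}(1+p^{-1/2})<2$ and are harmless.

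The main obstacle is exactly this last split case. The factor $p^{n/2}(1+p^{-1/2})$ that Proposition~\ref{prop:L-local-xv} attaches to an exceptional class looks dangerous, and the point is to see that it is damped both by the small value $|\mathrm{x}_{w_1}-\mathrm{x}_{w_2}|_p^{-1/2}\le p^{(n-1)/2}$ of the other coordinate's factor there (since $e<n$) and by the global $p^{-n}$ normalisation, while the bulk of the sum — the terms over the $a\ne a_1,a_2$ — is controlled by the valuation-counting identity. Everything else in the argument is bookkeeping: the reduction to these formulae, the constant-summand degenerate cases, and the elementary identities $\#\{a\in\cyclic{f_v}:\val_p(a)=k\}=p^{n-k}(1-p^{-1})$ and $\sum_{k\ge0}p^{-k/2}=(1-p^{-1/2})^{-1}$ already used in Corollary~\ref{cor:L1/2-x=0}.
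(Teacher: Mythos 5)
Your proposal takes a genuinely different route from the paper. The paper's proof first uses the artifice of extending the notation $|\cdot|_{w}$ to the non-split case (setting $|\cdot|_{w_i}\coloneqq\sqrt{\|\cdot\|_v}$), then applies Cauchy--Schwarz to the sum over $a\in\cyclic{f_v}$ to decouple the two factors $w\mid v$ entirely, reducing the problem to the single-variable estimate
\[
\sum_{a\in\cyclic{f_v}}\max\{|\mathrm{x}_w+a|_w,|f_v|_w\}^{-1}\ll n f_v\, ,
\]
which is the easy valuation-counting identity you already use. Your plan instead tackles the coupled double sum directly, tracking the correlation between the two places via $e=\val_p(\mathrm{x}_{w_1}-\mathrm{x}_{w_2})$; this buys an \emph{exact} treatment of the sum at the cost of substantially more bookkeeping, and it is precisely the bookkeeping you have not carried out.

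The gap is in the split case with both $\mathrm{x}_{w_i}\in\mathcal{O}_{E_{w_i}}$, which you correctly flag as "the genuinely combinatorial one". The claim that the sum is "$\le(n+O(1))p^n$, so $\ldots\le4n$ for $n\ge1$" needs the implicit constant to satisfy $O(1)\le 3n$, which for $n=1$ means $\le 3$ on the nose. A crude accounting does not deliver this: the two exceptional terms already contribute $2(1+p^{-1/2})p^{-(n-e)/2}$, which for $p=2$ and $e=n-1$ is about $2.4$, and the bulk over the $p^{n-e}$ classes $a$ with $\min_i\val_p(\mathrm{x}_{w_i}+a)\ge e$ requires care to avoid double-counting the class at the threshold $k=e$; a loose triangle-inequality bound on that range overshoots $3$ and fails for $n=1$. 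The bound is in fact true for $n=1$ (a direct computation gives $1+2/\sqrt{p}<4$), but the argument as written does not establish it. You would need either to work out the overlap structure at level $e$ precisely, or — more in the spirit of the paper — to decouple the places first. Note also that in the ramified sub-case your "$\le m+1\le n$" should be "$\le m+2\le n+1$" (the threshold is at $\val_{E_v}=2m+1$, not $2m$), though this still lies below $4n$.

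A smaller point: the non-split $\mathrm{x}_v\in\mathcal{O}_{E_v}$ argument and the "exactly one coordinate integral" split sub-case are correct and essentially match the single-variable counting the paper performs after Cauchy--Schwarz, so the only genuinely unverified piece is the doubly-coupled split case described above.
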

\begin{proof}
Notice first that
\begin{equation*}
\frac{1-p^{-1}}{1-p^{-1/2}}=1+p^{-1/2}<2 \, .
\end{equation*}
We can deduce from Proposition \ref{prop:L-local-xv} above that
\begin{equation*}
\meas(\Lambda_v^\times(\mathrm{x}))f_v L_{\Lambda_v(\mathrm{x})}(1/2)\leq f_v^{-1}\sum_{a\in \cyclic{f_v}} \prod_{w\mid v} \max\{|\mathrm{x}_w+a|_w,|f_v|_w\}^{-1/2}(1+\delta_{\mathrm{x}_w\in\mathcal{O}_{E_w}}) \, ,
\end{equation*}
where we have used the fact that if $\mathrm{x}_w+a\in f_v\mathcal{O}_{E_w}$ then $\mathrm{x}_w\in\mathcal{O}_{E_w}$.

If $v$ is split write $\|\bullet\|_v=|\bullet|_{w_1}|\bullet|_{w_2}$ as usual.
For this proof only we introduce a non-standard definition $\|\bullet\|_v=|\bullet|_{w_1}|\bullet|_{w_2}$ even if $v$ is not split and denote accordingly $w_i\mid v$ for $i=1,2$. If $v$ is either inert or ramified we define $|\bullet|_{w_i}\coloneqq \sqrt{\|\bullet\|_v}$.
Applying Cauchy-Schwartz to the inequality above we arrive at
\begin{equation}\label{eq:Lxv(1/2)-CS}
\meas(\Lambda_v^\times(\mathrm{x}))f_v L_{\Lambda_v(\mathrm{x})}(1/2)\leq  \prod_{w\mid v}(1+\delta_{\mathrm{x}_w\in\mathcal{O}_{E_w}})\sqrt{f_v^{-1}\sum_{a\in \cyclic{f_v}} \max\{|\mathrm{x}_w+a|_w,|f_v|_w\}^{-1}} \, .
\end{equation}
We follow by bounding the sum for each $w\mid v$ in \eqref{eq:Lxv(1/2)-CS} independently. Fix $w\mid v$. If $\mathrm{x}_w\not\in \mathcal{O}_{E_w}$ then
\begin{equation*}
f_v^{-1}\sum_{a\in \cyclic{f_v}} \max\{|\mathrm{x}_w+a|_w,|f_v|_w\}^{-1}= |\mathrm{x}_w|_w^{-1} \, .
\end{equation*}
Otherwise, we bound how many element $a\in \cyclic{f_v}$ exist with a fixed value of $|\mathrm{x}_w+a|_w$. Let $a_1,a_2\in\cyclic{f_v}$.
If $|\mathrm{x}_w+a_i|_w\leq|f_v|_w$ for $i=1,2$ then $a_1\equiv a_2 \mod f_v\mathcal{O}_{E_w}\Rightarrow a_1=a_2$. Thus at most one element $a\in\cyclic{f_v}$ satisfies $|\mathrm{x}_w+a|_w\leq|f_v|_w$.

Assume $|\mathrm{x}_w+a_1|_w=|\mathrm{x}_w+a_2|_w=p^{-k/2}>|f_v|_w$ where $0\leq k<2n$. The non-archimedean triangle inequality then implies $|a_1-a_2|_w\leq p^{-k/2}$ and $a_1\equiv a_2 \mod p^{\lceil k/2 \rceil}$. In $\cyclic{f_v}$ there $p^{n-\lceil k/2 \rceil}$ elements that reduce to the same element modulo $p^{\lceil k/2 \rceil}$. Hence there are at most $p^{n-\lceil k/2 \rceil}$ summands for which $|\mathrm{x}_w+a|_w=p^{-k/2}$. We deduce
\begin{equation*}
\sum_{a\in \cyclic{f_v}} \max\{|\mathrm{x}_w+a|_w,|f_v|_w\}^{-1}\leq
\sum_{k=0}^{2n} p^{n-k/2}p^{k/2}=2n p^n=2n f_v \, .
\end{equation*}
The claim follows by substituting this bound in \eqref{eq:Lxv(1/2)-CS}.
\end{proof}
\qed

\subsubsection{Local Factor with Bowen Level Structure}
Recall that in Definition \ref{defi:L-functions} we have a fixed prime $p_1$ and an integer $\tau\geq 0$. Let $v$ the place corresponding to $p_1$ and assume $v$ splits in $E$. If $\tau>0$ then $\Lambda_{p_1}(\mathrm{x},\mathrm{y})=\cap_{k=-\tau}^\tau \pi^k \tensor[^\sigma]{\pi}{^{-k}} \Lambda_{p_1}$ where $\pi$ is a uniformizer of $E_w$ for some $w\mid v$.
\begin{lem}
Assume $\tau>0$ then
\begin{equation*}
\bigcap_{k=-\tau}^\tau \frac{\pi^k}{\tensor[^\sigma]{\pi}{^k}} \Lambda_v=p_1^\tau f_v \mathcal{O}_{E_v} \, .
\end{equation*}
\end{lem}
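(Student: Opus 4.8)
The plan is to make the statement completely explicit by passing to coordinates via the splitting of $v=p_1$ in $E$, after which the claim reduces to elementary $p_1$-adic valuation bookkeeping. Since $v$ splits, write $E_v=E_{w_1}\times E_{w_2}$ with $E_{w_1}\cong E_{w_2}\cong\mathbb{Q}_{p_1}$, so that $\mathcal{O}_{E_v}\cong\mathbb{Z}_{p_1}\times\mathbb{Z}_{p_1}$ with $\mathbb{Z}_{p_1}$ sitting inside diagonally. Writing $f_v=p_1^{n}$, one has $\Lambda_v=\mathbb{Z}_{p_1}+f_v\mathcal{O}_{E_v}$ by Lemma~\ref{lem:Lambda-reduction} (and trivially when $\Lambda_v$ is maximal), i.e. in coordinates
\begin{equation*}
\Lambda_v=\left\{(a_1,a_2)\in\mathbb{Z}_{p_1}\times\mathbb{Z}_{p_1}\ \mid\ a_1\equiv a_2 \pmod{f_v}\right\}.
\end{equation*}
Normalising the factorisation $p_1=\pi\,\tensor[^\sigma]{\pi}{}$ so that $\pi=(p_1,1)$ and $\tensor[^\sigma]{\pi}{}=(1,p_1)$, one gets $\tfrac{\pi^{k}}{\tensor[^\sigma]{\pi}{^{k}}}=(p_1^{k},p_1^{-k})$, a unit of $E_v$; for a general admissible $\pi$ this differs only by a unit of $\mathcal{O}_{E_v}$, which, as one checks, plays no role in the symmetric intersection below.

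\textbf{Membership criterion.} Since $(p_1^{k},p_1^{-k})$ is a unit, $(y_1,y_2)\in\tfrac{\pi^{k}}{\tensor[^\sigma]{\pi}{^{k}}}\Lambda_v$ if and only if $(p_1^{-k}y_1,\,p_1^{k}y_2)\in\Lambda_v$, that is, if and only if $\val_{p_1}(y_1)\ge k$, $\val_{p_1}(y_2)\ge -k$ and $p_1^{-k}y_1\equiv p_1^{k}y_2\pmod{f_v}$. Intersecting over $-\tau\le k\le\tau$, the valuation conditions alone already force $\val_{p_1}(y_1)\ge\tau$ and $\val_{p_1}(y_2)\ge\tau$ (attained at $k=\tau$ and $k=-\tau$), so any element of $\bigcap_{k=-\tau}^{\tau}\tfrac{\pi^{k}}{\tensor[^\sigma]{\pi}{^{k}}}\Lambda_v$ lies in $p_1^{\tau}\mathcal{O}_{E_v}$.

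\textbf{The two inclusions.} For ``$\supseteq$'': if $\val_{p_1}(y_i)\ge\tau+n$ for $i=1,2$ then for every $|k|\le\tau$ both $p_1^{-k}y_1$ and $p_1^{k}y_2$ lie in $f_v\mathcal{O}_{E_v}$, hence are $\equiv 0\pmod{f_v}$, so all three conditions of the criterion hold; thus $p_1^{\tau}f_v\mathcal{O}_{E_v}\subseteq\bigcap_{k}\tfrac{\pi^{k}}{\tensor[^\sigma]{\pi}{^{k}}}\Lambda_v$. For ``$\subseteq$'': take $(y_1,y_2)$ in the intersection and, using the valuation bound just obtained, write $y_i=p_1^{\tau}z_i$ with $z_i\in\mathbb{Z}_{p_1}$; the congruences at $k=0$ and $k=\tau$ read $z_1\equiv z_2$ and $z_1\equiv p_1^{2\tau}z_2\pmod{f_v}$, and subtracting gives $(1-p_1^{2\tau})z_2\equiv 0\pmod{f_v}$. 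As $\tau\ge 1$, the element $1-p_1^{2\tau}$ is a unit of $\mathbb{Z}_{p_1}$, so $z_2\in f_v\mathbb{Z}_{p_1}$, and symmetrically $z_1\in f_v\mathbb{Z}_{p_1}$; hence $(y_1,y_2)\in p_1^{\tau}f_v\mathcal{O}_{E_v}$. Combining the two inclusions gives $\bigcap_{k=-\tau}^{\tau}\tfrac{\pi^{k}}{\tensor[^\sigma]{\pi}{^{k}}}\Lambda_v=p_1^{\tau}f_v\mathcal{O}_{E_v}$.

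\textbf{Main obstacle.} There is no genuine obstacle: the computation is purely local and elementary. The only points needing care are matching the abstract uniformisers $\pi,\tensor[^\sigma]{\pi}{}$ of Lemma~\ref{lem:Bowen-jmath} to the coordinate description of $E_v$ (carrying along the harmless unit factors, or normalising them away), and observing that the hypothesis $\tau>0$ is used precisely to make $1-p_1^{2\tau}$ invertible — for $\tau=0$ the asserted identity fails, the left-hand side being $\Lambda_v$ rather than $f_v\mathcal{O}_{E_v}$.
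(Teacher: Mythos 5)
Your strategy — pass to split coordinates $E_v\cong\mathbb{Q}_{p_1}\times\mathbb{Q}_{p_1}$, normalise $\pi$ so that $\pi/\tensor[^\sigma]{\pi}{}=(p_1,p_1^{-1})$, and translate membership into valuation and congruence conditions — is essentially the paper's route as well; the paper instead decomposes $\Lambda_v$ into its cosets $a+f_v\mathcal{O}_{E_v}$, $a\in\cyclic{f_v}$, and argues that after intersecting over $k$ only the $a=0$ coset survives, but the underlying bookkeeping is the same. The inclusion $p_1^\tau f_v\mathcal{O}_{E_v}\subseteq\bigcap_k$ and the preliminary observation that everything in the intersection lies in $p_1^\tau\mathcal{O}_{E_v}$ are fine.

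There is however a genuine error in your ``$\subseteq$'' step. Writing $y_i=p_1^\tau z_i$, the criterion at $k=0$ is $y_1\equiv y_2\pmod{f_v}$, which in terms of the $z_i$ reads
\begin{equation*}
p_1^{\tau}z_1\equiv p_1^{\tau}z_2 \pmod{f_v},
\end{equation*}
i.e.\ $z_1\equiv z_2\pmod{p_1^{\max(n-\tau,0)}}$, \emph{not} $z_1\equiv z_2\pmod{f_v}$: you cannot divide a congruence by $p_1^\tau$ without weakening the modulus. With $p_1=2$, $\tau=1$, $f_v=4$, for instance, $y_1=2$, $y_2=6$ satisfy the $k=0$ condition but $z_1=1\not\equiv z_2=3\pmod 4$. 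So the pair of congruences you subtract is not what the intersection actually gives you, and the conclusion $z_2\equiv 0\pmod{f_v}$ does not follow from the displayed step.

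The repair is small and stays entirely within your framework: use $k=\tau$ and $k=-\tau$ (rather than $k=0$ and $k=\tau$) and \emph{substitute} rather than subtract. These give $z_1\equiv p_1^{2\tau}z_2$ and $p_1^{2\tau}z_1\equiv z_2\pmod{f_v}$; substituting the second into the first yields $(1-p_1^{4\tau})z_1\equiv 0\pmod{f_v}$, and since $\tau\geq 1$ the element $1-p_1^{4\tau}$ is a unit of $\mathbb{Z}_{p_1}$, so $z_1\in f_v\mathbb{Z}_{p_1}$; then $p_1^{2\tau}z_1\equiv z_2$ forces $z_2\in f_v\mathbb{Z}_{p_1}$ as well. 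This is exactly the unit-cancellation the paper uses (``$(1-p^{2(k-l)})$ is a unit, hence $a=b=0$'') in its coset formulation. Your identification of the role of $\tau>0$ — making the relevant $1-p_1^{m}$ invertible — is correct; it is only the choice of which two values of $k$ to pair, and the passage from $y$-congruence to $z$-congruence at $k=0$, that needs fixing.
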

\begin{proof}
The definition does not depend on the choice of uniformizer so we may as well write $v=w_1 w_{-1}$ and $\pi=p\in E_{w_1}$, then $\tensor[^\sigma]{\pi}{}=p\in E_{w_{-1}}$.
Use Lemma \ref{lem:Lambda-reduction} to write
\begin{equation}\label{eq:Bowen-order}
\bigcap_{k=-\tau}^\tau \frac{\pi^k}{\tensor[^\sigma]{\pi}{^k}} \Lambda_v
=\bigcap_{k=-\tau}^\tau \bigsqcup_{a\in\cyclic{f_v}} \prod_{i\in\{\pm1\}} p^{ik}a+f_vp^{ik}\mathcal{O}_{E_{w_i}} \, .
\end{equation}
In the intersection above consider two cosets corresponding to $a\in\cyclic{f_v}$ and $-\tau\leq k \leq \tau$ and $b\in\cyclic{f_v}$ and $-\tau\leq l \leq \tau$; without loss of generality assume $k\geq l$. The corresponding cosets can intersect only if
\begin{equation*}
p^{k-l} a \equiv b \mod f_v \textrm{ and }
a \equiv p^{k-l} b \mod f_v
\end{equation*}
This can happen  only if either $k=l$ and $a=b$ or $a=b=0$. Hence only the cosets with $a=0$ can contribute to the intersection in \eqref{eq:Bowen-order} and
\begin{equation*}
\bigcap_{k=-\tau}^\tau \frac{\pi^k}{\tensor[^\sigma]{\pi}{^k}} \Lambda_v
=f_v\bigcap_{k=-\tau}^\tau \prod_{i\in\{\pm1\}}fp^{ik}\mathcal{O}_{E_{w_i}}
=f_v p_1^\tau \mathcal{O}_{E_v} \, .
\end{equation*}
\end{proof}
 If $\tau>0$ the local Euler factor is
\begin{equation*}
L_{\Lambda_{p_1}(\mathrm{x},\mathrm{y})}(s,\chi)=L_{\Lambda_{p_1}^{(-\tau,\tau)}}(s,\chi)\coloneqq\meas(\Lambda_{p_1}^\times)^{-1}\int_{p_1^\tau f_v \mathcal{O}_{E_v}} \chi(z) \|z\|^{s-1} \dif z \, .
\end{equation*}

It is easy to express $L_{\Lambda_{p_1}^{(-\tau,\tau)}}$ in terms of $L_{\mathcal{O}_{E_v}}$.
\begin{lem}\label{lem:L-loc-Bowen}
\begin{equation*}
L_{\Lambda_{p_1}^{(-\tau,\tau)}}(s,\chi)=\chi(f_v p_1^\tau)^{-1} (f_v p_1)^{-2\tau s}
\frac{\meas(\mathcal{O}_{E_v}^\times)}{\meas(\Lambda_v^\times)}
L_{\mathcal{O}_{E_v}}(s,\chi) \, .
\end{equation*}
In particular,
\begin{equation*}
L_{\Lambda_{p_1}^{(-\tau,\tau)}}(1)=f_v^{-2}p_1^{-2\tau}{\meas(\Lambda_v^\times)}^{-1}= p_1^{-2\tau-\val_{p_1}(f_v)} L_{\Lambda_{p_1}}(1) \, .
\end{equation*}
\end{lem}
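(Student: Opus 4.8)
The plan is to reduce the whole statement to a single dilation in the defining local integral. Starting from
\[
L_{\Lambda_{p_1}^{(-\tau,\tau)}}(s,\chi)=\meas(\Lambda_{p_1}^\times)^{-1}\int_{p_1^\tau f_v \mathcal{O}_{E_v}} \chi(z)\,\|z\|^{s-1}\dif z ,
\]
I would substitute $z=f_v p_1^\tau w$ with $w$ ranging over $\mathcal{O}_{E_v}$. Since $\chi$ is multiplicative on $E_v^\times$ and extended by zero to the non-invertible elements, and since scaling by $f_v p_1^\tau$ carries $\mathcal{O}_{E_v}^\times$ exactly onto the invertible locus of $p_1^\tau f_v\mathcal{O}_{E_v}$, one has $\chi(z)=\chi(f_v p_1^\tau)\,\chi(w)$. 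By the change-of-variables formula recalled at the beginning of the appendix, $\dif z=\|f_v p_1^\tau\|_v\,\dif w$ and $\|z\|^{s-1}=\|f_v p_1^\tau\|_v^{\,s-1}\,\|w\|^{s-1}$. Because $v$ is assumed split in $E$ the algebra $E_v/\mathbb{Q}_v$ is quadratic étale, so $\|f_v p_1^\tau\|_v=(f_v p_1^\tau)^{-2}$; collecting the powers of $f_v p_1^\tau$ produces the exponent $-2s$. Hence
\[
L_{\Lambda_{p_1}^{(-\tau,\tau)}}(s,\chi)=\meas(\Lambda_v^\times)^{-1}\,\chi(f_v p_1^\tau)^{-1}(f_v p_1^\tau)^{-2s}\int_{\mathcal{O}_{E_v}}\chi(w)\,\|w\|^{s-1}\dif w .
\]

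Next I would identify the remaining integral. By the definition \eqref{eq:L-order-local-integral} of the local factor, applied to the maximal order $\mathcal{O}_{E_v}$ with $\mathrm{x}_v=\mathrm{y}_v=0$, we have $\int_{\mathcal{O}_{E_v}}\chi(w)\,\|w\|^{s-1}\dif w=\meas(\mathcal{O}_{E_v}^\times)\,L_{\mathcal{O}_{E_v}}(s,\chi)$. Substituting this gives precisely the first displayed identity of the lemma.

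For the "in particular" clause I would specialize to $s=1$ and $\chi=1$. By Lemma~\ref{lem:local-volumes} one has $\meas(\mathcal{O}_{E_v}^\times)L_{\mathcal{O}_{E_v}}(1)=1$, so the formula collapses to $L_{\Lambda_{p_1}^{(-\tau,\tau)}}(1)=(f_v p_1^\tau)^{-2}\meas(\Lambda_v^\times)^{-1}=f_v^{-2}p_1^{-2\tau}\meas(\Lambda_v^\times)^{-1}$. Finally, writing $f_v=p_1^{\val_{p_1}(f_v)}$ (the conductor is a power of $p_1$ by the standing assumption of the appendix) and invoking Proposition~\ref{prop:L-local-at-1} in the form $L_{\Lambda_{p_1}}(1)=(\meas(\Lambda_v^\times)f_v)^{-1}$, one gets $f_v^{-2}p_1^{-2\tau}\meas(\Lambda_v^\times)^{-1}=p_1^{-2\tau-\val_{p_1}(f_v)}\,L_{\Lambda_{p_1}}(1)$, as asserted.

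There is essentially no obstacle here; the computation is a routine substitution. The only points that need a moment's care are bookkeeping the power $\|p^k\|_v=p^{-2k}$ for the quadratic étale algebra $E_v$, so that the dilation by $f_v p_1^\tau$ contributes $(f_v p_1^\tau)^{-2s}$ rather than $(f_v p_1^\tau)^{-s}$, and checking that the dilation matches up the unit group $\mathcal{O}_{E_v}^\times$ with the invertible part of $p_1^\tau f_v\mathcal{O}_{E_v}$ so that the character factor pulls out cleanly.
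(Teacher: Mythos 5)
Your proposal is correct and is essentially the paper's own proof: the paper disposes of the lemma by exactly this kind of one-line change of variables (a dilation in the defining integral, with the scaling treated as a $\mathbb{Q}_{p_1}$-linear endomorphism of $E_{p_1}$), followed by the identity $\meas(\mathcal{O}_{E_v}^\times)L_{\mathcal{O}_{E_v}}(1)=1$ and Proposition \ref{prop:L-local-at-1} for the specialization at $s=1$. Two bookkeeping remarks only: with the paper's convention for $\chi$ on $E_v$, your own substitution $z=f_vp_1^\tau w$ pulls out the factor $\chi(f_vp_1^\tau)$ rather than $\chi(f_vp_1^\tau)^{-1}$ (immaterial, since $|\chi|=1$ and the lemma is applied either with $\chi=1$ or in absolute value), and your exponent $(f_vp_1^\tau)^{-2s}$, rather than the displayed $(f_vp_1)^{-2\tau s}$, is the one consistent with the stated value $f_v^{-2}p_1^{-2\tau}\meas(\Lambda_v^\times)^{-1}$ at $s=1$, so the discrepancy lies in the lemma's display, not in your computation.
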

\begin{proof}
Follows by the change of variable formula $z\mapsto \pi^{-\tau} z$ where we treat multiplication by $\pi^{-\tau}$ as $\mathbb{Q}_{p_1}$ endomorphism of $E_{p_1}$ with determinant $p_1^{-\tau}$.
\end{proof}

\subsection{The Global L-function}\label{apndx:global}
We now study the global $L$-function $L_{\Lambda(\mathrm{x},\mathrm{y})}(s,\chi)$ as defined in \ref{defi:L-functions}.
Our aim is to bound residue at $1$ of $L_{\Lambda(\mathrm{x},\mathrm{y})}(s)$ and the absolute value  of $L_{\Lambda(\mathrm{x},\mathrm{y})}(s,\chi)$ on the line $\Re s=1/2$.

First we state the basic structure result for the $L$-functions $L_{\Lambda(\mathrm{x},\mathrm{y})}(s,\chi)$.
\begin{prop}\label{prop:L-Euler-product}
Let $L(s,\chi)$ be the Hecke $L$-function attached to $\chi\colon \lfaktor{E^\times}{\mathbb{A}_E^\times}\to S^1$. Assume $\chi$ is unramified outside of $\ord(\mathrm{x})\ord(\mathrm{y})f$. Then the following functions are equal as meromorphic functions for $\Re s>0$
\begin{equation*}
L_{\Lambda(\mathrm{x},\mathrm{y})}(s,\chi)
=L(s,\chi)\prod_{v\mid \ord(\mathrm{x})\ord(\mathrm{y})f} L_{\Lambda_v(\mathrm{x},\mathrm{y})}(s,\chi) L_v(s,\chi)^{-1} \, ,
\end{equation*}
where
\begin{equation*}
L_v(s,\chi)=\begin{cases}
1 & v \mid \operatorname{conductor}(\chi)\\
\prod_{w\mid v} \left(1-\chi(\pi_w)^{-1}|\pi_w|_w^{-s} \right)^{-1} & v \nmid \operatorname{conductor}(\chi)
\end{cases}
\end{equation*}
and $\pi_w$ is a uniformizer of $E_w$.
\end{prop}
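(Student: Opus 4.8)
The plan is to prove the asserted identity first on the half-plane $\Re s>1$, where all the Euler products involved converge absolutely, by matching local factors one place at a time, and then to propagate it to $\Re s>0$ by meromorphic continuation. So the argument is essentially a bookkeeping exercise that organizes the local computations carried out in the rest of this appendix.

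First I would record the two relevant Euler product decompositions. By Definition~\ref{defi:L-functions}, $L_{\Lambda(\mathrm{x},\mathrm{y})}(s,\chi)=\prod_{v<\infty}L_{\Lambda_v(\mathrm{x},\mathrm{y})}(s,\chi)$ with each local factor given by the integral \eqref{eq:L-order-local-integral}, and by classical Hecke theory $L(s,\chi)=\prod_{v<\infty}L_v(s,\chi)$ with $L_v(s,\chi)$ as in the statement ($L_v=1$ at places dividing the conductor of $\chi$). The key local input, already noted in Definition~\ref{defi:L-functions}, is that at every finite place $v$ with $v\nmid f$, $v\nmid\ord(\mathrm{x})$, $v\nmid\ord(\mathrm{y})$, $\chi$ unramified at $v$, and $v\neq p_1$ when $\tau>0$, one has $L_{\Lambda_v(\mathrm{x},\mathrm{y})}(s,\chi)=L_v(s,\chi)$: under these conditions $\Lambda_v=\mathcal{O}_{E_v}$, whence $\Lambda_v^\times(\mathrm{x})=\mathcal{O}_{E_v}^\times$, and the integration domain $\mathrm{x}_v+\Lambda_v-\Aut^1(\Lambda_v).\mathrm{y}_v$ in \eqref{eq:L-order-local-integral} collapses to $\mathcal{O}_{E_v}$, so \eqref{eq:L-order-local-integral} reduces to the standard local integral representation of the unramified Euler factor of $L(s,\chi)$. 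Write $S$ for the finite set of remaining (``exceptional'') places — those dividing $\ord(\mathrm{x})\ord(\mathrm{y})f$ (together with $p_1$ when $\tau>0$, on which the Bowen level structure is imposed). Since both $\prod_v L_{\Lambda_v(\mathrm{x},\mathrm{y})}(s,\chi)$ and $L(s,\chi)\cdot\prod_{v\in S}L_{\Lambda_v(\mathrm{x},\mathrm{y})}(s,\chi)L_v(s,\chi)^{-1}$ differ from $\prod_v L_v(s,\chi)$ at only finitely many factors, both converge absolutely for $\Re s>1$, and there they agree factor by factor: at $v\notin S$ both contribute $L_v(s,\chi)$, while at $v\in S$ the right-hand side contributes $L_v(s,\chi)\cdot L_{\Lambda_v(\mathrm{x},\mathrm{y})}(s,\chi)L_v(s,\chi)^{-1}=L_{\Lambda_v(\mathrm{x},\mathrm{y})}(s,\chi)$, the same as the left-hand side.

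Next I would extend the equality from $\Re s>1$ to $\Re s>0$. Hecke theory gives that $L(s,\chi)$ continues meromorphically to $\mathbb{C}$, holomorphically when $\chi\neq 1$ and with a single simple pole at $s=1$ when $\chi=1$. The finite correction product $\prod_{v\in S}L_{\Lambda_v(\mathrm{x},\mathrm{y})}(s,\chi)L_v(s,\chi)^{-1}$ is holomorphic and non-vanishing on $\Re s>0$: each $L_{\Lambda_v(\mathrm{x},\mathrm{y})}(s,\chi)$ is holomorphic and non-vanishing there by the explicit local formulas established in the remainder of this appendix (Propositions~\ref{prop:L-local-denominators}, \ref{prop:local-l-factor-formula}, \ref{prop:L-local-xv} and Lemma~\ref{lem:L-loc-Bowen}), and each $L_v(s,\chi)^{-1}=\prod_{w\mid v}(1-\chi(\pi_w)^{-1}|\pi_w|_w^{-s})$ is entire. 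Hence the right-hand side of the proposition is meromorphic on $\Re s>0$ with at most a simple pole at $s=1$, matching the analytic structure of $L_{\Lambda(\mathrm{x},\mathrm{y})}(s,\chi)$ asserted in Definition~\ref{defi:L-functions}; as both sides are meromorphic on $\Re s>0$ and coincide on $\Re s>1$, they coincide throughout $\Re s>0$ by the identity theorem.

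The only genuine content is the local identification at the good places — unwinding the three independent divisibility conditions and observing that the local integral \eqref{eq:L-order-local-integral} then reduces to the standard Hecke one — together with the absolute convergence of the Euler products for $\Re s>1$; the latter is immediate, since those products differ from $\prod_v L_v(s,\chi)$ at only finitely many factors, so convergence reduces to that of $L(s,\chi)$. I do not expect any serious obstacle: the statement is a reorganization of the local data computed elsewhere in this appendix, and the continuation step is a routine application of Hecke's theorem together with the local holomorphy and non-vanishing already in hand.
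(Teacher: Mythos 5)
Your proof is correct and takes essentially the same route as the paper: match Euler factors at the good places to get absolute convergence and equality for $\Re s>1$, then continue by noting that the finitely many exceptional local factors are holomorphic on $\Re s>0$. The only cosmetic difference is that you assert the correction factors are non-vanishing on $\Re s>0$ — this isn't needed for the identity-theorem step and isn't actually established by the cited local formulas, so it should be dropped; but omitting it leaves a complete proof matching the paper's.
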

\begin{proof}
The formal Euler product of $L_{\Lambda(\mathrm{x},\mathrm{y})}(s,\chi)$ coincides with that of $L(s,\chi)$ for all $v\nmid \ord(\mathrm{x})\ord(\mathrm{y})f$. This implies the convergence of the Euler product of $L_{\Lambda(\mathrm{x},\mathrm{y})}(s,\chi)$ for $\Re s>1$ and that it coincides with Definition \ref{defi:L-functions}. The claimed equality follows because the finitely many factors for $v\mid\ord(\mathrm{x})\ord(\mathrm{y})f$ are all holomorphic for $\Re s>0$. The expression for the local factors of $L(s,\chi)$ is standard and follows from \eqref{eq:L-order-local-integral}.
\end{proof}

\subsubsection{The Residue at 1}
\begin{prop}\label{prop:residue}
\begin{equation*}
0<\Res_{s=1} L_{\Lambda(\mathrm{x},\mathrm{y})}(s)
\leq L(1,\chi_E)p_1^{-2\tau} \ord(\mathrm{y})^2  \prod_{p\mid f} (1-\chi_E(p)p^{-1})
\prod_{v\mid\ord(\mathrm{x})}\left[\Lambda_v^\times:\Lambda_v^\times(\mathrm{x})\right] \, .
\end{equation*}
\end{prop}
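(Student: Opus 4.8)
Let $S$ denote the finite set of finite places dividing $\ord(\mathrm{x})\,\ord(\mathrm{y})\,f$, enlarged by $p_1$ when $\tau>0$. The plan is to compare $L_{\Lambda(\mathrm{x},\mathrm{y})}(s)$ with the Dedekind zeta function $\zeta_E$ via Proposition \ref{prop:L-Euler-product} applied to the trivial character $\mathbf{1}$ of $\lfaktor{E^\times}{\mathbb{A}_E^\times}$, whose Hecke $L$-function is $\zeta_E$ and whose local factors $L_v(s,\mathbf{1})$ coincide with the unmodified Euler factors $L_{\mathcal{O}_{E_v}}(s)$. For $v\notin S$ one has $\mathrm{x}_v,\mathrm{y}_v\in\Lambda_v=\mathcal{O}_{E_v}$ and no Bowen level structure at $v$, so $L_{\Lambda_v(\mathrm{x},\mathrm{y})}(s)=L_{\mathcal{O}_{E_v}}(s)$, and Proposition \ref{prop:L-Euler-product} reads
\begin{equation*}
L_{\Lambda(\mathrm{x},\mathrm{y})}(s)=\zeta_E(s)\prod_{v\in S}\frac{L_{\Lambda_v(\mathrm{x},\mathrm{y})}(s)}{L_{\mathcal{O}_{E_v}}(s)}\, .
\end{equation*}
Since $\zeta_E(s)=\zeta(s)L(s,\chi_E)$ has a simple pole at $s=1$ with residue $L(1,\chi_E)$, while each factor of the finite product is holomorphic at $s=1$ and strictly positive there (the local factors at $s=1$ are evaluated explicitly in Propositions \ref{prop:L-local-at-1} and \ref{prop:L-local-xv} and Lemma \ref{lem:L-loc-Bowen}), we obtain
\begin{equation*}
\Res_{s=1}L_{\Lambda(\mathrm{x},\mathrm{y})}(s)=L(1,\chi_E)\prod_{v\in S}\frac{L_{\Lambda_v(\mathrm{x},\mathrm{y})}(1)}{L_{\mathcal{O}_{E_v}}(1)}>0\, ,
\end{equation*}
which is the claimed strict positivity; it remains to bound each local ratio from above.

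For $v\in S$ with $v\neq p_1$, or with $v=p_1$ and $\tau=0$, I would combine the bound $L_{\Lambda_v(\mathrm{x},\mathrm{y})}(1)\leq(\meas(\Lambda_v^\times(\mathrm{x}))\,\|\ord(\mathrm{y})\|_v\,f_v)^{-1}$ of Proposition \ref{prop:L-local-at-1} with the volume identities $L_{\mathcal{O}_{E_v}}(1)^{-1}=\meas(\mathcal{O}_{E_v}^\times)=(1-p^{-1})(1-\chi_E(p)p^{-1})$ and $\meas(\Lambda_v^\times)=f_v^{-1}(1-p^{-1})$ for $f_v>1$ of Lemma \ref{lem:local-volumes} (for $f_v=1$ the two unit volumes agree), together with $\meas(\Lambda_v^\times)=[\Lambda_v^\times:\Lambda_v^\times(\mathrm{x})]\,\meas(\Lambda_v^\times(\mathrm{x}))$, to get
\begin{equation*}
\frac{L_{\Lambda_v(\mathrm{x},\mathrm{y})}(1)}{L_{\mathcal{O}_{E_v}}(1)}\leq \|\ord(\mathrm{y})\|_v^{-1}\,[\Lambda_v^\times:\Lambda_v^\times(\mathrm{x})]\cdot\begin{cases}1,&f_v=1,\\ 1-\chi_E(p)p^{-1},&f_v>1\, .\end{cases}
\end{equation*}
For $v=p_1$ with $\tau>0$ one has $\mathrm{x}_{p_1}=\mathrm{y}_{p_1}=0$ (the standing hypotheses under which $L_{\Lambda(\mathrm{x},\mathrm{y})}$ carries the Bowen level structure), hence $\Lambda_{p_1}^\times(\mathrm{x})=\Lambda_{p_1}^\times$ and $\|\ord(\mathrm{y})\|_{p_1}=1$; Lemma \ref{lem:L-loc-Bowen} gives $L_{\Lambda_{p_1}^{(-\tau,\tau)}}(1)=p_1^{-2\tau-\val_{p_1}(f)}L_{\Lambda_{p_1}}(1)$, and a short computation with Proposition \ref{prop:L-local-at-1} (applied with $\mathrm{x}=0$) and the volume formulas shows that $L_{\Lambda_{p_1}}(1)/L_{\mathcal{O}_{E_{p_1}}}(1)$ equals $1$ if $p_1\nmid f$ and equals $1-\chi_E(p_1)p_1^{-1}$ if $p_1\mid f$. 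Dropping the harmless factor $p_1^{-\val_{p_1}(f)}\leq 1$, the $p_1$-ratio is at most $p_1^{-2\tau}$ times $1$ or $1-\chi_E(p_1)p_1^{-1}$ according as $p_1\nmid f$ or $p_1\mid f$.

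Finally I would multiply the local bounds over $v\in S$. Using $\prod_{v}\|\ord(\mathrm{y})\|_v^{-1}=\ord(\mathrm{y})^2$ (each $\|\cdot\|_v$ is the square of the $p$-adic absolute value on the quadratic \'etale algebra $E_v$), that $[\Lambda_v^\times:\Lambda_v^\times(\mathrm{x})]=1$ unless $v\mid\ord(\mathrm{x})$, and that the places $v$ with $f_v>1$ are exactly those dividing $f$, the finite product is bounded by $p_1^{-2\tau}\,\ord(\mathrm{y})^2\prod_{p\mid f}(1-\chi_E(p)p^{-1})\prod_{v\mid\ord(\mathrm{x})}[\Lambda_v^\times:\Lambda_v^\times(\mathrm{x})]$; substituting into the residue identity above yields the asserted inequality. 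The main point to watch is the bookkeeping at $v=p_1$: one must confirm that the Bowen exponent $p_1^{-2\tau}$ is not spoiled by the conductor term $p_1^{-\val_{p_1}(f)}$ — since that term is $\leq 1$ it only helps — after which the argument is a routine assembly of the local computations of the appendix.
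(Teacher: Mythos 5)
Your proposal is correct and follows the paper's approach exactly: the paper's own proof is a one-line citation of Propositions \ref{prop:L-Euler-product}, \ref{prop:L-local-at-1} and Lemma \ref{lem:L-loc-Bowen} together with $\Res_{s=1}\zeta_E(s)=L(1,\chi_E)$, and you have filled in precisely the bookkeeping those citations encode. Your explicit enlargement of the set of bad places to include $p_1$ when $\tau>0$ is a sensible clarification, since under the standing hypotheses $p_1$ need not divide $\ord(\mathrm{x})\ord(\mathrm{y})f$ even though the Euler factor there is modified by the Bowen level structure.
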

\begin{proof}
Recall that $\Res_{s=1} \zeta_{E}(s)=L(1,\chi_E)$. The claim then follows
 from Propositions \ref{prop:L-Euler-product}, \ref{prop:L-local-at-1} and Lemma \ref{lem:L-loc-Bowen}.
\end{proof}

\subsubsection{The Line Re(s)=1/2}
The following lemma is useful in order to bound the Hecke $L$-function $L(s,\chi)$ on the line $\Re s=1/2$ as the subconvexity bound involves the norm of the conductor of the character $\chi$.
\begin{lem}\label{lem:conductor}
For a prime $\mathfrak{p}$ of $E$ let $U_{\mathfrak{p}^n}=1+\mathfrak{p}^n\mathcal{O}_{E_\mathfrak{p}}\subset E_\mathfrak{p}$ be the principal unit group of order $n>0$ and set $U_0\coloneqq \mathcal{O}_{E_\mathfrak{p}}^\times$. For any decomposable compact open subgroup $\mathfrak{K}=\prod_{\mathfrak{p}} \mathfrak{K}_\mathfrak{p}<\mathbb{A}_{E,f}$ the conductor of $\mathfrak{K}$ is an ideal $\mathfrak{c}(\mathfrak{K})=\prod \mathfrak{p}^{n_\mathfrak{p}}$ of $E$ such that  $U_{\mathfrak{p}^{n_\mathfrak{p}}}$ is the maximal principal unit subgroup contained in $\mathfrak{K}_\mathfrak{p}$ for each prime $\mathfrak{p}$.

The conductor $\mathfrak{c}(\Lambda_f^\times(\mathrm{x}))$ satisfies
\begin{equation*}
\mathfrak{c}(\Lambda_f^\times(\mathrm{x})) \mid f \left(\mathcal{O}_E:\jmath_{/\Lambda}(\mathrm{x})\right) \, ,
\end{equation*}
where
\begin{equation*}
\left(\mathcal{O}_E:\jmath_{/\Lambda}(\mathrm{x})\right) \coloneqq \left\{z\in\mathcal{O}_E\mid z\cdot \jmath_{/\Lambda}(\mathrm{x}) \in \mathcal{O}_E \right\}=\bigcap_{v<\infty} \prod_{w\mid v}\begin{cases}
\mathcal{O}_{E_w} & \mathrm{x}_w\in\mathcal{O}_{E_w}\\
\mathrm{x}_w^{-1}\mathcal{O}_{E_w} & \mathrm{x}_w\not\in\mathcal{O}_{E_w}
\end{cases} \, .
\end{equation*}
In particular,
\begin{align*}
\Nr \mathfrak{c}(\Lambda_f^\times(\mathrm{x}))&\leq f^2\prod_{v<\infty} \Nr\left(\mathcal{O}_E:\jmath_{/\Lambda}(\mathrm{x})\right)\, ,\\
\Nr\left(\mathcal{O}_E:\jmath_{/\Lambda}(\mathrm{x})\right)&=
\prod_{\substack{w\mid v \\ \mathrm{x}_w\not\in \mathcal{O}_{E_w}}} |\mathrm{x}_w|_w \, .
\end{align*}
\end{lem}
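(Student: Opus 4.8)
Almost all of the statement is bookkeeping once one establishes the divisibility $\mathfrak{c}(\Lambda_f^\times(\mathrm{x}))\mid f\,(\mathcal{O}_E:\jmath_{/\Lambda}(\mathrm{x}))$, so the plan is to first dispose of the easy pieces and then concentrate on that divisibility. The description of the conductor of a decomposable group is a definition. The adelic description of $(\mathcal{O}_E:\jmath_{/\Lambda}(\mathrm{x}))$ is a one-line unwinding: fixing local representatives $\mathrm{x}_w\in E_w$ of $\jmath_{/\Lambda}(\mathrm{x})$, an element $z\in\mathcal{O}_E$ lies in the displayed intersection iff $\val_w(z)\ge-\val_w(\mathrm{x}_w)$ for every $w$, which is automatic when $\mathrm{x}_w\in\mathcal{O}_{E_w}$ and amounts to $z\in\mathrm{x}_w^{-1}\mathcal{O}_{E_w}$ otherwise. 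Multiplicativity of the ideal norm then gives $\Nr(\mathcal{O}_E:\jmath_{/\Lambda}(\mathrm{x}))=\prod_{w:\,\mathrm{x}_w\notin\mathcal{O}_{E_w}}\Nr(\mathrm{x}_w^{-1}\mathcal{O}_{E_w})=\prod_{w:\,\mathrm{x}_w\notin\mathcal{O}_{E_w}}|\mathrm{x}_w|_w$, and, combined with $\Nr(f\mathcal{O}_E)=f^2$ and the monotonicity of the norm under divisibility of integral ideals, the asserted bound on $\Nr\mathfrak{c}(\Lambda_f^\times(\mathrm{x}))$ drops out of the divisibility. Thus the whole content is that divisibility.

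To prove it I would use the characterization of the conductor as the $\mid$-smallest integral ideal $\mathfrak{m}$ of $E$ with $U_{\mathfrak{m}}\coloneqq\prod_{\mathfrak{p}}U_{\mathfrak{p}^{\val_{\mathfrak{p}}(\mathfrak{m})}}\subseteq\mathfrak{K}$. This is well defined because the set of such $\mathfrak{m}$ is closed under gcd --- one has $U_{\gcd(\mathfrak{m},\mathfrak{m}')}=U_{\mathfrak{m}}U_{\mathfrak{m}'}\subseteq\mathfrak{K}$ since $\mathfrak{K}$ is a group --- and it agrees with the prime-by-prime definition for decomposable $\mathfrak{K}$; it is also the only feature of the conductor needed in \S\ref{sec:subconvex}, where $\chi$ is trivial on $\Lambda_f^\times(\mathrm{x})$ and hence has conductor dividing any such $\mathfrak{m}$. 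Therefore it suffices to show $U_{\mathfrak{m}}\subseteq\Lambda_f^\times(\mathrm{x})$ for $\mathfrak{m}=f\,(\mathcal{O}_E:\jmath_{/\Lambda}(\mathrm{x}))$.

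This I would check locally at each rational prime $v$ using Lemma \ref{lem:Lambda-reduction}. Writing $f_v=p^n$ and $\mathfrak{m}_v=f_v\prod_{w\mid v}\mathfrak{d}_w$ for the $v$-component, with $\mathfrak{d}_w\in\{\mathcal{O}_{E_w},\,\mathrm{x}_w^{-1}\mathcal{O}_{E_w}\}$, note that $\mathfrak{m}_v\subseteq f_v\mathcal{O}_{E_v}$ since each $\mathfrak{d}_w\subseteq\mathcal{O}_{E_w}$. For $z\in\mathcal{O}_{E_v}^\times$ with $z-1\in\mathfrak{m}_v$ one then verifies the two conditions defining $\Lambda_v^\times(\mathrm{x})$ in Definition \ref{defi:Lambda_v(x)+Pic}: first $z\in 1+f_v\mathcal{O}_{E_v}\subseteq\Lambda_v^\times$ by Lemma \ref{lem:Lambda-reduction}; second $(z-1)\mathrm{x}_v\in\mathfrak{m}_v\mathrm{x}_v=f_v\prod_{w\mid v}\mathfrak{d}_w\mathrm{x}_w$ where $\mathfrak{d}_w\mathrm{x}_w\subseteq\mathcal{O}_{E_w}$ in both cases (it equals $\mathcal{O}_{E_w}$ exactly when $\mathrm{x}_w\in\mathcal{O}_{E_w}$, and $\mathrm{x}_w^{-1}\mathcal{O}_{E_w}\cdot\mathrm{x}_w=\mathcal{O}_{E_w}$ otherwise), so $(z-1)\mathrm{x}_v\in f_v\mathcal{O}_{E_v}\subseteq\Lambda_v$. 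Hence $z\in\Lambda_v^\times(\mathrm{x})$; the $v$-part of $U_{\mathfrak{m}}$ is precisely $\{z\in\mathcal{O}_{E_v}^\times:z-1\in\mathfrak{m}_v\}$, so taking the product over $v<\infty$ gives $U_{\mathfrak{m}}\subseteq\Lambda_f^\times(\mathrm{x})$ and therefore $\mathfrak{c}(\Lambda_f^\times(\mathrm{x}))\mid\mathfrak{m}$.

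I do not expect a genuine obstacle here. The only points that want a little care are the justification that the gcd-characterization of the conductor is the right notion for the a priori non-decomposable group $\Lambda_f^\times(\mathrm{x})$, and keeping the two defining conditions of $\Lambda_v^\times(\mathrm{x})$ --- membership in $\Lambda_v^\times$, and $(\lambda-1)\mathrm{x}_v\in\Lambda_v$ --- cleanly separated in the local computation, so that one genuinely uses $f_v\mathcal{O}_{E_v}\subseteq\Lambda_v$ and $\mathfrak{d}_w\mathrm{x}_w\subseteq\mathcal{O}_{E_w}$ rather than the weaker inclusion $\mathfrak{d}_w\subseteq\mathcal{O}_{E_w}$.
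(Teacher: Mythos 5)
Your proof is correct and takes essentially the same route as the paper's: a place-by-place verification that the principal unit group attached to $f\,(\mathcal{O}_E:\jmath_{/\Lambda}(\mathrm{x}))$ lands in $\Lambda_v^\times(\mathrm{x})$; the paper phrases the local check via the identity $\Lambda_v^\times(\mathrm{x})=(1+\Lambda_v/\mathrm{x}_v)\cap\Lambda_v^\times$ (after a WLOG making $\mathrm{x}_v$ a unit) where you check the two defining conditions of $\Lambda_v^\times(\mathrm{x})$ directly, which comes to the same thing. Your parenthetical about replacing the prime-by-prime definition of the conductor with the gcd-characterization for the a priori non-decomposable group $\Lambda_f^\times(\mathrm{x})$ is a point the paper's proof glosses over, and is worth making explicit.
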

\begin{proof}
For any $v<\infty$ we need to show that the following subgroup is contained in $\Lambda_v^\times$
\begin{equation*}
\prod_{w\mid v}\begin{cases}
\mathcal{O}_{E_w}^\times &  \mathrm{x}_w\in\mathcal{O}_{E_w} \textrm{ and } f_v=1\\
1+f_v\mathcal{O}_{E_w} & \mathrm{x}_w\in\mathcal{O}_{E_w} \textrm{ and } f_v\neq 1\\
1+\frac{f_v}{\mathrm{x}_w}\mathcal{O}_{E_w} & \mathrm{x}_w\not\in\mathcal{O}_{E_w}
\end{cases} \, .
\end{equation*}
Without loss of generality, we can arrange that $\mathrm{x}_v\in E_v^\times$ by adding an element of $\Lambda_v$ to $\mathrm{x}_v$. Then there is a simple expression for $\Lambda_v^\times(\mathrm{x})$
\begin{equation*}
\Lambda_v^\times(\mathrm{x})=1+\frac{\Lambda_v}{\mathrm{x}_v}\cap \Lambda_v^\times \, .
\end{equation*}
If $\Lambda_v=\mathcal{O}_{E_v}$ then
\begin{equation*}
\mathcal{O}_{E_v}^\times(\mathrm{x})=1+\frac{\mathcal{O}_{E_v}}{\mathrm{x}_v}\cap \mathcal{O}_{E_v}^\times=\prod_{w\mid v}\begin{cases}
\mathcal{O}_{E_w}^\times & \mathrm{x}_w\in\mathcal{O}_{E_w}\\
1+\frac{\mathcal{O}_{E_w}}{\mathrm{x}_w} & \mathrm{x}_w\not\in\mathcal{O}_{E_w}
\end{cases}
\end{equation*}
as required.

Assume next $\Lambda_v=\mathbb{Z}_v+f_v\mathcal{O}_{E_v}$ is a non-maximal order, then
\begin{align*}
\Lambda_v^\times(\mathrm{x})=1+\frac{\Lambda_v}{\mathrm{x}_v}\cap \Lambda_v^\times
&\supseteq 1+\frac{f_v\mathcal{O}_{E_v}}{\mathrm{x}_v}\cap 1+f_v\mathcal{O}_{E_v}=1+f_v\left(\frac{\mathcal{O}_{E_v}}{\mathrm{x}_v}\cap\mathcal{O}_{E_v}\right)
\\
&=\prod_{w\mid v} \begin{cases}
1+f_v\mathcal{O}_{E_w} & \mathrm{x}_w\in\mathcal{O}_{E_w}\\
1+\frac{f_v}{\mathrm{x}_w}\mathcal{O}_{E_w} & \mathrm{x}_w\not\in\mathcal{O}_{E_w}
\end{cases} \, .
\end{align*}
\end{proof}

\begin{prop}\label{prop:L1/2-bound}
If $\Re s=1/2$ and $\val_{p_1}(f)=0$ then
\begin{equation*}
|L_{\Lambda(\mathrm{x},\mathrm{y})}(s)|
\ll_\varepsilon (f\ord(\mathrm{y}))^{\varepsilon}
|L(s,\chi)|p_1^{-\tau} \ord(\mathrm{y})^2 12^{\omega(\ord(\mathrm{x}))}
\left(\Nr \left(\mathcal{O}_E:\jmath_{/\Lambda}(\mathrm{x})\right) \right)^{-1/2}
\prod_{v\mid\ord(\mathrm{x})} \left[\Lambda_v^\times:\Lambda_v^\times(\mathrm{x})\right] \, ,
\end{equation*}
where the definition of $\left(\mathcal{O}_E:\jmath_{/\Lambda}(\mathrm{x})\right)$ is as in Lemma \ref{lem:conductor} above and in particular
\begin{equation*}
\Nr \left(\mathcal{O}_E:\jmath_{/\Lambda}(\mathrm{x})\right)=\prod_{v<\infty}\prod_{\substack{w\mid v \\ \mathrm{x}_w\not\in \mathcal{O}_{E_w}}} |\mathrm{x}_w|_w \, .
\end{equation*}
\end{prop}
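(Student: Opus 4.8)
The plan is to bound $|L_{\Lambda(\mathrm{x},\mathrm{y})}(s)|$ on $\Re s = 1/2$ by factoring it through the standard Hecke $L$-function $L(s,\chi)$ using Proposition \ref{prop:L-Euler-product}, so that
\begin{equation*}
L_{\Lambda(\mathrm{x},\mathrm{y})}(s) = L(s,\chi) \prod_{v \mid \ord(\mathrm{x})\ord(\mathrm{y})f} L_{\Lambda_v(\mathrm{x},\mathrm{y})}(s) L_v(s,\chi)^{-1} \, ,
\end{equation*}
and then estimating each local ratio on the critical line. The main point is that $|L_v(s,\chi)^{-1}| = \prod_{w\mid v}|1 - \chi(\pi_w)^{-1}|\pi_w|_w^{-s}| \leq \prod_{w\mid v}(1 + |\pi_w|_w^{-1/2}) \ll 1$ uniformly, so the denominators contribute only an $O(1)^{\omega(\ord(\mathrm{x})\ord(\mathrm{y})f)}$ factor, which will be absorbed into the final $12^{\omega(\ord(\mathrm{x}))}$ and the $(f\ord(\mathrm{y}))^\varepsilon$ via the divisor bound. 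So the real work is to bound $|L_{\Lambda_v(\mathrm{x},\mathrm{y})}(1/2)|$ at each bad place, which is exactly what the local computations of this appendix provide.

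First I would split the bad places into three groups and invoke the local bounds already proved. At the distinguished prime $p_1$ (where $\val_{p_1}(f) = 0$ by hypothesis), if $\tau > 0$ the relevant factor is $L_{\Lambda_{p_1}^{(-\tau,\tau)}}$, and Lemma \ref{lem:L-loc-Bowen} together with Lemma \ref{lem:local-vertical} gives $|L_{\Lambda_{p_1}^{(-\tau,\tau)}}(1/2)| \leq L_{\Lambda_{p_1}^{(-\tau,\tau)}}(1/2) \ll p_1^{-\tau} \cdot (\text{bounded})$, using that $\meas(\mathcal{O}_{E_v}^\times)/\meas(\Lambda_v^\times) \ll 1$ here since $f_{p_1}=1$ and $L_{\mathcal{O}_{E_v}}(1/2)$ is bounded for split $v$; this is the source of the $p_1^{-\tau}$ gain. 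At places $v \mid \ord(\mathrm{x})$ I would use Corollaries \ref{cor:L1/2-x=0} and \ref{cor:L1/2-non-max}: rewriting $\meas(\Lambda_v^\times(\mathrm{x})) f_v L_{\Lambda_v(\mathrm{x})}(1/2)$ and solving for $L_{\Lambda_v(\mathrm{x})}(1/2)$ produces a factor $[\Lambda_v^\times : \Lambda_v^\times(\mathrm{x})]$ together with a local constant bounded by $4n$ or $2\sqrt{n}$ in the ``$\mathrm{x}_w \in \mathcal{O}_{E_w}$'' case and by $|\mathrm{x}_w|_w^{-1/2}$ in the ``$\mathrm{x}_w \notin \mathcal{O}_{E_w}$'' case; the product of the latter over all bad $w$ is precisely $(\Nr(\mathcal{O}_E : \jmath_{/\Lambda}(\mathrm{x})))^{-1/2}$ by the formula in Lemma \ref{lem:conductor}, while the $n = \val_p(f_v) \ll \log$ factors and the numerical constants $\leq 4$ combine to at most $12^{\omega(\ord(\mathrm{x}))} (f)^\varepsilon$. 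At places dividing $\ord(\mathrm{y})$ but not $\ord(\mathrm{x})f$, Proposition \ref{prop:Lx<=Lxy} with $\mathrm{x}_v \in \Lambda_v$ bounds $|L_{\Lambda_v(\mathrm{x},\mathrm{y})}(1/2)|$ by $\|\ord(\mathrm{y})_v\|^{-1/2}$ times $\meas(\Lambda_v^\times(\ord(\mathrm{y})\mathrm{x}))/\meas(\Lambda_v^\times(\mathrm{x})) \cdot L_{\Lambda_v(\ord(\mathrm{y})\mathrm{x})}(1/2)$, and since $\|\ord(\mathrm{y})_v\|^{-1/2} = \ord_v(\mathrm{y})$ for split $v$ (and $=\ord_v(\mathrm{y})^{1/2}$ otherwise, which only helps), multiplying over all such $v$ yields a factor $\ll \ord(\mathrm{y})^{1} \cdot O(1)^{\omega(\ord(\mathrm{y}))}$; combined with the $\ord(\mathrm{y})$ coming from the split-place contribution this is where the $\ord(\mathrm{y})^2$ arises. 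Collecting all three contributions and absorbing every $O(1)^{\omega(\cdot)}$ and $\log^{O(1)}(\cdot)$ into $(f\ord(\mathrm{y}))^\varepsilon$ via $d(n) \ll_\varepsilon n^\varepsilon$ gives the asserted inequality.

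The step I expect to be the main obstacle is the bookkeeping of constants at the places dividing $\ord(\mathrm{x})$, specifically making sure that the numerical base of the exponential $12^{\omega(\ord(\mathrm{x}))}$ is correct. Each such place contributes from Corollary \ref{cor:L1/2-non-max} at most $\prod_{w\mid v}(1 + \delta_{\mathrm{x}_w\in\mathcal{O}_{E_w}})\max\{\ldots\}$ with the split case giving up to $(2\sqrt n)^2 = 4n$ and the inert/ramified case giving $4n$, and one must also fold in the factor $\prod_{w\mid v}(1+|\pi_w|_w^{-1/2})^{\text{sign}} \leq (1 + 2^{-1/2})^2 < 4$ from inverting $L_v(s,\chi)$ and a possible factor from the ratio $\meas(\mathcal{O}_{E_v}^\times)/\meas(\Lambda_v^\times) = f_v(1-\chi_E(p)p^{-1}) \leq 2f_v$ when converting between $L_{\Lambda_v}(1/2)$ normalizations; the product $4 \cdot 2 \cdot$ (a further small constant from the $(f\ord(\mathrm{y}))^\varepsilon$ cleanup) must stay $\leq 12$. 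One also needs to check that when $v \mid \gcd(\ord(\mathrm{x}),f)$, so that $\mathrm{x}_v$ may lie in $\mathcal{O}_{E_v}$ but outside $\Lambda_v$, the relevant bound is still the ``$\mathrm{x}_w \in \mathcal{O}_{E_w}$'' branch of Corollary \ref{cor:L1/2-non-max} rather than the sharper $|\mathrm{x}_w|_w^{-1/2}$ branch — which is exactly why the final answer in Theorem \ref{thm:Cor-main-inequality} features $\ord_{\mathcal{H}}(\mathrm{x})/\gcd(\ord_{\mathcal{H}}(\mathrm{x}),f)$ rather than $\ord_{\mathcal{H}}(\mathrm{x})$ itself, so no genuine difficulty here, just care. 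Everything else — the $p_1^{-\tau}$ gain and the $\ord(\mathrm{y})^2$ loss — follows transparently from the local formulas already established.
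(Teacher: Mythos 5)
Your proposal follows essentially the same route as the paper — factor through $L(s,\chi)$ via Proposition~\ref{prop:L-Euler-product}, bound $|L_v(s,\chi)^{-1}|\ll 1$ uniformly, use Lemma~\ref{lem:L-loc-Bowen} for the $p_1$-factor, and control the remaining bad local factors via the appendix corollaries. But there is a genuine gap in your case-splitting of the bad places.

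The gap: you apply Proposition~\ref{prop:Lx<=Lxy} only at places ``dividing $\ord(\mathrm{y})$ but not $\ord(\mathrm{x})f$'', and at $v\mid\ord(\mathrm{x})$ you propose to bound $L_{\Lambda_v(\mathrm{x})}(1/2)$ directly via Corollary~\ref{cor:L1/2-non-max}. But when $v$ divides both $\ord(\mathrm{x})$ and $\ord(\mathrm{y})$, the actual local factor entering the Euler product is $L_{\Lambda_v(\mathrm{x},\mathrm{y})}(1/2)$, not $L_{\Lambda_v(\mathrm{x})}(1/2)$ — the level structure is the set $\mathrm{x}_v + \Lambda_v - \Aut^1(\Lambda_v).\mathrm{y}_v$, and Corollary~\ref{cor:L1/2-non-max} says nothing about that set. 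The paper avoids this by applying Proposition~\ref{prop:Lx<=Lxy} \emph{uniformly at every} $v\mid f\ord(\mathrm{x})\ord(\mathrm{y})$ first, converting each $L_{\Lambda_v(\mathrm{x},\mathrm{y})}(1/2)$ to $\|\ord(\mathrm{y})\|_v^{-1/2}$ times a measure ratio times $L_{\Lambda_v(\ord(\mathrm{y})\mathrm{x})}(1/2)$, and only \emph{then} feeding the latter into Corollaries~\ref{cor:L1/2-x=0}/\ref{cor:L1/2-non-max}. You should do the same; in particular the argument to the Corollaries at $v\mid\ord(\mathrm{x})$ must be $\ord(\mathrm{y})\mathrm{x}$, not $\mathrm{x}$, and the resulting $|\ord(\mathrm{y})\mathrm{x}_w|_w^{-1/2}$ factors as $|\ord(\mathrm{y})|_w^{-1/2}\,|\mathrm{x}_w|_w^{-1/2}$, with the first piece feeding one of the two $\ord(\mathrm{y})$ budgets and the second giving $\Nr\bigl(\mathcal{O}_E:\jmath_{/\Lambda}(\mathrm{x})\bigr)^{-1/2}$ (using $|\mathrm{x}_w|_w^{-1/2}\geq 1$ when $\mathrm{x}_w\notin\mathcal{O}_{E_w}$, so passing to the product over all such $w$ is an upper bound).

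A secondary slip: you claim $\|\ord(\mathrm{y})\|_v^{-1/2}=\ord_v(\mathrm{y})$ at split $v$ but $=\ord_v(\mathrm{y})^{1/2}$ at inert or ramified $v$, ``which only helps.'' In the paper's normalization $\|\cdot\|_v=\prod_{w\mid v}|\cdot|_w$ with $|\pi_w|_w$ the reciprocal residue-field size of $E_w$, one has $\|\ord(\mathrm{y})\|_v=p^{-2m}$ with $m=\val_p(\ord(\mathrm{y}))$ in \emph{all three} cases — split, inert, and ramified — so $\|\ord(\mathrm{y})\|_v^{-1/2}=p^m$ uniformly. There is no gain at non-split places; the two full factors of $\ord(\mathrm{y})$ appearing in the statement (one from Proposition~\ref{prop:Lx<=Lxy}, one from the numerator bound) are genuinely needed.
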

\begin{proof}
Assume $\Re s=1/2$. The trivial bound $|L_v(s,\chi)^{-1}|\leq 3$ holds for any place $v<\infty$ because $|\chi_E(p)|\leq 1$ and $p\geq 2$ for $p$ -- the residue characteristic for $\mathbb{Q}_v$.
This bound in conjunction with Propositions \ref{prop:L-Euler-product}, \ref{prop:Lx<=Lxy} and Lemma \ref{lem:L-loc-Bowen} then imply
\begin{align}\label{eq:L1/2-combined}
|L_{\Lambda(\mathrm{x},\mathrm{y})}(s)|
&\leq
|L(s,\chi)| p_1^{-\tau} \ord(\mathrm{y}) 3^{\omega(f\ord(\mathrm{x})\ord(\mathrm{y}))}  \\
&\cdot\prod_{v\mid f\ord(\mathrm{x})\ord(\mathrm{y})}
\frac{\meas_v\left(\Lambda_v^\times(\ord(\mathrm{y})\mathrm{x})\right)f_v L_{\Lambda_v(\ord(\mathrm{y})\mathrm{x})}(1/2)}
{\meas_v\left(\Lambda_v^\times(\mathrm{x})\right)f_v} \, . \nonumber
\end{align}
We bound the nominators in the product aboves using Corollaries \ref{cor:L1/2-x=0} and \ref{cor:L1/2-non-max}
\begin{align*}
\prod_{v\mid f\ord(\mathrm{x})\ord(\mathrm{y})}
&\meas_v\left(\Lambda_v^\times(\ord(\mathrm{y})\mathrm{x})\right)f_v
L_{\Lambda_v(\ord(\mathrm{y})\mathrm{x})}(1/2)
\leq
4^{\omega(f)}
\prod_{p^n\mid f} n\\
&\cdot
\prod_{v\mid\ord(\mathrm{x})} \prod_{w\mid v}
\begin{cases}
|\ord(\mathrm{y})\mathrm{x}_w|_w^{-1/2} & |\mathrm{x}_w|_w\geq |\ord(\mathrm{y})|_w^{-1}\\
1 &|\mathrm{x}_w|_w< |\ord(\mathrm{y})|_w^{-1}
\end{cases}\\
&\ll_\varepsilon f^\varepsilon \ord(\mathrm{y}) \prod_{v\mid\ord(\mathrm{x})}
\prod_{\substack{w\mid v\\\mathrm{x}_w\not\in\mathcal{O}_{E_w}}}|\mathrm{x}_w|_w^{-1/2} \, .
\end{align*}
where we have used the inequality $n+3\leq 4n$ which is valid for all $n\geq 1$ and the elementary inequality $\prod_{p^n \mid f} n\leq d(f)\ll f^\varepsilon$, where $d(f)$ is the number of divisors of $f$.

Lemma \ref{lem:local-volumes} implies that the product of the denominators is equal to
\begin{align*}
\prod_{v\mid f\ord(\mathrm{x})\ord(\mathrm{y})} \left(\meas_v\left(\Lambda_v^\times(\mathrm{x})\right)f_v\right)^{-1}
=&\prod_{v\mid\ord(\mathrm{x})} \left[\Lambda_v^\times:\Lambda_v^\times(\mathrm{x})\right]\\
&\prod_{p\mid f\ord(\mathrm{x})\ord(\mathrm{y})}(1-p^{-1})^{-1}(1-\chi_E(p)p^{-1})^{-1}
\prod_{\substack{p\mid f}}(1-\chi_E(p)p^{-1})\\
&\ll 4^{\omega(f\ord(\mathrm{x})\ord(\mathrm{y}))}
\prod_{v\mid\ord(\mathrm{x})} \left[\Lambda_v^\times:\Lambda_v^\times(\mathrm{x})\right] \, .
\end{align*}
Combining these inequalities for the nominator and denominator of the product in \eqref{eq:L1/2-combined} and Lemma \ref{lem:conductor} we arrive at the claimed inequality.
\end{proof}

\bibliographystyle{alpha}
\bibliography{torsion_kuga_sato_bib}

\def\cprime{$'$}
\begin{thebibliography}{ELMV11}

\bibitem[AES16a]{AES2}
Menny Aka, Manfred Einsiedler, and Uri Shapira.
\newblock Integer points on spheres and their orthogonal grids.
\newblock {\em J. Lond. Math. Soc. (2)}, 93(1):143--158, 2016.

\bibitem[AES16b]{AES}
Menny Aka, Manfred Einsiedler, and Uri Shapira.
\newblock Integer points on spheres and their orthogonal lattices.
\newblock {\em Invent. Math.}, 206(2):379--396, 2016.

\bibitem[BK83]{BrinKatok}
M.~Brin and A.~Katok.
\newblock On local entropy.
\newblock In {\em Geometric dynamics ({R}io de {J}aneiro, 1981)}, volume 1007
  of {\em Lecture Notes in Math.}, pages 30--38. Springer, Berlin, 1983.

\bibitem[Bur63]{Burgess2}
D.~A. Burgess.
\newblock On character sums and {$L$}-series. {II}.
\newblock {\em Proc. London Math. Soc. (3)}, 13:524--536, 1963.

\bibitem[DFI94]{DFI-II}
W.~Duke, J.~B. Friedlander, and H.~Iwaniec.
\newblock Bounds for automorphic {$L$}-functions. {II}.
\newblock {\em Invent. Math.}, 115(2):219--239, 1994.

\bibitem[DFI00]{DFI-IIerr}
W.~Duke, J.~Friedlander, and H.~Iwaniec.
\newblock Erratum: ``{B}ounds for automorphic {$L$}-functions. {II}''
  [{I}nvent.\ {M}ath.\ {\bf 115} (1994), no.\ 2, 219--239; {MR}1258904
  (95a:11044)].
\newblock {\em Invent. Math.}, 140(1):227--242, 2000.

\bibitem[DFI02]{DFI}
W.~Duke, J.~B. Friedlander, and H.~Iwaniec.
\newblock The subconvexity problem for {A}rtin {$L$}-functions.
\newblock {\em Invent. Math.}, 149(3):489--577, 2002.

\bibitem[Dic01]{DicksonPSLn}
Leonard~Eugene Dickson.
\newblock Theory of linear groups in an arbitrary field.
\newblock {\em Trans. Amer. Math. Soc.}, 2(4):363--394, 1901.

\bibitem[Duk88]{Duke}
W.~Duke.
\newblock Hyperbolic distribution problems and half-integral weight {M}aass
  forms.
\newblock {\em Invent. Math.}, 92(1):73--90, 1988.

\bibitem[EL10]{Pisa}
M.~Einsiedler and E.~Lindenstrauss.
\newblock Diagonal actions on locally homogeneous spaces.
\newblock In {\em Homogeneous flows, moduli spaces and arithmetic}, volume~10
  of {\em Clay Math. Proc.}, pages 155--241. Amer. Math. Soc., Providence, RI,
  2010.

\bibitem[EL19]{ELJoinings}
Manfred Einsiedler and Elon Lindenstrauss.
\newblock Joinings of higher rank torus actions on homogeneous spaces.
\newblock {\em Publ. Math. Inst. Hautes \'{E}tudes Sci.}, 129:83--127, 2019.

\bibitem[ELMV09]{ELMVPeriodic}
Manfred Einsiedler, Elon Lindenstrauss, Philippe Michel, and Akshay Venkatesh.
\newblock Distribution of periodic torus orbits on homogeneous spaces.
\newblock {\em Duke Math. J.}, 148(1):119--174, 2009.

\bibitem[ELMV11]{ELMVCubic}
Manfred Einsiedler, Elon Lindenstrauss, Philippe Michel, and Akshay Venkatesh.
\newblock Distribution of periodic torus orbits and {D}uke's theorem for cubic
  fields.
\newblock {\em Ann. of Math. (2)}, 173(2):815--885, 2011.

\bibitem[EMV13]{EMV}
Jordan~S. Ellenberg, Philippe Michel, and Akshay Venkatesh.
\newblock Linnik's ergodic method and the distribution of integer points on
  spheres.
\newblock In {\em Automorphic representations and {$L$}-functions}, volume~22
  of {\em Tata Inst. Fundam. Res. Stud. Math.}, pages 119--185. Tata Inst.
  Fund. Res., Mumbai, 2013.

\bibitem[ERW17]{EWR}
Manfred Einsiedler, R\"uhr Ren\'e, and Philipp Wirth.
\newblock Distribution of shapes of orthogonal lattices.
\newblock {\em Ergodic Theory and Dynamical Systems}, page 1–77, 2017.

\bibitem[GMO08]{GMO}
Alex Gorodnik, Fran\c{c}ois Maucourant, and Hee Oh.
\newblock Manin's and {P}eyre's conjectures on rational points and adelic
  mixing.
\newblock {\em Ann. Sci. \'Ec. Norm. Sup\'er. (4)}, 41(3):383--435, 2008.

\bibitem[Har03]{Harcos}
Gergely Harcos.
\newblock An additive problem in the {F}ourier coefficients of cusp forms.
\newblock {\em Math. Ann.}, 326(2):347--365, 2003.

\bibitem[{Has}27]{HassePG}
H.~{Hasse}.
\newblock {Bericht \"uber neuere Untersuchungen und Probleme aus der Theorie
  der algebraischen Zahlk\"orper. Teil Ia: Beweise zu I.}
\newblock {\em {Jahresber. Dtsch. Math.-Ver.}}, 36:233--311, 1927.

\bibitem[Hoc61]{Hochschild}
G.~Hochschild.
\newblock Cohomology of algebraic linear groups.
\newblock {\em Illinois J. Math.}, 5:492--519, 1961.

\bibitem[Iwa87]{Iwaniec}
Henryk Iwaniec.
\newblock Fourier coefficients of modular forms of half-integral weight.
\newblock {\em Invent. Math.}, 87(2):385--401, 1987.

\bibitem[Iwa97]{iwaniec1997topics}
H.~Iwaniec.
\newblock {\em Topics in Classical Automorphic Forms}.
\newblock Graduate studies in mathematics. American Mathematical Society, 1997.

\bibitem[Kha19]{KhJoint}
Ilya Khayutin.
\newblock Joint equidistribution of {CM} points.
\newblock {\em Ann. of Math. (2)}, 189(1):145--276, 2019.

\bibitem[Klo27]{Kloosterman4}
H.~D. Kloosterman.
\newblock On the representation of numbers in the form {$ax^2+by^2+cz^2+dt^2$}.
\newblock {\em Acta Math.}, 49(3-4):407--464, 1927.

\bibitem[Lin57]{LinnikSphereRussian}
Yu.~V. Linnik.
\newblock Asymptotic-geometric and ergodic properties of sets of lattice points
  on a sphere.
\newblock {\em Mat. Sb. N.S.}, 43(85):257--276, 1957.

\bibitem[Lin60]{LinnikSphereEnglish}
Yu.~V. Linnik.
\newblock Asymptotic-geometric and ergodic properties of sets of lattice points
  on a sphere.
\newblock {\em Amer. Math. Soc. Transl. (2)}, 13:9--27, 1960.

\bibitem[Lin68]{LinnikBook}
Yu.~V. Linnik.
\newblock {\em Ergodic properties of algebraic fields}.
\newblock Translated from the Russian by M. S. Keane. Ergebnisse der Mathematik
  und ihrer Grenzgebiete, Band 45. Springer-Verlag New York Inc., New York,
  1968.

\bibitem[Maa56]{Maass56}
Hans Maass.
\newblock Spherical functions and quadratic forms.
\newblock {\em J. Indian Math. Soc. (N.S.)}, 20:117--162, 1956.

\bibitem[Maa59]{Maass59}
Hans Maass.
\newblock \"uber die {V}erteilung der zweidimensionalen {U}ntergitter in einem
  euklidischen {G}itter.
\newblock {\em Math. Ann.}, 137:319--327, 1959.

\bibitem[Mic04]{Michel04}
P.~Michel.
\newblock The subconvexity problem for {R}ankin-{S}elberg {$L$}-functions and
  equidistribution of {H}eegner points.
\newblock {\em Ann. of Math. (2)}, 160(1):185--236, 2004.

\bibitem[Shi71]{Shimura}
Goro Shimura.
\newblock {\em Introduction to the arithmetic theory of automorphic functions}.
\newblock Publications of the Mathematical Society of Japan, No. 11. Iwanami
  Shoten, Publishers, Tokyo; Princeton University Press, Princeton, N.J., 1971.
\newblock Kan\^o Memorial Lectures, No. 1.

\bibitem[Wal81]{Waldspurger}
J.-L. Waldspurger.
\newblock Sur les coefficients de {F}ourier des formes modulaires de poids
  demi-entier.
\newblock {\em J. Math. Pures Appl. (9)}, 60(4):375--484, 1981.

\end{thebibliography}
\end{document}